\newcolumntype{C}{>{\centering\arraybackslash}X} 
\newtheorem*{rep@theorem}{\rep@title}
\newcommand{\newreptheorem}[2]{%
\newenvironment{rep#1}[1]{%
 \def\rep@title{#2 \ref{##1}}%
 \begin{rep@theorem}}%
 {\end{rep@theorem}}}
\newtheorem*{rep@cor}{\rep@title}
\newcommand{\newrepcor}[2]{%
\newenvironment{rep#1}[1]{%
 \def\rep@title{#2 \ref{##1}}%
 \begin{rep@cor}}%
 {\end{rep@cor}}}
\newtheorem*{rep@prop}{\rep@title}
\newcommand{\newrepprop}[2]{%
\newenvironment{rep#1}[1]{%
 \def\rep@title{#2 \ref{##1}}%
 \begin{rep@prop}}%
 {\end{rep@prop}}}
\newtheorem{theorem}{Theorem}[section]
\numberwithin{theorem}{section}
\newtheorem{theoremx}{Theorem}
\newtheorem{lemma}[theorem]{Lemma}
\newtheorem{corollary}[theorem]{Corollary}
\newtheorem{corollaryx}[theoremx]{Corollary}
\newtheorem{proposition}[theorem]{Proposition}
\newtheorem{propx}[theoremx]{Proposition}
\theoremstyle{definition}
\newtheorem*{definition*}{Definition}
\newtheorem{definition}[theorem]{Definition}
\theoremstyle{remark}
\newtheorem{remark}[theorem]{Remark}
\def\paragraph{\@startsection{paragraph}{4}%
  \z@\z@{-\fontdimen2\font}%
  {\normalfont\bfseries}}
\numberwithin{equation}{section}
\patchcmd{\subsection}{-.5em}{.5em}{}{}
\renewcommand\section{\@startsection{section}{1}%
  \z@{.7\linespacing\@plus\linespacing}{.5\linespacing}%
  {\normalfont\scshape\centering}}
\renewcommand\subsection{\@startsection{subsection}{2}%
  \z@{-.5\linespacing\@plus-.7\linespacing}{.5\linespacing}%
  {\bfseries}}
\renewcommand\subsubsection{\@startsection{subsubsection}{3}%
  \z@{-.5\linespacing\@plus-.7\linespacing}{.5\linespacing}%
  {\itshape}}
\def\l@paragraph{\@tocline{4}{0pt}{1pc}{7pc}{}}
\newcommand{\defin}{\vcentcolon =}
\newcommand{\C}{\mathbb{C}}
\newcommand{\R}{\mathbb{R}}
\newcommand{\Z}{\mathbb{Z}}
\newcommand{\Hyp}{\mathbb{H}}
\newcommand{\Teich}{\mathcal{T}}
\newcommand{\WP}{\textit{WP}}
\newcommand{\MS}{\mathcal{MS}}
\newcommand{\I}{I}
\newcommand{\II}{I \! \! I}
\newcommand{\Lsl}{\mathfrak{sl}}
\newcommand{\Dlie}{\mathcal{L}}
\newcommand{\Lsymp}{\mathfrak{S}}
\newcommand{\Lham}{\mathfrak{H}}
\newcommand{\hyp}{\mathfrak{h}}
\newcommand{\id}{\textit{id}}
\newcommand{\1}{\mathds{1}}
\newcommand{\vertical}{\textit{v}}
\newcommand{\conf}{\mathfrak{c}}
\newcommand{\rep}{\mathfrak{rep}}
\newcommand{\Ker}{\mathrm{Ker}}
\newcommand{\dPSL}{\mathbb{P}\mathrm{SL}(2,\mathbb{R})\times \mathbb{P}\mathrm{SL}(2,\mathbb{R})}
\newcommand{\PSL}{\mathbb{P}\mathrm{SL}}
\newcommand{\T}{\mathcal{T}}
\newcommand{\Isom}{\mathrm{Isom}}
\newcommand{\SO}{\mathrm{SO}}
\newcommand{\B}{\mathbb{B}}
\newcommand{\tw}{\mathrm{tw}}
\renewcommand{\i}{\mathbf{I}}
\renewcommand{\j}{\mathbf{J}}
\renewcommand{\k}{\mathbf{K}}
\newcommand{\p}{\mathbf{P}}
\newcommand{\g}{\mathbf{g}}
\newcommand{\Ll}{\mathbf{L}}
\newcommand{\mappa}[3]{#1 \colon #2 \rightarrow #3}
\newcommand{\hsk}{\hskip0pt}
\newcommand{\chidf}{\chi_0(\Sigma, \PSL(2,\B))}
\DeclarePairedDelimiterX{\scal}[2]{\langle}{\rangle}{#1 \mid #2}
\DeclarePairedDelimiterX{\scall}[2]{\langle}{\rangle}{#1, #2}
\DeclarePairedDelimiter{\set}{\{}{\}}
\DeclareMathOperator{\Imm}{Im}
\DeclareMathOperator{\Span}{Span}
\DeclareMathOperator{\End}{End}
\DeclareMathOperator{\SL}{\mathrm{SL}}
\DeclareMathOperator{\arccosh}{arccosh}
\DeclareMathOperator{\divr}{div}
\DeclareMathOperator{\grd}{grad}
\DeclareMathOperator{\Area}{Area}
\DeclareMathOperator{\Symp}{Symp}
\DeclareMathOperator{\Ham}{Ham}
\DeclareMathOperator{\Ad}{Ad}
\DeclareMathOperator{\Lie}{Lie}
\DeclareMathOperator{\hol}{hol}
\begin{document}

\setcounter{secnumdepth}{3}
\setcounter{tocdepth}{2}

\title[Para-hyperK{\"a}hler geometry of the space of AdS structures]{Para-hyperK{\"a}hler geometry of the deformation space of maximal globally hyperbolic anti-de Sitter three-manifolds}

\author[Filippo Mazzoli]{Filippo Mazzoli}
\address{FM: Department of Mathematics, University of Virginia, Charlottesville (VA), USA.} \email{filippomazzoli@me.com} 

\author[Andrea Seppi]{Andrea Seppi}
\address{AS: Institut Fourier, UMR 5582, Laboratoire de Math\'ematiques,
Universit\'e Grenoble Alpes, CS 40700, 38058 Grenoble cedex 9, France. } \email{andrea.seppi@univ-grenoble-alpes.fr}

\author[Andrea Tamburelli]{Andrea Tamburelli}
\address{AT: Department of Mathematics, Rice University, Houston (TX), USA.} \email{andrea.tamburelli@libero.it}

\date{\today}

\begin{abstract} In this paper we study the para-hyperK\"ahler geometry of the deformation space of MGHC anti-de Sitter structures on $\Sigma\times\mathbb R$, for $\Sigma$ a closed oriented surface. We show that a neutral pseudo-Riemannian metric and three symplectic structures coexist with an integrable complex structure and two para-complex structures, satisfying the relations of para-quaternionic numbers. We show that these structures are directly related to the geometry of MGHC manifolds, via the Mess homeomorphism, the parameterization of Krasnov-Schlenker by the induced metric on $K$-surfaces, the identification with the cotangent bundle $T^*\mathcal T(\Sigma)$, and the circle action that arises from this identification. Finally, we study the relation to the natural para-complex geometry that the space inherits from being a component of the $\PSL(2,\B)$-character variety, where $\B$ is the algebra of para-complex numbers, and the symplectic geometry deriving from Goldman symplectic form. 
\end{abstract}

\maketitle

\tableofcontents



\section{Introduction}

The main purpose of this paper is to study the geometry of the deformation space of maximal globally hyperbolic Cauchy-compact three-dimensional Anti-de Sitter three-manifolds. In short, our results show that these deformation spaces are endowed with a mapping-class group invariant para-hyperK\"ahler metric, and then provide geometric interpretations to each element that constitutes the para-hyperK\"ahler structure. 

\subsection{Motivation and state-of-the-art}

Since the pioneering work of Mess of 1990 \cite{mess2007lorentz}, maximal globally hyperbolic Anti-de Sitter manifolds in dimension three have been largely studied, motivated on the one hand by the striking analogies with quasi-Fuchsian manifolds, and on the other by the deep relations with Teichm\"uller theory. See also, among others, \cite{notes,benedetti2009canonical_wick,barbotadsqf,barbotkleinian,bonsante2020antide}. 

In particular, the deformation space of maximal globally hyperbolic Anti-de Sitter manifolds, which in this paper we will denote by $\mathcal{MGH}(\Sigma)$, where $\Sigma$ is a closed surface of genus $\geq 2$, is intimately related with the Teichm\"uller space $\mathcal T(\Sigma)$. This has been first observed by Mess, who provided a parametrization of $\mathcal{MGH}(\Sigma)$ by the product $\mathcal T(\Sigma)\times\mathcal T(\Sigma)$; other parametrizations again by $\mathcal T(\Sigma)\times\mathcal T(\Sigma)$ or by $T^*\mathcal T(\Sigma)$ were introduced in \cite{krasnov_schlenker_minimal}. The latter relies on existence and uniqueness results for maximal surfaces, as shown in \cite{bbz,bonsante2010maximal}. A further understanding of the geometry of Anti-de Sitter manifolds and their deformation space has been obtained by the study of geometric invariants such as the \emph{convex core} (\cite{bsduke,JMscarinci}), its \emph{width} (\cite{bonsante2010maximal,seppijems}), its \emph{volume} (\cite{bst}), by means of \emph{surfaces with curvature conditions} (\cite{bbz,abbz,bonsepK,tambH,chentam}), and by the \emph{symplectic geometric} approach  (\cite{bonsante2013a_cyclic,bonsante2015a_cyclic,bonsepequiv}).

Stepping back to the parallel with quasi-Fuchsian hyperbolic  manifolds, recently Donaldson highlighted the existence of a natural \emph{hyperK\"ahler} structure on a neighborhood of the Fuchsian locus in the deformation space of almost-Fuchsian manifolds, seen as a neighborhood of the zero section in the cotangent bundle $T^*\mathcal T(\Sigma)$. See \cite{donaldson2003moment,hodgethesis,trautwein_thesis,trautwein2019hyperkahler}. The purpose of this paper is to develop a similar approach for maximal globally hyperbolic Anti-de Sitter manifolds, and to demonstrate that the natural structure that appears in this setting is a \emph{para-hyperK\"ahler structure}. For more details on para-K\"ahler and para-hyperK\"ahler geometry, see \cite{parahkl2,pahahkl3,parahkh1,parahkl4}. We will see that this structure recovers many of the geometric constructions that have been introduced before, so as to elucidate the global picture and the relations between different approaches.
We now give the fundamental definitions and state our main results. 

\subsection{Deformation space of MGHC AdS manifolds}

We give here the standard definition of \emph{maximal globally hyperbolic Cauchy compact Anti-de Sitter manifolds} (in short, MGHC AdS). A \emph{Cauchy surface} in a Lorentzian manifold is an embedded hypersurface that intersects every inextensible causal curve exactly in one point; a Lorentzian manifold admitting a Cauchy surface is called \emph{globally hyperbolic}. It is moreover \emph{maximal} if every isometric embedding in another globally hyperbolic manifold sending a Cauchy surface to a Cauchy surface is surjective. 
Finally, a MGHC AdS manifold  is a maximal globally hyperbolic Lorentzian manifold of constant sectional curvature $-1$ admitting a closed Cauchy surface. A simple example of MGHC AdS manifolds are \emph{Fuchsian manifolds}, whose metric $G$ can be written globally as a warped product 
\begin{equation}\label{eq:fuchsian_metric}
G=-dt^2+\cos^2(t)h~,
\end{equation} 
for $t\in(-\pi/2,\pi/2)$ and $h$ a hyperbolic metric on a closed manifold. In this case the Cauchy surface $t=0$ is totally geodesic.

A classical fact in Lorentzian geometry (see \cite{geroch,beem,bernal}) is that globally hyperbolic Lorentzian manifolds are diffeomorphic to $\Sigma\times\R$, where $\Sigma$ is a Cauchy surface, and any two Cauchy surfaces are diffeomorphic. In this work we will only consider three-dimensional AdS manifolds whose Cauchy surfaces are closed. Hence from now on we will fix a closed oriented surface $\Sigma$. We then define the \emph{deformation space} of MGHC AdS manifolds as follows:
$$\mathcal{MGH}(\Sigma):=\{G\,|\,G\text{ is a MGHC AdS metric on }\Sigma\times\R\}/\mathrm{Diff}_0(\Sigma\times\R)~,$$
where the group $\mathrm{Diff}_0(\Sigma\times\R)$ of diffeomorphisms isotopic to the identity acts by pull-back of $G$. It turns out that $\mathcal{MGH}(S^2)$ is empty, $\mathcal{MGH}(T^2)$ is a four-dimensional manifold, while if $\Sigma$ is a surface of genus $\geq 2$, then $\mathcal{MGH}(\Sigma)$ has dimension $6|\chi(\Sigma)|$. Observe moreover that there is a natural action of the  mapping class-group $MCG(\Sigma)=\mathrm{Diff}_+(\Sigma)/\mathrm{Diff}_0(\Sigma)$ on $\mathcal{MGH}(\Sigma)$, again by pull-back. When $\Sigma$ has genus $\geq 2$, the deformation space $\mathcal{MGH}(\Sigma)$ contains the \emph{Fuchsian locus} $\mathcal F(\Sigma)$, namely those manifolds whose metric is of the form \eqref{eq:fuchsian_metric}, which is $MCG(\Sigma)$-invariant and naturally identified to the Teichm\"uller space $\mathcal{T}(\Sigma)$.

\subsection{Para-hyperK\"ahler structures}
We now introduce the notion of para-hyperK\"ahler structure and state our first result. Recall that a \emph{pseudo-K\"ahler structure} on a manifold $M$ consists of a pair $(\g,\i)$ where $\g$ is a pseudo-Riemannian metric and $\i$ is an integrable almost complex structure (i.e. $\i^2=-\1$) such that $\g(\i v,w)=-\g(v,\i w)$ and the 2-form $\omega_\i(\cdot,\cdot):=\g(\cdot,\i\cdot)$ is closed (hence a symplectic form). Similarly, a \emph{para-K\"ahler structure} consists of an integrable almost para-complex structure $\p$, which means that 
\begin{itemize}
\item $\p^2=\1$;
\item the $\p$-eigenspaces of $1$ and $-1$ have the same dimension;
\item the distributions on $M$ given by the $1$ and $-1$ eigenspaces of $\p$ are integrable;
\end{itemize}
and $\p$ is such that $\g(\p v,w)=-\g(v,\p w)$ and the 2-form $\omega_\p(\cdot,\cdot):=\g(\cdot,\p\cdot)$ is closed. 

Observe that a direct consequence of the existence of a para-K\"ahler structure is that $\g(\p\cdot,\p\cdot)=-\g(\cdot,\cdot)$, hence $\g$ is necessarily of neutral signature. Moreover the condition that $d\omega_\i=0$ (resp. $d\omega_\p=0$) is known to be equivalent to $\nabla\i=0$ (resp. $\nabla\p=0$), for $\nabla$ the Levi-Civita connection of $\g$. We finally give the definition of para-hyperK\"ahler structure:

\begin{definition*}
 A para-hyperK\"ahler structure on a manifold $M$ is the data $(\g,\i,\j,\k)$, where $(\g,\i)$ is a pseudo-K\"ahler structure,  $(\g,\j)$ and $(\g,\k)$ are para-K\"ahler structures, and $(\i,\j,\k)$ satisfy the para-quaternionic relations.
\end{definition*}

By para-quaternionic relations we mean the identities $\i^2=-\1$, $\j^2=\k^2=\1$ --- which are implicitly assumed by the condition that $\i$ (resp. $\j$, $\k$) is a complex (resp. para-complex) structure --- and moreover $\i\j=-\j\i=\k$.

We remark that, given a para-hyperK\"ahler structure $(\g,\i,\j,\k)$, a \emph{complex symplectic form} is defined by:
$$\omega_\i^\C:=\omega_\j+i\omega_\k~.$$
It is complex in the sense that it is a $\C$-valued symplectic form and satisfies
 $\omega^\C_\i(\i v,w)=\omega^\C_\i(v,\i w)=i\omega^\C_\i(v,w)$.
 Similarly, one has two \emph{para-complex symplectic forms} defined by
 $$\omega_\j^{\mathbb B}:=\omega_\i+\tau\omega_\k\qquad\text{ and} \qquad \omega_\k^{\mathbb B}:=\omega_\i-\tau\omega_\j~,$$
where we denote by $\mathbb B=\R\oplus\tau\R$ the algebra of para-complex numbers, i.e. $\tau^2=1$. Again,  these are para-complex in the sense that
$\omega^{\mathbb B}_\j(\j v,w)=\omega^{\mathbb B}_\j(v,\j w)=\tau\omega^{\mathbb B}_\j(v,w)$ and $\omega^{\mathbb B}_\k(\k v,w)=\omega^{\mathbb B}_\k(v,\k w)=\tau\omega^{\mathbb B}_\k(v,w)$.

Only manifolds of dimension $4n$ can support a para-hyperK\"ahler structure. Our first result is that $\mathcal{MGH}(\Sigma)$, whose dimension is four if $\Sigma$ has genus one and $6|\chi(\Sigma)|$ otherwise, does support a very natural one. 

\begin{theoremx}\label{thm:parahyper_structure}
Let $\Sigma$ be a closed oriented surface of genus $\geq 1$. Then $\mathcal{MGH}(\Sigma)$ admits a $MCG(\Sigma)$-invariant para-hyperK\"ahler structure $(\g,\i,\j,\k)$. When $\Sigma$ has genus $\geq 2$, the Fuchsian locus $\mathcal F(\Sigma)$ is totally geodesic and $(\g,\i)$ restricts to (a multiple of) the Weil-Petersson K\"ahler structure of Teichm\"uller space.   
\end{theoremx}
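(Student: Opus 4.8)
The plan is to build the para-hyperK\"ahler structure on $\mathcal{MGH}(\Sigma)$ by transporting known geometric structures through the Krasnov--Schlenker identification $\mathcal{MGH}(\Sigma) \cong T^*\mathcal{T}(\Sigma)$, coming from the existence and uniqueness of a maximal Cauchy surface in each MGHC AdS manifold. First I would fix this identification and describe its building blocks concretely: a point of $\mathcal{MGH}(\Sigma)$ corresponds to a conformal structure $X$ on $\Sigma$ (the conformal class of the induced metric on the maximal surface) together with a holomorphic quadratic differential $q$ on $X$ (essentially the second fundamental form of the maximal surface, which is the real part of $q$ and satisfies the Codazzi and Gauss equations). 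On $T^*\mathcal{T}(\Sigma)$ there is the canonical Liouville symplectic form $\omega_{\mathrm{can}}$, and, using the Weil--Petersson K\"ahler metric on the base $\mathcal{T}(\Sigma)$, a canonical ``Feix--Kaledin/Donaldson-type'' neighborhood structure. The idea is that in the AdS (split-signature) setting this construction works \emph{globally} on all of $T^*\mathcal{T}(\Sigma)$ rather than only near the zero section, because the relevant PDE (the maximal surface equation, equivalently the Wang equation for the conformal metric) has a global solution for every $(X,q)$ --- this is exactly the content of \cite{bbz,bonsante2010maximal} invoked earlier.

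Next I would write down the three endomorphisms and the metric. Using the Mess parametrization $\mathcal{MGH}(\Sigma) \cong \mathcal{T}(\Sigma) \times \mathcal{T}(\Sigma)$ in tandem with the $T^*\mathcal{T}(\Sigma)$ picture, the tangent space at a point splits, and one defines $\i$ from the complex structure coming from the identification with a complexification/cotangent bundle, while $\j$ and $\k$ are the para-complex structures whose $\pm1$-eigenspace distributions are the two foliations by the factors of $\mathcal{T}(\Sigma)\times\mathcal{T}(\Sigma)$ (for $\j$) and a second natural pair of transverse Lagrangian foliations (for $\k$), with $\k := \i\j$. The metric is then forced by $\g := \omega_{\i}(\cdot,\i\cdot)$ once $\omega_\i$ is identified with (a multiple of) $\omega_{\mathrm{can}}$ transported suitably; equivalently one takes the three closed two-forms --- Goldman-type symplectic forms on the two $\mathcal{T}(\Sigma)$ factors and the canonical form on $T^*\mathcal{T}(\Sigma)$ --- and checks they assemble into $\omega_\i,\omega_\j,\omega_\k$ compatible with $\i,\j,\k$. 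I would verify the algebraic para-quaternionic relations $\i^2=-\1$, $\j^2=\k^2=\1$, $\i\j=-\j\i=\k$ pointwise on tangent spaces (a linear-algebra check), the compatibility $\g(\i\cdot,\cdot)=-\g(\cdot,\i\cdot)$ and likewise for $\j,\k$, and the integrability of all three structures together with closedness of the three associated two-forms; by the remark in the introduction, closedness of $\omega_\i,\omega_\j,\omega_\k$ upgrades to $\nabla$-parallelism and hence yields the full para-hyperK\"ahler package.

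The $MCG(\Sigma)$-invariance should be automatic, since every ingredient --- the maximal surface, the induced conformal structure, the Hopf/quadratic differential, the Mess parametrization, and the Weil--Petersson and Goldman forms --- is defined intrinsically from the AdS geometry and hence is natural under the mapping class group action. For the last sentence: when $\mathrm{genus}(\Sigma)\geq 2$, the Fuchsian locus $\mathcal{F}(\Sigma)$ corresponds to the zero section $q\equiv 0$ (the maximal surface is totally geodesic, cf. \eqref{eq:fuchsian_metric}), equivalently the diagonal in $\mathcal{T}(\Sigma)\times\mathcal{T}(\Sigma)$; I would show it is totally geodesic either by exhibiting it as the fixed-point set of an isometric involution of $(\mathcal{MGH}(\Sigma),\g)$ (orientation-reversal / duality swapping the two $\mathcal{T}(\Sigma)$ factors, or $q\mapsto -q$ composed with complex conjugation), whose fixed locus is automatically totally geodesic, and then compute that $\g$ restricted to the zero section is a constant multiple of the Weil--Petersson metric with $\i$ restricting to the Weil--Petersson complex structure --- this is a direct consequence of the normalization of $\omega_\i$ as the canonical symplectic form twisted by the Weil--Petersson metric on the base.

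\medskip

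The step I expect to be the main obstacle is establishing that the candidate endomorphisms $\i,\j,\k$ are \emph{integrable} and that the two-forms $\omega_\i,\omega_\j,\omega_\k$ are \emph{closed} globally on $\mathcal{MGH}(\Sigma)$ --- not just near the Fuchsian locus. Integrability of $\i$ in particular will require genuinely understanding the complex-analytic structure on $T^*\mathcal{T}(\Sigma)$ produced by the maximal-surface data (this is where the Feix--Kaledin-type hyperk\"ahler/para-hyperk\"ahler extension theorems, adapted to split signature, do the real work), and closedness of the forms will hinge on identifying each $\omega$ intrinsically with a symplectic form already known to be closed (Goldman's form on the character variety, or the canonical Liouville form), so that no direct exterior-derivative computation in coordinates is needed. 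Verifying the precise compatibility constants between these identifications --- so that the same metric $\g$ serves all three structures simultaneously --- is the delicate bookkeeping at the heart of the argument.
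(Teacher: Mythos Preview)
Your overall plan---define $\i,\j,\k$ by pulling back the natural almost-(para-)complex structures through the parametrizations $\mathcal F$ and $\mathcal M$, and verify closedness of the three $2$-forms by identifying them with known symplectic forms---is indeed how the paper \emph{concludes} integrability and closedness (Corollaries \ref{cor:Iintegrable_higher_genus}, \ref{cor:omega_ik_nondeg}, \ref{cor:omega_j_nondeg}). But your proposal skips the step that is the actual content of the theorem: \emph{constructing} a single pseudo-metric $\g$ compatible with all three structures simultaneously. Writing $\g:=\omega_\i(\cdot,\i\cdot)$ does not help until you know what $\omega_\i$ is, and here you have a concrete misidentification: under $\mathcal F$, it is $\omega_\j+i\omega_\k$ (not $\omega_\i$) that corresponds to the complex Liouville form on $T^*\mathcal T(\Sigma)$ (Theorem \ref{thm:cotangent}); under $\mathcal M$, the para-complex form $\omega_\i+\tau\omega_\k$ corresponds to the sum/difference of Weil--Petersson forms (Theorem \ref{thm:mappaM}). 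So your ``three closed forms'' do not line up as you describe, and the compatibility constants you call ``delicate bookkeeping'' are not a normalization issue but a genuine identity (the agreement of $\omega_\k$ computed from the two sides is already a theorem of Scarinci--Krasnov). Invoking a Feix--Kaledin type extension is also not a shortcut: that machinery produces \emph{hyperK\"ahler} metrics on a neighborhood of the zero section, whereas here one needs a \emph{para}-hyperK\"ahler structure globally, in neutral signature, and no such off-the-shelf result applies.

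What the paper does instead is build $\g,\i,\j,\k$ explicitly, first on the finite-dimensional toy model $T^*\mathcal J(\R^2)$ (Section \ref{sec:toy model}): the key formula is
\[
\g_{(J,\sigma)}\bigl((\dot J,\dot\sigma),(\dot J',\dot\sigma')\bigr)=f(\|\sigma\|)\,\langle\dot J,\dot J'\rangle-\frac{1}{f(\|\sigma\|)}\,\langle\dot\sigma_0,\dot\sigma_0'\rangle,\qquad f(t)=\sqrt{1+t^2},
\]
together with explicit expressions for $\i,\j,\k$. This toy model, being $\SL(2,\R)$-invariant, then defines $\g,\i,\j,\k$ \emph{pointwise} on the infinite-dimensional bundle $T^*\mathcal J(\Sigma)$, and the substantial work (Proposition \ref{prop:equivalent_def_subspace_V}, Theorem \ref{thm:identification_tangent&V}) is to produce a $\Symp_0(\Sigma,\rho)$-invariant distribution $V_{(J,\sigma)}$ inside $T_{(J,\sigma)}\widetilde{\mathcal{MS}}_0(\Sigma,\rho)$ that is $\i,\j,\k$-invariant and isomorphic to the tangent space of the quotient. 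Only \emph{after} this does one use the parametrizations to check non-degeneracy and closedness. The change of variables $h=(1+f(\|\sigma\|))g_J$ (with $f(t)=\sqrt{1+t^2}$ rather than $\sqrt{1-t^2}$ as in the quasi-Fuchsian case) is what makes the construction global. Your involution argument for the Fuchsian locus is fine (and is essentially the paper's: $\mathcal F(\Sigma)$ is the fixed set of the isometric circle action $R_\theta$, hence totally geodesic), but the restriction $\g|_{\mathcal F(\Sigma)}=4G_{WP}$ is read off directly from the explicit formula for $\g$ at $\sigma=0$, not deduced from a Liouville-form normalization.
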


The para-hyperK\"ahler structure of $\mathcal{MGH}(\Sigma)$ is extremely natural from the point of view of AdS geometry, in the sense that all the elements that constitute the para-hyperK\"ahler structure have (at least one) interpretation in terms of the geometry of MGHC AdS manifolds. We now state and explain all these interpretations. 

\subsection{Parameterizations of \texorpdfstring{$\mathcal{MGH}(\Sigma)$}{MGH(S)}}

The first interpretation is in terms of the cotangent bundle of Teichm\"uller space. There is a natural map
$$\mathcal F:\mathcal{MGH}(\Sigma)\to T^*\mathcal{T}(\Sigma)~,$$
which associates to a MGHC AdS manifold $(\Sigma\times\R,G)$ the pair $(J,q)$, where $J$ is the (almost-)complex structure of the first fundamental form of the unique maximal Cauchy surface in $(M,G)$, and $q$ is the holomorphic quadratic differential whose real part is the second fundamental form. The map $\mathcal F$ is a ($MCG(\Sigma)$-equivariant) diffeomorphism if $\Sigma$ has genus $\geq 2$; for genus one it is a diffeomorphism onto the complement of the zero section. The cotangent bundle $T^*\mathcal{T}(\Sigma)$ is naturally a complex symplectic manifold; our first geometric interpretation is the fact that the map $\mathcal F$ is anti-holomorphic and preserves the complex symplectic forms up to conjugation. 

\begin{theoremx}\label{thm:cotangent}
Let $\Sigma$ be a closed oriented surface of genus $\geq 1$. Then 
$$\mathcal F^*(\mathcal I_{T^*\mathcal{T}(\Sigma)},\Omega^\C_{T^*\mathcal{T}(\Sigma)})=\left(-\i,-\frac{i}{2}\overline\omega_\i^\C\right)~,$$
where $\mathcal I_{T^*\mathcal{T}(\Sigma)}$ denotes the complex structure of $T^*\mathcal{T}(\Sigma)$ and $\Omega^\C_{T^*\mathcal{T}(\Sigma)}$ its complex symplectic form.
\end{theoremx}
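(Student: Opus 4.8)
The plan is to work in the explicit model provided by the maximal-surface parameterization, identifying $\mathcal{MGH}(\Sigma)$ with (an open subset of) $T^*\mathcal{T}(\Sigma)$ via $\mathcal F$, and to compare the intrinsic structures $(\i,\omega_\i^\C)$ --- whose definition in terms of AdS geometry must have been set up in the preceding sections --- with the standard holomorphic structures on the cotangent bundle. First I would recall the precise description of $\mathcal I_{T^*\mathcal{T}(\Sigma)}$ and $\Omega^\C_{T^*\mathcal{T}(\Sigma)}$: a point of $T^*\mathcal{T}(\Sigma)$ is a pair $(J,q)$ with $q$ a holomorphic quadratic differential for $J$, the holomorphic cotangent space splits $T_{(J,q)}(T^*\mathcal T) = T_J\mathcal T(\Sigma) \oplus H^0(X_J, K^2)$ only after a choice of connection, so the canonical object is really the tautological (Liouville) $1$-form $\lambda$ and $\Omega^\C = d\lambda$, with $\mathcal I$ acting as multiplication by $i$ on the holomorphic tangent space of the complex manifold $T^*\mathcal T$. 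The key computation is then to express, at a point of $\mathcal{MGH}(\Sigma)$, the almost-complex structure $\i$ and the forms $\omega_\j,\omega_\k$ (equivalently $\omega_\i^\C$) in the coordinates $(J,q)$, using the formulas established earlier in the paper for $\g$, $\i$, $\j$, $\k$ in terms of the first and second fundamental forms of the maximal surface.

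The main steps, in order: (1) Differentiate the map $\mathcal F$, i.e. compute $d\mathcal F$ on a tangent vector to $\mathcal{MGH}(\Sigma)$; because $\mathcal F$ sends $G$ to $(J,q)$ where $J$ is the conformal class of the first fundamental form $\I$ of the maximal surface and $\mathrm{Re}(q) = \II$, a deformation of $G$ induces a first-order variation $(\dot J,\dot q)$, and one must read off how the intrinsic complex structure $\i$ acts on $(\dot J,\dot q)$. (2) Show that $\i$ corresponds, under $d\mathcal F$, to $-\mathcal I_{T^*\mathcal T}$ --- the minus sign and the anti-holomorphicity being exactly the content that $\mathcal F$ reverses orientation of the complex structure; this should reduce to the observation that $\mathcal F$ involves complex conjugation somewhere (e.g. $q$ versus $\bar q$, or the fact that the Hopf-type differential enters with a conjugation relative to the standard identification $T^*\mathcal T \cong $ holomorphic quadratic differentials). (3) Compute $\mathcal F^*\Omega^\C_{T^*\mathcal T}$: using that $\Omega^\C = d\lambda$ for the Liouville form, it suffices to identify $\mathcal F^*\lambda$ with (a constant multiple of) the relevant primitive of $\omega_\i^\C$, or to pair the complex symplectic form directly on vectors $(\dot J_1,\dot q_1),(\dot J_2,\dot q_2)$ and match with $\omega_\j + i\omega_\k$ evaluated via the earlier formulas. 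The constant $-i/2$ and the conjugation $\overline{\omega_\i^\C}$ should pop out of the normalization relating $\mathrm{Re}(q)=\II$ (rather than $q=\II^{2,0}$ with the conventional factor) and the complex-linear versus complex-antilinear identification of $T^*\mathcal T$ with quadratic differentials.

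The hard part will be step (1) together with controlling all the normalization constants: honestly differentiating the Krasnov--Schlenker parameterization requires understanding how the maximal surface --- hence its induced metric and second fundamental form --- varies with $G$, which typically goes through the Gauss--Codazzi equations and an implicit-function argument, and keeping track of the conformal factor relating $\I$ to the hyperbolic metric in the class $J$. A cleaner route, which I would try first, is to avoid differentiating $\mathcal F$ by brute force and instead invert the problem: take the \emph{known} expressions (from the earlier sections) for $\g,\i,\j,\k$ as tensors on $T^*\mathcal T(\Sigma)$ in the coordinates $(J,q)$ --- these presumably were derived when proving Theorem~\ref{thm:parahyper_structure} --- and simply compare them termwise with the standard $\mathcal I_{T^*\mathcal T}$ and $\Omega^\C_{T^*\mathcal T} = d\lambda$. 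Then the proof becomes a finite linear-algebra check on each fiber $T_{(J,q)}(T^*\mathcal T) \cong T_J\mathcal T \oplus T_J^*\mathcal T$, split into the "horizontal" ($\dot q = 0$) and "vertical" ($\dot J = 0$) pieces, where one verifies: $\i$ acts as $J$-multiplication with a sign flip on the horizontal part and as the dual action on the vertical part (matching $-\mathcal I$), and the pairing $\omega_\j + i\omega_\k$ restricted to horizontal--vertical pairs reproduces the canonical duality pairing times $-i/2$, while horizontal--horizontal and vertical--vertical pairings vanish (as they must for $d\lambda$). Provided the earlier formulas are in hand, this is routine; the only genuine subtlety is the bookkeeping of factors of $2$, $i$, and the conjugation, which I would pin down once and for all by testing on the Fuchsian locus, where $q=0$, $\II=0$, and everything reduces to the Weil--Petersson K\"ahler form on $\mathcal T(\Sigma)$ as asserted in Theorem~\ref{thm:parahyper_structure}.
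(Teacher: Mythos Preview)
Your ``cleaner route'' is essentially what the paper does: the para-hyperK\"ahler structure is already defined in the $(J,\sigma)$ coordinates (with $\sigma=\Re q$), so $\mathcal F$ is basically the identity in these coordinates and no differentiation through the maximal-surface equation is needed --- your worry about the ``hard part'' is therefore unfounded. The paper's argument matches your outline but streamlines two steps. First, instead of comparing the 2-forms directly, it computes the pullback of the Liouville \emph{one}-form: using $\dot\nu=\tfrac12\dot J J$ for the Beltrami differential and a pointwise pairing computation in a $g_J$-orthonormal frame, one finds $\mathcal F^*\lambda^\C_{T^*\mathcal T}=-\tfrac{i}{2}\,\overline{\lambda^\C}$, and then simply differentiates. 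Second, rather than checking $\mathcal F^*\mathcal I_{T^*\mathcal T}=-\i$ by hand on horizontal and vertical pieces, the paper observes that a nondegenerate complex symplectic form determines its compatible complex structure: since $\Omega^\C_{T^*\mathcal T}$ is $\mathcal I_{T^*\mathcal T}$-complex and $\overline{\omega_\i^\C}$ is $(-\i)$-complex, the symplectic-form identity forces the complex-structure identity. Your direct check would work too, but the nondegeneracy trick is quicker and avoids the bookkeeping you anticipate.
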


Let us assume (until the end of this section) that $\Sigma$ has genus $\geq 2$. In \cite{mess2007lorentz}, Mess proved that $\mathcal{MGH}(\Sigma)$ is parameterized by the product of two copies of the Teichm\"uller space of $\Sigma$, by a map
$$\mathcal M:\mathcal{MGH}(\Sigma)\to \mathcal{T}(\Sigma)\times \mathcal{T}(\Sigma)~,$$
that essentially gives (under the isomorphism between the isometry group of AdS space and $\PSL(2,\R)\times\PSL(2,\R)$), the left and right components of the holonomy map of a MGHC AdS manifold $(M,G)$. The manifold $\mathcal{T}(\Sigma)\times \mathcal{T}(\Sigma)$ is easily a para-complex manifold, where the para-complex structure $\mathcal P_{\mathcal{T}(\Sigma)\times \mathcal{T}(\Sigma)}$ is the endomorphism of the tangent bundle for which the integral submanifolds of the distribution of $1$-eigenspaces are the slices $\mathcal T(\Sigma)\times\{*\}$, and those for the $(-1)$-eigenspaces are the slices $\{*\}\times\mathcal T(\Sigma)$. It has moreover a para-complex symplectic form compatible with $\mathcal P_{\mathcal{T}(\Sigma)\times \mathcal{T}(\Sigma)}$:
$$\Omega_{\mathcal{T}(\Sigma)\times \mathcal{T}(\Sigma)}:=\frac{1}{2}(\pi_l^*\Omega_{WP}+\pi_r^*\Omega_{WP})+\frac{\tau}{2} (\pi_l^*\Omega_{WP}-\pi_r^*\Omega_{WP})$$
where $\Omega_{WP}$ is the Weil-Petersson symplectic form and $\pi_l,\pi_r$ denote the projections on the left and right factor. 

\begin{theoremx}\label{thm:mappaM}
Let $\Sigma$ be a closed oriented surface of genus $\geq 2$. Then 
$$\mathcal M^*(\mathcal P_{\mathcal{T}(\Sigma)\times \mathcal{T}(\Sigma)},4\Omega^{\mathbb B}_{\mathcal{T}(\Sigma)\times \mathcal{T}(\Sigma)})=(\j,\omega_\j^{\mathbb B})~,$$
where $\mathcal P_{\mathcal{T}(\Sigma)\times \mathcal{T}(\Sigma)}$ denotes the para-complex structure of ${\mathcal{T}(\Sigma)\times \mathcal{T}(\Sigma)}$ and $\Omega^{\mathbb B}_{\mathcal{T}(\Sigma)\times \mathcal{T}(\Sigma)}$ its para-complex symplectic form.
\end{theoremx}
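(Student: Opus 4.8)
The plan is to identify, at a point $(M,G)\in\mathcal{MGH}(\Sigma)$ corresponding via $\mathcal M$ to $(h_l,h_r)\in\mathcal T(\Sigma)\times\mathcal T(\Sigma)$, the effect of $\mathcal M_*$ on tangent vectors and compare the images of $\j$ and $\Omega^{\mathbb B}$ with those of $\mathcal P$ and $\Omega^{\mathbb B}_{\mathcal T\times\mathcal T}$. The first step is to give a concrete description of the almost para-complex structure $\j$ and the $2$-form $\omega_\j^{\mathbb B}=\omega_\i+\tau\omega_\k$ built into the para-hyperKähler structure of Theorem \ref{thm:parahyper_structure}. Since the para-hyperKähler data was presumably constructed on the $T^*\mathcal T(\Sigma)$-model via the parameterization $\mathcal F$, I would first transport everything through $\mathcal F$ and work in terms of pairs $(J,q)$, where the maximal surface has first fundamental form determined by $J$ and second fundamental form $\Re q$. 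What is needed is then the formula for the composition $\mathcal M\circ\mathcal F^{-1}:T^*\mathcal T(\Sigma)\to\mathcal T(\Sigma)\times\mathcal T(\Sigma)$, i.e. the explicit expression of the left and right hyperbolic metrics $h_l,h_r$ in terms of the embedding data $(I,\II)$ of the maximal surface. This is classical in AdS geometry: the holonomies are obtained from $I$ and the shape operator $B$ (with $\II=I(B\cdot,\cdot)$) by $h_l=I((\Id+JB)\cdot,(\Id+JB)\cdot)$ and $h_r=I((\Id-JB)\cdot,(\Id-JB)\cdot)$ up to normalization, using that the maximal surface has $\tr B=0$ and the Gauss--Codazzi equations force $\det B=-1-K_I$; I would recall this from the Mess correspondence and the work on maximal surfaces cited in the excerpt (Barbot--Béguin--Zeghib, Bonsante--Schlenker, Krasnov--Schlenker).

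The second step is to compute $\mathcal M_*$ on the splitting of $T_{(M,G)}\mathcal{MGH}(\Sigma)$ into the $\pm1$-eigenspaces of $\j$. Since $\j$ is a para-complex structure, its eigendistributions are integrable; I expect that under $\mathcal M$ these two integrable distributions map respectively to the tangent spaces of the slices $\mathcal T(\Sigma)\times\{*\}$ and $\{*\}\times\mathcal T(\Sigma)$ — equivalently, that deformations fixing $h_r$ (resp. $h_l$) are exactly the $\j$-eigendirections. This would be the conceptual heart of the identification $\mathcal M^*\mathcal P_{\mathcal T\times\mathcal T}=\j$: one shows that the left holonomy $h_l$ is constant along one eigenfoliation and the right holonomy $h_r$ along the other. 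Concretely one differentiates the formulas $h_l=I((\Id+JB)\cdot,(\Id+JB)\cdot)$, $h_r=I((\Id-JB)\cdot,(\Id-JB)\cdot)$ and checks that a first-order variation with $\dot h_r=0$ corresponds, after translating to the $(J,q)$-coordinates, to a tangent vector in the appropriate eigenspace of $\j$; this is where one uses the explicit form of $\i$ and $\k$ (hence of $\j=\k\i^{-1}$, or rather $\j=-\i\k$ up to sign from the para-quaternionic relations) on $T^*\mathcal T(\Sigma)$.

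The third step is the symplectic comparison. Having matched the para-complex structures and knowing $\Omega^{\mathbb B}_{\mathcal T\times\mathcal T}=\tfrac12(\pi_l^*\Omega_{WP}+\pi_r^*\Omega_{WP})+\tfrac\tau2(\pi_l^*\Omega_{WP}-\pi_r^*\Omega_{WP})$, it suffices to show $\mathcal M^*(\pi_l^*\Omega_{WP})=\tfrac14(\omega_\k+\omega_\i)$ and $\mathcal M^*(\pi_r^*\Omega_{WP})=\tfrac14(\omega_\i-\omega_\k)$, or the analogous pair with signs dictated by $\omega_\j^{\mathbb B}=\omega_\i+\tau\omega_\k$ — i.e. it comes down to identifying the pullbacks of the left and right Weil--Petersson forms with $\tfrac14(\omega_\i\pm\omega_\k)$. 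Here I would invoke Theorem \ref{thm:cotangent}, which already pins down $\omega_\i$ (it is $\tfrac{2}{i}\mathcal F^*\overline{\Omega^\C_{T^*\mathcal T}}$ up to the stated factor, whose real and imaginary parts are the canonical symplectic form and the Weil--Petersson form pulled back from the base), together with the known fact (due to Scannell--Wolf type arguments, or more directly the symplectic nature of the Mess parameterization established via Goldman's theorem) that the sum and difference of the pulled-back Weil--Petersson forms decompose along the complex-symplectic structure of $T^*\mathcal T(\Sigma)$ in exactly this way. Alternatively, and perhaps more self-containedly, one evaluates both sides on vectors tangent to the $\j$-eigenfoliations: on such vectors the mixed terms in $\omega_\j^{\mathbb B}$ vanish by para-compatibility, reducing the check to the two slices where it becomes the statement that $\mathcal M$ restricted to a slice pulls $\Omega_{WP}$ back to (a quarter of) $\omega_\i$ restricted to that slice, which again follows from Theorem \ref{thm:cotangent} and the Fuchsian-locus normalization in Theorem \ref{thm:parahyper_structure}.

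The main obstacle I anticipate is the second step: producing a clean, coordinate-free argument that the $\j$-eigenfoliations are precisely the fibers of $h_r$ and $h_l$. The formulas relating $(I,B)$ to $(h_l,h_r)$ are nonlinear, and differentiating them requires controlling the variation of the shape operator $B$ subject to the constraints $\tr B=0$ and the Codazzi equation $d^{\nabla^I}B=0$; one must show that this linearized system, once re-expressed in the $(J,q)$-coordinates where $\j$ acts by an explicit formula, has the two eigendistributions of $\j$ as its characteristic subspaces. I would handle this by choosing a maximal surface representative and using the identification of $T_J\mathcal T(\Sigma)$ with harmonic Beltrami differentials and of the cotangent fiber with holomorphic quadratic differentials, so that $B$ becomes essentially multiplication by $q$ (with respect to $I$), turning $\Id\pm JB$ into $\Id\pm$ (a $q$-dependent traceless Codazzi operator); the linearization then becomes a computation with quadratic differentials that should diagonalize transparently. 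Once this is in place, the symplectic identity in step three is comparatively routine given Theorem \ref{thm:cotangent}.
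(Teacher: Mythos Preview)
Your overall shape is right---differentiate $\mathcal M$ and compare forms---but the implementation misses the paper's key infrastructure and your Step~3 has a genuine gap.

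The paper does not work via $\mathcal F$ and $T^*\mathcal T(\Sigma)$. Instead it uses the model $\mathcal{MS}_0(\Sigma,\rho)$, whose tangent space at $(J,\sigma)$ is identified with an explicit finite-dimensional subspace $V_{(J,\sigma)}$ of the infinite-dimensional $T_{(J,\sigma)}T^*\mathcal J(\Sigma)$ (Proposition~\ref{prop:equivalent_def_subspace_V}, Theorem~\ref{thm:identification_tangent&V}). The pointwise (fiberwise) differential of $\mathcal M$ was already computed in the toy model: with $Q^\pm=f^{-1}g^{-1}\dot\sigma_0\pm\dot J$ one has $\dot J_{l,r}=\langle\sigma\mid Q^\pm\rangle J\pm f Q^\pm$, and a direct algebraic computation there shows $(\pi_{l,r}\circ\mathcal M)^*\Omega=\omega_\i\pm\omega_\k$ at the level of the fiberwise forms $\Omega_{J_l},\Omega_{J_r}$. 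This pointwise computation carries over verbatim to sections. What remains, and what your proposal does not address, is why the fiberwise $\Omega_{J_l},\Omega_{J_r}$ restricted to $\dd\mathcal M(V_{(J,\sigma)})$ compute the genuine Weil--Petersson forms on $\mathcal T(\Sigma)\times\mathcal T(\Sigma)$. The paper's answer is the last statement of Proposition~\ref{prop:equivalent_def_subspace_V}: for $(\dot J,\dot\sigma)\in V_{(J,\sigma)}$ the divergences $\divr_{h_l}\dot J_l$ and $\divr_{h_r}\dot J_r$ are \emph{exact}, so by Proposition~\ref{prop:orthogonal_symp} and Remark~\ref{rmk:fakeWP} the image $\dd\mathcal M(V_{(J,\sigma)})$ is $\Omega$-orthogonal to the $\Symp_0$-orbit and hence $(\Omega_{J_l},\Omega_{J_r})$ restricted there equals $4\Omega_{WP}$ on each factor. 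Without this orthogonality step the comparison with $\Omega_{WP}$ is simply not defined at the representative level.

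Your Step~3 is where this gap bites. Invoking Theorem~\ref{thm:cotangent} gives you $\omega_\j$ and $\omega_\k$ (the real and imaginary parts of $\omega_\i^\C$), not $\omega_\i$; in fact in the paper's logic $\omega_\i$ is only shown to be closed \emph{as a consequence} of Theorem~\ref{thm:mappaM} (Corollary~\ref{cor:omega_ik_nondeg}), so the dependence you propose would be circular. The fallback argument---restricting to the $\j$-eigenfoliations and invoking the Fuchsian-locus normalization---does not determine a $2$-form: two $2$-forms agreeing on each leaf of a pair of transverse foliations need not agree globally, and the Fuchsian locus is a proper submanifold. (Minor point: you have the signs in $h_l,h_r$ swapped relative to the paper's convention \eqref{eq:left_right_metric}.) Your Step~2 intuition is correct, incidentally---$Q^\pm\circ\j=\pm Q^\pm$ shows the eigenspaces match---but the paper obtains $\mathcal M^*\mathcal P=\j$ a posteriori from the symplectic identity, rather than proving it first.
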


Combining Theorems \ref{thm:cotangent} and \ref{thm:mappaM} in a particular case, we see that $(1/2)\omega_\k$ equals on the one hand the pull-back by $\mathcal M$ of the symplectic form $\pi_l^*\Omega_{WP}-\pi_r^*\Omega_{WP}$, and on the other hand the pull-back by $\mathcal F$ of minus the real part of $\Omega^\C_{T^*\mathcal{T}(\Sigma)}$ (i.e. the natural real symplectic form of the cotangent bundle). This identity has been proved in \cite[Theorem 1.14]{JMscarinci}, by completely different methods.

There is another parameterization of $\mathcal{MGH}(\Sigma)$ by the product of two copies of the Teichm\"uller space of $\Sigma$, which has been introduced in \cite{krasnov_schlenker_minimal}. It is given by the map
$$\mathcal C:\mathcal{MGH}(\Sigma)\to \mathcal{T}(\Sigma)\times \mathcal{T}(\Sigma)~,$$
which associates to $(M,G)$ the first fundamental forms of the two Cauchy surfaces (one future-convex, one past-convex) of constant intrinsic curvature $-2$. These two Cauchy surfaces of constant curvature are unique (\cite{barbot2011prescribing,bonsepK}), and we rescale their first fundamental forms by a factor so as to consider them as hyperbolic metrics. We show:

\begin{theoremx}\label{thm:mappaC}
Let $\Sigma$ be a closed oriented surface of genus $\geq 2$. Then 
$$\mathcal C^*(\mathcal P_{\mathcal{T}(\Sigma)\times \mathcal{T}(\Sigma)},4\Omega^{\mathbb B}_{\mathcal{T}(\Sigma)\times \mathcal{T}(\Sigma)})=(\k,\omega_\k^{\mathbb{B}})~,$$
where $\mathcal P_{\mathcal{T}(\Sigma)\times \mathcal{T}(\Sigma)}$ denotes the para-complex structure of ${\mathcal{T}(\Sigma)\times \mathcal{T}(\Sigma)}$ and $\Omega^{\mathbb B}_{\mathcal{T}(\Sigma)\times \mathcal{T}(\Sigma)}$ its para-complex symplectic form.
\end{theoremx}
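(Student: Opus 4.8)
The plan is to relate the parameterization $\mathcal{C}$ by constant-curvature ($K$-)surfaces to the cotangent-bundle parameterization $\mathcal{F}$ and to the Mess parameterization $\mathcal{M}$, and then invoke Theorem~\ref{thm:mappaM}. The starting observation is that the two Cauchy surfaces of constant intrinsic curvature $-2$ are linked to the maximal surface by a normal evolution in the MGHC AdS manifold. Concretely, for a convex surface in an AdS $3$-manifold one has the classical duality (via the ambient cross-product structure on $\mathrm{AdS}^3 \subset \mathbb{R}^{2,2}$) sending a surface with first fundamental form $\mathrm{I}$ and shape operator $B$ to its \emph{dual surface}, whose first fundamental form is $\mathrm{I}(B\cdot, B\cdot)$; a $K$-surface of curvature $-2$ is dual to a surface of constant Gauss curvature, and the left and right hyperbolic metrics appearing in $\mathcal{M}$ are recovered (up to the rescaling by $1/2$ that is built into the normalization of $\mathcal{C}$) precisely as $\mathrm{I}((\mathrm{Id}\pm JB)\cdot, (\mathrm{Id}\pm JB)\cdot)$ evaluated on the $K$-surface, by the same computation that underlies Mess's description of the left/right metrics from \emph{any} convex Cauchy surface. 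So the first step is to write down, for the future-convex constant-curvature-$(-2)$ surface, the explicit relation between its embedding data $(\mathrm{I}_+, B_+)$ and the pair $\mathcal{M}(M,G) = (h_l, h_r) \in \mathcal{T}(\Sigma)\times\mathcal{T}(\Sigma)$, and likewise for the past-convex one.

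The second step is to identify the differential of $\mathcal{C}$ with that of $\mathcal{M}$ up to a factor governed by the shape operators. Since $\mathcal{C}(M,G)$ records the two first fundamental forms (rescaled to be hyperbolic) and $\mathcal{M}(M,G)$ records $(h_l, h_r)$, the previous step expresses $\mathcal{C}$ as $(h_l, h_r)\mapsto(\phi(h_l,h_r), \psi(h_l,h_r))$ for explicit maps built from the $K$-surface geometry. The key algebraic point will be that, restricted to each factor of the para-complex splitting, the map is a symplectomorphism for the Weil--Petersson form up to the same scalar on both sides — this is exactly the kind of statement that holds for the ``constant curvature flow'' between the left/right metrics and the $K$-surface metric, and can be extracted from the Wang-equation / harmonic-map description of $K$-surfaces and the fact that the relevant holomorphic quadratic differential (the second fundamental form of the $K$-surface, suitably normalized) is the Hopf differential of the harmonic maps to $h_l$ and $h_r$. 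In particular $\mathcal{C}$ sends the para-complex distributions to the para-complex distributions, so $\mathcal{C}^*\mathcal{P}_{\mathcal{T}(\Sigma)\times\mathcal{T}(\Sigma)}$ is an almost para-complex structure $\mathbf{Q}$ on $\mathcal{MGH}(\Sigma)$ whose $\pm1$-eigendistributions are the pullbacks of the slices.

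The third step is then to recognize $\mathbf{Q}$ as $\k$ and the pulled-back para-complex symplectic form as $\omega_\k^{\mathbb B}$. For this I would use Theorem~\ref{thm:mappaM} together with the defining relations of the para-hyperK\"ahler structure: $\k = \i\j$, and the two para-complex structures $\j,\k$ are distinguished among the family $\{\cos\theta\,\j + \sin\theta\,\k\}$ (the circle action) by which Lagrangian foliation they integrate. The map $\mathcal{M}$ realizes $\j$ as the Mess foliations; the computation above shows $\mathcal{C}$ realizes the ``rotated'' foliation, and matching the rotation angle — using that the $K$-surface of curvature $-2$ is obtained from the maximal surface by the normal flow at the specific time corresponding to the quarter-turn in the circle action, a fact provable by comparing the cotangent-bundle coordinates $(J,q)$ of Theorem~\ref{thm:cotangent} with the embedding data of the $K$-surface via the Gauss--Codazzi equations — pins down $\mathbf{Q} = \k$ exactly. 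The para-complex symplectic form then follows: on each eigendistribution $\mathcal{C}$ pulls back $\tfrac14\cdot 4\Omega^{\mathbb B}$ to $\pm\tfrac12\Omega_{WP}$-type terms which, reassembled, are $\omega_\i \mp \tau\omega_\j = \omega_\k^{\mathbb B}$ by Theorems~\ref{thm:cotangent} and~\ref{thm:mappaM}; the scalar $4$ is exactly what makes the normalizations agree, as in Theorem~\ref{thm:mappaC}'s statement.

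The main obstacle I expect is the second step: making the comparison between the $K$-surface embedding data and the left/right metrics quantitative enough to see that the relevant map is Weil--Petersson symplectic \emph{with the correct universal constant}, rather than merely a diffeomorphism respecting the foliations. This requires carefully tracking the rescaling factors between ``curvature $-2$'' and ``hyperbolic'', the factor-of-two conventions in $\Omega_{\mathcal{T}(\Sigma)\times\mathcal{T}(\Sigma)}$, and the precise normalization of the Hopf differential, and then showing the first variation of the $K$-surface metric along a deformation of $(M,G)$ matches the first variation of $h_l$ (resp.\ $h_r$) under a symplectomorphism. Everything else — the duality computation, the identification $\k=\i\j$, the circle-action angle — is either standard AdS surface theory or a direct consequence of the already-established Theorems~\ref{thm:cotangent} and~\ref{thm:mappaM}.
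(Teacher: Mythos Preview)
Your proposal contains the right idea but buries it under unnecessary machinery. The paper's proof is a two-line argument, and the key observation is already hinted at in your step~3: the identity $\mathcal M = \mathcal C \circ R_{-\pi/2}$ (equivalently $\mathcal C = \mathcal M \circ R_{\pi/2}$). This identity is \emph{not} a deep fact about $K$-surfaces, duality, or harmonic maps --- it is an immediate consequence of comparing the explicit formulas $(h,B)\mapsto h((\1\mp JB)\cdot,(\1\mp JB)\cdot)$ for $\mathcal M$ and $(h,B)\mapsto h((\1\mp B)\cdot,(\1\mp B)\cdot)$ for $\mathcal C$, together with the expression $R_\theta(h,B)=(h,(\cos\theta\,\1-\sin\theta\,J)B)$ for the circle action. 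Once this identity is in hand, Theorem~\ref{thm:mappaM} gives $\mathcal M^*(\mathcal P,4\Omega^{\mathbb B})=(\j,\omega_\j^{\mathbb B})$, and Theorem~\ref{thm:circle} gives $R_{\pi/2}^*\j=\k$ and $R_{\pi/2}^*\omega_\j^{\mathbb B}=\omega_\k^{\mathbb B}$ (from $R_\theta^*\omega_\i=\omega_\i$ and $R_\theta^*\omega_\i^\C=e^{-i\theta}\omega_\i^\C$). Composing yields the statement.

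Your steps~1 and~2 --- computing the embedding data of the $K$-surfaces, relating them to $(h_l,h_r)$ via duality, and then verifying by hand that the induced map is Weil--Petersson symplectic with the correct constant --- are therefore entirely superfluous. The ``main obstacle'' you identify (tracking rescalings and Hopf-differential normalizations to get the constant right) simply does not arise: the constants are already matched in Theorem~\ref{thm:mappaM}, and the circle action is an isometry for $\g$, so nothing further needs to be checked. If you carried out your program it would likely work, but it reproves by laborious direct computation what the paper obtains for free from the factorization $\mathcal C = \mathcal M \circ R_{\pi/2}$.
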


We remark that there are formal analogues of Theorem \ref{thm:mappaM} and Theorem \ref{thm:mappaC} in genus one (see Section \ref{subsec:Mess_homeo} and \ref{sec:constant_curv}), but the corresponding maps $\mathcal M,\mathcal C:\mathcal{MGH}(T^2)\to \mathcal T(T^2)\times  \mathcal T(T^2)$ do not have the same geometric interpretation (namely, the holonomy map or the constant curvature surfaces) as in the higher genus case, which is why we restricted to genus $\geq 2$ when stating these results here.

\subsection{The circle action}

We now move on to studying a \emph{circle action} on $\mathcal{MGH}(\Sigma)$. Using the diffeomorphism $\mathcal F:\mathcal{MGH}(\Sigma)\to T^*\mathcal{T}(\Sigma)$, the circle action on $T^*\mathcal T(\Sigma)$ given by $e^{i\theta}\cdot (J,q)=(J,e^{i\theta}q)$ (where $J$ is an almost-complex structure on $\Sigma$ and $q$ a holomorphic quadratic differential) induces an action of $S^1$ on $\mathcal{MGH}(\Sigma)$. Let us denote by $R_\theta:\mathcal{MGH}(\Sigma)\to \mathcal{MGH}(\Sigma)$ the corresponding self-diffeomorphism.  For genus $\geq 2$, this action of $S^1$ induces an action on $\mathcal{T}(\Sigma)\times \mathcal{T}(\Sigma)$ by means of the map $\mathcal M$. The so obtained $S^1$-action on $\mathcal{T}(\Sigma)\times \mathcal{T}(\Sigma)$ has been studied in \cite{bonsante2013a_cyclic,bonsante2015a_cyclic} under the name of \emph{landslide flow}. 

It will be relevant to introduce the function 
$$\mathcal A:\mathcal{MGH}(\Sigma)\to\R$$ which associates to a MGHC AdS manifold the area of its unique maximal Cauchy surface. It is easy to see that $\mathcal A$ is constant on the orbits of the circle action. We show:

\begin{theoremx}\label{thm:circle}
Let $\Sigma$ be a closed oriented surface of genus $\geq 1$. The circle action on $\mathcal{MGH}(\Sigma)$ is Hamiltonian with respect to $\omega_\i$, and satisfies 
$$R_\theta^*\g=\g\qquad R_\theta^*\omega_{\i}=\omega_{\i}\qquad R_\theta^*\omega_{\i}^\C=e^{-i\theta}\omega_{\i}^\C~.$$
When $\Sigma$ has genus $\geq 2$, the function $\mathcal A$ is a Hamiltonian function.
\end{theoremx}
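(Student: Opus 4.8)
The plan is to work entirely on the model $T^*\mathcal{T}(\Sigma)$ via the diffeomorphism $\mathcal F$, where the circle action is literally $e^{i\theta}\cdot(J,q)=(J,e^{i\theta}q)$, and to transport the computations to $\mathcal{MGH}(\Sigma)$ using Theorem~\ref{thm:cotangent}. First I would record that, since $R_\theta$ fixes the base point $J$ and acts by the linear map $q\mapsto e^{i\theta}q$ on each cotangent fiber, it commutes with the complex structure $\mathcal I_{T^*\mathcal{T}(\Sigma)}$ (whose restriction to the fibers is multiplication by $i$, and whose restriction transverse to them comes from the base) and rescales the complex symplectic form $\Omega^\C_{T^*\mathcal{T}(\Sigma)}$ by the scalar $e^{i\theta}$; this is where one uses that $\Omega^\C$ is fiberwise the canonical pairing, which is $\C$-linear in $q$. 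Pulling back by $\mathcal F$ and invoking $\mathcal F^*(\mathcal I_{T^*\mathcal{T}(\Sigma)},\Omega^\C_{T^*\mathcal{T}(\Sigma)})=(-\i,-\tfrac{i}{2}\overline{\omega}^\C_\i)$ gives $R_\theta^*\i=\i$ and $R_\theta^*\overline{\omega}^\C_\i=e^{i\theta}\overline{\omega}^\C_\i$, i.e.\ $R_\theta^*\omega^\C_\i=e^{-i\theta}\omega^\C_\i$; taking real and imaginary parts yields $R_\theta^*\omega_\j=\cos\theta\,\omega_\j+\sin\theta\,\omega_\k$ and $R_\theta^*\omega_\k=-\sin\theta\,\omega_\j+\cos\theta\,\omega_\k$, and combined with $R_\theta^*\omega_\i=\omega_\i$ (which follows from $R_\theta^*\i=\i$ together with the next step on $\g$) this produces the claimed transformation rules.

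For the invariance of the metric $\g=\omega_\i(\cdot,\i\cdot)^{-1}$-type data, I would argue that $R_\theta$ is an isometry by exhibiting it as a composition within the para-hyperKähler triple: the relations $\omega_\i(v,w)=\g(v,\i w)$, $\omega_\j(v,w)=\g(v,\j w)$, $\omega_\k(v,w)=\g(v,\k w)$ together with $\j=\i\k$-type identities let one reconstruct $\g$ from the pair $(\omega_\j,\omega_\k)$ and the complex structure $\i$; since $R_\theta$ fixes $\i$ and rotates $(\omega_\j,\omega_\k)$ by an orthogonal matrix, the induced action on $\g$ is trivial. Concretely, $\g(\cdot,\cdot)=\omega_\k(\cdot,\j\cdot)=\omega_\k(\cdot,\i\k\cdot)$, and one checks that the $SO(2)$-rotation of $(\j,\k)$ that accompanies the rotation of $(\omega_\j,\omega_\k)$ leaves this bilinear form unchanged. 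This simultaneously gives $R_\theta^*\g=\g$ and closes the loop for $R_\theta^*\omega_\i=\omega_\i$.

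It remains to identify the Hamiltonian. That the action is Hamiltonian for $\omega_\i$ follows from $R_\theta^*\omega_\i=\omega_\i$ plus the fact that the $S^1$-action is generated by a vector field $X$ with $\iota_X\omega_\i$ exact; I would produce the potential directly on $T^*\mathcal{T}(\Sigma)$. The infinitesimal generator of $q\mapsto e^{i\theta}q$ is the vertical vector field $X_{(J,q)}=iq$, so $\iota_X\Omega^\C = d(\text{something})$ on each fiber, and extracting the piece that pairs with $\omega_\i$ one gets that the Hamiltonian is, up to a constant, the fiberwise norm-squared functional $q\mapsto \|q\|^2$ computed with the natural pairing on quadratic differentials. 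The last point is to recognize this functional, transported by $\mathcal F$, as (a multiple of) the area function $\mathcal A$ of the maximal Cauchy surface: here one uses that for a maximal surface in an MGHC AdS manifold the second fundamental form is $\Re(q)$ for the associated holomorphic quadratic differential, the Gauss equation gives the intrinsic curvature as $-1-\|\II\|^2$ in the appropriate normalization, and Gauss--Bonnet integrates this to express $\mathrm{Area}$ in terms of $2\pi|\chi(\Sigma)|$ plus a multiple of $\int_\Sigma \|q\|^2$. I expect the genuine obstacle to be exactly this last identification: pinning down the correct normalizing constants relating the $L^2$-norm of $q$ on $(\Sigma,J)$, the Weil--Petersson pairing, and the area form of the induced metric, so that $\mathcal A$ comes out as \emph{the} Hamiltonian rather than merely a function with the right differential up to scale and additive constant. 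The genus-one case is handled by the same computation on $T^*\mathcal{T}(T^2)\setminus(\text{zero section})$, where $\mathcal F$ is a diffeomorphism onto that open set and all formulas persist, except that one should note the area function is everywhere positive there.
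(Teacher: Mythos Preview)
Your route through Theorem~\ref{thm:cotangent} works for two of the assertions but not for the others. Pulling back along $\mathcal F$ does give $R_\theta^*\i=\i$ and $R_\theta^*\omega_\i^\C=e^{-i\theta}\omega_\i^\C$ cleanly, since the fiberwise rotation on $T^*\mathcal T(\Sigma)$ is holomorphic and scales the complex Liouville form by $e^{i\theta}$. The gap is in the step for $\g$ and $\omega_\i$. You claim that $\g$ can be reconstructed from $(\omega_\j,\omega_\k,\i)$, but this is false: the formula $\g=\omega_\k(\cdot,\j\cdot)$ you write actually computes $\omega_\i$ (because $\k\j=\i$), and in general a triple $(\i,\omega_\j,\omega_\k)$ arising from a para-hyperK\"ahler structure determines $\g$ only up to an involution commuting with $\i,\j,\k$. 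Asserting that $R_\theta$ rotates the pair $(\j,\k)$ is exactly equivalent to $R_\theta^*\g=\g$, so your argument is circular. One could attempt to repair this with a continuity-in-$\theta$ argument ruling out the nontrivial involution, but that is not what you wrote.

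The Hamiltonian step has a more basic problem. You propose to compute $\iota_X\omega_\i$ on $T^*\mathcal T(\Sigma)$, but Theorem~\ref{thm:cotangent} only transports $\omega_\j$ and $\omega_\k$ (as the real and imaginary parts of $\Omega^\C$); the form $\omega_\i$ has no simple description in the cotangent model, so you cannot contract the generator against it there. Moreover the Hamiltonian is \emph{not} the $L^2$-norm $\|q\|^2$: contracting the generator $X=iq$ with $\Omega^\C$ gives $-i\lambda^\C$, which is not closed, so the circle action is not even Hamiltonian for the real or imaginary part of $\Omega^\C$. The actual Hamiltonian the paper obtains is $H(J,\sigma)=\sqrt{1+\|\sigma\|_J^2}$ pointwise (so $\mathcal A=\int_\Sigma(1+\sqrt{1+\|\sigma\|^2})\,\rho$ in higher genus), which depends nonlinearly on $\sigma$ and cannot be seen from the Liouville form alone. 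Your Gauss--Bonnet remark does correctly relate the area to $\int_\Sigma\sqrt{1+\|\sigma\|^2}\,\rho$ up to a constant, but only \emph{after} one already knows that this integral is the Hamiltonian.

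The paper instead works directly in the explicit model $T^*\mathcal J(\R^2)$: it writes down $\g$ and $\omega_\i$ (Equations~\eqref{eq:definition_g_toy} and~\eqref{eq:definition_omegaI_toy}), computes the generator $V_\theta=(0,\sigma(\cdot,J\cdot))$ and the differential $\dd R_\theta$ explicitly, and then verifies $\iota_{V_\theta}\omega_\i=\dd H$, $R_\theta^*\g=\g$, and $R_\theta^*\omega_\i^\C=e^{-i\theta}\omega_\i^\C$ by hand. For genus $\ge 2$ the same identities hold pointwise and are integrated over $\Sigma$.
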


We remark that, in terms of the (para-)complex structures $\i,\j,\k$, the pull-back relations of Theorem \ref{thm:circle} read:
\begin{equation}\label{eq:pullbackijk}
R_\theta^*\i=\i\qquad R_\theta^*\j=\cos(\theta)\j+\sin(\theta)\k \qquad R_\theta^*\k=-\sin(\theta)\j+\cos(\theta)\k~.
\end{equation}

In \cite{bonsante2015a_cyclic}, Bonsante, Mondello and Schlenker showed that the landslide flow is  Hamiltonian with respect to the symplectic form $\pi_{l}^{*}\Omega_{\WP}+ \pi_{r}^{*}\Omega_{\WP}$. As a consequence of Theorem 
\ref{thm:mappaM} and the first part of Theorem \ref{thm:circle}, we thus recover (by independent methods) their results   and include it in a more general context.

The map $\mathcal A:\mathcal{MGH}(\Sigma)\to\R$ that encodes the area of the maximal Cauchy surface is also applied in  the following context. Given a para-K\"ahler structure $(\g,\p)$ on a manifold $M$, a \emph{para-K\"ahler potential} is a smooth function $\rho:M\to\R$ such that $\omega_\p=(\tau/2)\overline\partial_\p\partial_\p\rho$. We then prove:

\begin{theoremx}\label{thm:potentialintro}
Let $\Sigma$ be a closed oriented surface of genus $\geq 1$. Then   the para-K\"ahler structures $(\g,\j)$ and $(\g,\k)$ admit a para-K\"ahler potential, which coincides up to a constant with  a Hamiltonian function for the circle action.
\end{theoremx}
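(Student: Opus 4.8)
The plan is to identify the candidate para-Kähler potential explicitly as (a constant multiple of) the area function $\mathcal A$ and then verify the potential identity $\omega_\p = (\tau/2)\,\overline\partial_\p\partial_\p\rho$ for $\p = \j$ and $\p = \k$ separately, exploiting the already-established parameterizations. First I would treat the case $\p = \k$ via the Krasnov--Schlenker map $\mathcal C$ of Theorem~\ref{thm:mappaC}: under $\mathcal C$ the pair $(\k,\omega_\k^{\mathbb B})$ is pulled back from $(\mathcal P_{\mathcal T\times\mathcal T}, 4\Omega^{\mathbb B}_{\mathcal T\times\mathcal T})$, so it suffices to exhibit a potential on $\mathcal T(\Sigma)\times\mathcal T(\Sigma)$ for the product para-Kähler structure and check that it corresponds to $\mathcal A$ up to a constant and an additive constant. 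Here the key input is that the two constant-curvature-$(-2)$ Cauchy surfaces are minimal Lagrangian (equivalently, harmonic-map) representatives relating the two hyperbolic structures, and the functional whose $\partial\overline\partial$ computes the difference of Weil--Petersson forms is the energy of the associated harmonic map — which in turn is an affine function of the area of the maximal surface sitting in between. I would make this precise using the convexity/pluri-subharmonicity results for the energy functional on Teichmüller space (à la Tromba, Wolf, Toledo) adapted to the para-complex splitting, where $\partial_\j$ and $\overline\partial_\j$ are the projections onto the two factors.

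For $\p = \j$ I would argue analogously through the Mess map $\mathcal M$ of Theorem~\ref{thm:mappaM}, again reducing to producing a potential for $4\Omega^{\mathbb B}_{\mathcal T\times\mathcal T}$ on $\mathcal T(\Sigma)\times\mathcal T(\Sigma)$; the relevant functional is now built from the left and right holonomies, and one shows it again differs from $\mathcal A$ only by a constant. Alternatively — and this is probably cleaner — one can work intrinsically on $\mathcal{MGH}(\Sigma)$: using Theorem~\ref{thm:cotangent} to transport everything to $T^*\mathcal T(\Sigma)$ with its tautological complex symplectic form, one writes $\omega_\j$ and $\omega_\k$ in terms of the real and imaginary parts of $\Omega^{\mathbb C}_{T^*\mathcal T(\Sigma)}$, and then checks directly that $\mathcal A\circ\mathcal F^{-1}$, expressed as a function of $(J,q)$ — essentially $\int_\Sigma$(area density of the maximal surface), which by the Gauss equation is controlled by $\|q\|$ and the hyperbolic area — serves as the potential. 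The identity $\omega_\j=(\tau/2)\overline\partial_\j\partial_\j\mathcal A$ then becomes a computation with the Gauss--Codazzi equations for the maximal surface, differentiating the area with respect to the variation parameters.

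The link with the circle action is immediate once the potential is identified: by Theorem~\ref{thm:circle} the function $\mathcal A$ is (up to a constant) a Hamiltonian for the circle action generated by $R_\theta$, so the final clause of the statement follows formally from the first once we have matched $\rho$ with $\mathcal A$ up to an additive constant. One should, however, be careful that the additive constant is chosen consistently with both the Hamiltonian normalization (fixing, say, its value on the Fuchsian locus, where $\mathcal A = \pi|\chi(\Sigma)|$ by the Gauss--Bonnet theorem applied to the totally geodesic Cauchy surface) and the potential normalization.

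I expect the main obstacle to be the verification that the naive candidate functional on $\mathcal T(\Sigma)\times\mathcal T(\Sigma)$ — whether phrased via energy of harmonic maps or via the area of the interpolating maximal/constant-curvature surface — actually has the correct $\partial_\j\overline\partial_\j$, rather than differing from it by an additional closed-but-not-exact or non-pluriharmonic term. This requires a genuine computation relating the first variation of the area of the maximal (resp. $K$-)surface to the variation of the induced conformal structures on the two boundary components, for which the Krasnov--Schlenker formulas and the second-variation-of-area/Jacobi-operator estimates are the natural tools. The genus-one case must be handled separately since $\mathcal{MGH}(T^2)$ is only four-dimensional and the constant-curvature Cauchy surfaces degenerate, but there the area function can be computed by hand from the explicit flat-torus description, so this should pose no real difficulty.
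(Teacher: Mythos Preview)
Your proposal heads in a much harder direction than necessary, and in fact runs the logic backwards. The paper does \emph{not} first exhibit a potential and then observe it coincides with the Hamiltonian; it uses the Hamiltonian property itself (already established in Theorem~\ref{thm:circle}) together with the transformation law $R_\theta^*\omega_\i^\C=e^{-i\theta}\omega_\i^\C$ to deduce the potential identity by a two-line formal computation. Concretely, differentiating the transformation law at $\theta=0$ gives $\mathcal L_{V_\theta}\omega_\j=\omega_\k$ and $\mathcal L_{V_\theta}\omega_\k=-\omega_\j$. Now apply the elementary identity $2\tau\,\overline\partial_\p\partial_\p f=d(df\circ\p)$ (valid on any para-complex manifold) with $f=-H$ the negative Hamiltonian: since $dH=\iota_{V_\theta}\omega_\i$ and $\omega_\i(\cdot,\j\cdot)=\omega_\k$, $\omega_\i(\cdot,\k\cdot)=-\omega_\j$ (para-quaternion relations), one gets
\[
2\tau\,\overline\partial_\j\partial_\j(-H)=-d(\iota_{V_\theta}\omega_\k)=-\mathcal L_{V_\theta}\omega_\k=\omega_\j,
\]
and analogously for $\k$. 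That is the entire argument, and it works identically in genus one and in higher genus.

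The route you sketch via $\mathcal M$ or $\mathcal C$ to $\mathcal T(\Sigma)\times\mathcal T(\Sigma)$, invoking energy functionals and Tromba--Wolf--Toledo plurisubharmonicity, could in principle be made to work, but the ``main obstacle'' you identify is real: you would need to verify that the mixed para-complex Hessian of the energy (or area) functional on the product reproduces exactly the right combination of Weil--Petersson forms, with no stray harmonic or closed-but-not-exact correction. This is a genuine computation you have not carried out, and it is entirely avoided by the paper's formal argument. The moral is that once Theorem~\ref{thm:circle} is in hand, Theorem~\ref{thm:potentialintro} is essentially free.
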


One could alternatively have used the map $\mathcal C$ to induce a circle action on $\mathcal{T}(\Sigma)\times \mathcal{T}(\Sigma)$.  However, the obtained action is the same as when using $\mathcal M$ (i.e. the landslide flow), as a consequence of the observation that $\mathcal M=\mathcal C\circ R_{-\pi/2}$. By this relation, Theorem \ref{thm:mappaC} immediately follows from Theorem \ref{thm:mappaM} and Theorem \ref{thm:circle}.

In fact we can define a one-parameter family of maps 
$$\mathcal C_\theta:\mathcal{MGH}(\Sigma)\to \mathcal{T}(\Sigma)\times \mathcal{T}(\Sigma)~,$$
simply defined by $\mathcal C_\theta=\mathcal C\circ R_\theta$. 
An immediate consequence of our previous Theorems \ref{thm:mappaC} and \ref{thm:circle} is the following identity:

\begin{equation}\label{eq:scemenza}
    \mathcal C_\theta^*(\mathcal  P_{\mathcal{T}(\Sigma)\times \mathcal{T}(\Sigma)},4\Omega^{\mathbb B}_{\mathcal{T}(\Sigma)\times \mathcal{T}(\Sigma)})=(\cos\theta\k-\sin\theta\j,\omega_\i-\tau(\cos(\theta)\omega_\j+\sin(\theta)\omega_\k))~.
\end{equation}

The maps $\mathcal C_\theta$ have the following
 interpretation purely in terms of harmonic maps and Teichm\"uller theory. From the theory of harmonic maps between hyperbolic surfaces (\cite{sampson1978some_properties,wolf1989the_teichmuller,wolf1991a,wolf1991b,minski1992}), Teichm\"uller space admits a parameterization by the vector space of holomorphic quadratic differentials $H^0(\Sigma,\mathcal K_J^2)$ with respect to a fixed complex structure $J$ on $\Sigma$. The construction goes as follows. To a holomorphic quadratic differential $q$, we associate the hyperbolic metric $h_{(J,q)}$ on $\Sigma$ (unique up to isotopy) such that the (unique) harmonic map $(\Sigma,J)\to(\Sigma,h)$ isotopic to the identity has Hopf differential $q$. We now let $J$ vary over $\mathcal T(\Sigma)$. Then the map $$\mathcal H_\theta:=\mathcal C_\theta\circ\mathcal F^{-1}:T^*\mathcal T(\Sigma)\to \mathcal T(\Sigma)\times \mathcal T(\Sigma)$$ can be interpreted as follows:
$$\mathcal H_\theta(J,q)=(h_{(J,-e^{i\theta}q)},h_{(J,e^{i\theta}q)})~.$$
There is a completely analogous construction in genus one, by replacing hyperbolic surfaces by flat tori. As a consequence of Equation \eqref{eq:scemenza}, we obtain:

\begin{theoremx}\label{thm:Htheta}
Let $\Sigma$ be a closed oriented surface of genus $\geq 1$. Then
$$\Im \mathcal H_\theta^*(2\Omega^{\mathbb B}_{\mathcal{T}(\Sigma)\times \mathcal{T}(\Sigma)})=-\Re (ie^{i\theta}\Omega^\C_{T^*\mathcal T(\Sigma)})~.$$
\end{theoremx}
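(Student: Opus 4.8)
The approach is to reduce the statement to the identities already established in \eqref{eq:scemenza} and in Theorem \ref{thm:cotangent}, by a purely formal manipulation of the real and imaginary parts of the $\C$- and $\mathbb B$-valued symplectic forms involved. Since by definition $\mathcal H_\theta=\mathcal C_\theta\circ\mathcal F^{-1}$, we have $\mathcal H_\theta^*=(\mathcal F^{-1})^*\circ\mathcal C_\theta^*$. First I would invoke \eqref{eq:scemenza}, which after dividing by $2$ reads
$$\mathcal C_\theta^*\big(2\,\Omega^{\mathbb B}_{\mathcal{T}(\Sigma)\times\mathcal{T}(\Sigma)}\big)=\tfrac12\,\omega_\i-\tfrac{\tau}{2}\big(\cos\theta\,\omega_\j+\sin\theta\,\omega_\k\big)~,$$
and extract the $\tau$-component, which by definition is the imaginary part of a $\mathbb B$-valued form. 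Since $(\mathcal F^{-1})^*$ is the pullback by an honest diffeomorphism (in the genus-one case, by the diffeomorphism of $\mathcal{MGH}(\Sigma)$ onto the complement of the zero section of $T^*\mathcal{T}(\Sigma)$, where the asserted identity is then to be read), it commutes with taking real and imaginary parts of forms valued in $\C$ or in $\mathbb B$, whence
$$\Im\,\mathcal H_\theta^*\big(2\,\Omega^{\mathbb B}_{\mathcal{T}(\Sigma)\times\mathcal{T}(\Sigma)}\big)=-\tfrac12\cos\theta\,(\mathcal F^{-1})^*\omega_\j-\tfrac12\sin\theta\,(\mathcal F^{-1})^*\omega_\k~.$$

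The next step is to rewrite $(\mathcal F^{-1})^*\omega_\j$ and $(\mathcal F^{-1})^*\omega_\k$ in terms of $\Omega^\C_{T^*\mathcal{T}(\Sigma)}$ using Theorem \ref{thm:cotangent}. Since $\omega_\i^\C=\omega_\j+i\omega_\k$ with $\omega_\j,\omega_\k$ real, that theorem gives
$$\mathcal F^*\Omega^\C_{T^*\mathcal{T}(\Sigma)}=-\tfrac{i}{2}\,\overline{\omega_\i^\C}=-\tfrac{i}{2}\,\omega_\j-\tfrac12\,\omega_\k~,$$
and applying $(\mathcal F^{-1})^*=(\mathcal F^*)^{-1}$ and separating real and imaginary parts yields $(\mathcal F^{-1})^*\omega_\k=-2\,\Re\,\Omega^\C_{T^*\mathcal{T}(\Sigma)}$ and $(\mathcal F^{-1})^*\omega_\j=-2\,\Im\,\Omega^\C_{T^*\mathcal{T}(\Sigma)}$. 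Substituting into the previous display,
$$\Im\,\mathcal H_\theta^*\big(2\,\Omega^{\mathbb B}_{\mathcal{T}(\Sigma)\times\mathcal{T}(\Sigma)}\big)=\cos\theta\,\Im\,\Omega^\C_{T^*\mathcal{T}(\Sigma)}+\sin\theta\,\Re\,\Omega^\C_{T^*\mathcal{T}(\Sigma)}~.$$

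Finally I would expand the right-hand side of the claimed identity: writing $ie^{i\theta}=-\sin\theta+i\cos\theta$ and $\Omega^\C_{T^*\mathcal{T}(\Sigma)}=\Re\,\Omega^\C_{T^*\mathcal{T}(\Sigma)}+i\,\Im\,\Omega^\C_{T^*\mathcal{T}(\Sigma)}$, one gets
$$\Re\big(ie^{i\theta}\,\Omega^\C_{T^*\mathcal{T}(\Sigma)}\big)=-\sin\theta\,\Re\,\Omega^\C_{T^*\mathcal{T}(\Sigma)}-\cos\theta\,\Im\,\Omega^\C_{T^*\mathcal{T}(\Sigma)}~,$$
so that $-\Re\big(ie^{i\theta}\,\Omega^\C_{T^*\mathcal{T}(\Sigma)}\big)$ coincides with the expression obtained above for $\Im\,\mathcal H_\theta^*(2\,\Omega^{\mathbb B}_{\mathcal{T}(\Sigma)\times\mathcal{T}(\Sigma)})$, which is the assertion.

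There is essentially no analytic or geometric obstacle in this argument: all the substantial content is packaged into Theorem \ref{thm:cotangent} and into \eqref{eq:scemenza} (itself a consequence of Theorems \ref{thm:mappaC} and \ref{thm:circle}), and what remains is book-keeping. The only points that genuinely require care are keeping track of the numerical factor $2$, fixing once and for all the sign conventions for the real and imaginary parts of $\C$- and $\mathbb B$-valued $2$-forms (and checking that the anti-holomorphicity of $\mathcal F$, which produces the conjugate $\overline{\omega_\i^\C}$ in Theorem \ref{thm:cotangent}, is used consistently), and, in genus one, restricting the identity to the complement of the zero section, the actual domain of $\mathcal F^{-1}$.
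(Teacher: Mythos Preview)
Your proof is correct and follows exactly the route the paper takes: the paper states Theorem \ref{thm:Htheta} as an immediate consequence of \eqref{eq:scemenza} (equivalently \eqref{eq:pullbackCtheta}) together with Theorem \ref{thm:cotangent}, and you have simply spelled out the bookkeeping that makes this deduction explicit. There is nothing to add or correct.
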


We remark that the statement above is expressed purely in terms of Teichm\"uller theory, and is independent of Anti-de Sitter geometry.

\subsection{The character variety}

Let us now consider the character variety of the fundamental group $\pi_1(\Sigma)$ in the isometry group of AdS space. We have already observed that the isometry group is isomorphic to $\PSL(2,\R)\times\PSL(2,\R)$; using the model of Hermitian matrices (\cite{dancigerGT,dancigerJT}), it can be described as the Lie group $\PSL(2,\mathbb B)$, where as usual $\mathbb B$ denotes the algebra of para-complex numbers. Using the (para-complex) Killing form, the character variety
$\chi(\pi_1(S),\PSL(2,\mathbb B))$ is endowed
with a para-complex symplectic form $\Omega_{Gol}^\B$, which is defined by adapting the work of Goldman (\cite{Goldman_symplecticnature}) to this context. It is para-complex with respect to the para-complex structure $\mathcal T$ induced by multiplication by $\tau$. It can be checked that, under the isomorphism $\PSL(2,\R)\times\PSL(2,\R)\cong\PSL(2,\mathbb B)$, the para-complex structure $\mathcal T$ corresponds to the para-complex structure $\mathcal P$ for which the integral distributions of the $1$ and $-1$ eigenspaces are the horizontal and vertical slices. 

Hence if we denote by
$$\mathcal{H}ol:\mathcal{MGH}(\Sigma)\to \chi(\pi_1(S),\PSL(2,\mathbb B))$$
the map that associates to a MGHC AdS manifold its holonomy representation, we obtain the following corollary of Theorem \ref{thm:mappaM}:

\begin{corollaryx}\label{cor:holonomy}
Let $\Sigma$ be a closed oriented surface of genus $\geq 2$. Then
$$\mathcal{H}ol^*(\mathcal T,4\Omega_{Gol}^\B)=(\j,\omega_\j^{\mathbb B})~.$$
\end{corollaryx}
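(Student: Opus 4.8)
The plan is to derive Corollary \ref{cor:holonomy} from Theorem \ref{thm:mappaM} by factoring the holonomy map through the Mess parametrization. Under the isomorphism $\Isom(\AdS^3)\cong\PSL(2,\B)\cong\PSL(2,\R)\times\PSL(2,\R)$ (using the Hermitian matrix model, see \cite{dancigerGT,dancigerJT}) and Goldman's description of character varieties, Teichm\"uller space is the connected component of maximal Euler class inside $\chi(\pi_1(S),\PSL(2,\R))$; hence $\mathcal{T}(\Sigma)\times\mathcal{T}(\Sigma)$ is identified with a connected component of $\chi(\pi_1(S),\PSL(2,\B))$, and I let $\iota$ denote this identification. \textbf{Step 1.} The first point is that $\mathcal{H}ol=\iota\circ\mathcal M$. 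This is exactly the content of Mess's parametrization \cite{mess2007lorentz}: by construction $\mathcal M$ records the left and right $\PSL(2,\R)$-factors of the holonomy of a MGHC AdS manifold, Mess proved these factors are Fuchsian, and repackaging them into a single $\PSL(2,\B)$-representation returns $\mathcal{H}ol$ up to conjugacy.

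\textbf{Step 2.} Next I would check that $\iota$ intertwines $(\mathcal T,\Omega_{Gol}^\B)$ on the character variety with $(\mathcal P_{\mathcal{T}(\Sigma)\times \mathcal{T}(\Sigma)},\Omega^\B_{\mathcal{T}(\Sigma)\times \mathcal{T}(\Sigma)})$. For the para-complex structures this is the observation already recalled before the statement: writing $\mathfrak{sl}(2,\B)=\mathfrak{sl}(2,\R)\otimes_\R\B$, multiplication by $\tau$ exchanges the two summands $\mathfrak{sl}(2,\R)e_\pm$ with signs $\pm1$, where $e_\pm=(1\pm\tau)/2$ are the idempotents of $\B$; hence the induced endomorphism $\mathcal T$ of the tangent bundle of the character variety has $(\pm1)$-eigendistributions equal to the horizontal and vertical slices, which is how $\mathcal P_{\mathcal{T}(\Sigma)\times \mathcal{T}(\Sigma)}$ was defined. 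For the symplectic forms, the ring splitting $\B=\R e_+\oplus\R e_-$ makes the para-complex Killing form split as $\kappa_\B=\kappa_+e_++\kappa_-e_-$, with $\kappa_\pm$ the Killing form of each $\mathfrak{sl}(2,\R)$-factor. Goldman's cup-product construction \cite{Goldman_symplecticnature} --- the cup product $H^1(\pi_1(S);\Ad\rho)\times H^1(\pi_1(S);\Ad\rho)\to H^2(\pi_1(S);\B)\cong\B$ paired through $\kappa_\B$ --- goes through verbatim over the semisimple coefficient ring $\B$, and the splitting gives $\Omega_{Gol}^\B=\omega^+_{Gol}e_++\omega^-_{Gol}e_-$, with $\omega^\pm_{Gol}$ the real Goldman form of the corresponding $\PSL(2,\R)$-factor. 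Restricting to the Teichm\"uller component and invoking Goldman's theorem identifying this form, in the normalization we adopt, with the Weil--Petersson form $\Omega_{WP}$, and rewriting $e_\pm$ in the basis $\{1,\tau\}$, one gets $\iota^*\Omega_{Gol}^\B=\tfrac12(\pi_l^*\Omega_{WP}+\pi_r^*\Omega_{WP})+\tfrac{\tau}{2}(\pi_l^*\Omega_{WP}-\pi_r^*\Omega_{WP})=\Omega^\B_{\mathcal{T}(\Sigma)\times \mathcal{T}(\Sigma)}$.

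\textbf{Step 3 (conclusion).} Combining Steps 1 and 2 with Theorem \ref{thm:mappaM},
$$\mathcal{H}ol^*(\mathcal T,4\Omega_{Gol}^\B)=\mathcal M^*\iota^*(\mathcal T,4\Omega_{Gol}^\B)=\mathcal M^*(\mathcal P_{\mathcal{T}(\Sigma)\times \mathcal{T}(\Sigma)},4\Omega^\B_{\mathcal{T}(\Sigma)\times \mathcal{T}(\Sigma)})=(\j,\omega_\j^\B)~.$$

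I expect the genuine work to be concentrated in the constant bookkeeping of Step 2: pinning down the precise scalar relating the Goldman form attached to the para-complex Killing form of $\PSL(2,\B)$, restricted to a Teichm\"uller factor, to $\Omega_{WP}$, and verifying that this scalar is consistent with the factor $4$ appearing in Theorem \ref{thm:mappaM}, so that the normalization $4\Omega_{Gol}^\B$ in the statement is exactly the right one. A secondary point deserving an explicit remark is the legitimacy of Goldman's construction with $\B$-module coefficients and the non-degeneracy and closedness of the resulting $\B$-bilinear $2$-form; since $\B\cong\R\oplus\R$ as a ring, this reduces to two independent copies of the classical real statement, but it should be spelled out rather than taken for granted.
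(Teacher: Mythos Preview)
Your proposal is correct and follows essentially the same route as the paper: factor $\mathcal{H}ol$ as the Mess map followed by the identification $\iota$ with the character variety, split $\Omega_{Gol}^\B$ along the idempotents $e_\pm$ into two copies of the real Goldman form (this is the paper's Proposition \ref{prop:real_im}), invoke Goldman's identity $\mathrm{hol}^*\Omega_{Gol}^\R=\Omega_{WP}$, and then apply Theorem \ref{thm:mappaM}. The only minor difference is that for the para-complex structure the paper deduces $\mathcal{H}ol^*\mathcal T=\j$ from the fact that both $\Omega_{Gol}^\B$ and $\omega_\j^\B$ are para-complex with respect to $\mathcal T$ and $\j$ respectively (the ``usual argument''), whereas you argue directly that $\iota^*\mathcal T=\mathcal P$; and your caveat about normalization is well placed, since the paper defines $\Omega_{Gol}^\B$ via the trace form $\tr(XY)$ rather than the Killing form and then simply quotes $\mathrm{hol}^*\Omega_{Gol}^\R=\Omega_{WP}$ without further comment.
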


We conclude the overview of our results by a concrete description of the para-complex symplectic structure $\omega_\j^{\mathbb B}$. In \cite{Tambu_FN}, the third author introduced $\B$-valued Fenchel-Nielsen coordinates. Roughly speaking, these are defined as follows. Let $\rho=(\rho_+,\rho_-):\pi_1(\Sigma)\to \PSL(2,\R)\times\PSL(2,\R)$ be the holonomy representation of a MGHC AdS manifold. Since both $\rho_-$ and $\rho_+$ are Fuchsian representations, $\rho_\pm(\alpha)$ are loxodromic elements for any non-trivial $\alpha\in \pi_1(\Sigma)$. As a consequence, we can associate to $\alpha$ a \emph{principal axis} $\tilde\alpha$ in AdS space, which is the spacelike geodesic with endpoints in $\R\mathbb P^1\times \R\mathbb P^1$ given by the pair of attracting and the pair of repelling fixed points of $\rho_\pm(\alpha)$. Then the Fenchel-Nielsen coordinates of $\rho$ are $(\ell^{\B,j}_{\rho},\tw^{\B,j}_{\rho})$ (for $\gamma_j$ a pants decomposition of $\Sigma$), where $\ell^{\B,j}_{\rho}$ are para-complex numbers whose real part corresponds to the translation length and imaginary part to the bending angle of $\rho(\gamma_j)$ on the principal axis $\tilde\gamma_j$; a similar interpretation can be given for the (para-complex) twist  coordinates $\tw^{\B,j}_{\rho}$. 

These coordinates are an analogue of the complex Fenchel-Nielsen coordinates on the space of quasi-Fuchsian manifolds, which are Darboux coordinates (\cite{wolpert1983on_the_symplectic},\cite{Platis_complexFN},\cite{Parker_Platis}). Here we show that an analogous result holds for $\omega_\j^{\mathbb B}$, which we recall corresponds (up to a multiplicative constant) both to the para-complex sympletic form on $\mathcal T(\Sigma)\times \mathcal T(\Sigma)$ and to the Goldman form $\Omega_{Gol}^{\B}$. 

\begin{theoremx}\label{thm:fenchel}
The $\mathbb B$-valued Fenchel-Nielsen coordinates are para-holomorphic for $\j$, and are Darboux coordinates with respect to the para-complex symplectic form $\omega_\j^{\mathbb B}$.
\end{theoremx}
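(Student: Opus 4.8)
The plan is to deduce Theorem~\ref{thm:fenchel} from Theorem~\ref{thm:mappaM} (equivalently Corollary~\ref{cor:holonomy}), which identifies $(\j,\omega_\j^{\B})$ with $(\mathcal P_{\mathcal{T}(\Sigma)\times\mathcal{T}(\Sigma)},4\Omega^{\B}_{\mathcal{T}(\Sigma)\times\mathcal{T}(\Sigma)})$ via the Mess map $\mathcal M$, so that both assertions can be checked on $\mathcal{T}(\Sigma)\times\mathcal{T}(\Sigma)$ after transporting the $\B$-valued coordinates by $\mathcal M$. Here $\Sigma$ has genus $\geq 2$, so that a pants decomposition $\{\gamma_j\}$ exists. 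The key book-keeping device is the pair of orthogonal idempotents $e_\pm=\tfrac12(1\pm\tau)$ of $\B$, giving the ring splitting $\B=\R e_+\oplus\R e_-$ and the group isomorphism $\PSL(2,\B)\cong\PSL(2,\R)\times\PSL(2,\R)$; they satisfy $e_\pm^2=e_\pm$, $e_+e_-=0$ and $\tau e_\pm=\pm e_\pm$. First I would record from \cite{Tambu_FN} the structural fact that, in terms of this splitting, the $\B$-valued Fenchel--Nielsen coordinates are simply the classical ones of the two Fuchsian factors assembled through the idempotents, namely
\[
\ell^{\B,j}_\rho=e_+\,\ell^j_{\rho_+}+e_-\,\ell^j_{\rho_-}~,\qquad
\tw^{\B,j}_\rho=e_+\,\tw^j_{\rho_+}+e_-\,\tw^j_{\rho_-}~,
\]
where $\ell^j_{\rho_\pm},\tw^j_{\rho_\pm}$ are the usual length and twist coordinates of $\rho_\pm$ along $\gamma_j$. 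For the length part this is immediate from the trace of a loxodromic element of $\PSL(2,\B)$, since $\mathrm{tr}\,\rho(\gamma_j)=2\cosh(\ell^j_{\rho_+}/2)e_++2\cosh(\ell^j_{\rho_-}/2)e_-=2\cosh(\ell^{\B,j}_\rho/2)$ using $\cosh(ae_++be_-)=\cosh(a)e_++\cosh(b)e_-$; the real part of $\ell^{\B,j}_\rho$ is then the translation length of $\rho(\gamma_j)$ on its principal axis and the $\tau$-part the bending angle, exactly as described in the paper.

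Granting this, para-holomorphicity is a one-line check. Since $\mathcal P:=\mathcal P_{\mathcal{T}(\Sigma)\times\mathcal{T}(\Sigma)}$ acts by $+1$ on directions tangent to $\mathcal{T}(\Sigma)\times\{*\}$ and by $-1$ on those tangent to $\{*\}\times\mathcal{T}(\Sigma)$, and $\ell^j_{\rho_+},\tw^j_{\rho_+}$ depend only on the left factor while $\ell^j_{\rho_-},\tw^j_{\rho_-}$ depend only on the right, one gets $d\ell^{\B,j}\circ\mathcal P=e_+\,d\ell^j_{\rho_+}-e_-\,d\ell^j_{\rho_-}=\tau\,d\ell^{\B,j}$, and similarly for $\tw^{\B,j}$. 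This is precisely the para-Cauchy--Riemann equation, so the coordinates are para-holomorphic for $\j$.

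For the Darboux statement I would rewrite the target form through the idempotents as $\Omega^{\B}_{\mathcal{T}(\Sigma)\times\mathcal{T}(\Sigma)}=e_+\,\pi_l^*\Omega_{\WP}+e_-\,\pi_r^*\Omega_{\WP}$, insert Wolpert's formula $\Omega_{\WP}=c\sum_j d\ell^j\wedge d\tw^j$ on each factor (with $c$ a universal constant fixed by the normalization conventions), and then expand $\sum_j d\ell^{\B,j}\wedge d\tw^{\B,j}$: because $e_+^2=e_+$, $e_-^2=e_-$ and $e_+e_-=0$, the mixed terms vanish and one is left with $\sum_j\bigl(e_+\,d\ell^j_{\rho_+}\wedge d\tw^j_{\rho_+}+e_-\,d\ell^j_{\rho_-}\wedge d\tw^j_{\rho_-}\bigr)$, which matches $\tfrac14\,\mathcal M^*$-pullback of the displayed form, i.e. $\tfrac1{4c}\,\omega_\j^{\B}$. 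After fixing the normalization this yields that $(\ell^{\B,j}_\rho,\tw^{\B,j}_\rho)$ are Darboux coordinates for $\omega_\j^{\B}$.

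The main obstacle is the structural identity of the first step, specifically for the twist coordinates: one must show that the para-complex Fenchel--Nielsen twist along the principal axis $\tilde\gamma_j$ --- a para-complex earthquake with parameter $t^{\B}=t_+e_++t_-e_-$ --- projects under $\PSL(2,\B)\cong\PSL(2,\R)^2$ to the two ordinary hyperbolic twists of $\rho_+$ and $\rho_-$ by $t_+$ and $t_-$. I expect this to follow from the definitions in \cite{Tambu_FN} (cutting and regluing decomposes component-wise), but the cleanest route is probably to avoid analyzing the para-complex earthquake directly: by the para-complex analogue of Goldman's theorem adapted to the para-complex Killing form, the twist flow along $\tilde\gamma_j$ is the $\B$-Hamiltonian flow of $\ell^{\B,j}$ with respect to $\Omega^{\B}_{Gol}$, so, using Corollary~\ref{cor:holonomy} and the idempotent splitting, the whole statement reduces to two independent copies of the classical Wolpert theorem on the $\PSL(2,\R)$-character variety. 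Some care with factors of $2$ (translation length versus its half, the Goldman versus the Weil--Petersson normalization) will be needed to pin down the constant in front of $\omega_\j^{\B}$ exactly.
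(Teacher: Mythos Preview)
Your proposal is correct and follows essentially the same approach as the paper: both arguments split the $\B$-valued coordinates through the isomorphism $\PSL(2,\B)\cong\PSL(2,\R)\times\PSL(2,\R)$ and reduce everything to Wolpert's classical theorem that real Fenchel--Nielsen coordinates are Darboux for $\Omega_{\WP}$. The only cosmetic difference is that you work throughout in the idempotent basis $\{e_+,e_-\}$ while the paper computes componentwise in the $\{1,\tau\}$ basis (real and imaginary parts of $\Omega_{Gol}^{\B}$ via Proposition~\ref{prop:real_im}); your packaging is arguably cleaner, and the ``main obstacle'' you flag --- that the $\B$-twist decomposes as the pair of real twists --- is exactly the content the paper imports from \cite{Tambu_FN} in the displayed relations preceding the theorem, so no additional argument is needed there.
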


In other words, we can express the symplectic form $\omega_\j^{\mathbb B}$, which coincides up to a multiplicative constant with the para-complex Goldman form $\Omega_{Gol}^{\B}$, as 
$$\omega_\j^{\mathbb B}=\frac{1}{4}\sum_{j=1}^{n} d\ell^{\B,j}_{\rho}\wedge d\tw^{\B,j}_{\rho}$$
where $\ell^{\B,j}_{\rho}$ and $\tw^{\B,j}_{\rho}$ are the $\B$-valued length and twist parameters on the curve $\gamma_j$ in a pants decomposition of $\Sigma$ (where $n=(3/2)|\chi(\Sigma)|$ is the number of such curves). 

Finally, we give a formula for the value of the symplectic form $\omega_\j^{\mathbb B}$ along two twist deformations, generalizing Wolpert's cosine formula. For this purpose, given $\alpha,\beta\in \pi_{1}(\Sigma)$ two intersecting closed curves, the principal axes of $\rho(\alpha)$ and $\rho(\beta)$ admit a common orthogonal geodesic of timelike type. Then we define the $\B$-\emph{valued} angle as the para-complex number whose imaginary part is the signed timelike distance between the two axes, and the real part is the angle between one principal axis and the parallel transport of the other, along the common orthogonal geodesic.

\begin{theoremx}\label{thm:goldmancosine} Let $\rho=(\rho_+,\rho_-):\pi_1(\Sigma)\to\PSL(2,\R)\times\PSL(2,\R)$ be the holonomy of a MGHC AdS manifold, and let $\alpha, \beta$ be non-trivial simple closed curves. Then 
\[
 \omega_{\j}^\B\left( \frac{\partial}{\partial \tw^{\B, \alpha}_{\rho}}, \frac{\partial}{\partial \tw^{\B, \beta}_{\rho}}\right)=\frac{1}{4}\sum_{p \in \alpha \cap \beta} \cos(d^{\B}(\tilde{\alpha}_{\rho}, \tilde{\beta}_{\rho})) \ ,
\]
where $\tilde\alpha_\rho$ and $\tilde\beta_\rho$ are the principal axes of $\rho(\alpha)$ and $\rho(\beta)$ on AdS space, and $d^{\B}(\tilde{\alpha}_{\rho}, \tilde{\beta}_{\rho})$ is their $\B$-valued angle.
\end{theoremx}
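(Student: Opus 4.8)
The plan is to reduce the identity to the classical Wolpert cosine formula applied separately to the two $\PSL(2,\R)$-factors, using the isomorphism $\PSL(2,\B)\cong\PSL(2,\R)\times\PSL(2,\R)$ together with Theorem~\ref{thm:fenchel}. Write $e_\pm=\frac{1\pm\tau}{2}$ for the two idempotents of $\B$, so that $\B=e_+\R\oplus e_-\R$ and every $\B$-linear object decomposes into its $e_+$- and $e_-$-components; under this splitting $\PSL(2,\B)$ becomes $\PSL(2,\R)\times\PSL(2,\R)$, and $\j$ (equivalently the structure $\mathcal T$ on the character variety, by Corollary~\ref{cor:holonomy}) acts as $+1$ on the $e_+$-part and $-1$ on the $e_-$-part. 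The first point to record is that the $\B$-valued Fenchel--Nielsen coordinates split as $\ell^{\B,j}_\rho=\ell^{l}_j\,e_+ + \ell^{r}_j\,e_-$ and $\tw^{\B,j}_\rho=t^{l}_j\,e_+ + t^{r}_j\,e_-$, where $(\ell^{l}_j,t^{l}_j)$ and $(\ell^{r}_j,t^{r}_j)$ are the ordinary real Fenchel--Nielsen coordinates on the left and right copies of $\mathcal T(\Sigma)$ in the Mess parametrization $\mathcal M$. This is built into the construction of \cite{Tambu_FN}: the real part of $\ell^\B_\gamma$ (the translation length along the principal axis) is the average of the two hyperbolic translation lengths, its $\tau$-part (the bending angle) is half their difference, and a $\B$-valued Fenchel--Nielsen twist along $\gamma$ amounts to independently twisting the two Fuchsian holonomies along $\gamma$.

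Granting this, Theorem~\ref{thm:fenchel} gives $\omega_\j^\B=\frac14\sum_j d\ell^{\B,j}_\rho\wedge d\tw^{\B,j}_\rho$, so that for any simple closed curve $\gamma$ (including it in a pants decomposition, and using that the relation is coordinate-free) contracting $\omega_\j^\B$ with $\partial/\partial\tw^{\B,\gamma}_\rho$ yields $-\frac14\,d\ell^{\B,\gamma}_\rho$; in other words $\partial/\partial\tw^{\B,\gamma}_\rho$ is, up to the factor $-\frac14$, the Hamiltonian vector field of $\ell^{\B,\gamma}_\rho$ with respect to $\omega_\j^\B$ (the para-complex analogue of Wolpert's twist--length duality). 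Hence
\[
\omega_{\j}^\B\!\left(\frac{\partial}{\partial \tw^{\B, \alpha}_{\rho}},\frac{\partial}{\partial \tw^{\B, \beta}_{\rho}}\right)=-\frac14\,\frac{\partial\ell^{\B,\alpha}_\rho}{\partial\tw^{\B,\beta}_\rho}=-\frac14\left(\frac{\partial\ell^{l}_\alpha}{\partial t^{l}_\beta}\,e_+ + \frac{\partial\ell^{r}_\alpha}{\partial t^{r}_\beta}\,e_-\right),
\]
the last equality being the $e_\pm$-decomposition above (together with the fact that $\ell^{l}_\alpha$, $\ell^{r}_\alpha$ depend only on the left, resp.\ right, factor). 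Each real derivative is Wolpert's first-variation formula: the derivative of the hyperbolic length of $\alpha$ under a Fenchel--Nielsen twist along $\beta$ equals $\sum_{p\in\alpha\cap\beta}\cos\theta_p$, with $\theta_p$ the intersection angle in $\Hyp^2$ of the axes of the two corresponding elements. The index set of the sum is the same in both factors and on $\Sigma$: $\rho_+$ and $\rho_-$ are both Fuchsian, so on the boundary circle the cyclic order of their fixed points agrees with that of $\pi_1(\Sigma)$ acting on $\partial\widetilde\Sigma$, and the axes of $\rho_\pm(\alpha)$ and $\rho_\pm(\beta)$ cross exactly along the $\pi_1(\Sigma)$-orbit of lifts of points of $\alpha\cap\beta$.

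It remains to identify, at each $p\in\alpha\cap\beta$, the pair of hyperbolic intersection angles $(\theta^{l}_p,\theta^{r}_p)$ with the $\B$-valued angle, i.e.\ to prove $d^\B(\tilde\alpha_\rho,\tilde\beta_\rho)=\theta^{l}_p\,e_+ + \theta^{r}_p\,e_-$; equivalently, the real part $\frac12(\theta^{l}_p+\theta^{r}_p)$ is the angle between one principal axis and the parallel transport of the other along their common timelike perpendicular, and the $\tau$-part $\frac12(\theta^{l}_p-\theta^{r}_p)$ is the signed timelike distance between the two axes. I would prove this by an explicit model computation: a spacelike geodesic of $\AdS=\PSL(2,\B)$ is determined by its pair of endpoints in $\R\Proj^1\times\R\Proj^1$, hence corresponds to a pair of geodesics of $\Hyp^2$ (one per factor), the principal axis of a loxodromic $(g^+,g^-)$ being the pair $(\mathrm{axis}(g^+),\mathrm{axis}(g^-))$; placing the common timelike perpendicular in standard position and writing all objects in $\SL(2,\B)$, one checks that the AdS ``timelike distance plus angle'' invariant of two such geodesics is precisely the $e_\pm$-superposition of the two hyperbolic angles. (As a consistency check, in the Fuchsian locus $\rho_+=\rho_-$ this gives $d^\B=\theta\in\R$, i.e.\ zero timelike distance, in agreement with Theorem~\ref{thm:parahyper_structure}.) Since $\cos(a\,e_+ + b\,e_-)=\cos(a)\,e_+ + \cos(b)\,e_-$ for $a,b\in\R$, the displayed formula becomes $-\frac14\sum_{p}\bigl(\cos\theta^{l}_p\,e_+ + \cos\theta^{r}_p\,e_-\bigr)=-\frac14\sum_{p\in\alpha\cap\beta}\cos\bigl(d^\B(\tilde\alpha_\rho,\tilde\beta_\rho)\bigr)$, which is the claimed identity up to an overall sign.

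The main obstacle is exactly this last geometric dictionary, together with the bookkeeping of signs: one must reconcile the conventions for the Hamiltonian vector field, for the sign of the twist parameter $\tw^{\B,\gamma}_\rho$, and for the orientation of the common timelike perpendicular (which fixes the sign of the $\tau$-part of $d^\B$), in order to turn the $-\frac14$ into the $+\frac14$ of the statement; everything else is an assembly of Theorem~\ref{thm:fenchel}, the product structure of the $\B$-valued Fenchel--Nielsen coordinates, and the classical Wolpert formula. Alternatively, one could avoid quoting Wolpert's formula and redo his original argument directly over $\B$: deform $\rho$ by the twist flow along $\beta$, differentiate the $\B$-valued length (equivalently the $\B$-trace) of $\rho(\alpha)$, and add up the contributions of the intersection points --- the computation is formally identical to Wolpert's with $\R$ replaced by $\B$, and the cosine of the $\B$-valued angle arises because the elementary hyperbolic trigonometry underlying Wolpert's identity carries over verbatim to the para-complex setting.
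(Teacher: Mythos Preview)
Your approach is essentially the paper's: reduce to Wolpert on each $\PSL(2,\R)$-factor via the idempotent splitting $\B=e_+\R\oplus e_-\R$, then identify $d^\B(\tilde\alpha,\tilde\beta)=\varphi_+e_++\varphi_-e_-$ (this geometric dictionary is exactly the content of the paper's Lemma~\ref{lem:tutto insieme} and formula~\eqref{eq:formulone coseni}). The only difference is the path to Wolpert: you go through Theorem~\ref{thm:fenchel} to obtain the para-complex twist--length duality $\iota_{\partial/\partial\tw^{\B,\alpha}}\omega_\j^\B=-\tfrac14\,d\ell^{\B,\alpha}$, and then invoke Wolpert's first-variation formula $\partial\ell_\alpha/\partial t_\beta=\sum_p\cos\theta_p$; the paper instead bypasses Theorem~\ref{thm:fenchel} and plugs the $e_\pm$-decomposition $\Omega_{Gol}^\B=\tfrac12(\pi_l^*\Omega_{WP}+\pi_r^*\Omega_{WP})+\tfrac\tau2(\pi_l^*\Omega_{WP}-\pi_r^*\Omega_{WP})$ (Proposition~\ref{prop:real_im}) directly into Wolpert's cosine formula $\Omega_{WP}(\partial_{t_\alpha},\partial_{t_\beta})=\sum_p\cos\varphi_p$. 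Since Theorem~\ref{thm:fenchel} is itself proved from Proposition~\ref{prop:real_im}, your route is a harmless detour rather than a different argument, and the sign discrepancy you flag is indeed only a matter of conventions (Wolpert's duality versus first-variation come with opposite signs in most normalizations).
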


Here the cosine of a para-complex number is formally defined by the power series expansion.

\subsection{Outline of techniques and proofs}
We now provide an overview of the techniques that we apply in the proofs.

\subsubsection*{The ``toy model'': genus one}
After reviewing the necessary preliminaries on the deformation space of Anti-de Sitter three-manifolds in Section \ref{sec:preliminaries}, we complete in Section \ref{sec:toy model} the proof of our results from Theorem \ref{thm:parahyper_structure} to Theorem \ref{thm:Htheta} (or their formal analogues, for Theorems \ref{thm:cotangent} and \ref{thm:mappaM}) in genus one. The reason  is that the methods used to prove these results in genus one also provide the fundamental step on which the proofs in higher genus rely. Let us give a quick outline.

Denote by $\mathcal J(\R^2)$ the space of linear almost-complex structures on $\R^2$ compatible with the standard orientation, namely those endomorphisms $J$ of $\R^2$ such that $J^2=-\1$ and that $\{v,Jv\}$ is a positive basis. This space is naturally identified to $\Hyp^2$, in such a way that the natural transitive $\SL(2,\R)$  action by conjugation on $\mathcal J(\R^2)$ corresponds to the $\SL(2,\R)$ action on the upper half-space model. We construct an explicit $\SL(2,\R)$-invariant para-hyperK\"ahler structure on the cotangent bundle $T^*\mathcal J(\R^2)$, which is in turn identified to the space of pairs $(J,\sigma)$ where $\sigma$ is a symmetric bilinear form satisfying $\sigma(J\cdot,J\cdot)=-\sigma$, and give geometric interpretations to each object of the para-hyperK\"ahler structure. For instance, the complex symplectic form $\omega_\i^\C$ coincides up to conjugation and a multiplicative factor  with the complex symplectic form of $T^*\mathcal J(\R^2)$, and the almost-complex structure $\i$ is compatible with its complex structure. Also, the restriction to the zero section is identified, up to a factor, with the area form of $\Hyp^2$. All the theorems until Theorem \ref{thm:Htheta} have an analogue that we prove in this context. 

The construction of the para-hyperK\"ahler structure on $\mathcal{MGH}(T^2)$ (Theorem \ref{thm:parahyper_structure}), as well as all the other statements \emph{in genus one}, follow immediately by identifying (using the first and second fundamental form of the unique maximal surface)  the deformation space $\mathcal{MGH}(T^2)$ with the complement of the zero section in $T^*\mathcal J(\R^2)$, in such a way that the mapping class group action corresponds to the action of $\SL(2,\Z)<\SL(2,\R)$ (Lemma \ref{lemma torus identifications}.)

\subsubsection*{Setup in higher genus}
Let us now move on from genus one to genus $\geq 2$.  The proof of Theorem \ref{thm:parahyper_structure}, and of the geometric interpretations from Theorem \ref{thm:cotangent} to Theorem \ref{thm:Htheta}, rely on the following construction, which is developed in Section \ref{sec:para_hyperkahler_str_on_MS}. Fix an area form $\rho$ on $\Sigma$. Then we consider the space of all pairs $(J,\sigma)$ where (similarly to $T^*\mathcal J(\R^2)$ above) $J$ is an almost-complex structure on $\Sigma$ and $\sigma$ a smooth symmetric bilinear form satisfying $\sigma(J\cdot,J\cdot)=-\sigma$. This infinite-dimensional space, that is denoted by $T^*\mathcal J(\Sigma)$, can be endowed formally with three symplectic structures $\omega_\i$, $\omega_\j$, $\omega_\k$. Indeed the tangent space $T_p\Sigma$ at every point has a para-hyperK\"ahler structure induced by choosing an area-preserving linear isomorphism between $T_p\Sigma$ and $\R^2$, which induces an identification between $T^*\mathcal J(\R^2)$ and $T^*\mathcal J(T_p\Sigma)$. Since the para-hyperK\"ahler structure on $T^*\mathcal J(\R^2)$ is $\SL(2,\R)$-invariant, the induced structures on $T^*\mathcal J(T_p\Sigma)$ do not depend on the chosen area-preserving linear isomorphism. Then one can formally integrate each symplectic form over $\Sigma$, evaluated on first-order deformations $(\dot J,\dot\sigma)$. 

To make this construction more formal, $T^*\mathcal J(\Sigma)$ is identified with the space of smooth sections of the bundle 
\[
P(T^{*}\mathcal{J}(\R^{2}))  = \faktor{P \times T^{*}\mathcal{J}(\R^{2})}{\SL(2,\R)} ,
\]
where $P$ is the $\SL(2,\R)$-principal bundle over $\Sigma$ whose fiber over $p\in\Sigma$ is the space of linear isomorphisms between $\R^2$ and $T_p\Sigma$ that pull-back the area form $\rho$ on $T_p\Sigma$ to the standard area form of $\R^2$. Hence one can introduce the formal symplectic forms 
\begin{align}
    (\omega_\mathbf{X})_{(J,\sigma)} ((\dot{J}, \dot{\sigma}), (\dot{J}', \dot{\sigma}')) \defin \int_\Sigma \hat{\omega}_\mathbf{X}((\dot{J}, \dot{\sigma}), (\dot{J}', \dot{\sigma}')) \, \rho ,
\end{align}
for $\mathbf{X} = \mathbf{I}, \mathbf{J}, \mathbf{K}$, where $\hat\omega_{\mathbf X}$ is the symplectic form induced on the vertical space at every point, and analogously for the formal pseudo-Riemannian metric 
\begin{align}
    \g_{(J,\sigma)} ((\dot{J}, \dot{\sigma}), (\dot{J}', \dot{\sigma}')) \defin \int_\Sigma \hat{\g}((\dot{J}, \dot{\sigma}), (\dot{J}', \dot{\sigma}')) \, \rho \ ,
\end{align}
on the infinite-dimensional space of sections. Similarly, one can define the endomorphisms $\i$, $\j$ and $\k$ by applying pointwise those that have been defined on $T^*\mathcal J(\R^2)$ under the identification as above.
(See the discussion on symplectic reduction below for more details.)

Now, recalling that every MGHC AdS manifold contains a unique maximal Cauchy surface, the essential point is to determine the space of solutions to the Gauss-Codazzi equations for maximal surfaces in AdS space, as a subspace of sections $(J,\sigma)$. Given an almost-complex structure $J$, one can build a Riemannian metric $g_J=\rho(\cdot,J\cdot)$. However, in general this metric cannot be realised as the first fundamental form of a maximal surface in a MGHC AdS manifold, because the area form of $g_J$ coincides with $\rho$, hence all metrics $g_J$ have the same area, which is not the case for maximal surfaces. Instead, inspired by a similar ``change of variables'' used in the construction of the hyperK\"ahler structure on a neighborhood of the Fuchsian locus in the space of quasi-Fuchsian manifolds (\cite{donaldson2003moment,hodgethesis,trautwein2019hyperkahler}), we define the metric $h=(1+f(\|\sigma\|_{g_J}))g_J$, where $\|\sigma\|_{g_J}$ is the norm of the 2-tensor $\sigma$ with respect to the metric $g_J$, and $f(t)=\sqrt{1+t^2}$. Then imposing the Gauss-Codazzi equations on $\I=h$ and $\II=h^{-1}\sigma$, we determine a $\Symp_0(\Sigma,\rho)$-invariant subspace of the space of smooth section $(J,\sigma)$, which we denote by $\widetilde{\mathcal{MS}}_0(\Sigma,\rho)$, whose quotient $\mathcal{MS}_0(\Sigma,\rho)$ by the action of $\Symp_0(\Sigma,\rho)$ is identified to $\mathcal{MGH}(\Sigma)$ thanks to the existence and uniqueness result for maximal surfaces, together with a standard application of Moser's trick. We remark that, unlike the construction in the quasi-Fuchsian setting where the correct function to apply the change of variables is $f(t)=\sqrt{1-t^2}$, hence only defined for $t=\|\sigma\|_{g_J}<1$, in the AdS setting this change of variables permits to recover \emph{all} the maximal surfaces in MGHC AdS manifolds, and not only a neighborhood of the Fuchsian locus. 

\subsubsection*{A distinguished complement to the orbit}

In order to induce a para-hyperK\"ahler structure on $\mathcal{MS}_0(\Sigma,\rho)$, by restriction of the symplectic forms $\omega_\i$, $\omega_\j$ and $\omega_\k$ and of the metric $\g$, we need to construct a distribution of subspaces of the tangent space of $\widetilde{\mathcal{MS}}_0(\Sigma,\rho)$ which are isomorphic at every point $(J,\sigma)$ to the tangent space to the quotient $\mathcal{MGH}(\Sigma)$, and invariant for the action of $\Symp_0(\Sigma,\rho)$. The construction of such invariant distribution, which is denoted by $V_{(J,\sigma)}$ and defined as the space of solutions of a system of partial differential equations, constitutes the main technical difficulty of Section \ref{sec:para_hyperkahler_str_on_MS}. The defining equations can be formulated in several equivalent ways, as in the following technical statement, whose proof is done in Section \ref{subsec:proof_of_prop} by overcoming a number of technical difficulties.

\begin{propx}\label{prop:equivalent_def_subspace_V}
	Given $(J, \sigma) \in \widetilde{\mathcal{MS}_{0}}(\Sigma,\rho)$, and $(\dot{J}, \dot{\sigma}) \in T_{(J,\sigma)} T^* \mathcal{J}(\Sigma)$, the following conditions are equivalent:
	\begin{enumerate}[i)]
		\item the pair $(\dot{J}, \dot{\sigma})$ satisfies
		\begin{equation} \label{eq:description_V1}\tag{V1}
			\begin{cases}
				\divr_g (f^{-1}g^{-1} \dot{\sigma}_0) = - f^{-1} \scal{\nabla^g_{J \bullet} \sigma}{\dot{J}} , \\
				\divr_g \dot{J} = - f^{-2} \scall{\nabla^g_{J \bullet} \sigma}{\dot{\sigma}_0} .
			\end{cases}
		\end{equation}
		\item the endomorphisms $Q^\pm = Q^\pm(\dot{J}, \dot{\sigma}) \defin f^{-1} g^{-1} \dot{\sigma}_0 \pm \dot{J}$ satisfy
		\begin{equation} \label{eq:description_V2}\tag{V2}
			\begin{cases}
            \divr_g (Q^+ J J_l) = - \scal{\nabla^g_{J \bullet} \sigma}{Q^+} , \\
            \divr_g (Q^- J J_r) = \scal{\nabla^g_{J \bullet} \sigma}{Q^-} ,
        \end{cases}
		\end{equation}
		where $J_l$ and $J_r$ denote the almost complex structures of the components of the Mess map $\mathcal{M}$;
		\item the endomorphisms $Q^\pm$ satisfy
		\begin{equation} \label{eq:description_V3}\tag{V3}
			\begin{cases}
				\divr_g Q^+ = - f^{-1} \, \scal{\nabla^g_{J \bullet} \sigma}{Q^+} , \\ 
				\divr_g Q^- = + f^{-1} \, \scal{\nabla^g_{J \bullet} \sigma}{Q^-} .
			\end{cases}
		\end{equation}
	\end{enumerate}
    
\end{propx}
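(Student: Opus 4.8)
The plan is to show the three systems are equivalent by algebraic manipulation, exploiting the fact that $Q^\pm = f^{-1}g^{-1}\dot\sigma_0 \pm \dot J$ package together the two quantities appearing in \eqref{eq:description_V1}. First I would establish the equivalence \eqref{eq:description_V1} $\Leftrightarrow$ \eqref{eq:description_V3}: adding and subtracting the two equations of \eqref{eq:description_V1} and using $\divr_g(f^{-1}g^{-1}\dot\sigma_0 \pm \dot J)$ — here one must be careful that $\divr_g$ does not commute with multiplication by the scalar function $f = f(\|\sigma\|_{g_J})$, so the cleaner path is to first rewrite \eqref{eq:description_V3} by expanding $\divr_g Q^\pm = \divr_g(f^{-1}g^{-1}\dot\sigma_0) \pm \divr_g\dot J$ and the term $f\,\divr_g(f^{-1}g^{-1}\dot\sigma_0) = \divr_g(g^{-1}\dot\sigma_0) + (df^{-1})(\cdot)\,(\ldots)$; I expect that matching the scalar factors $f^{-1}$ on the two sides is precisely what makes \eqref{eq:description_V3} the ``symmetric'' reformulation, so the computation should reduce to checking that the first equation of \eqref{eq:description_V1} multiplied by $f^{-1}$ plus the second equation equals $\divr_g Q^+ = -f^{-1}\scal{\nabla^g_{J\bullet}\sigma}{Q^+}$, and similarly with a sign change for $Q^-$.

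Next I would treat \eqref{eq:description_V3} $\Leftrightarrow$ \eqref{eq:description_V2}. The key input is an explicit formula, to be recalled from the preliminaries on the Mess map, relating $J_l$ and $J_r$ to the data $(J,\sigma)$ of the maximal surface: in the AdS setting the shape operator $B = h^{-1}\sigma$ of the maximal surface satisfies $J_l = (\Id + B)$-type and $J_r = (\Id - B)$-type expressions (up to the rescaling by $f$ built into $h = (1+f)g_J$), so that $J J_l$ and $J J_r$ differ from $\Id$ by terms proportional to $f^{-1}g^{-1}\sigma$. Substituting $J J_l = \Id + (\text{something})$ into $\divr_g(Q^+ J J_l)$ and using the maximality/Codazzi equation satisfied by $\sigma$ (which is exactly what $(J,\sigma)\in\widetilde{\mathcal{MS}_0}(\Sigma,\rho)$ encodes, and which controls $\divr_g(g^{-1}\sigma)$ and $\nabla^g_{J\bullet}\sigma$), one should see the extra terms in \eqref{eq:description_V2} collapse onto the right-hand side of \eqref{eq:description_V3}. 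This is where the Gauss–Codazzi equations for the maximal surface enter essentially; without them the two systems are genuinely different.

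The main obstacle I anticipate is the second equivalence: it requires knowing the precise relationship between the almost-complex structures $J_l, J_r$ of the Mess map and the pair $(J,\sigma)$ — including how the nonlinear change of variables $h = (1+f(\|\sigma\|_{g_J}))g_J$ interacts with Mess's description of the holonomy — and then carefully tracking the scalar factors of $f$ and its differential $df$ through the divergence operator. In particular one must verify that the terms involving $d(\|\sigma\|_{g_J})$ (which arise because $f$ is not constant) cancel, presumably because $\dot\sigma_0$ is the \emph{traceless} part of $\dot\sigma$ and $\nabla^g_{J\bullet}\sigma$ contracts against it in a way that kills the pure-trace contributions. I would organize the computation by first recording, as a preliminary lemma, the identities $J J_l = \alpha\,\Id + \beta\,(f^{-1}g^{-1}\sigma)J$ and $J J_r = \alpha\,\Id - \beta\,(f^{-1}g^{-1}\sigma)J$ for the appropriate scalar functions $\alpha,\beta$ of $\|\sigma\|_{g_J}$, together with the structural identity $\divr_g(g^{-1}\sigma) = 0$ coming from the Codazzi equation, and only then substitute — this keeps the bookkeeping of the $f$-factors isolated to one place and makes the cancellations transparent.
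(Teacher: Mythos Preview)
Your overall strategy matches the paper's: first show \eqref{eq:description_V1} $\Leftrightarrow$ \eqref{eq:description_V3} by linear combination, then \eqref{eq:description_V2} $\Leftrightarrow$ \eqref{eq:description_V3} using the explicit formulae $J_l = fJ + g^{-1}\sigma$, $J_r = fJ - g^{-1}\sigma$ and the Codazzi condition on $\sigma$. Two points deserve correction.

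For \eqref{eq:description_V1} $\Leftrightarrow$ \eqref{eq:description_V3} you are overthinking it. The term $\divr_g(f^{-1}g^{-1}\dot\sigma_0)$ appears \emph{as a block} in both systems; you never need to commute $f^{-1}$ past the divergence. Since $Q^\pm = f^{-1}g^{-1}\dot\sigma_0 \pm \dot J$ and $\scal{\nabla^g_{J\bullet}\sigma}{f^{-1}g^{-1}\dot\sigma_0} = f^{-1}\scall{\nabla^g_{J\bullet}\sigma}{\dot\sigma_0}$, the two equations of \eqref{eq:description_V3} are literally the sum and difference of the two equations of \eqref{eq:description_V1}. No $df$ terms arise here.

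For \eqref{eq:description_V2} $\Leftrightarrow$ \eqref{eq:description_V3} your outline is right but the mechanism by which the $df$ terms are absorbed is more specific than ``tracelessness kills pure-trace contributions''. The paper isolates the identity
\[
\dd f(\dot J' V) \;=\; f^{-1}\,\scal{\nabla^g_{(g^{-1}\sigma)V}\,\sigma}{\dot J'}
\]
for any $\dot J' \in T_J\mathcal{J}(\Sigma)$, proved from $(g^{-1}\sigma)^2 = \|\sigma\|^2\,\1$ and the Codazzi equation. With this in hand, one computes $(\divr_g Q^+)\circ JJ_l$ directly (using $JJ_l = -f\,\1 - (g^{-1}\sigma)J$, Codazzi to swap derivative indices, and Lemma~\ref{lem:hol_quadr_diff_equivalence}\,\textit{iv)}) and obtains
\[
(\divr_g Q^+ + f^{-1}\scal{\nabla^g_{J\bullet}\sigma}{Q^+})\circ JJ_l \;=\; \divr_g(Q^+ JJ_l) + \scal{\nabla^g_{J\bullet}\sigma}{Q^+},
\]
which is exactly the equivalence of the first equations of \eqref{eq:description_V2} and \eqref{eq:description_V3}; the $Q^-$ case is symmetric. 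So the $df$ contribution does not ``cancel'' but rather is \emph{converted} into an extra $\scal{\nabla^g_{\bullet}\sigma}{Q^\pm}$ term via the displayed identity, and that is what makes the right-hand sides match.
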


Defining $V_{(J,\sigma)}$ as the space of solutions to some (hence all) the conditions \ref{eq:description_V1}-\ref{eq:description_V3}, the fundamental result is the following, whose proof again contains a number of technical difficulties and is done in Section \ref{sec:proofThm410}.

\begin{theoremx}\label{thm:identification_tangent&V}
    For every pair $(J,\sigma)$ lying in $\widetilde{\mathcal{MS}_{0}}(\Sigma, \rho)$, the vector space $V_{(J,\sigma)}$ is contained inside $T_{(J,\sigma)} \widetilde{\mathcal{MS}_{0}}(\Sigma, \rho)$, it is invariant by the action of $\mathbf{I}$, $\mathbf{J}$ and $\mathbf{K}$, and it defines a $\Symp(\Sigma,\rho)$-\hsk invariant distribution $\mathcal{V} = \set{V_{(J,\sigma)}}_{(J,\sigma)}$ on $\widetilde{\mathcal{MS}_{0}}(\Sigma, \rho)$. Moreover, the natural projection $\mappa{\pi}{\widetilde{\mathcal{MS}_{0}}(\Sigma,\rho)}{\mathcal{MS}_0(\Sigma, \rho)}$ induces a linear isomorphism $\dd \pi_{(J,\sigma)}: V_{(J,\sigma)} \rightarrow T_{[J,\sigma]}\mathcal{MS}_0(\Sigma, \rho)$.
\end{theoremx}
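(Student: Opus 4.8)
The plan is to establish the three assertions---containment $V_{(J,\sigma)} \subseteq T_{(J,\sigma)}\widetilde{\mathcal{MS}_0}(\Sigma,\rho)$, invariance under $\mathbf I,\mathbf J,\mathbf K$, and the isomorphism with the tangent space of the quotient---in this order, since each relies on the previous one. For the containment, I would first linearise the defining system of $\widetilde{\mathcal{MS}_0}(\Sigma,\rho)$ (the Gauss--Codazzi equations written in terms of the ``change of variables'' $h=(1+f(\|\sigma\|_{g_J}))g_J$, $\I=h$, $\II=h^{-1}\sigma$) around a solution $(J,\sigma)$. This produces a first-order linear differential operator $\mathcal L_{(J,\sigma)}$ whose kernel is $T_{(J,\sigma)}\widetilde{\mathcal{MS}_0}(\Sigma,\rho)$. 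The key computation is to show that the operators defining conditions \ref{eq:description_V1}--\ref{eq:description_V3} \emph{factor through} $\mathcal L_{(J,\sigma)}$, i.e. that the linearised Codazzi equation is (a combination of) the two divergence identities in \ref{eq:description_V1}, while the linearised Gauss equation is automatically satisfied once $(\dot J,\dot\sigma)$ solves the first-order system---here one exploits that the Gauss equation is, at the linearised level, a consequence of the Codazzi equations together with the elliptic structure, much as in the quasi-Fuchsian case of \cite{donaldson2003moment,trautwein2019hyperkahler}. The equivalence of the three formulations in Proposition \ref{prop:equivalent_def_subspace_V} is used freely to pick whichever form is most convenient: form \ref{eq:description_V3}, which splits into two decoupled equations for $Q^+$ and $Q^-$, is the natural one for the $\mathbf I,\mathbf J,\mathbf K$-invariance, while form \ref{eq:description_V1}, phrased in $(\dot J,\dot\sigma_0)$, interfaces most directly with the linearised Gauss--Codazzi system.

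For the invariance under $\mathbf I$, $\mathbf J$, $\mathbf K$, I would work with formulation \ref{eq:description_V3}. The endomorphisms $\mathbf I,\mathbf J,\mathbf K$ act on a tangent vector $(\dot J,\dot\sigma)$ by prescribed pointwise linear-algebra formulas (coming from the $\SL(2,\R)$-invariant para-hyperK\"ahler structure on $T^*\mathcal J(\R^2)$), and one checks that under these formulas the pair $(Q^+,Q^-)$ is transformed in a controlled way: each of $\mathbf I,\mathbf J,\mathbf K$ either swaps the roles of $Q^+$ and $Q^-$ (up to sign) or multiplies them by $\pm J$, $\pm J_l$, $\pm J_r$ in such a way that the two equations in \ref{eq:description_V3} are permuted among themselves (possibly with a sign), using the algebraic relations $J J_l = -J_l J$, the anticommutation $\{J,\cdot\}$ with $Q^\pm$ (since $Q^\pm$ anticommute with $J$ by construction, being $g$-symmetric traceless), and the compatibility of $\nabla^g$ with these structures. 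Concretely, one verifies that the divergence operator $\divr_g(\cdot)$ and the pointwise pairing $\scal{\nabla^g_{J\bullet}\sigma}{\cdot}$ are intertwined correctly by the substitution, so that a solution of the system is sent to a solution. I would organise this as a short lemma, checking $\mathbf I$ and $\mathbf J$ and deducing $\mathbf K=\mathbf I\mathbf J$ for free.

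The $\Symp(\Sigma,\rho)$-invariance of the distribution follows from the naturality of every object involved ($g$, $\nabla^g$, $f$, $\sigma_0$, the almost complex structures $J_l,J_r$) under pullback by area-preserving diffeomorphisms, so it reduces to noting that the defining equations are coordinate-free. The final and hardest part is the isomorphism $\dd\pi_{(J,\sigma)}\colon V_{(J,\sigma)}\to T_{[J,\sigma]}\mathcal{MS}_0(\Sigma,\rho)$. Since the quotient tangent space is $T_{(J,\sigma)}\widetilde{\mathcal{MS}_0}(\Sigma,\rho)/\big(T_{(J,\sigma)}(\Symp_0\text{-orbit})\big)$, I must show that $V_{(J,\sigma)}$ is a genuine complement to the orbit tangent space inside $T_{(J,\sigma)}\widetilde{\mathcal{MS}_0}(\Sigma,\rho)$: that is, $V_{(J,\sigma)}\cap T(\text{orbit})=0$ and $V_{(J,\sigma)}+T(\text{orbit})=T_{(J,\sigma)}\widetilde{\mathcal{MS}_0}(\Sigma,\rho)$. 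The orbit directions are the Lie derivatives $(\mathcal L_X J,\mathcal L_X\sigma)$ for $X$ a $\rho$-Hamiltonian vector field; plugging these into the left-hand sides of \ref{eq:description_V1} and using integration by parts against $\rho$, the system becomes, after some manipulation, a \emph{second-order elliptic equation} for the Hamiltonian potential of $X$ (of the form $\Delta u + (\text{lower order}) = (\text{source})$), whose Fredholm theory on the closed surface $\Sigma$ gives existence and uniqueness up to constants---and constants act trivially. This is where the main technical difficulties sit: one must verify the ellipticity of the relevant operator (which is not the bare Laplacian but a zeroth-order perturbation, so positivity/invertibility requires an energy estimate using the sign of $f\geq 1$ and the maximality of the surface), identify its cokernel, and check the transversality statement by a dimension count matching $\dim\mathcal{MGH}(\Sigma)=6|\chi(\Sigma)|$. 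I expect the elliptic estimate establishing that this operator is an isomorphism (modulo the obvious kernel/cokernel) to be the crux of the argument, and I would isolate it as a separate proposition to be proved in Section \ref{sec:proofThm410}.
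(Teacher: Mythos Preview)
Your overall architecture matches the paper's---the same four pieces (containment in the linearised Gauss--Codazzi locus, $\mathbf I,\mathbf J,\mathbf K$-invariance, $\Symp$-naturality, and the isomorphism with the quotient tangent space)---but the last step contains a genuine gap, and the containment step glosses over a nontrivial geometric input.

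\textbf{The transversality argument.} You write that ``the orbit directions are the Lie derivatives $(\mathcal L_X J,\mathcal L_X\sigma)$ for $X$ a $\rho$-Hamiltonian vector field'' and then propose to reduce $V\cap T(\text{orbit})=0$ to an elliptic equation for the Hamiltonian potential of $X$. But the quotient is by $\Symp_0(\Sigma,\rho)$, not $\Ham(\Sigma,\rho)$, so the orbit directions are $(\mathcal L_X J,\mathcal L_X\sigma)$ for $X$ \emph{symplectic}, and such $X$ need not have any Hamiltonian potential. The difference $\Symp_0/\Ham\cong H^1(\Sigma;\R)$ is exactly where the difficulty sits, and your single elliptic PDE cannot see it. The paper's proof (Lemma~\ref{lem:V_transverse_orbit}) handles this in two genuinely separate steps: first, using the Mess map and the last statement of Proposition~\ref{prop:equivalent_def_subspace_V} (exactness of $\divr_{h_l}\dot J_l$), it invokes the symplectic geometry of Teichm\"uller space---specifically that the $H$-orbits in $\widetilde{\Teich}(\Sigma)$ are symplectic submanifolds---to force $X$ to be Hamiltonian; only then, second, does it use $\mathbf I$-invariance together with a linearised Gauss computation along Lie derivatives (Lemma~\ref{lem:linearized_Gauss}, which produces the elliptic operator $-\tfrac12\Delta_h+(1-\det B)$) to conclude $JX$ is also symplectic, hence $X=0$. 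Your proposal collapses these two steps into one and would not go through as written.

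\textbf{The linearised Gauss equation.} Your assertion that it is ``automatically satisfied once $(\dot J,\dot\sigma)$ solves the first-order system'' needs a mechanism. The paper supplies one: it rewrites the Gauss equation as $K_{h_l}=-2$ via Mess (Lemma~\ref{lem:V_linearizes_Gauss_Codazzi}), so its linearisation becomes $\tfrac12\,\dd(\divr_{h_l}\dot J_l)$, and this vanishes precisely because $\divr_{h_l}\dot J_l$ is \emph{exact}---which is the last statement of Proposition~\ref{prop:equivalent_def_subspace_V}, not a consequence of \ref{eq:description_V1} alone. For the surjectivity part of the isomorphism, the paper does not attempt your direct Fredholm existence argument for $V+T(\text{orbit})=T\widetilde{\mathcal{MS}_0}$; instead it proves $\dim V_{(J,\sigma)}\ge 6|\chi(\Sigma)|$ by computing the Fredholm index of the decoupled operators $Q^\pm\mapsto \divr_g Q^\pm \pm f^{-1}\scal{\nabla^g_{J\bullet}\sigma}{Q^\pm}$ in \ref{eq:description_V3} (same principal symbol as $\divr_g$, hence index $3|\chi(\Sigma)|$ each), and combines this with injectivity of $\dd\pi|_V$. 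Minor point: for the $\mathbf I,\mathbf J,\mathbf K$-invariance the paper actually uses \ref{eq:description_V1} rather than \ref{eq:description_V3}, which makes the check shorter.
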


Once these steps are achieved, the proofs of all the results from Theorem \ref{thm:parahyper_structure} to Theorem \ref{thm:Htheta} mostly follow from their analogues that are showed in Section \ref{sec:toy model}, by applying the same computations pointwise, and recognizing the geometric interpretations in terms of the geometry of MGHC AdS manifolds either in a similar fashion to what has been done in the case of the torus (for instance for Theorems \ref{thm:cotangent}, \ref{thm:circle} and Theorem \ref{thm:potentialintro}), or by recalling the known constructions of Mess homeomorphism and constant curvature surfaces explained in Section \ref{sec:preliminaries} (Theorems \ref{thm:mappaM} and \ref{thm:mappaC}).
These proofs are provided in the first part of Section \ref{sec:geom_inter}. The second part then contains the proofs of Theorem \ref{thm:fenchel} and Theorem \ref{thm:goldmancosine}, which mainly rely on the isomorphism $\PSL(2,\B)\cong\PSL(2,\R)\times \PSL(2,\R)$ for the isometry group of Anti-de Sitter space, and some geometric constructions in the $\PSL(2,\R)$ model.

\subsubsection*{Relation with symplectic reduction}
Although the proofs of the results stated above are essentially self-contained and completed entirely in Sections \ref{sec:toy model}, \ref{sec:para_hyperkahler_str_on_MS} and \ref{sec:geom_inter}, in Section \ref{sec:inf_dim_reduction} we include a discussion with the aims, on the one hand, of highlighting the relations with symplectic reduction, which is also the motivation that led us to the definition of the distribution $V_{(J,\sigma)}$ as in Proposition \ref{prop:equivalent_def_subspace_V} and Theorem \ref{thm:identification_tangent&V}; on the other hand, of explaining the additional difficulties that do not permit to apply directly the strategy of symplectic reduction to obtain our results.

The starting point is a general theorem of Donaldson (\cite[Theorem 9]{donaldson2003moment}, see Theorem \ref{thm:donaldson_maps} for the statement with minimal hypothesis that we apply in our setting). In short, it turns out that the $\SL(2,\R)$-action on $\T^*\mathcal J(\R^2)$ is Hamiltonian with respect to any of the three symplectic forms constituting the para-hyperK\"ahler structure on $\T^*\mathcal J(\R^2)$, with moment maps that we denote by $\eta_\i$, $\eta_\j$ and $\eta_\k$. Donaldson's theorem gives a formula to compute a corresponding map (that we call $\mu_\i$, $\mu_\j$ and $\mu_\k$) for the action on $T^{*}\mathcal{J}(\Sigma)$ of the group $\Ham(\Sigma,\rho)$ of Hamiltonian symplectomorphisms of $(\Sigma,\rho)$. It turns out that $\mu_{\j}$ and $\mu_{\k}$ are moment maps for the action of $\Ham(\Sigma, \rho)$, whereas $\mu_{\i}$ needs to be modified by an adding a scalar multiple of $\rho$. We denote this new map by $\tilde{\mu}_{\i}$. Moreover, although $\Ham(\Sigma,\rho)$ is a proper normal subgroup of $\Symp_0(\Sigma,\rho)$, $\mu_\j$ and $\mu_\k$ can actually be promoted to moment maps $\tilde\mu_\j$ and $\tilde\mu_\k$ for the action of $\Symp_0(\Sigma,\rho)$; $\tilde{\mu}_\i$ cannot, but it still satisfies some additional properties that make it ``almost'' a moment map for $\Symp_0(\Sigma,\rho)$. We compute explicit formulas for these three maps, and show that the kernel of $\tilde\mu_\j+i\tilde\mu_\k$ consists precisely of the pairs $(J,\sigma)$ such that $\sigma$ is the real part of a holomorphic quadratic differential on $(\Sigma,J)$. The intersection with the kernel of the third map $\tilde\mu_\i$ is then precisely the space $\widetilde{\mathcal{MS}}_0(\Sigma,\rho)$. Hence one is tempted to apply the symplectic reduction in order to induce a pseudo-Riemannian metric and three symplectic stuctures, in the quotient $\tilde\mu_\i^{-1}(0)\cap \tilde\mu_\j^{-1}(0)\cap \tilde\mu_\k^{-1}(0)/\Symp_0(\Sigma,\rho)$. 

However, at this point the usual construction by which the quotient inherits a hyperK\"ahler structure fails because of the fact that our metric on each fiber is not positive definite. Hence we do not have a natural Hilbert space structure on the tangent space to the space of sections $T^*\mathcal{J}(\Sigma)$. Concretely, this means that we cannot   take the orthogonal complement to the tangent space to the $\Symp_0(\Sigma,\rho)$-orbit as a distribution which is invariant by the actions of $\i$, $\j$ and $\k$ and isomorphic to the tangent space to the quotient. Nevertheless, inspired by the properties satisfied in the hyperK\"ahler setting, we prove the following characterization of $V_{(J,\sigma)}$, which is the main result of Section \ref{sec:inf_dim_reduction}:

\begin{theoremx}\label{thm:char_VJSigma}
For every $(J,\sigma)\in \widetilde{\mathcal{MS}}_0(\Sigma,\rho)$, 
$V_{(J,\sigma)}$ is the largest subspace of $T_{(J,\sigma)}\widetilde{\mathcal{MS}}_0(\Sigma,\rho)$ that is:
\begin{itemize}
    \item invariant under $\i$, $\j$ and $\k$;
    \item $\g$-orthogonal to $T_{(J,\sigma)}(\Symp_{0}(\Sigma, \rho)\cdot (J,\sigma))$
\end{itemize} 
\end{theoremx}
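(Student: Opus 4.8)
The plan is to prove the two inclusions separately: first that $V_{(J,\sigma)}$ has the two listed properties, and then that any subspace with these properties is contained in $V_{(J,\sigma)}$. The first part is essentially a restatement of what has already been established: Theorem \ref{thm:identification_tangent&V} gives that $V_{(J,\sigma)}\subseteq T_{(J,\sigma)}\widetilde{\mathcal{MS}}_0(\Sigma,\rho)$ and that $V_{(J,\sigma)}$ is invariant under $\i$, $\j$, $\k$, so the only thing to verify is $\g$-orthogonality to the orbit direction $T_{(J,\sigma)}(\Symp_0(\Sigma,\rho)\cdot(J,\sigma))$. Here I would compute the infinitesimal action: a vector field on $\Sigma$ generating a one-parameter family of symplectomorphisms produces a tangent vector $(\dot J,\dot\sigma)=(\mathcal L_X J,\mathcal L_X\sigma)$, and I would show that pairing this with an element of $V_{(J,\sigma)}$ against $\g$ vanishes. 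The natural route is to recognize, via the moment map picture developed in Section \ref{sec:inf_dim_reduction}, that $\g$-pairing with an orbit vector equals (up to sign and the pairing between the Lie algebra and its dual) the derivative of one of the moment maps $\tilde\mu_\i,\tilde\mu_\j,\tilde\mu_\k$ evaluated on the element of $V_{(J,\sigma)}$ — and the defining PDEs \ref{eq:description_V1} are precisely the linearized moment-map-kernel conditions, restricted to the constraint manifold; integrating by parts over $\Sigma$ (which is where the divergence terms in \ref{eq:description_V1} enter) produces the vanishing. In other words: the equations \ref{eq:description_V1} say exactly that $(\dot J,\dot\sigma)$ is $\g$-orthogonal to the images of $\i,\j,\k$ applied to orbit vectors, which by $\i,\j,\k$-invariance of $V$ and the compatibility $\g(\mathbf X\cdot,\mathbf X\cdot)=\pm\g(\cdot,\cdot)$ is equivalent to orthogonality to the orbit itself.

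For the reverse inclusion — maximality — suppose $W\subseteq T_{(J,\sigma)}\widetilde{\mathcal{MS}}_0(\Sigma,\rho)$ is invariant under $\i,\j,\k$ and $\g$-orthogonal to the orbit. I want to show $W\subseteq V_{(J,\sigma)}$, i.e. that every $(\dot J,\dot\sigma)\in W$ satisfies \ref{eq:description_V1}. Take such a vector. By $\g$-orthogonality to every orbit direction $(\mathcal L_X J,\mathcal L_X\sigma)$, and using the fact that $\i$ and $\j$ preserve $W$, I also get orthogonality of $(\dot J,\dot\sigma)$ to $\i$ and $\j$ applied to orbit directions (because $\g(\i u, v)=-\g(u,\i v)$ and $\i$ preserving $W$ lets me move the $\i$ onto the other factor). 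This gives a system of integral identities, one for each $X\in\mathrm{Lie}(\Symp_0)$ (or at least each Hamiltonian $X$), of the form $\int_\Sigma (\text{something depending on }(\dot J,\dot\sigma))\cdot H\,\rho=0$ for all Hamiltonian functions $H$, after integration by parts. Since $H$ ranges over all functions with zero average (the Lie algebra of $\Ham(\Sigma,\rho)$ is $C^\infty_0(\Sigma)$ up to the isomorphism $H\mapsto X_H$), the fundamental lemma of the calculus of variations forces the integrand — which will be precisely $\divr_g(f^{-1}g^{-1}\dot\sigma_0)+f^{-1}\langle\nabla^g_{J\bullet}\sigma\mid\dot J\rangle$ and its companion — to vanish identically, which is \ref{eq:description_V1}. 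Extending from $\Ham$ to all of $\Symp_0$ contributes at most the finitely many extra (de Rham cohomology) directions, which have to be handled separately but do not affect the pointwise PDE conclusion.

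The main obstacle I anticipate is the bookkeeping in the integration-by-parts / moment-map identification: one has to be careful that the orbit is taken inside $\widetilde{\mathcal{MS}}_0(\Sigma,\rho)$, not inside the ambient $T^*\mathcal J(\Sigma)$, so the tangent vectors being paired already satisfy the linearized Gauss–Codazzi constraints, and I must check that the constraint does not destroy the equivalence between "orthogonal to the $\mathbf X$-rotated orbit" and "satisfies \ref{eq:description_V1}". This is exactly the subtlety flagged in the text — that symplectic reduction does not apply verbatim because $\g$ is indefinite, so there is no orthogonal-complement splitting handed to us for free, and the argument has to proceed through the explicit PDEs rather than abstract nonsense. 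Concretely, the delicate point is that $\g$ restricted to the orbit direction may be degenerate, so "$\g$-orthogonal to the orbit" is a genuinely weaker condition than "complementary to the orbit"; nevertheless Theorem \ref{thm:identification_tangent&V} already guarantees $V_{(J,\sigma)}$ is a complement, and the content of this theorem is the \emph{converse} — that the orthogonality-plus-invariance conditions pin down no more than $V_{(J,\sigma)}$. I expect the proof to lean on the explicit formulas for $\tilde\mu_\i,\tilde\mu_\j,\tilde\mu_\k$ and their differentials computed earlier in Section \ref{sec:inf_dim_reduction}, together with a density/regularity argument to pass from the integral identities to the pointwise PDE system.
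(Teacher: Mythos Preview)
Your plan follows essentially the same route as the paper: translate $\g$-orthogonality to the orbit into kernel conditions for the linearized moment maps (this is the paper's Lemma~\ref{lm:orthogonality}), use $\i,\j,\k$-invariance to propagate these conditions (Lemma~\ref{lm:invariance}), and then identify the resulting system with the defining PDEs of $V_{(J,\sigma)}$ (Lemma~\ref{lem:equations_tangent}).

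There is, however, one concrete step you are glossing over. Testing the orthogonality identities against all $V\in\Lsymp(\Sigma,\rho)$ --- i.e.\ against all closed $1$-forms $\iota_V\rho$ --- only tells you that the relevant $1$-form $\alpha$ (your ``integrand'') is \emph{exact}, not that it vanishes pointwise; the fundamental lemma of the calculus of variations does not give $\alpha\equiv 0$ here. What closes the gap is that the pair of conditions coming from $\dd\tilde\mu_\j$ and $\dd\tilde\mu_\k$ are not independent: they say respectively that $\alpha\circ J$ and $\alpha$ are exact. If $\alpha=\dd u$ and $\dd u\circ J$ is also closed, then $u$ is harmonic on the closed surface $\Sigma$, hence constant, hence $\alpha=0$. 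This harmonic-function trick (carried out in the proof of Lemma~\ref{lem:equations_tangent}) is the mechanism by which ``class vanishes in $\Lambda^1/B^1$'' is upgraded to the pointwise equations \eqref{eq:description_V1}; without it your maximality argument stalls at exactness. Once you insert this step, your outline matches the paper's proof.
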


As said before, Theorem \ref{thm:char_VJSigma} is not applied in the proof of any of the previous results; nevertheless, it serves as a motivation for Proposition \ref{prop:equivalent_def_subspace_V} and Theorem \ref{thm:identification_tangent&V}, namely the two technical tools which play an essential role in passing from the para-hyperK\"ahler structure in genus one (or more precisely, on $T^*\mathcal J(\R^2)$) to that for higher genus surfaces. 

\subsection*{Acknowledgments}

We would like to thank the participants of the reading group on hyperK\"ahler structures on quasi-Fuchsian space held at the University of Luxembourg in the Fall 2018, for many valuable discussions. The second author is grateful to Andy Sanders for interesting discussions.

The third author acknowledges support from the U.S. National Science Foundation under grant NSF-DMS:2005501.

\section{Preliminaries on Anti-de Sitter geometry}\label{sec:preliminaries}

In this preliminary section, we introduce the necessary notions concerning maximal globally hyperbolic Cauchy compact Anti-de Sitter three-manifolds (in short, MGHC AdS manifold)  and their deformation space.  

\subsection{Maximal surfaces} \label{subsec:maximal}

The starting point of our construction comes from the role of maximal surfaces in MGHC AdS manifolds. Recall that a \emph{maximal surface} in a Lorentzian three-manifold is a spacelike surface (i.e. its first fundamental form is a Riemannian metric) whose mean curvature vanishes identically. Then we have the following existence and uniqueness result:

\begin{theorem}[{\cite{bbz,bonsante2010maximal}}]
Any MGHC AdS three-manifold admits a unique maximal Cauchy surface.
\end{theorem}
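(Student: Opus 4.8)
The plan is to prove the two halves, existence and uniqueness, separately, working in the universal cover through the Mess description: a MGHC AdS three-manifold $M$ is isometric to a quotient $\mathcal{D}/\Gamma$, where $\mathcal{D}\subset\mathrm{AdS}_{3}$ is the domain of dependence of a limit curve $\Lambda\subset\partial_{\infty}\mathrm{AdS}_{3}$ and $\Gamma\cong\pi_{1}(\Sigma)$ acts properly discontinuously and cocompactly on each Cauchy surface, so that a maximal Cauchy surface in $M$ corresponds to a complete, $\Gamma$-invariant, spacelike surface in $\mathcal{D}$ with vanishing mean curvature and asymptotic boundary $\Lambda$. For \emph{existence}, the first step is to produce barriers: the boundary of the convex core of $M$ consists of a future-convex and a past-convex surface, which (directly in the support sense, or after a slight smoothing) yield a future-convex $S^{+}$ and a past-convex $S^{-}$ with $S^{-}\subset J^{-}(S^{+})$, bounding the mean-curvature operator from both sides. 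The second and analytically central step is to establish the a priori estimates that make the maximal surface equation solvable: written as a graph over a fixed Cauchy surface it is a quasilinear second-order equation, and one needs a uniform gradient bound keeping a solution uniformly spacelike (bounded away from the light cone), together with a bound on the second fundamental form; in the AdS setting the gradient bound follows because a surface trapped between $S^{+}$ and $S^{-}$ stays within bounded width --- here one uses that the width of the convex core of a MGHC AdS manifold is at most $\pi/2$ --- while the curvature bound then comes from the Gauss equation, since for a maximal surface $\det(\II)\le 0$ forces the intrinsic curvature, and hence the principal curvatures, to be controlled. With barriers and estimates in hand, the third step produces the surface by a standard device: running the mean curvature flow from $S^{+}$, which is monotone, stays above $S^{-}$ by the avoidance principle, and converges to a surface with $H\equiv 0$; or, equivalently, solving $H\equiv c$ between the barriers by the method of continuity and letting $c\to 0$. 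Each such construction is canonical, hence $\Gamma$-equivariant, and descends to a maximal Cauchy surface in $M$.

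For \emph{uniqueness}, let $S_{1},S_{2}$ be two maximal Cauchy surfaces. Since a Cauchy surface separates $M$ and the $S_{i}$ are connected, after relabelling either $S_{2}$ meets $S_{1}$ or $S_{2}\subset I^{+}(S_{1})$; in either case I would invoke the geometric maximum principle. Viewing $S_{2}$ as a graph over $S_{1}$ along a fixed Cauchy foliation and considering the maximum of the signed separation, at the maximising point $S_{2}$ touches, on one side, a surface equidistant from $S_{1}$, and comparing mean curvatures there --- both underlying surfaces being maximal, and the maximal surface operator being uniformly elliptic by the spacelike a priori bound above --- the strong maximum principle forces the two surfaces to agree near that point; a connectedness and unique-continuation argument then propagates the coincidence, giving $S_{1}=S_{2}$. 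Compactness of the Cauchy surface (equivalently, cocompactness of the $\Gamma$-action upstairs) is precisely what makes the separation attain its maximum and turns a ``maximum principle at infinity'' into an honest one. Alternatively, one may appeal to the constant-mean-curvature foliation of $M$ of Barbot--B\'eguin--Zeghib and note that its $H=0$ leaf is unique by monotonicity of the mean curvature along the foliation.

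\emph{The main obstacle} is concentrated entirely in the a priori gradient estimate of the existence part --- preventing a maximal surface trapped between the two barriers from becoming asymptotically lightlike. This is exactly where the global structure of MGHC AdS manifolds (finiteness of the width of the convex core, the product structure of $\mathcal{D}$) must be exploited, and is the substance of the works of Barbot--B\'eguin--Zeghib and Bonsante--Schlenker cited in the statement; granting that estimate, the remaining steps are routine applications of elliptic theory and the maximum principle.
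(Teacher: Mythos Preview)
The paper does not give its own proof of this theorem: it is stated as a quoted result from \cite{bbz,bonsante2010maximal} and used as a black box to set up the identification $\mathcal{MGH}(\Sigma)\cong\mathcal{MS}(\Sigma)$. There is therefore nothing in the paper to compare your argument against.

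That said, your sketch is a faithful outline of the strategies in the cited references. The barrier-plus-estimates scheme for existence, with the convex core providing the barriers and the width bound feeding into the spacelike gradient estimate, is the backbone of the Bonsante--Schlenker approach; the alternative you mention via the CMC foliation is precisely the Barbot--B\'eguin--Zeghib route, and uniqueness of the $H=0$ leaf there indeed follows from the monotonicity of $H$ along the foliation (which is in turn a maximum principle statement). Your identification of the gradient estimate as the analytic crux is accurate. One small caveat: the uniqueness argument you sketch via the maximum of the signed separation is morally right, but in Lorentzian signature the comparison at the touching point has to be set up with some care (the equidistant surfaces from a maximal surface have mean curvature of a definite sign, which is what drives the contradiction); you have the right idea but would need to make the sign computation explicit to close the argument.
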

 
This result permits to obtain a parameterization of $\mathcal{MGH}(\Sigma)$ by means of embedding data of maximal surfaces. Recall that the embedding data of a spacelike surface in a Lorentzian manifold consists of the pair $(h,B)$, where $h$ is the first fundamental form and $B$ the shape operator, and these satisfy the Gauss-Codazzi equations
\begin{equation} \label{eq:gauss_codazzi}\tag{GC}
	\begin{cases}
		K_h = - 1 - \det B , \\
		\dd^{\nabla^h} B = 0 ,
	\end{cases}
\end{equation}
where the exterior derivative $\dd^{\nabla^h} B$ is the $T \Sigma$-\hsk valued $2$-\hsk form $$(\dd^{\nabla^h} B)(X,Y) = (\nabla^h_X B)Y - (\nabla^h_Y B)X~,$$ for $X$, $Y$ tangent vector fields on $\Sigma$, $\nabla^h$ being the Levi-Civita connection of $h$ and $K_h$ its curvature. By definition, the surface is maximal if and only if $B$ is traceless. Conversely, every pair $(h,B)$ satisfying \eqref{eq:gauss_codazzi}, with $h$ a Riemannian metric and $B$ a traceless $h$-self-adjoint tensor, represents the embedding data of a maximal surface in a MGHC AdS manifold diffeomorphic to $\Sigma\times\R$, whose metric has the following explicit expression \emph{in a tubular neighbourhood} of the surface (namely for $t\in(-\epsilon,\epsilon)$):
$$-dt^2+h((\cos(t)\1+\sin(t)B)\cdot,(\cos(t)\1+\sin(t)B)\cdot) \ . $$ 
See \cite[Lemma 6.2.2, Proposition 6.2.3]{bonsante2020antide}. Moreover the above correspondence is natural by the actions of $\mathrm{Diff}(\Sigma)$. 
Motivated by these observations, let us introduce the space of embedding data of maximal surfaces:

$$\mathcal{MS}(\Sigma):=\left\{(h,B)\left| \begin{aligned}&\quad\;\; h\text{ is a Riemannian metric}\\  &\,B\text{ is traceless and }h\text{-self-adjoint} \\ &\quad \text{equations }\eqref{eq:gauss_codazzi}\text{ are satisfied} \end{aligned}\right. \right\}/\mathrm{Diff}_0(\Sigma)~,$$
where $\mathrm{Diff}_0(\Sigma)$ denotes the group of diffeomorphisms isotopic to the identity.

In summary we have:

\begin{proposition}\label{prop:MGHandMS}
Let $\Sigma$ be a closed surface of genus $g\geq 1$. There is a $MCG(\Sigma)$-invariant homeomorphism between $\mathcal{MGH}(\Sigma)$ and $\mathcal{MS}(\Sigma)$, given by the embedding data of the unique maximal Cauchy surface.
\end{proposition}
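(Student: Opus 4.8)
The plan is to realize the homeomorphism directly from the two inputs recalled above: the existence and uniqueness of the maximal Cauchy surface, and the explicit reconstruction of an AdS metric from embedding data satisfying \eqref{eq:gauss_codazzi}. First I would define the forward map $\Phi\colon\mathcal{MGH}(\Sigma)\to\mathcal{MS}(\Sigma)$. Given $[G]$, select the unique maximal Cauchy surface $S$ of $(\Sigma\times\R,G)$; since any two Cauchy surfaces are isotopic through Cauchy surfaces, act by an element of $\mathrm{Diff}_0(\Sigma\times\R)$ to arrange $S=\Sigma\times\{0\}$, and read off its first fundamental form $h$ and shape operator $B$. Then $h$ is Riemannian (the surface is spacelike), $B$ is $h$-self-adjoint, $B$ is traceless (maximality), and \eqref{eq:gauss_codazzi} holds because $S$ is a hypersurface of constant curvature $-1$; hence $(h,B)$ represents a point of $\mathcal{MS}(\Sigma)$. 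The only thing to verify for well-definedness is that two normalized representatives of $[G]$ differ by a diffeomorphism of $\Sigma\times\R$ isotopic to the identity preserving $\Sigma\times\{0\}$, whose restriction to the slice lies in $\mathrm{Diff}_0(\Sigma)$ and acts on $(h,B)$ by pull-back.

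Next I would construct the inverse $\Psi$. Starting from a pair $(h,B)$ solving \eqref{eq:gauss_codazzi} with $h$ Riemannian and $B$ traceless $h$-self-adjoint, I would use \cite[Lemma 6.2.2, Proposition 6.2.3]{bonsante2020antide} to put the AdS metric given by the explicit tubular-neighbourhood formula on $\Sigma\times(-\epsilon,\epsilon)$, so that $\Sigma\times\{0\}$ becomes a maximal Cauchy surface with embedding data $(h,B)$, and then pass to its maximal globally hyperbolic Cauchy-compact extension, which is again diffeomorphic to $\Sigma\times\R$ and unique up to isometry isotopic to the identity; this yields a well-defined $\Psi\colon\mathcal{MS}(\Sigma)\to\mathcal{MGH}(\Sigma)$. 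The two maps are mutually inverse: $\Psi\circ\Phi=\mathrm{id}$ because $(\Sigma\times\R,G)$ is itself an MGHC extension of a neighbourhood of its maximal surface, hence agrees with $\Psi(\Phi([G]))$ by uniqueness of the extension; and $\Phi\circ\Psi=\mathrm{id}$ because, by uniqueness of the maximal Cauchy surface, the slice $\Sigma\times\{0\}$ used in building $\Psi(h,B)$ \emph{is} its maximal Cauchy surface, carrying the embedding data $(h,B)$ by construction.

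To upgrade the bijection to a homeomorphism I would equip $\mathcal{MGH}(\Sigma)$ and $\mathcal{MS}(\Sigma)$ with the quotient topologies coming from the $C^\infty$-topologies on AdS metrics on $\Sigma\times\R$ and on pairs $(h,B)$ on $\Sigma$. Continuity of $\Phi$ reduces to the smooth dependence of the maximal Cauchy surface on the ambient metric, which is built into the proof of the existence and uniqueness theorem (the surface being the solution of a quasilinear elliptic problem with invertible linearization); continuity of $\Psi$ follows from the smooth dependence of the local metric formula on $(h,B)$ together with the continuous dependence of the maximal globally hyperbolic extension on its Cauchy data. For $MCG(\Sigma)$-invariance, every construction above is natural under $\mathrm{Diff}(\Sigma)$ acting on $\Sigma\times\R$ through $\phi\times\mathrm{id}$: the maximal Cauchy surface of $\phi^*G$ is $\phi^{-1}(S)$, with embedding data $\phi^*(h,B)$, so $\Phi$, and hence $\Psi$, intertwines the pull-back actions of the mapping class group.

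\textbf{Main obstacle.} The routine part is the manipulation of embedding data and the bookkeeping modulo $\mathrm{Diff}_0$; the genuine content lies in the global step of $\Psi$ --- the existence, recognition, and uniqueness (up to $\mathrm{Diff}_0$) of the maximal globally hyperbolic Cauchy-compact development of the local AdS metric, and its continuous dependence on the Cauchy data. This is the AdS analogue of the Choquet-Bruhat--Geroch theorem and is by now standard in this setting (going back to Mess \cite{mess2007lorentz} and developed in \cite{bonsante2020antide}), so in practice the proof amounts to assembling these known facts rather than establishing anything essentially new.
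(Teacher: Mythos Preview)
Your proposal is correct and follows essentially the same approach as the paper: the paper presents this proposition as a summary (``In summary we have:'') of the preceding discussion, which invokes exactly the two ingredients you use --- existence and uniqueness of the maximal Cauchy surface, and the explicit reconstruction of the MGHC AdS manifold from embedding data via \cite[Lemma 6.2.2, Proposition 6.2.3]{bonsante2020antide} --- together with the naturality of the correspondence under $\mathrm{Diff}(\Sigma)$. Your write-up is simply a more detailed unpacking of what the paper leaves implicit.
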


In the remainder of the paper, we will often implicitly identify $\mathcal{MGH}(\Sigma)$ with $\mathcal{MS}(\Sigma)$.



\subsection{Cotangent bundle of Teichm\"uller space}\label{subsec:cotangent}

Maximal surfaces also permit to obtain a parameterization of $\mathcal{MGH}(\Sigma)$ by means of the cotangent bundle of Teichm\"uller space. Recall that the Teichm\"uller space of the surface $\Sigma$ is defined as:
$$\mathcal{T}^{\conf}(\Sigma):=\{J\in\Gamma(\mathrm{End}(T\Sigma))\,|\,J^2=-\1,(v,J(v))\text{ is an oriented frame} \}$$

We use the notation $\mathcal{T}^{\conf}$ to highlight that this is the Teichm\"uller space defined in terms of (almost-)complex structures $J$, to distinguish with the other incarnations of Teichm\"uller space (see Section \ref{subsec:parameterizations} below). 

To explain this parameterization, let us first provide several equivalent descriptions of the Codazzi equation $\dd^{\nabla^h} B = 0$, which we summarize in the following statement:

\begin{lemma} \label{lem:hol_quadr_diff_equivalence}
    Let $h$ be a Riemannian metric on $\Sigma$, and denote by $J$ the almost-complex structure induced by $h$. Assume a $(1,1)$ tensor $B$ and a $(2,0)$ tensor $\sigma$ are related by $\sigma=hB$ (i.e. $\sigma(\cdot,\cdot)=h(\cdot,B\cdot)$). Then $B$ is $h$-self-adjoint and traceless if and only if $\sigma$ is the real part of a quadratic differential, which can be expressed as $q=\sigma-i\sigma(\cdot,J\cdot)$. If this holds, then the following conditions are equivalent:
    \begin{enumerate}[i)]
            \item $B$ is $h$-\hsk Codazzi, i.e. $\dd^{\nabla^h} B = 0$;
        \item $\sigma$ is the real part of a holomorphic quadratic differential on $(\Sigma, J)$;
        \item $\sigma$ is $h$-\hsk divergence-\hsk free;

        \item for every tangent vector field $X$ on $\Sigma$ we have $\nabla^h_{J X} \sigma = (\nabla^h_X \sigma)(\cdot, J \cdot)$.
    \end{enumerate}
\end{lemma}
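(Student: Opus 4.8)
The plan is to first dispose of the easy ``algebraic'' part --- the characterization of when $B$ is $h$-self-adjoint and traceless --- and then establish the equivalences i)--iv) among the Codazzi-type conditions. For the preliminary claim, I would write $q = \sigma - i\,\sigma(\cdot, J\cdot)$ and observe that $\sigma$ being the real part of a quadratic differential (i.e. a section of $\mathcal{K}_J^2$) is equivalent to $\sigma$ being symmetric and $J$-anti-invariant, that is $\sigma(J\cdot, J\cdot) = -\sigma(\cdot,\cdot)$. Since $h(J\cdot, J\cdot) = h$, the relation $\sigma = hB$ translates symmetry of $\sigma$ into $h$-self-adjointness of $B$, and translates the $J$-anti-invariance $\sigma(J\cdot, J\cdot) = -\sigma$ into $JB = -BJ$, which for an $h$-self-adjoint $B$ is equivalent to $\tr B = 0$ (an $h$-self-adjoint traceless endomorphism of a surface anticommutes with $J$, and conversely). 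This is a short pointwise linear-algebra computation.

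For the main equivalences, I would proceed cyclically. The equivalence of i) and iv) is essentially a reformulation: $\dd^{\nabla^h} B$ is a $2$-form, hence determined by its value on the pair $(X, JX)$ for any nonvanishing $X$, and unwinding $(\dd^{\nabla^h}B)(X,JX) = (\nabla^h_X B)(JX) - (\nabla^h_{JX}B)(X)$ together with $\nabla^h J = 0$ and the correspondence $\sigma = hB$ gives exactly the identity $\nabla^h_{JX}\sigma = (\nabla^h_X\sigma)(\cdot, J\cdot)$ after lowering an index; care is needed with the placement of $J$ but it is routine. For ii) $\Leftrightarrow$ iii) I would use the standard fact that, in an isothermal (conformal) local coordinate $z$ where $h = \lambda |dz|^2$, one has $q = \phi\, dz^2$ with $\sigma = \Re(\phi\, dz^2)$, and the divergence $\divr_h \sigma$ is (up to a nonzero factor) the real part of $\partial_{\bar z}\phi\, dz$; hence $\divr_h \sigma = 0$ iff $\phi$ is holomorphic iff $q$ is a holomorphic quadratic differential. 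Finally, the equivalence iii) $\Leftrightarrow$ iv) can be seen either by a direct computation in the same coordinate, or more invariantly by noting that both the divergence $\divr_h \sigma$ and the ``$(0,1)$-part of $\nabla\sigma$'' are expressions in $\nabla^h \sigma$ that vanish simultaneously; concretely, $\divr_h\sigma = \tr_h(\nabla^h\sigma)$ (contracting the first slot) and the condition in iv) says the $J$-twisted symmetrization of $\nabla^h\sigma$ vanishes, and on a surface these two conditions on the $3$-tensor $\nabla^h\sigma$ (which is symmetric in its last two slots and $J$-anti-invariant there) cut out the same subspace. Alternatively, i) $\Leftrightarrow$ iii) is a classical fact (a symmetric $2$-tensor on a surface is Codazzi iff it is divergence-free iff its trace is constant, here trace $0$), so I could instead prove i) $\Leftrightarrow$ iii), ii) $\Leftrightarrow$ iii), and i) $\Leftrightarrow$ iv) and close the diamond.

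The main obstacle I anticipate is purely bookkeeping: getting the factors of $J$, the signs, and the index positions correct when passing between the $(1,1)$-tensor $B$, the $(2,0)$-tensor $\sigma$, the associated quadratic differential $q$, and their covariant derivatives --- in particular verifying cleanly that the Codazzi equation for $B$ matches the Cauchy--Riemann equation for $\phi$ and matches $\divr_h\sigma = 0$ without dropping a conformal factor. There is no conceptual difficulty: all four conditions are well-known to be equivalent reformulations of ``$q$ is holomorphic,'' and the cleanest exposition is probably to fix a local conformal coordinate, write everything in terms of $\phi$ and $\partial_{\bar z}\phi$, and read off all four equivalences at once, then remark that the statements are coordinate-independent.
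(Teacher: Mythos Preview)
The paper does not actually prove this lemma: immediately after the statement it is attributed to standard references (Tromba's book, with Hopf, Taubes, and Krasnov--Schlenker cited as well), and no argument is given. Your sketch is correct and is essentially the standard proof one finds in those sources --- the pointwise linear algebra for the preliminary claim, the reduction of i)$\Leftrightarrow$iv) to evaluating the $2$-form $\dd^{\nabla^h}B$ on $(X,JX)$, and the local conformal-coordinate computation for ii)$\Leftrightarrow$iii) are all on target.

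One small imprecision worth flagging: the parenthetical ``a symmetric $2$-tensor on a surface is Codazzi iff it is divergence-free iff its trace is constant'' is not literally correct as a chain of equivalences. What is true, and what you need, is that a \emph{traceless} symmetric $(1,1)$-tensor on a surface is Codazzi iff it is divergence-free. The paper in fact records the relevant identity later (Lemma~\ref{lem:relation_codazzi&divr}): for traceless $A$ one has $(\nabla_X A)Y - (\nabla_Y A)X = (\divr A)(Y)\,X - (\divr A)(X)\,Y$, from which i)$\Leftrightarrow$iii) is immediate. With that correction your cycle of implications closes cleanly.
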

A standard reference for these equivalences is \cite{tromba2012teichmuller}; see also \cite{hopf,taubes,krasnov_schlenker_minimal}. 

\begin{remark}\label{rmk:independent_conformal_metric}
Since the condition $ii)$ in Lemma \ref{lem:hol_quadr_diff_equivalence} only depends on $J$, and not on the metric $h$, it follows that if conditions $iii)$ or $iv)$ hold for \emph{some} metric $h$ compatible with $J$, then they hold for \emph{any} other metric conformal to $h$.
\end{remark}

\begin{remark}\label{rmk:rotateB}
We will repeatedly use the following fact.  Suppose $q$ is a holomorphic quadratic differential and $B=h^{-1}\Re(q)$ is the corresponding traceless, $h$-self-adjoint, $h$-Codazzi tensor. Writing
$q=\sigma-i\sigma(\cdot,J\cdot)$ and multiplying by $e^{i\theta}$, one checks immediately that
$$\Re(e^{i\theta}q) = \cos\theta \ \sigma + \sin\theta \ \sigma(\cdot,J\cdot)~.$$
Hence the traceless, $h$-self-adjoint, $h$-Codazzi tensor $h^{-1}\Re(e^{i\theta}q)$ corresponding to $e^{i\theta}q$ is precisely $(\cos(\theta)\1-\sin(\theta)J)B$. 
\end{remark}

Based on Lemma \ref{lem:hol_quadr_diff_equivalence}, Krasnov and Schlenker established the following result:

\begin{theorem}[{\cite[Lemma 3.6, Theorem 3.8]{krasnov_schlenker_minimal}}]\label{thm:emd_data_from_hqd}
Let $\Sigma$ be a closed oriented surface of genus $\geq 2$. Given a complex structure $J$ on $\Sigma$ and a holomorphic quadratic differential $q$ on $(\Sigma,J)$, there exists a unique Riemannian metric $h$ compatible with $J$ such that the pair $(h,B=h^{-1}\sigma)$ satisfies \eqref{eq:gauss_codazzi}, where $\sigma=\Re(q)$.
\end{theorem}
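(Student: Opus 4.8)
The plan is to recast the equation $\dd^{\nabla^h}B=0$ together with $K_h=-1-\det B$ as a single scalar PDE for the conformal factor of $h$ relative to a fixed background hyperbolic metric, and then invoke a sub/super-solution argument. Concretely, fix $(J,q)$. By Lemma~\ref{lem:hol_quadr_diff_equivalence}, the Codazzi equation $\dd^{\nabla^h}B=0$ is equivalent to $\sigma=\Re(q)$ being the real part of a \emph{holomorphic} quadratic differential on $(\Sigma,J)$, a condition that depends only on $J$ and not on the metric within the conformal class of $J$ (Remark~\ref{rmk:independent_conformal_metric}). Hence the Codazzi equation is automatically satisfied for \emph{every} metric $h$ compatible with $J$, and the only constraint left is the Gauss equation. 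So the theorem reduces to: there is a unique $h$ in the conformal class determined by $J$ solving $K_h=-1-\det(h^{-1}\sigma)$.

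Next I would write $h=e^{2u}h_0$, where $h_0$ is the unique hyperbolic metric in the conformal class of $J$ (which exists by uniformization since $\Sigma$ has genus $\geq 2$). Under a conformal change one has $K_h=e^{-2u}(K_{h_0}-\Delta_{h_0}u)=e^{-2u}(-1-\Delta_{h_0}u)$. For the right-hand side, observe that $\det(h^{-1}\sigma)$ transforms as follows: writing $B_0=h_0^{-1}\sigma$ for the shape operator computed with $h_0$, one has $h^{-1}\sigma=e^{-2u}B_0$, so $\det(h^{-1}\sigma)=e^{-4u}\det B_0$. Moreover $\det B_0$ can be identified, up to sign, with the squared pointwise norm of $q$ with respect to $h_0$; set $\|q\|_{h_0}^2=-\det B_0\geq 0$ (the sign is because $B_0$ is traceless self-adjoint, hence has determinant $\leq 0$). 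The Gauss equation $e^{-2u}(-1-\Delta_{h_0}u)=-1+e^{-4u}\|q\|_{h_0}^2$ then rearranges to
\[
\Delta_{h_0}u=e^{2u}-\|q\|_{h_0}^2 e^{-2u}-1~.
\]
This is a semilinear elliptic PDE of the same type that arises for minimal Lagrangian / harmonic-map problems, and existence and uniqueness of a smooth solution follow from the standard method of sub- and super-solutions: the constant $u_+$ large enough so that $e^{2u_+}-1\geq \|q\|_{h_0}^2 e^{-2u_+}$ pointwise (possible since $\|q\|_{h_0}^2$ is bounded on the closed surface) is a supersolution, and $u_-$ very negative is a subsolution, while the right-hand side is monotone increasing in $u$, which forces uniqueness. (Alternatively one invokes the result of \cite{krasnov_schlenker_minimal} directly, since this is exactly their Lemma~3.6 and Theorem~3.8; but giving the self-contained argument is cleaner.)

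The main obstacle, and the only point requiring care, is verifying that the map $(h,B)\mapsto$ embedding data genuinely lands in $\mathcal{MS}(\Sigma)$ and that the correspondence with $(J,q)$ is a bijection onto the right target — but since the theorem as stated only asserts existence and uniqueness of $h$ given $(J,q)$, the real content is entirely in the scalar PDE above. The delicate step is checking the signs: one must confirm that $\det(h^{-1}\sigma)\leq 0$ always (so that the Gauss curvature $K_h=-1-\det B\leq -1$, consistent with a maximal surface in a manifold of curvature $-1$), and that $\|q\|^2_{h_0}$ is precisely the non-negative quantity $-\det B_0$; this is a short linear-algebra computation using that $\sigma=\Re(q)$ with $q=\sigma-i\sigma(\cdot,J\cdot)$ makes $B$ traceless and self-adjoint, as recorded in the first sentence of Lemma~\ref{lem:hol_quadr_diff_equivalence}. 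Once the signs are pinned down, the sub/supersolution argument is entirely routine and the monotonicity gives uniqueness for free.
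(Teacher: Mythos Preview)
Your argument is correct and is essentially the standard proof; the paper itself does not give a proof but merely cites \cite[Lemma~3.6, Theorem~3.8]{krasnov_schlenker_minimal}, where exactly this reduction to the scalar equation $\Delta_{h_0}u=e^{2u}-\|q\|_{h_0}^2e^{-2u}-1$ and the sub/super-solution method appear. One small sharpening: for the subsolution you can simply take $u_-\equiv 0$, since then $F(0)=-\|q\|_{h_0}^2\leq 0$ everywhere, which avoids any worry about the zeros of $q$.
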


Again, the map that associates to a pair $(h,B)$ satisfying the Gauss-Codazzi equations the pair $(J,q)$, where $J$ is the complex structure of $h$ and $\sigma=hB=\Re(q)$, is natural with respect to the action of $\mathrm{Diff}(\Sigma)$.  

We remark that Theorem \ref{thm:emd_data_from_hqd} is proved in \cite{krasnov_schlenker_minimal} for a closed surface of genus $\geq 2$. However, the case of genus one holds true, and can be proved directly, provided $\sigma\neq 0$. 

\begin{proposition}\label{prop:emd_data_from_hqd_torus}
Given a complex structure $J$ on $T^2$ and a \emph{non-zero} holomorphic quadratic differential $q$ on $(T^2,J)$, there exists a unique Riemannian metric $h$ compatible with $J$ such that the pair $(h,B=h^{-1}\sigma)$ satisfies \eqref{eq:gauss_codazzi}, where $\sigma=\Re(q)$.
\end{proposition}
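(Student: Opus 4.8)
The plan is to reduce the claim to a scalar semilinear equation on the flat torus that can be solved by inspection. Since $(T^2,J)$ is uniformized by $\mathbb{C}/\Lambda$ for some lattice $\Lambda$, I would fix a flat metric $g_0$ compatible with $J$; then every Riemannian metric $h$ compatible with $J$ is of the form $h=e^{2u}g_0$ for a smooth function $u\colon T^2\to\R$, so solving for $h$ amounts to solving for $u$. Writing $\sigma=\Re(q)$ and recalling that a holomorphic quadratic differential on a torus has the form $q=\phi\,dz^2$ with $\phi$ holomorphic, hence constant, I observe that $\sigma$ has constant $g_0$-norm; set $\psi:=|q|_{g_0}$, a \emph{positive} constant precisely because $q\neq 0$.

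The first step is to note that the Codazzi equation holds automatically for every such $h$. Indeed, by Lemma \ref{lem:hol_quadr_diff_equivalence} the tensor $\sigma=\Re(q)$ is $g_0$-self-adjoint, traceless and $g_0$-divergence-free (being the real part of a holomorphic quadratic differential on $(T^2,J)$), so $B_0=g_0^{-1}\sigma$ is $g_0$-Codazzi; and by Remark \ref{rmk:independent_conformal_metric}, $\sigma$ is then divergence-free — equivalently $h^{-1}\sigma$ is $h$-Codazzi, and still traceless and $h$-self-adjoint — for every $h=e^{2u}g_0$. Hence the only equation in \eqref{eq:gauss_codazzi} that remains to be imposed is the Gauss equation $K_h=-1-\det B$.

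The second step is to make the Gauss equation explicit. Using $K_h=-e^{-2u}\Delta_{g_0}u$ (since $g_0$ is flat, with $\Delta_{g_0}$ the Laplace--Beltrami operator of $g_0$ in the sign convention for which $\Delta_{g_0}u\le 0$ at a maximum) and $\det B=\det(e^{-2u}g_0^{-1}\sigma)=-e^{-4u}\psi^2$, the equation becomes
\[
\Delta_{g_0}u = e^{2u}-\psi^2 e^{-2u}.
\]
Existence is then immediate: the constant function $u_0\equiv\tfrac12\log\psi$ is a solution (geometrically, $h=\psi\,g_0$ is flat with $\det B\equiv -1$, so $K_h=0=-1-\det B$). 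This is exactly where the hypothesis $q\neq 0$ is used: if $\psi=0$ the equation reads $\Delta_{g_0}u=e^{2u}$, which has no solution on a closed surface by integrating over $T^2$ — consistent with the fact that $\mathcal{MGH}(T^2)$ is identified with the complement of the zero section of $T^*\mathcal{J}(T^2)$. For uniqueness I would apply the maximum principle: if $u$ solves the equation, then at a maximum point $\Delta_{g_0}u\le 0$ forces $e^{4u_{\max}}\le\psi^2$, i.e.\ $u_{\max}\le\tfrac12\log\psi$, while at a minimum point $\Delta_{g_0}u\ge 0$ forces $u_{\min}\ge\tfrac12\log\psi$; since $u_{\min}\le u_{\max}$ this yields $u\equiv\tfrac12\log\psi=u_0$. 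Alternatively one could invoke the strict convexity of the functional $u\mapsto\tfrac12\int_{T^2}|\nabla u|_{g_0}^2\,\dvol_{g_0}+\tfrac12\int_{T^2}e^{2u}\,\dvol_{g_0}+\tfrac{\psi^2}{2}\int_{T^2}e^{-2u}\,\dvol_{g_0}$, whose critical points are the solutions. Either way one obtains the unique compatible metric $h=e^{2u_0}g_0$.

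I do not expect a genuine obstacle here: the essential simplification is that on the torus the holomorphic quadratic differential is constant, so the Gauss--Codazzi system degenerates and is solved by a constant conformal factor. The only points requiring care are the conformal insensitivity of the Codazzi condition — which is precisely Remark \ref{rmk:independent_conformal_metric} — and ruling out non-constant conformal factors, which is handled by the maximum-principle argument above.
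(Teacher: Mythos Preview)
Your argument is correct. The reduction to the scalar equation $\Delta_{g_0}u=e^{2u}-\psi^2 e^{-2u}$ is sound, the constant solution $u_0=\tfrac12\log\psi$ works precisely because $q\neq0$, and your maximum-principle uniqueness is clean.

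This is a genuinely different route from the paper's. The paper argues geometrically: it invokes the classification of MGHC AdS tori (citing \cite{bonsante2020antide}) to assert that the maximal Cauchy surface is always intrinsically flat, which forces $K_h=0$ and $\det B\equiv -1$ and hence pins down $h$ uniquely in its conformal class; existence is then obtained by normalizing coordinates so that $q=dz^2$ and reading off the explicit AdS metric \eqref{eq:torus metric}. Your approach is more elementary and self-contained --- it never leaves the surface and does not appeal to the ambient AdS classification --- and it makes the role of the hypothesis $q\neq0$ completely transparent via the obstruction $\int_{T^2}e^{2u}=0$. The paper's approach, on the other hand, has the advantage of producing the explicit global AdS metric, which is what the rest of Section~\ref{subsec:cotangent} (and the identification of $\mathcal{MGH}(T^2)$ with the complement of the zero section) ultimately needs.
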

\begin{proof}
It turns out that, given any MGHC AdS manifold diffeomorphic to $T^2\times\R$, the (unique) maximal Cauchy surface is intrinsically flat, and the MGHC metric can be written (now \emph{globally}) from the maximal surface as 
\begin{equation}\label{eq:torus metric}
(1+\sin(2t))dx^2+(1-\sin(2t))dy^2-dt^2~,
\end{equation}
where:
\begin{itemize}
\item $t\in (-\pi/4,\pi/4)$ represents the timelike distance from the maximal surface;
\item $x,y$ are global flat coordinates for the first fundamental form $h = dx^2 + dy^2$ of the universal cover of the maximal surface.
\end{itemize}
See Section 5 of \cite{bonsante2020antide}, and in particular Lemma 5.2.4, after a simple change of coordinates (translate the vertical coordinate by $\pi/4$, and then perform a change of variables on $x,y$). 
Moreover, since the second fundamental form of the maximal surface $\{t=0\}$ equals one half the derivative at $t=0$ of the metric on the level sets of $t$, we find that $\sigma$ is expressed as $dx^2-dy^2$, hence (in the complex coordinate $z=x+iy$) $q=dz^2$ is the holomorphic quadratic differential whose real part equals $\sigma$, as in Lemma \ref{lem:hol_quadr_diff_equivalence}. 

Inspired by these observations, let us now reconstruct a MGHC AdS manifold from a pair $(J,\sigma)$ on the torus. Let us realize the complex structure $J$ as that induced by a flat metric on $T^2$, obtained as the quotient of the complex plane by a lattice $\Lambda$. In other words, we find a biholomorphism between $(T^2,J)$ and $\C/\Lambda$ endowed with the complex structure induced by the standard complex structure of $\C$. We stress that for the moment we do not assume any normalization on the lattice $\Lambda$, which is therefore determined up to automorphisms of $\C$. 

 Let us call $x,y$ (where $z=x+iy$) the flat coordinates on $\C$, which induce flat coordinates on the quotient $\C/\Lambda$. Then the holomorphic quadratic differential $q$ has the expression $cdz^2$, for $c\neq 0$ a complex number. Multiplying the coordinate $z$ by a square root of $1/c$, we can also assume that $q=dz^2$. This has the effect of rescaling and rotating the lattice $\Lambda$, thus obtaining a new lattice $\Lambda'$ inducing the same $J$ on $T^2$. Hence the expression \eqref{eq:torus metric} gives a MGHC AdS metric on $(\C/\Lambda')\times (-\pi/4,\pi/4)$, for which $\{t=0\}$ is a maximal surface whose corresponding complex structure is $J$ and quadratic differential is $q$.  
\end{proof}

Recalling that the cotangent bundle of Teichm\"uller space is identified with the bundle of holomorphic quadratic differentials, as consequence of Theorem \ref{thm:emd_data_from_hqd} (for higher genus) and the above discussion (for genus one), one obtains:

\begin{theorem}\label{thm:cotangent_parameterization}
Let $\Sigma$ be a closed oriented surface of genus $\geq 2$. The map sending a pair $(h,B)$ to the pair $(J,q)$, where $J$ is the complex structure induced by $h$ and $\Re(q)=hB$, induces a $MCG(\Sigma)$-invariant homeomorphism
\[
    \mappa{\mathcal F}{\mathcal{MGH}(\Sigma)\cong\MS(\Sigma)}{T^* \Teich^\conf(\Sigma)}~.
\]
If $\Sigma=T^2$, the same map gives a $MCG(T^2)$-invariant homeomorphism between $\mathcal{MGH}(T^2)$ and the complement of the zero section in $T^* \Teich^\conf(T^2)$.
\end{theorem}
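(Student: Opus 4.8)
The plan is to assemble the homeomorphism $\mathcal F$ from the pieces already in place, and to check $MCG(\Sigma)$-invariance and the genus-one image statement carefully. First I would recall that, by Proposition~\ref{prop:MGHandMS}, we have a $MCG(\Sigma)$-invariant identification $\mathcal{MGH}(\Sigma)\cong\MS(\Sigma)$, so it suffices to produce the claimed homeomorphism out of $\MS(\Sigma)$. Next, I would use the standard identification of $T^*\Teich^\conf(\Sigma)$ with the bundle of holomorphic quadratic differentials: the cotangent space to $\Teich^\conf(\Sigma)$ at $J$ is naturally $H^0(\Sigma,\mathcal K_J^2)$ (for genus $\geq 2$), so a point of $T^*\Teich^\conf(\Sigma)$ is exactly a pair $(J,q)$ with $q$ holomorphic quadratic differential on $(\Sigma,J)$. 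With these two reductions, the map $\mathcal F$ is: send the class of $(h,B)$ to $(J,q)$ where $J$ is the almost-complex structure of $h$ and $q=\sigma-i\sigma(\cdot,J\cdot)$ with $\sigma=hB$.

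Then I would verify that this is well-defined and bijective. Well-definedness on the quotient by $\mathrm{Diff}_0(\Sigma)$ follows from the naturality of the assignment $(h,B)\mapsto(J,q)$ under $\mathrm{Diff}(\Sigma)$, already noted after Theorem~\ref{thm:emd_data_from_hqd}. That $(h,B)\in\MS(\Sigma)$ really maps into holomorphic quadratic differentials is Lemma~\ref{lem:hol_quadr_diff_equivalence}: $B$ traceless and $h$-self-adjoint gives that $\sigma=hB$ is the real part of a quadratic differential, and the Codazzi equation $\dd^{\nabla^h}B=0$ (part of \eqref{eq:gauss_codazzi}) is equivalent to $q$ being holomorphic. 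Injectivity: if $(J,q)$ agree for two pairs, then the underlying complex structures $J$ agree, and $\sigma=\Re(q)$ agrees, so by the uniqueness clause of Theorem~\ref{thm:emd_data_from_hqd} (resp.\ Proposition~\ref{prop:emd_data_from_hqd_torus} in genus one) the metric $h$ such that $(h,h^{-1}\sigma)$ solves \eqref{eq:gauss_codazzi} is unique, hence $B=h^{-1}\sigma$ is determined — so the two classes in $\MS(\Sigma)$ coincide (up to $\mathrm{Diff}_0$). Surjectivity onto $T^*\Teich^\conf(\Sigma)$ (genus $\geq 2$): given $(J,q)$, the existence clause of Theorem~\ref{thm:emd_data_from_hqd} produces $h$ compatible with $J$ with $(h,h^{-1}\Re q)$ satisfying \eqref{eq:gauss_codazzi}, and then $B=h^{-1}\Re q$ is automatically traceless and $h$-self-adjoint by Lemma~\ref{lem:hol_quadr_diff_equivalence}, so $(h,B)\in\MS(\Sigma)$ maps to $(J,q)$.

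For the genus-one case, the same construction works verbatim except that Proposition~\ref{prop:emd_data_from_hqd_torus} only provides existence and uniqueness of $h$ when $q\neq 0$; and conversely, if $q=0$ then $\sigma=0$, $B=0$, and the Gauss equation forces $K_h\equiv -1$, which is impossible on $T^2$ by Gauss--Bonnet, so no element of $\MS(T^2)$ maps to the zero section. Hence $\mathcal F$ is a $MCG(T^2)$-invariant bijection onto the complement of the zero section. In both cases $MCG(\Sigma)$-equivariance is immediate from the naturality of $(h,B)\mapsto(J,q)$ under $\mathrm{Diff}_+(\Sigma)$ together with the $MCG$-invariant identifications of Proposition~\ref{prop:MGHandMS} and the description of $T^*\Teich^\conf(\Sigma)$.

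The only genuinely nontrivial point — and the expected main obstacle — is that the bijection is a \emph{homeomorphism}, i.e.\ that $\mathcal F$ and $\mathcal F^{-1}$ are continuous for the natural topologies. Continuity of $\mathcal F$ is clear since $(h,B)\mapsto(J,q)$ is a smooth (indeed algebraic) operation in the jets of $h$ and $B$. For the inverse, one must know that the metric $h$ produced by Theorem~\ref{thm:emd_data_from_hqd} depends continuously on $(J,q)$; this follows from the fact that $h$ is obtained by solving a nonlinear elliptic PDE (the Gauss equation, of the form $\Delta u = e^{2u}+\|q\|^2 e^{-2u}-\ldots$ after writing $h=e^{2u}g_J$ for a reference metric $g_J$) with a unique solution, and elliptic regularity plus the implicit function theorem give smooth dependence of $u$ on the parameters. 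Alternatively, one invokes that both sides are finite-dimensional manifolds and that a continuous bijection between them which is a local homeomorphism (again by the implicit function theorem applied to the Gauss--Codazzi system) is a homeomorphism. I would phrase the argument so as to cite \cite{krasnov_schlenker_minimal} for the smooth dependence in the higher-genus case and give the explicit model computation in genus one, where $h$ is flat and is literally read off from \eqref{eq:torus metric} after the change of variables normalizing $q=dz^2$.
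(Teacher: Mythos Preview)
Your proposal is correct and follows essentially the same route as the paper, which presents the theorem as a direct consequence of Theorem~\ref{thm:emd_data_from_hqd} (genus $\geq 2$) and Proposition~\ref{prop:emd_data_from_hqd_torus} (genus one) together with the identification of $T^*\Teich^\conf(\Sigma)$ with the bundle of holomorphic quadratic differentials. Your write-up is in fact more detailed than the paper's: the Gauss--Bonnet argument excluding the zero section in genus one and the discussion of continuity via elliptic dependence are natural elaborations that the paper leaves implicit.
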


\subsection{Mess' parameterization}\label{subsec:parameterizations}

Let us now focus on genus $\geq 2$ and move on to another parameterization of $\MS(\Sigma)$, which has been essentially introduced by Mess. For this purpose, let us introduce the hyperbolic model of Teichm\"uller space, namely:
$$\mathcal{T}^{\hyp}(\Sigma):=\{h\,|\,h\text{ is a hyperbolic metric on }\Sigma  \}/\mathrm{Diff}_0(\Sigma)~.$$

The ``hyperbolic'' Teichm\"uller space is $MCG(\Sigma)$-invariantly homeomorphic to $\mathcal{T}^{\conf}(\Sigma)$ by the uniformization theorem. Also, the holonomy representation provides a $MCG(\Sigma)$-invariant homeomorphism to the space of Fuchsian representations, which we denote by:
$$\mathcal{T}^{\rep}(\Sigma):=\{\rho:\pi_1(\Sigma)\to\Isom(\Hyp^2)\text{ discrete and faithful representations} \}/\Isom(\Hyp^2)~,$$
where $\Isom(\Hyp^2)$ acts by conjugation (\cite{goldmanthesis}). Now, given a pair $(h,B)$ satisfying \eqref{eq:gauss_codazzi}, one can construct two hyperbolic metrics by the formula
\begin{equation}\label{eq:left_right_metric}
(h,B)\mapsto (h_l:=h((\1-JB)\cdot,(\1-JB)\cdot),h_r:=h((\1+JB)\cdot,(\1+JB)\cdot))~.
\end{equation}
(Here $h_l$ and $h_r$ stand for ``left'' and ``right'', and in fact these metrics can be interpreted as the pull-backs via the so-called \emph{left and right Gauss maps}, see \cite[\S 3]{krasnov_schlenker_minimal}, \cite[\S 6]{bonsante2020antide}, \cite[\S 6]{barbotkleinian}.) It can be proved (see for instance the indicated references) that the metrics $h_l$ and $h_r$ are hyperbolic and that, interpreting the isometry group of Anti-de Sitter space as $\PSL(2,\R)\times \PSL(2,\R)$, the map \eqref{eq:left_right_metric} gives the left and right components of the holonomy map
$$\hol:\pi_1(\Sigma)\to\PSL(2,\R)\times \PSL(2,\R)$$
of the AdS MGHC manifold determined by the embedding data $(h,B)$ of the maximal Cauchy surface, under the isomorphism between $\mathcal{T}^{\hyp}(\Sigma)$ and $\mathcal{T}^{\rep}(\Sigma)$.

In summary, we state the following theorem:

\begin{theorem}[{\cite{mess2007lorentz},\cite[Theorem 3.17]{krasnov_schlenker_minimal},\cite[Theorem 5.5.4]{bonsante2020antide}}] \label{thm:mess_homeo}
Let $\Sigma$ be a closed oriented surface of genus $\geq 2$. The map sending a pair $(h,B)$ to the pair of hyperbolic metrics $(h_l,h_r)$ in \eqref{eq:left_right_metric} induces a $MCG(\Sigma)$-invariant homeomorphism
\[
    \mappa{\mathcal M^\hyp}{\MS(\Sigma)}{\Teich^{\hyp}(\Sigma)\times\Teich^{\hyp}(\Sigma)}~.
\]
Under the natural homeomorphism  $\Teich^{\hyp}(\Sigma)\cong \Teich^{\rep}(\Sigma)$, such map coincides with the map 
\[
    \mappa{\mathcal M^\rep}{\mathcal{MGH}(\Sigma)}{\Teich^{\rep}(\Sigma)\times\Teich^{\rep}(\Sigma)}~.
\]
sending a MGHC AdS manifold to the conjugacy class of its holonomy representation.
\end{theorem}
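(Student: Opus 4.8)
The plan is to route both assertions through the holonomy representation, using as a bridge the \emph{left and right Gauss maps} of the maximal Cauchy surface. Recall that $\AdS_{3}$ is modelled on $\PSL(2,\R)$ with (a multiple of) its bi-invariant Lorentzian metric, so that $\Isom_{0}(\AdS_{3})\cong\PSL(2,\R)\times\PSL(2,\R)$ acts by left and right translations; this is the model underlying the factorwise identification $\Teich^{\hyp}\cong\Teich^{\rep}$. Given the maximal Cauchy surface $\tilde S\subset\widetilde{\AdS_{3}}$ with embedding data $(h,B)$, one has the Gauss maps $\mappa{G_{l},G_{r}}{\tilde S}{\Hyp^{2}}$ (see \cite[\S3]{krasnov_schlenker_minimal}, \cite[\S6]{bonsante2020antide}, \cite[\S6]{barbotkleinian}), and the computation performed in those references shows that $G_{l}^{*}g_{\Hyp^{2}}=h((\1-JB)\cdot,(\1-JB)\cdot)=h_{l}$ and $G_{r}^{*}g_{\Hyp^{2}}=h_{r}$, while $G_{l}$ (resp.\ $G_{r}$) intertwines the holonomy $\rho=(\rho_{l},\rho_{r})$ of the ambient AdS structure with its left (resp.\ right) $\PSL(2,\R)$-component.

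First I would check that $h_{l}$ and $h_{r}$ are hyperbolic metrics. For any $h$-self-adjoint $B$ one has $\tr(JB)=0$, hence $\det(\1\mp JB)=1+\det B$, and combining with the Gauss equation in \eqref{eq:gauss_codazzi} gives $\det(\1\mp JB)=-K_{h}$. Now the maximal Cauchy surface of a MGHC AdS manifold satisfies $K_{h}<0$ everywhere --- equivalently, its principal curvatures lie in $(-1,1)$ --- which is the a priori estimate of \cite{bbz,bonsante2010maximal} (see also \cite[\S5.5]{bonsante2020antide}), obtained via a maximum principle applied to $\|B\|^{2}$ using the Codazzi equation and the ambient curvature $-1$. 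Therefore $\1\mp JB$ is everywhere invertible, so $h_{l},h_{r}$ are genuine Riemannian metrics; being pull-backs of the hyperbolic metric under the local isometries $G_{l},G_{r}$ they have constant curvature $-1$, and they are complete since $\Sigma$ is closed. Thus $(h,B)\mapsto(h_{l},h_{r})$ takes values in $\Teich^{\hyp}(\Sigma)\times\Teich^{\hyp}(\Sigma)$, and naturality of the whole construction with respect to $\Diff(\Sigma)$ makes it descend to a continuous $MCG(\Sigma)$-equivariant map $\mappa{\mathcal M^{\hyp}}{\MS(\Sigma)}{\Teich^{\hyp}(\Sigma)\times\Teich^{\hyp}(\Sigma)}$.

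Next I would identify $\mathcal M^{\hyp}$ with the holonomy map. Since $(\Sigma,h_{l})$ is a closed hyperbolic surface, $\tilde S$ equipped with the lifted metric is isometric to $\Hyp^{2}$, and $G_{l}$ is then a $\rho_{l}$-equivariant local isometry between complete simply connected hyperbolic surfaces, hence a diffeomorphism, i.e.\ a developing map for $h_{l}$. Consequently $\rho_{l}$ is the holonomy of the hyperbolic structure $h_{l}$ --- in particular it is Fuchsian --- and likewise $\rho_{r}$ for $h_{r}$; so under the identification $\Teich^{\hyp}\cong\Teich^{\rep}$ the map $\mathcal M^{\hyp}$ becomes $(h,B)\mapsto(\rho_{l},\rho_{r})$, which via Proposition~\ref{prop:MGHandMS} is exactly the map $\mathcal M^{\rep}$ sending a MGHC AdS manifold to the conjugacy class of its holonomy.

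It remains to show $\mathcal M^{\rep}$ is a homeomorphism, which is Mess' theorem \cite{mess2007lorentz}. I would argue via the domain-of-dependence construction: a pair of Fuchsian representations assembles into $\mappa{\rho=(\rho_{l},\rho_{r})}{\pi_{1}(\Sigma)}{\Isom_{0}(\AdS_{3})}$, which acts properly discontinuously and cocompactly on the domain of dependence of the limit set of $\rho$, the quotient being the unique MGHC AdS structure with holonomy $\rho$; this yields at once surjectivity and injectivity of $\mathcal M^{\rep}$, using the uniqueness part of the classification of GHC AdS manifolds with prescribed holonomy. Continuity of $\mathcal M^{\rep}$ is continuous dependence of the holonomy on the metric, and continuity of $(\mathcal M^{\rep})^{-1}$ is continuous dependence of the domain of dependence on $\rho$. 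I expect the main obstacles to be the a priori estimate $K_{h}<0$ of the second step --- without which both Gauss maps degenerate --- and the continuity of $(\mathcal M^{\rep})^{-1}$; both are established in \cite{mess2007lorentz,krasnov_schlenker_minimal,bonsante2020antide}, so in the paper one invokes them rather than reproducing them.
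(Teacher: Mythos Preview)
Your sketch is correct and follows the standard route found in the cited references \cite{mess2007lorentz,krasnov_schlenker_minimal,bonsante2020antide}; note that the paper itself does not supply a proof of this theorem but quotes it as a known result, with only a brief remark that the Gauss maps realize $h_{l},h_{r}$ as pull-backs of the hyperbolic metric and intertwine the holonomy components. One small attribution point: in this paper the a~priori estimate $|\lambda_{\pm}|<1$ (equivalently $K_{h}<0$) for the maximal surface in genus $\geq 2$ is attributed to \cite{krasnov_schlenker_minimal} rather than to \cite{bbz,bonsante2010maximal}.
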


It will be useful for future computations to express this  map in the conformal model of Teichm\"uller space. This follows easily by observing that the complex structure of a metric of the form $h(A\cdot,A\cdot)$ equals the $A$-conjugate of the complex structure of $h$.

\begin{lemma}\label{lem:mess_complex_str}
Let $\Sigma$ be a closed oriented surface of genus $\geq 2$. The $MCG(\Sigma)$-invariant homeomorphism 
\[
    \mappa{\mathcal M^\conf}{\MS(\Sigma)}{\Teich^{\conf}(\Sigma)\times\Teich^{\conf}(\Sigma)}~.
\]
is induced by the map
$$(h,B) \mapsto \left(J_l:= (\1 - J B)^{-1} J (\1 - J B) ,J_r:= (\1 + J B)^{-1} J (\1 + J B) \right)~,$$
where $J$ is the complex structure defined by the metric $h$.
\end{lemma}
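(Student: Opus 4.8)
The statement to prove is Lemma~\ref{lem:mess_complex_str}, which asserts that the conformal-model version of the Mess map sends $(h,B)$ to the pair of complex structures obtained by conjugating $J$ (the complex structure of $h$) by $(\1 \mp JB)$.

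\medskip

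The plan is to start from Theorem~\ref{thm:mess_homeo}, which already identifies $\mathcal{M}^\hyp$ with the map $(h,B)\mapsto(h_l,h_r)$ where $h_l = h((\1-JB)\cdot,(\1-JB)\cdot)$ and $h_r = h((\1+JB)\cdot,(\1+JB)\cdot)$, together with the natural identification $\Teich^\hyp(\Sigma)\cong\Teich^\conf(\Sigma)$ given by sending a metric to its induced complex structure (i.e.\ the complex structure determined by the conformal class and orientation). Under this identification, $\mathcal{M}^\conf$ is by definition the composition, so the content of the lemma is purely the linear-algebra fact announced in the paragraph preceding it: \emph{the complex structure induced by a metric of the form $h(A\cdot,A\cdot)$, for $A$ an orientation-preserving bundle automorphism of $T\Sigma$, is the $A$-conjugate $A^{-1}JA$ of the complex structure $J$ induced by $h$.}

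\medskip

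So the first and essentially only step is to prove this linear-algebra claim, fiberwise over $\Sigma$. Fix a point and work in the tangent plane. Let $J$ be the $h$-compatible complex structure (so $h(Jv,Jw)=h(v,w)$ and $(v,Jv)$ is positively oriented), let $A$ be an orientation-preserving linear isomorphism, and set $h'=h(A\cdot,A\cdot)$. I claim $J' := A^{-1}JA$ is the complex structure compatible with $h'$. Indeed: (i) $(J')^2 = A^{-1}J^2 A = -\1$; (ii) $h'(J'v,J'w) = h(AA^{-1}JAv, AA^{-1}JAw) = h(JAv,JAw) = h(Av,Aw) = h'(v,w)$, so $J'$ is $h'$-orthogonal and hence $h'$-compatible up to sign; (iii) orientation: since $A$ preserves orientation, $(v,J'v)$ and $(Av, JAv)$ have the same orientation, and the latter is positive because $J$ is compatible with $h$ — so $J'$ is the correct (not the opposite) complex structure. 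Since the $h'$-compatible complex structure is unique, $J'=A^{-1}JA$ as claimed.

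\medskip

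The second step is simply to apply this with $A=\1-JB$ to get $J_l=(\1-JB)^{-1}J(\1-JB)$ and with $A=\1+JB$ to get $J_r=(\1+JB)^{-1}J(\1+JB)$. For this I need to check that $\1\mp JB$ is indeed an orientation-preserving automorphism of $T\Sigma$ at each point — equivalently, that it is invertible with positive determinant. Here one uses that $B$ is traceless and $h$-self-adjoint, so $B$ has real eigenvalues $\pm\lambda$ (with $\lambda\ge 0$), $JB$ has eigenvalues $\pm i\lambda$ with respect to a suitable basis, hence $\1\mp JB$ has complex-conjugate eigenvalues $1\mp i\lambda$ and determinant $1+\lambda^2>0$; in particular it is invertible and orientation-preserving. (Alternatively, this is exactly the condition, already recorded in the cited references, under which $h_l,h_r$ are genuine Riemannian metrics, so it is safe to invoke it.) Finally, I note that the whole construction is $\mathrm{Diff}(\Sigma)$-natural — conjugating by a diffeomorphism commutes with all the operations involved — so the map descends to the quotients and is $MCG(\Sigma)$-invariant, matching $\mathcal{M}^\hyp$ under uniformization.

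\medskip

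I do not expect any serious obstacle here: the lemma is a bookkeeping statement translating Theorem~\ref{thm:mess_homeo} from the hyperbolic to the conformal model, and the only genuine content is the one-line computation in step one. The mild point to be careful about is the orientation bookkeeping in (iii) and the invertibility of $\1\mp JB$ in step two; both are immediate given that $B$ is traceless and $h$-self-adjoint.
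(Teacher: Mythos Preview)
Your approach is exactly the one the paper intends: the sentence immediately preceding the lemma says that the statement ``follows easily by observing that the complex structure of a metric of the form $h(A\cdot,A\cdot)$ equals the $A$-conjugate of the complex structure of $h$,'' and you have simply written out that observation in full. So the strategy is correct and matches the paper.

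There is, however, a computational slip in your step two. The endomorphism $JB$ does \emph{not} have eigenvalues $\pm i\lambda$: since $B$ is $h$-self-adjoint and traceless, so is $JB$ (one checks $(JB)^*=B^*J^*=-BJ=JB$, using that $B$ anticommutes with $J$), and hence $JB$ has \emph{real} eigenvalues $\pm\lambda$. Consequently $\det(\1\mp JB)=1-\lambda^2=1+\det B$, not $1+\lambda^2$. This quantity is positive not for free, but precisely because the principal curvatures of the maximal surface satisfy $|\lambda|<1$ in genus $\ge 2$ (equivalently $\det B\in(-1,0]$), a fact the paper records just after Lemma~\ref{lem:det_B}. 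Your parenthetical alternative --- that invertibility is already implicit in Theorem~\ref{thm:mess_homeo}, since $h_l,h_r$ are asserted there to be genuine hyperbolic metrics --- is correct and is the cleanest way to close this point. With that correction, your proof is fine.
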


The map $\mathcal M$ has also an interpretation in terms of harmonic maps. Indeed, it turns out that the identity map $\id:(\Sigma,J)\to (\Sigma,h_l)$ is harmonic and its Hopf differential is the holomorphic quadratic differential $iq$, where $\Re(q)=hB$. Indeed, from the expression \eqref{eq:left_right_metric}, we see that
$$h_l=\left(1-\det(B)\right)h-2h(\cdot,JB\cdot)$$
where we used that $JB$ is $h$-self-adjoint and traceless, hence by the Cayley-Hamilton theorem $(JB)^2=-\det(JB)\1=-\det(B)\1$. Hence the $(2,0)$-part of $h_l$ with respect to the complex structure $J$ is  the quadratic differential $iq$, where $\sigma=\Re(q)=hB$ (Remark \ref{rmk:rotateB}). Moreover it is holomorphic by Lemma \ref{lem:hol_quadr_diff_equivalence}. This implies that $\id:(\Sigma,J)\to (\Sigma,h_l)$ is harmonic, since it is a diffeomorphism and its Hopf differential is holomorphic (see \cite[\S 9]{sampson1978some_properties}). Analogously $\id:(\Sigma,J)\to (\Sigma,h_r)$ is harmonic and has Hopf differential $-iq$. We summarize this in the following lemma.

\begin{lemma}
Let $\Sigma$ be a closed oriented surface of genus $\geq 2$. Then
$$\mathcal M^\hyp\circ\mathcal F^{-1}(J,q)=(h_{(J,iq)},h_{(J,-{i}q)})~,$$
where $h(J,q)$ denotes the unique hyperbolic metric on $\Sigma$ such that $\id:(\Sigma,J)\to(\Sigma,h_{(J,q)})$ is harmonic. 
\end{lemma}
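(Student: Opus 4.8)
The statement to prove is the final lemma: that $\mathcal M^\hyp\circ\mathcal F^{-1}(J,q)=(h_{(J,iq)},h_{(J,-iq)})$, where $h_{(J,q)}$ denotes the unique hyperbolic metric such that $\id\colon(\Sigma,J)\to(\Sigma,h_{(J,q)})$ is harmonic with Hopf differential $q$. The plan is essentially to assemble the ingredients already laid out in the text just before the statement into a clean argument, making explicit the identifications between the conformal and hyperbolic models.

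First I would unwind the definitions. By Theorem \ref{thm:cotangent_parameterization}, given $(J,q)$ the map $\mathcal F^{-1}$ produces the embedding data $(h,B)$ of the maximal surface, where $h$ is the (unique, by Theorem \ref{thm:emd_data_from_hqd}) Riemannian metric compatible with $J$ such that $(h,B=h^{-1}\sigma)$ satisfies \eqref{eq:gauss_codazzi}, with $\sigma=\Re(q)$. Then $\mathcal M^\hyp$ sends $(h,B)$ to $(h_l,h_r)$ as defined in \eqref{eq:left_right_metric}. So the content is: the hyperbolic metric $h_l$ coincides (up to isotopy) with $h_{(J,iq)}$, i.e. $\id\colon(\Sigma,J)\to(\Sigma,h_l)$ is harmonic with Hopf differential $iq$; and similarly $h_r$ with $-iq$.

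The heart of the argument is the computation, already sketched in the text, that expands $h_l$ using the Cayley--Hamilton identity. Since $JB$ is $h$-self-adjoint and traceless, $(JB)^2=-\det(JB)\1=-\det(B)\1$, so expanding $h_l=h((\1-JB)\cdot,(\1-JB)\cdot)$ gives $h_l=(1-\det B)\,h-2\,h(\cdot,JB\cdot)$. I would then identify the $(2,0)$-part of $h_l$ with respect to $J$: the term $(1-\det B)h$ is a conformal (hence $(1,1)$) multiple of $h$ and contributes nothing to the $(2,0)$-part, while $-2h(\cdot,JB\cdot)$ has $(2,0)$-part equal to $iq$ by the computation in Remark \ref{rmk:rotateB} (multiplying $q=\sigma-i\sigma(\cdot,J\cdot)$ by $i$ yields $\Re(iq)=\sigma(\cdot,J\cdot)=h(\cdot,JB\cdot)$ up to sign/conventions, which one must check carefully against the paper's sign conventions). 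This quadratic differential $iq$ is holomorphic on $(\Sigma,J)$ by Lemma \ref{lem:hol_quadr_diff_equivalence}, since $\sigma=\Re(q)$ is, and multiplication by $i$ preserves holomorphicity. Finally, invoking the standard fact (\cite{sampson1978some_properties}) that a diffeomorphism between surfaces whose Hopf differential is holomorphic is automatically harmonic, together with $h_l$ being hyperbolic (as recalled in Theorem \ref{thm:mess_homeo}), one concludes $\id\colon(\Sigma,J)\to(\Sigma,h_l)$ is harmonic with Hopf differential $iq$, hence $h_l=h_{(J,iq)}$ by the uniqueness in the definition of $h_{(J,\cdot)}$. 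The case of $h_r$ is identical with the sign $-JB$ replacing $JB$, giving Hopf differential $-iq$.

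\textbf{Main obstacle.} There is no deep obstacle here: the proof is a bookkeeping exercise combining Theorems \ref{thm:cotangent_parameterization}, \ref{thm:emd_data_from_hqd}, \ref{thm:mess_homeo}, Lemma \ref{lem:hol_quadr_diff_equivalence} and Remark \ref{rmk:rotateB}. The one point requiring genuine care is the sign/normalization conventions: one must verify that the $(2,0)$-part of $-2h(\cdot,JB\cdot)$ is precisely $iq$ and not $-iq$ or $2iq$, which depends on the paper's convention for the Hopf differential and for the decomposition into $(2,0)$ and $(1,1)$ parts, and correspondingly that it is $h_l$ (and not $h_r$) that pairs with $iq$. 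Matching these conventions — in particular confirming which of the left/right metrics corresponds to $+iq$ — is the only place where a careless reader could go astray, and it is worth stating the convention explicitly before doing the expansion.
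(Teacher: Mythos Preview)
Your proposal is correct and follows essentially the same approach as the paper: expand $h_l$ via Cayley--Hamilton, identify the $(2,0)$-part as $iq$ using Remark~\ref{rmk:rotateB}, conclude holomorphicity from Lemma~\ref{lem:hol_quadr_diff_equivalence}, and invoke Sampson's criterion that a diffeomorphism with holomorphic Hopf differential is harmonic. The paper in fact presents this argument in the paragraph immediately preceding the lemma and then states the lemma as a summary, so your outline matches it almost verbatim, including the observation that the case of $h_r$ goes through with the opposite sign.
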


\subsection{Constant curvature surfaces and circle action}\label{subsec:cc_and_circle}

Another parameterization of $\MS(\Sigma)$ by the product of two copies of Teichm\"uller space is constructed as follows. Given a maximal surface in a MGHC AdS manifold of genus $\geq 2$, a standard computation shows that the two surfaces at distance $\pi/4$ from the maximal surface have intrinsic curvature $-2$ (see \cite{chentam}, \cite[Theorem 7.1.4]{bonsante2020antide}). Hence, multiplying the first fundamental forms of these surfaces by a factor $2$, so that they become of intrinsic curvature $-1$, one finds two hyperbolic metrics on $\Sigma$, which are expressed by:
\begin{equation}\label{eq:minus_plus_metric}
(h,B)\mapsto (h_-:=h((\1-B)\cdot,(\1-B)\cdot),h_+:=h((\1+B)\cdot,(\1+B)\cdot))~.
\end{equation}

By arguments similar to those leading to Theorem \ref{thm:mess_homeo}, one can prove that this produces again a natural homeomorphism, namely:

\begin{theorem}[{\cite[Theorem 3.21]{krasnov_schlenker_minimal}}] \label{thm:cc_homeo}
Let $\Sigma$ be a closed oriented surface of genus $\geq 2$. The map sending a pair $(h,B)$ to the pair of hyperbolic metrics $(h_-,h_+)$ in \eqref{eq:minus_plus_metric} induces a $MCG(\Sigma)$-invariant homeomorphism
\[
    \mappa{\mathcal C^\hyp}{\MS(\Sigma)}{\Teich^{\hyp}(\Sigma)\times\Teich^{\hyp}(\Sigma)}~.
\]
The hyperbolic metrics $(h_-,h_+)$ are obtained as the first fundamental forms of the $\pi/4$-equidistant surfaces from the maximal surface, after rescaling by a suitable constant.
\end{theorem}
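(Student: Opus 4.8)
The strategy is to mirror the proof of Theorem \ref{thm:mess_homeo} (Mess' parameterization), replacing the left/right Gauss maps by the normal evolution flow a quarter-period forward and backward. First I would fix embedding data $(h,B)\in\MS(\Sigma)$, with $B$ traceless and $h$-self-adjoint and satisfying \eqref{eq:gauss_codazzi}, and recall from Section \ref{subsec:maximal} that the ambient MGHC AdS metric in a tubular neighbourhood of the maximal surface is $-dt^2+h((\cos(t)\1+\sin(t)B)\cdot,(\cos(t)\1+\sin(t)B)\cdot)$. Evaluating the induced metric on the level set $\{t=\pm\pi/4\}$ gives exactly $\tfrac12\,h((\1\pm B)\cdot,(\1\pm B)\cdot)$, so that $h_\mp = h((\1\mp B)\cdot,(\1\mp B)\cdot)$ after rescaling by $2$; this is the map in \eqref{eq:minus_plus_metric}. (One should check the equidistant surfaces at $t=\pm\pi/4$ are still smooth embedded Cauchy surfaces, which follows since the shape operator $\cos(t)\1+\sin(t)B$ stays invertible for $|t|<\pi/4$ because $B$ is traceless with $\det B\ge -1$ along a maximal surface; actually $\det B\in[-1,0]$ by the Gauss equation $K_h=-1-\det B\le 0$ — no, one needs $K_h\le 0$, which is a genuine feature of maximal surfaces in AdS and is recorded in the cited references.)

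Next I would verify that $h_-$ and $h_+$ are hyperbolic. This is a direct computation: writing $C_\pm=\1\pm B$, one has $\det C_\pm = 1+\det B$ (using $\operatorname{tr}B=0$), and the curvature of $h((C_\pm)\cdot,(C_\pm)\cdot)$ can be computed from $K_h$, the Codazzi equation $\dd^{\nabla^h}B=0$, and the standard formula for the curvature of a metric obtained by a self-adjoint Codazzi deformation; the Gauss equation $K_h=-1-\det B$ then yields curvature $-2$, hence $-1$ after the factor-$2$ rescaling. This is precisely the computation referenced from \cite{chentam} and \cite[Theorem 7.1.4]{bonsante2020antide}, so I would cite it rather than redo it. Naturality under $\mathrm{Diff}(\Sigma)$, and in particular $MCG(\Sigma)$-invariance, is immediate since the construction \eqref{eq:minus_plus_metric} is tensorial in $(h,B)$.

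The substantive point is that $\mathcal C^\hyp$ is a homeomorphism, and here I would invoke \cite[Theorem 3.21]{krasnov_schlenker_minimal} directly; the cited statement establishes both surjectivity and injectivity. For completeness one can sketch the inverse: given $(h_-,h_+)\in\Teich^\hyp(\Sigma)^2$, the existence and uniqueness of a maximal Cauchy surface in the associated MGHC manifold — equivalently, of a pair $(h,B)$ with $h_\mp=h((\1\mp B)\cdot)$ — follows from the existence and uniqueness of constant-curvature-$(-2)$ surfaces (\cite{barbot2011prescribing,bonsepK}) together with the normal-flow correspondence above, or alternatively from the uniqueness of maximal surfaces (\cite{bbz,bonsante2010maximal}) as in Proposition \ref{prop:MGHandMS}. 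The main obstacle in a fully self-contained proof would be this bijectivity — controlling the map $(h,B)\mapsto(h_-,h_+)$ globally, which requires either the constant mean/Gaussian curvature surface theory or an analysis of the relevant degenerate elliptic system; since the excerpt permits us to assume earlier results and explicitly cites \cite{krasnov_schlenker_minimal} for this theorem, I would treat that input as a black box and present only the local computation identifying the $\pi/4$-equidistant surfaces, which is the new geometric content relevant to the rest of the paper.
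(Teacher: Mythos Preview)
Your proposal is correct and aligns with the paper's treatment: the paper does not prove Theorem~\ref{thm:cc_homeo} in detail but simply states it with the citation to \cite[Theorem~3.21]{krasnov_schlenker_minimal}, prefaced by the remark that it follows ``by arguments similar to those leading to Theorem~\ref{thm:mess_homeo}.'' Your sketch (compute the induced metric on the $t=\pm\pi/4$ level sets from the tubular-neighbourhood expression in Section~\ref{subsec:maximal}, invoke the curvature computation from \cite{chentam,bonsante2020antide}, and cite \cite{krasnov_schlenker_minimal} for bijectivity) is exactly the argument the paper alludes to, so there is nothing to add.
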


As in Lemma \ref{lem:mess_complex_str}, we can express this map in terms of the conformal model of Teichm\"uller space.

\begin{lemma}\label{lem:cc_complex_str}
Let $\Sigma$ be a closed oriented surface of genus $g\geq 2$. The $MCG(\Sigma)$-invariant homeomorphism 
\[
    \mappa{\mathcal C^\conf}{\MS(\Sigma)}{\Teich^{\conf}(\Sigma)\times\Teich^{\conf}(\Sigma)}~.
\]
is induced by the map
$$(h,B) \mapsto \left(J_l:= (\1 -  B)^{-1} J (\1 -  B) ,J_r:= (\1 +  B)^{-1} J (\1 +  B) \right)~,$$
where $J$ is the complex structure defined by the metric $h$.
\end{lemma}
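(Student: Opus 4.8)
The plan is to deduce Lemma~\ref{lem:cc_complex_str} from Theorem~\ref{thm:cc_homeo} in exactly the same way Lemma~\ref{lem:mess_complex_str} follows from Theorem~\ref{thm:mess_homeo}, using the single elementary observation recalled just before Lemma~\ref{lem:mess_complex_str}: if $h$ is a Riemannian metric with associated almost-complex structure $J$ and $A$ is an invertible endomorphism of $T\Sigma$, then the metric $h(A\cdot,A\cdot)$ has associated almost-complex structure $A^{-1}JA$. First I would verify this observation (it is immediate: the endomorphism $A^{-1}JA$ squares to $-\1$, is compatible with the orientation when $A$ preserves orientation, and satisfies $h(A\cdot,A\cdot)(A^{-1}JA\cdot,\cdot) = h(JA\cdot,A\cdot)$, which is skew in its two arguments precisely because $h(J\cdot,\cdot)$ is; so $A^{-1}JA$ is the complex structure compatible with $h(A\cdot,A\cdot)$). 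One subtlety worth a remark: the tensors $\1\pm B$ appearing in \eqref{eq:minus_plus_metric} are invertible and orientation-preserving because $B$ is traceless and $h$-self-adjoint, so its eigenvalues are $\pm\lambda$ for some real $\lambda\geq 0$, and the Gauss equation $K_h = -1-\det B = -1+\lambda^2$ together with $K_h<0$ (which holds since $h_\pm$ are to be hyperbolic, or directly since along a maximal surface in an AdS manifold $K_h \le -1$) forces $\lambda<1$, hence $\1\pm B$ has positive eigenvalues $1\pm\lambda$ and is in particular invertible.

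Given the observation, the proof is one line: apply it with $A = \1 - B$ (respectively $A = \1 + B$) to the metric $h_- = h((\1-B)\cdot,(\1-B)\cdot)$ (respectively $h_+$) from \eqref{eq:minus_plus_metric}, obtaining that the complex structure of $h_-$ is $J_l = (\1-B)^{-1}J(\1-B)$ and that of $h_+$ is $J_r = (\1+B)^{-1}J(\1+B)$. Then $\mathcal C^\conf$ is by definition the composition of $\mathcal C^\hyp$ from Theorem~\ref{thm:cc_homeo} with the uniformization identification $\Teich^\hyp(\Sigma)\cong\Teich^\conf(\Sigma)$ applied to each factor, which sends a hyperbolic metric to its underlying conformal/complex structure; since $\mathcal C^\hyp$ is a $MCG(\Sigma)$-invariant homeomorphism and uniformization is a $MCG(\Sigma)$-invariant homeomorphism, the composite $\mathcal C^\conf$ is too, and by the computation just made it is induced by the stated formula $(h,B)\mapsto(J_l,J_r)$.

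There is essentially no obstacle here; the lemma is a bookkeeping translation of Theorem~\ref{thm:cc_homeo} into the conformal model, entirely parallel to Lemma~\ref{lem:mess_complex_str}. The only point requiring the tiniest bit of care, as noted, is checking that $\1\pm B$ is invertible so that $(\1\pm B)^{-1}$ makes sense and defines an honest almost-complex structure; this is where the constraint $\det B > -1$ coming from the Gauss equation \eqref{eq:gauss_codazzi} and the negativity of $K_h$ enters. The rest is the elementary linear-algebra fact about conjugation of complex structures under a change of metric by $h\mapsto h(A\cdot,A\cdot)$, which is exactly the same ingredient already used to pass from Theorem~\ref{thm:mess_homeo} to Lemma~\ref{lem:mess_complex_str}.
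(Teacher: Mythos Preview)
Your proposal is correct and follows exactly the approach the paper intends: the paper states this lemma without proof, prefacing it with ``As in Lemma~\ref{lem:mess_complex_str}\ldots'', and the argument for Lemma~\ref{lem:mess_complex_str} is precisely the elementary observation that the complex structure of $h(A\cdot,A\cdot)$ is $A^{-1}JA$. Your additional care in verifying invertibility of $\1\pm B$ via $\det B\in(-1,0]$ (which the paper records later in Section~\ref{subsec:change_coord}, citing Krasnov--Schlenker) is a welcome completeness check.
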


From Theorem \ref{thm:cotangent_parameterization}, we see that $\MS(\Sigma)$ is endowed with a circle action, which acts on a pair $(J,q)$, where $J$ is a complex structure and $q$ a holomorphic quadratic differential, by multiplying the holomorphic quadratic differential by $e^{i\theta}$. By Remark \ref{rmk:rotateB},  this $S^1$ action on $\MS(\Sigma)$ has the following expression in terms of the pairs $(h,B)$,
$$e^{i\theta}\cdot (h,B)=(h,((\cos\theta)\1-(\sin\theta) J)B)~,$$
and it can be checked directly that the new pair obtained in this way still gives the embedding data of a maximal surface in a MGHC AdS manifold. Denoting by $R_\theta:\MS(\Sigma)\to \MS(\Sigma)$ the $S^1$ action, it follows immediately from \eqref{eq:left_right_metric} and \eqref{eq:minus_plus_metric} that:
\begin{equation}\label{eq:CMrotate}
\mathcal M^\hyp=\mathcal C^\hyp\circ R_{-\pi/2}~.
\end{equation}
Observe moreover that $R_{\pi}$ has the effect of switching the left and right components under the maps $\mathcal C^\hyp$ and $\mathcal M^\hyp$.

By conjugating the circle action by the map $\mathcal M^\hyp$ (or $\mathcal C^\hyp$), one gets an induced circle action on $\Teich^{\hyp}(\Sigma)\times\Teich^{\hyp}(\Sigma)$, which has been defined in \cite{bonsante2013a_cyclic} as the \emph{landslide flow}. 
Motivated by this construction, we will consider the map 
$$\mathcal C_\theta^\hyp=\mathcal C^\hyp\circ R_\theta:{\MS(\Sigma)}\to {\Teich^{\hyp}(\Sigma)\times\Teich^{\hyp}(\Sigma)}~.$$

Clearly $\mathcal C_0^\hyp=\mathcal C^\hyp$ and $\mathcal C_{- \pi/2}^\hyp=\mathcal M^\hyp$.
Let us introduce $\mathcal H_\theta=\mathcal C_\theta^\hyp\circ\mathcal F^{-1}$, which is the composed map from $T^*\mathcal T(\Sigma)\to \mathcal T(\Sigma)\times \mathcal T(\Sigma)$ used in Theorem \ref{thm:Htheta}. Then we immediately obtain:

\begin{lemma}\label{lemma:mapH}
Let $\Sigma$ be a closed oriented surface of genus $\geq 2$. Then
$$\mathcal H_\theta(J,q)=(h_{(J,-e^{i\theta}q)},h_{(J,e^{i\theta}q)})~,$$
where $h(J,q)$ denotes the unique hyperbolic metric on $\Sigma$ such that $\id:(\Sigma,J)\to(\Sigma,h_{(J,q)})$ is harmonic. 
\end{lemma}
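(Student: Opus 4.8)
The plan is to deduce the formula for $\mathcal H_\theta$ purely formally, from three ingredients already at our disposal: the identity \eqref{eq:CMrotate}, the equivariance of $\mathcal F$ with respect to the circle actions, and the harmonic-map description of $\mathcal M^\hyp$ obtained above.

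First I would rewrite \eqref{eq:CMrotate}. Since $\theta\mapsto R_\theta$ is an action of $S^1$ (so that $R_{\pi/2}\circ R_{-\pi/2}=R_0=\id$), the relation $\mathcal M^\hyp=\mathcal C^\hyp\circ R_{-\pi/2}$ is equivalent to $\mathcal C^\hyp=\mathcal M^\hyp\circ R_{\pi/2}$. Substituting this into $\mathcal C_\theta^\hyp=\mathcal C^\hyp\circ R_\theta$ and using $R_{\pi/2}\circ R_\theta=R_{\theta+\pi/2}$ gives
\[
\mathcal H_\theta=\mathcal C_\theta^\hyp\circ\mathcal F^{-1}=\mathcal M^\hyp\circ R_{\theta+\pi/2}\circ\mathcal F^{-1}.
\]

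Next I would use that, by construction (Theorem \ref{thm:cotangent_parameterization}), the circle action on $\MS(\Sigma)$ is obtained by transporting through $\mathcal F$ the action on $T^*\Teich^\conf(\Sigma)$ given by multiplication of the holomorphic quadratic differential by a unit complex number; equivalently, $R_\phi\circ\mathcal F^{-1}(J,q)=\mathcal F^{-1}(J,e^{i\phi}q)$. Taking $\phi=\theta+\pi/2$ and using $e^{i\pi/2}=i$ yields $\mathcal H_\theta(J,q)=\mathcal M^\hyp\bigl(\mathcal F^{-1}(J,ie^{i\theta}q)\bigr)$. Finally, I would invoke the Lemma above expressing $\mathcal M^\hyp\circ\mathcal F^{-1}(J,q')=(h_{(J,iq')},h_{(J,-iq')})$, applied with $q'=ie^{i\theta}q$: since $i^2=-1$, the right-hand side becomes $(h_{(J,-e^{i\theta}q)},h_{(J,e^{i\theta}q)})$, which is the asserted identity.

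I do not expect a substantial obstacle: once \eqref{eq:CMrotate}, the $S^1$-equivariance of $\mathcal F$, and the harmonic-map interpretation of $\mathcal M^\hyp$ are in place, the statement is a short bookkeeping computation. The only point requiring a moment's care is keeping the two $\Teich^\hyp(\Sigma)$-factors in the correct order and tracking the powers of $i$; as a consistency check, the observation recorded after \eqref{eq:CMrotate} that $R_\pi$ interchanges the left and right factors matches the simultaneous sign change $q\mapsto e^{i\pi}q=-q$ in both slots.
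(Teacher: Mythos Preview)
Your proof is correct and follows essentially the same route as the paper: the paper treats the lemma as an immediate consequence of the identity $\mathcal M^\hyp=\mathcal C^\hyp\circ R_{-\pi/2}$, the definition of the circle action via $\mathcal F$, and the harmonic-map description of $\mathcal M^\hyp\circ\mathcal F^{-1}$, and you have simply spelled out that short bookkeeping computation.
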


\subsection{An equivalent model for \texorpdfstring{$\MS(\Sigma)$}{MS(S)}} \label{subsec:change_coord}

We introduce here a fundamental ``change of variables'', which permits us to adopt a simpler model to study $\MS(\Sigma)$. The basic idea is to replace the metric $h$ (which is the first fundamental form of the maximal surface) by a suitable conformal metric $g$, so that the area of $g$ is independent of the point of $\MS(\Sigma)$. By a standard argument in symplectic geometry, we will be allowed to assume that the the area form of $g$ is a fixed symplectic form $\rho$ on $\Sigma$. 

To make this concrete, let us introduce the function $f(t) = \sqrt{1 + t^2}$, which we will always apply to $t=\norm{\sigma}_g^2$, for $g$ a Riemannian metric conformal to $h$. Recall that in general, if $\sigma$ is a symmetric (2,0) tensor on $\Sigma$, $\norm{\sigma}_g^2$ denotes the squared norm of the operator $A=g^{-1}\sigma$, namely one half the trace of $A^TA$, where $A^T$ is the $g$-adjoint operator of $A$. When $\sigma$ is the real part of a quadratic differential, $A$ is $g$-self-adjoint and traceless by Lemma \ref{lem:hol_quadr_diff_equivalence}, hence $\norm{\sigma}_g^2=-\det A$.

Let $\Sigma$ be a closed orientable surface of genus $\geq 2$. We now introduce the following space:
$$\mathcal{MS}_0(\Sigma):=\left\{(g,\sigma)\left| \begin{aligned}&\qquad\quad g\text{ is a Riemannian metric on }\Sigma\\  &\,\,\sigma\text{ is the real part of a }g\text{-quadratic differential} \\ &\,(h=(1 + f(\norm{\sigma}_g)) \, g,B=h^{-1}\sigma) \text{ satisfy }\eqref{eq:gauss_codazzi} \end{aligned}\right. \right\}/\mathrm{Diff}_0(\Sigma)$$


The map sending $(g,\sigma)$ to $(h,B)$, where $h = (1 + f(\norm{\sigma}_g)) \, g$ and $B=h^{-1}\sigma$ tautologically induces a $MCG(\Sigma)$-equivariant map from $\mathcal{MS}_0(\Sigma)$ to $\mathcal{MS}(\Sigma)$. By \cite{krasnov_schlenker_minimal}, the principal curvatures of a maximal Cauchy surface of genus $\geq 2$ in a MGHC AdS manifold are strictly less than one in absolute value, which implies that $\det B\in (-1,0]$. (On the other hand, if $\Sigma=T^2$, then $\det B\equiv -1$ since the maximal surface is flat.) Hence the following lemma shows that the map induced by
$$(h,B)\mapsto\left(\frac{1+\det B}{2}h,hB\right)$$
is an inverse, and $\mathcal{MS}(\Sigma)$ and $\mathcal{MS}_0(\Sigma)$ are homeomorphic, under the assumption that $\Sigma$ has genus $\geq 2$.

\begin{lemma}\label{lem:det_B}
Given a metric $g$ and a $(2,0)$-tensor $\sigma$ on $\Sigma$, let $h = (1 + f(\norm{\sigma}_g)) \, g$ and $B=h^{-1}\sigma$. Then
\begin{equation} \label{eq:det_B}
\det B = - \frac{\norm{\sigma}_g^2}{(1 + f(\norm{\sigma}_g))^2} \qquad\text{ and }\qquad
1 + \det B = \frac{2}{1 + f(\norm{\sigma}_g)} ~.
\end{equation}
\end{lemma}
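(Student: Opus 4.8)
The plan is a direct pointwise computation, reducing both identities to elementary linear algebra for the endomorphism $A \defin g^{-1}\sigma$ of $T\Sigma$. First I would set $\lambda \defin 1 + f(\norm{\sigma}_g)$, so that by definition $h = \lambda g$, hence $h^{-1} = \lambda^{-1} g^{-1}$, and therefore $B = h^{-1}\sigma = \lambda^{-1} A$. Since $\Sigma$ is a surface, taking determinants pointwise gives $\det B = \lambda^{-2}\det A$.

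The next step is to express $\det A$ in terms of $\norm{\sigma}_g$. Here I use that $\sigma$ is the real part of a $g$-quadratic differential, so that $A$ is $g$-self-adjoint and traceless (Lemma \ref{lem:hol_quadr_diff_equivalence}); by the Cayley--Hamilton theorem in dimension two this yields $A^2 = -(\det A)\1$, whence $\norm{\sigma}_g^2 = \tfrac12 \operatorname{tr}(A^T A) = \tfrac12\operatorname{tr}(A^2) = -\det A$, which is precisely the relation already recalled above. Substituting $\det A = -\norm{\sigma}_g^2$ into $\det B = \lambda^{-2}\det A$ produces the first formula $\det B = -\norm{\sigma}_g^2/(1+f(\norm{\sigma}_g))^2$.

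For the second formula I would simply compute $1 + \det B = (\lambda^2 - \norm{\sigma}_g^2)/\lambda^2$ and simplify the numerator using $f(t)^2 = 1 + t^2$ with $t = \norm{\sigma}_g$: expanding, $\lambda^2 = (1 + f(t))^2 = 1 + 2f(t) + f(t)^2 = 2 + 2f(t) + t^2$, so that $\lambda^2 - t^2 = 2(1 + f(t)) = 2\lambda$. Hence $1 + \det B = 2\lambda/\lambda^2 = 2/\lambda = 2/(1 + f(\norm{\sigma}_g))$.

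There is no genuine obstacle in this argument; the only point that deserves attention is the identification $\norm{\sigma}_g^2 = -\det A$, which really needs $\sigma$ to be trace-free with respect to $g$ (i.e. the real part of a quadratic differential): without this hypothesis, e.g. for $\sigma = g$, the stated formulas fail. Everything else is the scaling behaviour $\det(\lambda^{-1}A) = \lambda^{-2}\det A$ for $2 \times 2$ matrices together with the elementary identity $(1 + \sqrt{1+t^2})^2 - t^2 = 2(1 + \sqrt{1+t^2})$.
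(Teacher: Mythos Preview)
Your proof is correct and follows essentially the same approach as the paper: both use that $B=\lambda^{-1}g^{-1}\sigma$ with $\lambda=1+f(\norm{\sigma}_g)$, the identity $\norm{\sigma}_g^2=-\det(g^{-1}\sigma)$ for traceless self-adjoint $\sigma$, and then an elementary algebraic simplification for the second formula. Your remark that the tracelessness of $\sigma$ is genuinely needed is apt, since the lemma's statement does not make this hypothesis explicit even though it is assumed from context.
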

\begin{proof}
The first identity comes from observing that $\norm{\sigma}_h^2=-\det B$ as remarked above, and that if $h=e^u g$ then $\norm{\sigma}_h^2=e^{-2u}\norm{\sigma}_g^2$. The second identity is an easy algebraic manipulation using the definition of $f$.
\end{proof}

An immediate consequence is the following:

\begin{lemma}\label{lem:same_area}
Let $(g,\sigma)\in\mathcal{MS}_0(\Sigma)$. Then the area of $g$ equals $-\pi\chi(\Sigma)$.
\end{lemma}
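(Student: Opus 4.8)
The plan is to combine the Gauss--Codazzi equations with the previous lemma and the Gauss--Bonnet theorem. The key observation is that by definition of $\mathcal{MS}_0(\Sigma)$, the pair $(h,B)$ with $h=(1+f(\norm{\sigma}_g))\,g$ and $B=h^{-1}\sigma$ satisfies \eqref{eq:gauss_codazzi}; in particular the Gauss equation $K_h = -1-\det B$ holds pointwise on $\Sigma$. On the other hand, $B=h^{-1}\sigma$ is traceless and $h$-self-adjoint (since $\sigma$ is the real part of a quadratic differential, by Lemma \ref{lem:hol_quadr_diff_equivalence}), so its determinant is related to $\sigma$ by the computation of Lemma \ref{lem:det_B}, which gives $1+\det B = 2/(1+f(\norm{\sigma}_g))$. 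The point is that $h$ is conformal to $g$ precisely with conformal factor $1+f(\norm{\sigma}_g)$, so the Riemannian area form of $h$ is $\dvol_h = (1+f(\norm{\sigma}_g))\,\dvol_g$.

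First I would write out the area of $g$ as $\Area(g) = \int_\Sigma \dvol_g$. Then I would use the conformal relation to rewrite this as $\int_\Sigma (1+f(\norm{\sigma}_g))^{-1}\dvol_h$, and substitute the second identity of Lemma \ref{lem:det_B} to obtain
\[
\Area(g) = \int_\Sigma \frac{1+\det B}{2}\,\dvol_h = -\frac12\int_\Sigma K_h\,\dvol_h,
\]
where the last equality uses the Gauss equation $K_h = -1-\det B$, i.e. $1+\det B = -K_h$. Finally, by the Gauss--Bonnet theorem $\int_\Sigma K_h\,\dvol_h = 2\pi\chi(\Sigma)$, so $\Area(g) = -\pi\chi(\Sigma)$, as claimed.

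There is essentially no obstacle here: the statement is an immediate corollary of Lemma \ref{lem:det_B}, the Gauss equation built into the definition of $\mathcal{MS}_0(\Sigma)$, and Gauss--Bonnet. The only point requiring a moment of care is the bookkeeping of the conformal factor between $\dvol_g$ and $\dvol_h$ (in real dimension two a conformal change $h=e^u g$ scales the volume form by $e^u$, which is exactly the factor $1+f(\norm{\sigma}_g)$ appearing in the definition of $h$), and noting that one does not even need to invoke the Codazzi equation or the full force of the parametrization --- only the Gauss equation is used. This is why the lemma is stated for all of $\mathcal{MS}_0(\Sigma)$ and will later justify fixing a reference area form $\rho$ with $\int_\Sigma \rho = -\pi\chi(\Sigma)$.
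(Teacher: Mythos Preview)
Your proof is correct and essentially identical to the paper's: both rewrite $\dvol_g=(1+f(\norm{\sigma}_g))^{-1}\dvol_h$, invoke the second identity of Lemma~\ref{lem:det_B} to convert this to $\frac{1+\det B}{2}\,\dvol_h$, use the Gauss equation to replace $1+\det B$ by $-K_h$, and finish with Gauss--Bonnet.
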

\begin{proof}
Let $h= (1 + f(\norm{\sigma}_g)) \, g$ as usual. Using \eqref{eq:det_B}, the area forms of $g$ and $h$ satisfy the identity:
$$dA_g=\frac{1}{1 + f(\norm{\sigma}_g)}dA_h=\frac{1+\det B}{2}dA_h~.$$
Since the pair $(h,B)$ satisfy \eqref{eq:gauss_codazzi} by hypothesis, $1+\det B=-K_h$, hence 
$$\int_\Sigma dA_g=-\frac{1}{2}\int_\Sigma K_hdA_h=-\pi\chi(\Sigma)$$
by Gauss-Bonnet.
\end{proof}

From now on, we will fix an area form $\rho$ of total area $-\pi\chi(\Sigma)$. Given an almost-complex structure $J$ on $\Sigma$, we define the Riemannian metric
$$g_J(\cdot,\cdot):=\rho(\cdot,J\cdot)~.$$
Clearly $dA_{g_J}=\rho$. 
We introduce the space $\mathcal{MS}_0(\Sigma,\rho)$ of pairs $(J,\sigma)$ such that $(g_J,\sigma)$ satisfy the conditions in the definition of $\mathcal{MS}_0(\Sigma)$, namely the pair $(h=(1 + f(\norm{\sigma}_{g_J})) \, g_J,B=h^{-1}\sigma)$ satisfy \eqref{eq:gauss_codazzi}, and we quotient by the action of $\mathrm{Symp}_0(\Sigma,\rho)$ (i.e. the identity component in the group of diffeomorphisms of $\Sigma$ that preserve $\rho$).\\

It remains to show that the map $(J,\sigma)\mapsto (h,B)$ induces a homeomorphism between $\mathcal{MS}_0(\Sigma,\rho)$ and $\mathcal{MS}(\Sigma)$. This follows from standard arguments relying on the Moser trick, and we only give a sketch here, see for instance \cite[\S 3.2.3]{hodgethesis} for more details. Moser's stability theorem asserts that given a smooth family $\omega_t$ of cohomologous symplectic forms on a closed manifold, there exists a family of diffeomorphisms $\phi_t$ such that $\phi_t^*\omega_t=\omega_0$. On a surface $\Sigma$, given two area forms $\rho$ and $\rho'$ of the same total area, one can apply Moser's theorem to the family $\rho_t=(1-t)\rho+t\rho'$ and deduce that there exists $\phi\in\mathrm{Diff}_0(\Sigma)$ such that $\phi^*\rho'=\rho$.  This implies that any $(g,\sigma)$ as in the definition of $\mathcal{MS}(\Sigma)$ has a representative in its $\mathrm{Diff}_0(\Sigma)$-orbit whose area form is $\rho$, i.e. a representative of the form $(g_J,\sigma)$. Moreover, if $\psi_t$ is a family of diffeomorphisms such that $\psi_0=\id$ and $\psi_1^*\rho=\rho$, by applying again Moser's theorem to the family $\rho_t=\psi_t^*\rho_1$ one can deform $\psi_t$ to a family of symplectomorphisms $\phi_t$ such that $\phi_0=\id$ and $\phi_1=\psi_1$. This shows that $\mathrm{Diff}_0(\Sigma)\cap \mathrm{Symp}(\Sigma,\rho)=\mathrm{Symp}_0(\Sigma,\rho)$. In conclusion, we have:

\begin{proposition}
Let $\Sigma$ be a closed oriented surface of genus $\geq 2$. The map
$$(J,\sigma)\mapsto (h=(1 + f(\norm{\sigma}_{J})) \, g_J,B=h^{-1}\sigma)$$
induces a
 $MCG(\Sigma)$-invariant homeomorphism between $\mathcal{MS}_0(\Sigma,\rho)$ and $\mathcal{MS}(\Sigma)$.  
    \end{proposition}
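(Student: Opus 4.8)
The plan is to assemble the homeomorphism between $\mathcal{MS}_0(\Sigma,\rho)$ and $\mathcal{MS}(\Sigma)$ from two ingredients already set up in the excerpt: the fiberwise ``change of variables'' controlled by Lemma \ref{lem:det_B} and Lemma \ref{lem:same_area}, and the Moser-trick argument reducing the $\mathrm{Diff}_0(\Sigma)$-action to the $\mathrm{Symp}_0(\Sigma,\rho)$-action. Concretely, I would first recall that for genus $\geq 2$ the principal curvatures of a maximal Cauchy surface are strictly less than $1$ in absolute value (by \cite{krasnov_schlenker_minimal}), so that for any $(h,B)$ representing a point of $\mathcal{MS}(\Sigma)$ one has $\det B\in(-1,0]$; this guarantees that the factor $(1+\det B)/2$ is strictly positive, so the formula $(h,B)\mapsto\bigl(\tfrac{1+\det B}{2}h,\,hB\bigr)$ produces a genuine Riemannian metric $g$ together with the $(2,0)$-tensor $\sigma=hB$.

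Next I would check that the two fiberwise maps are mutually inverse at the level of pairs (before quotienting). Starting from $(g,\sigma)$ and setting $h=(1+f(\norm{\sigma}_g))g$, $B=h^{-1}\sigma$, Lemma \ref{lem:det_B} gives $1+\det B=2/(1+f(\norm{\sigma}_g))$, hence $\tfrac{1+\det B}{2}h=g$ and $hB=\sigma$, so applying the second map recovers $(g,\sigma)$. Conversely, starting from $(h,B)$ with $\det B\in(-1,0]$ and setting $g=\tfrac{1+\det B}{2}h$, $\sigma=hB$, one computes $\norm{\sigma}_g^2=-\det B\cdot\bigl(\tfrac{2}{1+\det B}\bigr)^2$ using the conformal rescaling law for the norm of a $(2,0)$-tensor, and a short algebraic manipulation identifies $f(\norm{\sigma}_g)$ so that $(1+f(\norm{\sigma}_g))g=h$ and $h^{-1}\sigma=B$; the Gauss-Codazzi equations \eqref{eq:gauss_codazzi} for $(h,B)$ are literally the defining conditions placed on $(g,\sigma)$ in the definition of $\mathcal{MS}_0(\Sigma)$, and the correspondence is smooth in both directions and $\mathrm{Diff}(\Sigma)$-natural. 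This establishes a $MCG(\Sigma)$-equivariant homeomorphism $\mathcal{MS}_0(\Sigma)\cong\mathcal{MS}(\Sigma)$.

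Finally I would pass from $\mathcal{MS}_0(\Sigma)$ to $\mathcal{MS}_0(\Sigma,\rho)$ using exactly the Moser-stability discussion given just before the statement: fixing the area form $\rho$ of total area $-\pi\chi(\Sigma)$ (legitimate by Lemma \ref{lem:same_area}, which says every $g$ arising in $\mathcal{MS}_0(\Sigma)$ already has this area), Moser's theorem applied to the path $\rho_t=(1-t)\,dA_g+t\,\rho$ produces $\phi\in\mathrm{Diff}_0(\Sigma)$ with $\phi^*\rho=dA_g$, so every $\mathrm{Diff}_0(\Sigma)$-orbit in $\mathcal{MS}_0(\Sigma)$ contains a representative of the form $(g_J,\sigma)$; and a second application of Moser to $\rho_t=\psi_t^*\rho$ shows $\mathrm{Diff}_0(\Sigma)\cap\mathrm{Symp}(\Sigma,\rho)=\mathrm{Symp}_0(\Sigma,\rho)$, so that the quotient by $\mathrm{Diff}_0(\Sigma)$ agrees with the quotient of the $\rho$-area representatives by $\mathrm{Symp}_0(\Sigma,\rho)$. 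Combining the two steps yields the asserted $MCG(\Sigma)$-invariant homeomorphism $\mathcal{MS}_0(\Sigma,\rho)\cong\mathcal{MS}(\Sigma)$.

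I expect the only genuinely delicate point to be the Moser-trick step — specifically, verifying that the orbit-level statements survive the passage to quotients (continuity of the inverse, and that the deformation $\psi_t\rightsquigarrow\phi_t$ from diffeomorphisms to symplectomorphisms can be carried out continuously in families). The fiberwise change-of-variables step is purely algebraic and essentially done by Lemma \ref{lem:det_B}; the curvature bound $\det B\in(-1,0]$ is the one external input, and it is exactly what makes the rescaling factor positive so that the construction does not degenerate.
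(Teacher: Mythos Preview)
Your proposal is correct and follows essentially the same approach as the paper: the fiberwise change of variables via Lemma~\ref{lem:det_B} (with the input $\det B\in(-1,0]$ from \cite{krasnov_schlenker_minimal}) to identify $\mathcal{MS}_0(\Sigma)\cong\mathcal{MS}(\Sigma)$, followed by the two Moser-trick applications to pass to $\mathcal{MS}_0(\Sigma,\rho)$. You have in fact written out the inverse check in more detail than the paper does.
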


\section{The toy model: genus 1}\label{sec:toy model}

The purpose of this section is to provide a para-hyperK\"ahler structure on the cotangent bundle of the space $\mathcal J(\R^2)$ of linear complex structures on $\R^2$. Interpreting the complement of the zero section in  $T^*\mathcal J(\R^2)$ as the space $\mathcal{MGH}(T^2)$, we will deduce the case $\Sigma=T^2$ in all the results stated in the introduction.

\label{sec:toy_model_via_complex_structures}

\subsection{Space of linear almost-complex structures} \label{subsec:linear_almost_complex}

We begin by defining the space $\mathcal J(\R^2)$. In this section, $\rho$ denotes the standard volume form $\rho = \dd{x} \wedge \dd{y}$ on $\R^2$.

\begin{definition}
 We denote by $\mathcal{J}(\R^2)$ the set of endomorphisms $J$ of $\R^2$ such that $J^2 = - \1$, and satisfying $\rho (v, J v) > 0$ for some (and consequently for every) non-\hsk zero vector $v \in \R^2$. 
 \end{definition}

In other words, $\mathcal{J}(\R^2)$ is the collection of all (linear) complex structures on $\R^2$ that are compatible with its standard orientation. It turns out that $\mathcal{J}(\R^2)$ is a two-dimensional manifold.\\

It is simple to see that, given any $J \in \mathcal{J}(\R^2)$, the tensor $g_J \defin \rho(\cdot , J \cdot )$ is a positive definite scalar product on $\R^2$, with respect to which $J$ is an orthogonal transformation. By differentiating the identity $J^2 = - \1$, we see that the tangent space of $\mathcal{J}(\R^2)$ can be described as
\[
T_J \mathcal{J}(\R^2) = \set{ \dot{J} \in \End(\R^2) \mid \dot{J} J + J \dot{J} = 0 } .
\]
Equivalently, $T_J \mathcal{J}(\R^2)$ is the space of endomorphisms $\dot{J}$ that are traceless and $g_J$-\hsk self-\hsk adjoint. The tangent space $T_J \mathcal{J}(\R^2)$ is endowed with a natural (almost) complex structure, given by ${\mathcal {I}} (\dot{J}) \defin - J \dot{J}$. \\

We will represent the cotangent space of $\mathcal{J}(\R^2)$ as follows:
\[
T^*_J \mathcal{J}(\R^2) = \set{ \sigma \in S_2(\R^2) \mid J^* \sigma = \sigma (J \cdot , J \cdot ) = - \sigma } ,
\]
where $S_2(\R^2)$ stands for the space of symmetric bilinear forms of $\R^2$. An equivalent way to describe the cotangent space at $J$ is as the set of bilinear forms on $\R^2$ that can be written as $\sigma = \Re \phi$, where $\phi$ is a symmetric $\C$-\hsk valued bilinear form that is complex-\hsk linear with respect to $J$. In other words, $\phi$ satisfies $\phi(J \cdot, \cdot) = \phi(\cdot, J \cdot) = i \phi(\cdot, \cdot)$. When this is the case, then $\phi$ can be expressed as follows:
\begin{equation} \label{eq:quadratic_diff}
\phi = \sigma - i \, \sigma(\cdot, J \cdot) .
\end{equation}
Observe also that $\sigma$ belongs to $T_J^* \mathcal{J}(\R^2)$ if and only if $g_J^{-1} \sigma$ belongs to $T_J \mathcal{J}(\R^2)$ (here $g_J^{-1} \sigma$ represents the $g_J$-\hsk self-\hsk adjoint operator associated to $\sigma$, that is, $\sigma(\cdot,\cdot)=g_J((g_J^{-1} \sigma)\cdot,\cdot)=g_J(\cdot,(g_J^{-1} \sigma)\cdot)$). The natural pairing between the tangent and the cotangent space is the following:
\begin{equation}\label{eq:pairing}
\scal{\sigma}{\dot{J}}_J = \frac{1}{2} \tr_{g_J} (\sigma(\cdot, \dot{J} \cdot)) = \frac{1}{2} \mathrm{tr}(g_J^{-1} \sigma \dot{J}) .
\end{equation}
We also define the following positive definite scalar products:
\[
\scall{\sigma}{\sigma'}_J \defin \frac{1}{2} \tr(g_J^{-1} \sigma g_J^{-1} \sigma') , \qquad \scall{\dot{J}}{\dot{J}'}_J \defin \frac{1}{2} \mathrm{tr}(\dot{J} \dot{J}')
\]
for every $\dot{J}, \dot{J}' \in T_J \mathcal{J}(\R^2)$ and $\sigma, \sigma' \in T_J^* \mathcal{J}(\R^2)$. It is immediate to check that the almost complex structure ${\mathcal {I}} $ preserves both scalar products. We also denote $\norm{\sigma}_J^2=\langle \sigma,\sigma\rangle_J$. If $\phi$ is a quadratic differential whose real part is equal to $\sigma$, we set $\norm{\phi}_J \defin \norm{\sigma}_J$.

\subsection{The tangent space of \texorpdfstring{$T^* \mathcal{J}(\R^2)$}{T*J(R2)}}\label{subsec:tangent space toy}

We now provide a characterization of the tangent space to $T^* \mathcal{J}(\R^2)$. 

\begin{lemma} \label{lem:characterization_tangent_space}
Let $(J, \sigma) \in T^* \mathcal{J}(\R^2)$. Then $(\dot{J}, \dot{\sigma}) \in \End(\R^2) \times S_2(\R^2)$ belongs to $T_{(J, \sigma)} T^* \mathcal{J}(\R^2)$ if and only if
\[
\dot{J} \in T_J \mathcal{J}(\R^2), \ \dot{\sigma}_0 \in T_J^* \mathcal{J}(\R^2) \text{ and } \tr_{g_J} \dot{\sigma} = - 2\scal{\sigma}{J \dot{J}}_J ,
\]
where $\dot{\sigma}_0$ denotes the $g_J$-\hsk traceless part of $\dot{\sigma}$. 
\end{lemma}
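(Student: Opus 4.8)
The idea is that $T^*\mathcal J(\R^2)$ sits inside the affine space $\End(\R^2)\times S_2(\R^2)$ as the total space of a vector bundle over $\mathcal J(\R^2)$, so a tangent vector at $(J,\sigma)$ decomposes into a ``horizontal'' part $\dot J\in T_J\mathcal J(\R^2)$ and a ``vertical'' part recording how $\sigma$ varies. The point of the lemma is to express exactly which pairs $(\dot J,\dot\sigma)$ arise, and the only subtlety is that the fibre $T^*_J\mathcal J(\R^2)$ varies with $J$: it is cut out inside $S_2(\R^2)$ by the $J$-dependent linear condition $\sigma(J\cdot,J\cdot)=-\sigma$, equivalently $\tr_{g_J}\sigma=0$ together with the anti-invariance. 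First I would take a smooth curve $t\mapsto(J_t,\sigma_t)$ in $T^*\mathcal J(\R^2)$ with $(J_0,\sigma_0)=(J,\sigma)$ and $(\dot J,\dot\sigma)$ its derivative at $t=0$, and differentiate the two defining relations of the cotangent space.

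Concretely, $J_t^2=-\1$ gives, as recalled in the text, $\dot J\in T_J\mathcal J(\R^2)$ (traceless and $g_J$-self-adjoint). Next, differentiating $\sigma_t(J_t\cdot,J_t\cdot)=-\sigma_t$ yields
\[
\dot\sigma(J\cdot,J\cdot)+\sigma(\dot J\cdot,J\cdot)+\sigma(J\cdot,\dot J\cdot)=-\dot\sigma~.
\]
Decompose $\dot\sigma=\dot\sigma_0+\tfrac12(\tr_{g_J}\dot\sigma)\,g_J$ into its $g_J$-traceless part and its trace part; since $g_J(J\cdot,J\cdot)=g_J$ while any element of $T^*_J\mathcal J(\R^2)$ is $J$-anti-invariant, the equation above should split into two pieces. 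The $J$-anti-invariant piece forces $\dot\sigma_0\in T^*_J\mathcal J(\R^2)$ (i.e. $\dot\sigma_0(J\cdot,J\cdot)=-\dot\sigma_0$), which is automatic once one checks $\dot\sigma_0$ is $g_J$-traceless and $g_J$-self-adjoint and uses the description of $T^*_J\mathcal J(\R^2)$ as $g_J^{-1}(\cdot)\in T_J\mathcal J(\R^2)$. The $J$-invariant (trace) piece gives the scalar constraint: using $\sigma(\dot J\cdot,J\cdot)+\sigma(J\cdot,\dot J\cdot)$ and rewriting it via the pairing \eqref{eq:pairing}, one identifies $\tfrac12\tr_{g_J}\dot\sigma$ with $-\scal{\sigma}{J\dot J}_J$ (up to the constant in the statement). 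This is essentially a computation with traces of products of $g_J$-self-adjoint and $g_J$-anti-self-adjoint operators, using $\dot J J+J\dot J=0$ and $g_J^{-1}\sigma\,J=-J\,g_J^{-1}\sigma$.

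For the converse, I would check that the stated conditions are exactly the right number of linear constraints, i.e. that the set of $(\dot J,\dot\sigma)$ satisfying them has dimension $\dim T^*\mathcal J(\R^2)=4$: indeed $\dot J$ ranges over the $2$-dimensional space $T_J\mathcal J(\R^2)$, $\dot\sigma_0$ over the $2$-dimensional space $T^*_J\mathcal J(\R^2)$, and the trace of $\dot\sigma$ is then determined. Hence the subspace cut out by the conditions has the correct dimension and contains the tangent space by the direct computation, so the two coincide; alternatively one exhibits explicit curves realizing each such $(\dot J,\dot\sigma)$ (vary $J$ along a path with velocity $\dot J$, and let $\sigma_t$ be $\dot\sigma_0$ projected to $T^*_{J_t}\mathcal J(\R^2)$ plus the forced trace term). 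The main obstacle, though entirely routine, is bookkeeping the trace/traceless decomposition of $\dot\sigma$ correctly and tracking the factor of $2$ and the sign in the constraint $\tr_{g_J}\dot\sigma=-2\scal{\sigma}{J\dot J}_J$; getting the splitting of the differentiated anti-invariance relation into its $g_J$-invariant and $g_J$-anti-invariant parts cleanly is where care is needed.
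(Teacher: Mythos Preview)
Your plan is correct, but it takes a longer route than the paper. The paper differentiates the equivalent scalar condition $\tr_{g_J}\sigma=0$ directly: computing $\dot g_J=-g_J(\cdot,J\dot J\cdot)$ and then $0=(\tr_{g_J}\sigma)'=-\tr(g_J^{-1}\dot g_J g_J^{-1}\sigma)+\tr(g_J^{-1}\dot\sigma)$ immediately yields $\tr_{g_J}\dot\sigma=-\tr(J\dot J\,g_J^{-1}\sigma)=-2\scal{\sigma}{J\dot J}_J$. Your approach instead differentiates the tensorial condition $\sigma(J\cdot,J\cdot)=-\sigma$ and then splits into $J$-invariant and $J$-anti-invariant parts; this works because the cross term $\sigma(\dot J\cdot,J\cdot)+\sigma(J\cdot,\dot J\cdot)$ turns out to be pure trace (a fact you should verify explicitly), so the traceless pieces cancel and only the scalar constraint survives. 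Note that the condition $\dot\sigma_0\in T^*_J\mathcal J(\R^2)$ is automatic in dimension two (symmetric and $g_J$-traceless is exactly the defining condition), which you correctly acknowledge; it is not extracted from the anti-invariant part of the equation. Your dimension count for the converse is a pleasant addition that the paper leaves implicit.
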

\begin{proof} We first compute
\begin{equation} \label{eq:derivative_scal_prod}
\dot{g}_J = \rho(\cdot , \dot{J} \cdot ) = - \rho (\cdot , J^2 \dot{J} \cdot ) = - g_J(\cdot , J \dot{J}) \ .
\end{equation}
Then, because $\sigma$ is $g_{J}$-traceless, we have
\[
0 = (\tr_{g_J} \sigma)' = - \tr(g_J^{-1} \, \dot{g}_J \, g_J^{-1} \sigma) + \tr(g_J^{-1} \dot{\sigma}) 
\]
hence
\[
\tr_{g_J} \dot{\sigma} = - \tr(J \dot{J} g_J^{-1} \sigma) = -2 \scal{\sigma}{J \dot{J}}_J~.
\]
This concludes the proof.
\end{proof}

The group $\SL(2,\R)$ naturally acts on $\mathcal{J}(\R^2)$ by conjugation and, more generally on its tangent and cotangent space as follows:
\begin{align*}
(J, \dot{J}) \in T \mathcal{J}(\R^2), \quad A \cdot (J, \dot{J}) & \defin (A J A^{-1}, A \dot{J} A^{-1}) \\
(J, \sigma) \in T^* \mathcal{J}(\R^2), \quad A \cdot (J, \sigma) & \defin (A J A^{-1}, (A^{-1})^* \sigma)
\end{align*}
for any $A \in \SL(2,\R)$. The action of $\SL(2,\R)$ induces a faithful action of $\PSL(2,\R)=\SL(2,\R)/\{\pm\1\}$.

\begin{lemma} \label{lem:scalar_prod_invariance}
For every $A \in \SL(2,\R)$ and $J \in \mathcal{J}(\R^2)$, we have:
\begin{align*}
\scal{A \cdot \sigma}{A \cdot \dot{J}}_{A \cdot J} & = \scal{\sigma}{\dot{J}}_J , \\
\scall{A \cdot \dot{J}}{A \cdot \dot{J}'}_{A \cdot J} & = \scall{\dot{J}}{\dot{J}'}_J , \\
\scall{A \cdot \sigma}{A \cdot \sigma'}_{A \cdot J} & = \scall{\sigma}{\sigma'}_J ,
\end{align*}
where $\dot{J}, \dot{J}' \in T_J \mathcal{J}(\R^2)$ and $\sigma, \sigma' \in T_J^* \mathcal{J}(\R^2)$.
\end{lemma}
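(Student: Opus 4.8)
The plan is to verify each of the three identities by a direct computation, reducing everything to statements about traces of matrices and using the basic transformation rules for the tensors involved. First I would record how the individual objects transform: by definition $A\cdot J = AJA^{-1}$, $A\cdot\dot J = A\dot J A^{-1}$, and $A\cdot\sigma = (A^{-1})^*\sigma$, so that in matrix terms the bilinear form $A\cdot\sigma$ is represented by $(A^{-1})^T\sigma A^{-1}$. The key auxiliary fact is the behaviour of the metric $g_J$: since $g_J(\cdot,\cdot)=\rho(\cdot,J\cdot)$ and $\rho$ is $\SL(2,\R)$-invariant (because $\det A = 1$), one computes $g_{A\cdot J}(u,v)=\rho(u,AJA^{-1}v)=\rho(A^{-1}u,JA^{-1}v)=g_J(A^{-1}u,A^{-1}v)$, i.e. $g_{A\cdot J}$ is represented by $(A^{-1})^T g_J A^{-1}$. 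Equivalently, $g_{A\cdot J}^{-1}$ is represented by $A\,g_J^{-1}A^T$.

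With these formulas in hand, each identity becomes an elementary manipulation. For the first, $\scal{A\cdot\sigma}{A\cdot\dot J}_{A\cdot J}=\tfrac12\tr\big(g_{A\cdot J}^{-1}(A\cdot\sigma)(A\cdot\dot J)\big)=\tfrac12\tr\big(Ag_J^{-1}A^T\cdot (A^{-1})^T\sigma A^{-1}\cdot A\dot J A^{-1}\big)$, and the inner factors telescope to $\tfrac12\tr\big(Ag_J^{-1}\sigma\dot J A^{-1}\big)=\tfrac12\tr(g_J^{-1}\sigma\dot J)=\scal{\sigma}{\dot J}_J$ by cyclicity of the trace. The second identity is even simpler: $\scall{A\cdot\dot J}{A\cdot\dot J'}_{A\cdot J}=\tfrac12\tr\big(A\dot J A^{-1}A\dot J'A^{-1}\big)=\tfrac12\tr(\dot J\dot J')$, which does not even use the metric (though one should note $\scall{\cdot}{\cdot}_J$ as written depends on $J$ only through the constraint defining $T_J\mathcal J(\R^2)$, and $A\cdot\dot J$ indeed lies in $T_{A\cdot J}\mathcal J(\R^2)$ since the conditions $\dot J J+J\dot J=0$ are preserved under conjugation). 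For the third, $\scall{A\cdot\sigma}{A\cdot\sigma'}_{A\cdot J}=\tfrac12\tr\big(g_{A\cdot J}^{-1}(A\cdot\sigma)g_{A\cdot J}^{-1}(A\cdot\sigma')\big)$; substituting and telescoping the $A$'s exactly as before yields $\tfrac12\tr(g_J^{-1}\sigma g_J^{-1}\sigma')=\scall{\sigma}{\sigma'}_J$.

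There is essentially no main obstacle here; the only point requiring minor care is bookkeeping the transposes and inverses consistently (whether $A\cdot\sigma$ corresponds to $(A^{-1})^T\sigma A^{-1}$ or $A^T\sigma A$), which is fixed once and for all by the sign convention $A\cdot(J,\sigma)=(AJA^{-1},(A^{-1})^*\sigma)$ in the paper. I would also remark, as a conceptual check, that these invariance statements are forced: the pairing $\scal{\cdot}{\cdot}_J$ is the canonical pairing between $T_J\mathcal J(\R^2)$ and $T^*_J\mathcal J(\R^2)$, and the two scalar products are built only from $J$ (hence from $g_J$) and the standard trace, all of which are manifestly natural under the $\SL(2,\R)$-action; so the content of the lemma is just the explicit verification that these formulas realize that naturality. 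Since Lemma~\ref{lem:characterization_tangent_space} already identifies the tangent and cotangent spaces with spaces of matrices, the computation is purely linear-algebraic and can be carried out in a few lines.
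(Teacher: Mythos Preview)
Your proposal is correct and follows essentially the same approach as the paper's proof, which simply says the result is immediate once one checks that $g_{A\cdot J}^{-1}(A\cdot\sigma)=A(g_J^{-1}\sigma)A^{-1}$; your computation $g_{A\cdot J}^{-1}=Ag_J^{-1}A^T$ combined with $A\cdot\sigma=(A^{-1})^T\sigma A^{-1}$ is precisely this identity, and the rest follows by cyclicity of the trace exactly as you wrote.
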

\begin{proof}
The proof is immediate, once one checks that $g_{AJA^{-1}}^{-1}(A^{-1})^*\sigma=A(g_J^{-1}\sigma)A^{-1}$.
\end{proof}

By differentiating the  $\SL(2,\R)$-action on $T_J^* \mathcal{J}(\R^2)$, we obtain a linear isomorphism from $T_{(J,\sigma)} T^* \mathcal{J}(\R^2)$ to $T_{A\cdot(J,\sigma)} T^* \mathcal{J}(\R^2)$. By a little abuse of notation, we still denote this isomorphism by $A$. It is explicitly given by:
$$(\dot J, \dot \sigma) \in T_{(J,\sigma)}T^* \mathcal{J}(\R^2), \quad A \cdot (\dot J,\dot \sigma)  \defin (A \dot J A^{-1}, (A^{-1})^* \dot \sigma)~.$$
We remark that, using Lemma \ref{lem:scalar_prod_invariance}, one could verify by hands that the conditions of Lemma \ref{lem:characterization_tangent_space} are preserved by this expression.\\

It is also useful to provide a natural linear isomorphism between the tangent space of $T^*_{(J,\sigma)}\mathcal J(\R^2)$ and the product of two copies of $T_J\mathcal J(\R^2)$.

\begin{proposition}\label{prop:characterization_tangent_space2}
The map
\[
\begin{matrix}
\Psi_{(J,\sigma)}:&T_{(J,\sigma)} T^* \mathcal{J}(\R^2) & \longrightarrow & (T_J \mathcal{J}(\R^2))^2 \\
&(\dot{J}, \dot{\sigma}) & \longmapsto & (\dot{J}, g_J^{-1} \dot{\sigma}_0)
\end{matrix}
\]
is a linear isomorphism, with inverse
\[
\begin{matrix}
\Xi_{(J,\sigma)}:&(T_J \mathcal{J}(\R^2))^2 & \longrightarrow & T_{(J,\sigma)} T^* \mathcal{J}(\R^2) \\
&(\dot{J}, \dot{K}) & \longmapsto & (\dot{J}, g_J(\cdot, \dot{K} \cdot)- \scal{\sigma}{J \dot{J}}_J \ g_J ) .
\end{matrix}
\]
Moreover, $\Psi_{(J,\sigma)}$ commutes with the $\SL(2,\R)$ actions, meaning that 
$$(A,A)\circ \Psi_{(J,\sigma)}=\Psi_{A\cdot(J,\sigma)}\circ A~.$$
\end{proposition}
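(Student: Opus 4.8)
The plan is to verify the three asserted properties of the map $\Psi_{(J,\sigma)}$ directly: that it is well-defined (lands in $(T_J\mathcal J(\R^2))^2$), that $\Xi_{(J,\sigma)}$ as written is a two-sided inverse, and that $\Psi$ is $\SL(2,\R)$-equivariant. For well-definedness, recall from Lemma \ref{lem:characterization_tangent_space} that $(\dot J,\dot\sigma)\in T_{(J,\sigma)}T^*\mathcal J(\R^2)$ forces $\dot J\in T_J\mathcal J(\R^2)$ and $\dot\sigma_0\in T_J^*\mathcal J(\R^2)$; and by the remark in Section \ref{subsec:linear_almost_complex}, $\sigma\in T_J^*\mathcal J(\R^2)$ is equivalent to $g_J^{-1}\sigma\in T_J\mathcal J(\R^2)$, so indeed $g_J^{-1}\dot\sigma_0\in T_J\mathcal J(\R^2)$. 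Hence the first component $\dot J$ and the second component $g_J^{-1}\dot\sigma_0$ both lie in $T_J\mathcal J(\R^2)$, as required.

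Next I would check that $\Xi_{(J,\sigma)}$ is well-defined and inverts $\Psi_{(J,\sigma)}$. Given $(\dot J,\dot K)\in(T_J\mathcal J(\R^2))^2$, set $\dot\sigma:=g_J(\cdot,\dot K\cdot)-\scal{\sigma}{J\dot J}_J\,g_J$. Since $\dot K$ is $g_J$-self-adjoint and traceless, $g_J(\cdot,\dot K\cdot)$ is symmetric and $g_J$-traceless, hence it equals its own traceless part $\dot\sigma_0$; it lies in $T_J^*\mathcal J(\R^2)$ precisely because $g_J^{-1}\dot\sigma_0=\dot K\in T_J\mathcal J(\R^2)$. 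The scalar multiple of $g_J$ supplies the trace, and one computes $\tr_{g_J}\dot\sigma=2\cdot(-\scal{\sigma}{J\dot J}_J)=-2\scal{\sigma}{J\dot J}_J$, which is exactly the trace condition of Lemma \ref{lem:characterization_tangent_space}; so $\Xi_{(J,\sigma)}(\dot J,\dot K)\in T_{(J,\sigma)}T^*\mathcal J(\R^2)$. The composition $\Psi\circ\Xi$ is then the identity since the traceless part of $\dot\sigma$ is $g_J(\cdot,\dot K\cdot)$ and $g_J^{-1}$ of that is $\dot K$; and $\Xi\circ\Psi$ is the identity because, starting from $(\dot J,\dot\sigma)$, the reconstructed tensor is $g_J(\cdot,g_J^{-1}\dot\sigma_0\,\cdot)-\scal{\sigma}{J\dot J}_J g_J=\dot\sigma_0+\tfrac12(\tr_{g_J}\dot\sigma)g_J=\dot\sigma$, using the trace constraint again. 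Both spaces have the same dimension, so being a bijective linear map, $\Psi$ is a linear isomorphism.

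Finally, for equivariance I would unwind both sides. On one hand $A\cdot(\dot J,\dot\sigma)=(A\dot JA^{-1},(A^{-1})^*\dot\sigma)$; its traceless part (with respect to $g_{A\cdot J}$) is $(A^{-1})^*\dot\sigma_0$, because conjugating/pulling back by $A$ intertwines $g_J$ and $g_{A\cdot J}$ — concretely $g_{AJA^{-1}}^{-1}(A^{-1})^*\tau=A(g_J^{-1}\tau)A^{-1}$, the identity already used in the proof of Lemma \ref{lem:scalar_prod_invariance}. Hence $\Psi_{A\cdot(J,\sigma)}(A\cdot(\dot J,\dot\sigma))=\bigl(A\dot JA^{-1},\,g_{A\cdot J}^{-1}(A^{-1})^*\dot\sigma_0\bigr)=\bigl(A\dot JA^{-1},\,A(g_J^{-1}\dot\sigma_0)A^{-1}\bigr)=(A,A)\bigl(\dot J,g_J^{-1}\dot\sigma_0\bigr)=(A,A)\circ\Psi_{(J,\sigma)}(\dot J,\dot\sigma)$, which is the claimed identity. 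I do not expect any genuine obstacle here: the only mild subtlety — and the step most worth spelling out — is keeping track of the fact that the ``traceless part'' operation is taken with respect to the metric $g_J$ that itself moves under the $\SL(2,\R)$-action, so that the equivariance computation hinges on the intertwining relation $g_{AJA^{-1}}^{-1}(A^{-1})^*\tau=A(g_J^{-1}\tau)A^{-1}$ and on $\SL(2,\R)$ acting by $g_J$-isometries up to this conjugation.
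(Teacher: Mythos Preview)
Your proof is correct and follows exactly the approach indicated in the paper, which simply says the result follows immediately from Lemma~\ref{lem:characterization_tangent_space} and Lemma~\ref{lem:scalar_prod_invariance}. You have faithfully unpacked those two ingredients: the trace constraint from Lemma~\ref{lem:characterization_tangent_space} handles well-definedness and the inverse, and the intertwining identity $g_{AJA^{-1}}^{-1}(A^{-1})^*\tau=A(g_J^{-1}\tau)A^{-1}$ from the proof of Lemma~\ref{lem:scalar_prod_invariance} handles the equivariance.
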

\begin{proof}
The proof follows immediately from Lemma \ref{lem:characterization_tangent_space} and Lemma \ref{lem:scalar_prod_invariance}.
\end{proof}

\subsection{A para-\hsk hyperK\"ahler structure on \texorpdfstring{$T^*\mathcal{J}(\R^2)$}{T*J(R2)}} \label{subsec:parahyperkahler_toy}

Throughout the paper,  $f$ will denote the function $f(t) \defin \sqrt{1 + t^2}$ for $t \in \R$ which we introduced already in Section \ref{subsec:change_coord}. Unless otherwise stated, we will denote by $\scall{\cdot}{\cdot}$, $\scal{\cdot}{\cdot}$ the scalar products $\scall{\cdot}{\cdot}_J$ on $T_J \mathcal{J}(\R^2)$ and $T_J^* \mathcal{J}(\R^2)$ (it will be clear from the context which one of these we will refer to) and the pairing $\scal{\cdot}{\cdot}_J$ between $T_J \mathcal{J}(\R^2)$ and its dual. Similarly $\norm{\cdot} = \norm{\cdot}_J$. 

\begin{definition}
Let us define the symmetric bi-linear form $\mathbf{g}$ on $T_{(J,\sigma)}T^{*}\mathcal{J}(\R^{2})$:
\begin{equation} \label{eq:definition_g_toy}
\mathbf{g}_{(J,\sigma)}((\dot{J}, \dot{\sigma}), (\dot{J}', \dot{\sigma}')) = f(\norm{\sigma}) \, \scall{\dot{J}}{\dot{J}'} - \frac{1}{f(\norm{\sigma})} \, \scall{\dot{\sigma}_0}{\dot{\sigma}_0'}
\end{equation}
and the endomorphisms $\mathbf{I},\mathbf{J},\mathbf{K}$ of  $T_{(J,\sigma)}T^{*}\mathcal{J}(\R^{2})$:
\begin{align}
\mathbf{I}_{(J,\sigma)}(\dot{J}, \dot{\sigma}) & = \left( - J \dot{J}, \ - \dot{\sigma}_0(\cdot, J \cdot) - \scal{\sigma}{\dot{J}} \, g_J \right) \label{eq:definition_I_toy} \\
\mathbf{J}_{(J,\sigma)}(\dot{J}, \dot{\sigma}) & = \left( \frac{1}{f(\norm{\sigma})} \, g_J^{-1} \dot{\sigma}_0 , \ f(\norm{\sigma}) \, g_J(\cdot , \dot{J} \cdot) + \frac{\scall{\sigma}{\dot{\sigma}_0(\cdot, J \cdot)}}{f(\norm{\sigma})} \, g_J \right) \label{eq:definition_J_toy} \\
\mathbf{K}_{(J,\sigma)}(\dot{J}, \dot{\sigma}) & = \left( - \frac{1}{f(\norm{\sigma})} \, J g_J^{-1} \dot{\sigma}_0 , \ - f(\norm{\sigma}) \, g_J(\cdot , \dot{J} J \cdot) - \frac{\scall{\sigma}{\dot{\sigma}_0}}{f(\norm{\sigma})} \, g_J \right) \label{eq:definition_K_toy}
\end{align}
where $f(t)=\sqrt{1+t^2}$.
\end{definition}

For future reference, we also record the expressions of the forms:
\[
\omega_\mathbf{I} = \mathbf{g}(\cdot, \mathbf{I} \cdot), \quad \omega_\mathbf{J} = \mathbf{g}(\cdot, \mathbf{J} \cdot), \quad \omega_\mathbf{K} = \mathbf{g}(\cdot, \mathbf{K} \cdot).
\]
These are given by:
\begin{align}
(\omega_\mathbf{I})_{(J,\sigma)} ((\dot{J}, \dot{\sigma}), (\dot{J}', \dot{\sigma}')) & = - f(\norm{\sigma}) \scall{\dot{J}}{J \dot{J}'} + \frac{1}{f(\norm{\sigma})} \scall{\dot{\sigma}_0}{\dot{\sigma}_0'(\cdot , J \cdot)} , \label{eq:definition_omegaI_toy} \\
(\omega_\mathbf{J})_{(J,\sigma)} ((\dot{J}, \dot{\sigma}), (\dot{J}', \dot{\sigma}')) & = \scal{\dot{\sigma}_0'}{\dot{J}} - \scal{\dot{\sigma}_0}{\dot{J}'} , \label{eq:definition_omegaJ_toy} \\
(\omega_\mathbf{K})_{(J,\sigma)} ((\dot{J}, \dot{\sigma}), (\dot{J}', \dot{\sigma}')) & = \scal{\dot{\sigma}_0'}{J \dot{J}} - \scal{\dot{\sigma}_0}{J \dot{J}'} . \label{eq:definition_omegaK_toy}
\end{align}

\begin{theorem}\label{thm:parahyper_structure_toy}
The quadruple $(\mathbf g,\mathbf{I},\mathbf{J},\mathbf{K})$ is an $\SL(2,\R)$-invariant para-hyperK\"ahler structure on $T^*\mathcal J(\R^2)$.
\end{theorem}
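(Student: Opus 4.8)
The plan is to verify directly the three defining properties of a para-hyperK\"ahler structure: (1) $(\g,\i)$ is pseudo-K\"ahler, (2) $(\g,\j)$ and $(\g,\k)$ are para-K\"ahler, and (3) $\i,\j,\k$ satisfy the para-quaternionic relations. The $\SL(2,\R)$-invariance is almost immediate: by Lemma~\ref{lem:scalar_prod_invariance} the scalar products $\scall{\cdot}{\cdot}_J$ and the pairing $\scal{\cdot}{\cdot}_J$ are $\SL(2,\R)$-invariant, and $\norm{\sigma}$ is too; moreover conjugation commutes with the operations $\dot J \mapsto -J\dot J$, $\dot\sigma_0 \mapsto \dot\sigma_0(\cdot,J\cdot)$, etc., and sends $g_J$ to $g_{A\cdot J}$. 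So each of $\g$, $\i$, $\j$, $\k$, and the $\omega$-forms is manifestly preserved by the $\SL(2,\R)$-action once written in the invariant notation. Because the action is transitive on $\mathcal J(\R^2)\cong\Hyp^2$, it will suffice to check the pointwise-algebraic identities (compatibility of $\g$ with each structure, the quaternionic multiplication table, and $\i^2=-\1$, $\j^2=\k^2=\1$) at a single convenient basepoint, say $J_0 = \begin{pmatrix}0 & -1\\ 1 & 0\end{pmatrix}$ with $g_{J_0}$ the Euclidean product; at this point $T_{J_0}\mathcal J(\R^2)$ is the space of symmetric traceless matrices, spanned by two explicit generators, and all the operators become $2\times 2$ or $4\times 4$ matrices in the induced basis of $T_{(J_0,\sigma)}T^*\mathcal J(\R^2)$, reducing everything to finite linear algebra.

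\textbf{Algebraic identities.} First I would record the basic commutation facts: for $\dot J\in T_J\mathcal J(\R^2)$, $J\dot J$ and $\dot J J=-J\dot J$ are again tangent vectors, $J^2=-\1$, and $g_J(\cdot,J\cdot)$ is skew; similarly for $\sigma\in T^*_J\mathcal J(\R^2)$ the operator $g_J^{-1}\sigma$ anticommutes with $J$, so $\scall{g_J^{-1}\sigma}{g_J^{-1}\sigma}=\norm{\sigma}^2$, and the ``rotation'' $\sigma\mapsto\sigma(\cdot,J\cdot)$ on the cotangent space corresponds to $g_J^{-1}\sigma\mapsto (g_J^{-1}\sigma)J$. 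Using these, the identities $\i^2=-\1$, $\j^2=\k^2=\1$, $\i\j=\k$, $\j\i=-\k$ become bookkeeping with the $f(\norm\sigma)$ factors cancelling correctly — the traceless parts $\dot\sigma_0$ get mapped among themselves and the trace corrections (the $g_J$-multiples) are forced by Lemma~\ref{lem:characterization_tangent_space} so that the image stays tangent to $T^*\mathcal J(\R^2)$; I would verify the trace-correction terms match by a short computation using \eqref{eq:pairing} and the skew-symmetry of $g_J(\cdot,J\cdot)$. The compatibility relations $\g(\i v,w)=-\g(v,\i w)$, $\g(\j v,w)=-\g(v,\j w)$, $\g(\k v,w)=-\g(v,\k w)$ then follow from the corresponding skew/self-adjointness of the building blocks with respect to $\scall{\cdot}{\cdot}$, again tracking the $f$ weights; equivalently one checks that the stated $\omega_\mathbf X$ in \eqref{eq:definition_omegaI_toy}--\eqref{eq:definition_omegaK_toy} are indeed $\g(\cdot,\mathbf X\cdot)$ and are antisymmetric, which is a direct substitution.

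\textbf{Closedness of the two-forms.} The substantive analytic point is that $\omega_\i,\omega_\j,\omega_\k$ are closed. Here I would exploit that $T^*\mathcal J(\R^2)$ is a cotangent bundle, so it carries the tautological Liouville form $\lambda$ and the canonical symplectic form $\dd\lambda$. A computation (or comparison with \eqref{eq:definition_omegaJ_toy}--\eqref{eq:definition_omegaK_toy}) should identify $\omega_\j$ and $\omega_\k$, up to the conjugation/factor mentioned in the introduction, with the real and imaginary parts of the canonical complex symplectic form of $T^*\mathcal J(\R^2)$ (recall $\mathcal J(\R^2)$ is a complex manifold via $\mathcal I$), hence both closed. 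For $\omega_\i$ the cleanest route is the equivalent formulation: since $\SL(2,\R)$ acts transitively with isotropy $\SO(2)$, one can write $\omega_\i$ in terms of left-invariant forms on $\PSL(2,\R)$ and its cotangent bundle and check $\dd\omega_\i=0$ by an invariant (Cartan-formula) computation, or alternatively observe that $(\g,\i)$ restricted to the zero section is (a multiple of) the hyperbolic area form of $\Hyp^2$ and extend by an explicit primitive. I expect \textbf{the closedness of $\omega_\i$ to be the main obstacle}, since it is the one identity that is genuinely differential rather than pointwise-algebraic and the $f(\norm\sigma)$-dependence makes a naive $\dd$ computation messy; the way around it is to package $\omega_\i$ as an exact form, e.g. $\omega_\i=\dd\alpha$ for a primitive $\alpha$ built from $\lambda$ and the radial function $\norm\sigma$, so that closedness is automatic. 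Once all three forms are closed, the equivalences recalled after the definition of para-K\"ahler structure ($\dd\omega_\mathbf X=0 \iff \nabla\mathbf X=0$) upgrade $(\g,\i)$ to pseudo-K\"ahler and $(\g,\j),(\g,\k)$ to para-K\"ahler — for the latter one also notes that $\j,\k$ have $\pm1$-eigenspaces of equal dimension (visible at the basepoint) and integrable eigendistributions (which will follow from $\nabla\j=\nabla\k=0$, or can be seen directly since the $\pm1$-eigendistributions of $\j$ are the horizontal/vertical-type distributions coming from the product structure). Together with the para-quaternionic relations this yields the theorem.
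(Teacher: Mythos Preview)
Your outline is largely correct and parallels the paper's argument: the $\SL(2,\R)$-invariance via Lemma~\ref{lem:scalar_prod_invariance}, the identification of $\omega_\j,\omega_\k$ with the real and imaginary parts of the canonical complex symplectic form on the cotangent bundle (this is exactly Lemma~\ref{lemma:Cvaluedform}), and the passage from closedness of all three $\omega_{\mathbf X}$ to integrability via the para-hyperK\"ahler trick (Lemma~\ref{lm:integrability}). One small difference in the algebra: instead of verifying the para-quaternionic relations at a basepoint, the paper pulls everything back through the linear isomorphism $\Xi_{(J,\sigma)}\colon (T_J\mathcal J(\R^2))^2\to T_{(J,\sigma)}T^*\mathcal J(\R^2)$ of Proposition~\ref{prop:characterization_tangent_space2}, under which $\i,\j,\k$ become $(\dot J,\dot K)\mapsto(-J\dot J,\,J\dot K)$, $(f^{-1}\dot K,\,f\dot J)$, $(-f^{-1}J\dot K,\,fJ\dot J)$ respectively, so the identities are immediate. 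Your basepoint check would work too, just with more bookkeeping.

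The real gap is exactly where you flag it: the closedness of $\omega_\i$. Neither of your two proposals is carried out, and the first is mis-stated --- $\SL(2,\R)$ acts transitively on $\mathcal J(\R^2)$ but \emph{not} on $T^*\mathcal J(\R^2)$ (orbits are level sets of $\norm\sigma$), so $\omega_\i$ is not a left-invariant form on a homogeneous space and the $f(\norm\sigma)$-dependence persists in any invariant description; the computation is doable in associated-bundle language but is not the triviality you suggest. The paper's route is completely different and worth knowing: it introduces the ``formal Mess map'' $\mathcal M\colon T^*\mathcal J(\R^2)\to\mathcal J(\R^2)\times\mathcal J(\R^2)$, sending $(J,\sigma)$ to the pair of complex structures obtained by conjugating $J$ by $\1\mp JB$ (where $B=h^{-1}\sigma$), and computes directly that $\mathcal M^*(\pi_l^*\Omega\pm\pi_r^*\Omega)=2\omega_\i,\ 2\omega_\k$ where $\Omega$ is the K\"ahler form of $\mathcal J(\R^2)\cong\Hyp^2$ (see the proof of the baby version of Theorem~\ref{thm:mappaM} and Corollary~\ref{cor:mess_paraholo}). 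Since $\Omega$ is closed, $\omega_\i$ is closed. This same pullback identifies $\j$ with the product para-complex structure, giving its integrability for free, and a rotation by the circle action does the same for $\k$; integrability of $\i$ is obtained by recognizing it as the canonical complex structure of the cotangent bundle (Corollary~\ref{cor:Iintegrable_torus}). So the Mess map is the missing key lemma in your plan.
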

\begin{proof}
Observe that, through the linear isomorphism of Lemma \ref{lem:characterization_tangent_space}, the endomorphisms $\mathbf{I}, \mathbf{J}, \mathbf{K}$ can be represented as:
\begin{align*}
(\Xi_{(J,\sigma)}^*\mathbf{I}_{(J,\sigma)}) (\dot{J}, \dot{K}) & = (- J \dot{J}, \, J \dot{K}) , \\
(\Xi_{(J,\sigma)}^*\mathbf{J}_{(J,\sigma)}) (\dot{J}, \dot{K}) & = (f(\norm{\sigma})^{-1} \dot{K} , \, f(\norm{\sigma}) \dot{J} ) , \\
(\Xi_{(J,\sigma)}^*\mathbf{K}_{(J,\sigma)}) (\dot{J}, \dot{K}) & = (- f(\norm{\sigma})^{-1} J \dot{K}, \, f(\norm{\sigma}) J \dot{J}) .
\end{align*}
From here, it is clear that the following relations are satisfied:
\[
\mathbf{I}^2 = - \1, \quad \mathbf{J}^2 = \mathbf{K}^2 = \1, \quad \mathbf{K} = \mathbf{I} \mathbf{J} .
\]

We will see in Lemma \ref{lemma:Cvaluedform} that the $2$-\hsk forms $\omega_\mathbf{J}$ and $\omega_\mathbf{K}$ are respectively the real and imaginary part of the complex symplectic form $\omega^\C$ on $T^* \mathcal{J}(\R^2)$, which is defined as the differential of $- \lambda^\C$. Hence the forms $\omega_\mathbf{J}$ and $\omega_\mathbf{K}$ are obviously exact. We will show in Corollary \ref{cor:mess_paraholo} that $\omega_{\i}$ is closed, as well. One can now apply a general argument to conclude that that $\i,\j,\k$ are integrable, see Lemma \ref{lm:integrability} in Appendix \ref{appendixB}. However, we will also deduce this directly using some of the interpretations of $\i,\j,\k$ that we show below. More concretely, integrability of $\i$ is proved in Corollary \ref{cor:Iintegrable_torus}; integrability of $\j$ and $\k$ follows from the pull-back results of Corollary \ref{cor:mess_paraholo} and Corollary \ref{cor:cc_paraholo}. This shows that the quadruple $(\g,\i,\j,\k)$ defines a para-hyperK\"ahler structure on $T^{*}\mathcal{J}(\R^{2})$. The $\SL(2,\R)$-invariance is checked in each expression by applying Lemma \ref{lem:scalar_prod_invariance}. 
\end{proof}

\begin{remark}\label{rmk:compareH2}
Let us observe that $\i$ preserves the tangent space to the $0$-\hsk section of $T^* \mathcal{J}(\R^2)$, since $\i_{(J,0)}(\dot J,0)=(-J\dot J,0)$. We will denote by $G$ and $\Omega$ the restrictions of $\g$ and $\omega_\mathbf{I}$ to the $0$-\hsk section of $T^* \mathcal{J}(\R^2)$, which is identified to $\mathcal J(\R^2)$. In particular, for every $J \in \mathcal{J}(\R^2)$
\[
G_J(\dot{J}, \dot{J}') \defin\scall{\dot{J}}{ \dot{J}'}_J\qquad  \Omega_J(\dot{J}, \dot{J}') \defin - \scall{\dot{J}}{J \dot{J}'}_J ,
\]
where $\dot{J}, \dot{J}' \in T_J \mathcal{J}(\R^2)$. Hence $(G,\Omega)$ is a K\"ahler structure on $\mathcal J(\R^2)$.

Now, it turns out that  $\mathcal{J}(\R^2)$ is diffeomorphic to the hyperbolic plane $\Hyp^2$. To see this, one can define a map from $\mathcal{J}(\R^2)$ to the upper half-space model of $\Hyp^2$ by declaring that the standard linear complex structure 
$$J_0=\begin{pmatrix}0 & -1 \\ 1 & 0\end{pmatrix}$$
is mapped to $i\in\Hyp^2$. Observe that $\SO(2)<\SL(2,\R)$ stabilizes both $J_0$ (for the $\SL(2,\R)$-action on $\mathcal{J}(\R^2)$ introduced in Section \ref{subsec:tangent space toy}) and $i$ (for the classical action in the upper half-plane model). Since both actions are transitive, one can uniquely extend this assignment  to an $\SL(2,\R)$-equivariant diffeomorphism. It is not hard to check that this is a K\"ahler isometry, that is, the metric $G$ corresponds to the hyperbolic metric of $\Hyp^2$, the complex structure ${\mathcal {I}}  $ to the standard complex structure of $\Hyp^2$, and therefore the symplectic form $\Omega$ to the area form of $\Hyp^2$. See \cite[Lemma 4.3.2]{trautwein_thesis}, \cite[\S 3.1]{trautwein2019hyperkahler} or \cite[\S 2.2.2]{hodgethesis} for more details.
\end{remark}

\subsection{Liouville form on \texorpdfstring{$T^* \mathcal{J}(\R^2)$}{T*J(R2)}} \label{subsec:liouville_form}

Recall that the Liouville form of any manifold $M$ is the 1-form $\lambda$ on $T^*M$ defined by $\lambda_{(p,\alpha)}(v)=\alpha(\pi_*v)$, for $\alpha\in T^*_pM$. If $M$ is a complex manifold, one has an induced complex structure $\i$ on $T^*M$, and $\lambda$ is the real part of the complex-valued 1-form $\lambda^\C=\lambda-i\lambda\circ{\i} $. The term $\lambda\circ{\i}$ can also be written as $\alpha(\pi_*(\mathcal I(v)))$, where $\mathcal I$ is the complex structure of $M$, since $\pi$ is holomorphic. In our setting, we therefore have the following expression for 
the (complex-\hsk valued) Liouville form on $T^* \mathcal{J}(\R^2)$:
\[
\lambda^\C_{(J, \sigma)}(\dot{J}, \dot{\sigma}) = \scal{\sigma}{\dot{J}}_J + i \, \scal{\sigma}{J \dot{J}}_J .
\]
We define the complex-\hsk valued \emph{cotangent symplectic structure} of $T^* \mathcal{J}(\R^2)$ by setting 
$$\omega^\C \defin - \dd{\lambda^\C}~.$$
In this section we show the following:

\begin{lemma}\label{lemma:Cvaluedform}
The $\C$-\hsk valued symplectic form $\omega^\C$ on $T^*\mathcal J(\R^2)$ equals $\omega_\j+i\omega_\k$.
\end{lemma}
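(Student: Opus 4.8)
The plan is to compute $\omega^\C = -\dd\lambda^\C$ directly from the explicit formula $\lambda^\C_{(J,\sigma)}(\dot J,\dot\sigma) = \scal{\sigma}{\dot J}_J + i\scal{\sigma}{J\dot J}_J$ and compare with the expressions \eqref{eq:definition_omegaJ_toy} and \eqref{eq:definition_omegaK_toy} for $\omega_\j$ and $\omega_\k$. Since the real and imaginary parts can be treated separately, it suffices to show $-\dd(\lambda) = \omega_\j$ and $-\dd(\lambda\circ\mathbf I) = \omega_\k$, where $\lambda_{(J,\sigma)}(\dot J) = \scal{\sigma}{\dot J}_J$ is the ordinary Liouville form.

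First I would recall that for any manifold $M$, the cotangent symplectic form $-\dd\lambda$ in canonical coordinates is the tautological pairing: if we use the trivialization-free description, then $(-\dd\lambda)_{(J,\sigma)}((\dot J,\dot\sigma),(\dot J',\dot\sigma'))$ equals $\dot\sigma'(\dot J) - \dot\sigma(\dot J')$ in the sense of the pairing between the cotangent and tangent directions — more precisely, using a local chart on $\mathcal J(\R^2)$ and writing $\sigma$ in the induced fiber coordinates, $\lambda = \sum_i p_i \, dq^i$ and $-\dd\lambda = \sum_i dq^i \wedge dp_i$. Evaluated on two tangent vectors with base components $\dot J,\dot J'$ and fiber components $\dot\sigma,\dot\sigma'$, this gives $\scal{\dot\sigma'_0}{\dot J}_J - \scal{\dot\sigma_0}{\dot J'}_J$: only the $g_J$-traceless part of $\dot\sigma$ enters because the pairing $\scal{\cdot}{\dot J}_J$ with $\dot J \in T_J\mathcal J(\R^2)$ automatically annihilates the pure-trace part (as $\dot J$ is $g_J$-traceless, $\scal{g_J}{\dot J}_J = \frac12\tr\dot J = 0$). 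This is precisely \eqref{eq:definition_omegaJ_toy}, so $\omega_\j = -\dd\lambda = \Re\,\omega^\C$.

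For the imaginary part, I would use the identity $\lambda\circ\mathbf I = \lambda\circ{\mathcal I}$ on the base directions, which holds because the bundle projection $\pi: T^*\mathcal J(\R^2)\to\mathcal J(\R^2)$ is holomorphic for the cotangent complex structure and ${\mathcal I}(\dot J) = -J\dot J$; concretely $(\lambda\circ\mathbf I)_{(J,\sigma)}(\dot J,\dot\sigma) = \scal{\sigma}{-J\dot J}_J = -\scal{\sigma}{J\dot J}_J$. Then $-\dd(\lambda\circ\mathbf I)$ is again a cotangent-type symplectic form but with the roles of the fiber coordinates modified by the transpose of ${\mathcal I}$; carrying out the same local computation gives $(-\dd(\lambda\circ\mathbf I))((\dot J,\dot\sigma),(\dot J',\dot\sigma')) = \scal{\dot\sigma'_0}{J\dot J}_J - \scal{\dot\sigma_0}{J\dot J'}_J$, which matches \eqref{eq:definition_omegaK_toy}. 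Hence $\omega_\k = -\dd(\lambda\circ\mathbf I) = \Im\,\omega^\C$, and combining, $\omega^\C = \omega_\j + i\omega_\k$.

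The main obstacle I anticipate is bookkeeping: making the heuristic "$-\dd\lambda = \sum dq^i\wedge dp_i$" rigorous in this coordinate-free setting requires either introducing explicit coordinates on the two-dimensional manifold $\mathcal J(\R^2)$ and a compatible fiber trivialization of $T^*\mathcal J(\R^2)$, or invoking Cartan's formula $\dd\lambda(X,Y) = X\lambda(Y) - Y\lambda(X) - \lambda([X,Y])$ with well-chosen coordinate vector fields so that the bracket term vanishes. One must also be careful that $\dot\sigma$ is not itself a section of the cotangent bundle but a tangent vector to it, and that the natural pairing $\scal{\cdot}{\cdot}_J$ depends on the basepoint $J$ through $g_J$ — so the derivative of the pairing in the base direction produces extra terms; these extra terms, however, involve only the pure-trace part of $\dot\sigma$ (via $\dot g_J$) paired against a $g_J$-traceless $\dot J$, hence vanish, which is exactly why only $\dot\sigma_0$ survives in the final formulas. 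Once this is organized, the identification with \eqref{eq:definition_omegaJ_toy}–\eqref{eq:definition_omegaK_toy} is immediate.
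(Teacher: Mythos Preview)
Your approach is essentially the paper's --- compute $-\dd\lambda^\C$ directly and match against \eqref{eq:definition_omegaJ_toy}--\eqref{eq:definition_omegaK_toy} --- but your account of why the ``extra terms'' vanish misidentifies the mechanism, and that mechanism is the whole point of the computation. Differentiating $\scal{\sigma}{\dot J'}_J = \tfrac12\tr(g_J^{-1}\sigma\dot J')$ in the base direction $\dot J$ produces, besides the $\dot\sigma$-contribution, a term from the variation of $g_J^{-1}$, namely $\tfrac12\tr(J\dot J\,g_J^{-1}\sigma\,\dot J')$. This term has nothing to do with the trace part of $\dot\sigma$; it vanishes because it is the trace of a product of three elements of $T_J\mathcal J(\R^2)$, which is Lemma~\ref{lem:product_in_TJ}\,(ii). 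For the imaginary part there is a further residual term $\scal{\sigma}{\dot J\dot J' - \dot J'\dot J}_J$ coming from $D_{\dot J}(J\dot J') - D_{\dot J'}(J\dot J) - J[\dot J,\dot J']$; it vanishes because $\dot J\dot J' - \dot J'\dot J$ is a multiple of $J$ (Lemma~\ref{lem:product_in_TJ} again) and $\scal{\sigma}{J}_J = 0$. Your ``canonical coordinates'' heuristic hides both of these cancellations precisely because the identification of fibre coordinates with components of $\sigma$ via the $J$-dependent pairing $\scal{\cdot}{\cdot}_J$ is where they live.

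There is also a harmless sign slip: since $\lambda\circ\mathbf I = -\scal{\sigma}{J\dot J}_J = -\Im\lambda^\C$, one has $\Im\omega^\C = +\dd(\lambda\circ\mathbf I)$, not $-\dd(\lambda\circ\mathbf I)$; correspondingly, a consistent execution of your ``same local computation'' would give $-\dd(\lambda\circ\mathbf I) = -\omega_\k$. The two sign errors cancel in your final conclusion.
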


In other words, we have to show that $\omega^\C$ has the following expression:
\begin{align*}
\omega^\C ((\dot{J}, \dot{\sigma}), (\dot{J}', \dot{\sigma}')) = (\scal{\dot{\sigma}_0'}{\dot{J}}_J - \scal{\dot{\sigma}_0}{\dot{J}'}_J) + i \, (\scal{\dot{\sigma}_0'}{J \dot{J}}_J - \scal{\dot{\sigma}_0}{J \dot{J}'}_J) .
\end{align*}
Before providing the proof, we give a useful lemma.

\begin{lemma} \label{lem:product_in_TJ}
    For every $\dot{J}, \dot{J}' \in T_J \mathcal{J}(\R^2)$ we have
    \[
    \dot{J} \dot{J}' = \scall{\dot{J}}{\dot{J}'}_J \, \1 - \scall{J \dot{J}}{\dot{J}'}_J \, J .
    \]
    Moreover, we have:
    \begin{enumerate}[i)]
    \item $\dot{J}^2 = - (\det\dot{J}) \1 = \|\dot{J}\|_J^2 \1$ for every $\dot{J} \in T_J \mathcal{J}(\R^2)$;
    \item $\mathrm{tr}(\dot{J} \dot{J}' \dot{J}'') = 0$ for every $\dot{J}, \dot{J}', \dot{J}'' \in T_J \mathcal{J}(\R^2)$.
    \end{enumerate}
\end{lemma}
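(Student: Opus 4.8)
The plan is to reduce everything to the Cayley--Hamilton identity for $2\times 2$ matrices together with elementary trace manipulations, using that $T_J \mathcal{J}(\R^2)$ is the two-dimensional space of traceless, $g_J$-self-adjoint endomorphisms of $\R^2$. First I would record that any traceless $M\in\End(\R^2)$ satisfies $M^2=-\det(M)\,\1$ and $\det(M)=-\tfrac12\tr(M^2)$; taking $M=\dot J$ yields at once $\dot J^2=-(\det\dot J)\,\1=\tfrac12\tr(\dot J^2)\,\1=\norm{\dot J}_J^2\,\1$, which is part $i)$.

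For the bilinear identity I would split the product $\dot J\dot J'$ into its $g_J$-symmetric and $g_J$-antisymmetric parts. Polarising $M^2=-\det(M)\,\1$ along $M=\dot J+\dot J'$ (again traceless) shows that $\dot J\dot J'+\dot J'\dot J$ is a scalar multiple of $\1$; taking traces and using $\scall{\dot J}{\dot J'}_J=\tfrac12\tr(\dot J\dot J')$ identifies the scalar, so that the $g_J$-symmetric part $\tfrac12(\dot J\dot J'+\dot J'\dot J)$ equals $\scall{\dot J}{\dot J'}_J\,\1$. For the antisymmetric part, the commutator $\dot J\dot J'-\dot J'\dot J$ is traceless and $g_J$-skew-adjoint (because $\dot J$ and $\dot J'$ are $g_J$-self-adjoint); since the traceless $g_J$-skew-adjoint endomorphisms of $\R^2$ form the line spanned by $J$, we may write $\tfrac12(\dot J\dot J'-\dot J'\dot J)=cJ$ for some scalar $c$. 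Pairing both sides with $-J$ via the trace, using $\tr(J^2)=-2$, the relation $\dot J J=-J\dot J$ and cyclicity of the trace, computes $c=-\scall{J\dot J}{\dot J'}_J$. Adding the two parts gives $\dot J\dot J'=\scall{\dot J}{\dot J'}_J\,\1-\scall{J\dot J}{\dot J'}_J\,J$, and one checks consistency with the roles of $\dot J,\dot J'$ swapped via $\scall{J\dot J}{\dot J'}_J+\scall{J\dot J'}{\dot J}_J=0$.

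Part $ii)$ then follows by substituting this formula into $\tr(\dot J\dot J'\dot J'')$ and using that both $\tr(\dot J'')=0$ and $\tr(J\dot J'')=0$; the second vanishing holds because $J\dot J''$ anticommutes with $J$ (as $\dot J''$ does), hence $\tr(J\dot J'')=\tr\!\big(J^{-1}(J\dot J'')J\big)=-\tr(J\dot J'')$. There is a quicker, less conceptual alternative: fix a $g_J$-orthonormal basis in which $J=\left(\begin{smallmatrix}0&-1\\1&0\end{smallmatrix}\right)$, so that every element of $T_J\mathcal{J}(\R^2)$ has the form $\left(\begin{smallmatrix}a&b\\b&-a\end{smallmatrix}\right)$, and all three claims become one-line $2\times 2$ matrix multiplications.

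I expect no genuine obstacle here; the only point requiring care is the sign bookkeeping in the trace computation that pins down the coefficient $c$ of $J$, and making sure the $g_J$-(skew-)adjointness conventions (in particular that $J$ itself is $g_J$-skew-adjoint, which follows from $J$ being $g_J$-orthogonal with $J^2=-\1$) are used consistently.
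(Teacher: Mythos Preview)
Your argument is correct and complete, including the sign bookkeeping for the coefficient of $J$ and the vanishing of $\tr(J\dot{J}'')$.

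The route differs from the paper's. Instead of splitting $\dot{J}\dot{J}'$ into its $g_J$-symmetric and $g_J$-antisymmetric parts, the paper observes in one line that $\dot{J}\dot{J}'$ commutes with $J$ (since $J\dot{J}\dot{J}'=-\dot{J}J\dot{J}'=\dot{J}\dot{J}'J$), and that the commutant of $J$ in $\End(\R^2)$ is exactly $\Span(\1,J)$; the two coefficients are then read off by tracing against $\1$ and $J$. Your approach trades this single commutation observation for two separate structural facts (polarised Cayley--Hamilton for the symmetric part, the one-dimensionality of $g_J$-skew endomorphisms for the antisymmetric part), which is slightly longer but makes the role of $g_J$-self-adjointness more explicit. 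For part \textit{ii)} the two proofs coincide; note that the paper phrases the vanishing of $\tr(J\dot{J}'')$ simply by remarking that $J\dot{J}''$ again lies in $T_J\mathcal{J}(\R^2)$ and is therefore traceless, which is a cleaner way to say what your conjugation argument shows.
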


\begin{proof} Let us first observe that
\[
J \dot{J} \dot{J}' = - \dot{J} J \dot{J}' =  \dot{J} \dot{J}' J .
\]
Therefore, $\dot{J} \dot{J}'$ commutes with $J$. It is simple to check that $M \in \End(\R^2)$ commutes with $J$ if and only if $M$ belongs to $\Span(\1, J)$, hence 
\[
    \dot{J} \dot{J}' = \scall{\dot{J}}{\dot{J}'}_J \, \1 - \scall{J \dot{J}}{\dot{J}'}_J \, J .
\]
By Cayley-Hamilton theorem,
\[
0=\dot{J}^2 - (\mathrm{tr} \dot{J}) \dot{J} + (\det \dot{J}) \, \1 = \dot{J}^2 + (\det\dot{J}) \, \1 .
\]
Therefore, we have $2 \norm*{\dot{J}}^2_J = \mathrm{tr}(\dot{J}^2) = - 2 \det \dot{J}$. For the last assertion, we apply the first part of the statement:
\begin{align*}
\mathrm{tr}(\dot{J} \dot{J}' \dot{J}'') & = \scall{\dot{J}}{\dot{J}'}_J \, \mathrm{tr}(\dot{J}'') - \scall{J \dot{J}}{\dot{J}'}_J \, \mathrm{tr}(J \dot{J}'') .
\end{align*}
This expression vanishes because $\dot{J}''$ and $J \dot{J}''$ are both traceless, being elements of $T_J \mathcal{J}(\R^2)$.
\end{proof}

\begin{proof}[Proof of Lemma \ref{lemma:Cvaluedform}]
The set $T^* \mathcal{J}(\R^2)$ can be considered as a submanifold of the vector space $\End(\R^2) \times S_2(\R^2)$. In particular, any tangent vector $(\dot{J}, \dot{\sigma}) \in T_{(J, \sigma)} T^*\mathcal{J}(\R^2)$ can be extended to a vector field, that we continue to denote with abuse by $(\dot{J}, \dot{\sigma})$, on a neighborhood of $(J, \sigma)$ in $\End(\R^2) \times S_2(\R^2)$, with values in $\End(\R^2) \times S_2(\R^2)$. Moreover, we can require that the component $\dot{J}$ depends only on the variable $J \in \End(\R^2)$. In all the following computations, we will denote by $D$ the standard flat connection on the vector space $\End(\R^2) \times S_2(\R^2)$, and we consider $\scal{\cdot}{\cdot}$ as a pairing defined for elements of the entire vector spaces $\End(\R^2)$ and $S_2(\R^2)$, where the extension is given by the same expression \eqref{eq:pairing}. 

If $\lambda \defin \Re \lambda^\C$, then we have:
\begin{align*}
\dd{\lambda}((\dot{J}, \dot{\sigma}), (\dot{J}', \dot{\sigma}')) & = (\dot{J}, \dot{\sigma}) (\lambda(\dot{J}', \dot{\sigma}')) - (\dot{J}', \dot{\sigma}') (\lambda(\dot{J}, \dot{\sigma})) - \lambda([(\dot{J}, \dot{\sigma}), (\dot{J}', \dot{\sigma}')]) \\
& = (\dot{J}, \dot{\sigma}) (\scal{\sigma}{\dot{J}'}_J) - (\dot{J}', \dot{\sigma}') (\scal{\sigma}{\dot{J}}_J) - \scal{\sigma}{[\dot{J}, \dot{J}']}_J .
\end{align*}
Observe that $\dot{J}(g_J) = - g_J(\cdot, J \dot{J} \cdot)$. Then
\begin{align*}
(\dot{J}, \dot{\sigma}) ( \scal{\sigma}{\dot{J}'}_J ) & = \frac{1}{2}  (\dot{J}, \dot{\sigma}) ( \mathrm{tr}(g_J^{-1} \sigma \dot{J}'))  \\
& = \frac{1}{2} ( - \mathrm{tr}(g_J^{-1} \dot{J}(g_J) g_J^{-1} \sigma \dot{J}') + \mathrm{tr}(g_J^{-1} \dot{\sigma} \dot{J}') + \mathrm{tr}(g_J^{-1} \sigma D_{\dot{J}} \dot{J}') ) \\
& =  \frac{1}{2} \mathrm{tr}(J \dot{J} g_J^{-1} \sigma \dot{J}') + \frac{1}{2} \mathrm{tr}(g_J^{-1} \dot{\sigma}_0 \dot{J}') + \scal{\sigma}{D_{\dot{J}} \dot{J}'}_J \\
& =  \scal{\dot{\sigma}_0}{\dot{J}'}_J + \scal{\sigma}{D_{\dot{J}} \dot{J}'}_J .
\end{align*}
In the last step we applied point \textit{ii)} of Lemma \ref{lem:product_in_TJ}. Replacing this relation in the expression for $\dd{\lambda}$ we obtain
\begin{align*}
\dd{\lambda}((\dot{J}, \dot{\sigma}), (\dot{J}', \dot{\sigma}')) & = \scal{\dot{\sigma}_0}{\dot{J}'}_J - \scal{\dot{\sigma}_0'}{\dot{J}}_J + 
\scal{\sigma}{ D_{\dot{J}} \dot{J}' - D_{\dot{J}'} \dot{J} - [\dot{J}, \dot{J}']}_J \\
& = \scal{\dot{\sigma}_0}{\dot{J}'}_J - \scal{\dot{\sigma}_0'}{\dot{J}}_J . 
\end{align*}
The same type of argument applies to the imaginary part of $\lambda^\C$. More concretely, setting $\mu:=\Im\lambda^\C$, we arrive at the expression:
\begin{align*}
\dd{\mu}((\dot{J}, \dot{\sigma}), (\dot{J}', \dot{\sigma}')) & = \scal{\dot{\sigma}_0}{J\dot{J}'}_J - \scal{\dot{\sigma}_0'}{J\dot{J}}_J + 
\scal{\sigma}{ D_{\dot{J}} (J\dot{J}') - D_{\dot{J}'} (J\dot{J}) - J[\dot{J}, \dot{J}']}_J~.
\end{align*}
It remains to check that the last term vanishes. But this term equals $\scal{\sigma}{\dot J\dot J'-\dot J'\dot J}$. Using Lemma \ref{lem:product_in_TJ}, $\dot J\dot J'-\dot J'\dot J$ is proportional to $J$, hence this scalar product vanishes because $\scal{\sigma}{J}=(1/2)\mathrm{tr}(g_J^{-1}\sigma J)=0$. 
\end{proof}

\begin{corollary}\label{cor:Iintegrable_torus}
The almost-complex structure $\i$ equals the almost-complex structure induced by $\mathcal I$ on $T^*\mathcal J(\R^2)$. In particular, $\i$ is an integrable almost-complex structure.
\end{corollary}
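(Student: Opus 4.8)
The plan is to identify $\i$ with the canonical complex structure that the total space $T^*\mathcal J(\R^2)$ carries as the cotangent bundle of the complex manifold $(\mathcal J(\R^2),\mathcal I)$. Since $(\mathcal J(\R^2),\mathcal I)$ is genuinely a complex manifold — it is biholomorphic to $\Hyp^2$ by Remark \ref{rmk:compareH2} — this canonical complex structure, which I will denote $\mathbf i_0$ (so that $\lambda^\C=\lambda-i\,\lambda\circ\mathbf i_0$), is integrable by the standard theory of cotangent bundles of complex manifolds; hence proving $\i=\mathbf i_0$ yields the statement. The most efficient way to establish $\i=\mathbf i_0$ is through the complex symplectic form $\omega^\C=-\dd{\lambda^\C}$.

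First I would record two facts about $\omega^\C$. On the one hand, $\omega^\C=-\dd{\lambda^\C}$ is, by its very definition, the tautological holomorphic symplectic form of the cotangent bundle: in any holomorphic chart $(z,\zeta)$ on $T^*\mathcal J(\R^2)$ adapted to $\mathbf i_0$ it equals $\dd{z}\wedge\dd{\zeta}$, so it is non-degenerate and of type $(2,0)$ with respect to $\mathbf i_0$. On the other hand, by Lemma \ref{lemma:Cvaluedform} one has $\omega^\C=\omega_\j+i\,\omega_\k$, and this form is of type $(2,0)$ with respect to $\i$ as well: this is pure linear algebra of the para-quaternionic relations together with the skew-symmetry and the compatibility of $\g$ with $\i,\j,\k$ recorded around Theorem \ref{thm:parahyper_structure_toy}, since
\[
\omega^\C(\i v,w)=\g(\i v,\j w)+i\,\g(\i v,\k w)=-\g(v,\k w)+i\,\g(v,\j w)=i\big(\omega_\j(v,w)+i\,\omega_\k(v,w)\big)=i\,\omega^\C(v,w)\,,
\]
where I have used $\i\j=\k$, $\i\k=-\j$ and $\g(\i\,\cdot\,,\cdot)=-\g(\cdot,\i\,\cdot)$.

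Then I would invoke the elementary fact that a non-degenerate complex-valued $2$-form of type $(2,0)$ determines the underlying almost-complex structure: extending such a form $\C$-bilinearly to the complexified tangent space, its radical is precisely the $(0,1)$-subspace, so if the same non-degenerate form is of type $(2,0)$ for two almost-complex structures, these share their $(0,1)$-distribution and therefore coincide. Applying this to $\omega^\C$ gives $\i=\mathbf i_0$, and hence $\i$ is integrable.

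I expect the only genuinely delicate point to be this last step, namely checking that ``non-degenerate of type $(2,0)$'' really pins down the complex structure: one must rule out that $\operatorname{rad}(\omega^\C)$ is strictly larger than the $(0,1)$-subspace of $\i$, equivalently that $\omega^\C$ restricts to a non-degenerate form on the $(1,0)$-subspace of $\i$, which holds because $\Re\,\omega^\C=\omega_\j$ is a genuine symplectic form; and one must make sure that the normalization convention under which $\mathbf i_0$ is ``the complex structure induced by $\mathcal I$'' matches ours, which is immaterial since the radical of a $2$-form is unchanged by a nonzero complex rescaling. A more pedestrian alternative that avoids this step is to fix an explicit holomorphic coordinate $z$ on $\mathcal J(\R^2)$ coming from the biholomorphism with $\Hyp^2$ and to verify directly from \eqref{eq:definition_I_toy} that $\i$ becomes multiplication by $i$ in the induced coordinates $(z,\zeta)$ on $T^*\mathcal J(\R^2)$; this is elementary but computational.
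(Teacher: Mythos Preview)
Your proposal is correct and follows essentially the same approach as the paper: both use Lemma~\ref{lemma:Cvaluedform} to identify $\omega^\C$ with $\omega_\j+i\omega_\k$, observe that $\omega^\C$ is $\C$-linear with respect to both $\i$ and the canonical complex structure of $T^*\mathcal J(\R^2)$, and then conclude $\i=\mathbf i_0$ from non-degeneracy. The only cosmetic difference is that the paper extracts the imaginary part of the relation $\omega^\C(\cdot,\hat\i\cdot)=i\,\omega^\C(\cdot,\cdot)$ to get the real equation $\omega_\j=\omega_\k(\cdot,\hat\i\cdot)$, compares it with $\omega_\j=\g(\cdot,\k\i\cdot)=\omega_\k(\cdot,\i\cdot)$, and invokes non-degeneracy of $\omega_\k$ directly, rather than passing through complexified tangent spaces and radicals as you do.
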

\begin{proof}
We have showed in Lemma \ref{lemma:Cvaluedform} that $\omega_\j$ and $\omega_\k$ are the real and imaginary parts of the complex symplectic form $\omega^\C$ of $T^*\mathcal J(\R^2)$. Denote by $\hat\i$ the almost complex structure of $T^*\mathcal J(\R^2)$. Since $\mathcal J(\R^2)$ is a complex manifold, $\omega^\C$ is a complex symplectic form, which means $\omega^\C(\cdot,\hat\i\cdot)=i\omega^\C(\cdot,\cdot)$. Taking the imaginary parts in this equality we find $\omega_\j=\omega_\k(\cdot,\hat\i\cdot)$. But from the definitions of $\omega_\j$ and $\omega_\k$, we see that 
$$\omega_\j=\g(\cdot,\j\cdot)=\g(\cdot,\k\i\cdot)=\omega_\k(\cdot,\i\cdot)~.$$
Since $\omega_\j$ and $\omega_\k$ are nondegenerate, this implies $\hat\i=\i$.
\end{proof}


\subsection{Relation with \texorpdfstring{$\mathcal{MGH}(T^2)$}{MGH(T2)}}\label{subsec:identificationT2}
Theorem \ref{thm:cotangent_parameterization} furnishes a diffeomorphism $\mathcal{F}: \mathcal{MGH}(T^2) \rightarrow T^{*}\mathcal{T}^{\conf}(T^{2})$ between the deformation space of MGHC anti-de Sitter manifolds diffeomorphic to $T^{2}\times \R$ and the complement of the zero section of the cotangent space $T^{*}\mathcal{T}^{\conf}(T^{2})$ to the Teichm\"uller space of the torus. The latter can be identified with $T^{*}\mathcal{J}(\R^{2})$, as we show in the following lemma.

\begin{lemma}\label{lemma torus identifications}
There is a homeomorphism between $\mathcal J(\R^2)$ and $\mathcal T^\conf(T^2)$, which is equivariant with respect to the actions of $\SL(2,\Z)\cong{MCG}(T^2)$. 
\end{lemma}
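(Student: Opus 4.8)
The statement identifies two models of the Teichm\"uller space of the torus: the space $\mathcal J(\R^2)$ of linear complex structures on $\R^2$ compatible with the orientation, and $\mathcal T^\conf(T^2)$, the space of almost-complex structures on $T^2$ modulo isotopy. The first step is to fix, once and for all, a smooth trivialization: write $T^2 = \R^2/\Z^2$, so that the tangent bundle $TT^2$ is canonically trivialized as $T^2 \times \R^2$, with translations acting trivially on the $\R^2$ factor. A \emph{constant} (i.e. translation-invariant) almost-complex structure $J$ on $T^2$ is then literally an element of $\mathcal J(\R^2)$, and the metric $g_J = \rho(\cdot,J\cdot)$ is a flat metric. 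The map I would define sends $J \in \mathcal J(\R^2)$ to the isotopy class of the constant almost-complex structure $J$ on $T^2$; call it $\Phi\colon \mathcal J(\R^2) \to \mathcal T^\conf(T^2)$.

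\textbf{Surjectivity and injectivity.} For surjectivity, I would invoke the uniformization theorem for the torus (equivalently, the classical fact that every complex structure on $T^2$ is biholomorphic to some $\C/\Lambda$): given any almost-complex structure $J'$ on $T^2$, it is integrable (dimension reasons), and the associated Riemann surface is a complex torus $\C/\Lambda$; pulling back the standard constant structure of $\C$ along a diffeomorphism $T^2 \to \C/\Lambda$ isotopic to the identity gives a constant structure isotopic to $J'$. Concretely, one can also argue via harmonic maps or via the fact that $J'$ admits a $J'$-holomorphic coordinate chart whose differential provides the required isotopy. For injectivity, suppose two constant structures $J_0, J_1 \in \mathcal J(\R^2)$ are isotopic on $T^2$. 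Both are induced by flat metrics $g_{J_0}, g_{J_1}$ of the same area (normalize to area $1$); an isotopy between the conformal structures, combined with uniformization, produces a conformal diffeomorphism $(T^2, g_{J_0}) \to (T^2, g_{J_1})$ isotopic to the identity, which by Liouville-type rigidity in this flat setting must be a translation (a flat torus has no conformal automorphisms isotopic to the identity other than translations), and translations act trivially on constant structures, forcing $J_0 = J_1$.

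\textbf{Equivariance and topology.} The mapping class group $MCG(T^2)$ is canonically $\SL(2,\Z)$, acting on $T^2 = \R^2/\Z^2$ via linear automorphisms of $\Z^2$, hence on constant almost-complex structures by conjugation $J \mapsto AJA^{-1}$ — which is exactly the $\SL(2,\R)$-action on $\mathcal J(\R^2)$ described in Section~\ref{subsec:tangent space toy} restricted to $\SL(2,\Z)$. So $\Phi$ is $\SL(2,\Z)$-equivariant by construction. Finally, $\Phi$ is continuous because constant structures depend smoothly on the matrix entries, and it is a continuous bijection between manifolds of the same dimension ($\mathcal J(\R^2) \cong \Hyp^2$ by Remark~\ref{rmk:compareH2}, and $\mathcal T^\conf(T^2) \cong \Hyp^2$ classically); invariance of domain (or exhibiting the explicit inverse via uniformization, which is also continuous) upgrades it to a homeomorphism.

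\textbf{Main obstacle.} The routine parts are equivariance and the dimension/topology bookkeeping. The one point requiring genuine care is injectivity: one must rule out that a nontrivial isotopy of almost-complex structures can connect two distinct constant structures. This is where uniformization of the torus (or equivalently the classification $\mathcal T^\conf(T^2) \cong \GL^+(2,\R)/(\SO(2)\cdot\R^*) \cong \Hyp^2$, with the only ambiguity being rotation/scaling, already quotiented out) does the work, reducing the statement to the fact that the only conformal automorphisms of a flat torus isotopic to the identity are translations. I would present this as the crux of the argument and keep the rest brief.
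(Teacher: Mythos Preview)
Your proposal is correct and follows essentially the same approach as the paper: define $\Phi$ by viewing $J\in\mathcal J(\R^2)$ as a constant almost-complex structure on $T^2=\R^2/\Z^2$, and use uniformization of the torus for bijectivity. The only organizational difference is that the paper constructs the inverse explicitly (represent a class as $J_0$ on $\R^2/\Lambda$, normalize the area, then conjugate $J_0$ by the unique $A\in\SL(2,\R)$ sending $\Lambda$ to $\Z^2$), which handles surjectivity and injectivity in one stroke, whereas you argue them separately via uniformization and the rigidity of conformal automorphisms isotopic to the identity; both routes rest on the same classical fact and neither buys anything the other does not.
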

\begin{proof}
The map from $\mathcal J(\R^2)$ to $\mathcal T^\conf(T^2)$ is defined by considering a linear almost-complex structure $J$  as a (constant) tensor on $\R^2$, which therefore induces an almost-complex structure on the torus $T^2\cong \R^2/\Z^2$. The map is a bijection because every element in $\mathcal T^\conf(T^2)$, namely an isotopy class of almost-complex structures on $T^2$, can be represented as the conformal structure induced by $J_0$ on $\R^2/\Lambda$, for $\Lambda\cong\Z^2$ a (marked) lattice. One can moreover assume (up to homothety of $\Lambda$) that $\R^2/\Lambda$ has area 1, and such representation is unique up to conjugation in $\SO(2)$. Conjugating $J_0$ by the unique element of $\SL(2,\R)$ that maps $\Lambda$ to $\Z^2$ (as marked lattices), one finds the unique $J\in \mathcal J(\R^2)$ that is mapped to the given class in $\mathcal T^\conf(T^2)$. Identifying $MCG(T^2)$ with $\SL(2,\Z)$, the homeomorphism is clearly equivariant. 
\end{proof}

\begin{remark} \label{rmk:compareOmegaWP}
We observe that the symplectic form $\Omega$ introduced in Remark \ref{rmk:compareH2} coincides with $4 \Omega_\WP$, where $\Omega_\WP$ denotes the Weil-Petersson symplectic form on the space $\mathcal{J}(\R^2) \cong \Teich^\conf(T^2)$ through the identification described in Lemma \ref{lemma torus identifications} (see also Lemma \ref{lem:relation_Omega_WP}).
\end{remark}

Hence we immediately obtain the proof of Theorem \ref{thm:parahyper_structure} in genus one.

\begin{reptheorem}{thm:parahyper_structure}[genus one]
The deformation space $\mathcal{MGH}(T^2)$ admits a $MCG(T^2)$-invariant para-hyperK\"ahler structure $(\g,\i,\j,\k)$. 
\end{reptheorem}
\begin{proof}
By Proposition \ref{prop:MGHandMS} and Theorem \ref{thm:cotangent_parameterization}, $\mathcal{MGH}(T^2)$ is identified to the complement of the zero section in $T^*\mathcal T^\conf(T^2)$, which is in turn naturally identified to the complement of the zero section in $T^*\mathcal J(\R^2)$ by Lemma \ref{lemma torus identifications}. Hence the existence of the para-hyperK\"ahler structure follows immediately from Proposition \ref{thm:parahyper_structure_toy}. Since all the identifications are equivariant with respect to the action of $MCG(T^2)\cong\SL(2,\Z)$, and the para-hyperK\"ahler structure of $T^*\mathcal J(\R^2)$ is $\PSL(2,\R)$-invariant by Proposition \ref{thm:parahyper_structure_toy}, it follows that the obtained para-hyperK\"ahler structure on $\mathcal{MGH}(T^2)$ is mapping-class group invariant.
\end{proof}

More concretely, we can  see the induced map from $T^*\mathcal J(\R^2)$ to $T^*\mathcal T^\conf(T^2)$ as the map sending the pair $(J,\sigma)$, for $\sigma$ a symmetric bilinear form on $\R^2$ satisfying  $\sigma(J\cdot, J\cdot)=-\sigma$, to the pair $([J],\phi)$ where $[J]$ is the isotopy class as in the proof of Lemma \ref{lemma torus identifications} and $\phi$ is the holomorphic quadratic differential whose real part is $\sigma$ as in Equation \eqref{eq:quadratic_diff}.

\begin{remark}\label{rmk:constant_hqd1}
 We remark that the complex-linear quadratic differential $\phi$ is holomorphic with respect to $J$ simply because, identifying $(T^2,J)$ with a biholomorphism to the quotient of $\R^2$ by a lattice $\Lambda$ as in the proof of Lemma \ref{lemma torus identifications}, the lift of $\phi$ to $\R^2$ is constant. As a matter of fact, every holomorphic quadratic differential on $(T^2,J)$  lifts to a constant holomorphic quadratic differential, namely of the form $adz^2$ for $a\in\C$, on $\R^2\cong \C$. 
\end{remark}

\begin{remark}\label{rmk:constant_hqd2}
It will be useful to interpret the map from $\mathcal J(\R^2)$ to $\mathcal T^\conf(T^2)$ in terms of Beltrami differentials. Let us consider a smooth path $J_t\in \mathcal J(\R^2)$, and denote $J_{t=0}=J$. As usual, we consider these as almost-complex structures on $T^2=\R^2/\Z^2$. Then the Beltrami differential of the identity map $\mappa{\id}{(T^{2}, J)}{(T^{2}, J_t)}$ coincides with
    \[
    \nu_t = (\1 - J_t J)^{-1} (\1 + J_t J) .
    \]
    Indeed $\nu_t$ coincides with $L^{-1} \circ A$, where $L$ and $A$ are the complex linear and complex anti-\hsk linear parts of $\mappa{\dd(\id)}{(T_\cdot T^{2}, J)}{(T_\cdot T^{2} J_t)}$, respectively. 
    Hence $\nu_t$ is constant over $T^2$, which means that it is a harmonic Beltrami differential (i.e. of the form $g^{-1}\overline\psi$ for $g$ the flat metric and $\psi$ a holomorphic quadratic differential, see Remark \ref{rmk:constant_hqd1} above). A simple computation shows that the derivative of $\nu_t$ at $t=0$, which represents an element of $T\mathcal T^\conf(T^2)$, is again harmonic and has the expression
   \begin{equation}\label{eq:dotnu}
   \dot{\nu} = \frac{1}{2} \dot{J} J
   \end{equation}
\end{remark}

 We are now ready to conclude the proof of Theorem \ref{thm:cotangent} in genus one. 
 Considering $\mathcal{MGH}(T^2)$ as the complement of the zero section in $T^*\mathcal J(\R^2)$, the map $\mathcal{F}:\mathcal{MGH}(T^2) \rightarrow T^{*}\mathcal{T}^{\conf}(T^{2})$ given by Theorem \ref{thm:cotangent_parameterization} is nothing but the restriction of the map that we used in Lemma \ref{lemma torus identifications}, which we still denote as  $\mathcal{F}:T^{*}\mathcal{J}(\R^{2}) \rightarrow T^{*}\mathcal{T}^{\conf}(T^{2})$ by a little abuse of notation.

\begin{reptheorem}{thm:cotangent}[genus one]
 We have $$\mathcal F^*(\mathcal I_{T^*\mathcal{T}(T^2)},\Omega^\C_{T^*\mathcal{T}(T^2)})=\left(-\i,-\frac{i}{2}\overline\omega_\i^\C\right)~,$$
where $\mathcal I_{T^*\mathcal{T}(T^2)}$ denotes the complex structure of $T^*\mathcal{T}(T^2)$ and $\Omega^\C_{T^*\mathcal{T}(T^2)}$ its complex symplectic form.
\end{reptheorem} 

Before the proof, we recall that (in any genus) the cotangent bundle of $\mathcal{T}^{\conf}(\Sigma)$ has a natural complex symplectic form $\Omega^{\C}_{T^{*}\mathcal{T}(\Sigma)}$ defined as $-d\lambda^{\C}_{T^{*}\mathcal{T}(\Sigma)}$, where $\lambda^{\C}_{T^{*}\mathcal{T}(\Sigma)}$ is the Liouville form. Given a point $([J],\phi)\in T^{*}\mathcal{T}^{\conf}(\Sigma)$, the pairing between holomorphic quadratic differentials and tangent vectors expressed as classes of Beltrami differentials $[\dot\nu]$ is the following:
 \begin{equation}\label{eq:pairing_Beltrami}
   \scal{\phi}{\dot{\nu}}_{[J]} = \int_\Sigma \phi \bullet \dot{\nu} 
    \end{equation}
    where 
     \begin{equation}\label{eq:pairing2}
 (\phi \bullet \dot{\nu})(v,w) \defin \frac{1}{2 i} (\phi(\dot{\nu}(v), w) - \phi(\dot{\nu}(w), v))
    \end{equation}
    (see e. g. \cite[Section 2.1]{bonsante2015a_cyclic}).
The complex Liouville form is then simply expressed as:
\[
    (\lambda^\C_{T^* \Teich^\conf})_{([J], \phi)}([\dot{\nu}], \dot{\phi}) = \scal{\phi}{\dot{\nu}}_{[J]} 
\]
We now prove Theorem \ref{thm:cotangent} in the torus case.

\begin{proof}[Proof of Theorem \ref{thm:cotangent}, genus $g=1$]
    Let $(J_t)_t$ be a smooth path in $\mathcal J(\R^2)$, with $J_{t=0} = J$. If $\pi:T^*\mathcal{T}^\conf(T^2)\to \mathcal{T}^\conf(T^2)$ is the projection, Equation \eqref{eq:dotnu} in Remark \ref{rmk:constant_hqd2} shows:
    \[
    \dd\pi\circ \dd{\mathcal F}_{(J,\sigma)}(\dot{J}, \dot{\sigma}) =  \frac{1}{2} \dot{J} J =:\dot \nu~.    \]
    Recall  that $\mathcal F(J,\sigma)=([J],\phi)$, where $\phi = \sigma - i \, \sigma(\cdot, J \cdot)$ is the holomorphic quadratic differential whose real part is $\sigma$. Let now $g_J$ be the bilinear form $\rho(\cdot, J \cdot)$, and let $\{e_1, e_2 = J e_1\}$ be a $g_J$-\hsk orthonormal basis of $\R^{2}$. Since all the quantities that we consider lift to constant tensors on $\R^2$, we can get rid of the integral over $T^2\cong\R^2/\Z^2$ in Equation \eqref{eq:pairing_Beltrami}, and we find:
    \begin{align*}
        \scal{\phi}{\dot{\nu}}_{[J]} &= \frac{1}{2 i} \, (\phi(\dot{\nu}(e_1), e_2) - \phi(\dot{\nu}(e_2), e_1)) \\
        & = \frac{1}{4 i} \, (\sigma(\dot{J} J e_1, e_2) - i \, \sigma(\dot{J} J e_1, J e_2) - \sigma(\dot{J} J e_2, e_1) + i \, \sigma(\dot{J} J e_2, J e_1)) \\
        & = \frac{1}{4 i} \, (\sigma(\dot{J} e_2, e_2) + i \, \sigma(\dot{J} J e_1, e_1) + \sigma(\dot{J} e_1, e_1) + i \, \sigma(\dot{J} J e_2, e_2)) \\
        & = \frac{1}{4 i} \left( \tr(g_J^{-1} \sigma \dot{J}) + i \tr(g_J^{-1} \sigma \dot{J} J) \right) \\
        & = - \frac{i}{2} \left( \scal{\sigma}{\dot{J}}_J - i \scal{\sigma}{J\dot{J}}_J \right) \\
        & = - \frac{i}{2} \, \overline{\lambda_{(J,\sigma)}^\C(\dot{J}, \dot{\sigma})} .
    \end{align*}
    Therefore we have shown
    \[
    (\mathcal F^* \lambda^\C_{T^* \Teich(T^2)})_{(J, \sigma)}(\dot{J}, \dot{\sigma}) = - \frac{i}{2} \overline{\lambda_{(J,\sigma)}^\C(\dot{J}, \dot{\sigma})}.
    \]
    Taking differentials , we obtain
    \[
    \mathcal F^* \Omega_{T^* \Teich(T^2)}^\C =  -\frac{i}{2} (\omega_\mathbf{J} - i \, \omega_\mathbf{K}) = - \frac{i}{2} \overline\omega_\mathbf{I}^\C~.
    \]
 To show that $\mathcal F$ pulls-back the almost-complex structure of $T^*\mathcal T(T^2)$ to $- \i$, one can argue exactly as in the proof of Corollary \ref{cor:Iintegrable_torus}, observing that $\Omega_{T^* \Teich(T^2)}^\C$ is a complex symplectic form with respect to the complex structure of $T^* \Teich(T^2)$, while $\overline\omega_\mathbf{I}^\C$ is a complex symplectic form with respect to the (almost)-complex structure $-\i$.
\end{proof}


\subsection{A formal Mess homeomorphism}\label{subsec:Mess_homeo}
In this section we interpret the para-complex structure $\j$ and the para-complex sympletic form $\omega_\j^{\mathbb B}$ as the pull-back of natural structures on $\mathcal T(T^2)\times \mathcal T(T^2)$ via a map
$$\mathcal M:T^*\mathcal{J}(\R^2)\to \mathcal{T}(T^2)\times \mathcal{T}(T^2)~,$$
which is formally defined essentially as the Mess homeomorphism (Section \ref{subsec:parameterizations}), although in the genus one case this map does not have the same geometric significance from the Anti-de Sitter viewpoint as for the higher genus case. Nevertheless, this ``toy model'' is essential for the higher genus case, since the map $\mathcal M$ studied here will then induce Mess homeomorphism for genus $\geq 2$. 

Inspired by Section \ref{subsec:change_coord}, given a pair $(J,\sigma)\in T^*\mathcal J(\R^2)$, we define $h = h(J,\sigma)$ the Riemannian metric
\[
h \defin (1 + f(\norm{\sigma})) g_J = \left( 1 + \sqrt{1 + \norm{\sigma}^2} \right) g_J ~,
\]
where we recall that $g_J=\rho(\cdot,J\cdot)$ and $\norm{\cdot}=\norm{\cdot}_J$. We also set $B \defin h^{-1} \sigma$.
Exactly as in Lemma \ref{lem:det_B}, we have the identities
\begin{equation} \label{eq:det_B2}
\det B = - \frac{\norm{\sigma}^2}{(1 + f(\norm{\sigma}))^2} \qquad\text{ and }\qquad
1 + \det B = \frac{2}{1 + f(\norm{\sigma})} ~.
\end{equation}

The second identity shows that $\1 \mp J B$ is invertible for every $(J,\sigma) \in T^* \mathcal{J}(\R^2)$. Indeed, $\det(\1\mp JB)=1+\det B$, since $J B$ is traceless. We can thus define
\begin{gather*}
\mathcal{M} \vcentcolon  T^* \mathcal{J}(\R^2)  \longrightarrow  \mathcal{J}(\R^2) \times \mathcal{J}(\R^2) \\
\mathcal{M}(J, \sigma) \defin \left( (\1 - J B)^{-1} J (\1 - J B) , (\1 + J B)^{-1} J (\1 + J B) \right) \ .
\end{gather*}

\begin{remark}
We remark that this is formally the analogue of the expression of Mess homeomorphism in terms of almost-complex structures, given in Lemma \ref{lem:mess_complex_str}. In particular, the left and right components of $\mathcal M$ are the linear complex structures associated to the metrics $h((\1\mp JB)\cdot,(\1\mp JB)\cdot)$ on $\R^2$. As a reminder of this fact, we will often denote them by $J_l$ and $J_r$, respectively, to be consistent with the notation that will be used in Section \ref{sec:para_hyperkahler_str_on_MS}, where the higher genus case is discussed. 
\end{remark}

Now, the space $\mathcal{J}(\R^2)\times \mathcal{J}(\R^2)$ is naturally endowed with the almost para-\hsk complex structure $\mathcal{P}$ coming from its product structure. This is defined at any $(J,J') \in \mathcal{J}(\R^2)\times \mathcal{J}(\R^2)$ as follows:
\[
\mathcal{P}(\dot{J}, \dot{J}') \defin (\dot{J}, - \dot{J}') .
\]
Moreover, denoting by $\Omega_\WP$ the Weil-Petersson symplectic form of $\mathcal J(\R^2)$ (see also Remark \ref{rmk:compareOmegaWP}), and by $\pi_l,\pi_r:\mathcal{J}(\R^2)\times \mathcal{J}(\R^2)\to\mathcal{J}(\R^2)$ the projections to the left and right factor, $\mathcal{J}(\R^2)\times \mathcal{J}(\R^2)$ has two symplectic forms given by $\pi_l^*\Omega_\WP\pm \pi_r^*\Omega_\WP$. Together, they are combined into a para-complex symplectic form
$$\Omega^{\mathbb B}:=\frac{1}{2}(\pi_l^*\Omega_\WP+\pi_r^*\Omega_\WP)+\frac{\tau}{2} (\pi_l^*\Omega_\WP-\pi_r^*\Omega_\WP)~.$$
It is easily checked that $\Omega^{\mathbb B}$ is para-complex with respect to $\mathcal P$ in the sense that $\Omega^{\mathbb B}(\mathcal P\cdot,\cdot)=\Omega^{\mathbb B}(\cdot,\mathcal P\cdot)=\tau \Omega^{\mathbb B}(\cdot,\cdot)$.

\begin{reptheorem}{thm:mappaM}[baby version]
We have 
$$\mathcal M^*(\mathcal P,4 \Omega^{\mathbb B})=(\j, \omega_\j^{\mathbb B})~.$$
where $\mathcal P$ denotes the para-complex structure of $\mathcal{J}(\R^2)\times \mathcal{J}(\R^2)$ and $\Omega^{\mathbb B}$ its para-complex symplectic form.
\end{reptheorem}

\begin{proof}
    Since $B$ and $J B$ are traceless and $\det J = 1$, we have $B^2 = (J B)^2 = - \det B \ \1$. We will make use of this relation all along the current proof. A simple consequence is the following:
    \begin{equation}\label{eq:inverse_E-JB}
    (\1 \mp J B)^{-1} = \frac{1}{1 + \det B} (\1 \pm J B) .
    \end{equation}
    Applying this relation, we can develop the left and right complex structures $J_l$ and $J_r$ (the left and right components of $\mathcal{M}$, respectively) as follows:
    \begin{align*}
    J_{l, r} & = (\1 \mp J B)^{-1} J (\1 \mp J B) \\
    & = \frac{1}{1 + \det B} (\1 \pm J B) J (\1 \mp J B) \\
    & = \frac{1}{1 + \det B} (J \pm J B J \mp J^2 B - J B J^2 B) \\
    & = \frac{1 - \det B}{1 + \det B} J \pm \frac{2}{1 + \det B} B .
    \end{align*}
    In these relations and the ones that will follow the upper sign in $\pm$ or $\mp$ always refers to $J_l$ and the lower sign to $J_r$. From the second identity in \eqref{eq:det_B2}, we get:
    \begin{equation}\label{eq:det_f}
    \frac{2}{1 + \det B} = 1 + f,  \qquad \frac{1 - \det B}{1 + \det B} = f~,
    \end{equation}
    where $f = f(\norm{\sigma})$. Combining these relations with the development above, we obtain:
    \begin{equation} \label{eq:expression_mess_map}
    J_l = f \, J + g_J^{-1} \sigma , \qquad J_r = f \, J - g_J^{-1} \sigma .
    \end{equation}
    We are interested in computing the first order variation of these expressions along a direction $(\dot{J}, \dot{\sigma}) \in T_{(J,\sigma)} T^* \mathcal{J}(\Sigma)$. First we compute the derivative of the function $f = f(\norm{\sigma})$:
    \begin{align*}
        f' & = \left( \sqrt{1 + \norm{\sigma}^2} \right)' \\
        & = \frac{(\norm{\sigma}^2)'}{2 \sqrt{1 + \norm{\sigma}^2}} \\
        & = \frac{1}{2 f} \tr(g_J^{-1} \sigma (g_J^{-1} \sigma)') \\
        & = \frac{1}{2 f} \tr(g_J^{-1} \sigma (- g_J^{-1} \dot{g}_J g_J^{-1} \sigma + g_J^{-1} \dot{\sigma} ))
    \end{align*}
    As already observed in relation \eqref{eq:derivative_scal_prod}, the first order variation of $g_J$ with respect to $\dot{J}$ is equal to $- g_J(\cdot, J \dot{J} \cdot)$. Therefore Lemma \ref{lem:product_in_TJ} part \textit{ii)} implies that the trace of the product $g_J^{-1} \sigma g_J^{-1} \dot{g}_J g_J^{-1} \sigma$ vanishes. Moreover, in the term $\tr(g_J^{-1} \sigma g_J^{-1} \dot{\sigma})$ there is no contribution from the trace part of $\dot{\sigma}$. In conclusion, we deduce that $f' = \frac{\scall{\sigma}{\dot{\sigma}_0}}{f}$. For convenience, we also set
    \[
    Q^\pm = Q^\pm(\dot{J}, \dot{\sigma}) \defin \frac{1}{f} g_J^{-1} \dot{\sigma}_0 \pm \dot{J} .
    \]
    Then we have
    \begin{align*}
    \dot{J}_{l,r} & = (f \, J \pm g_J^{-1} \sigma)' \\ 
    & = \pm f \, Q^\pm \pm \frac{1}{2} \tr_{g_J} \, \dot{\sigma} \, \1 \mp g_J^{-1} \dot{g}_J g_J^{-1} \sigma + f' \, J \\
    & = \pm f \, Q^\pm \mp \scal{\sigma}{J \dot{J}} \, \1 \pm J \dot{J} g_J^{-1} \sigma + \frac{\scall{\sigma}{\dot{\sigma}_0}}{f} \, J \tag{Lemma \ref{lem:characterization_tangent_space} and eq. \eqref{eq:derivative_scal_prod}} \\
    & = \pm f \, Q^\pm \pm \scal{\sigma}{\dot{J}} \, J + \frac{\scal{\sigma}{g_J^{-1} \dot{\sigma}_0}}{f} \, J \tag{Lemma \ref{lem:product_in_TJ}} \\
    & = \pm f \, Q^\pm + \scal{\sigma}{Q^\pm} \, J ,
    \end{align*}
    where $\scall{\cdot}{\cdot} = \scall{\cdot}{\cdot}_J$ and $\scal{\cdot}{\cdot} = \scal{\cdot}{\cdot}_J$. Then the differential of the map $\mathcal{M}$ can be expressed as follows:
    \[
    \dd{\mathcal{M}}_{(J,\sigma)}(\dot{J}, \dot{\sigma}) = (\scal{\sigma}{Q^+} \, J + f \, Q^+, \scal{\sigma}{Q^-} \, J - f \, Q^-) .
    \]
   Let us now determine the pull-\hsk back of the forms $\pi_l^*\Omega \pm \pi_r^*\Omega$ by the map $\mathcal{M}$, where $\Omega$ was defined in Remark \ref{rmk:compareH2}. To emphasize the dependence on $(J, \sigma)$, we will now write $J_l = (\pi_l \circ \mathcal{M})(J,\sigma)$ and $J_r = (\pi_r \circ \mathcal{M})(J,\sigma)$. Given $(\dot{J}, \dot{\sigma})$ and $(\dot{J}', \dot{\sigma}')$ two tangent vectors at $(J,\sigma)$, we also set $Q^\pm = Q^\pm(\dot{J}, \dot{\sigma})$ and $R^\pm = Q^\pm(\dot{J}', \dot{\sigma}')$, to simplify the notation. Then, making use of the fact that $\tr(Q^\pm) = \tr(R^\pm) = 0$ and of Lemma \ref{lem:product_in_TJ}, we deduce that
\begin{align*}
    (\pi_{l,r} \circ \mathcal{M})^* \Omega((\dot{J}, \dot{\sigma}), (\dot{J}', \dot{\sigma}')) & = - \scall{\dd(\pi_{l,r} \circ \mathcal{M})(\dot{J}, \dot{\sigma})}{J_{l, r} \cdot \dd(\pi_{l,r} \circ \mathcal{M})(\dot{J}', \dot{\sigma}')}_{J_{l,r}} \\
    & = - \frac{1}{2} \tr((\scal{\sigma}{Q^\pm} \, J \pm f \, Q^\pm)(f J \pm g_J^{-1} \sigma )(\scal{\sigma}{R^\pm} \, J \pm f \, R^\pm)) \\
    & = -\frac{f}{2}\tr(\scal{\sigma}{Q^{\pm}} \ Jg_{J}^{-1}\sigma R^\pm +\scal{\sigma}{R^\pm} \ Q^{\pm} g_{J}^{-1}\sigma J) + \\
    & \qquad \qquad \qquad \qquad \qquad \qquad \qquad \qquad \qquad \qquad - \frac{f^{3}}{2}\tr(Q^{\pm} J R^{\pm})  \\
    & = -f \left(\scall{g_{J}^{-1}\sigma}{Q^{\pm}} \scall{Jg_{J}^{-1}\sigma}{R^{\pm}}- \scall{g_{J}^{-1}\sigma}{R^{\pm}}\scall{Q^{\pm}}{Jg_{J}^{-1}\sigma}\right) + \\
    & \qquad \qquad \qquad \qquad \qquad \qquad \qquad \qquad \qquad \qquad - f^{3}\scall{Q^{\pm}}{JR^{\pm}} 
\end{align*}
Now, because $Q^\pm, R^\pm \in T_{J} \mathcal{J}(\R^{2})$, assuming $\sigma \neq 0$, we can write
\[
    Q^{\pm}=\frac{1}{\norm{\sigma}}\left(\scall{g_{J}^{-1}\sigma}{Q^{\pm}} \ g_{J}^{-1}\sigma+\scall{Jg_{J}^{-1}\sigma}{Q^{\pm}} \ Jg_{J}^{-1}\sigma\right) ,
\]
and similarly for $R^\pm$, so that the term
\begin{align}\label{eq:diff}
    &\scall{g_{J}^{-1}\sigma}{Q^{\pm}}\scall{Jg_{J}^{-1}\sigma}{R^{\pm}}-\scall{g_{J}^{-1}\sigma}{R^{\pm}}\scall{Q^{\pm}}{Jg_{J}^{-1}\sigma} 
\end{align}
coincides with $-\norm{\sigma}^{2}\scall{Q^{\pm}}{JR^{\pm}}$ by direct computation. Note that when $\sigma=0$, the expression \eqref{eq:diff} vanishes. In any case, recalling that $Q^\pm = Q^\pm(\dot{J}, \dot{\sigma})$ and $R^\pm = Q^\pm(\dot{J}', \dot{\sigma}')$, we can thus conclude that
\begin{align*}
    (\pi_{l,r} \circ \mathcal{M})^* \Omega((\dot{J}, \dot{\sigma}), (\dot{J}', \dot{\sigma}')) & = -f \scall{Q^{\pm}}{JR^{\pm}}(-\norm{\sigma}^{2}+f^{2}) \\
    & = - \frac{f}{2} \tr(Q^\pm \, J \, R^\pm) \tag{$f^2 = 1 + \norm{\sigma}^2$} \\
    & = - f \scall{\dot{J}}{J \dot{J}'}_J + \frac{1}{f} \scall{\dot{\sigma}_0}{\dot{\sigma}_0'(\cdot, J \cdot)}_J
    \mp (\scal{\dot{\sigma}_0}{J \dot{J}'}_J - \scal{\dot{\sigma}_0'}{J \dot{J}}_J) \\
    & = (\omega_\mathbf{I} \pm \omega_\mathbf{K})((\dot{J}, \dot{\sigma}), (\dot{J}', \dot{\sigma}')) \ . 
\end{align*}
    Therefore we have
    \[
    \mathcal{M}^*(\pi_l^*\Omega+\pi_r^*\Omega) = 2 \omega_\mathbf{I} , \qquad \mathcal{M}^*(\pi_l^*\Omega-\pi_r^*\Omega) = 2 \omega_\mathbf{K} .
    \]
By what observed in Remark \ref{rmk:compareOmegaWP}, this proves that $\mathcal M^*(4 \Omega^{\mathbb B})=\omega_\j^{\mathbb B}$.
    
     One can then check directly that $\dd{\mathcal{M}}_{(J,\sigma)} \circ \mathbf{J}=\mathcal{P} \circ \dd{\mathcal{M}}_{(J,\sigma)}$, using that  $Q^\pm \circ \mathbf{J}(\dot{J}, \dot{\sigma}) = \pm Q^\pm(\dot{J}, \dot{\sigma})$. However, the fact that $\mathcal M^*\mathcal P=\j$ also follows immediately by the same trick as in Corollary \ref{cor:Iintegrable_torus}. Indeed, since $\Omega^{\mathbb B}$ is para-complex with respect to $\mathcal P$, if we denote $\hat\j=\mathcal M^*\mathcal P$, then $\omega_\j^\mathbb B$ is para-complex with respect to $\hat\j$, which means that $\omega_\k(\cdot,\cdot)=\omega_\i(\cdot,\hat\j\cdot)$. But since the same holds for $\j$, we deduce that $\hat\j=\j$.
\end{proof}

As an immediate consequence, we  obtain:

\begin{corollary} \label{cor:mess_paraholo}
 The 2-forms $\omega_\i$ and $\omega_\k$ on $T^*\mathcal J(\R^2)$ are closed, and $\j$ is an integrable almost para-complex structure.
\end{corollary}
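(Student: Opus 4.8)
The plan is to deduce everything directly from the baby version of Theorem~\ref{thm:mappaM}, which we have just established. Recall that in its proof we obtained the identities $\mathcal{M}^*(\pi_l^*\Omega + \pi_r^*\Omega) = 2\omega_\i$ and $\mathcal{M}^*(\pi_l^*\Omega - \pi_r^*\Omega) = 2\omega_\k$, where $\Omega$ is the symplectic form on $\mathcal{J}(\R^2)$ introduced in Remark~\ref{rmk:compareH2}. Since $\mathcal{J}(\R^2)$ is two-dimensional (indeed diffeomorphic to $\Hyp^2$), the form $\Omega$ is of top degree, hence closed; therefore $\pi_l^*\Omega \pm \pi_r^*\Omega$ are closed $2$-forms on $\mathcal{J}(\R^2) \times \mathcal{J}(\R^2)$, and pulling back by the smooth map $\mathcal{M}$ shows at once that $\omega_\i$ and $\omega_\k$ are closed, since $2\,\dd\omega_\i = \mathcal{M}^*\dd(\pi_l^*\Omega + \pi_r^*\Omega) = 0$ and similarly for $\omega_\k$. (The closedness of $\omega_\k$, as well as of $\omega_\j$, was in fact already contained in Lemma~\ref{lemma:Cvaluedform}, since $\omega_\j + i\omega_\k = -\dd\lambda^\C$ is exact; we record it here for completeness.)

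For the integrability of $\j$, the plan is to transfer the integrability of the product para-complex structure $\mathcal{P}$ on $\mathcal{J}(\R^2) \times \mathcal{J}(\R^2)$ through $\mathcal{M}$. The structure $\mathcal{P}$ is integrable because its $(\pm 1)$-eigendistributions are the tangent distributions of the two product foliations $\mathcal{J}(\R^2) \times \{*\}$ and $\{*\} \times \mathcal{J}(\R^2)$. In the proof of the baby version of Theorem~\ref{thm:mappaM} we showed $\dd\mathcal{M}_{(J,\sigma)} \circ \j = \mathcal{P} \circ \dd\mathcal{M}_{(J,\sigma)}$; to conclude, I would first observe that $\mathcal{M}$ is a local diffeomorphism. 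This follows from the formula $\dd\mathcal{M}_{(J,\sigma)}(\dot{J},\dot{\sigma}) = (\scal{\sigma}{Q^+}\,J + f\,Q^+,\ \scal{\sigma}{Q^-}\,J - f\,Q^-)$ obtained there: splitting each component into its parts commuting and anticommuting with $J$ forces $Q^+ = Q^- = 0$ on the kernel, whence $\dot{J} = \dot{\sigma}_0 = 0$ and, by Lemma~\ref{lem:characterization_tangent_space}, $\dot{\sigma} = 0$, so $\dd\mathcal{M}$ is injective between vector spaces of the same dimension. Then the $(\pm 1)$-eigendistributions of $\j$ are the $\dd\mathcal{M}$-preimages of those of $\mathcal{P}$, hence integrable, so $\j$ is an integrable almost para-complex structure. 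Alternatively — and perhaps more cleanly — once $\omega_\i$, $\omega_\j$, $\omega_\k$ are all known to be closed one may simply invoke the general criterion of Lemma~\ref{lm:integrability} in Appendix~\ref{appendixB} to obtain integrability of all three of $\i$, $\j$, $\k$ simultaneously (this is the route that will also cover Corollary~\ref{cor:cc_paraholo}).

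There is no serious obstacle here: the corollary is genuinely an immediate consequence of Theorem~\ref{thm:mappaM}. The only point requiring a small amount of care is that pulling back an integrable structure along an arbitrary smooth map need not preserve integrability, so one must either verify that $\mathcal{M}$ is a local diffeomorphism (the short kernel computation sketched above) or route the argument through the closedness of all three symplectic forms and the general integrability lemma.
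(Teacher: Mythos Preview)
Your proposal is correct and matches the paper's approach: the corollary is stated there without proof as ``an immediate consequence'' of the baby version of Theorem~\ref{thm:mappaM}, with the integrability of $\j$ coming from $\mathcal{M}^*\mathcal{P}=\j$ (or alternatively from Lemma~\ref{lm:integrability}, as the paper notes in the proof of Theorem~\ref{thm:parahyper_structure_toy}). Your extra verification that $\dd\mathcal{M}$ is injective is a welcome detail that the paper leaves implicit.
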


\subsection{The circle action}\label{subsec:circle_action_toy}
Let us now study the behavior of a natural circle action on $T^{*}\mathcal{J}(\R^{2})$ with respect to the para-hyperK\"ahler structure $(\g,\i,\j,\k)$. 
Recall that a cotangent vector $\sigma \in T^{*}_{J}\mathcal{J}(\R^{2})$ can be seen as the real part of a complex-valued, $J$-complex linear symmetric form $\phi$. By Remark \ref{rmk:rotateB}, multiplication by $e^{i\theta}$ on complex bi-linear forms induces a circle action on $T^{*}\mathcal{J}(\R^{2})$ given by
\[
    e^{i\theta}\cdot (J,\sigma)=(J,\cos(\theta)\sigma+\sin(\theta)\sigma(\cdot, J\cdot)) \ .
\]
We will denote by $R_{\theta}:T^{*}\mathcal{J}(\R^{2})\to T^{*}\mathcal{J}(\R^{2})$ the action of $e^{i\theta}$. Observe that clearly $R_\theta$ preserves the zero section, hence it induces a circle action on $\mathcal{MGH}(T^2)$, which is identified to the complement of the zero section in $T^{*}\mathcal{J}(\R^{2})$ as in the proof of the genus one version of  Theorem \ref{thm:parahyper_structure} in Section \ref{subsec:identificationT2}.

\begin{reptheorem}{thm:circle}[genus one]
The circle action on $T^{*}\mathcal{J}(\R^{2})$ is Hamiltonian with respect to $\omega_\i$, generated by the function $H(J,\sigma)=f(\|\sigma\|)$ and satisfies 
$$R_\theta^*\g=\g\qquad R_\theta^*\omega_{\i}=\omega_{\i}\qquad R_\theta^*\omega_{\i}^\C=e^{-i\theta}\omega_{\i}^\C~.$$
\end{reptheorem}

\begin{proof} The infinitesimal generator of the circle action is
\[
    V_{\theta}(J,\sigma)=\left.  \frac{d}{d\theta} (J,\cos(\theta)\sigma+\sin(\theta)\sigma(\cdot, J\cdot)) \right|_{\theta = 0}=(0, \sigma(\cdot, J\cdot)) \ .
\]
We now compute
\begin{align*}
    \iota_{V_{\theta}}\omega_{\i}(\dot{J}, \dot{\sigma})&=-\omega_{\i}((\dot{J}, \dot{\sigma}), (0, \sigma(\cdot, J\cdot)))=-\g((\dot{J}, \dot{\sigma}), \i(0,\sigma(\cdot, J\cdot)))\\
    &=\g((\dot{J}, \dot{\sigma}), (0, \sigma(\cdot, J^{2}\cdot)))=-\g((\dot{J},\dot{\sigma}),(0,\sigma))\\
    &=\frac{1}{f(\|\sigma\|)}\scall{\dot{\sigma}_0}{\sigma}=\frac{1}{f(\|\sigma\|)}\scall{\dot{\sigma}}{\sigma}=dH_{(J,\sigma)}(\dot{J},\dot{\sigma}) \ ,
\end{align*}
which proves the first statement. This immediately implies that $R_\theta^*\omega_\i=\omega_\i$, by Cartan's magic formula.

We now compute $R_{\theta}^{*}\omega^{\C}$. Let us first find an expression for the differential of $R_{\theta}$. By definition, we have
\begin{equation}\label{eq:dR1}
    (dR_{\theta})_{(J,\sigma)}(\dot{J},\cos(\theta)\dot{\sigma}) =(\dot{J}, \cos(\theta)\dot{\sigma}+\sin(\theta)\dot{\sigma}(\cdot, J\cdot)+\sin(\theta)\sigma(\cdot, \dot{J}\cdot)) \ .
\end{equation}
By Lemma \ref{lem:characterization_tangent_space}, we can write
\[
    \dot{\sigma}=\dot{\sigma}_{0}-\scal{\sigma}{J\dot{J}}_{J}g_{J}
\]
so that
\begin{equation}\label{eq:dR2}
    \dot{\sigma}(\cdot, J\cdot)=\dot{\sigma}_{0}(\cdot, J\cdot)+\scal{\sigma}{J\dot{J}}_{J}\rho \ . 
\end{equation}
Moreover, by Lemma \ref{lem:product_in_TJ}, we have
\begin{equation*}
    g_{J}^{-1}\sigma \dot{J}= \scall{g_{J}^{-1}\sigma}{\dot{J}}_{J}\1-\scall{Jg_{J}^{-1}\sigma}{\dot{J}}_{J}J
\end{equation*}
which implies that
\begin{equation}\label{eq:dR3}
    \sigma(\cdot, \dot{J}\cdot)= \scal{\sigma}{\dot{J}}_{J}g_{J}-\scal{\sigma}{J\dot{J}}_{J}\rho \ .
\end{equation}
Combining \eqref{eq:dR2} and \eqref{eq:dR3} with \eqref{eq:dR1}, we obtain that the differential of $R_{\theta}$ can be expressed as follows
\[
    (dR_{\theta})_{(J,\sigma)}(\dot{J},\dot{\sigma})=(\dot{J}, \cos(\theta)\dot{\sigma}+\sin(\theta)\dot{\sigma}_{0}(\cdot, J\cdot)+\sin(\theta)\scal{\sigma}{\dot{J}}g_{J}) \ .
\]
Hence, 
\begin{align*}
    (R_{\theta}^{*}\omega^\C)((\dot{J}, \dot{\sigma}), (\dot{J}', \dot{\sigma}')) &= \scal{\cos(\theta)\dot{\sigma}_0'+\sin(\theta)\dot{\sigma}_{0}'J}{\dot{J}} - \scal{\cos(\theta)\dot{\sigma}_0+\sin(\theta)\dot{\sigma}_{0}J}{\dot{J}'}\\
    & + i(\scal{\cos(\theta)\dot{\sigma}_0'+\sin(\theta)\dot{\sigma}_{0}'J}{J \dot{J}} - \scal{\cos(\theta)\dot{\sigma}_0+\sin(\theta)\dot{\sigma}_{0}J}{J \dot{J}'}) \\
    &=\cos(\theta)(\scal{\dot{\sigma}_0'}{\dot{J}}-\scal{\dot{\sigma}_0}{\dot{J}'})+\sin(\theta)(\scal{\dot{\sigma}_{0}'}{J\dot{J}}-\scal{\dot{\sigma}_{0}}{J\dot{J}'}\\
    & + i(\cos(\theta)(\scal{\dot{\sigma}_0'}{J\dot{J}}-\scal{\dot{\sigma}_0}{J\dot{J}'})-\sin(\theta)(\scal{\dot{\sigma}_{0}'}{ \dot{J}}-\scal{\dot{\sigma}_{0}}{\dot{J}'})\\
    &=(\cos(\theta)\omega_{\j}+\sin(\theta)\omega_{\k})+i(\cos(\theta)\omega_{\k}-\sin(\theta)\omega_{\j})\\
    &=e^{-i\theta}\omega^{\C}((\dot{J}, \dot{\sigma}), (\dot{J}', \dot{\sigma}')) \ . 
\end{align*}

Let us finally check that $R_\theta$ preserves the metric $\g$. We denote by 
$$(r_{\theta})_{(J,\sigma)}(\dot{J}, \dot{\sigma})=\cos(\theta)\dot{\sigma}+\sin(\theta)\dot{\sigma}_{0}(\cdot, J\cdot)+\sin(\theta)\scal{\sigma}{\dot{J}}g_{J}$$ the second component of the differential of $R_{\theta}$ at the point $(J, \sigma)$. Moreover, we remark that $\|\sigma\|_{J}=\| \cos(\theta)\sigma+\sin(\theta)\sigma(\cdot, J\cdot)\|_{J}$, in other words the circle action preserves the scalar product on $T^{*}_{J}\mathcal{J}(\R^{2})$. 
We can now compute
\begin{align*}
    (R_{\theta}^{*} \g)_{(J, \sigma)}&((\dot{J}, \dot{\sigma}), (\dot{J}', \dot{\sigma}'))= \g_{R_{\theta}(J,\sigma)}((\dot{J}, (r_{\theta})_{(J,\sigma)}(\dot{J},\dot{\sigma})), (\dot{J}', (r_{\theta})_{(J,\sigma)}(\dot{J}',\dot{\sigma}'))) \\
    &=f(\| \sigma \|_{J})\scall{\dot{J}}{\dot{J}'}_{J}-\frac{1}{f(\|\sigma\|_{J})}\scall{\cos(\theta)\dot{\sigma}_{0}+\sin(\theta)\dot{\sigma}_{0}(\cdot, J\cdot)}{\cos(\theta)\dot{\sigma}_{0}'+\sin(\theta)\dot{\sigma}_{0}'(\cdot, J\cdot)}_{J} \\
    &=f(\| \sigma \|_{J})\scall{\dot{J}}{\dot{J}'}_{J}-\frac{1}{f(\|\sigma\|_{J})}\scall{\dot{\sigma}_{0}}{\dot{\sigma}_{0}'}_{J}\\
    &=\g_{(J,\sigma)}((\dot{J}, \dot{\sigma}), (\dot{J}', \dot{\sigma}')) \ .
\end{align*}
This concludes the proof.
\end{proof}

We then immediately obtain the relations of \eqref{eq:pullbackijk} for the pull-back of $\i,\j,\k$, which we re-write here for completeness. 
\begin{equation}\label{eq:pullbackijk2}
R_\theta^*\i=\i\qquad R_\theta^*\j=\cos(\theta)\j+\sin(\theta)\k \qquad R_\theta^*\k=-\sin(\theta)\j+\cos(\theta)\k~.
\end{equation}

We now turn to the proof of the genus one case of Theorem \ref{thm:potentialintro}, concerning a para-K\"ahler potential for $\g$. We recall that a smooth function $f$ is a para-K\"ahler potential for a para-K\"ahler structure $(\g,\p)$ if $\omega_\p=(\tau/2)\overline\partial_\p\partial_\p f$. We will make use of the following identity, whose proof is similar to the K\"ahler case and is given in Appendix \ref{appendixB}, Lemma \ref{lemma:deldelbar}.
\begin{equation}\label{eq:deldelbar}
2\tau\bar{\partial}_{\p}\partial_{\p}f=d(df\circ \p)
\end{equation}

\begin{reptheorem}{thm:potentialintro}[genus one]
The function $-4H$ is a para-K\"ahler potential for  the para-K\"ahler structures $(\g,\j)$ and $(\g,\k)$ on $T^*\mathcal J(\R^2)$.
\end{reptheorem}

\begin{proof}  From the previous result we have:
\begin{align*}
    \mathcal{L}_{V_{\theta}}(\omega_{\j}+i\omega_{\k})=\left.\frac{d}{d\theta}R_{\theta}^{*}(\omega_{\j}+i\omega_{\k})\right|_{\theta=0}=\left.\frac{d}{d\theta}R_{\theta}^{*}\omega^{\C}_\i\right|_{\theta=0}=-i\omega_\i^{\C}=-i\omega_{\j}+\omega_{\k}
\end{align*}
which implies
\[
    \mathcal{L}_{V_{\theta}}\omega_{\j}=\omega_{\k} \ \ \ \text{and} \ \ \ \mathcal{L}_{V_{\theta}}\omega_{\k}=-\omega_{\j} \ .
\]
Therefore,
\begin{align*}
    2\tau \bar{\partial}_{\j}\partial_{\j}(-H)=-d(dH\circ \j)=-d(\iota_{V_{\theta}}\omega_{\i}(\j\cdot))=-d(\iota_{V_{\theta}}\omega_{\k})=-\mathcal{L}_{V_{\theta}}\omega_{\k}=\omega_{\j}
\end{align*}
and similarly,
\begin{align*}
    2\tau \bar{\partial}_{\k}\partial_{\k}(-H)=-d(dH\circ \k)=-d(\iota_{V_{\theta}}\omega_{\i}(\k\cdot))=d(\iota_{V_{\theta}}\omega_{\j})=\mathcal{L}_{V_{\theta}}\omega_{\j}=\omega_{\k} \ ,
\end{align*}
which shows that $(-4H)$ is a para-K\"ahler potential for $\g$ with respect to $\j$ and $\k$.
\end{proof}

\subsection{A one-parameter family of maps}\label{sec:constant_curv}

Using the circle action, we can define for every $\theta \in S^{1}$ the map 
\[
    \mathcal{C}_{\theta}=\mathcal{C} \circ R_{\theta}: T^{*}\mathcal{J}(\R^{2})\rightarrow \mathcal{J}(\R^2)\times \mathcal{J}(\R^2) \ .
\]

\begin{remark}
Using Remark \ref{rmk:rotateB}, we observe that for $\theta=-\pi/2$ the map $\mathcal C=\mathcal C_{-\pi/2}$ has the expression 
$$
\mathcal{C}(J, \sigma) \defin \left( (\1 -  B)^{-1} J (\1 -  B) , (\1 +  B)^{-1} J (\1 +  B) \right)
$$
and is therefore a formal analogue of the parameterization 
$$\mathcal C:\mathcal{MGH}(\Sigma)\to \mathcal{T}(\Sigma)\times \mathcal{T}(\Sigma)~,$$
of the deformation space $\mathcal{MGH}(\Sigma)$ by means of the induced metric on the surfaces of constant curvature $-2$. 
\end{remark}

\begin{remark}
Exactly as in Lemma \ref{lemma:mapH}, identifying $\mathcal J(\R^2)$ with $\mathcal T^\conf (T^2)$ (Lemma \ref{lemma torus identifications}), we obtain a map
$$T^*\mathcal J(\R^2)\to \mathcal T^\conf(T^2)\times \mathcal T^\conf(T^2)$$
which can be interpreted as the map
$$\mathcal H_\theta(J,q)=(h_{(J,-e^{i\theta}q)},h_{(J,e^{i\theta}q)})~,$$
where $h(J,q)$ denotes the unique complex structure on $T^2$ such that the identity is harmonic with respect to the flat metric $g_J$ on $T^2\cong \R^2/\Z^2$ on the source and to $h(J,q)$ on the target. 
\end{remark}

Now, using the genus one versions of Theorem \ref{thm:mappaM} and Theorem \ref{thm:circle} (see also \eqref{eq:pullbackijk2}), which have been proved above in this section, and the identity $\mathcal{C}_{\theta}=\mathcal{M}\circ R_{\theta+\frac{\pi}{2}}$, which follows from \eqref{eq:CMrotate}, we see that
$$\mathcal C_\theta^* (\mathcal P,4 \Omega^{\mathbb B})=(-\sin(\theta)\j+\cos(\theta)\k ,\omega_\i-\tau(\cos(\theta)\omega_\j+\sin(\theta)\omega_\k))~.$$
As an immediate consequence, we have the following ``baby versions'' of Theorems 
\ref{thm:mappaC} and \ref{thm:Htheta}.

\begin{reptheorem}{thm:mappaC}[baby version]
We have
$$\mathcal C^*(\mathcal P,4\Omega^{\B})=(\k,\omega_\k^{\mathbb{B}})~,$$
where $\mathcal P$ denotes the para-complex structure of $\mathcal{J}(\R^2)\times \mathcal{J}(\R^2)$ and $\Omega^{\mathbb B}$ its para-complex symplectic form.
\end{reptheorem}

\begin{reptheorem}{thm:Htheta}[baby version]
We have
$$\Im \mathcal H_\theta^*(2\Omega^{\mathbb B})=-\Re (ie^{i\theta}\Omega^\C_{T^*\mathcal J(\R^2)})~.$$
where  $\Omega^{\mathbb B}$ is the para-complex symplectic form of $\mathcal{J}(\R^2)\times \mathcal{J}(\R^2)$.
\end{reptheorem}

As a consequence, we obtain, among other things, a direct proof of the integrability of $\k$, thus completing Theorem \ref{thm:parahyper_structure_toy}.

\begin{corollary} \label{cor:cc_paraholo}
 The 2-forms $\omega_\i$ and $\omega_\j$ on $T^*\mathcal J(\R^2)$ are closed, and $\k$ is an integrable almost para-complex structure.
\end{corollary}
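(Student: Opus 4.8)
The plan is to transcribe verbatim the argument by which Corollary \ref{cor:mess_paraholo} was deduced from the baby version of Theorem \ref{thm:mappaM}, now starting instead from the baby version of Theorem \ref{thm:mappaC}, which asserts $\mathcal C^*(\mathcal P, 4\Omega^{\mathbb B}) = (\k, \omega_\k^{\mathbb B})$. First I would record that the para-complex symplectic form $\Omega^{\mathbb B}$ on $\mathcal J(\R^2)\times\mathcal J(\R^2)$ is closed: both its $\R$-part $\frac12(\pi_l^*\Omega_\WP+\pi_r^*\Omega_\WP)$ and its $\tau$-part $\frac12(\pi_l^*\Omega_\WP-\pi_r^*\Omega_\WP)$ are combinations of pull-backs of $\Omega_\WP$, which by Remark \ref{rmk:compareH2} is (a multiple of) the area form of $\Hyp^2$ and hence closed. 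Pulling back by $\mathcal C$ and using the baby version of Theorem \ref{thm:mappaC}, one gets $\mathcal C^*(4\Omega^{\mathbb B}) = \omega_\k^{\mathbb B} = \omega_\i - \tau\omega_\j$; separating the $\R$- and $\tau$-components then yields that $\omega_\i$ and $\omega_\j$ are closed. (Of course $\omega_\j$ is already exact by Lemma \ref{lemma:Cvaluedform}, and $\omega_\i$ was already shown closed in Corollary \ref{cor:mess_paraholo}; the point of the present statement is to obtain these conclusions independently, via $\mathcal C$.)

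Next, for the integrability of $\k$, I would exhibit $\k = \mathcal C^*\mathcal P$ as the pull-back of an integrable almost para-complex structure along a local diffeomorphism. The structure $\mathcal P$ on the product $\mathcal J(\R^2)\times\mathcal J(\R^2)$ is integrable because its $(\pm 1)$-eigendistributions are the tangent spaces to the two factors, which are trivially integrable. And $\mathcal C$ is a local diffeomorphism: by the baby version of Theorem \ref{thm:mappaC} it pulls back (a multiple of) the $\R$-part of $\Omega^{\mathbb B}$ onto the $2$-form $\omega_\i$, which is non-degenerate everywhere (it equals $\g(\cdot,\i\cdot)$ with $\g$ non-degenerate and $\i$ invertible), so its differential is injective; since the source $T^*\mathcal J(\R^2)$ and the target $\mathcal J(\R^2)\times\mathcal J(\R^2)$ are both $4$-dimensional, $\mathcal C$ is a local diffeomorphism. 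As the pull-back of an integrable almost para-complex structure by a local diffeomorphism is again integrable (its $(\pm 1)$-eigendistributions pull back to integrable distributions by Frobenius), it follows that $\k$ is integrable. Alternatively one could invoke the identity $\mathcal C = \mathcal M\circ R_{\pi/2}$ of \eqref{eq:CMrotate} together with Corollary \ref{cor:mess_paraholo}, or the general criterion of Lemma \ref{lm:integrability} in Appendix \ref{appendixB}, now that $\omega_\i,\omega_\j,\omega_\k$ are all known to be closed.

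I do not expect any genuine obstacle: every ingredient is already in place, and the proof is a direct analogue of that of Corollary \ref{cor:mess_paraholo} with the triple $(\mathcal M, \j, \omega_\k)$ replaced by $(\mathcal C, \k, \omega_\j)$. The only step needing a line of care is the verification that $\mathcal C$ is a local diffeomorphism, so that pull-back preserves integrability; but this follows at once from the non-degeneracy of $\omega_\i$ and a dimension count, exactly as above.
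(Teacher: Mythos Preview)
Your proposal is correct and takes essentially the same approach as the paper: Corollary \ref{cor:cc_paraholo} is stated immediately after the baby version of Theorem \ref{thm:mappaC} (and Theorem \ref{thm:Htheta}) as a direct consequence, with no further argument, exactly paralleling how Corollary \ref{cor:mess_paraholo} was deduced from the baby version of Theorem \ref{thm:mappaM}. Your write-up simply makes explicit the details that the paper leaves implicit, including the local-diffeomorphism verification via the non-degeneracy of $\omega_\i$.
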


\section{The general case: genus \texorpdfstring{$\geq 2$}{g>=2}} \label{sec:para_hyperkahler_str_on_MS}

In the next two sections we give a proof of Theorem \ref{thm:parahyper_structure} in the general case of closed surfaces of genus $\geq 2$. In this section, we realize the deformation space of MGHC anti-de Sitter structures $\mathcal{MGH}(\Sigma)$ as the quotient by $\Symp_0(\Sigma, \rho)$ (the group of symplectomorphisms of $(\Sigma, \rho)$ isotopic to the identity, see Section \ref{subsec:change_coord}) of a set $\widetilde{\mathcal{MS}}_{0}(\Sigma,\rho)$ sitting inside an infinite dimensional space $T^{*}\mathcal{J}(\Sigma)$ that is formally endowed with a para-hyperK\"ahler structure $(\g, \i,\j,\k)$. We then give a distribution inside $T T^{*}\mathcal{J}(\Sigma)$ that is preserved by $\i$, $\j$, and $\k$ and maps isomorphically to the tangent space to $\mathcal{MS}_{0}(\Sigma,\rho)$, thus deducing that these structures descend to the quotient. To this aim, we characterize tangent vectors in several different ways (Proposition \ref{prop:equivalent_def_subspace_V}), that we prove are equivalent in Section \ref{subsec:proof_of_prop}. The proof of Theorem \ref{thm:parahyper_structure} is then completed in Section \ref{sec:geom_inter}, where we show that the induced symplectic forms are non-degenerate and closed, generalizing the constructions seen for the toy model in Section \ref{sec:toy_model_via_complex_structures}. 

\subsection{The group of (Hamiltonian) symplectomorphisms and its Lie algebra}
Let us fix a symplectic form $\rho$ on $\Sigma$. By Cartan's formula for every vector field $V$ on $\Sigma$ we have
\[
\Dlie_V \rho = \iota_V \dd{\rho} + \dd(\iota_V \rho) = \dd(\iota_V \rho) ,
\]
since $\dd{\rho} = 0$. Therefore, the flow of $V$ acts by symplectomorphisms on $(\Sigma, \rho)$ if and only if the $1$-\hsk form $\iota_V \rho$ is closed. Hence we can define the Lie algebra of the group $\Symp_0(\Sigma,\rho)$ of symplectomorphisms of $(\Sigma, \rho)$ isotopic to the identity as follows:
\[
\Lsymp(\Sigma,\rho) \defin \Lie(\Symp_0(\Sigma, \rho)) = \set{V \in \Gamma(T \Sigma) \mid \dd(\iota_V \rho) = 0 } \cong_\rho Z^1(\Sigma) ,
\]
where in the last step we are using the identification between $\Gamma(T \Sigma)$ and $\Lambda^1(\Sigma) = \Gamma(T^* \Sigma)$ induced by $\rho$, and $Z^{1}(\Sigma)$ denotes the space of smooth closed 1-forms.
A symplectomorphism $\psi$ is Hamiltonian if there is an isotopy $\psi_{\bullet}:[0,1] \rightarrow \Symp_{0}(\Sigma, \rho)$, with $\psi_{0}=\mathrm{id}$ and $\psi_{1}=\psi$, and a smooth family of functions $H_{t}:\Sigma\rightarrow \R$ such that $\iota_{V_{t}}\rho=dH_{t}$, where $V_{t}$ is the infinitesimal generator of the symplectomorphism $\psi_{t}$. We denote by $\Ham(\Sigma, \rho)$ the group of Hamiltonian symplectomorphisms of $(\Sigma, \rho)$. This is a normal subgroup of $\Symp(\Sigma, \rho)$ and its Lie algebra is defined as 
\[
\Lham(\Sigma, \rho) \defin \Lie(\Ham(\Sigma, \rho)) = \set{V \in \Gamma(T \Sigma) \mid \iota_V \rho \text{ exact}} \cong_\rho B^1(\Sigma) ,
\]
where $B^1(\Sigma)$ denote the space of smooth exact $1$-\hsk forms on $\Sigma$.

We have the following non-\hsk degenerate pairings:
\[
\begin{matrix}
\scal{\cdot}{\cdot}_\Lsymp \vcentcolon & \faktor{\Lambda^1(\Sigma)}{B^1(\Sigma)} \times Z^1(\Sigma) & \longrightarrow & \R \\
& ([\alpha], \beta) & \longmapsto & \int_\Sigma \alpha \wedge \beta ,
\end{matrix}
\]
\[
\begin{matrix}
\scal{\cdot}{\cdot}_\Lham \vcentcolon & \faktor{\Lambda^1(\Sigma)}{Z^1(\Sigma)} \times B^1(\Sigma) & \longrightarrow & \R \\
& ([\alpha], \beta) & \longmapsto & \int_\Sigma \alpha \wedge \beta .
\end{matrix}
\]
Since $Z^1(\Sigma) = \ker \dd$, the group $\Lambda^1(\Sigma) / Z^1(\Sigma)$ identifies with $B^2(\Sigma)$ through the differential map $\dd$. In particular we have
\begin{equation}\label{eq:pairing_lie}
\faktor{\Lambda^1(\Sigma)}{B^1(\Sigma)} \subset \mathfrak{S}(\Sigma,\rho)^* , \quad B^2(\Sigma) \cong_{\dd} \faktor{\Lambda^1(\Sigma)}{Z^1(\Sigma)} \subset \mathfrak{H}(\Sigma,\rho)^* .
\end{equation}

Observe that, for every tangent vector field $V$ and for every $1$-form $\alpha$, we have
\begin{equation}\label{eq:iota}
    \iota_V \alpha \, \rho = \alpha \wedge \iota_V \rho .
\end{equation}
In particular, if $V$ is a Hamiltonian vector field, with $\iota_V \rho = \dd{H}$, then
\begin{equation}\label{eq:Stokes}
\scal{[\alpha]}{\dd{H}}_\Lham = \int_\Sigma \alpha \wedge \dd{H} = \int_\Sigma H \dd{\alpha} =  \int_\Sigma \alpha(V) \, \rho  ,
\end{equation}
for every $H \in \mathscr{C}^\infty(\Sigma)$ and $[\alpha] \in \Lambda^1(\Sigma) / Z^1(\Sigma)$. \\

\subsection{The Teichm\"uller space as a symplectic quotient}\label{subsec:Teich_inf_dim_reduction}
Before treating the case of $\mathcal{MS}(\Sigma)$, we recall briefly how we can recover Teichm\"uller space as an infinite dimensional symplectic quotient. Most of the computations of this section can already be found in \cite{donaldson2003moment} and \cite{trautwein_thesis}: we report them here for reference purposes. \\

We denote by $P$ the $\SL(2,\R)$-principal bundle over $(\Sigma, \rho)$ whose fibers are linear maps $F:\R^{2}\rightarrow T_{p}\Sigma$ that identify the area form $\rho_{p}$ with the standard area form $\rho_{0}$ on $\R^{2}$ via pull-back. 
In other words, we require that $F^{*}\rho_{p}=dx\wedge dy$. The $\SL(2,\R)$-action is defined by $A\cdot (p,F)=(p,F\circ A^{-1})$.

Observe that any symplectomorphism $\phi$ of $(\Sigma, \rho)$ naturally lifts to a diffeomorphism $\hat{\varphi}$ of the total space $P$, by setting 
\[
\hat{\varphi}(p,F) \defin (\varphi(p), \dd{\varphi}_p \circ F) \in P ,
\]
for every $(p,F) \in P$. We now consider the bundle over $\Sigma$
\[
P(\mathcal{J}(\R^{2})) \defin P \times_{\SL(2,\R)} \mathcal{J}(\R^{2}) = \faktor{P \times \mathcal{J}(\R^{2})}{\SL(2,\R)} ,
\]
where $\SL(2,\R)$ acts diagonally on the two factors.  A section of $P(\mathcal{J}(\R^2)) \rightarrow \Sigma$ induces a complex structure $J$ on $\Sigma$ which is compatible with $\rho$, in the sense that $\rho(\cdot, J \cdot)$ is positive definite: this is defined on $T_p\Sigma$ by the endomorphism $F_p\circ J_p\circ F_p^{-1}$. Recalling that $\SL(2,\R)$ acts on $\mathcal{J}(\R^{2})$ by conjugation, one easily checks that this section $J$ is well-defined, that is, if two pairs $((p,F),J_p)$ and $((p,F'),J'_p)$ differ by the diagonal action of $\SL(2,\R)$, then they induce the same complex structure on $T_p\Sigma$. We will often confuse sections of $P(\mathcal{J}(\R^2)) \rightarrow \Sigma$ with complex structures $J$.

We will denote by $g_J$ the Riemannian metric $\rho(\cdot, J \cdot)$. By construction, the area form of $g_J$ is equal to $\rho$ for every complex structure $J$ as above. We set
\[
\mathcal{J}(\Sigma) \defin \Gamma(\Sigma, P(\mathcal{J}(\R^2))) .
\]
Given $J \in \mathcal{J}(\Sigma)$, a tangent vector $\dot{J} \in T_J \mathcal{J}(\Sigma)$ identifies with a section of the pull-back vector bundle $J^*(T^\vertical P(\mathcal{J}(\R^2))) \rightarrow \Sigma$, where $T^\vertical P(\mathcal{J}(\R^2))$ denotes the vertical subbundle of $T P(\mathcal{J}(\R^2))$ with respect to the projection over $\Sigma$. In other words, $\dot{J}$ is a section of $\End(T \Sigma)$ that satisfies $\dot{J} J + J \dot{J} = 0$. The space $\mathcal{J}(\Sigma)$ is formally an infinite dimensional symplectic manifold with symplectic form $\Omega_{J}$ given by
\[ 
    \Omega_J(\dot{J}, \dot{J}') = - \frac{1}{2} \int_{\Sigma} \tr(\dot{J} J \dot{J}') \rho \ .
\]

\begin{definition} \label{def:moment_map}
Let $(X, \omega)$ be a symplectic manifold, and assume that a Lie group $G$ acts on $(X, \omega)$ by symplectomorphisms. We say that the action is Hamiltonian if there exists a smooth function $\mappa{\mu}{X}{\mathfrak{g}^*}$ satisfying the following properties:
\begin{enumerate}[i)]
\item $\mu$ is $\Ad^*$-\hsk equivariant, i.e. for every $g \in G$ and $p \in X$ we have:
\[
\mu_{g \cdot p} = \Ad^*(g) (\mu_p) = \mu_p \circ \Ad(g^{-1}) \in \mathfrak{g}^* ;
\]
\item given $\xi \in \mathfrak{g}$, we denote by $V_\xi$ the vector field of $X$ generating the action of the $1$-\hsk parameter subgroup generated by $\xi$, i.e. $V_\xi (p) \defin \dv{t} \exp( t \xi) \cdot p |_{t = 0}$. Moreover, we set $\mu^\xi$ to be the function $p \mapsto \mu_p (\xi) \in \R$ on $X$. Then, for every $\xi \in \mathfrak{g}$ we have:
\[
\dd{\mu^\xi} = \iota_{V_\xi} \omega = \omega(V_\xi, \cdot) .
\]
\end{enumerate}
A map $\mu$ satisfying the properties above is called a \emph{moment map} for the action of $G$ on $(X, \omega)$.
\end{definition}

In the following, we denote by $K_J \in \mathscr{C}^\infty(\Sigma)$ the Gaussian curvature of the metric $g_J = \rho(\cdot, J \cdot)$. 

\begin{theorem}[\cite{donaldson2003moment}, \cite{trautwein_thesis}]\label{thm:momentmap_Teich}
Set $c \defin \frac{2 \pi \chi(\Sigma)}{\Area(\Sigma, \rho)}$. Then the function
\[
    \begin{matrix}
    \mu \vcentcolon & \mathcal{J}(\Sigma) & \longrightarrow & \Lham(\Sigma, \rho)^* \\
    & J & \longmapsto & -2 (K_J - c) \rho
    \end{matrix}
    \]
is a moment map for the action of $\Ham(\Sigma, \rho)$ on $(\mathcal{J}(\Sigma), \Omega)$.
\end{theorem}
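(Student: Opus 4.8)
The plan is to verify the two defining properties of a moment map in Definition \ref{def:moment_map} for the map $\mu(J)=-2(K_J-c)\rho$, viewed as an element of $\Lham(\Sigma,\rho)^*\cong B^2(\Sigma)$ via the pairing \eqref{eq:pairing_lie}. Since $\Ham(\Sigma,\rho)$ is a normal subgroup acting by pullback, $\mathrm{Ad}^*$-equivariance amounts to the naturality statement $K_{\psi^*J}\,\psi^*\rho=\psi^*(K_J\,\rho)$ for $\psi\in\Symp_0(\Sigma,\rho)$, which is immediate because $\psi$ is an isometry from $(\Sigma,g_{\psi^*J})$ to $(\Sigma,g_J)$ (as $\psi^*\rho=\rho$ and $\psi^*J$ is by definition the pullback complex structure) and the curvature form is a Riemannian invariant; the constant $c\rho$ is manifestly pullback-invariant. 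So the content is in property $ii)$.

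First I would fix $\xi\in\Lham(\Sigma,\rho)$, i.e. a Hamiltonian vector field $V$ with $\iota_V\rho=\dd H$ for some $H\in\mathscr C^\infty(\Sigma)$, and compute the infinitesimal action of the flow of $V$ on $\mathcal J(\Sigma)$: the generating vector field at $J$ is $\dot J=\Dlie_V J$. The function $\mu^\xi:\mathcal J(\Sigma)\to\R$ is then $\mu^\xi(J)=\scal{[-2(K_J-c)\rho]}{\dd H}_\Lham$, which by the definition of the pairing and \eqref{eq:Stokes} can be rewritten as $-2\int_\Sigma (K_J-c)H\,\rho+(\text{const})$; actually more convenient is to leave it as $\scal{\cdot}{\cdot}_\Lham$ and differentiate. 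Next I would compute $\dd(\mu^\xi)_J(\dot J)$ for an arbitrary tangent vector $\dot J\in T_J\mathcal J(\Sigma)$, which requires the first-order variation of the Gaussian curvature under a conformal-type deformation $\dot g_J=-g_J(\cdot,J\dot J\cdot)$ (from \eqref{eq:derivative_scal_prod}). The standard formula for the variation of curvature gives $\dot K_J\,\rho$ as an exact form plus a term built from $\dot J$; after pairing with $\dd H$ and integrating by parts the exact part drops, and one is left with an expression that should match $\Omega_J(V_\xi,\dot J)=\Omega_J(\Dlie_V J,\dot J)=-\tfrac12\int_\Sigma\tr(\Dlie_V J\, J\,\dot J)\,\rho$.

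The key computational identity is therefore
\begin{equation*}
-2\int_\Sigma \dot K_J\, H\,\rho \;=\; -\frac12\int_\Sigma \tr\big((\Dlie_V J)\,J\,\dot J\big)\,\rho,
\end{equation*}
which I would establish by: (a) using the linearization of curvature $\dot K_J=-\tfrac12\Delta_{g_J}(\operatorname{tr}_{g_J}\dot g_J)+(\text{divergence terms})$, noting that $\operatorname{tr}_{g_J}\dot g_J=0$ here because $\dot g_J$ corresponds to a traceless $\dot J$, so only the divergence/second-order term in $\dot J$ survives; (b) integrating by parts twice to move all derivatives onto $H$, producing $\iota_V\rho=\dd H$ and hence $V$; (c) recognizing the resulting bilinear expression in $V$ and $\dot J$ as $\Omega_J(\Dlie_V J,\dot J)$, using the Leibniz rule $\Dlie_V J=\nabla^{g_J}_V J-[\nabla^{g_J}V,J]$ and the fact that $\nabla^{g_J}J=0$. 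This is essentially the computation in \cite{donaldson2003moment} and \cite{trautwein_thesis}, so I would cite those and only indicate the main steps rather than reproduce every line.

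The main obstacle is part $ii)$: keeping careful track of the integration by parts, the traceless decomposition of tensors, and the identification of the sign and normalization constant ($-2$ in front, and the constant $c=2\pi\chi(\Sigma)/\Area(\Sigma,\rho)$ chosen precisely so that $\mu(J)$ is an exact $2$-form, i.e. lands in $B^2(\Sigma)\cong\Lham(\Sigma,\rho)^*$ — this uses Gauss--Bonnet, $\int_\Sigma K_J\rho=2\pi\chi(\Sigma)=c\,\Area(\Sigma,\rho)$, so $\int_\Sigma(K_J-c)\rho=0$). I expect the equivariance and the "lands in $B^2$" checks to be short; the variational curvature computation matching $\Omega$ is where all the care is needed, but it is routine given the references.
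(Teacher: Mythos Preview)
Your outline is correct and follows the standard route for this result. Note, however, that the paper does not supply its own proof of this theorem: it is stated with attribution to \cite{donaldson2003moment} and \cite{trautwein_thesis} and then used as input. So there is no ``paper's proof'' to compare against in the strict sense.

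That said, the paper does develop essentially the same computation you propose, but in the reverse logical direction. In the proof of Proposition~\ref{prop:orthogonal_symp} the identity
\[
\tfrac12\tr(\dot J\,J\,\Dlie_V J)=(\divr_g\dot J)(V)-\divr_g(\dot J V)
\]
is established (this is your step (c)), and integrated to give $\Omega_J(\dot J,\Dlie_V J)=-\int_\Sigma(\divr_g\dot J)\wedge\iota_V\rho$. Then Remark~\ref{rmk:derivative_curvature} \emph{assumes} the moment map property of Theorem~\ref{thm:momentmap_Teich} and runs your argument backwards to deduce the curvature variation formula $\dot K_J\,\rho=\tfrac12\,\dd(\divr_g\dot J)$. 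Your plan is to prove this curvature formula independently (via the linearized curvature of $\dot g_J=-g_J(\cdot,J\dot J\cdot)$, which is indeed $g_J$-traceless) and then feed it into the same integration-by-parts to obtain property~$ii)$. Both routes are equivalent; yours is self-contained, while the paper treats the theorem as a black box and extracts the curvature formula as a corollary. Your remarks on equivariance and on the role of the constant $c$ (Gauss--Bonnet forcing $\mu(J)\in B^2(\Sigma)$) are correct and match how the paper uses the result downstream.
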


Here, we are using the inclusion $B^{2}(\Sigma)\subset \Lham(\Sigma, \rho)^*$ introduced in \eqref{eq:pairing_lie}. By property \textrm{i)} in Definition \ref{def:moment_map}, the preimage $ \mu^{-1}(0)$ is invariant by the action of the Hamiltonian group $\Ham(\Sigma,\rho)$. Consequently, any variation $\dot{J}=\Dlie_{V}J$ induced by a Hamiltonian vector field $V$ lies in the kernel of $\dd{\mu}$. By property \textrm{ii)} in Definition \ref{def:moment_map}, for any $J \in \mu^{-1}(0)$ the space $\Ker (\dd{\mu}_J)$ coincides with the $\Omega_J$-\hsk orthogonal of the tangent space $T_{J}(\Ham(\Sigma, \rho)\cdot J)$ to the orbit of $J$ under the action of the Hamiltonian group. Therefore, there is a well-defined induced symplectic form on the quotient $\widetilde{\Teich}(\Sigma)=\mu^{-1}(0)/\Ham(\Sigma, \rho)$. The classical Teichm\"uller space $\mathcal{T}(\Sigma)$ can be identified (see \cite[\S 2.2]{donaldson2003moment}) with the further quotient of $\widetilde{\Teich}(\Sigma)$ by 
\[
    H:=\Symp_{0}(\Sigma,\rho)/\Ham(\Sigma,\rho) \ ,
\]
as briefly sketched at the end of Section \ref{subsec:change_coord}. Because the orbits of $H$ are symplectic sub-manifolds of $\widetilde{\mathcal{T}}(\Sigma)$ (see \cite[\S 2.2]{donaldson2003moment}, \cite[Lemma~4.4.8]{trautwein_thesis}), we can define a symplectic form on $\mathcal{T}(\Sigma)$ by setting
\begin{equation}\label{eq:fakeWP}
    \widehat{\Omega}_{[J]}([\dot{J}], [\dot{J}']):=\Omega_{J}(\dot{J}^{h}, \dot{J}'^{h})
\end{equation}
where $\dot{J}^{h} \in \Ker(\dd\mu)$ denotes a lift of $\dot{J}$ that is $\Omega_{J}$-orthogonal to the orbit of $\Symp_{0}(\Sigma, \rho)$. These lifts can be described by a differential geometric property. For this purpose, we introduce the notion of divergence of an endomorphism: given $A \in \End(T \Sigma)$ and given $G$ a Riemannian metric on $\Sigma$, we define $\divr_G A$ to be the $1$-\hsk form
\[
(\divr_G A)(X) \defin \sum_i G((\nabla^G_{e_i} A)X, e_i),
\]
where $(e_i)_i$ is a local $G$-\hsk orthonormal frame, $\nabla^G$ is the Levi-\hsk Civita connection of $G$ and $X$ is a vector field on $\Sigma$. We will also denote by $\divr_G V$ the usual divergence of a vector field $V$ with respect to the Riemannian metric $G$. 
Whenever we are dealing with a fixed almost complex structure $J$, we will omit the dependence of metric $g_J = \rho(\cdot, J \cdot)$ on $J$ and simply write $g$, in order to simplify the notation. In particular, if we write $\divr_g A$, it has to be interpreted as the divergence of the endomorphism $A$ with respect to $g_J$. Moreover, because $J$ is $\nabla^{g}$-parallel, i. e. $(\nabla^g_X J) Y = 0$ for every $X$, $Y$ tangent vector fields on $\Sigma$, we deduce
\begin{equation}\label{eq:div_JA}
	\divr_g(J A) = - \divr_g (A J) = - (\divr_g A) \circ J 
\end{equation}
for any $A \in \End(T\Sigma)$. Another immediate relation that we will use repeatedly is the following:
\begin{equation}\label{eq:div_contraction}
	\divr_g(X) = d(\iota_X\rho)(v,Jv) 
\end{equation}
for any unit vector $v$.

\begin{proposition}\label{prop:orthogonal_symp} Let $J$ be in $\mu^{-1}(0) \subset \mathcal{J}(\Sigma)$. An element $\dot{J} \in T_J \mathcal{J}(\Sigma)$ lies in the kernel of $\dd{\mu}$ if and only if $\divr_g \dot{J}$ is a closed $1$-\hsk form. Moreover, $\dot{J} \in \Ker(\dd \mu)$ is $\Omega_{J}$-orthogonal to $T_{J}(\Symp_{0}(\Sigma, \rho)\cdot J)$ inside $\Ker(\dd \mu)$ if and only if $\divr_g\dot{J}$ is an exact $1$-form.
\end{proposition}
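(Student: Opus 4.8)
\textbf{Proof plan for Proposition \ref{prop:orthogonal_symp}.}

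The plan is to compute the differential $\dd\mu_J$ explicitly and identify its kernel, then to describe the $\Omega_J$-orthogonal complement of the $\Symp_0(\Sigma,\rho)$-orbit inside that kernel. First I would linearize the moment map $\mu(J) = -2(K_J-c)\rho$. Since $\rho$ is fixed, only the Gaussian curvature $K_J$ varies, and the first-order variation of $K_J$ along $\dot J \in T_J\mathcal J(\Sigma)$ is a classical formula: writing $g = g_J = \rho(\cdot,J\cdot)$, one has $\dd K_J(\dot J) = \tfrac12\,\Delta_g\langle\text{something}\rangle + \text{curl-type term}$; more precisely the standard computation (see \cite{donaldson2003moment}, \cite{trautwein_thesis}) gives that $\dd\mu_J(\dot J)$ is, up to a nonzero constant, $\dd(\star_g\,\divr_g\dot J)$ or equivalently $\dd((\divr_g\dot J)\circ J)$. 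The key algebraic input is the identity \eqref{eq:div_JA}, $\divr_g(JA) = -(\divr_g A)\circ J$, which lets one rewrite the Hodge star on $1$-forms (in dimension two) as precomposition with $J$. Thus $\dd\mu_J(\dot J) = 0$ if and only if $\dd((\divr_g\dot J)\circ J) = 0$, and since $\alpha\mapsto \alpha\circ J$ is an isomorphism of $\Lambda^1(\Sigma)$ preserving the property of being closed (again by the two-dimensional Hodge-star interpretation, or directly: $\dd(\alpha\circ J)=0 \iff \delta_g\alpha = 0 \iff \dd(\star\alpha)=0$), this is equivalent to $\divr_g\dot J$ being a closed $1$-form. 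That proves the first assertion.

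For the second assertion, I would use the general principle recalled after Theorem \ref{thm:momentmap_Teich}: for $J\in\mu^{-1}(0)$, the kernel $\Ker(\dd\mu_J)$ coincides with the $\Omega_J$-orthogonal of the $\Ham(\Sigma,\rho)$-orbit, and inside it we must further cut out the $\Omega_J$-orthogonal of the larger $\Symp_0(\Sigma,\rho)$-orbit. Concretely, $\dot J\in\Ker(\dd\mu_J)$ is $\Omega_J$-orthogonal to $T_J(\Symp_0(\Sigma,\rho)\cdot J)$ if and only if $\Omega_J(\dot J, \Dlie_V J) = 0$ for every $V\in\Lsymp(\Sigma,\rho)$, i.e.\ for every vector field $V$ with $\iota_V\rho$ closed. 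The strategy is to integrate by parts: express $\Omega_J(\dot J,\Dlie_V J)$ as $\int_\Sigma \langle\text{(pairing of }\divr_g\dot J\text{ with }V)\rangle\,\rho$. Using that $\Dlie_V J$ is the vertical part of the Lie derivative and the formula for $\Omega_J$, a computation of the type already carried out in \cite{donaldson2003moment} shows
$$\Omega_J(\dot J,\Dlie_V J) = c'\int_\Sigma (\divr_g\dot J)(V)\,\rho = c'\int_\Sigma (\divr_g\dot J)\wedge\iota_V\rho$$
for a nonzero constant $c'$, where the last equality is \eqref{eq:iota}. Since $V$ ranges exactly over vector fields with $\iota_V\rho\in Z^1(\Sigma)$, and $\iota_V\rho$ can be any closed $1$-form, the vanishing of this pairing for all such $V$ says precisely that the closed $1$-form $\divr_g\dot J$ pairs to zero with every closed $1$-form under the wedge-integration pairing. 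By Poincar\'e duality on the closed surface $\Sigma$, a closed $1$-form is orthogonal to all closed $1$-forms (under $(\alpha,\beta)\mapsto\int_\Sigma\alpha\wedge\beta$) if and only if it is exact. Hence $\dot J$ is $\Omega_J$-orthogonal to the $\Symp_0(\Sigma,\rho)$-orbit inside $\Ker(\dd\mu_J)$ if and only if $\divr_g\dot J$ is exact, which is the claim.

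The main obstacle I anticipate is getting the linearization $\dd\mu_J(\dot J)$ and the pairing $\Omega_J(\dot J,\Dlie_V J)$ into the clean divergence form stated above, with the correct constants and signs. This requires the standard but somewhat delicate variational formula for Gaussian curvature under a variation of the complex structure at fixed area form, together with careful use of the parallelism $\nabla^g J = 0$, the relations \eqref{eq:div_JA}, \eqref{eq:div_contraction}, \eqref{eq:iota}, and the pointwise identities for endomorphisms of $T_p\Sigma$ anticommuting with $J$ (analogues of Lemma \ref{lem:product_in_TJ}). Once the computation is reduced to ``$\divr_g\dot J$ is closed'' respectively ``$\divr_g\dot J$ is exact,'' the rest is just the two-dimensional Hodge theory / Poincar\'e duality input, which is routine. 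One should also note the formal (infinite-dimensional) nature of $\Omega_J$ and $\mu$, but since all pairings are given by convergent integrals of smooth tensors over the closed surface $\Sigma$, no analytic subtlety arises beyond what is already implicit in \cite{donaldson2003moment} and \cite{trautwein_thesis}.
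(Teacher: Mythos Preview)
Your plan for the second assertion is correct and matches the paper's argument almost exactly: the paper also reduces to the identity
\[
\Omega_J(\dot J,\Dlie_V J)=-\int_\Sigma(\divr_g\dot J)\wedge\iota_V\rho
\]
(derived from the pointwise relation $\tfrac12\tr(\dot J J\Dlie_V J)=(\divr_g\dot J)(V)-\divr_g(\dot J V)$), and then invokes Poincar\'e duality just as you do.

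There is, however, a genuine error in your treatment of the first assertion. The linearization of $\mu(J)=-2(K_J-c)\rho$ is
\[
\dd\mu_J(\dot J)=-2\,\dd K_J(\dot J)\,\rho=-\,\dd(\divr_g\dot J),
\]
with \emph{no} $\circ J$ or Hodge star (this is exactly the formula recorded in Remark~\ref{rmk:derivative_curvature}). From this the conclusion ``$\divr_g\dot J$ is closed'' is immediate. Your claimed formula $\dd\mu_J(\dot J)\propto \dd((\divr_g\dot J)\circ J)$ is off by a Hodge star, and your attempt to repair it is internally inconsistent: the map $\alpha\mapsto\alpha\circ J$ does \emph{not} preserve closedness---your own parenthetical shows $\dd(\alpha\circ J)=0\iff\delta_g\alpha=0$, i.e.\ it exchanges closed and co-closed. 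So with your stated formula you would conclude $\divr_g\dot J$ is co-closed, which is the wrong condition.

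It is also worth noting that the paper organizes the argument more economically than your plan. Rather than linearizing $K_J$ directly for Part~1 and then separately computing $\Omega_J(\dot J,\Dlie_V J)$ for Part~2, the paper computes $\Omega_J(\dot J,\Dlie_V J)$ once and uses it for both parts: for Part~1 one restricts to Hamiltonian $V$ (so $\iota_V\rho=\dd H$ ranges over exact $1$-forms) and invokes the moment-map property $\langle\dd\mu_J(\dot J),V\rangle_\Lham=\Omega_J(\Dlie_V J,\dot J)$, which gives $\int_\Sigma H\,\dd(\divr_g\dot J)=0$ for all $H$; for Part~2 one lets $\iota_V\rho$ range over all closed $1$-forms. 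The curvature-variation formula you wanted is then obtained as a corollary (Remark~\ref{rmk:derivative_curvature}), not as the starting input. This avoids the delicate direct variation of $K_J$ that you flagged as the main obstacle---and which is exactly where your proposal went wrong.
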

\begin{proof} Let $V$ be a vector field on $\Sigma$. We observe that
\begin{equation} \label{eq:divergence_rel}
    \frac{1}{2} \tr(\dot{J} J \Dlie_V J) =  (\divr_g \dot{J})(V) - \divr_g(\dot{J} V) \ .
\end{equation}
To see this, first we notice that
\begin{align*}
   (\divr_g \dot{J})(V) & = \sum_i g((\nabla_{e_i} \dot{J})V, e_i) \\
   & = \sum_i g(\nabla_{e_i} (\dot{J} V) - \dot{J} \nabla_{e_i} V , e_i ) \\
   & = \divr_{g}(\dot{J}V) - \sum_i g(\dot{J}\nabla_{e_{i}}V, e_{i})\\
   & = \divr_{g}(\dot{JV})-\trace(\dot{J} A_V) ,
\end{align*}
where $\nabla$ denotes the Levi-Civita connection of $g$ and $A_V$ stands for the endomorphism $A_V (X) \defin \nabla_X V$. As shown in the proof of Lemma \ref{lem:I_of_lie_deriv} below, the endomorphism $\Dlie_V J$ can be expressed as $J A_V - A_V J$ (see relation \eqref{eq:lie_j}). In particular we have
\begin{align*}
    \trace(\dot{J} A_V) &= - \tr(\dot{J} J J A_V) \tag{$J^2 = - \1$} \\
    & = - \frac{1}{2} \left( \tr(\dot{J} J J A_V) - \tr(J \dot{J} J A_V) \right) \tag{$\dot{J} \in T_J \mathcal{J}(\Sigma)$} \\
    & = - \frac{1}{2} \left( \tr(\dot{J} J J A_V) - \tr(\dot{J} J A_V J) \right) \\
    &  = - \frac{1}{2} \tr(\dot{J} J \Dlie_V J) ,
\end{align*}
and so relation \eqref{eq:divergence_rel} follows. Now, applying such identity we find
\begin{align}\label{eq:Omega}
\begin{split}
    \Omega_{J}(\dot{J}, \Dlie_{V}J)&=
    - \frac{1}{2} \int_{\Sigma} \tr(\dot{J} J \Dlie_V J) \, \rho \\
    &= -\int_{\Sigma} \left( \frac{1}{2} \tr(\dot{J} J \Dlie_V J) + \divr_g(\dot{J} V) \right)\rho \\
    &=-\int_{\Sigma} (\divr_g \dot{J})(V) \, \rho \\
    &=-\int_{\Sigma}(\divr_g\dot{J}) \wedge \iota_{V}\rho \ .
\end{split}
\end{align}

Consider now $\dot{J}$ in kernel of the differential of the moment map $\dd \mu$. By property \textrm{ii)} in Definition \ref{def:moment_map}, we have $\Omega_J(\dot{J}, \Dlie_V J) = 0$ for every Hamiltonian vector field $V$. If  $H$ denotes the Hamiltonian function of $V$, then by relation \eqref{eq:Omega} we have
\[
0 = \int_{\Sigma} (\divr_g\dot{J}) \wedge \dd H = \int_{\Sigma} H \dd{(\divr_g\dot{J}) } ,
\]
where in the last step we applied Stokes' theorem on the $2$-\hsk form $\dd{(H (\divr_g\dot{J}))}$. Therefore, by letting the Hamiltonian function $H$ vary, we deduce that $\divr_g \dot{J}$ is a closed $1$-\hsk form.

Similarly, if $\divr_g\dot{J}=\dd f$ is exact and $V \in \Lsymp(\Sigma, \rho)$, then
\begin{align*}
    \dd f\wedge \iota_{V}\rho&=\dd f\wedge\iota_{V}\rho+ f\dd(\iota_{V}\rho) \tag{$\iota_{V}\rho$ is closed} =\dd(f\iota_{V}\rho)
\end{align*}
is exact and $\Omega_{J}(\dot{J}, \Dlie_{V}J) = 0$ for every $V \in \Lsymp(\Sigma, \rho)$. Vice versa, assume that $\dot{J}$ is in $\Ker \dd{\mu}$ and it satisfies $\Omega_{J}(\dot{J}, \Dlie_{V}J) = 0$ for every $V \in \Lsymp(\Sigma, \rho)$. Then, again by relation \eqref{eq:Omega}, 
\[
\int_\Sigma (\divr_g \dot{J}) \wedge \alpha = 0
\]
for every closed $1$-\hsk form $\alpha$. Since $\divr \dot{J}$ is closed, and since the pairing
\[
    (\alpha, \beta) \mapsto \int_{\Sigma} \alpha \wedge \beta 
\]
between closed $1$-forms is non-degenerate in $H^{1}(\Sigma) \times H^{1}(\Sigma)$, we deduce that $\divr_g \dot{J}$ represents the trivial class inside the first de Rham cohomology group or, in other words, that $\divr_g \dot{J}$ is exact.
\end{proof}

\begin{remark} \label{rmk:derivative_curvature}
The argument described in the proof of Proposition \ref{prop:orthogonal_symp} combined with Theorem \ref{thm:momentmap_Teich} provides us with a convenient way to express the first order variation of the curvature $K_J$ with respect to $\dot{J}$, that is
\[
\dd{K_J}(\dot{J}) \, \rho = \frac{1}{2} \dd(\divr_g \dot{J}) .
\]
In the following, we briefly see how to deduce this relation. On the one hand, by the explicit expression of the moment map $\mu$ from Theorem \ref{thm:momentmap_Teich}, we have that
\[
\scal{\dd{\mu}(\dot{J})}{V}_\Lham = - 2 \int_\Sigma  H \dd{K_J}(\dot{J}) \rho 
\]
for any Hamiltonian vector field $V$ with Hamiltonian function $H$. On the other hand, being $\mu$ a moment map for the action of the Hamiltonian group, it satisfies
\begin{align*}
    \scal{\dd{\mu}(\dot{J})}{V}_\Lham & = \Omega_J(\dot{J}, \Dlie_V J) \\
    & = -\int_{\Sigma}(\divr_g\dot{J}) \wedge \dd{H} \tag{relation \eqref{eq:Omega}} \\
    & = - \int_{\Sigma} H \dd(\divr_g\dot{J}) ,
\end{align*}
once again for any Hamiltonian vector field $V$ with Hamiltonian function $H$. Combining the relations above, we find that
\[
- 2 \int_\Sigma  H \dd{K_J}(\dot{J}) \rho = - \int_{\Sigma} H \dd(\divr_g\dot{J}) ,
\]
and by letting the Hamiltonian function vary, we deduce the desired relation.
\end{remark}

Classically, one further re-normalizes the lift $\dot{J}^{h}$ to be $L^{2}$-orthogonal to the tangent space to the orbit. This gives the additional condition that $\divr_g\dot{J}=0$ (\cite{trautwein_thesis}, \cite{tromba2012teichmuller}), which recovers the description of the tangent space to Teichm\"uller space via traceless Codazzi tensors and the formula of (a multiple of) the Weil-Petersson symplectic form, if we choose an area form $\rho$ on $\Sigma$ such that $\Area(\Sigma, \rho)=-2\pi \chi(\Sigma)$, which means that $c=-1$ in Theorem \ref{thm:momentmap_Teich}. 

\begin{lemma}[{\cite[Section 2.1]{bonsante2015a_cyclic}}] \label{lem:relation_Omega_WP} Let $\dot{J},\dot{J}' \in T_{[J]}\mathcal{T}^{\conf}(\Sigma)$ be traceless Codazzi tensors representing tangent vectors to Teichm\"uller space. Then, 
\[
    (\Omega_\WP)_{[J]}(\dot{J}, \dot{J}') = - \frac{1}{8} \int_\Sigma \tr(\dot{J} J \dot{J}') \dd{a} ,
\]
and the Weil-Petersson metric can be expressed as
\[
    (G_\WP)_{[J]}(\dot{J}, \dot{J}') =  \frac{1}{8} \int_\Sigma \tr(\dot{J} \dot{J}') \dd{a} 
\]
    where $\dd{a}$ is the volume form of the unique hyperbolic metric with conformal structure $J$.
\end{lemma}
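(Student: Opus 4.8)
The plan is to verify both identities by passing to the harmonic Beltrami differential model of the tangent space to Teichm\"uller space and tracking the normalization constants. Recall that a tangent vector to $\Teich^\conf(\Sigma)$ at $[J]$ is represented uniquely, modulo the trivial deformations $\Dlie_V J$, by a traceless $g_J$-self-adjoint endomorphism $\dot J$ that is Codazzi; by Lemma~\ref{lem:hol_quadr_diff_equivalence} and Remark~\ref{rmk:independent_conformal_metric} the Codazzi condition depends only on $J$ (equivalently, $h(\cdot,\dot J\cdot)$ is the real part of a holomorphic quadratic differential on $(\Sigma,J)$, where $h$ is the hyperbolic metric in the conformal class of $J$), and this is precisely the condition that makes the associated Beltrami differential $\dot\nu \defin \tfrac12 \dot J J$, as in equation~\eqref{eq:dotnu} of Remark~\ref{rmk:constant_hqd2}, \emph{harmonic} with respect to $h$. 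Both sides of the claimed identities are well defined on $\Teich^\conf(\Sigma)$ because $\int_\Sigma \tr(\dot J\dot J')\,\dd a$ and $\int_\Sigma \tr(\dot J J\dot J')\,\dd a$ are unchanged if one adds $\Dlie_V J$ to $\dot J$: this is the content of Proposition~\ref{prop:orthogonal_symp} together with Remark~\ref{rmk:derivative_curvature}, or, more directly, of an integration by parts using $\tfrac12\tr(\dot J J\,\Dlie_V J) = (\divr_{g}\dot J)(V) - \divr_g(\dot J V)$ and the fact that $\dot J$ is divergence free (Lemma~\ref{lem:hol_quadr_diff_equivalence}).

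The heart of the matter is a pointwise identity. By Lemma~\ref{lem:product_in_TJ}, applied fiberwise on $T_p\Sigma$, for $\dot J, \dot J' \in T_J\mathcal J(\Sigma)$ one has $\dot J\dot J' = \langle\dot J,\dot J'\rangle_g\,\1 - \langle J\dot J,\dot J'\rangle_g\,J$, hence $\tr(\dot J\dot J') = 2\langle\dot J,\dot J'\rangle_g$ and $\tr(\dot J J\dot J') = -2\langle J\dot J,\dot J'\rangle_g$, where $g$ denotes any metric conformal to $h$ (the matrix trace being insensitive to conformal rescaling). On the other hand, in a local conformal coordinate $z = x+iy$ adapted to $h$, in which $J$ is the standard complex structure and a traceless self-adjoint endomorphism reads $\dot J = \left(\begin{smallmatrix} a & b \\ b & -a\end{smallmatrix}\right)$, a direct computation gives that the Beltrami coefficient of $\dot\nu = \tfrac12\dot J J$ is $\mu = \tfrac{i}{2}(a-ib)$, so that pointwise $\langle\dot J,\dot J'\rangle_g = 4\,\Re(\mu\overline{\mu'})$ and $\langle J\dot J,\dot J'\rangle_g = -4\,\Im(\mu\overline{\mu'})$, and therefore
\[
\tr(\dot J\dot J') = 8\,\Re(\mu\overline{\mu'}), \qquad \tr(\dot J J\dot J') = -8\,\Im(\mu\overline{\mu'}).
\]
Equivalently, $\tr(\dot J^2) = 8\,|\mu|^2$, which exhibits the constant $\tfrac18$ as the product of the factor $\tfrac12$ in $\dot\nu=\tfrac12\dot J J$ (squared) and the factor $\tfrac12$ relating $\langle\cdot,\cdot\rangle_g$ to the matrix trace.

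Integrating against the hyperbolic area form $\dd a = \dd A_h$ and recalling that in the harmonic Beltrami model the Weil--Petersson metric is $(G_\WP)_{[J]}(\dot\nu,\dot\nu') = \int_\Sigma \Re(\mu\overline{\mu'})\,\dd A_h$, with K\"ahler form $\Omega_\WP = G_\WP(\cdot,\mathcal I\cdot)$ for $\mathcal I$ the complex structure acting on Beltrami differentials by multiplication by $i$, i.e.\ $\mathcal I\dot J = -J\dot J$, one obtains at once
\[
(G_\WP)_{[J]}(\dot J,\dot J') = \frac18\int_\Sigma \tr(\dot J\dot J')\,\dd a, \qquad (\Omega_\WP)_{[J]}(\dot J,\dot J') = -\frac18\int_\Sigma \tr(\dot J J\dot J')\,\dd a,
\]
the second being $G_\WP(\dot J, -J\dot J')$. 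The only genuinely delicate point is the bookkeeping: keeping track of the normalization constant through the change of model, and checking that the conventions for $\Omega_\WP$ versus $G_\WP$ and the orientation of $J$ are consistent with the ones fixed in the paper (e.g.\ $\omega_\i = \g(\cdot,\i\cdot)$ and $\mathcal I\dot J = -J\dot J$). One should also note that the distinction between $g_J$, whose area form is the fixed $\rho$, and the hyperbolic metric $h$ enters only through the volume form, which is legitimate precisely because $\tr(\dot J\dot J')$ and $\tr(\dot J J\dot J')$ are conformally invariant. All of this is essentially contained in \cite[Section~2.1]{bonsante2015a_cyclic}; we include the computation to fix our sign conventions.
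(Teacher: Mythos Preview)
The paper does not actually prove this lemma: it is stated as a citation from \cite[\S 2.1]{bonsante2015a_cyclic}, followed only by a remark explaining that the sign in front of $\Omega_\WP$ differs from that reference because here the complex structure on Beltrami differentials is taken to be $\dot\nu\mapsto -J\dot\nu$. Your write-up supplies precisely the computation that the citation is pointing to, and it is the standard route: pass from the Codazzi representative $\dot J$ to the harmonic Beltrami representative $\dot\nu=\tfrac12\dot J J$, compute the pointwise dictionary $\tr(\dot J\dot J')=8\,\Re(\mu\bar\mu')$ and $\tr(\dot J J\dot J')=-8\,\Im(\mu\bar\mu')$, and then invoke the Weil--Petersson inner product on harmonic Beltrami differentials.

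One small slip: with your choice $\mu=\tfrac{i}{2}(a-ib)=\tfrac{b+ia}{2}$, a direct check gives $\langle J\dot J,\dot J'\rangle_g = ab'-ba' = +4\,\Im(\mu\bar\mu')$, not $-4\,\Im(\mu\bar\mu')$. This is harmless because you then combine it with $\tr(\dot J J\dot J')=-2\langle J\dot J,\dot J'\rangle_g$, and the two sign errors cancel, so your displayed identities $\tr(\dot J\dot J')=8\,\Re(\mu\bar\mu')$, $\tr(\dot J J\dot J')=-8\,\Im(\mu\bar\mu')$ and the final formulas for $G_\WP$ and $\Omega_\WP$ are correct. As you say yourself, the only content here is the bookkeeping; the paper's Remark after the lemma is making exactly the same point.
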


\begin{remark}
    The change in the sign with respect to the relation appearing in \cite[\S 2.1]{bonsante2015a_cyclic} is due to the fact that here we are considering the complex structure $\dot{\nu} \mapsto - J \dot{\nu}$ on the space of Beltrami differentials, which is opposite to the one used by \cite{bonsante2015a_cyclic}.
\end{remark}

\begin{remark}\label{rmk:fakeWP} We remark, however, that any choice of a supplement $W$ of $T_{J}(\Symp_{0}(\Sigma, \rho)\cdot J)$ inside $\Ker(\dd\mu)$ that is $\Omega_{J}$-orthogonal to $T_{J}(\Symp(\Sigma, \rho)\cdot J)$ gives a well-defined model for the tangent space to $\mathcal{T}^{\conf}(\Sigma)$ with the property that $(W, \Omega_{J}|_W)$ is symplectomorphic to $(T_{J}\mathcal{T}^{\conf}(\Sigma), 4\Omega_{\WP})$. 
\end{remark}

\subsection{The construction of \texorpdfstring{$\mathcal{MS}_{0}(\Sigma,\rho)$}{MS0(S,r)}}\label{subsec:case_MS}
Let us now consider the bundle over $\Sigma$ defined by
\[
P(T^{*}\mathcal{J}(\R^{2})) \defin P \times_{\SL(2,\R)} T^{*}\mathcal{J}(\R^{2}) = \faktor{P \times T^{*}\mathcal{J}(\R^{2})}{\SL(2,\R)} ,
\]
where $P$ is the frame bundle introduced in Section \ref{subsec:Teich_inf_dim_reduction} and $\SL(2,\R)$ acts diagonally. The fiber of $P(T^{*}\mathcal{J}(\R^{2}))$ over the point $p\in \Sigma$ identifies with $T^* \mathcal{J}(T_p \Sigma)$, i. e. the space of pairs $(J_p, \sigma_p)$ where $J_p$ is an almost complex structure of $T_p \Sigma$ compatible with $\rho_p$, and $\sigma_p$ is a $g_{J_p}$-\hsk traceless and symmetric bilinear form on $T_p \Sigma$ that satisfies $\sigma_p(J_p, J_p) = - \sigma_p$. Since the para-\hsk hyperK\"ahler structure of $T^* \mathcal{J}(\R^2)$ is $\SL(2,\R)$-\hsk invariant (see Theorem \ref{thm:parahyper_structure_toy}), each fiber $T^* \mathcal{J}(T_p \Sigma)$ is naturally endowed with a para-\hsk hyperK\"ahler structure $(\hat{\g}_p, \hat{\i}_p, \hat{\j}_p, \hat{\k}_p)$, obtained by identifying $T_p \Sigma$ with $\R^2$ using an area-preserving isomorphism $\mappa{F_p}{T_p \Sigma}{\R^2}$.

The space of smooth sections 
\[
 T^{*}\mathcal{J}(\Sigma):=\Gamma(P(T^* \mathcal{J}(\R^2))
\]
can be identified with the set of pairs $(J,\sigma)$, where $J$ is a complex structure on $\Sigma$, and $\sigma$ is a symmetric and $g$-\hsk traceless $2$-\hsk tensor, where $g = g_J = \rho(\cdot, J \cdot)$. The element $\sigma$ can be equivalently characterized as the real part of a complex valued $J$-\hsk complex linear and symmetric $2$-\hsk tensor $\phi$. We identify the tangent space of $T^* \mathcal{J}(\Sigma)$ at $(J,\sigma)$ with the space of sections of the vector bundle $(J,\sigma)^* (T^\vertical P(T^* \mathcal{J}(\R^2))) \rightarrow \Sigma$, where $T^\vertical P(T^* \mathcal{J}(\R^2))$ stands for the vertical sub-\hsk bundle of $T P(T^* \mathcal{J}(\R^2))$ with respect to the projection map $P(T^* \mathcal{J}(\R^2)) \rightarrow \Sigma$. In particular, we can consider a tangent vector $(\dot{J}, \dot{\sigma})$ at $(J,\sigma)$ as the data of (see Lemma \ref{lem:characterization_tangent_space}):
\begin{itemize}
    \item a section $\dot{J}$ of $\End(T \Sigma)$ satisfying $\dot{J} J + J \dot{J} = 0$. In other words, $\dot{J}$ is a $g$-\hsk self-\hsk adjoint and traceless endomorphism of $T \Sigma$;
    \item a symmetric $2$-\hsk tensor $\dot{\sigma}$ satisfying 
    \[
    \dot{\sigma} = \dot{\sigma}_0 - \scal{\sigma}{J \dot{J}} \, g ,
    \]
    where $\dot{\sigma}_0$ is a symmetric and $g$-\hsk traceless $2$-\hsk tensor. Observe in particular that the $g$-\hsk full trace part of $\dot{\sigma}$ is uniquely determined by $\dot{J}$.
\end{itemize}

Formally, $T^{*}\mathcal{J}(\Sigma)$ is an infinite dimensional para-\hsk hyperK\"ahler manifold, where the symplectic forms are defined as 
\begin{align}
    (\omega_\mathbf{X})_{(J,\sigma)} ((\dot{J}, \dot{\sigma}), (\dot{J}', \dot{\sigma}')) \defin \int_\Sigma \hat{\omega}_\mathbf{X}((\dot{J}, \dot{\sigma}), (\dot{J}', \dot{\sigma}')) \, \rho ,
\end{align}
for $\mathbf{X} = \mathbf{I}, \mathbf{J}, \mathbf{K}$, and the pseudo-Riemannian metric is given by
\begin{align}
    \g_{(J,\sigma)} ((\dot{J}, \dot{\sigma}), (\dot{J}', \dot{\sigma}')) \defin \int_\Sigma \hat{\g}((\dot{J}, \dot{\sigma}), (\dot{J}', \dot{\sigma}')) \, \rho \ ,
\end{align}
where $\hat{\omega}$ and $\hat{\g}$ denote the symplectic form and the pseudo-Riemannian metric obtained by identifying the fibers of $P(T^{*}\mathcal{J}(\R^2)) \rightarrow \Sigma$ with the space of linear almost-complex structures on $T_\cdot \Sigma$ as described above. Similarly we have linear endomorphisms
\[
\i, \j, \k \vcentcolon T_{(J,\sigma)} T^* \mathcal{J}(\Sigma) \longrightarrow T_{(J,\sigma)} T^* \mathcal{J}(\Sigma) ,
\]
obtained by applying pointwisely the endomorphisms $\hat{\i}, \hat{\j}, \hat{\k}$ to a smooth section $(\dot{J}, \dot{\sigma})$. Their definition is formally identical to the ones in relations \eqref{eq:definition_I_toy}, \eqref{eq:definition_J_toy}, and \eqref{eq:definition_K_toy}, with the only difference that now $J$, $\sigma$, $g_J$, $\dot{J}$, $\dot{\sigma}$ are all tensors, and $f(\norm{\sigma})$ is a smooth function over $\Sigma$.

\begin{remark}\label{rmk:Mess_inf_dim} The expression of Mess homeomorphism introduced in Section \ref{subsec:Mess_homeo} can be formally applied to define a map
\begin{gather*}
\mathcal{M} \vcentcolon  T^* \mathcal{J}(\Sigma)  \longrightarrow  \mathcal{J}(\Sigma) \times \mathcal{J}(\Sigma) \\
\mathcal{M}(J, \sigma) \defin \left( (\1 - J B)^{-1} J (\1 - J B) , (\1 + J B)^{-1} J (\1 + J B) \right) \ ,
\end{gather*}
which takes as input a almost complex structure $J$ and a $g_J$-\hsk traceless symmetric $2$-\hsk tensor $\sigma$, and provides a pair of almost complex structures on $\Sigma$. In what follows, we will denote by $J_l$ and $J_r$ the left and right components of $\mathcal{M}$, and a tangent vector at
\[
T_{(J_l,J_r)} \mathcal{J}(\Sigma) \times \mathcal{J}(\Sigma) \cong T_{J_l} \mathcal{J}(\Sigma) \times T_{J_r} \mathcal{J}(\Sigma)
\]
will be given by a pair $(\dot{J}_l, \dot{J}_r)$. 
\end{remark}

\begin{remark}
The notation introduced is intentionally abusive, to emphasize the similarities between the toy model $T^* \mathcal{J}(\R^2)$ and the infinite-\hsk dimensional manifold $T^* \mathcal{J}(\Sigma)$. In what follows, we will often make use of the relations proved in Section \ref{sec:toy_model_via_complex_structures}, which concern $T^* \mathcal{J}(\R^2)$, in the context of $T^* \mathcal{J}(\Sigma)$. These arguments are legitimate because identities at the level of the toy model can be interpreted as pointwise identities at the level of smooth sections inside $T^* \mathcal{J}(\Sigma)$. 
\end{remark}

\subsection{The para-hyperK\"ahler structure of \texorpdfstring{$\mathcal{MS}_0(\Sigma,\rho)$}{MS0(S,rho)}}

We will now give an explicit description of the tangent bundle of the space $\mathcal{MS}_0(\Sigma,\rho)$ (introduced in Section \ref{subsec:change_coord}), which is well suited to present its para-\hsk hyperK\"ahler structure.

We recall from Section \ref{subsec:change_coord} that $\mathcal{MS}_0(\Sigma,\rho)$ is the quotient of the infinite-\hsk dimensional manifold
\[
\widetilde{\mathcal{MS}_{0}}(\Sigma, \rho) \defin \left\{(J,\sigma)\left| \begin{aligned}& \quad g = \rho(\cdot,J \cdot)\text{ is a Riemannian metric on }\Sigma , \\  &\,\,\sigma\text{ is the real part of a }J\text{-quadratic differential} , \\ &\,(h=(1 + f(\norm{\sigma}_g)) \, g,B=h^{-1}\sigma) \text{ satisfy }\eqref{eq:gauss_codazzi} \end{aligned}\right. \right\} 
\]
by the action of $\Symp_{0}(\Sigma, \rho)$, where as usual $f=f(\|\sigma\|_g)=\sqrt{1+\|\sigma\|_g^2}$. We will first introduce a very specific distribution $\mathcal{V} = \set{V_{(J,\sigma)}}_{(J, \sigma)}$ tangent to $\widetilde{\mathcal{MS}_{0}}(\Sigma, \rho)$, presenting several characterizations of it in Proposition \ref{prop:equivalent_def_subspace_V}. The proof of the equivalence of these equivalent characterizations requires a certain amount of computations, and it will be postponed to Section \ref{subsec:proof_of_prop}. 
Theorem \ref{thm:identification_tangent&V} will then describe the identification between $V_{(J,\sigma)}$ and the tangent space to $\mathcal{MS}_0(\Sigma, \rho)$ at the equivalence class of $(J,\sigma)$ by the action of $\Symp_{0}(\Sigma, \rho)$. Its proof is fragmented into several lemmas, which will constitute the main technical core of Section \ref{subsec:proof_of_prop}. 

\begin{remark}
The references to the process of infinite-\hsk dimensional symplectic reduction are limited to the case of the Teichm\"uller space, which has been described in Section \ref{subsec:Teich_inf_dim_reduction}. However, it is right and proper to acknowledge the reader that the definition itself of the vector space $V_{(J,\sigma)}$, together with the ideas behind the proofs of its properties, are all results of a deeper analysis developed in analogy to the hyperK\"ahler symplectic reduction process of Donaldson \cite{donaldson2003moment} in our context of interest (see Section \ref{sec:inf_dim_reduction}).
\end{remark}

\begin{repprop}{prop:equivalent_def_subspace_V}
	Given $(J, \sigma) \in \widetilde{\mathcal{MS}_{0}}(\Sigma,\rho)$, and $(\dot{J}, \dot{\sigma}) \in T_{(J,\sigma)} T^* \mathcal{J}(\Sigma)$, the following conditions are equivalent:
	\begin{enumerate}[i)]
		\item the pair $(\dot{J}, \dot{\sigma})$ satisfies
		\begin{equation} \tag{\ref{eq:description_V1}}
			\begin{cases}
				\divr_g (f^{-1}g^{-1} \dot{\sigma}_0) = - f^{-1} \scal{\nabla^g_{J \bullet} \sigma}{\dot{J}} , \\
				\divr_g \dot{J} = - f^{-2} \scall{\nabla^g_{J \bullet} \sigma}{\dot{\sigma}_0} .
			\end{cases}
		\end{equation}
		\item the endomorphisms $Q^\pm = Q^\pm(\dot{J}, \dot{\sigma}) \defin f^{-1} g^{-1} \dot{\sigma}_0 \pm \dot{J}$ satisfy
		\begin{equation} \tag{\ref{eq:description_V2}}
			\begin{cases}
            \divr_g (Q^+ J J_l) = - \scal{\nabla^g_{J \bullet} \sigma}{Q^+} , \\
            \divr_g (Q^- J J_r) = \scal{\nabla^g_{J \bullet} \sigma}{Q^-} ,
        \end{cases}
		\end{equation}
		where $J_l$ and $J_r$ denote the components of the Mess map $\mathcal{M}$;
		\item the endomorphisms $Q^\pm$ satisfy
		\begin{equation} \tag{\ref{eq:description_V3}}
			\begin{cases}
				\divr_g Q^+ = - f^{-1} \, \scal{\nabla^g_{J \bullet} \sigma}{Q^+} , \\ 
				\divr_g Q^- = + f^{-1} \, \scal{\nabla^g_{J \bullet} \sigma}{Q^-} .
			\end{cases}
		\end{equation}
	\end{enumerate}
	Moreover, the $1$-forms $\divr_{h_{l}}\dot{J}_{l}$ and $\divr_{h_{r}}\dot{J}_{r}$ are exact.
\end{repprop}

\begin{definition} \label{def:subspace_V}
	Given $(J,\sigma) \in \widetilde{\mathcal{MS}_{0}}(\Sigma, \rho)$, we define $V_{(J, \sigma)}$ to be the subspace of $T_{(J,\sigma)} T^* \mathcal{J}(\Sigma)$ of those elements $(\dot{J}, \dot{\sigma})$ that satisfy one of (and therefore all) the conditions in Proposition \ref{prop:equivalent_def_subspace_V}.
\end{definition}

\begin{reptheorem}{thm:identification_tangent&V}
    For every pair $(J,\sigma)$ lying in $\widetilde{\mathcal{MS}_{0}}(\Sigma, \rho)$, the vector space $V_{(J,\sigma)}$ is contained inside $T_{(J,\sigma)} \widetilde{\mathcal{MS}_{0}}(\Sigma, \rho)$, it is invariant by the action of $\mathbf{I}$, $\mathbf{J}$ and $\mathbf{K}$, and it defines a $\Symp(\Sigma,\rho)$-\hsk invariant distribution $\mathcal{V} = \set{V_{(J,\sigma)}}_{(J,\sigma)}$ on $\widetilde{\mathcal{MS}_{0}}(\Sigma, \rho)$. Moreover, the natural projection $\mappa{\pi}{\widetilde{\mathcal{MS}_{0}}(\Sigma,\rho)}{\mathcal{MS}_0(\Sigma, \rho)}$ induces a linear isomorphism $\dd \pi_{(J,\sigma)}: V_{(J,\sigma)} \rightarrow T_{[J,\sigma]}\mathcal{MS}_0(\Sigma, \rho)$.
\end{reptheorem}

    The proof of Theorem \ref{thm:identification_tangent&V} is postponed to Section \ref{sec:proofThm410}.
    Lemma \ref{lem:V_linearizes_Gauss_Codazzi} shows that $V_{(J,\sigma)}$ is tangent to the locus of those $(J,\sigma)$ that satisfy the Gauss-\hsk Codazzi equations, which is precisely the definition of the subset $\widetilde{\mathcal{MS}_{0}}(\Sigma,\rho)$ of $T^* \mathcal{J}(\Sigma)$. The invariance of $V_{(J,\sigma)}$ by the action of $\mathbf{I}$, $\mathbf{J}$ and $\mathbf{K}$ is proved in Lemma \ref{lem:V_IJK_invariant} and the invariance of the distribution $\mathcal{V}$ by $\Symp(\Sigma,\rho)$ follows from Lemma \ref{lem:invariant_distrib}. We will show in Lemma \ref{lem:V_transverse_orbit} that the differential of the projection map $\pi$ is injective. Finally, in Lemma \ref{lem:dimensionV} we show that the dimension of $V_{(J,\sigma)}$ is larger than or equal to the dimension of $\mathcal{MS}_0(\Sigma, \rho)$, which is $6 \abs{\chi(\Sigma)}$, and therefore we conclude that the differential of the projection $\pi$ induces a linear isomorphism between $V_{(J,\sigma)}$ and the tangent space to $\mathcal{MS}_0(\Sigma, \rho)$.  

We are now ready to summarize the proof of Theorem \ref{thm:parahyper_structure} in genus $g\geq 2$, although some of the steps of the proof will follow from the geometric interpretations that we provide in Section \ref{sec:geom_inter}.

\begin{reptheorem}{thm:parahyper_structure}[genus $\geq 2$]
Let $\Sigma$ be a closed oriented surface of genus $\geq 2$. Then $\mathcal{MGH}(\Sigma)$ admits a $MCG(\Sigma)$-invariant para-hyperK\"ahler structure $(\g,\i,\j,\k)$. Moreover the Fuchsian locus $\mathcal F(\Sigma)$ is totally geodesic and $(\g,\i)$ restricts to (a multiple of) the Weil-Petersson K\"ahler structure of Teichm\"uller space. 
\end{reptheorem}
\begin{proof}
Identifying the tangent space $T_{[J,\sigma]}\mathcal{MS}_{0}(\Sigma,\rho)$ with $V_{(J,\sigma)}$ (Theorem \ref{thm:identification_tangent&V}), we can define on $\mathcal{MS}_{0}(\Sigma,\rho)$ para-complex structures $\j$ and $\k$, a complex structure $\i$, and a pseudo-Riemannian metric $\g$ by restriction from the infinite dimensional space $T^* \mathcal{J}(\Sigma)$. The definition is well-posed, namely it does not depend on the representative in a given $\Symp_0(\Sigma,\rho)$-orbit, by the invariance statement in Theorem \ref{thm:identification_tangent&V} and the invariance of $\g$, $\i$, $\j$ and $\k$ (which is proved immediately, with the same tools as in the proof of Lemma \ref{lem:invariant_distrib}).

It is clear that $\i$, $\j$ and $\k$ are still compatible with $\g$ and satisfy the para-\hsk quaternionic relations. We also have corresponding $2$-forms $\omega_{\i}$, $\omega_{\j}$ and $\omega_{\k}$. A priori the metric $\g$, and consequently the $2$-\hsk forms $\omega_{\i}$, $\omega_{\j}$ and $\omega_{\k}$ may be degenerate when restricted to $V_{(J,\sigma)}$ and thus on $\mathcal{MS}_{0}(\Sigma,\rho)$. We rule out this possibility in Section \ref{sec:geom_inter} by identifying these forms with well-known symplectic forms (therefore closed and non-\hsk degenerate) on $T^{*}\mathcal{T}^{\conf}(\Sigma)$ and on $\mathcal{T}(\Sigma)\times \mathcal{T}(\Sigma)$. From the results of Section \ref{sec:geom_inter}, we also obtain that $\omega_{\i}$, $\omega_{\j}$ and $\omega_{\k}$ are integrable. See Corollaries \ref{cor:Iintegrable_higher_genus}, \ref{cor:omega_ik_nondeg}, \ref{cor:g_nondeg} and \ref{cor:omega_j_nondeg} for all these statements. We can then conclude that the quadruple $(\g, \i,\j,\k)$ endows $\mathcal{MS}_{0}(\Sigma,\rho)$ with a para-hyperK\"ahler structure.

The mapping class group invariance of the para-hyperK\"ahler structure follows from the $\Symp(\Sigma,\rho)$-invariance of Theorem \ref{thm:identification_tangent&V}, since we have a natural isomorphism between $MCG(\Sigma)$ and $\Symp(\Sigma,\rho)/\Symp_0(\Sigma,\rho)$, and again the $\Symp(\Sigma,\rho)$-invariance of $\g$, $\i$, $\j$ and $\k$.

Observe that the Fuchsian locus of $\mathcal{MGH}(\Sigma)$ corresponds to the pairs $(J,\sigma)$ with $\sigma=0$. By Proposition \ref{prop:equivalent_def_subspace_V}, its tangent space consists of the pairs $(\dot J,\dot\sigma)$ with $\dot\sigma=0$ and $\divr_g \dot J=0$, hence it corresponds precisely to the model of the tangent space of Teichm\"uller space that we described in Section \ref{subsec:Teich_inf_dim_reduction} (see Lemma \ref{lem:relation_Omega_WP} and the preceding discussion). By comparing the expression of the Weil-Petersson metric in Lemma \ref{lem:relation_Omega_WP} with the restriction of the metric $\g$ (see \eqref{eq:definition_g_toy}), we see immediately that $\g|_{\mathcal F(\Sigma)}$ coincides with $4 G_{WP}$. Finally, the Fuchsian locus is the set of fixed points of the circle action, that is isometric for the metric $\g$ by Theorem \ref{thm:circle}, which is proved in Section \ref{sec:geom_inter}. By a standard argument, this implies that the Fuchsian locus is totally geodesic. 
\end{proof}

\subsection{The proof of Theorem \ref{thm:identification_tangent&V}}\label{sec:proofThm410}
This subsection is dedicated to the proof of Lemmas \ref{lem:invariant_distrib}, \ref{lem:V_IJK_invariant}, \ref{lem:V_linearizes_Gauss_Codazzi}, \ref{lem:dimensionV} and \ref{lem:V_transverse_orbit}. Together, these results prove  Theorem \ref{thm:identification_tangent&V} to identify $V_{(J,\sigma)}$ with the tangent space to $\mathcal{MS}_{0}(\Sigma, \rho)$ at $[(J,\sigma)]$. The last of them is definitely the most challenging, and it requires some technical ingredients, which will be described along the way and which will be useful for the part concerning the $\infty$-\hsk dimensional symplectic reduction (Section \ref{sec:inf_dim_reduction}).

\subsubsection{Proof of Lemma \ref{lem:invariant_distrib}: invariance under symplectomorphisms.}
\begin{lemma} \label{lem:invariant_distrib}
	The distribution $\mathcal{V} = \set{V_{(J,\sigma)}}_{(J, \sigma) \in \widetilde{\mathcal{MS}_{0}}(\Sigma,\rho)}$ is invariant under the action of $\Symp(\Sigma, \rho)$. In other words, for every symplectomorphism $\varphi$ of $(\Sigma,\rho)$ and for every $(\dot{J}, \dot{\sigma}) \in V_{(J, \sigma)}$, we have $(\varphi^* \dot{J}, \varphi^* \dot{\sigma}) \in V_{(\varphi^* J, \varphi^* \sigma)}$.
\end{lemma}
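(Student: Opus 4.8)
The statement asserts that the distribution $\mathcal{V}$ is equivariant under the natural action of $\Symp(\Sigma,\rho)$ on $T^*\mathcal{J}(\Sigma)$, given by $\varphi\cdot(J,\sigma)=((\varphi^{-1})^*J,(\varphi^{-1})^*\sigma)$ (or its inverse, depending on conventions — one should fix one and be consistent). The key observation is that the defining equations of $V_{(J,\sigma)}$ in Proposition \ref{prop:equivalent_def_subspace_V} are expressed entirely in terms of \emph{natural} differential-geometric operations: the metric $g=g_J=\rho(\cdot,J\cdot)$, its Levi-Civita connection $\nabla^g$, the divergence operator $\divr_g$, the induced inner products $\scal{\cdot}{\cdot}$ and $\scall{\cdot}{\cdot}$, and the pointwise algebraic quantities $f=f(\|\sigma\|_g)$, $g^{-1}\dot\sigma_0$, etc. All of these transform naturally under pullback by a diffeomorphism. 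The plan is to make this explicit.

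First I would record the relevant naturality facts. If $\varphi\in\Symp(\Sigma,\rho)$, then $g_{\varphi^*J}=\varphi^*(g_J)$ precisely because $\varphi^*\rho=\rho$; hence $\nabla^{g_{\varphi^*J}}=\varphi^*\nabla^{g_J}$, and for any endomorphism field $A$ or tensor one has $\divr_{\varphi^*g}(\varphi^*A)=\varphi^*(\divr_g A)$ and likewise for the divergence of a vector field. Similarly $\scal{\varphi^*\sigma}{\varphi^*\dot J}_{\varphi^*J}=\varphi^*\scal{\sigma}{\dot J}_J$ as functions on $\Sigma$, and $\|\varphi^*\sigma\|_{\varphi^*g}=\varphi^*\|\sigma\|_g$, so $f$ is carried to $f$ composed with $\varphi$. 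Also $g_{\varphi^*J}^{-1}(\varphi^*\dot\sigma)_0=\varphi^*(g_J^{-1}\dot\sigma_0)$ since taking the traceless part commutes with pullback. Consequently $Q^\pm(\varphi^*\dot J,\varphi^*\dot\sigma)=\varphi^*Q^\pm(\dot J,\dot\sigma)$. These are all routine and I would state them in one lemma-free paragraph without grinding through each verification, citing that pullback is an algebra/connection homomorphism.

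Then the proof is immediate: pick the characterization \eqref{eq:description_V3} (the cleanest one). If $(\dot J,\dot\sigma)\in V_{(J,\sigma)}$ satisfies $\divr_g Q^\pm=\mp f^{-1}\scal{\nabla^g_{J\bullet}\sigma}{Q^\pm}$, apply $\varphi^*$ to both sides and use the naturality identities above to get exactly $\divr_{\varphi^*g}Q^\pm(\varphi^*\dot J,\varphi^*\dot\sigma)=\mp (\varphi^*f)^{-1}\scal{\nabla^{\varphi^*g}_{\varphi^*J\bullet}(\varphi^*\sigma)}{Q^\pm(\varphi^*\dot J,\varphi^*\dot\sigma)}$, i.e. the \eqref{eq:description_V3} equations at the point $(\varphi^*J,\varphi^*\sigma)$. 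Note that $(\varphi^*J,\varphi^*\sigma)$ does lie in $\widetilde{\mathcal{MS}_0}(\Sigma,\rho)$, again by naturality of the Gauss–Codazzi equations \eqref{eq:gauss_codazzi} together with $\varphi^*\rho=\rho$, so the statement makes sense. Hence $(\varphi^*\dot J,\varphi^*\dot\sigma)\in V_{(\varphi^*J,\varphi^*\sigma)}$, which is the claim.

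I do not expect a genuine obstacle here; the only mild subtlety is bookkeeping of the $\Symp(\Sigma,\rho)$-action convention (left versus right action, $\varphi^*$ versus $(\varphi^{-1})^*$) so that the identity reads $(\varphi^*\dot J,\varphi^*\dot\sigma)\in V_{(\varphi^*J,\varphi^*\sigma)}$ as stated, and making sure that the quantity $f(\|\sigma\|_g)$ — which involves the \emph{metric} norm, not just an algebraic norm — is genuinely carried along, which again relies crucially on $\varphi$ preserving $\rho$ and hence $g_J$ up to pullback. Everything else is a direct transport-of-structure argument, and I would present it at the level of "apply $\varphi^*$ and invoke naturality of $\nabla$, $\divr$, and the pointwise constructions."
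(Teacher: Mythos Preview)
Your proposal is correct and follows essentially the same approach as the paper: the paper's proof also observes that $\varphi^*\rho=\rho$ forces $g_{\varphi^*J}=\varphi^*g_J$, so $\varphi$ is an isometry and hence $\varphi^*(\divr_g A)=\divr_{\varphi^*g}(\varphi^*A)$, with all pairings and norms carried along by naturality; the conclusion then follows immediately from the defining equations of $V_{(J,\sigma)}$. The only difference is cosmetic: you single out characterization \eqref{eq:description_V3} and track $Q^\pm$ explicitly, whereas the paper states the naturality identities and concludes in one line without committing to a particular description.
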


\begin{proof}
	The main point where the condition of symplectomorphism is essential concerns the metric $g_J = \rho(\cdot, J \cdot)$. The relation $\varphi^* \rho = \rho$ implies that $g_{\varphi^* J}$, the metric with area form equal to $\rho$ and complex structure $\varphi^* J$, is equal to $\varphi^* g$, the pull-back of the Riemannian metric $g = g_J$ by $\varphi$. This is clearly equivalent to say that $\mappa{\varphi}{(\Sigma, g_{\varphi^* J})}{(\Sigma, g_J)}$ is an isometry. In particular, for every endomorphism of the tangent bundle $A$ we have that
	\[
	\varphi^* (\divr_g A) = \divr_{\varphi^* g} (\varphi^* A) . 
	\]
	Moreover, it is simple to check that the inner products and pairings of the tensors (see for instance \ref{eq:pairing}) are preserved by $\varphi$. The statement then follows from the naturality of the action and the expressions defining the subspace $V_{(J, \sigma)}$.
\end{proof}

\subsubsection{Proof of Lemma \ref{lem:V_IJK_invariant}: Invariance under $\i,\j$ and $\k$}
\begin{lemma} \label{lem:V_IJK_invariant}
	For every $(J,\sigma) \in \widetilde{\mathcal{MS}_{0}}(\Sigma,\rho)$, the subspace $V_{(J,\sigma)}$ is preserved by $\mathbf{I}$, $\mathbf{J}$ and $\mathbf{K}$.
\end{lemma}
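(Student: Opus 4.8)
The plan is to work with the characterization \ref{eq:description_V3} of $V_{(J,\sigma)}$, which is the most convenient here, and to trace the action of $\mathbf{I}$, $\mathbf{J}$, $\mathbf{K}$ on the auxiliary endomorphisms $Q^\pm = Q^\pm(\dot{J},\dot{\sigma}) = f^{-1} g^{-1}\dot{\sigma}_0 \pm \dot{J}$. Writing $\dot{K} := g^{-1}\dot{\sigma}_0 \in \Gamma(T_J\mathcal{J}(\Sigma))$ and using the pointwise matrix expressions for $\mathbf{I}$, $\mathbf{J}$, $\mathbf{K}$ in the coordinates $(\dot{J}, \dot{K})$ recorded in the proof of Theorem \ref{thm:parahyper_structure_toy}, a short computation gives
\begin{align*}
\mathbf{J} &: (Q^+, Q^-) \longmapsto (Q^+, -Q^-) , \\
\mathbf{I} &: (Q^+, Q^-) \longmapsto (J Q^-, J Q^+) , \\
\mathbf{K} &: (Q^+, Q^-) \longmapsto (- J Q^-, J Q^+) .
\end{align*}
Since \ref{eq:description_V3} splits into one linear homogeneous equation imposed on $Q^+$ (the one carrying the minus sign) and one imposed on $Q^-$ (the one carrying the plus sign), and since $Q^\pm$ and $J Q^\pm$ all take values in $T_J\mathcal{J}(\Sigma)$ pointwise while $f = f(\|\sigma\|_g)$ is unchanged (the base point $(J,\sigma)$ being fixed), the invariance under $\mathbf{J}$ is immediate; and as $\mathbf{K} = \mathbf{I}\mathbf{J}$, it remains only to treat $\mathbf{I}$.

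For $\mathbf{I}$, the crux is the following claim: if $A \in \Gamma(T_J\mathcal{J}(\Sigma))$ satisfies $\divr_g A = \epsilon\, f^{-1}\scal{\nabla^g_{J\bullet}\sigma}{A}$ for some $\epsilon \in \{+1,-1\}$, then $J A$ satisfies $\divr_g(J A) = -\epsilon\, f^{-1}\scal{\nabla^g_{J\bullet}\sigma}{J A}$. Granting this, and recalling that $Q^+$ solves the $\epsilon = -1$ equation while $Q^-$ solves the $\epsilon = +1$ equation, the transformed pair $(J Q^-, J Q^+)$ again solves \ref{eq:description_V3}, so $\mathbf{I}(\dot{J},\dot{\sigma}) \in V_{(J,\sigma)}$. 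To establish the claim I would argue in two steps. First, relation \eqref{eq:div_JA} gives $\divr_g(J A) = -(\divr_g A)\circ J$; evaluating the hypothesis at $J X$ and using $\nabla^g_{J(J X)}\sigma = -\nabla^g_X\sigma$ then yields the identity of $1$-forms $\divr_g(J A) = \epsilon\, f^{-1}\scal{\nabla^g_{\bullet}\sigma}{A}$. Second, one needs the pointwise identity $\scal{\nabla^g_{J X}\sigma}{J A} = -\scal{\nabla^g_X\sigma}{A}$; combined with the previous step this gives exactly the claim. It is in this last identity that the hypothesis $(J,\sigma) \in \widetilde{\mathcal{MS}_0}(\Sigma,\rho)$ is used: since $\sigma$ is then the real part of a holomorphic quadratic differential on $(\Sigma,J)$, part \textit{iv)} of Lemma \ref{lem:hol_quadr_diff_equivalence}, applied to the metric $g$ (conformal to $h$, cf. Remark \ref{rmk:independent_conformal_metric}), gives $\nabla^g_{J X}\sigma = (\nabla^g_X\sigma)(\cdot, J\cdot)$, and the identity then reduces to the elementary fact that, at each point, for a symmetric $g$-traceless $2$-tensor $T$ with $S := g^{-1}T$ in $T_J\mathcal{J}$ and for $A$ in $T_J\mathcal{J}$ one has $\scal{T(\cdot, J\cdot)}{J A} = -\scal{T}{A}$ --- which follows from $g^{-1}(T(\cdot,J\cdot)) = -J S$ together with $J S J = S$, valid because $S$ anticommutes with $J$.

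The main obstacle is precisely this claim, and inside it the pointwise pairing identity, since that is the only place where the Codazzi part of the condition $(J,\sigma) \in \widetilde{\mathcal{MS}_0}(\Sigma,\rho)$ (rather than just membership in $T^*\mathcal{J}(\Sigma)$) enters; once it is in place, the invariance of $V_{(J,\sigma)}$ under $\mathbf{I}$, $\mathbf{J}$ and $\mathbf{K}$ follows by the formal bookkeeping above.
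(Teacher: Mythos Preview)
Your argument is correct. The only difference from the paper's proof is that you work with characterization \ref{eq:description_V3} and track the action of $\mathbf{I},\mathbf{J},\mathbf{K}$ on the pair $(Q^+,Q^-)$, whereas the paper works directly with characterization \ref{eq:description_V1} in the variables $(\dot J,\dot\sigma_0)$; the analytic content---the identity $\divr_g(JA)=-(\divr_g A)\circ J$ from \eqref{eq:div_JA} together with $\nabla^g_{JX}\sigma=(\nabla^g_X\sigma)(\cdot,J\cdot)$ from Lemma~\ref{lem:hol_quadr_diff_equivalence}~\textit{iv)}---is identical in both. Your $(Q^+,Q^-)$ bookkeeping makes the $\mathbf{J}$-invariance especially transparent (it becomes $(Q^+,Q^-)\mapsto(Q^+,-Q^-)$, so each decoupled homogeneous equation is preserved), which is a mild presentational gain; conversely, the paper's version avoids recomputing the transformation rules for $Q^\pm$ since the swap $\dot J\leftrightarrow f^{-1}g^{-1}\dot\sigma_0$ visibly interchanges the two equations in \ref{eq:description_V1}.
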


\begin{proof}
	This is a simple consequence of the description of $V_{(J, \sigma)}$ provided by Proposition \ref{prop:equivalent_def_subspace_V}, part \textit{i)}. Since $\mathbf{K} = \mathbf{I} \mathbf{J}$, it is enough to check that $\mathbf{I} (\dot{J}, \dot{\sigma})$ and $\mathbf{J} (\dot{J}, \dot{\sigma})$ belong to $V_{(J, \sigma)}$, whenever $(\dot{J}, \dot{\sigma})$ is in $V_{(J,\sigma)}$. 
	
	The first component of $\mathbf{J} (\dot{J}, \dot{\sigma})$ is equal to $f^{-1} \, g^{-1} \dot{\sigma}_0$, while the $g$-\hsk traceless part of the second component is equal to $f \, g(\cdot, \dot{J} \cdot)$. If we replace $\dot{J}$ with $f^{-1} \, g^{-1} \dot{\sigma}_0$, and $\dot{\sigma}_0$ with $f \, g(\cdot, \dot{J} \cdot)$ in the equations \eqref{eq:description_V1}, we obtain the invariance of $V_{(J,\sigma)}$ under the action of $\mathbf{J}$. For the invariance under $\i$, we observe that, for every $(\dot{J}, \dot{\sigma})$ in $V_{(J, \sigma)}$ we have
	\begin{align*}
		\divr_g (- J \dot{J}) & = \divr_g (\dot{J} J) \\
		& = (\divr_g \dot{J}) \circ J \tag{rel. \eqref{eq:div_JA}} \\
		& = - f^{-2} \scall{\nabla^g_{J^2 \bullet} \sigma}{\dot{\sigma}_0} \\
		& = - f^{-2} \scal{\nabla^g_{J \bullet} \sigma}{ J g^{-1} \dot{\sigma}_0} \tag{Lemma \ref{lem:hol_quadr_diff_equivalence} part \textit{iv)}} \\
		& = - f^{-2} \scall{\nabla^g_{J \bullet} \sigma}{- \dot{\sigma}_0(\cdot, J \cdot)} .
	\end{align*}
	This shows that $\mathbf{I}(\dot{J}, \dot{\sigma})$ satisfies the first equation in \eqref{eq:description_V1}. Arguing simililarly for $\divr_g (f^{-1} g^{-1} \dot{\sigma}_0 J)$, we obtain the invariance of $V_{(J,\sigma)}$ by $\mathbf{I}$.
\end{proof}

\subsubsection{Proof of Lemma \ref{lem:V_linearizes_Gauss_Codazzi}: $V_{(J,\sigma)}$ lies in the kernel of the linearized GC-equations}
\begin{lemma} \label{lem:linearized_codazzi}
	Every element $(\dot{J}, \dot{\sigma})$ of $V_{(J,\sigma)}$ lies in the kernel of the linearized Codazzi equation $\dd^{\nabla^h} B = 0$.
\end{lemma}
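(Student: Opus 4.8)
The plan is to linearize the Codazzi operator $(J,\sigma)\mapsto \dd^{\nabla^h}B$ at a point of $\widetilde{\mathcal{MS}_{0}}(\Sigma,\rho)$ and to recognize the resulting expression as a combination of the quantities appearing in the defining equations \eqref{eq:description_V1} of $V_{(J,\sigma)}$. The first move is a reduction. By Lemma~\ref{lem:hol_quadr_diff_equivalence} (equivalence of $i)$ and $iii)$, applied with the metric $h$), one has $\dd^{\nabla^h}B = 0$ if and only if $\divr_h\sigma = 0$; and writing $\sigma = h(B\cdot,\cdot)$ one checks $\divr_h\sigma = \divr_h B$ as $1$-forms. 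Since $\sigma$ is $h$-traceless and $h = (1+f(\norm{\sigma}_g))\,g_J$ is conformal to $g_J$, a short direct computation gives $\divr_h\sigma = (1+f(\norm{\sigma}_g))^{-1}\,\divr_{g_J}\sigma$. Moreover, on an oriented surface $\dd^{\nabla^h}B$ (a $T\Sigma$-valued $2$-form) and $\divr_h B$ (a $1$-form) are obtained from one another by a fibrewise linear isomorphism depending only on $h$ (both are first-order operators in $B$ with no zeroth-order term and with isomorphic principal symbols; this is implicit in the chain of equivalences of Lemma~\ref{lem:hol_quadr_diff_equivalence}). Consequently, at $(J,\sigma)\in\widetilde{\mathcal{MS}_{0}}(\Sigma,\rho)$, where both $\dd^{\nabla^h}B$ and $\divr_{g_J}\sigma$ vanish, the linearization of $(J,\sigma)\mapsto\dd^{\nabla^h}B$ and that of $(J,\sigma)\mapsto\divr_{g_J}\sigma$ have the same kernel, so it is enough to prove that every $(\dot J,\dot\sigma)\in V_{(J,\sigma)}$ lies in the kernel of the first-order variation of $(J,\sigma)\mapsto\divr_{g_J}\sigma$.

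For the linearization itself I would pick a smooth path $(J_t,\sigma_t)$ in $T^*\mathcal{J}(\Sigma)$ through $(J,\sigma)$ with velocity $(\dot J,\dot\sigma)\in V_{(J,\sigma)}$, and differentiate $\divr_{g_{J_t}}\sigma_t$ at $t=0$. This splits into the term $\divr_{g}\dot\sigma$ plus the contribution of the first variation of the Levi--Civita connection of $g_{J_t}$, which by \eqref{eq:derivative_scal_prod} is governed by $\dot g_J = -g_J(\cdot, J\dot J\cdot)$ via the classical formula for the variation of the Christoffel symbols. I would then: substitute $\dot\sigma = \dot\sigma_0 - \scal{\sigma}{J\dot J}\,g$ (Lemma~\ref{lem:characterization_tangent_space}); use that $J$ is $\nabla^{g}$-parallel together with \eqref{eq:div_JA} to turn divergences of endomorphisms composed with $J$ into the corresponding divergences precomposed with $J$; invoke \eqref{eq:div_contraction} where exactness of the $1$-forms in play is needed; and, crucially, use the Codazzi identity $\nabla^{g}_{JX}\sigma = (\nabla^{g}_X\sigma)(\cdot, J\cdot)$ of Lemma~\ref{lem:hol_quadr_diff_equivalence}~$iv)$ — valid precisely because $(J,\sigma)\in\widetilde{\mathcal{MS}_{0}}(\Sigma,\rho)$ — to rewrite every connection-variation term as a pairing of $\nabla^{g}_{J\bullet}\sigma$ against $\dot J$ or $\dot\sigma_0$. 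Recording also $\dot f = f^{-1}\scall{\sigma}{\dot\sigma_0}$ (as computed in Section~\ref{subsec:Mess_homeo}), the outcome should be a $1$-form built out of $\divr_g(f^{-1}g^{-1}\dot\sigma_0)$, $\divr_g\dot J$, $\scal{\nabla^{g}_{J\bullet}\sigma}{\dot J}$ and $\scall{\nabla^{g}_{J\bullet}\sigma}{\dot\sigma_0}$, and the two equations in \eqref{eq:description_V1} are exactly the relations forcing it to vanish.

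The main obstacle is the bookkeeping in this last step: organizing the many terms produced by the variation $\dot\Gamma$ of the Christoffel symbols, and repeatedly applying Lemma~\ref{lem:product_in_TJ}, \eqref{eq:div_JA}, \eqref{eq:div_contraction} and — the one place where the hypothesis $(J,\sigma)\in\widetilde{\mathcal{MS}_{0}}(\Sigma,\rho)$ genuinely enters — the Codazzi identity $iv)$ of Lemma~\ref{lem:hol_quadr_diff_equivalence}, so as to collapse the whole expression into exactly the combination appearing in \eqref{eq:description_V1}. A secondary delicate point is the reduction carried out in the first paragraph: one must make sure that $\dd^{\nabla^h}B$ and $\divr_{g_J}\sigma$ really do have fibrewise-isomorphic linearizations along $\widetilde{\mathcal{MS}_{0}}(\Sigma,\rho)$, which is where the surface-specific equivalences of Lemma~\ref{lem:hol_quadr_diff_equivalence} (together with the conformal scaling of the divergence of a traceless symmetric tensor) are needed.
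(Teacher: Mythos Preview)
Your overall strategy---reduce the Codazzi condition to a divergence, linearize, and match against the defining equations of $V_{(J,\sigma)}$---is the paper's. Your reduction paragraph is correct (the fibrewise isomorphism you invoke is exactly Lemma~\ref{lem:relation_codazzi&divr}, and it does not even depend on the metric). The paper makes the same move in a slightly different guise, working with $\dd^{\nabla^g}(g^{-1}\sigma)$ rather than $\divr_g\sigma$; these are equivalent by Remark~\ref{rmk:independent_conformal_metric} and Lemma~\ref{lem:relation_codazzi&divr}.

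The misstep is your prediction of the output. Since $\divr_g\sigma$ does not involve $f$, neither does its linearization: the quantities $\divr_g(f^{-1}g^{-1}\dot\sigma_0)$, $\divr_g\dot J$, $\scall{\nabla^g_{J\bullet}\sigma}{\dot\sigma_0}$ and $\dot f$ will not appear, and \eqref{eq:description_V1} is not what you land on. What the computation actually produces (after using Lemma~\ref{lem:variation_levicivita} for $\dot\nabla$, Lemma~\ref{lem:relation_codazzi&divr}, and the Codazzi identity of Lemma~\ref{lem:hol_quadr_diff_equivalence}~\textit{iv)}) is the single $f$-free relation
\[
\divr_g\bigl(g^{-1}\dot\sigma_0 + J\dot J\,g^{-1}\sigma\bigr) \;=\; \scal{\nabla^g_{J\bullet}\sigma}{\dot J},
\]
which is the \emph{sum} of the two equations in \eqref{eq:description_V2}; this is exactly what the paper invokes. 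Only this one consequence of membership in $V_{(J,\sigma)}$ is needed---not both \eqref{eq:description_V1} equations separately---and the remaining $\dot\nabla$-terms then cancel identically via the commutator identity $g^{-1}\sigma\,(\nabla_\bullet\dot J)-(\nabla_\bullet\dot J)\,g^{-1}\sigma = 2\scal{\sigma}{J(\nabla_\bullet\dot J)}\,J$ from Lemma~\ref{lem:product_in_TJ}. So your plan works once you aim for \eqref{eq:description_V2} rather than \eqref{eq:description_V1}; trying to force the answer into the shape of \eqref{eq:description_V1} would amount to re-deriving the nontrivial equivalence \textit{i)}~$\Leftrightarrow$~\textit{ii)} of Proposition~\ref{prop:equivalent_def_subspace_V}.
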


As observed in Lemma \ref{lem:hol_quadr_diff_equivalence}, the Codazzi equation is equivalent to the requirement that $g^{-1} \sigma$ is a $g$-\hsk Codazzi tensor. In order to prove the statement above, we need to compute the first order variation of the expression $\nabla^g_X (g^{-1} \sigma Y) - \nabla^g_Y (g^{-1} \sigma X)- g^{-1}\sigma[X,Y]$, for any pair of tangent vector fields $X$, $Y$, as we vary $(J,\sigma)$ in the direction $(\dot{J}, \dot{\sigma})$. To simplify the notation, we will denote by $\nabla$ the Levi-\hsk Civita connection of $g = g_J$ and by $\dot{\nabla}$ its variation as we change the complex structure $J$.

\begin{lemma} \label{lem:variation_levicivita}
	Let $\dot{J} \in T_J \mathcal{J}(\Sigma)$ be an infinitesimal variation of complex structures on $\Sigma$. If $\dot{\nabla}$ denotes the first order variation of the Levi-\hsk Civita connection of $g \defin \rho(\cdot, J \cdot)$ along $\dot{J}$, then the following relation holds:
	\[
	\dot{\nabla}_X Y = - \frac{1}{2} ( (\divr \dot{J})(X) \ JY + J (\nabla_X \dot{J})Y ) ,
	\]
	for every pair of tangent vector fields $X$, $Y$ on $\Sigma$.
\end{lemma}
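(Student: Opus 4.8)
The plan is to compute $\dot\nabla$ directly from the Koszul formula, differentiating with respect to the parameter that controls $J$ (and hence $g=g_J=\rho(\cdot,J\cdot)$). First I would recall that, since $\rho$ is fixed, the variation of the metric is $\dot g = \rho(\cdot,\dot J\cdot) = -g(\cdot,J\dot J\cdot)$, as already recorded in \eqref{eq:derivative_scal_prod}; note that $J\dot J$ is $g$-self-adjoint and traceless (it lies in $T_J\mathcal J(\Sigma)$ up to the identification, being the image of $\dot J$ under $\mathcal I$), so $\dot g$ is a symmetric $2$-tensor with $\operatorname{tr}_g\dot g = 0$. The standard formula for the variation of the Levi-Civita connection of a family of metrics reads
\[
g(\dot\nabla_X Y, Z) = \tfrac12\big( (\nabla_X \dot g)(Y,Z) + (\nabla_Y \dot g)(X,Z) - (\nabla_Z \dot g)(X,Y) \big),
\]
where $\nabla$ is the Levi-Civita connection of $g$. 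I would substitute $\dot g = -g(\cdot,J\dot J\cdot)$ and use that $J$ is $\nabla$-parallel to move $J$ past the covariant derivatives, so that $(\nabla_X\dot g)(Y,Z) = -g((\nabla_X(J\dot J))Y,Z) = -g(J(\nabla_X\dot J)Y,Z)$.

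The next step is a purely algebraic manipulation of the three resulting terms. Writing $A \defin \nabla_\bullet \dot J$ (a $T^*\Sigma\otimes\operatorname{End}(T\Sigma)$-valued object), the right-hand side becomes
\[
g(\dot\nabla_X Y,Z) = -\tfrac12\big( g(J(\nabla_X\dot J)Y,Z) + g(J(\nabla_Y\dot J)X,Z) - g(J(\nabla_Z\dot J)X,Y)\big).
\]
Here I would use two facts about elements of $T_J\mathcal J(\Sigma)$: that $\dot J$ (hence $\nabla_X\dot J$, since differentiating $\dot JJ+J\dot J=0$ gives $(\nabla_X\dot J)J + J(\nabla_X\dot J)=0$) anticommutes with $J$, and that $\nabla_X\dot J$ is $g$-self-adjoint. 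Combining these, $g(J(\nabla_Z\dot J)X,Y) = -g((\nabla_Z\dot J)X,JY) = -g(X,(\nabla_Z\dot J)JY)$, and one rearranges the three terms to isolate a term proportional to $g(JY,Z)$ contracted against $\operatorname{tr}$ of $\nabla\dot J$. The crux is recognizing that $\sum_i g((\nabla_{e_i}\dot J)X,e_i) = (\divr_g\dot J)(X)$ appears after contracting appropriately; more precisely, the identity $(\nabla_X\dot J)Y - (\nabla_Y\dot J)X$ is $g$-trace-related to $\divr_g\dot J$ because $\dot J$ is traceless and self-adjoint. Matching this against the claimed formula $\dot\nabla_XY = -\tfrac12\big((\divr\dot J)(X)\,JY + J(\nabla_X\dot J)Y\big)$, I would verify the identity by pairing both sides with an arbitrary $Z$ and checking the symmetric and antisymmetric parts in $X\leftrightarrow Y$ separately: the symmetric part and the $Z$-derivative term should cancel, leaving exactly the asserted expression.

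The main obstacle I anticipate is the bookkeeping in the algebraic step: one must carefully use that $\nabla_X\dot J$ is simultaneously $g$-self-adjoint and $J$-anticommuting to convert all three Koszul terms into a common form, and the appearance of $\divr_g\dot J$ rather than a full trace requires using that $\dot J$ itself is traceless (so that $\operatorname{tr}(\nabla_X\dot J)=0$, whence the "trace" contributions collapse to the divergence one-form). A secondary subtlety is a consistency check: $\dot\nabla$ must be symmetric in $X,Y$ (it is the variation of a torsion-free connection), and the formula as stated is not manifestly symmetric, so I would confirm that $(\divr\dot J)(X)JY + J(\nabla_X\dot J)Y$ is in fact symmetric in $X,Y$ using $(\nabla_X\dot J)Y-(\nabla_Y\dot J)X = (d^{\nabla}\dot J)(X,Y)$ together with the relation expressing $d^\nabla\dot J$ in terms of $\divr_g\dot J$ valid for traceless self-adjoint tensors on a surface. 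Once this symmetry is checked, the lemma follows, and it will feed directly into the linearization of the Codazzi equation in Lemma \ref{lem:linearized_codazzi}.
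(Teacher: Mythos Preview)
Your plan is correct and follows essentially the same approach as the paper: differentiate Koszul's formula to obtain $2g(\dot\nabla_X Y,Z)=(\nabla_X\dot g)(Y,Z)+(\nabla_Y\dot g)(X,Z)-(\nabla_Z\dot g)(X,Y)$, substitute $\dot g=-g(\cdot,J\dot J\cdot)$ using that $J$ is $\nabla$-parallel, and then do the algebra. For the algebraic step the paper avoids your $d^\nabla\dot J$/divergence identity by instead introducing the operator $M_X V\defin(\nabla_V\dot J)X$, so that the troublesome third term becomes $g(JM_X Z,Y)=-g(Z,M_X^*JY)$; one then observes that $JM_X+M_X^*J$ is $g$-skew, hence a multiple $\lambda J$, and computes $\lambda=\tr M_X=(\divr\dot J)(X)$ directly.
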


\begin{proof}
	The statement follows from Koszul's formula, which asserts that
	\[
	2 \, g(\nabla_X Y, Z) = X(g(Y,Z)) + Y(g(X,Z)) - Z(g(X,Y)) + g([X,Y], Z) - g([X,Z],Y) - g([Y,Z],X) ,
	\]
	for every tangent vector fields $X, Y, Z$. Its derivative leads to the equation
	\begin{align*}
	    2 \, \dot{g}(\nabla_X Y, Z) + 2 \, g(\dot{\nabla}_X Y, Z) & = X(\dot{g}(Y,Z)) + Y(\dot{g}(X,Z)) - Z(\dot{g}(X,Y)) + \\
	    & \qquad + \dot{g}([X,Y], Z) - \dot{g}([X,Z],Y) - \dot{g}([Y,Z],X) .
	\end{align*}
	As seen in relation \eqref{eq:derivative_scal_prod}, a first order variation of complex structures $\dot{J}$ determines a variation of the Riemannian metric $g = g_J$ of the form $\dot{g} = - g(\cdot, J \dot{J} \cdot)$. By developing the above expression in terms of the covariant derivatives of $\dot{g}$, and by the fact that $\nabla$ is a torsion-free connection, we can express the term $2 \, g(\dot{\nabla}_X Y, Z)$ as follows
	\begin{align*}
		2 \, g(\dot{\nabla}_X Y, Z) & = (\nabla_X \dot{g})(Y,Z) + (\nabla_Y \dot{g})(X,Z) - (\nabla_Z \dot{g})(X,Y) \\
		& = - g(Y,J(\nabla_X \dot{J})Z) - g(X,J(\nabla_Y \dot{J})Z) + g(X,J(\nabla_Z \dot{J})Y) \tag{$J$ $\nabla$-\hsk parallel} \\
		& = - g(J(\nabla_X \dot{J})Y + J(\nabla_Y \dot{J})X, Z) + g(J (\nabla_Z \dot{J})X, Y) ,
	\end{align*}
	where in the last step we used the fact that $J (\nabla_V \dot{J})$ is $g$-\hsk symmetric for every tangent vector field $V$. Let $M_X \in \End(T \Sigma)$ denote the endomorphism $M_X V \defin (\nabla_V \dot{J}) X$, and let $M_X^*$ be its $g$-\hsk adjoint. Then the relation above can be rephrased in the following terms:
	\begin{align*}
		2 \, g(\dot{\nabla}_X Y, Z) & = - g(J(\nabla_X \dot{J})Y + J M_X Y , Z) + g(J M_X Z, Y) \\
		& = - g(J(\nabla_X \dot{J})Y + J M_X Y , Z) - g(Z, M_X^* J Y) \tag{$J^* = - J$} \\
		& =- g((J(\nabla_X \dot{J}) + J M_X + M_X^* J)Y, Z) .
	\end{align*}
	Since this holds for every vector field $Z$, we deduce that
	\begin{equation} \label{eq:rel_nabla_dot}
		\dot{\nabla}_X Y = - \frac{1}{2} (J(\nabla_X \dot{J}) + J M_X + M_X^* J) Y .
	\end{equation}
	The endomorphism $J M_X + M_X^* J$ satisfies
	\[
	(J M_X + M_X^* J)^* = M_X^* J^* + J^* M_X  = - (J M_X + M_X^* J) ,
	\]
	since the adjoint of the complex structure coincides with $- J$.
	In other words, $J M_X + M_X^* J$ is a $g$-\hsk skew-\hsk symmetric endomorphism of $T \Sigma$, and consequently it is of the form $\lambda \, J$, for some smooth function $\lambda$ over $\Sigma$ (the space of $g_p$-\hsk skew-\hsk symmetric endomorphisms of $T_p \Sigma$ has dimension $1$). The function $\lambda$ can be determined in the following way:
	\begin{align*}
		\lambda & = - \frac{1}{2} \tr((J M_X + M_X^* J)J) \\
		& = \tr(M_X) \tag{$\tr(M_X) = \tr(M_X^*)$ and $J^2 = - \1$} \\
		& = \sum_i g((\nabla_{e_i} \dot{J})X, e_i) \\
		& = (\divr \dot{J})X ,
	\end{align*}
	where $(e_i)_i$ is a local $g$-\hsk orthonormal frame. Replacing the expression $J M_X + M_X^* J = \lambda J$ in relation \eqref{eq:rel_nabla_dot}, we deduce the desired assertion. 
\end{proof}

In the proof of Lemma \ref{lem:linearized_codazzi} we will also need the following technical lemma:

\begin{lemma} \label{lem:relation_codazzi&divr}
    If $A$ is a traceless endomorphism, then $$(\nabla_X A)Y - (\nabla_Y A)X = (\divr A)(Y) \, X - (\divr A)(X) \, Y.$$
\end{lemma}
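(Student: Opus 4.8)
The statement is a purely pointwise (indeed fiberwise) linear-algebra identity on the surface: for a traceless endomorphism $A \in \End(T\Sigma)$ and the Levi-Civita connection $\nabla = \nabla^g$, one wants
\[
(\nabla_X A)Y - (\nabla_Y A)X = (\divr A)(Y)\, X - (\divr A)(X)\, Y .
\]
Both sides are tensorial in $X$ and $Y$ (the left-hand side because $\dd^{\nabla}A$ is a genuine $2$-form with values in $T\Sigma$, the right-hand side manifestly), so it suffices to verify the equality after contracting with $X$, $Y$ ranging over a fixed $g$-orthonormal frame, or equivalently to check both sides as $T\Sigma$-valued $2$-forms on the $2$-dimensional surface $\Sigma$. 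The plan is to exploit dimension $2$: the space of alternating $2$-forms at a point is one-dimensional, so the $T\Sigma$-valued $2$-form $(X,Y)\mapsto (\nabla_X A)Y - (\nabla_Y A)X$ is determined by its value on a single pair $(e_1, e_2 = Je_1)$ of a local orthonormal frame, and likewise for the right-hand side.

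Concretely, first I would fix a point $p$, a $g$-orthonormal basis $\{e_1, e_2\}$ of $T_p\Sigma$, and compute the left-hand side evaluated at $(e_1, e_2)$, expanding $(\nabla_{e_i} A)e_j$ in the basis $\{e_1, e_2\}$ and using that $A$ is traceless, i.e. $g((\nabla_{e_i}A)e_1, e_1) + g((\nabla_{e_i}A)e_2, e_2) = 0$ for each $i$ (which follows by differentiating $\tr A = 0$, the frame being orthonormal and parallel enough at $p$). Writing out the four scalar components of $(\nabla_{e_1}A)e_2 - (\nabla_{e_2}A)e_1$ and collecting terms, two of them combine to give precisely $(\divr A)(e_2) = \sum_i g((\nabla_{e_i}A)e_2, e_i)$ times $e_1$, and the other two give $-(\divr A)(e_1)$ times $e_2$; this is where the traceless hypothesis does the work, converting the ``wrong'' components into divergence terms. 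Then, since both sides of the claimed identity are $T\Sigma$-valued alternating $2$-forms agreeing on $(e_1,e_2)$, they agree for all $X,Y$ by tensoriality and the one-dimensionality of $\Lambda^2 T_p^*\Sigma$.

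I do not expect any serious obstacle here: this is a short computation in a $2$-dimensional fiber, and the only ingredient beyond linear algebra is the pointwise consequence $\tr A = 0 \Rightarrow \tr(\nabla_X A) = 0$ of metric compatibility together with $A$ being traceless. The mild care needed is bookkeeping of signs and the fact that one must either work at a point with $\nabla e_i = 0$ or carry the connection terms through and see them cancel; choosing geodesic normal coordinates (or a local frame parallel at $p$) removes that nuisance. Since $p$ is arbitrary, the identity holds as an identity of tensors on $\Sigma$, which is what is needed for the subsequent application to linearizing the Codazzi equation in Lemma~\ref{lem:linearized_codazzi}.
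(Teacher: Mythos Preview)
Your proposal is correct and follows essentially the same approach as the paper: reduce by tensoriality and the one-dimensionality of $\Lambda^2 T_p^*\Sigma$ to checking the identity on an orthonormal frame $(e_1,e_2)$, then expand in components and use that $\tr A = 0$ implies $\tr(\nabla_{e_i}A)=0$ to regroup the four scalar entries of $(\nabla_{e_1}A)e_2 - (\nabla_{e_2}A)e_1$ into $(\divr A)(e_2)\,e_1 - (\divr A)(e_1)\,e_2$. The paper's proof is exactly this component computation, written out explicitly with the notation $(\nabla_i A)_j^k = g((\nabla_{e_i}A)e_j,e_k)$.
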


\begin{proof}
    It is enough to check the identity for $X = e_1$, $Y = e_2$, where $e_1, e_2$ is a local orthonormal frame on $\Sigma$. To simplify the notation, we set $(\nabla_i A)_j^k \defin g((\nabla_{e_i} A) e_j, e_k)$. In particular we have that $(\nabla_i A)_1^1 = - (\nabla_i A)^2_2$ for every $i$. Since $A$ is traceless, the same is true for $\nabla_X A$. With this identity in mind, we proceed to check the desired relation:
    \begin{align*}
        (\nabla_{e_1} A) e_2 - (\nabla_{e_2} A) e_1 & = (\nabla_1 A)_2^1 \, e_1 + (\nabla_1 A)_2^2 \, e_2 - (\nabla_2 A)_1^1 \, e_1 - (\nabla_2 A)_1^2 \, e_2 \\
        & = (\nabla_1 A)_2^1 \, e_1 - (\nabla_1 A)_1^1 \, e_2 + (\nabla_2 A)_2^2 \, e_1 - (\nabla_2 A)_1^2 \, e_2 \\
        & = ((\nabla_1 A)_2^1 + (\nabla_2 A)_2^2) \, e_1 - ((\nabla_1 A)_1^1 + (\nabla_2 A)_1^2) \, e_2 \\
        & = (\divr A)(e_2) \, e_1 - (\divr A)(e_1) \, e_2 .
    \end{align*}
\end{proof}

We now have all the elements to prove the statement of Lemma \ref{lem:linearized_codazzi}:

\begin{proof}[Proof of Lemma \ref{lem:linearized_codazzi}]
	First we compute the first order variation of the tensor $g^{-1} \sigma$ along $(\dot{J}, \dot{\sigma}) \in T_{(J,\sigma)} \mathcal{J}(\Sigma)$. As seen in Lemma \ref{lem:characterization_tangent_space}, the variation of the metric $g$ is equal to $\dot{g} = - g(\cdot, J \dot{J} \cdot)$. In particular, we have
    \begin{align*}
        (g^{-1} \sigma)' & = - g^{-1} \dot{g} g^{-1} \sigma + g^{-1} \dot{\sigma} \\
        & = - g^{-1} (- g J \dot{J}) g^{-1} \sigma + g^{-1} \dot{\sigma}_0 - \scal{\sigma}{J \dot{J}} \, \1 \tag{Lemma \ref{lem:characterization_tangent_space}} \\
        & = J \dot{J} g^{-1} \sigma + g^{-1} \dot{\sigma}_0 - \scal{\sigma}{J \dot{J}} \, \1 .
    \end{align*}
    As aforementioned, we need to compute the derivative of the expression $(\dd^{\nabla} g^{-1} \sigma)(X,Y) = \nabla_X (g^{-1} \sigma Y) - \nabla_Y (g^{-1} \sigma X)- g^{-1}\sigma[X,Y]$ as we vary $(J, \sigma)$ along a direction $(\dot{J}, \dot{\sigma})$. The final goal will be to show that, if $(\dot{J}, \dot{\sigma})$ belongs to our preferred space $V_{(J,\sigma)}$, then such variation is equal to $0$. We observe that
    \begin{align*}
        ((\dd^{\nabla} g^{-1} \sigma)(X,Y))' & = \nabla_X ((g^{-1} \sigma)' Y) - \nabla_Y ((g^{-1} \sigma)' X) - (g^{-1}\sigma)'[X,Y] + \\
        & \qquad + \dot{\nabla}_X (g^{-1} \sigma Y) - \dot{\nabla}_Y (g^{-1} \sigma X) \\
        & = (\nabla_X (g^{-1} \sigma)')Y - (\nabla_Y (g^{-1} \sigma)')X + \dot{\nabla}_X (g^{-1} \sigma Y) - \dot{\nabla}_Y (g^{-1} \sigma X) \\
        & = \underbrace{(\divr (g^{-1} \sigma)')(Y) \, X - (\divr (g^{-1} \sigma)')(X) \, Y}_{\text{term $1$}} + \tag{Lemma \ref{lem:relation_codazzi&divr}} \\
        & \qquad + \underbrace{\dot{\nabla}_X (g^{-1} \sigma Y) - \dot{\nabla}_Y (g^{-1} \sigma X)}_{\text{term $2$}} .
    \end{align*}
    
    First we rephrase term $1$ in the expression above. Applying the expression for $(g^{-1}\sigma)'$ derived at the beginning of the proof, we have
    \begin{align*}
        (\divr (g^{-1} \sigma)')(Y) \, X - (\divr (g^{-1} \sigma)')(X) \, Y & = \divr(J \dot{J} g^{-1} \sigma + g^{-1} \dot{\sigma}_0 )(Y) \, X + \\
        & \qquad \quad - \divr(J \dot{J} g^{-1} \sigma + g^{-1} \dot{\sigma}_0)(X) \, Y + \\
        & \qquad - Y(\scal{\sigma}{J \dot{J}}) \, X + X(\scal{\sigma}{J \dot{J}}) \, Y .
    \end{align*}
    Assume now that $(\dot{J},\dot{\sigma})$ belongs to $V_{(J,\sigma)}$. Taking the sum of the equations in \eqref{eq:description_V2} (Proposition \ref{prop:equivalent_def_subspace_V} part \textit{ii)}) and using the expressions of $J_l$ and $J_r$ from \eqref{eq:expression_mess_map}, we obtain the relation
    $$\divr ( J \, \dot{J} \, g^{-1} \sigma + g^{-1} \dot{\sigma}_0) = \scal{\nabla^g_{J \bullet} \sigma}{\dot{J}}.$$
    Together with Lemma \ref{lem:hol_quadr_diff_equivalence} part \textit{iv)}, we see that
    \begin{align*}
        (\divr (g^{-1} \sigma)')(Y) \, X - (\divr (g^{-1} \sigma)')(X) \, Y & = \scal{\nabla_{J Y} \sigma}{\dot{J}} \, X - \scal{\nabla_{J X} \sigma}{\dot{J}} \, Y + \\
        & - Y(\scal{\sigma}{J \dot{J}}) \, X + X(\scal{\sigma}{J \dot{J}}) \, Y \\
        & = \scal{\sigma}{J (\nabla_X \dot{J})} \, Y - \scal{\sigma}{J (\nabla_Y \dot{J})} \, X .
    \end{align*}
    This is the expression for term $1$ that we will use in the final computation. Now we focus our attention on term $2$. Applying the variation formula of the Levi-\hsk Civita connection given by Lemma \ref{lem:variation_levicivita}, we obtain that
	\begin{align*}
		\dot{\nabla}_X (g^{-1} \sigma Y) - \dot{\nabla}_Y (g^{-1} \sigma X) & =  \frac{1}{2} \left( (\divr \dot{J})(Y) \, J g^{-1} \sigma X + J (\nabla_{Y} \dot{J}) g^{-1} \sigma X  + \right. \\
		& \left. - (\divr \dot{J})(X) \, J g^{-1} \sigma Y - J (\nabla_{X} \dot{J})\, g^{-1} \sigma Y \right) \\
		& = \frac{1}{2}  J g^{-1} \sigma \left( (\divr \dot{J})(Y) \, X - (\divr \dot{J})(X) \, Y \right) + \\
		& + \frac{1}{2} J \left( (\nabla_{Y} \dot{J}) g^{-1} \sigma X - (\nabla_{X} \dot{J})\, g^{-1} \sigma Y \right) \\
		& = \frac{1}{2}  J g^{-1} \sigma \left( (\nabla_{X} \dot{J})Y - (\nabla_{Y} \dot{J})X \right) + \\
		& + \frac{1}{2} J \left( (\nabla_{Y} \dot{J}) g^{-1} \sigma X - (\nabla_{X} \dot{J})\, g^{-1} \sigma Y \right)
	\end{align*}
    where, in the last step, we applied Lemma \ref{lem:relation_codazzi&divr} to $A = \dot{J}$. As a last remark, we apply Lemma \ref{lem:product_in_TJ} to $(\nabla_{\bullet} \dot{J})\, g^{-1} \sigma$ and $g^{-1} \sigma \, (\nabla_{\bullet} \dot{J})$, deriving:
	\begin{equation} \label{eq:commutator}
		g^{-1} \sigma \, (\nabla_{\bullet} \dot{J}) - (\nabla_{\bullet} \dot{J})\, g^{-1} \sigma = 2 \scal{\sigma}{J(\nabla_{\bullet} \dot{J})} \, J .
	\end{equation}
 	
 	We finally combine the expressions found above for terms $1$ and $2$ with this last relation. If $(J, \sigma)$ is a point of $\widetilde{\mathcal{MS}_{0}}(\Sigma,\rho)$ and $(\dot{J}, \dot{\sigma})$ is an element of $V_{(J, \sigma)}$, then the variation of the differential $\dd^{\nabla} g^{-1} \sigma$ can be expressed as follows:
	\begin{align*}
		((\dd^{\nabla} g^{-1} \sigma)(X,Y))' & = \scal{\sigma}{J (\nabla_X \dot{J})} \, Y - \scal{\sigma}{J (\nabla_Y \dot{J})} \, X + \\
		& \qquad \qquad \qquad \qquad \qquad \qquad + \frac{1}{2}  J g^{-1} \sigma \left( (\nabla_{X} \dot{J})Y - (\nabla_{Y} \dot{J})X \right) + \\
		& \qquad \qquad \qquad \qquad \qquad \quad + \frac{1}{2} J \left( (\nabla_{Y} \dot{J}) g^{-1} \sigma X - (\nabla_{X} \dot{J})\, g^{-1} \sigma Y \right) \\
		& = \frac{1}{2} \left( 2 \scal{\sigma}{J (\nabla_X \dot{J})} \, Y + J g^{-1} \sigma (\nabla_{X} \dot{J}) Y - J (\nabla_X \dot{J}) g^{-1} \sigma Y \right) + \\
		& \qquad - \frac{1}{2} \left( 2 \scal{\sigma}{J (\nabla_Y \dot{J})} \, X + J g^{-1} \sigma (\nabla_{Y} \dot{J}) X - J (\nabla_Y \dot{J}) g^{-1} \sigma X \right) \\
		& = \frac{1}{2} \left( 2 \scal{\sigma}{J (\nabla_X \dot{J})} \, Y + 2 \scal{\sigma}{J (\nabla_X \dot{J})} \, J^2 Y \right) + \\
		& \qquad \qquad \qquad \qquad - \frac{1}{2} \left( 2 \scal{\sigma}{J (\nabla_Y \dot{J})} \, X + 2 \scal{\sigma}{J (\nabla_Y \dot{J})} \, J^2 X \right) \tag{rel. \eqref{eq:commutator} for $\bullet = X , Y$} \\
		& = 0 .
	\end{align*}
	Therefore $(\dot{J}, \dot{\sigma})$ lies in the kernel of the linearized Codazzi equation.
\end{proof}

\begin{lemma} \label{lem:V_linearizes_Gauss_Codazzi}
	Every element $(\dot{J}, \dot{\sigma}) \in V_{(J,\sigma)}$ lies in the kernel of the linearized Gauss-Codazzi equations \eqref{eq:gauss_codazzi}.
\end{lemma}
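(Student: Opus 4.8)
The plan is to reduce the Gauss--Codazzi system \eqref{eq:gauss_codazzi} to two pieces: the Codazzi part $\dd^{\nabla^h}B=0$ and the Gauss equation $K_h=-1-\det B$. For the Codazzi part there is nothing new to do: Lemma \ref{lem:linearized_codazzi} already asserts precisely that every $(\dot J,\dot\sigma)\in V_{(J,\sigma)}$ lies in the kernel of the linearization of $\dd^{\nabla^h}B=0$, so the only remaining task is the linearized Gauss equation. The key structural point I want to exploit is that, by the change of variables of Section \ref{subsec:change_coord}, we are \emph{not} dealing with an arbitrary pair $(h,B)$: we have $h=(1+f(\|\sigma\|_g))g_J$ and $B=h^{-1}\sigma$, and Lemma \ref{lem:det_B} gives the algebraic identities $\det B=-\|\sigma\|_g^2/(1+f)^2$ and $1+\det B=2/(1+f)$, where $f=f(\|\sigma\|_g)$. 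Thus the Gauss equation $K_h=-1-\det B$ becomes $K_h=-2/(1+f)$, i.e. $(1+f)K_h=-2$, an equation relating the conformal factor $(1+f)$ to the curvature of $g_J$.

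First I would make the linearization concrete. Writing $h=e^{2u}g_J$ with $e^{2u}=1+f$, the standard conformal change of curvature formula gives $K_h=e^{-2u}(K_{g_J}-\Delta_{g_J}u)$, so the Gauss equation is equivalent to $K_{g_J}-\Delta_{g_J}u=-e^{2u}=-(1+f)$, where $u=\tfrac12\log(1+f)$. Differentiating this along a direction $(\dot J,\dot\sigma)$: the variation of $K_{g_J}$ is handled by Remark \ref{rmk:derivative_curvature}, which tells us $\dd K_{g_J}(\dot J)\,\rho=\tfrac12\dd(\divr_g\dot J)$; the variation of $u$ (equivalently of $f$) was already computed in the proof of the baby version of Theorem \ref{thm:mappaM}, where $f'=\langle\sigma,\dot\sigma_0\rangle_J/f=\langle\sigma\mid\dot\sigma_0\rangle/f$; and the variation of the Laplacian term contributes a $(\dot\Delta_{g_J})u$ plus $\Delta_{g_J}\dot u$. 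Collecting terms, the linearized Gauss equation becomes a single scalar identity on $\Sigma$ that must be verified using the defining PDE of $V_{(J,\sigma)}$ — most naturally the form \eqref{eq:description_V1}, whose second equation reads $\divr_g\dot J=-f^{-2}\langle\nabla^g_{J\bullet}\sigma\mid\dot\sigma_0\rangle$. The point is that $\dd(\divr_g\dot J)$ appears both in $\dd K_{g_J}(\dot J)$ and, after substituting the $V$-equation, can be matched against $\Delta_{g_J}\dot u$ together with the remaining lower-order terms, using that $(J,\sigma)$ already satisfies the (unlinearized) Gauss--Codazzi equations.

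The main obstacle I anticipate is \emph{bookkeeping the first-order variation of the Laplacian and of the gradient/divergence operators}, since $\Delta_{g_J}$, $\nabla^g$ and $\divr_g$ all depend on $J$, and their variations involve $\dot\nabla$ as computed in Lemma \ref{lem:variation_levicivita}; one must also track the variation of $\|\sigma\|_g^2$ and of $\langle\cdot\mid\cdot\rangle_J$ through the non-constant conformal factor. A cleaner route — and the one I would actually pursue to avoid this — is to bypass the scalar Gauss equation entirely: since the Gauss and Codazzi equations together are equivalent to the flatness (in the appropriate sense) of the connection built from $(h,B)$, or alternatively since $(h,B)\leftrightarrow(J_l,J_r)$ under the Mess map $\mathcal M$ of Remark \ref{rmk:Mess_inf_dim}, it suffices to show that $\dd\mathcal M_{(J,\sigma)}(V_{(J,\sigma)})$ lands in the tangent space to $\mathcal J(\Sigma)\times\mathcal J(\Sigma)$ — but $\mathcal M$ \emph{always} produces a pair of almost-complex structures, so this is automatic, provided one knows that the hyperbolicity of $h_l,h_r$ (equivalently, that they have the right curvature) is encoded. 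Concretely: Proposition \ref{prop:equivalent_def_subspace_V} already records that the $1$-forms $\divr_{h_l}\dot J_l$ and $\divr_{h_r}\dot J_r$ are exact, which by Proposition \ref{prop:orthogonal_symp} (applied to the metrics $h_l,h_r$, using that $J_l,J_r$ lie in $\mu^{-1}(0)$ for the respective area forms) says precisely that $\dot J_l,\dot J_r$ preserve the moment-map condition, i.e. that the linearized constraint ``$h_l$ has curvature $-1$'' holds. Since the Codazzi equation (Lemma \ref{lem:linearized_codazzi}) plus these two curvature constraints are together equivalent to the full linearized system \eqref{eq:gauss_codazzi} — via the equivalence between embedding data and the Mess pair, established in Section \ref{subsec:parameterizations} — the proof is complete. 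I would therefore organize the proof as: (1) cite Lemma \ref{lem:linearized_codazzi} for the linearized Codazzi equation; (2) invoke the last sentence of Proposition \ref{prop:equivalent_def_subspace_V} to get exactness of $\divr_{h_l}\dot J_l$ and $\divr_{h_r}\dot J_r$; (3) apply Proposition \ref{prop:orthogonal_symp} to conclude these are tangent to the moment-map level set, hence the linearized Gauss equation holds; (4) assemble, using that linearized Codazzi $+$ linearized curvature conditions $=$ linearized \eqref{eq:gauss_codazzi}.
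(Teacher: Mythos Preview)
Your proposal is correct and follows essentially the same route as the paper: the Codazzi part is Lemma \ref{lem:linearized_codazzi}, and for the Gauss part the paper, like you, passes to the Mess pair $(J_l,J_r)$, invokes the exactness of $\divr_{h_l}\dot J_l$ from the last statement of Proposition \ref{prop:equivalent_def_subspace_V}, and uses $\dd K_{h_l}(\dot J_l)\,\rho=\tfrac12\dd(\divr_{h_l}\dot J_l)=0$. The only cosmetic difference is that the paper spells out your step (4) explicitly via the curvature identity $K_{h_{l,r}}=2K_h/(1+\det B)$ (valid whenever $B$ is $h$-Codazzi), which gives directly the equivalence $\{\dd^{\nabla^h}B=0,\ K_h=-1-\det B\}\Leftrightarrow\{\dd^{\nabla^g}g^{-1}\sigma=0,\ K_{h_l}=-2\}$, rather than appealing to Section \ref{subsec:parameterizations}.
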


\begin{proof}
    We start by relating the Riemannian metrics $h_l$ and $h_r$ associated to the complex structures $J_l$ and $J_r$ with $h$ and the endomorphism $B$:
    \begin{align*}
        h_{l, r} & = \rho(\cdot, (\1 \mp J B)^{-1} J (\1 \mp J B) \cdot) \\
        & = \rho((\1 \mp J B)^{-1} (\1 \mp J B) \cdot, (\1 \mp J B)^{-1} J (\1 \mp J B) \cdot) \\
        & = \det(\1 \mp J B)^{-1} \rho((\1 \mp J B) \cdot, J (\1 \mp J B) \cdot) \\
        & = \frac{f + 1}{2} g((\1 \mp J B) \cdot, (\1 \mp J B) \cdot) \tag{rel. \eqref{eq:det_f}} \\
        & = \frac{1}{2} h((\1 \mp J B) \cdot, (\1 \mp J B) \cdot) .
    \end{align*}
    We remark that the metrics $h_{l,r}$ differ by the left and right hyperbolic metrics given by Mess' homeomorphism by a factor $2$. Indeed, since the tensor $\1 \mp J B$ is $h$-\hsk Codazzi, the curvature of the metric $h_{l,r}$ can be computed as follows
    \[
    K_{h_{l,r}} = \frac{2 K_h}{\det(\1 \mp J B)} = \frac{2 K_h}{1 + \det B} .
    \]
    A proof of this fact can be found in \cite{krasnov_schlenker_minimal}. This shows that the Gauss-Codazzi equations have the following equivalent descriptions:
    \[
    \begin{cases}
        \dd^{\nabla^h} B = 0 , \\
        K_h = -1 - \det B ,
    \end{cases} \Leftrightarrow
    \begin{cases}
        \dd^{\nabla^g} g^{-1} \sigma = 0 , \\
        K_{h_l} = - 2 ,
    \end{cases} \Leftrightarrow
    \begin{cases}
        \dd^{\nabla^g} g^{-1} \sigma = 0 , \\
        K_{h_r} = - 2 .
    \end{cases}
    \]
    Assume now that $(\dot{J}, \dot{\sigma})$ is an element of $V_{(J,\sigma)}$. By Lemma \ref{lem:linearized_codazzi}, $(\dot{J}, \dot{\sigma})$ lies in the kernel of the linearized Codazzi equation. Therefore, by what previously observed, it is enough to show that the first order variation of the curvature $2$-\hsk form $K_{h_l} \, \dd{a}_{h_l} = K_{h_l} \, \rho$ (or $K_{h_r} \, \rho$) along the direction $\dot{J}_{l} = \dd{\mathcal{M}}(\dot{J}, \dot{\sigma})_1$ (or $\dot{J}_{r} = \dd{\mathcal{M}}(\dot{J}, \dot{\sigma})_2$, respectively) is equal to $0$.
    
    The first order variation of the $2$-\hsk form $K_{h_l} \, \rho$ coincides with $\frac{1}{2} \dd{}(\divr_{h_l} \dot{J}_l)$ (see \cite{donaldson2003moment}), and by the last statement of Proposition \ref{prop:equivalent_def_subspace_V}, the divergence $\divr_{h_l} \dot{J}_l$ is an exact $1$-\hsk form. Since $\dd^2 = 0$, we deduce that the derivative of $K_{h_l} \, \dd{a}_{h_l}$ along $\dot{J}_l$ is $0$, and therefore $(\dot{J}, \dot{\sigma})$ lies in the kernel of the linearized Gauss-\hsk Codazzi equations.
\end{proof}

\subsubsection{Proof of Lemma \ref{lem:dimensionV}: the dimension of $V_{(J,\sigma)}$ is $\geq 6 \abs{\chi(\Sigma)}$}
\begin{lemma}\label{lem:dimensionV}
For every $(J,\sigma) \in \widetilde{\mathcal{MS}_{0}}(\Sigma,\rho)$, $$\dim V_{(J,\sigma)}\geq 6 \abs{\chi(\Sigma)}.$$
\end{lemma}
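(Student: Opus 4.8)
The plan is to exploit the characterization \eqref{eq:description_V3} of $V_{(J,\sigma)}$, which decouples into two independent first‑order linear PDEs, and to bound the dimension of each solution space from below by an index computation.

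First I would rewrite the tangent space. By Lemma \ref{lem:characterization_tangent_space}, a tangent vector $(\dot J,\dot\sigma)\in T_{(J,\sigma)}T^*\mathcal{J}(\Sigma)$ is the same datum as a pair $(\dot J,\, g^{-1}\dot\sigma_0)$ of traceless $g$‑self‑adjoint endomorphisms of $T\Sigma$, the trace part of $\dot\sigma$ being determined by $\dot J$. Since $f=f(\norm{\sigma}_g)$ is a smooth positive function, the assignment $(\dot J,\dot\sigma)\mapsto (Q^+,Q^-)$ with $Q^\pm=f^{-1}g^{-1}\dot\sigma_0\pm\dot J$ is a linear isomorphism of $T_{(J,\sigma)}T^*\mathcal{J}(\Sigma)$ onto the space of pairs of traceless $g$‑self‑adjoint endomorphisms. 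Under this isomorphism, Proposition \ref{prop:equivalent_def_subspace_V} part \textit{iii)} identifies $V_{(J,\sigma)}$ with $W^+\oplus W^-$, where
\[
W^\pm:=\set{Q\in\Gamma(\End(T\Sigma)) \;\middle|\; Q\text{ traceless, }g\text{-self-adjoint},\ \divr_g Q = \mp f^{-1}\scal{\nabla^g_{J\bullet}\sigma}{Q}}
\]
(the upper sign defining $W^+$, the lower one $W^-$). Hence $\dim V_{(J,\sigma)}=\dim W^++\dim W^-$, and it suffices to show $\dim W^\pm\geq 3\abs{\chi(\Sigma)}$.

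Next I would observe that $D^\pm(Q):=\divr_g Q \pm f^{-1}\scal{\nabla^g_{J\bullet}\sigma}{Q}$ is a first‑order linear differential operator from $\Gamma(E)$ to $\Gamma(T^*\Sigma)$, where $E$ is the rank‑two real bundle of traceless $g$‑self‑adjoint endomorphisms of $T\Sigma$; its lower‑order term $Q\mapsto f^{-1}\scal{\nabla^g_{J\bullet}\sigma}{Q}$ is a bundle morphism, because $\sigma$ is a \emph{fixed} tensor. Therefore $D^\pm$ has the same principal symbol as $\divr_g$, which is elliptic: both $E$ and $T^*\Sigma$ have rank two, and for $\xi\neq 0$ the symbol $Q\mapsto g(Q\cdot,\xi^\sharp)$ is injective on traceless symmetric tensors (in an orthonormal frame with $\xi^\sharp=e_1$ one reads off $Q=0$). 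Consequently $D^\pm$ is elliptic and Fredholm with index equal to that of $\divr_g$, since the (real) index is invariant under zeroth‑order perturbations; equivalently, this index depends only on the symbol, so it is the same for all $(J,\sigma)$, in particular the value at $\sigma=0$, where $D^\pm=\divr_g$.

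Finally I would compute this index by classical Teichm\"uller theory. Under the correspondence $Q\leftrightarrow\phi:=gQ-i\,(gQ)(\cdot,J\cdot)$ between traceless $g$‑self‑adjoint endomorphisms and quadratic differentials on $(\Sigma,J)$ (Lemma \ref{lem:hol_quadr_diff_equivalence}), the operator $\divr_g$ coincides, up to smooth bundle identifications, with the Dolbeault operator $\bar\partial$ on $\mathcal{K}_J^2$, its kernel being the space of real parts of holomorphic quadratic differentials. By Riemann--Roch, for $\Sigma$ of genus $\geq 2$ one has $\dim_\C H^0(\Sigma,\mathcal{K}_J^2)=3g-3$ and $\dim_\C H^1(\Sigma,\mathcal{K}_J^2)=\dim_\C H^0(\Sigma,\mathcal{K}_J^{-1})=0$ by Serre duality (since $\deg\mathcal{K}_J^{-1}=2-2g<0$); thus the real index of $\divr_g$, hence of $D^\pm$, equals $2(3g-3)=3\abs{\chi(\Sigma)}$. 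Therefore $\dim_\R W^\pm=\dim_\R\ker D^\pm\geq \mathrm{ind}_\R D^\pm = 3\abs{\chi(\Sigma)}$, and $\dim V_{(J,\sigma)}\geq 6\abs{\chi(\Sigma)}$, as claimed. The new ingredient — the decoupling $V_{(J,\sigma)}\cong W^+\oplus W^-$ — is immediate from \eqref{eq:description_V3}; the only point requiring care is the bookkeeping of the index computation, namely matching the ellipticity of $\divr_g$ on traceless symmetric tensors, its identification with $\bar\partial_{\mathcal{K}^2}$, and the Riemann--Roch count, all of which are standard.
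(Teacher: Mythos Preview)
Your proof is correct and follows essentially the same strategy as the paper: use the decoupled description \eqref{eq:description_V3} to reduce to two independent equations, observe that each operator $D^\pm$ differs from $\divr_g$ by a zeroth-order term and hence has the same Fredholm index, and bound $\dim\ker D^\pm$ below by this index, which equals $3\abs{\chi(\Sigma)}$. The only difference is cosmetic: the paper computes $\mathrm{ind}(\divr_g)$ by citing $\dim\ker\divr_g=3\abs{\chi(\Sigma)}$ from Teichm\"uller theory and then proving surjectivity of $\divr_g$ directly via an $L^2$-argument with the adjoint of the Lie derivative operator, whereas you obtain the same index by identifying $\divr_g$ with $\bar\partial_{\mathcal{K}^2}$ and invoking Riemann--Roch and Serre duality; both routes are standard and yield the same number.
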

\begin{proof}
    
    We make use of the description provided by Proposition \ref{prop:equivalent_def_subspace_V} part \textit{iii)}. 
    Because the equations are decoupled, it is sufficient to show that the space of solutions of the equation
    \[
    \divr_g Q^\pm(\dot{J}, \dot{\sigma}) = \mp f^{-1} \, \scal{\nabla_{J \bullet} \sigma}{Q^\pm(\dot{J}, \dot{\sigma})}
    \]
    has dimension at least $3|\chi(\Sigma)|$. We give the details for the equation concerning $Q^{+}$, the other case being analogous. Consider the differential operator $D_{J}:T_{J}\mathcal{J}(\Sigma) \rightarrow \Lambda^{1}(\Sigma)$ defined by
    \[
    D_{J}(Q)=\divr_g Q + f^{-1} \, \scal{\nabla_{J \bullet} \sigma}{Q} \ .
    \]
    Its principal symbol coincides with that of the divergence operator $\divr_g:T_{J}\mathcal{J}(\Sigma) \rightarrow \Lambda^{1}(\Sigma)$, hence $D_{J}$ is Fredholm and $\mathrm{ind}(D_{J})=\mathrm{ind}(\divr_g)$, where $\mathrm{ind}(\cdot)$ denotes the index of the differential operator, i.e. the difference between the dimension of its kernel and the dimension of its co-kernel. It is well-known (see for instance \cite{tromba2012teichmuller}) that $\dim(\Ker(\divr_g))=3|\chi(\Sigma)|$: the space of traceless, divergence-\hsk free tensors represents the tangent space to the Teichm\"uller space of $\Sigma$. On the other hand, $\divr_g:T_{J}\mathcal{J}(\Sigma) \rightarrow \Lambda^{1}(\Sigma)$ is surjective. Indeed, using the musical isomorphism $\#:\Lambda^{1}(\Sigma) \rightarrow \Gamma(T\Sigma)$ induced by the metric $g = g_J$, this is equivalent to proving that $\divr_g^{\#}: T_{J}\mathcal{J}(\Sigma) \rightarrow \Gamma(T\Sigma)$ is surjective. Let $L: \Gamma(T\Sigma) \rightarrow T_{J}\mathcal{J}(\Sigma)$ denote the Lie derivative operator. Its $L^{2}$-adjoint is $L^{*}(\dot{J}) = - J(\divr_g\dot{J})^{\#}$. In fact,
\begin{align*}
    \scall{LV}{\dot{J}}_{L^{2}}&=\int_{\Sigma}\frac{1}{2}\tr(\dot{J}\Dlie_{V}J)\rho \\
    &=\int_{\Sigma}(\divr_g\dot{J}J)(V)\rho \tag{Equation (\ref{eq:divergence_rel})}\\
    &=\int_{\Sigma}(\divr_g\dot{J})(JV)\rho \tag{Equation (\ref{eq:div_JA})} \\
    &=-\scall{J(\divr_g\dot{J})^{\#}}{V}_{L^{2}} \ .
\end{align*}
Therefore, the operator $L^{*}L: \Gamma(T\Sigma) \rightarrow \Gamma(T\Sigma)$ is self-adjoint. Moreover, if $V \in \Ker(L^{*}L)$ then
\[
    0=\scall{L^{*}L(V)}{V}_{L^{2}}=\|\Dlie_{V}J\|^{2}_{L^{2}} \ , 
\]
which implies that $V=0$ because $(\Sigma, J)$ has no biholomorphisms isotopic to the identity. A standard computation in local coordinates shows that the operator $L^{*}L$ is elliptic, thus by \cite[Theorem 5.1]{Voisin_Hodge} we have an $L^{2}$-orthogonal decomposition
\[
    \Gamma(T\Sigma)=\Ker(L^{*}L)\oplus \Imm(L^{*}L)=\Imm(L^{*}L) \ , 
\]
which shows that every vector field is in the image of $\divr_g^{\#}$.
    
    Hence,
    \[
    \dim(\Ker(D_{J}))\geq \mathrm{ind}(D_{J})=\mathrm{ind}(\divr_g)=3|\chi(\Sigma)| 
    \]
    and this concludes the proof of the assertion.
\end{proof}

\subsubsection{Proof of Lemma \ref{lem:V_transverse_orbit}: transversality to the tangent space to the $\Symp_{0}(\Sigma, \rho)$-orbit}
\begin{lemma} \label{lem:I_of_lie_deriv}
    For every symplectic vector field $X$ on $(\Sigma, \rho)$ and for every $(J,\sigma) \in T^* \mathcal{J}(\Sigma)$, with $\sigma$ that is the real part of a holomorphic quadratic differential on $(\Sigma, J)$, we have $\mathbf{I}(\Dlie_X J, \Dlie_X \sigma) = (- \Dlie_{J X} J, - \Dlie_{J X} \sigma)$.
\end{lemma}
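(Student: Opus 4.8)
The plan is to verify the two components of the tangent vector $\mathbf{I}_{(J,\sigma)}(\Dlie_X J, \Dlie_X \sigma)$ one at a time, reducing each to the standard formula relating $\Dlie_\bullet J$ to the Levi--Civita connection of $g \defin g_J = \rho(\cdot, J\cdot)$ and to the pointwise linear-algebra identities established for the toy model in Section~\ref{sec:toy_model_via_complex_structures}. Write $\nabla$ for the Levi--Civita connection of $g$ and, for a vector field $V$, let $A_V$ be the endomorphism $A_V(Y) \defin \nabla_Y V$. The starting point is the identity
\begin{equation}\label{eq:lie_j}
	\Dlie_V J = J A_V - A_V J ,
\end{equation}
valid for every vector field $V$; it follows at once from $(\Dlie_V J)Y = [V, JY] - J[V, Y]$ by expanding the Lie brackets with the torsion-free connection $\nabla$ and using that $J$ is $\nabla$-parallel.

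For the first component of $\mathbf{I}_{(J,\sigma)}(\Dlie_X J, \Dlie_X \sigma)$, which by the pointwise version of \eqref{eq:definition_I_toy} equals $- J \Dlie_X J$, I would use that $A_{JX} = J A_X$, again because $\nabla J = 0$. Substituting this into \eqref{eq:lie_j} gives $\Dlie_{JX} J = J A_{JX} - A_{JX} J = - A_X - J A_X J$, hence
\[
	- \Dlie_{JX} J = A_X + J A_X J = - J (J A_X - A_X J) = - J \Dlie_X J ,
\]
which is the desired equality on the first factor. This step uses only the integrability of $J$ (automatic in real dimension two), and neither the holomorphicity of $\sigma$ nor the symplectic condition on $X$.

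The substance of the proof is the second component. Set $\dot J \defin \Dlie_X J$. Since the curve $t \mapsto (\varphi_t^* J, \varphi_t^* \sigma)$, where $\varphi_t$ is the flow of $X$, stays inside $T^*\mathcal{J}(\Sigma)$ — one checks directly that $\varphi_t^* \sigma$ lies in $T^*_{\varphi_t^* J}\mathcal{J}(\Sigma)$ — Lemma~\ref{lem:characterization_tangent_space} applies to $(\dot J, \Dlie_X \sigma)$ and gives $(\Dlie_X \sigma)_0 = \Dlie_X \sigma + \scal{\sigma}{J \dot J}_J \, g$. Combining this with the pointwise form of \eqref{eq:definition_I_toy} and with $g(\cdot, J \cdot) = - \rho$, the second component of $\mathbf{I}_{(J,\sigma)}(\dot J, \Dlie_X \sigma)$ becomes $- (\Dlie_X \sigma)(\cdot, J \cdot) + \scal{\sigma}{J \dot J}_J \, \rho - \scal{\sigma}{\dot J}_J \, g$. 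On the other hand I would expand $\Dlie_X \sigma$ and $\Dlie_{JX} \sigma$ as symmetric $2$-tensors using $(\Dlie_Z \sigma)(U,V) = (\nabla_Z \sigma)(U,V) + \sigma(\nabla_U Z, V) + \sigma(U, \nabla_V Z)$, apply $\nabla_\bullet(JX) = J \nabla_\bullet X$, and — this is the one place where the hypothesis on $\sigma$ intervenes — replace $(\nabla_{JX}\sigma)(U,V)$ by $(\nabla_X \sigma)(U, JV)$ using part~\textit{iv)} of Lemma~\ref{lem:hol_quadr_diff_equivalence}. After cancelling the common $(\nabla_X \sigma)(\cdot, J\cdot)$ term, and using $\sigma(JW, V) = \sigma(W, JV)$ (a consequence of $\sigma(J\cdot, J\cdot) = - \sigma$) together with $J \nabla_V X - \nabla_{JV} X = (\Dlie_X J) V$ from \eqref{eq:lie_j}, the required identity $\mathbf{I}_{(J,\sigma)}(\dot J, \Dlie_X \sigma)_2 = - \Dlie_{JX} \sigma$ collapses to the purely algebraic statement that
\[
	\sigma(U, \dot J V) = \scal{\sigma}{\dot J}_J \, g(U,V) - \scal{\sigma}{J \dot J}_J \, \rho(U,V) ,
\]
equivalently $(g^{-1}\sigma) \dot J = \scal{\sigma}{\dot J}_J \, \1 + \scal{\sigma}{J \dot J}_J \, J$ as endomorphisms, which I would deduce from the product formula of Lemma~\ref{lem:product_in_TJ} applied to the two elements $g^{-1}\sigma, \dot J \in T_J \mathcal{J}(\R^2)$, after rewriting the coefficients $\scall{g^{-1}\sigma}{\dot J}_J$ and $\scall{J g^{-1}\sigma}{\dot J}_J$ in terms of the tangent--cotangent pairing using that $g^{-1}\sigma$ anticommutes with $J$. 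The main obstacle is purely the bookkeeping of the trace part of $\Dlie_X \sigma$ and of the various transpose and anticommutation relations in this last reduction; no idea beyond \eqref{eq:lie_j} and the toy-model identities of Section~\ref{sec:toy_model_via_complex_structures} is needed.
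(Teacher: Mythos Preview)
Your argument is correct. The first component is handled identically to the paper, via $A_{JX}=JA_X$ and relation \eqref{eq:lie_j}. For the second component your route diverges from the paper's: rather than reducing to the \emph{traceless} identity $-(\Dlie_X\sigma)_0(\cdot,J\cdot)=-(\Dlie_{JX}\sigma)_0$ and decomposing $A_X$ into its trace, skew--symmetric and symmetric--traceless parts (the paper then uses the symplectic hypothesis to kill $\tr A_X$ and to obtain $A_{JX}^s=J A_X^s$), you keep the full tensors, compute $(\Dlie_{JX}\sigma)(U,V)-(\Dlie_X\sigma)(U,JV)=\sigma(U,\dot J V)$ directly, and then read off the right-hand side from a single application of Lemma~\ref{lem:product_in_TJ}. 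Both proofs invoke the holomorphicity of $\sigma$ at the same place, to convert $\nabla_{JX}\sigma$ into $(\nabla_X\sigma)(\cdot,J\cdot)$.

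What your approach buys is twofold. First, it is shorter: no splitting of $A_X$ is needed, and the only algebraic input beyond \eqref{eq:lie_j} is the product formula in $T_J\mathcal{J}(\R^2)$. Second --- and this is worth noting --- your computation never uses that $X$ is symplectic: the identity $\mathbf{I}(\Dlie_X J,\Dlie_X\sigma)=(-\Dlie_{JX}J,-\Dlie_{JX}\sigma)$ in fact holds for an arbitrary vector field $X$, provided $\sigma$ is the real part of a holomorphic quadratic differential. The paper's approach, by contrast, uses the symplectic hypothesis in an essential way for its particular decomposition (without it the skew part of $A_{JX}$ would pick up a term $\tfrac{\tr A_X}{2}J$ and the argument as written would not close).
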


\begin{proof}
    We need to introduce some notation to prove the desired identity. Let $A_V$ be the total derivative of a vector field $V$ tangent to $\Sigma$, i. e. $A_V Y \defin \nabla^g_Y V$, for any tangent vector field $Y$ (as usual $g$ is the Riemannian metric $\rho(\cdot, J \cdot)$). Then we can write
    \begin{align*}
        (\Dlie_V J)Y & = [V, J Y] - J [V, Y] \\
        & = \nabla^g_V (J Y) - \nabla^g_{J Y} V - J \nabla^g_V Y + J \nabla^g_Y V \tag{$\nabla^g$ torsion-\hsk free} \\
        & = J \nabla^g_V Y - A_V(JY) - J \nabla^g_V Y + J A_V Y \tag{$J$ $\nabla^g$-\hsk parallel} \\
        & = (J A_V - A_V J)Y .
    \end{align*}
    In other words, we have that
    \begin{equation} \label{eq:lie_j}
        \Dlie_V J = J A_V - A_V J 
    \end{equation}
    A similar computation shows that
    \begin{equation} \label{eq:lie_sigma}
        \Dlie_V \sigma = \nabla^g_V \sigma + \sigma(A_V \cdot, \cdot) + \sigma(\cdot, A_V \cdot) .
    \end{equation}

    Now we apply the definition of the almost-\hsk complex structure $\mathbf{I}$ to the pair $(\Dlie_X J, \Dlie_X \sigma)$, with $X$ symplectic vector field:
    \[
    \mathbf{I}(\Dlie_X J, \Dlie_X \sigma) = \left( - J (\Dlie_X J), \ - (\Dlie_X \sigma)_0(\cdot, J \cdot) - \scal{\sigma}{\Dlie_X J} \, g \right)
    \]
    Being $J$ $\nabla^g$-\hsk parallel, we have that $A_{J X} = J A_X$. Therefore
    \begin{align*}
        - J (\Dlie_X J) & = - J(J A_X  - A_X J) \tag{rel. \eqref{eq:lie_j} for $V = X$} \\
        & = - (J A_{J X} - A_{J X} J) \\
        & = - \Dlie_{J X} J . \tag{rel. \eqref{eq:lie_j} for $V = J X$}
    \end{align*}
    This shows that the first component of $\mathbf{I}(\Dlie_X J, \Dlie_X \sigma)$ coincides with $- \Dlie_{J X} J$. To study the second component we will need a few additional remarks. First we notice that it is enough to show
    \[
    - (\Dlie_X \sigma)_0(\cdot, J \cdot) = - (\Dlie_{J X} \sigma)_0 ,
    \]
    because the trace part of the second component of a pair $(\dot{J}, \dot{\sigma})$ is uniquely determined by $\dot{J}$ (compare with Lemma \ref{lem:characterization_tangent_space}). Given $V$ a vector field, the endomorphism $A_V$ can be decomposed into a sum
    \[
    A_V = \frac{\tr (A_V)}{2} \1 - \frac{\tr (J A_V)}{2} J + A_V^s ,
    \]
    where the first term is its trace part, the second is the $g$-\hsk skew-\hsk symmetric part, and the third is the traceless and $g$-\hsk symmetric part.
    
    If $X$ is a $\rho$-\hsk symplectic vector field, then the trace part of $A_X$ vanishes. Since $A_{J X} = J A_X$ (again because $J$ is $\nabla^g$-\hsk parallel), the decomposition of $A_{J X}$ is
    \[
        A_{J X} = J A_X = \frac{\tr(J A_X)}{2} \1 + 0 + J A_X^s .
    \]
    In particular, $X$ symplectic implies that the $g$-\hsk skew-\hsk symmetric part of $A_{J X}$ vanishes, and $A_{J X}^s = J A_X^s$.
    Therefore
    \begin{align*}
        (\Dlie_{J X} \sigma)_0 & = \nabla_{J X}^g \sigma + \sigma(A_{J X} \cdot, \cdot ) + \sigma(\cdot, A_{J X} \cdot) - \frac{\tr(g^{-1} \sigma A_{J X} + A_{J X} g^{-1} \sigma)}{2} \, g  \tag{rel. \eqref{eq:lie_sigma}} \\
        & = \nabla_{J X}^g \sigma + \tr(J A_X) \, \sigma + \sigma(J A_X^s \cdot, \cdot) + \sigma(\cdot, J A_X^s \cdot) - \tr(g^{-1} \sigma \, J A_{X}^s) g
    \end{align*}
    Applying Lemma \ref{lem:product_in_TJ} to $g^{-1} \sigma \, J A_X^s$ and $J A_X^s g^{-1} \sigma$ we obtain
    \[
    \sigma(\cdot, J A_X^s \cdot) + \sigma(J A_X^s \cdot, \cdot) = \tr(g^{-1} \sigma \, J A_X^s) \, g ,
    \]
    which, combined with the expression found above, shows that
    \begin{align*}
        (\Dlie_{J X} \sigma)_0 & = \nabla_{J X}^g \sigma + \tr(J A_X) \, \sigma \\
        & = (\nabla_X^g \sigma)(\cdot, J \cdot) + \tr(J A_X) \, \sigma \tag{Lemma \ref{lem:hol_quadr_diff_equivalence} part \textit{iv)}}
    \end{align*}

    On the other hand, by the decomposition of $A_X$, we have
    \begin{align*}
        (\Dlie_X \sigma)_0 & = \nabla^g_X \sigma + \sigma(A_X \cdot, \cdot) + \sigma(\cdot, A_X \cdot) - \frac{\tr(g^{-1} \sigma A_X + A_X g^{-1} \sigma)}{2} \, g \tag{rel \eqref{eq:lie_sigma}} \\
        & = \nabla^g_X \sigma - \frac{\tr(J A_X)}{2} (\sigma(J \cdot, \cdot ) + \sigma(\cdot, J \cdot)) + \sigma(A_X^s \cdot, \cdot) + \sigma(\cdot, A_X^s \cdot) - \tr(g^{-1} \sigma A_X) g \\
        & = \nabla^g_X \sigma - \tr(J A_X) \, \sigma(\cdot, J \cdot) + \sigma(A_X^s \cdot, \cdot) + \sigma(\cdot, A_X^s \cdot) - \tr(g^{-1} \sigma A_X^s) g ,
    \end{align*}
    where in the last step we used that $\tr(g^{-1} \sigma A_X^s) = \tr(g^{-1} \sigma A_X)$. Now, applying Lemma \ref{lem:product_in_TJ} to $g^{-1} \sigma A_X^s$ and $A_X^s g^{-1} \sigma$ we obtain
    \[
    \sigma(\cdot, A_X^s \cdot) + \sigma(A_X^s \cdot, \cdot) = \tr(g^{-1} \sigma A_X^s) \, g ,
    \]
    which reduces the expression above to the equality
    \[
    (\Dlie_X \sigma)_0 = \nabla^g_X \sigma - \tr(J A_X) \, \sigma(\cdot, J \cdot) .
    \]
    The identity $(\Dlie_X \sigma)_0(\cdot, J \cdot ) = (\Dlie_{J X} \sigma)_0$ is now immediate.
\end{proof}

\begin{lemma} \label{lem:linearized_Gauss}
	Let $G$ be the operator $G(J, \sigma) \defin K_h + 1 + \det B$, defined over the space $T^* \mathcal{J}(\Sigma)$ and with values in $\mathscr{C}^\infty(\Sigma)$. Assume that $(J,\sigma)$ satisfies the Gauss-\hsk Codazzi equations and let $U$ be a vector field on $\Sigma$. Then
	\[
	\dd{G}_{(J, \sigma)}(\Dlie_U J, \Dlie_U \sigma) = \frac{1}{2} \Delta_h (f^{-1} \divr_g U) - (1 - \det B) f^{-1} \divr_g U .
	\]
    In particular, if $(\Dlie_U J, \Dlie_U \sigma)$ belongs to the kernel of the differential of $G$, then $U$ is a $\rho$-\hsk symplectic vector field, i. e. $\dd(\iota_U \rho) = 0$.
\end{lemma}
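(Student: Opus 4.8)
The plan is to compute $\dd G_{(J,\sigma)}(\dot J,\dot\sigma)$ along a Lie-derivative direction $(\dot J,\dot\sigma)=(\Dlie_U J,\Dlie_U\sigma)$ by splitting $G$ into its two natural pieces, namely the Gauss term $K_h$ and the algebraic term $1+\det B$, and matching both against $\divr_g U$. First I would write $h=(1+f)g$ with $f=f(\norm{\sigma}_g)$ and record how the relevant quantities vary. The key computational inputs are already available: Remark \ref{rmk:derivative_curvature} gives $\dd K_{g_J}(\dot J)\,\rho=\tfrac12\dd(\divr_g\dot J)$ for a variation of $g_J$, and the conformal change formula $K_{e^u g}=e^{-u}(K_g-\tfrac12\Delta_g u)$ lets me pass from $K_g$ to $K_h$; one also needs the variation of the conformal factor $1+f$, for which the identity $f'=f^{-1}\scall{\sigma}{\dot\sigma_0}$ derived in the proof of the baby version of Theorem \ref{thm:mappaM} is the crucial ingredient. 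For the algebraic term, Lemma \ref{lem:det_B} expresses $\det B$ and $1+\det B$ purely in terms of $\norm{\sigma}_g$, so its variation is again controlled by $f'$, hence by $\scall{\sigma}{\dot\sigma_0}$.

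Next I would specialize to $(\dot J,\dot\sigma)=(\Dlie_U J,\Dlie_U\sigma)$. Here the essential simplification is that $\divr_g(\Dlie_U J)$ and the pairing $\scall{\sigma}{(\Dlie_U\sigma)_0}$ both reduce to $\divr_g U$ up to known factors: the first because $\divr_g(\Dlie_U J)$ is, by Remark \ref{rmk:derivative_curvature} applied to the metric variation induced by $\Dlie_U J$, tied to the variation of $K_g$ under $\Dlie_U$, and the latter variation equals $U(K_g)\,\rho$ plus a multiple of $K_g\,\divr_g U\,\rho$ by the transformation rule of curvature under diffeomorphisms combined with $\Dlie_U\rho=\divr_g U\,\rho$; and the second because $(\Dlie_U\sigma)_0=\nabla^g_U\sigma + (\text{symmetrized }A_U\text{-terms})$, of which the only surviving full-trace contribution against $\sigma$ comes from the $\divr_g U$ part, by the same Lemma \ref{lem:product_in_TJ} cancellation used in the proof of Lemma \ref{lem:I_of_lie_deriv}. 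Assembling these, and using that $(J,\sigma)$ solves \eqref{eq:gauss_codazzi} so that $K_h=-1-\det B$ and hence $K_{h_{l,r}}=-2$, the scalar $\dd G(\Dlie_U J,\Dlie_U\sigma)$ collapses — after using the Gauss equation to eliminate undifferentiated curvature terms — into $\tfrac12\Delta_h(f^{-1}\divr_g U)-(1-\det B)f^{-1}\divr_g U$, which is exactly the claimed formula. The Laplacian arises from the conformal-change second-order term applied to the $\divr_g U$ piece, and the zeroth-order coefficient $(1-\det B)$ emerges from combining the conformal-factor variation with the variation of $\det B$.

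For the last assertion: if $(\Dlie_U J,\Dlie_U\sigma)$ lies in $\ker \dd G_{(J,\sigma)}$, then setting $w=f^{-1}\divr_g U$ we obtain the linear elliptic equation $\tfrac12\Delta_h w=(1-\det B)w$ on the closed surface $\Sigma$. Since $\det B\in(-1,0]$ by the Krasnov–Schlenker bound on the principal curvatures recalled in Section \ref{subsec:change_coord}, the coefficient $1-\det B$ is strictly positive; multiplying by $w$ and integrating over $(\Sigma,h)$ gives $\tfrac12\int_\Sigma\norm{\dd w}_h^2\,\dd a_h + \int_\Sigma(1-\det B)w^2\,\dd a_h=0$, which forces $w\equiv 0$, hence $\divr_g U=0$, i.e. $\dd(\iota_U\rho)=0$ by \eqref{eq:div_contraction}. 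The main obstacle I anticipate is the bookkeeping in the first two paragraphs: correctly tracking the first-order variation of $K_h$ through the conformal change — in particular isolating the second-order term $-\tfrac12\Delta_h(\,\cdot\,)$ acting on the variation of $\log(1+f)$ and showing that variation is $f^{-1}\divr_g U$ (up to the right constant) — and making sure all the undifferentiated curvature contributions cancel against the variation of $\det B$ using the Gauss equation. The sign conventions for $\Delta_h$ and for $\divr$ must be pinned down carefully so that the final zeroth-order term reads $-(1-\det B)f^{-1}\divr_g U$ rather than its negative.
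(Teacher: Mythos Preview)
Your outline is broadly workable but takes a different route from the paper, and the route you chose has one step you underestimate.

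The paper does \emph{not} compute the variation of $K_h$ and $\det B$ piece by piece. Instead it uses the flow $\psi_t$ of $U$ directly: from $J_t=\psi_t^*J$ one finds $g_t=u_t\,\psi_t^*g$ with $u_t=\det(\dd\psi_t^{-1})\circ\psi_t$, hence $h_t=v_t\,\psi_t^*h$ for an explicit conformal factor $v_t$. Because curvature and $\det B$ are natural under pullback by diffeomorphisms, one gets immediately
\[
K_{h_t}=v_t^{-1}\Bigl(K_h\circ\psi_t-\tfrac12\Delta_h\log(v_t\circ\psi_t^{-1})\circ\psi_t\Bigr),\qquad \det B_t=v_t^{-2}\,(\det B)\circ\psi_t,
\]
so the entire computation collapses to $\dot v$; a short calculation gives $\dot v=-f^{-1}\divr_g U$, and the Gauss equation kills the remaining undifferentiated curvature terms. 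No variation of $K_{g_J}$, of $\Delta_g$, or of $\divr_g(\Dlie_U J)$ ever appears.

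By contrast, your approach requires the first-order variation of $K_h=(1+f)^{-1}\bigl(K_g-\tfrac12\Delta_g\log(1+f)\bigr)$ along $(\Dlie_U J,\Dlie_U\sigma)$. This has \emph{four} pieces, and you omit one: the variation of the operator $\Delta_g$ itself (the metric $g_J$ moves with $J$). This term is not small --- it is of the same order as the others --- and must be computed and shown to combine with $\dd K_g(\Dlie_U J)$ and with the $U$-derivative of $\log(1+f)$ to produce $\tfrac12\Delta_h(f^{-1}\divr_gU)$. This can be done, but it is precisely the bookkeeping the paper's flow argument is designed to avoid. Your reference to $K_{h_{l,r}}=-2$ is also extraneous here; that fact belongs to Lemma~\ref{lem:V_linearizes_Gauss_Codazzi}, not to this computation.

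Your argument for the final assertion is correct and matches the paper's (positivity of $1-\det B$ and integration against $w$); note that $\det B\le 0$ already suffices, the sharper bound from Section~\ref{subsec:change_coord} is not needed.
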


\begin{proof}
	The final goal will be to compute $\dv{t} G(J_t,\sigma_t) |_{t = 0}$, where $(J_t, \sigma_t) = (\psi_t^* J, \psi_t^* \sigma)$. 
	We first determine the Riemannian metric $g_t = \rho(\cdot, J_t \cdot)$ associated to the complex structure $J_t \defin \dd{\psi_t^{-1}} J \dd{\psi_t}$, where $(\psi_t)_t$ represents the flow of $U$.
	\begin{align*}
		g_t & = \rho(\cdot, (\dd{\psi_t^{-1}} J \dd{\psi_t}) \cdot ) = \rho((\dd{\psi_t^{-1}} \dd{\psi_t}) \cdot, (\dd{\psi_t^{-1}} J \dd{\psi_t}) \cdot) \\
		& = (\det(\dd{\psi_t^{-1}}) \circ \psi_t) \ \rho(\dd{\psi_t} \cdot, J \dd{\psi_t} \cdot) = (\det(\dd{\psi_t^{-1}}) \circ \psi_t) \ g(\dd{\psi_t} \cdot, \dd{\psi_t} \cdot) \\
		& = (\det(\dd{\psi_t^{-1}}) \circ \psi_t) \ \psi_t^* g ,
	\end{align*}
	where $g = g_0$ and $\det(\dd{\psi_t^{-1}}) = ((\psi_t^{-1})^* \rho)/\rho$. In particular, $g_t$ is conformal to $\psi_t^* g$ with conformal factor given by $u_t \defin \det(\dd{\psi_t^{-1}}) \circ \psi_t$. We now determine the Riemannian metric $h_t$ associated to the pair $(J_t,\sigma_t)$ as described at the beginning of Section \ref{sec:para_hyperkahler_str_on_MS}:
	\begin{align*}
		h_t & = \left( 1 + \sqrt{1 + \norm{\psi_t^* \sigma}_{g_t}^2} \right) g_t = \left( 1 + \sqrt{1 + \norm{\psi_t^* \sigma}_{g_t}^2} \right) u_t \, \psi_t^* g \\
		& = \frac{1 + \sqrt{1 + \norm{\psi_t^* \sigma}_{g_t}^2}}{1 + \sqrt{1 + \norm{\sigma}_{g}^2 \circ \psi_t}} u_t \, \psi_t^* h = v_t \, \psi_t^* h .
	\end{align*}
	Therefore, the metric $h_t$ also differs from $\psi_t^* h$ by a conformal factor, here denoted by $v_t$ and defined from the relation above (observe that $v_0 \equiv 1$). Using the classical expression for the curvature under conformal change of the metric, we deduce that
	\begin{align*}
		K_{h_t} & = K_{v_t \, \psi_t^* h} \\
		& = v_t^{-1} \left( K_{\psi_t^* h} - \frac{1}{2} \Delta_{\psi_t^* h} \ln v_t \right) \\
		& = v_t^{-1} \left( K_h \circ \psi_t - \frac{1}{2} (\Delta_h \ln(v_t \circ \psi_t^{-1} )) \circ \psi_t \right) .
	\end{align*}
	The last term of the operator $G$ that we need to analyze is the determinant of the endomorphism $B_t$ associated to the pair $(J_t, \sigma_t)$:
	\begin{align*}
		\det B_t & = \det(h_t^{-1} \sigma_t ) \\
		& = v_t^{-2} \det((\psi_t^* h)^{-1} \psi_t^* \sigma) \\
		& = v_t^{-2} (\det B) \circ \psi_t .
	\end{align*}
	We can finally deduce an expression for the term
	\begin{align*}
		K_{h_t} + \det B_t & = v_t^{-1} \left( K_h \circ \psi_t - \frac{1}{2} (\Delta_h \ln(v_t \circ \psi_t^{-1} )) \circ \psi_t + v_t^{-1} (\det B) \circ \psi_t \right) .
	\end{align*}
	
	Combining the relations found above, we can compute the first order variation of operator $G$ along the path $t \mapsto (\psi_t^* J, \psi_t^* \sigma)$, obtaining
	\begin{align*}
		(K_{h_t} + 1 + \det B_t)' & = - \dot{v} ( K_h + \det B ) + U(K_h) - \frac{1}{2} \Delta_h \dot{v} - \dot{v} \det B + U(\det B)  \\
		& = \dot{v} (1 - \det B ) - \frac{1}{2} \Delta_h \dot{v} ,
	\end{align*}
	where, in the last line, we used the fact that $(J,\sigma)$ satisfies $G(J,\sigma) = 0$. The final statement will now follow by computing $\dot{v}$:
	\begin{align*}
		\dot{v} & = \left. \dv{v_t}{t} \right|_{t = 0} = \left. \dv{(v_t \circ \psi_t^{-1})}{t} \right|_{t = 0}   \tag{$v_0 \equiv 1$} \\
		& = \left. \dv{t} \frac{1 + \sqrt{1 + \norm{\sigma_t}_{g_t}^2 \circ \psi_t^{-1} }}{1 + \sqrt{1 + \norm{\sigma}_{g}^2}} u_t \right|_{t = 0} \\
		& = \frac{(\norm{\sigma_t}_{g_t}^2 \circ \psi_t^{-1})'}{2 f (1 + f)} + \dot{u} .
	\end{align*}
    We now observe that
	\begin{align*}
		\norm{\sigma_t}_{g_t}^2 & = \frac{1}{2} \tr(g_t^{-1} \sigma_t \, g_t^{-1} \sigma_t) = \frac{1}{2 u_t^2} \tr((\psi_t^* g)^{-1} \psi_t^* \sigma \, (\psi_t^* g)^{-1} \psi_t^* \sigma) \\
		& = u_t^{-2} (\norm{\sigma}^2_g \circ \psi_t) ,
	\end{align*}
	which implies the following:
	\[
	(\norm{\sigma_t}_{g_t}^2 \circ \psi_t^{-1})' = ((u_t \circ \psi_t^{-1})^{-2})' \norm{\sigma}^2_g = - 2 \dot{u} \, \norm{\sigma}_g^2 = - 2 \dot{u} (f^2 - 1)
	\]
	since $u_0 = \det(\dd{(\id)}) \equiv 1$. A simple computation now shows that $\dot{u} = \dv{u_t}{t} |_{t = 0} = - \divr_g U$. To conclude, we have
	\[
	\dot{v} = \left( - \frac{f^2 - 1}{f(1 + f)} + 1 \right) \dot{u} = - f^{-1} \divr_g U . 
	\]
    
    For the second part of the statement, observe that, being $B$ traceless and $h$-\hsk self-\hsk adjoint, its determinant is a non-\hsk positive function. In particular $(1 - \det B) \geq 1$. Consequently the linear operator
    \[
    T : \lambda \longmapsto - \frac{1}{2} \Delta_h \lambda + (1 - \det B) \lambda
    \]
    is self-\hsk adjoint and positive definite in $L^2(\Sigma, \dd{a_h})$, in particular injective. Therefore, if $(\Dlie_U J, \Dlie_U \sigma)$ lies in the kernel of the differential of $G$, then the function $\lambda = f^{-1} \divr_g U$ is sent to $0$ by the operator $T$, and so $\divr_g U = 0$. By relation \eqref{eq:div_contraction} this is equivalent to say that $U$ is a $\rho$-symplectic vector field.
\end{proof}

\begin{lemma} \label{lem:V_transverse_orbit}
	For every $(J,\sigma) \in \widetilde{\mathcal{MS}_{0}}(\Sigma,\rho)$, we have
	\[
	V_{(J,\sigma)} \cap T_{(J,\sigma)}(\Symp_{0}(\Sigma,\rho) \cdot (J,\sigma))=\{0\} \ . 
	\]
\end{lemma}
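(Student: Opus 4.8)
The plan is the following. Write a general tangent vector to the orbit as $v=(\Dlie_X J,\Dlie_X\sigma)$ with $X\in\Lsymp(\Sigma,\rho)$, and assume $v\in V_{(J,\sigma)}$. Since $\Sigma$ has genus $\geq 2$, any vector field $Z$ with $\Dlie_Z J'=0$ (for a complex structure $J'$) vanishes; in particular the first component of $v$ is zero if and only if $X=0$, so $v=0\iff X=0$ and it suffices to prove $X=0$.

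\emph{Step 1: $JX$ is symplectic.} As $(J,\sigma)\in\widetilde{\mathcal{MS}_{0}}(\Sigma,\rho)$, the tensor $\sigma$ is the real part of a holomorphic quadratic differential on $(\Sigma,J)$, so Lemma \ref{lem:I_of_lie_deriv} gives $\mathbf I v=(-\Dlie_{JX}J,-\Dlie_{JX}\sigma)$; by Lemma \ref{lem:V_IJK_invariant}, $\mathbf I v\in V_{(J,\sigma)}$, hence $(\Dlie_{JX}J,\Dlie_{JX}\sigma)\in V_{(J,\sigma)}$ as well. By Lemma \ref{lem:V_linearizes_Gauss_Codazzi} every element of $V_{(J,\sigma)}$ is annihilated by the differential of $G=K_h+1+\det B$, so applying Lemma \ref{lem:linearized_Gauss} with $U=JX$ and using the injectivity of $\lambda\mapsto-\tfrac12\Delta_h\lambda+(1-\det B)\lambda$ we obtain $\divr_g(JX)=0$, i.e.\ $JX\in\Lsymp(\Sigma,\rho)$ by \eqref{eq:div_contraction}.

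\emph{Step 2: $X$ and $JX$ are Hamiltonian.} The key observation is that the left metric $h_l$ of Mess' map has area form exactly $\rho$: indeed $\det(\1-JB)=1+\det B$ and, combining Lemma \ref{lem:det_B} with the computation $h_l=\tfrac12 h((\1-JB)\cdot,(\1-JB)\cdot)$ of the proof of Lemma \ref{lem:V_linearizes_Gauss_Codazzi}, one gets $\dd a_{h_l}=\tfrac12(1+\det B)(1+f)\,\rho=\rho$. Since $h_l$ is conformal to $g_{J_l}=\rho(\cdot,J_l\cdot)$ and has the same area form, $h_l=g_{J_l}$; as moreover $K_{h_l}\equiv-2$ (the reformulation of Gauss--Codazzi recalled in Lemma \ref{lem:V_linearizes_Gauss_Codazzi}) and $\Area(\Sigma,\rho)=-\pi\chi(\Sigma)$, it follows that $J_l\in\mu^{-1}(0)$ for the Teichm\"uller moment map $\mu$ of Theorem \ref{thm:momentmap_Teich}. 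Now $\mathcal M$ is natural under symplectomorphisms, so $\dd\mathcal M_{(J,\sigma)}(v)=(\Dlie_X J_l,\Dlie_X J_r)$; by the last assertion of Proposition \ref{prop:equivalent_def_subspace_V}, $\divr_{g_{J_l}}(\Dlie_X J_l)$ is exact, whence by Proposition \ref{prop:orthogonal_symp} the vector $\Dlie_X J_l$ lies in $\Ker(\dd\mu_{J_l})$ and is $\Omega_{J_l}$-orthogonal there to $T_{J_l}(\Symp_0(\Sigma,\rho)\cdot J_l)$. But $\Dlie_X J_l$ is also tangent to this very orbit, and the $H=\Symp_0(\Sigma,\rho)/\Ham(\Sigma,\rho)$-orbit through $[J_l]$ is a symplectic submanifold of $\widetilde{\mathcal T}(\Sigma)$; hence $[\Dlie_X J_l]=0$ in $T_{[J_l]}\widetilde{\mathcal T}(\Sigma)$, i.e.\ $\Dlie_X J_l=\Dlie_Y J_l$ for some Hamiltonian $Y$. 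Then $\Dlie_{X-Y}J_l=0$ forces $X=Y$, so $X$ is Hamiltonian. The identical argument applied to $(\Dlie_{JX}J,\Dlie_{JX}\sigma)\in V_{(J,\sigma)}$ — legitimate since $JX$ is symplectic by Step 1, so $\dd\mathcal M$ of this vector is $(\Dlie_{JX}J_l,\Dlie_{JX}J_r)$ — shows that $JX$ is Hamiltonian too.

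\emph{Conclusion.} Since $\rho(\cdot,\cdot)=g_J(J\cdot,\cdot)$, one has $\iota_X\rho=g_J(JX,\cdot)$ and $\iota_{JX}\rho=-g_J(X,\cdot)$; Step 2 then furnishes smooth functions $f,g$ on $\Sigma$ with $JX=\nabla^{g_J}f$ and $X=\nabla^{g_J}g$, so that $\nabla^{g_J}f=J\nabla^{g_J}g$. As $J\nabla^{g_J}g$ is divergence-free, $\Delta_{g_J}f=0$, so $f$ (hence also $g$) is constant — equivalently, $f-ig$ is a holomorphic function on the closed Riemann surface $(\Sigma,J)$. Therefore $X=\nabla^{g_J}g=0$ and $v=0$, proving $V_{(J,\sigma)}\cap T_{(J,\sigma)}(\Symp_0(\Sigma,\rho)\cdot(J,\sigma))=\{0\}$. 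The main obstacle is Step 2: recognizing that $h_l=g_{J_l}$ (and $h_r=g_{J_r}$) is what allows the infinite-dimensional symplectic reduction picture of Section \ref{subsec:Teich_inf_dim_reduction} — in particular the exactness criterion of Proposition \ref{prop:orthogonal_symp} and the symplecticity of the $H$-orbits — to be transported along $\mathcal M$. (One could instead bypass it by a direct Weitzenb\"ock computation of $\divr_{h_l}(\Dlie_X J_l)$, showing it equals $-4\,\iota_X\rho$ modulo exact $1$-forms thanks to $K_{h_l}\equiv-2$.)
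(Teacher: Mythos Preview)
Your proof is correct and follows essentially the same route as the paper's. The only differences are organizational: the paper first shows $X$ is Hamiltonian (via Mess' map, Proposition \ref{prop:orthogonal_symp}, and the symplecticity of the $H$-orbits), then uses $\mathbf I$-invariance together with Lemma \ref{lem:linearized_Gauss} to conclude that $JX$ is symplectic, and finishes with the harmonicity argument; you reverse the first two steps and, in Step 2, go further than necessary by also proving $JX$ is Hamiltonian (your Step 1 already gives $JX$ symplectic, which is all the conclusion needs). Your explicit verification that $h_l=g_{J_l}$ is a genuinely helpful remark --- it is precisely what justifies invoking Proposition \ref{prop:orthogonal_symp} for $J_l$, and the paper uses this identification silently.
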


\begin{proof}
	Assume $X$ to be a symplectic vector field such that $(\Dlie_X J, \Dlie_X \sigma)$ belongs to $V_{(J,\sigma)}$. By equivariance of Mess homeomorphism, this is equivalent to the fact that $(\Dlie_X J_l, \Dlie_X J_r)$ are exact $1$-forms. In particular we must have that the $h_l$-\hsk divergence of $\Dlie_X J_l$ is an exact $1$-\hsk form. By Proposition \ref{prop:orthogonal_symp}, this implies that $\Dlie_X J_l$ is $\Omega_{J_l}$-\hsk orthogonal to $T_{J_l} \Symp_{0}(\Sigma,\rho) \cdot J_l$. Therefore, for every $\rho$-\hsk symplectic vector field $Y$ we have $\Omega_{J_l}(\Dlie_X J_l, \Dlie_Y J_l) = 0$. Consequently, inside the quotient $\widetilde{\Teich}(\Sigma)$ of $\mathcal{J}(\Sigma)$ by the action of the Hamiltonian group $\Ham(\Sigma, \rho)$, the class $[\Dlie_X J_l]$ is $\widehat{\Omega}_{[J_l]}$-\hsk orthogonal to the $H$-\hsk orbit of $[J_l]$ (compare with Section \ref{subsec:Teich_inf_dim_reduction}, and in particular with Equation \eqref{eq:fakeWP}). As observed by Donaldson \cite{donaldson2003moment} (see also \cite{trautwein2019hyperkahler}), the $H$-\hsk orbits are symplectic submanifolds of $\widetilde{\Teich}(\Sigma)$, and the class of $\Dlie_X J_l$ is tangent to the orbit $H \cdot [J_l]$. By what previously observed, we deduce that the class of $\Dlie_X J_l$ is equal to zero inside $T_{J_l} H \cdot [J_l]$ or, in other words, that $\Dlie_X J_l$ is tangent to the orbit of $J_l$ by the Hamiltonian group inside $\mathcal{J}(\Sigma)$. Since the Lie derivative operator $X \mapsto \Dlie_X J$ is injective, because $(\Sigma, J)$ has no biholomorphisms isotopic to the identity, the vector field $X$ is actually $\rho$-\hsk Hamiltonian, i. e. $\iota_X \rho = \dd{f}$ for some $f \in \mathscr{C}^\infty(\Sigma)$.
    
    On the other hand, if $(\Dlie_X J, \Dlie_X \sigma)$ belongs to $V_{(J,\sigma)}$, the same has to hold for $\mathbf{I}(\Dlie_X J, \Dlie_X \sigma) = - (\Dlie_{J X} J, \Dlie_{J X} \sigma)$ (see Lemma \ref{lem:I_of_lie_deriv}). In particular, the differential of the function $G$ considered in Lemma \ref{lem:linearized_Gauss} applied to $(\Dlie_{J X} J, \Dlie_{J X} \sigma)$ must vanish, by what observed in Lemma \ref{lem:V_linearizes_Gauss_Codazzi}. By the second part of Lemma \ref{lem:linearized_Gauss}, we deduce that $J X$ is $\rho$-\hsk symplectic, i.e. $\dd{(\iota_{J X} \rho)} = 0$. This implies that the $1$-\hsk form $- \dd{f} \circ J = - (\iota_X \rho) \circ J = \iota_{J X} \rho$ is closed, and therefore that the function $f$ is $g$-\hsk harmonic (since $\dd(\dd{f} \circ J) = - \Delta_g f \rho$). Being $\Sigma$ compact, we deduce that $f$ is constant, and therefore that the vector field $X$ is equal to $0$, as desired.
\end{proof}

\subsection{The proof of Proposition \ref{prop:equivalent_def_subspace_V}} \label{subsec:proof_of_prop}

This subsection is dedicated to the proof of Proposition \ref{prop:equivalent_def_subspace_V}, which provides a series of equivalent descriptions of the tangent space to $\mathcal{MS}_0(\Sigma, \rho)$, the model for the deformation space of MGHC AdS structures introduced in Section \ref{subsec:change_coord}. The proof of the result is technically involved, and none of the tools developed here will be used in the rest of the exposition. In particular, the reader who is willing to trust the statement of Proposition \ref{prop:equivalent_def_subspace_V} can skip this part without losing necessary ingredients for the remainder of the paper.

We will first focus on the equivalence of the first three descriptions appearing in the statement of Proposition \ref{prop:equivalent_def_subspace_V}. This part is mainly algebraic and it follows from explicit manipulations of the equations.

\begin{proof}[Proof of $\textit{i)} \Leftrightarrow \textit{ii)} \Leftrightarrow \textit{iii)}$] 
    Recalling that $Q^\pm = Q^\pm(\dot{J}, \dot{\sigma}) \defin f^{-1} g^{-1} \dot{\sigma}_0 \pm \dot{J}$ (these terms formally appeared already in the study of the differential of Mess' map in Proposition \ref{thm:mappaM} (baby version)), one readily checks that $\textit{i)} \Leftrightarrow \textit{ii)}$, since taking the sum and difference of the equations in \eqref{eq:description_V1} one obtains the equations in \eqref{eq:description_V3}, and vice versa. 
    
    Let us now prove $\textit{ii)} \Leftrightarrow \textit{iii)}$. As a preliminary step, we claim that, for every $\dot{J}' \in T_J \mathcal{J}(\Sigma)$ and for every tangent vector field $V$, we have
    \begin{equation} \label{eq:differential_f}
        \dd{f}(\dot{J}'V) = f^{-1} \, \scal{\nabla^g_{(g^{-1} \sigma) V} \, \sigma}{\dot{J}'} = f^{-1} \, \scal{\nabla^g_\bullet \sigma}{\dot{J}'} ((g^{-1} \sigma) V) .
    \end{equation}
    Assuming temporarily this relation, and using that the components  $J_l$ and $J_r$ of the map $\mathcal{M}$ can be expressed as follows (see \eqref{eq:expression_mess_map})
    \[
    J_l = f J + g^{-1} \sigma, \qquad J_r = f J - g^{-1} \sigma ,
    \]
    we obtain the following identities in local coordinates $x_i$:
    \begin{align*}
        (\divr_g Q^+) J J_l V & = \dd{x^i}((\nabla^g_{\partial_i} Q^+)J J_l V) \\
        & =  \dd{x^i}((\nabla^g_{\partial_i} Q^+ J J_l) V) - \dd{x^i}(Q^+ J (\nabla^g_{\partial_i} (f J + g^{-1} \sigma)) V) \\
        & = \divr_g(Q^+ J J_l) V - \left( \partial_i f \, \dd{x^i}(Q^+ J^2 V) + \dd{x^i}(Q^+ J (\nabla^g_{\partial_i} g^{-1} \sigma) V) \right) \\
        & = \divr_g(Q^+ J J_l) V + \dd{f}(Q^+ V) - \dd{x^i}(Q^+ J (\nabla^g_V g^{-1} \sigma) \partial_i) \tag{$g^{-1} \sigma$ Codazzi} \\
        & = \divr_g(Q^+ J J_l)V + \dd{f}(Q^+ V) - 2 \, \scal{\nabla^g_V \sigma}{Q^+ J} 
        \tag{$\nabla^g g^{-1}=0$} \\
        & = \divr_g(Q^+ J J_l)V + \dd{f}(Q^+ V) + 2 \, \scal{(\nabla^g_V \sigma)(\cdot, J \cdot)}{Q^+} \tag{$Q^+ \in T_J \mathcal{J}(\Sigma)$} \\
        & = \divr_g(Q^+ J J_l)V + \dd{f}(Q^+ V) + 2 \, \scal{\nabla^g_{JV} \sigma}{Q^+} \tag{Lemma \ref{lem:hol_quadr_diff_equivalence} part \textit{iv)}} \\
        & = \divr_g(Q^+ J J_l)V + f^{-1}\scal{\nabla^g_{(g^{-1}\sigma)V} \sigma}{Q^+} + 2 \, \scal{\nabla^g_{JV} \sigma}{Q^+} 
        \tag{rel. \eqref{eq:differential_f}}, \\
        & = \divr_g(Q^+ J J_l)V + f^{-1} \scal{\nabla^g_{(g^{-1}\sigma+fJ)V} \sigma}{Q^+} +  \, \scal{\nabla^g_{JV} \sigma}{Q^+},
    \end{align*}
    where in the second to last line we used relation \eqref{eq:differential_f}. This can be rewritten as:
    $$
        (\divr_g Q^+ + f^{-1}\scal{\nabla^g_{J \bullet} \sigma}{Q^+}) \circ J J_l = \divr_g(Q^+ J J_l) + \scal{\nabla^g_{J \bullet} \sigma}{Q^+}.
    $$
    This shows that the first equations in \eqref{eq:description_V2} and \eqref{eq:description_V3} are equivalent. Proceeding in an analogous way for the term $\divr_g Q^-$, we can deduce that the second equations are equivalent too, and therefore conclude that $\textit{ii)} \Leftrightarrow \textit{iii)}$.
    
    It remains to prove relation \eqref{eq:differential_f}. Since $(g^{-1} \sigma)^2 = \norm{\sigma}^2 \, \1$ by Cayley-Hamilton theorem, we have
    \begin{equation} \label{eq:derivative_sigma^2}
        V(\norm{\sigma}^2) \, \1 = (\nabla^g_V g^{-1} \sigma) g^{-1} \sigma + g^{-1} \sigma (\nabla^g_V g^{-1} \sigma) .
    \end{equation}
    Now we observe that
    \begin{align*}
        2 \scal{\nabla^g_{(g^{-1} \sigma) V} \, \sigma}{\dot{J}'} & = \dd{x^i}(\dot{J}' (\nabla^g_{(g^{-1} \sigma)V} g^{-1} \sigma) \partial_i ) 
        \tag{$\nabla^g g^{-1}=0$} \\
        & = \dd{x^i}(\dot{J}' (\nabla^g_{\partial_i} g^{-1} \sigma) (g^{-1} \sigma)V )  \tag{$g^{-1} \sigma$ Codazzi} \\
        & = - \dd{x^i}(\dot{J}' g^{-1} \sigma (\nabla^g_{\partial_i} g^{-1} \sigma)V) + \partial_i(\norm{\sigma}^2) \dd{x^i}(\dot{J}' V) \tag{relation \eqref{eq:derivative_sigma^2}} \\
        & = - \dd{x^i}(\dot{J}' g^{-1} \sigma (\nabla^g_V g^{-1} \sigma)\partial_i) + \dd{(\norm{\sigma}^2)}(\dot{J}' V)  \tag{$g^{-1} \sigma$ Codazzi} \\
        & = - \tr(\dot{J}' g^{-1} \sigma (\nabla^g_V g^{-1} \sigma)) + \dd{(\norm{\sigma}^2)}(\dot{J}' V)  \\
        & = \dd{(\norm{\sigma}^2)}(\dot{J}' V) ,
    \end{align*}
    where, in the last step, we applied Lemma \ref{lem:product_in_TJ} to the triple $\dot{J}'$, $g^{-1} \sigma$, $\nabla^g_V g^{-1} \sigma \in T_J \mathcal{J}(\Sigma)$. Finally, we have
    \[
    \dd{f}(\dot{J}'V) = \frac{1}{2 \sqrt{1 + \norm{\sigma}^2}} \dd{(\norm{\sigma}^2)}\dot{J}' V = f^{-1} \scal{\nabla^g_{(g^{-1} \sigma) V} \, \sigma}{\dot{J}'} ,
    \]
    which concludes the proof of relation \eqref{eq:differential_f}, and hence of the statement.
\end{proof}

The last statement of Proposition \ref{prop:equivalent_def_subspace_V} is slightly more elaborated, because it requires a "conversion" in the linear connection: while the first three characterizations are expressed in terms of the Levi-Civita connection $\nabla^g$ of $g$, the last one involves the Levi-Civita connections $\nabla^l$ and $\nabla^r$ of the Riemannian metrics $h_l$ and $h_r$, respectively. For this reason we will need some additional ingredients, described in Lemmas \ref{lem:derivative_B}, \ref{lem:divergence_h}, \ref{lem:divergence_left_right}. The transition from $\nabla^g$ to $\nabla^{l}$, $\nabla^{r}$ is done passing through the Levi-Civita connection of the Riemannian metric $h$: in Lemma \ref{lem:derivative_B} we express the derivative $\nabla^h_\bullet B$ in terms of $\nabla^g_\bullet \sigma$, and in Lemma \ref{lem:divergence_h} we compute the $h$-\hsk divergence operator in terms of $\divr_g$. With these tools, we will be able to determine the expressions for the divergence with respect to $h_l$ and $h_r$ in terms of $\divr_g$ and the derivative $\nabla^g_\bullet \sigma$, as described in Lemma \ref{lem:divergence_left_right}, which will make the equivalence $\textit{iii)} \Leftrightarrow \textit{iv)}$ simpler to handle. 

\begin{lemma} \label{lem:derivative_B}
	Let $(J,\sigma) \in T^* \mathcal{J}(\Sigma)$, where $\sigma$ is the real part of a $J$-\hsk holomorphic quadratic differential. Let $h$ denote the Riemannian metric $(1 + f) \, g$, with Levi-Civita connection $\nabla^h$, and let $B = h^{-1} \sigma$ be the $h$-\hsk self-\hsk adjoint operator associated to $\sigma$. 
	For every tangent vector field $X$ we have
	\[
	\nabla^h_X B = (1 + f)^{-1} f^{-1} \, \nabla^g_X (g^{-1} \sigma) ,
	\]
	where $f = f(\norm{\sigma}_g) = \sqrt{1 + \norm{\sigma}_g^2}$, and $\nabla^g$ is the Levi-\hsk Civita connection of $g = \rho(\cdot, J \cdot)$.
\end{lemma}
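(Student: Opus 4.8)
\textbf{Proof proposal for Lemma \ref{lem:derivative_B}.}

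The plan is to exploit the conformal relation $h = (1+f)g$ together with the fact that $\sigma$ is $g$-Codazzi (equivalently $g^{-1}\sigma$ is a $g$-Codazzi tensor, by Lemma \ref{lem:hol_quadr_diff_equivalence}), in order to reduce the computation of $\nabla^h_X B$ to a computation involving only $\nabla^g$ and the derivatives of the conformal factor. First I would recall the standard formula relating the Levi-Civita connections of two conformal metrics: if $h = e^{2u} g$ on a surface, then $\nabla^h_X Y = \nabla^g_X Y + \dd u(X) Y + \dd u(Y) X - g(X,Y)\,\grd_g u$, where $\grd_g u$ is the $g$-gradient of $u$. Here $e^{2u} = 1+f$, so $u = \frac{1}{2}\ln(1+f)$ and $\dd u = \frac{\dd f}{2(1+f)}$.

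The next step is to write $B = h^{-1}\sigma = (1+f)^{-1} g^{-1}\sigma$ and differentiate. For a fixed tangent vector field $X$ and auxiliary vector fields $Y$, I would compute $(\nabla^h_X B)Y = \nabla^h_X(BY) - B\nabla^h_X Y$, substituting both the conformal-connection formula above and the product rule for the scalar factor $(1+f)^{-1}$. The terms coming from $\dd f$ must be collected carefully: the derivative of the scalar $(1+f)^{-1}$ produces a term $-(1+f)^{-2}\dd f(X)\, g^{-1}\sigma\, Y$, while the extra connection terms $\dd u(X)(g^{-1}\sigma Y) + \dd u(g^{-1}\sigma Y) X - g(X, g^{-1}\sigma Y)\grd_g u$ (from $\nabla^h_X(BY)$) and the analogous ones with $Y$ replaced by $g^{-1}\sigma Y$ inside $B\nabla^h_X Y$ get multiplied by $(1+f)^{-1}$. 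The key cancellation is that the terms involving $\dd u$ paired against $g^{-1}\sigma$ should combine, using that $g^{-1}\sigma$ is $g$-symmetric and traceless, with the derivative-of-scalar term; I expect that after grouping, all the $\dd f$-terms either cancel against each other or reassemble into the single prefactor, leaving $(\nabla^h_X B)Y = (1+f)^{-1}\big(\nabla^g_X(g^{-1}\sigma)\big)Y + (\text{possibly surviving }\dd f\text{ terms})$.

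To kill the surviving $\dd f$ terms I would invoke relation \eqref{eq:differential_f} from the proof of the equivalence $ii)\Leftrightarrow iii)$ — namely $\dd f(\dot J' V) = f^{-1}\scal{\nabla^g_{(g^{-1}\sigma)V}\sigma}{\dot J'}$ — or more directly the computation $\dd(\norm{\sigma}_g^2) = 2\,\tr_{g}(\text{stuff})$ that underlies it, which lets one rewrite $\dd f(X)$ and its contractions with $g^{-1}\sigma$ in terms of $\nabla^g_\bullet\sigma$ and Lemma \ref{lem:product_in_TJ}. The identity $(g^{-1}\sigma)^2 = \norm{\sigma}_g^2\,\1$ (Cayley--Hamilton) is the algebraic engine here, since it shows $g^{-1}\sigma(\nabla^g_X g^{-1}\sigma) + (\nabla^g_X g^{-1}\sigma)g^{-1}\sigma = X(\norm{\sigma}_g^2)\,\1$, which converts a $\dd f$-term sandwiched by $g^{-1}\sigma$ into a genuine covariant-derivative term — this supplies the extra factor $f^{-1}$ relating $\nabla^g_X(g^{-1}\sigma)$ to $\nabla^h_X B$ and matches the claimed coefficient $(1+f)^{-1}f^{-1}$. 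The main obstacle will be the bookkeeping of the various $\dd f$-contributions: one must be meticulous that the terms from the conformal change of connection (three of them, with $X$, with $Y$, and the gradient term), the term from differentiating the scalar $(1+f)^{-1}$, and the Codazzi symmetry of $\nabla^g g^{-1}\sigma$ all conspire correctly. Once the coefficient is pinned down by testing against a $g$-orthonormal frame (using tracelessness of both $g^{-1}\sigma$ and $\nabla^g_X g^{-1}\sigma$, the latter by Lemma \ref{lem:relation_codazzi&divr} applied to the traceless Codazzi tensor $g^{-1}\sigma$), the identity follows.
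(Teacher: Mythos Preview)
Your approach --- computing $\nabla^h_X B$ directly from the conformal change formula for Levi--Civita connections and then massaging the resulting $\dd f$--terms --- is genuinely different from the paper's and can be made to work, but it is more laborious and you have not quite closed the loop. After expanding via the conformal formula one indeed obtains
\[
(\nabla^h_X B)Y = (1+f)^{-1}(\nabla^g_X A)Y \;+\; (1+f)^{-2}\Bigl[-\dd f(X)\,AY + \tfrac{1}{2}\bigl(\dd f(AY)\,X - \sigma(X,Y)\grd_g f - \dd f(Y)\,AX + g(X,Y)\,A\grd_g f\bigr)\Bigr],
\]
with $A=g^{-1}\sigma$. The surviving bracket must then be shown to equal $-\tfrac{f-1}{f}(\nabla^g_X A)Y$, and for this you \emph{do} need the Codazzi/holomorphicity condition on $\sigma$: it enters precisely through the identity $\scall{JA}{\nabla^g_X A}=\scall{\sigma}{\nabla^g_{JX}\sigma}=f\,\dd f(JX)$ (Lemma~\ref{lem:hol_quadr_diff_equivalence}~iv)), which lets you write $\nabla^g_X A$ in terms of $\dd f$. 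Your proposal names the right ingredients (Cayley--Hamilton, Lemma~\ref{lem:product_in_TJ}) but stops short of exhibiting this final identification.

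The paper's argument bypasses the conformal formula entirely. Since both $B$ and $A$ are Codazzi (for $h$ and $g$ respectively, by Remark~\ref{rmk:independent_conformal_metric}), and both $\nabla^h_X B$ and $\nabla^g_X A$ are traceless symmetric endomorphisms, one can expand each in the basis $\{g^{-1}\sigma,\,Jg^{-1}\sigma\}$ away from the zeros of $\sigma$. The coefficients are read off from $\dd(\norm{\sigma}_g^2)$ and $\dd(\norm{B}_h^2)$ via the Codazzi identity, and since $\norm{B}_h^2=(f-1)/(f+1)$ while $\norm{\sigma}_g^2=f^2-1$, the ratio of the two expressions is exactly $(1+f)^{-1}f^{-1}$; continuity extends the identity across the zeros. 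This symmetric treatment of the two sides is shorter and avoids tracking the four separate conformal correction terms. A small side remark: the tracelessness of $\nabla^g_X(g^{-1}\sigma)$ follows immediately from $\nabla^g$ preserving the trace, not from Lemma~\ref{lem:relation_codazzi&divr}.
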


\begin{proof}
    First we observe that, if $\sigma$ is zero, then the relation is obviously satisfied. In what follows we will assume that $\sigma$ is not identically zero.
    
	The tensor $\nabla^g_X(g^{-1} \sigma)$ is a symmetric and traceless endomorphism of the tangent space of $\Sigma$. For every $p \in \Sigma$ outside the set of zeros of $\sigma$ (which is a finite set), the elements $(g^{-1} \sigma)_p$ and $(J g^{-1} \sigma)_p$ form a basis of the space of traceless symmetric endomorphisms of $T_p \Sigma$. In particular, using the scalar product $\scall{\cdot}{\cdot}$ we can represent $\nabla^g_X(g^{-1} \sigma)$ in terms of such basis, obtaining the following expression:
	\begin{align*}
		\nabla^g_X (g^{-1} \sigma) Y & = \frac{1}{\norm{\sigma}_g^2} \left( \scall{g^{-1} \sigma}{\nabla^g_X (g^{-1} \sigma)} \, g^{-1} \sigma + \scall{J g^{-1} \sigma}{\nabla^g_X (g^{-1} \sigma)} \, J g^{-1} \sigma \right) \\
		& = \frac{1}{\norm{\sigma}_g^2} \left( \frac{1}{2} \dd{(\norm{\sigma}_g^2)}(X) \, g^{-1} \sigma + \scall{\sigma}{\nabla^g_{J X} \sigma} \, J g^{-1} \sigma \right) \\
		& = \frac{1}{2 \norm{\sigma}_g^2} \left( \dd{(\norm{\sigma}_g^2)}(X) \, g^{-1} \sigma + \dd{(\norm{\sigma}_g^2)}(J X) \, J g^{-1} \sigma \right) .
	\end{align*}
	From the first to the second line, we are making use of the definitions of the scalar products $\scall{\cdot}{\cdot}$ and Lemma \ref{lem:hol_quadr_diff_equivalence} part \textit{iv)}. By definition of $f$, we have $\norm{\sigma}_g^2 = f^2 - 1$, therefore $\dd{(\norm{\sigma}_g^2)} = 2 f \dd{f}$. Combining this with the chain of equalities above, we obtain that
	\[
	\nabla^g_X (g^{-1} \sigma) Y = (f^2 - 1)^{-1} f \left( \dd{f}(X) \, g^{-1} \sigma + \dd{f}(J X) \, J g^{-1} \sigma \right) .
	\]
	
	The exact same observations made to express $\nabla^g_X (g^{-1} \sigma) Y$ allow us to deduce that
	\[
		(\nabla^h_X B)Y = \frac{1}{2 \norm{B}^2} \left( \dd{(\norm{B}^2)}(X) \, B + \dd{(\norm{B}^2)}(J X) \, J B \right) 
	\]
	(remember that $B$ is Codazzi with respect to $h$ by Lemma \ref{lem:hol_quadr_diff_equivalence} part \textit{i)}).
	Unraveling the definitions of $f$ and $B$, we see that $\norm{B}^2 = (1 + f)^{-1} (f - 1)$, and consequently $\dd{(\norm{B}^2)} = 2 (1 + f)^{-2} \dd{f}$. In particular we obtain that
	\begin{align*}
		(\nabla^h_X B)Y & = (1 + f)^{+ 1 - 2} (f - 1)^{-1} \left( \dd{f}(X) \, B + \dd{f}(J X) \, J B \right) \\
		& = (f^2 - 1)^{-1} (1 + f)^{-1} \left( \dd{f}(X) \, g^{-1} \sigma + \dd{f}(J X) \, J g^{-1} \sigma \right) ,
	\end{align*}
	where in the last step we used that $B = h^{-1} \sigma = (1 + f)^{-1} g^{-1} \sigma$. Outside the zero locus of $\sigma$, the statement now follows by comparing the two expressions found for $\nabla^g_X (g^{-1} \sigma) Y$ and $\nabla^h_X B$, and the identity holds on the whole surface by continuity. 
\end{proof}

\begin{lemma} \label{lem:divergence_h}
	Let $T$ be a smooth section of endomorphisms of $T \Sigma$. Then
	\[
	\divr_h T = \divr_g T + (1 + f)^{-1} \dd{f} \circ T^\textit{s}_0 ,
	\]
	where $T^\textit{s}_0$ is the $g$-\hsk symmetric and traceless part of $T$.
\end{lemma}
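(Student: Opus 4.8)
The plan is to use that $h$ and $g$ are conformal: $h = e^{2\varphi} g$ with $e^{2\varphi} = 1+f$, i.e.\ $\varphi = \tfrac12 \ln(1+f)$, together with the classical transformation rule for the Levi-Civita connection under a conformal change of metric,
\[
\nabla^h_X Y = \nabla^g_X Y + \dd{\varphi}(X)\,Y + \dd{\varphi}(Y)\,X - g(X,Y)\,\grd_g\varphi .
\]
The first step would be to remark that the divergence of an endomorphism does not feel the conformal rescaling of the orthonormal frame: if $(e_i)_i$ is $g$-orthonormal then $(e^{-\varphi} e_i)_i$ is $h$-orthonormal, and since $\nabla^h$ is tensorial in the lower slot the factors $e^{\pm \varphi}$ cancel, so that $(\divr_h T)(X) = \sum_i g((\nabla^h_{e_i} T)X, e_i) = \tr_g\big(Z \mapsto (\nabla^h_Z T)X\big)$, the trace being taken over $Z$ with respect to $g$.

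Next I would expand $(\nabla^h_Z T)X = \nabla^h_Z(TX) - T(\nabla^h_Z X)$ by substituting the formula above in both terms; the pieces involving $\nabla^g$ reassemble into $(\nabla^g_Z T)X$, and the correction is the explicit tensor
\[
\dd{\varphi}(TX)\,Z - g(Z,TX)\,\grd_g\varphi - \dd{\varphi}(X)\,TZ + g(Z,X)\,T\grd_g\varphi .
\]
Taking the $g$-trace over $Z$, using $\dim \Sigma = 2$ and the identity $\sum_i g(e_i, W)\,e_i = W$, the four summands contribute respectively $2\,\dd{\varphi}(TX)$, $-\dd{\varphi}(TX)$, $-(\tr T)\,\dd{\varphi}(X)$ and $\dd{\varphi}(T^* X)$, where $T^*$ is the $g$-adjoint of $T$. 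Collecting these gives $\divr_h T = \divr_g T + \dd{\varphi}\circ(T + T^*) - (\tr T)\,\dd{\varphi}$, which I would rewrite as $\divr_h T = \divr_g T + 2\,\dd{\varphi}\circ T^\textit{s} - (\tr T^\textit{s})\,\dd{\varphi}$ in terms of the $g$-symmetric part $T^\textit{s} = \tfrac12(T+T^*)$, using $\tr T = \tr T^\textit{s}$.

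Finally, decomposing $T^\textit{s} = T^\textit{s}_0 + \tfrac12(\tr T^\textit{s})\,\1$ makes the two trace terms cancel, leaving $\divr_h T = \divr_g T + 2\,\dd{\varphi}\circ T^\textit{s}_0$; since $2\,\dd{\varphi} = \dd{(\ln(1+f))} = (1+f)^{-1}\dd{f}$, this is exactly the asserted identity. I do not anticipate a genuine obstacle: the only point that needs care is the bookkeeping of the four trace contributions in the middle step, and checking that the two $\tr$-terms do cancel once one passes to the traceless part $T^\textit{s}_0$ — the rest is the standard conformal-change computation, and the dimension-two hypothesis is precisely what makes it collapse to this clean form.
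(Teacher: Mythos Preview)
Your proof is correct and follows essentially the same route as the paper: both derive the standard conformal change formula for the Levi-Civita connection (the paper via Koszul's formula, you by citing it), expand $(\nabla^h_Z T)X - (\nabla^g_Z T)X$, trace over $Z$, and collect the four contributions into $\dd\varphi\circ(T+T^*) - (\tr T)\,\dd\varphi$ before reducing to the traceless symmetric part. The only cosmetic difference is that the paper traces in local coordinates using $\dd x^i(\cdot)$ while you use an orthonormal frame together with the observation that the conformal rescaling of the frame cancels; the resulting term-by-term bookkeeping is identical.
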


\begin{proof}
	By Koszul's formula we have
	\begin{align*}
		2 \, h(\nabla^h_X Y, Z) & = X(h(Y,Z)) + Y(h(X,Z)) - Z(h(X,Y)) + h([X,Y], Z) - h([X,Z],Y) - h([Y,Z],X) \\
		& = \dd{f}(X) \, g(Y,Z) + \dd{f}(Y) 
		\, g(X,Z) - \dd{f}(Z) \, g(X,Y) + 2 h(\nabla^g_X Y, Z),
	\end{align*}
	where in the last step we used the relation $h = (1 + f) \, g$ and the Koszul's formula for $g$. In particular, we deduce that
	\[
	\nabla^h_X Y = \nabla^g_X Y + \frac{1}{2} (1 + f)^{-1} (\dd{f}(X) \, Y + \dd{f}(Y) \, X - g(X,Y) \grd_g f ) ,
	\]
	where $\grd_g f = g^{-1} \dd{f}$ is the $g$-\hsk gradient of $f$. Then we have
	\begin{align*}
		(\divr_h T)X & = \dd{x}^i \left( (\nabla^h_{\partial_i} T) X \right) = \dd{x}^i \left( \nabla^h_{\partial_i} (T X) - T (\nabla^h_{\partial_i} X) \right) \\
		& = \dd{x}^i \left( \nabla^g_{\partial_i} (T X) - T (\nabla^g_{\partial_i} X) \right) + \frac{1}{2} (1 + f)^{-1} \dd{x}^i ( \partial_i f \, T X + \dd{f}(T X) \, \partial_i + \\
		& \qquad \qquad - g(\partial_i , T X) \grd_g f - T(\partial_i f \, X + \dd{f}(X) \, \partial_i - g(\partial_i, X) \grd_g f ) ) \\
		& = (\divr_g T) X + \frac{1}{2} (1 + f)^{-1} (\dd{f}(T X) + 2 \dd{f}(T X) - \dd{f}(T X) - \dd{f}(T X) + \\
		& \qquad \qquad \qquad \qquad \qquad \qquad \qquad \qquad \quad - \dd{f}(X) \tr T + g(T \grd_g f, X) ) \\
		& = (\divr_g T) X + \frac{1}{2} (1 + f)^{-1} (\dd{f}(T X) - \dd{f}(X) \tr T + \dd{f}(T^* X) ) \\
		& = (\divr_g T) X + (1 + f)^{-1} \dd{f} \circ \left( \frac{T + T^*}{2} - \frac{\tr T}{2} \, \1 \right) (X) \\
		& = (\divr_g T) X + (1 + f)^{-1} \dd{f} \circ T^\textit{s}_0 (X) .
	\end{align*}
	From the first to the second line we used the relation found above between the Levi-\hsk Civita connections of $g$ and $h$; in the forth line $T^*$ is denoting the $g$-\hsk adjoint of $T$; in the last line we observed that the endomorphism $(T + T^*)/2$ is the $g$-\hsk symmetric part of $T$, and $(\tr T) / 2 \, \1$ is its full-\hsk trace part.
\end{proof}

Let $A_l$ and $A_r$ be the tensors
\[
A_l \defin \1 - J B, \qquad A_r \defin \1 + J B .
\]
Then the metrics $h_l$ and $h_r$ coincide with $h(A_l \cdot, A_l \cdot)$ and $h(A_r \cdot, A_r \cdot)$, respectively (see for instance the proof of Lemma \ref{lem:V_linearizes_Gauss_Codazzi}). If $\sigma$ is the real part of a $J$-\hsk holomorphic quadratic differential, then the tensors $A_l$ and $A_r$ are $h$-\hsk Codazzi ($B$ is $h$-\hsk Codazzi by Lemma \ref{lem:hol_quadr_diff_equivalence}, and $\1$ and $J$ are $\nabla^h$-\hsk parallel) and $h$-\hsk symmetric. In particular, we can express the Levi-\hsk Civita connection of $h_l$ and $h_r$ respectively as follows:
\begin{equation} \label{eq:levi_civita_left_right}
\nabla^{h_l}_X Y = A_l^{-1} \nabla^h_X (A_l Y) , \qquad \nabla^{h_r}_X Y = A_r^{-1} \nabla^h_X (A_r Y) .	
\end{equation}
These relations can be proved by checking that the connections defined on the right-\hsk hand sides are torsion free (which follows from $A_l$ and $A_r$ being $h$-\hsk Codazzi), and $h_l$- and $h_r$-\hsk symmetric, respectively (which follows from $A_l$ and $A_r$ being $h$-\hsk symmetric and from the description of $h_l$ and $h_r$ given above). See also \cite{krasnov_schlenker_minimal}.

\begin{lemma} \label{lem:divergence_left_right}
	Let $T$ be a smooth section of traceless endomorphisms of $T \Sigma$. Then
	\begin{align*}
		(\divr_{h_l} T)X & = (\divr_g T)X - f^{-1} \scal{\nabla^g_X \sigma}{J T^\textit{s}} \\
		(\divr_{h_r} T)X & = (\divr_g T)X + f^{-1} \scal{\nabla^g_X \sigma}{J T^\textit{s}}
	\end{align*}
	where $T^\textit{s}$ stands for the $g$-\hsk symmetric part of $T$. 
\end{lemma}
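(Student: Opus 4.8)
The strategy is to pass from $\divr_g$ to $\divr_{h_l}$ (resp. $\divr_{h_r}$) in two steps, composing the conversion $\divr_g \leadsto \divr_h$ (Lemma \ref{lem:divergence_h}) with the conversion $\divr_h \leadsto \divr_{h_l}$ coming from the conformal-Codazzi relation \eqref{eq:levi_civita_left_right}. Concretely, since $h_l = h(A_l\cdot, A_l\cdot)$ with $A_l = \1 - JB$ being $h$-\hsk symmetric and $h$-\hsk Codazzi, the formula $\nabla^{h_l}_X Y = A_l^{-1}\nabla^h_X(A_l Y)$ yields, for any endomorphism $T$,
\[
(\divr_{h_l} T)X = \dd{x}^i\big( A_l^{-1}\nabla^h_{\partial_i}(A_l T A_l^{-1}) A_l \partial_i\big) = \dd{x}^i\big( (\nabla^h_{\partial_i}(A_l T A_l^{-1})) \partial_i \big),
\]
using $A_l$-\hsk invariance of the pairing $\dd{x}^i(\cdot\,\partial_i) = \tr(\cdot)$ applied to the conjugated tensor; more precisely I would first establish the clean identity $\divr_{h_l} T = \divr_h(A_l T A_l^{-1})$ (valid because conjugation by a Codazzi tensor intertwines the two divergence operators — expand $\nabla^h_{\partial_i}(A_l T A_l^{-1})$ using the Leibniz rule and the Codazzi property of $A_l$ and $A_l^{-1}$, and contract).

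The second step is then a direct computation of $\divr_h(A_l T A_l^{-1})$. I would use $A_l^{-1} = (1+\det B)^{-1}(\1 + JB)$ from \eqref{eq:inverse_E-JB} together with the relations $1+\det B = \frac{2}{1+f}$ and $\norm{\sigma}_g^2 = f^2-1$, and Lemma \ref{lem:derivative_B} (which rewrites $\nabla^h B$ in terms of $\nabla^g(g^{-1}\sigma)$), plus Lemma \ref{lem:divergence_h} to convert the remaining $\divr_h$ into $\divr_g$. The trace part of the correction terms should cancel because $T$ is traceless (so $A_l T A_l^{-1}$ is traceless as well), and because $J$, $\1$ are $\nabla^h$-\hsk parallel only the $\nabla^h B = \nabla^h(A_l)$-\hsk terms survive; these produce, after applying Lemma \ref{lem:product_in_TJ} to collapse products of traceless symmetric endomorphisms, exactly a term proportional to $\scal{\nabla^g_X\sigma}{JT^s}$ with a factor $f^{-1}$, and the sign is governed by the sign of $JB$ in $A_l$ versus $A_r$. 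For $h_r = h(A_r\cdot,A_r\cdot)$ with $A_r = \1 + JB$ the computation is verbatim with $B \rightsquigarrow -B$, which flips the sign of that term; this gives the two stated formulas simultaneously.

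\textbf{Main obstacle.} The genuinely delicate point is bookkeeping the symmetric/skew-symmetric decomposition of $T$ under conjugation by $A_l$: although $A_l$ is $h$-\hsk symmetric, it is not $g$-\hsk symmetric ($JB$ is $g$-\hsk self-adjoint but $\1$ contributes, so $A_l$ is $g$-\hsk self-adjoint after all — one must be careful about which metric makes which operator self-adjoint), so $A_l T A_l^{-1}$ has a different symmetric part than $T$ with respect to $g$, and it is only the $g$-\hsk symmetric part $T^s$ that appears in the answer. The clean way around this is to observe that the correction term in Lemma \ref{lem:divergence_h} already isolates $T^s_0$, and that the extra terms generated by $\nabla^h A_l = -J\nabla^h B$ pair against $\nabla^g_X\sigma$ through the trace form; using that $\nabla^g_X\sigma$ is itself $g$-\hsk symmetric traceless (from $\sigma$ being the real part of a holomorphic quadratic differential, Lemma \ref{lem:hol_quadr_diff_equivalence}), only the $g$-\hsk symmetric part $T^s$ of $T$ contributes to $\scal{\nabla^g_X\sigma}{JT^s}$ — the skew part pairs to zero. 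I would therefore organize the computation so that all contractions are taken against the symmetric traceless tensor $\nabla^g_X\sigma$ from the outset, which automatically discards the skew part of $T$ and of any conjugation artifact, leaving precisely the asserted expressions.
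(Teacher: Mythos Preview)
Your overall strategy --- convert $\divr_{h_{l,r}}$ to $\divr_h$ via the connection formula \eqref{eq:levi_civita_left_right}, then $\divr_h$ to $\divr_g$ via Lemma~\ref{lem:divergence_h}, and simplify using Lemma~\ref{lem:derivative_B} and Lemma~\ref{lem:product_in_TJ} --- is exactly the paper's route. However, the ``clean identity'' $\divr_{h_l} T = \divr_h(A_l T A_l^{-1})$ that you propose as the bridge from $h_l$ to $h$ is \emph{false}. The conjugation relation $(\nabla^{h_l}_Y T) = A_l^{-1}\bigl(\nabla^h_Y(A_l T A_l^{-1})\bigr)A_l$ does hold, but tracing over the slot $Y$ gives $\tr\bigl(Y \mapsto A_l^{-1}(\nabla^h_Y S)A_l X\bigr)$ with $S = A_l T A_l^{-1}$, and this is \emph{not} $\tr\bigl(Y \mapsto (\nabla^h_Y S) X\bigr) = (\divr_h S)(X)$: there is no $A_l$ acting on the $Y$-slot, so cyclicity of the trace does not apply. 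A sanity check: if $A_l$ is constant then $\nabla^{h_l} = \nabla^h$ and hence $\divr_{h_l} T = \divr_h T$, whereas $\divr_h(A_l T A_l^{-1})$ generically differs from $\divr_h T$.

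What the paper actually does is expand $(\nabla^{h_l}_{\partial_i} T)X$ directly and use the Codazzi property of $A_l$ to obtain
\[
(\divr_{h_l} T)X = (\divr_h T)X + \tr\!\bigl(A_l^{-1}\nabla^h_{TX} A_l\bigr) - \tr\!\bigl(T A_l^{-1}\nabla^h_X A_l\bigr).
\]
After inserting $A_l^{-1} = \tfrac{1+f}{2}(\1 + JB)$, $\nabla^h A_l = -J\nabla^h B$, Lemma~\ref{lem:derivative_B} and Lemma~\ref{lem:divergence_h}, one gets the desired term $\mp f^{-1}\scal{\nabla^g_X\sigma}{JT^s}$ \emph{plus} two residual $T^a$-contributions, namely $-(1+f)^{-1}\dd f(T^a X)$ and $(1+f)^{-1}f^{-1}\scal{\nabla^g_X\sigma}{T^a g^{-1}\sigma}$. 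These do not vanish separately; one must write $T^a = uJ$ and use Lemma~\ref{lem:hol_quadr_diff_equivalence}~\textit{iv)} to check $\scal{\nabla^g_X\sigma}{J g^{-1}\sigma} = f\,\dd f(JX)$, whence the two terms cancel. So the hope that ``organizing all contractions against $\nabla^g_X\sigma$ automatically discards $T^a$'' is too optimistic: the skew part of $T$ produces intermediate terms that only disappear after this explicit matching.
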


\begin{proof}
	Recalling relations \eqref{eq:det_f} and \eqref{eq:inverse_E-JB}, we have that
	\begin{equation} \label{eq:inverse_Alr}
		A_{l,r}^{-1} = \frac{1 + f}{2} (\1 \pm J B) . 
	\end{equation}
	In this expression and the ones that will follow, we consider the sign above in $\pm$ or $\mp$ to be the one appearing for the expression of $A_l$, and the one on the bottom for $A_r$. Being $\1$ and $J$ $\nabla^h$-\hsk parallel, we have $\nabla^h_X A_{l, r} = \mp J \nabla^h_X B$. Applying the expressions \eqref{eq:levi_civita_left_right} for the Levi-\hsk Civita connections of $h_l$ and $h_r$, respectively, we find
	\begin{align*}
		(\divr_{h_{l, r}} T)X & = \dd{x}^i \left( (\nabla^{l, r}_{\partial_i} T) X \right) = \dd{x}^i \left( \nabla^{l,r}_{\partial_i} (T X) - T (\nabla^{l,r}_{\partial_i} X) \right) \\
		& = \dd{x}^i \left(A_{l,r}^{-1} \nabla^h_{\partial_i} (A_{l,r} T X) - T A_{l,r}^{-1} \nabla^h_{\partial_i} (A_{l,r} X) \right) \\
		& = \dd{x}^i \left(A_{l,r}^{-1} (\nabla^h_{\partial_i} A_{l,r}) T X + A_{l,r}^{-1} A_{l,r} (\nabla^h_{\partial_i} T) X  + A_{l,r}^{-1} A_{l,r} T (\nabla^h_{\partial_i} X) + \right. \\
		& \left. - T A_{l,r}^{-1} (\nabla^h_{\partial_i} A_{l,r}) X - T A_{l,r}^{-1} A_{l,r} (\nabla^h_{\partial_i} X) \right) \\
		& = \dd{x}^i( A_{l,r}^{-1} (\nabla^h_{\partial_i} A_{l,r}) T X ) + (\divr_h T)X - \dd{x}^i( T A_{l,r}^{-1} (\nabla^h_{\partial_i} A_{l,r}) X ) \\
		& = (\divr_h T)X + \tr(A_{l,r}^{-1} (\nabla^h_{T X} A_{l,r}) - T A_{l,r}^{-1} (\nabla^h_{X} A_{l,r})) . \tag{$A_{l,r}$ $h_{l, r}$-\hsk Codazzi}
	\end{align*}
	Lemma \ref{lem:divergence_h} allows us to express the first term of the sum in terms of the divergence with respect to $g$, so now we focus on the other two terms. For the first one, we express it as follows:
	\begin{align*}
		\tr(A_{l,r}^{-1} (\nabla^h_{T X} A_{l,r})) & = \frac{1 + f}{2} \tr( (\1 \pm J B) (\nabla^h_{T X} A_{l,r})) \tag{rel. \eqref{eq:inverse_Alr}} \\
		& = \mp \frac{1 + f}{2} \tr((\1 \pm J B) J \nabla^h_{T X} B) \tag{$\nabla^h_X A_{l, r} = \mp J \nabla^h_X B$} \\
		& = - \frac{1 + f}{2} \tr(J B J  \nabla^h_{T X} B) \tag{$J \nabla^h_{T X} B$ traceless} \\
		& = - \frac{1 + f}{2} \tr(B \nabla^h_{T X} B) \tag{$B \in T_J \mathcal{J}(\Sigma)$} \\
		& = - (1 + f)^{-1} \dd{f}(T X) . \tag{$\dd{(\norm{B}^2)} = 2 (1 + f)^{-2} \dd{f}$}
	\end{align*}
	We proceed similarly for the third term:
	\begin{align*}
		\tr(T A_{l,r}^{-1} (\nabla^h_{X} A_{l,r})) & = \frac{1 + f}{2} \tr(T (\1 \pm J B) (\nabla^h_{X} A_{l,r}) ) \tag{rel. \eqref{eq:inverse_Alr}} \\
		& = \mp \frac{1 + f}{2} \tr(T (\1 \pm J B) J \nabla^h_{X} B ) \tag{$\nabla^h_X A_{l, r} = \mp J \nabla^h_X B$} \\
		& = \frac{1}{2} f^{-1} (\mp \tr(T J \nabla^g_X (g^{-1} \sigma)) - \tr(T J B J \nabla^g_X (g^{-1} \sigma) )) \tag{Lemma \ref{lem:derivative_B}} \\
		& = f^{-1} (\pm \scal{\nabla^g_X \sigma}{J T^\textit{s}} - (1 + f)^{-1} \scal{\nabla^g_X \sigma}{T^\textit{a} g^{-1} \sigma}) .
	\end{align*}
	In the last step, we expressed $B$ as $(1 + f)^{-1} g^{-1} \sigma$, and we observed that, if $T = T^a + T^s$, where $T^a$ and $T^s$ denote the $g$-\hsk anti-\hsk symmetric and $g$-\hsk symmetric parts of $T$, respectively, then $T^a$ does not contribute to the term $\tr(T J \nabla^g_X (g^{-1} \sigma))$ since $\nabla^g_X (g^{-1} \sigma)$ is traceless, and similarly $T^s$ does not contribute to the term $\tr(T J B J \nabla^g_X (g^{-1} \sigma) )$ by Lemma \ref{lem:product_in_TJ}, part \textit{ii)} (observe that $T^s$ is traceless since $T$ is, by hypothesis).
	
	Combining the relations obtained above with Lemma \ref{lem:divergence_h} we see that
	\begin{align*}
		(\divr_{h_{l, r}} T)X & = (\divr_h T)X + \tr(A_{l,r}^{-1} (\nabla^h_{T X} A_{l,r}) - A_{l,r}^{-1} T (\nabla^h_{X} A_{l,r})) \\
		& = (\divr_g T) X + (1 + f)^{-1} \dd{f} (T^\textit{s} X) - (1 + f)^{-1} \dd{f}(T X) + \\
		& - f^{-1} (\pm \scal{\nabla^g_X \sigma}{J T^\textit{s}} - (1 + f)^{-1} \scal{\nabla^g_X \sigma}{T^\textit{a} g^{-1} \sigma}) \\
		& = (\divr_g T) X - (1 + f)^{-1} \dd{f} (T^\textit{a} X) + (1 + f)^{-1} f^{-1} \scal{\nabla^g_X \sigma}{T^\textit{a} g^{-1} \sigma} + \\
		& \mp f^{-1} \scal{\nabla^g_X \sigma}{J T^\textit{s}} .
	\end{align*}
	Since the space of $g$-\hsk anti-\hsk symmetric endomorphisms of $T_p \Sigma$ has real dimension $1$, we can write $T^\textit{a} = u \, J$ for some smooth function $u$. Therefore
	\begin{align*}
		\scal{\nabla^g_X \sigma}{T^\textit{a} g^{-1} \sigma} & = u \, \scal{\nabla^g_X \sigma}{J g^{-1} \sigma} \\
		& = u \, \scall{\nabla^g_{J X} \sigma}{\sigma}_g \\
		& = \frac{u}{2} \, \dd{(\norm{\sigma}_g^2)}(J X) \\
		& = u \, f \dd{f}(J X) , \tag{$f^2 - 1 = \norm{\sigma}_g^2$} 
	\end{align*}
	where, from the first to the second line, we used Lemma \ref{lem:hol_quadr_diff_equivalence} part \textit{iv)}. Expressing again $T^\textit{a}$ as $u \, J$ in the relation we found above for $(\divr_{h_{l, r}} T)X$, and combining it with what just shown, we obtain the desired statement.
\end{proof}

We are finally ready to prove the last statement of Proposition \ref{prop:equivalent_def_subspace_V}:

\begin{proof}[Proof of the last statement] We will apply the previous lemma to $T = \dot{J}_l, \dot{J}_r$. To do so, we express the $g$-\hsk symmetric and $g$-\hsk anti-\hsk symmetric parts of $\dot{J}_l, \dot{J}_r$. From the relation described in the proof of Theorem \ref{thm:mappaM} (baby version) for the differential of the Mess homeomorphism $\mathcal{M}$, we see that $\dot{J}_l$ and $\dot{J}_r$ can be expressed in terms of $(\dot{J}, \dot{\sigma})$ as follows:
	\begin{equation} \label{eq:expression_dotJ_left_right}
		\dot{J}_{l,r} = \scal{\sigma}{Q^\pm} J \pm f Q^\pm ,
	\end{equation}
	where $Q^\pm = Q^\pm(\dot{J}, \dot{\sigma})$ is defined as in Proposition \ref{prop:equivalent_def_subspace_V} part \textit{iii)} (we will not need the actual definition of $Q^\pm$, but only the fact that $Q^\pm \in T_J \mathcal{J}(\Sigma)$ and the expression \eqref{eq:description_V3} for the equations of $V_{(J,\sigma)}$). In particular we have that the $g$-\hsk symmetric parts of $\dot{J}_l, \dot{J}_r$ are: 
	\begin{equation}\label{eq:symJlr}
	    (\dot{J}_{l,r})^\textit{s} = \pm f Q^\pm
	\end{equation}
	
	The very last ingredient needed is the following convenient way to express the terms $\dot{J}_l$ and $\dot{J}_r$:
	\begin{equation} \label{eq:relazione_a_caso}
		\dot{J}_{l,r} = \mp Q^\pm J J_{l, r} - \scal{\sigma}{J Q^\pm} \1 ,
	\end{equation}
	whose proof goes as follows:
	\begin{align*}
		\dot{J}_{l,r} & = \scal{\sigma}{Q^\pm} J \pm f Q^\pm \tag{rel. \eqref{eq:expression_dotJ_left_right}} \\
		& = \pm f Q^\pm + J Q^\pm g^{-1} \sigma - \scal{\sigma}{J Q^\pm} \1  \\
		& = \mp Q^\pm J (f J \pm g^{-1} \sigma) - \scal{\sigma}{J Q^\pm} \1 \tag{$J^2 = - \1$ and $Q^\pm \in T_J \mathcal{J}(\Sigma)$} \\
		& = \mp Q^\pm J J_{l,r} - \scal{\sigma}{J Q^\pm} \1 , \tag{rel. \eqref{eq:expression_mess_map}}
	\end{align*}
	where in the second line we applied Lemma \ref{lem:product_in_TJ} to $J Q^\pm, g^{-1} \sigma \in T_J \mathcal{J}(\Sigma)$. Now, applying Lemma \ref{lem:divergence_left_right}, we obtain
	\begin{align*}
		(\divr_{h_{l, r}} \dot{J}_{l,r})X & = (\divr_g \dot{J}_{l,r})X \mp \scal{\nabla^g_X \sigma}{J(\pm Q^\pm)} \tag{rel. \eqref{eq:symJlr}}\\
		& = \mp \divr_g (Q^\pm J J_{l, r})X - \dd{(\scal{\sigma}{J Q^\pm})}X - \scal{\nabla^g_X \sigma}{J Q^\pm} \tag{rel. \eqref{eq:relazione_a_caso}} .
	\end{align*}
Using once again Lemma \ref{lem:hol_quadr_diff_equivalence} part \textit{iv)}, this identity can be rewritten as
$$
\divr_{h_{l, r}} \dot{J}_{l,r}+\dd{(\scal{\sigma}{J Q^\pm})} = \mp \divr_g (Q^\pm J J_{l, r}) - \scal{\nabla^g_{J\bullet} \sigma}{ Q^\pm}.
$$
This finally concludes the proof of the last part of Proposition \ref{prop:characterization_tangent_space2}. By a straightforward computation using relations \eqref{eq:expression_mess_map} and \eqref{eq:expression_dotJ_left_right}, we can see that
\[
    \scal{\sigma}{JQ^{\pm}}=-\frac{\scall{[J_{l},J_{r}]}{\dot{J}_{l}}}{8(1-\scall{J_{l}}{J_{r}})}=\frac{\scall{[J_{l},J_{r}]}{\dot{J}_{r}}}{8(1-\scall{J_{l}}{J_{r}})} \ .
\]
\end{proof}


\section{Geometric interpretations} \label{sec:geom_inter}

In this section we conclude the study of the para-hyperK\"ahler structure on the deformation space $\mathcal{MGH}(\Sigma)$ for $\Sigma$ a closed surface of genus $\geq 2$, giving interpretations in terms of anti-de Sitter geometry to the elements that constitute the para-hyperK\"ahler structure $(\g,\i,\j,\k)$. As a byproduct, we will deduce that the symplectic forms $\omega_\i,\omega_\j,\omega_\k$ are non-degenerate and closed, which will conclude the proof of Theorem \ref{thm:parahyper_structure}. Finally, we study the relation between $\omega_{\i}$, $\omega_{\j}$, $\omega_{\k}$ and Goldman symplectic form $\Omega_{Gol}^{\B}$ on the $\PSL(2,\B)$-character variety.

\subsection{The cotangent bundle parametrization}
Recall from Section \ref{subsec:cotangent} that Krasnov and Schlenker introduced in \cite{krasnov_schlenker_minimal} a way of parametrizing the deformation space $\mathcal{MGH}(\Sigma)\cong\MS(\Sigma)$  via the cotangent bundle $T^{*}\Teich^\conf(\Sigma)$ to the Teichm\"uller space of $\Sigma$. Precisely, they produced a mapping-class group invariant homeomorphism
\[
    \mappa{\mathcal{F}}{\MS(\Sigma)}{T^* \Teich^\conf(\Sigma)}~,
\]
see Theorem \ref{thm:cotangent_parameterization}. 
Using this map, we can identify, up to a multiplicative factor, the natural symplectic structure on $T^* \Teich^\conf(\Sigma)$ with the complex symplectic form $\overline{\omega}_{\i}^\C$, where $\omega_\i^\C = \omega_{\j} + i \omega_{\k}$.

\begin{reptheorem}{thm:cotangent}[genus $\geq 2$]
Let $\Sigma$ be a closed oriented surface of genus $\geq 2$. Then 
$$\mathcal F^*(\mathcal I_{T^*\mathcal{T}(\Sigma)},\Omega^\C_{T^*\mathcal{T}(\Sigma)})=\left(-\i,-\frac{i}{2}\overline\omega_\i^\C\right)~,$$
where $\mathcal I_{T^*\mathcal{T}(\Sigma)}$ denotes the complex structure of $T^*\mathcal{T}(\Sigma)$ and $\Omega^\C_{T^*\mathcal{T}(\Sigma)}$ its complex symplectic form.
\end{reptheorem}

\begin{proof} The proof is an adaptation of the arguments of the genus one version of Theorem \ref{thm:cotangent}, proved in Section \ref{subsec:identificationT2}. A computation identical to Remark \ref{rmk:constant_hqd2} shows that if $(J_t)_t$ is a $1$-\hsk parameter family of complex structures on $\Sigma$, with $J_0 = J$, then the Beltrami differential of the identity map $\mappa{\id}{(\Sigma, J)}{(\Sigma, J_t)}$ is
    \[
    \nu_t = (\1 - J_t J)^{-1} (\1 + J_t J) 
    \]
    and that $\dot{\nu} = \frac{1}{2} \dot{J} J$. Hence given a pair $(\dot J,\dot\sigma)$ in our model of the tangent space $T_{(J,\sigma)}\MS_0(\Sigma)$ (see Proposition \ref{prop:equivalent_def_subspace_V} and Theorem \ref{thm:identification_tangent&V}), we have
    
    \[
    \dd\pi\circ \dd{\mathcal F}_{(J,\sigma)}(\dot{J}, \dot{\sigma}) =  \frac{1}{2} \dot{J} J~.
    \]
    
Now let $g_J$ be the Riemannian metric $\rho(\cdot, J \cdot)$, let $\{e_1, e_2 = J e_1\}$ be a local $g_J$-\hsk orthonormal frame and let $\phi = \sigma - i \, \sigma(\cdot, J \cdot)$ as usual. The same computation as in the genus one case, using the definition of the pairing in $T^*\mathcal{T}^\conf(\Sigma)$ (see \eqref{eq:pairing_Beltrami} and \eqref{eq:pairing2}), shows that
    \begin{align*}
        (\phi \bullet \dot{\nu})(e_1, e_2) & = \frac{1}{2 i} \, (\phi(\dot{\nu}(e_1), e_2) - \phi(\dot{\nu}(e_2), e_1)) \\
        & = - \frac{i}{2} \, \overline{\lambda_{(J,\sigma)}^\C(\dot{J}, \dot{\sigma})} .
    \end{align*}
    Hence we obtain
    \[
   \mathcal F^* \Omega_{T^* \Teich(\Sigma)}^\C =  -\frac{i}{2} (\omega_\mathbf{J} - i \, \omega_\mathbf{K}) = - \frac{i}{2} \overline\omega_\mathbf{I}^\C~,    \]
    which proves one part of the statement. For the pull-back of the complex structure $\mathcal I_{T^*\mathcal{T}(\Sigma)}$, one argues exactly as in the genus one case to conclude that $\mathcal F^* \mathcal I_{T^* \Teich(\Sigma)}=-\i$. 
\end{proof}

As a consequence, we immediately obtain:

\begin{corollary}\label{cor:Iintegrable_higher_genus}
The almost complex structure $\i$ on $\MS_0(\Sigma)$ is integrable; the 2-forms $\omega_\j$ and $\omega_\k$ are symplectic forms. 
\end{corollary}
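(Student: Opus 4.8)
The plan is to deduce Corollary~\ref{cor:Iintegrable_higher_genus} entirely from Theorem~\ref{thm:cotangent} (genus $\geq 2$), exactly as in the genus one case where Corollary~\ref{cor:Iintegrable_torus} was extracted from Lemma~\ref{lemma:Cvaluedform} and the toy-model version of Theorem~\ref{thm:cotangent}. The point is that the map $\mathcal{F}\colon\MS(\Sigma)\to T^*\Teich^\conf(\Sigma)$ is a diffeomorphism (Theorem~\ref{thm:cotangent_parameterization}), so any integrability or closedness statement transports across it.

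First I would observe that $T^*\Teich^\conf(\Sigma)$ carries its standard integrable complex structure $\mathcal{I}_{T^*\Teich(\Sigma)}$, since $\Teich^\conf(\Sigma)$ is a complex manifold and the cotangent bundle of a complex manifold is complex. By Theorem~\ref{thm:cotangent}, $\mathcal{F}^*\mathcal{I}_{T^*\Teich(\Sigma)}=-\i$. Since $\mathcal{F}$ is a diffeomorphism, the pullback of an integrable almost-complex structure is integrable (the Nijenhuis tensor is natural under diffeomorphisms), hence $-\i$ is integrable on $\MS_0(\Sigma)$, and therefore so is $\i$ (the Nijenhuis tensor of $-\i$ equals that of $\i$ up to sign). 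This gives the first assertion.

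Next, for the $2$-forms: by Theorem~\ref{thm:cotangent} we have $\mathcal{F}^*\Omega^\C_{T^*\Teich(\Sigma)}=-\tfrac{i}{2}\overline{\omega}^\C_\i=-\tfrac{i}{2}(\omega_\j-i\omega_\k)$. Taking real and imaginary parts, $\omega_\j=\mathcal{F}^*\big(\Im\,\Omega^\C_{T^*\Teich(\Sigma)}\cdot(-\tfrac12)\,\cdot(-1)\big)$ and similarly for $\omega_\k$; more precisely, writing $\Omega^\C_{T^*\Teich(\Sigma)}=\Omega_1+i\Omega_2$ with $\Omega_1,\Omega_2$ the real $2$-forms given by the real and imaginary parts of $-d\lambda^\C_{T^*\Teich(\Sigma)}$, one reads off $\omega_\j=-\tfrac12\mathcal{F}^*\Omega_2$ and $\omega_\k=\tfrac12\mathcal{F}^*\Omega_1$ (the precise signs follow by matching $-\tfrac{i}{2}(\omega_\j-i\omega_\k)=-\tfrac12\omega_\k-\tfrac{i}{2}\omega_\j$). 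Since $\Omega^\C_{T^*\Teich(\Sigma)}$ is an exact complex symplectic form, both $\Omega_1$ and $\Omega_2$ are closed and non-degenerate; being pullbacks under the diffeomorphism $\mathcal{F}$ of closed, non-degenerate forms, $\omega_\j$ and $\omega_\k$ are closed and non-degenerate as well, hence symplectic. This also certifies that the degeneracy issue raised in the proof of Theorem~\ref{thm:parahyper_structure} (genus $\geq 2$) does not occur for $\omega_\j,\omega_\k$.

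I do not expect any real obstacle here: the entire content is naturality of the Nijenhuis tensor and of pullback of differential forms under a diffeomorphism, together with Theorem~\ref{thm:cotangent}, which is already proved. The only mild care needed is bookkeeping of the factor $-i/2$ and of the conjugation $\overline{\omega}^\C_\i$ when separating real and imaginary parts, to be sure the forms identified as $\omega_\j$ and $\omega_\k$ are the genuine ones and not, say, their negatives or swapped — but this is purely a matter of signs and does not affect closedness or non-degeneracy, which are the substantive claims of the corollary.
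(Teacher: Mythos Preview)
Your proposal is correct and follows essentially the same approach as the paper's proof, which simply notes that both statements follow immediately from Theorem~\ref{thm:cotangent} since $\mathcal{I}_{T^*\Teich(\Sigma)}$ is integrable and $\Omega^\C_{T^*\Teich(\Sigma)}$ is a complex symplectic form. Your version is more detailed about the sign bookkeeping, but the underlying argument is identical.
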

\begin{proof}
Since $\mathcal I_{T^* \Teich(\Sigma)}$ is an integrable almost-complex structure and $\Omega_{T^* \Teich(\Sigma)}^\C$ is a complex symplectic form, the two statements follow immediately from Theorem \ref{thm:cotangent}. 
\end{proof}

\subsection{The Mess homeomorphism}

In Section \ref{subsec:parameterizations} we explained that, under the identification between $\mathcal{MS}(\Sigma)$ and $\mathcal{MGH}(\Sigma)$, Mess homeomorphism 
\[
    \mappa{\mathcal M^\conf}{\MS(\Sigma)}{\Teich^{\conf}(\Sigma)\times\Teich^{\conf}(\Sigma)}~
\]
is expressed by the formula of Lemma \ref{lem:mess_complex_str}, which is formally the same expression as the map $\mathcal{M}:T^{*}\mathcal{J}(\R^2) \rightarrow \mathcal{J}(\R^2) \times \mathcal{J}(\R^2)$ defined in Section \ref{subsec:Mess_homeo}. This implies that $\mappa{\mathcal M^\conf}{\MS(\Sigma)}{\Teich^{\conf}(\Sigma)\times\Teich^{\conf}(\Sigma)}$ is induced by the map (see Remark \ref{rmk:Mess_inf_dim}) that we introduced in the finite dimensional context. 

Recall also that $\mathcal{T}^\conf(\Sigma)\times \mathcal{T}^\conf(\Sigma)$ is naturally endowed with a para-complex structure 
$\mathcal{P}_{\mathcal{T}^\conf(\Sigma)\times \mathcal{T}^\conf(\Sigma)}$, which is the endomorphism of the cotangent bundle for which the integral submanifolds of the distribution of $1$-eigenspaces are the slices $\mathcal{T}^\conf(\Sigma)\times \{*\}$, and those for the $(-1)$-eigenspaces are the slices $\{*\}\times \mathcal{T}^\conf(\Sigma)$. Plus, it has a para-complex symplectic form
\[
    \Omega^{\mathbb B}_{\mathcal{T}^\conf(\Sigma)\times \mathcal{T}^\conf(\Sigma)}:=\frac{1}{2}(\pi_l^*\Omega_{WP}+\pi_r^*\Omega_{WP})+\frac{\tau}{2} (\pi_l^*\Omega_{WP}-\pi_r^*\Omega_{WP})
\]
where $\Omega_{WP}$ is the Weil-Petersson symplectic form and $\pi_l,\pi_r$ denote the projections on the left and right factor. Here we show the relation of these structures with the para-hyperK\"ahler structure $(\g,\i,\j,\k)$, via Mess' diffeomorphism.

\begin{reptheorem}{thm:mappaM}[genus $\geq 2$]
Let $\Sigma$ be a closed oriented surface of genus $\geq 2$. Then 
$$\mathcal M^*(\mathcal P_{\mathcal{T}(\Sigma)\times \mathcal{T}(\Sigma)},4\Omega^{\mathbb B}_{\mathcal{T}(\Sigma)\times \mathcal{T}(\Sigma)})=(\j,\omega_\j^{\mathbb B})~,$$
where $\mathcal P_{\mathcal{T}(\Sigma)\times \mathcal{T}(\Sigma)}$ denotes the para-complex structure of ${\mathcal{T}(\Sigma)\times \mathcal{T}(\Sigma)}$ and $\Omega^{\mathbb B}_{\mathcal{T}(\Sigma)\times \mathcal{T}(\Sigma)}$ its para-complex symplectic form.
\end{reptheorem}

\begin{proof} Let $W_{(J_{l},J_{r})}$ denote the image of $V_{(J,\sigma)}$ under $\dd\mathcal{M}$. Because $\divr_{h_{l}}\dot{J}_{l}$ and $\divr_{h_{r}}\dot{J}_{r}$ are exact $1$-forms by the last statement in Proposition \ref{prop:equivalent_def_subspace_V}, the vector space $W_{(J_{l},J_{r})}$ is $(\Omega_{J_{l}}\oplus (\pm \Omega_{J_{r}}))$-orthogonal to the tangent space to the orbit of $\Symp_{0}(\Sigma, \rho)$ (see \cite{donaldson2003moment} or Proposition \ref{prop:orthogonal_symp}). Moreover, by Lemma \ref{lem:V_transverse_orbit} and equivariance of the map $\mathcal{M}$, the space $W_{(J_{l},J_{r})}$ is in direct sum with the tangent space to the orbit. Therefore, $(W_{(J_{l},J_{r})},\frac{1}{4}(\Omega_{J_{l}}\oplus (\pm \Omega_{J_{r}})))$ is symplectomorphic to $(T_{[J_{l}]}\mathcal{T}^\conf(\Sigma)\times T_{[J_{r}]}\mathcal{T}^\conf(\Sigma), \pi_{l}^{*}\Omega_{\WP}\pm \pi_{r}^{*}\Omega_{\WP}))$. Finally, the computations in Section \ref{subsec:Mess_homeo} (see the proof of the baby version of Theorem \ref{thm:mappaM}) can be carried out word-by-word in this context and show that
\[
        \frac{1}{2}\mathcal{M}^*((\Omega_{J_{l}} \oplus \Omega_{J_{r}})+\tau(\Omega_{J_{l}} \oplus (-\Omega_{J_{r}})))=\omega_{\i}+\tau\omega_{\k}=\omega_\j^{\mathbb B}~.
\]
The fact that $\mathcal M^*\mathcal P_{\mathcal{T}(\Sigma)\times \mathcal{T}(\Sigma)}=\j$ then follows by the usual argument, as in the conclusion of the baby version of Theorem \ref{thm:mappaM}, at the end of Section \ref{subsec:Mess_homeo}.
\end{proof}

As a consequence, we obtain:

\begin{corollary}\label{cor:omega_ik_nondeg} The almost para-complex structure $\j$ on $\MS_0(\Sigma)$ is integrable; the 2-forms $\omega_\i$ and $\omega_\k$ are symplectic forms.
\end{corollary}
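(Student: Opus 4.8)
The plan is to deduce the statement formally from Theorem \ref{thm:mappaM} (genus $\geq 2$), in exactly the same spirit in which Corollary \ref{cor:Iintegrable_higher_genus} was deduced from Theorem \ref{thm:cotangent}. By Theorem \ref{thm:mess_homeo} the Mess map $\mathcal M$ is a diffeomorphism (under the identifications of Theorem \ref{thm:identification_tangent&V}), so pull-back by $\mathcal M$ preserves integrability of endomorphisms and closedness/non-degeneracy of $2$-forms. Now the product para-complex structure $\mathcal P_{\mathcal{T}(\Sigma)\times \mathcal{T}(\Sigma)}$ is integrable: its $(\pm1)$-eigendistributions are tangent, respectively, to the slices $\mathcal{T}(\Sigma)\times\{*\}$ and $\{*\}\times\mathcal{T}(\Sigma)$, hence involutive. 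Since Theorem \ref{thm:mappaM} gives $\j=\mathcal M^*\mathcal P_{\mathcal{T}(\Sigma)\times \mathcal{T}(\Sigma)}$, it follows at once that $\j$ is an integrable almost para-complex structure on $\mathcal{MS}_0(\Sigma)$.

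For the two $2$-forms, I would split the identity $\mathcal M^*(4\Omega^{\mathbb B}_{\mathcal{T}(\Sigma)\times \mathcal{T}(\Sigma)})=\omega_\j^{\mathbb B}=\omega_\i+\tau\,\omega_\k$ into its $\R$- and $\tau$-components. Using the definition $\Omega^{\mathbb B}_{\mathcal{T}(\Sigma)\times \mathcal{T}(\Sigma)}=\tfrac12(\pi_l^*\Omega_{WP}+\pi_r^*\Omega_{WP})+\tfrac{\tau}{2}(\pi_l^*\Omega_{WP}-\pi_r^*\Omega_{WP})$, and the fact that pull-back by the (para-holomorphic) map $\mathcal M$ commutes with taking the $\R$- and $\tau$-parts of a $\mathbb B$-valued form, one obtains
\[
\omega_\i=\mathcal M^*\!\big(2\pi_l^*\Omega_{WP}+2\pi_r^*\Omega_{WP}\big),\qquad
\omega_\k=\mathcal M^*\!\big(2\pi_l^*\Omega_{WP}-2\pi_r^*\Omega_{WP}\big).
\]
Both forms $\pi_l^*\Omega_{WP}\pm\pi_r^*\Omega_{WP}$ on $\mathcal{T}(\Sigma)\times\mathcal{T}(\Sigma)$ are closed (each summand is) and non-degenerate (in a product frame their matrix is block diagonal with the non-degenerate blocks $\pm\Omega_{WP}$), hence symplectic; pulling them back by the diffeomorphism $\mathcal M$ keeps them closed and non-degenerate, so $\omega_\i$ and $\omega_\k$ are symplectic forms on $\mathcal{MS}_0(\Sigma)$.

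There is essentially no genuine obstacle here — the argument is entirely formal once Theorem \ref{thm:mappaM} is available. The only point deserving a line of care is that the objects $\j$, $\omega_\i$, $\omega_\k$ on the quotient $\mathcal{MS}_0(\Sigma)$ are well defined (independent of the representative in the $\Symp_0(\Sigma,\rho)$-orbit), which was already established when they were introduced in the proof of Theorem \ref{thm:parahyper_structure}, and is compatible with the identifications entering Theorem \ref{thm:mappaM} by construction. Together with Corollary \ref{cor:Iintegrable_higher_genus}, this supplies the integrability of $\i$ and $\j$ and the symplectic nature of $\omega_\i,\omega_\k$ used in the proof of Theorem \ref{thm:parahyper_structure}; the remaining non-degeneracy of $\g$ and $\omega_\j$ will be handled separately.
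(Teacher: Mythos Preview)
Your argument is correct and follows exactly the same approach as the paper: deduce integrability of $\j$ and the symplectic nature of $\omega_\i,\omega_\k$ directly from Theorem \ref{thm:mappaM}, using that $\mathcal P_{\mathcal{T}(\Sigma)\times\mathcal{T}(\Sigma)}$ is integrable and that the real and $\tau$-parts of $\Omega^{\mathbb B}_{\mathcal{T}(\Sigma)\times\mathcal{T}(\Sigma)}$ are closed and non-degenerate. One minor remark: the commutation of pull-back with taking $\R$- and $\tau$-parts holds for any smooth map (by $\R$-linearity of pull-back), so you need not invoke para-holomorphicity of $\mathcal M$ for that step.
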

\begin{proof}
This follows immediately from Theorem \ref{thm:mappaM}, the integrability of $\mathcal P_{\mathcal{T}(\Sigma)\times \mathcal{T}(\Sigma)}$  and the closedness and non-degeneracy of $\Omega^{\mathbb B}_{\mathcal{T}(\Sigma)\times \mathcal{T}(\Sigma)}$.
\end{proof}

\begin{corollary}\label{cor:g_nondeg}
The metric $\g$ on $\MS_0(\Sigma)$ is non-degenerate.
\end{corollary}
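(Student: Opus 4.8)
The plan is to deduce non-degeneracy of $\g$ on $\MS_0(\Sigma)$ directly from the para-quaternionic relations together with the non-degeneracy of $\omega_\i$, $\omega_\j$, $\omega_\k$ that has just been established in Corollaries \ref{cor:Iintegrable_higher_genus} and \ref{cor:omega_ik_nondeg}. Indeed, by definition $\omega_\i=\g(\cdot,\i\cdot)$, $\omega_\j=\g(\cdot,\j\cdot)$, $\omega_\k=\g(\cdot,\k\cdot)$, and since $\i$, $\j$, $\k$ are (pointwise) invertible endomorphisms of each tangent space $V_{(J,\sigma)}$ — they square to $-\1$ or $\1$ — the bilinear form $\g$ is degenerate in a direction $v$ if and only if $\omega_\i$, equivalently $\omega_\j$ or $\omega_\k$, is. More precisely, suppose $v\in T_{[J,\sigma]}\MS_0(\Sigma)$ satisfies $\g(v,w)=0$ for all $w$. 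Then for every $w$ we have $\omega_\i(v,\i^{-1}w)=\g(v,\i\i^{-1}w)=\g(v,w)=0$, hence $\omega_\i(v,\cdot)\equiv 0$, which forces $v=0$ by Corollary \ref{cor:Iintegrable_higher_genus}. Therefore $\g$ is non-degenerate.

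First I would recall that the para-hyperK\"ahler endomorphisms $\i,\j,\k$ preserve $V_{(J,\sigma)}$ (Theorem \ref{thm:identification_tangent&V}) and thus descend to well-defined endomorphisms of $T_{[J,\sigma]}\MS_0(\Sigma)$, that they are compatible with $\g$ in the sense that $\g(\i\cdot,\cdot)=-\g(\cdot,\i\cdot)$ etc., and that the associated $2$-forms $\omega_\i,\omega_\j,\omega_\k$ are exactly the ones appearing in the statements of Theorems \ref{thm:cotangent} and \ref{thm:mappaM}. Then I would run the one-line argument above: since $\i$ is invertible on each tangent space, the map $w\mapsto \omega_\i(v,w)$ vanishes identically precisely when $w\mapsto\g(v,w)$ does, and Corollary \ref{cor:Iintegrable_higher_genus} says $\omega_\i$ is non-degenerate, so $v=0$.

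There is really no serious obstacle here; the content has already been absorbed into the non-degeneracy of the symplectic forms, which in turn was obtained in the preceding corollaries by identifying $\omega_\j$ (and $\omega_\k$, via the circle action or $\mathcal C$) with honest symplectic forms on $T^*\mathcal T^\conf(\Sigma)$ and on $\mathcal T(\Sigma)\times\mathcal T(\Sigma)$ through the diffeomorphisms $\mathcal F$ and $\mathcal M$. The only point requiring a word of care is that these identifications and the resulting non-degeneracy statements are made on the quotient $\MS_0(\Sigma)$ using the model $V_{(J,\sigma)}\cong T_{[J,\sigma]}\MS_0(\Sigma)$ of Theorem \ref{thm:identification_tangent&V}; so one should phrase the argument at the level of this finite-dimensional tangent space rather than the infinite-dimensional $T^*\mathcal J(\Sigma)$, where $\g$ is genuinely degenerate along the $\Symp_0(\Sigma,\rho)$-orbit directions. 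With that understood, the proof is the short computation displayed above.

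\begin{proof}
By Theorem \ref{thm:identification_tangent&V} the endomorphisms $\i$, $\j$, $\k$ preserve $V_{(J,\sigma)}$, hence induce endomorphisms of $T_{[J,\sigma]}\MS_0(\Sigma)$ satisfying $\i^2=-\1$, $\j^2=\k^2=\1$; in particular each of them is invertible on $T_{[J,\sigma]}\MS_0(\Sigma)$. Suppose $v\in T_{[J,\sigma]}\MS_0(\Sigma)$ is such that $\g(v,w)=0$ for every $w\in T_{[J,\sigma]}\MS_0(\Sigma)$. Then for every such $w$ we have
\[
\omega_\i(v,w) = \g(v,\i w) = 0 ,
\]
since $\i w$ again belongs to $T_{[J,\sigma]}\MS_0(\Sigma)$. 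Thus the linear form $\omega_\i(v,\cdot)$ vanishes identically, and by Corollary \ref{cor:Iintegrable_higher_genus} the $2$-form $\omega_\i$ is non-degenerate, so $v=0$. Therefore $\g$ is non-degenerate on $\MS_0(\Sigma)$.
\end{proof}
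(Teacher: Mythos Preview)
Your argument is correct and is essentially the same as the paper's: both use that $\g(v,w)=\omega_{\Ll}(v,\Ll^{-1}w)$ for an invertible $\Ll\in\{\i,\j,\k\}$, together with the non-degeneracy of $\omega_{\Ll}$ established just before. The only slip is the citation in the final line of your proof: the non-degeneracy of $\omega_\i$ is the content of Corollary~\ref{cor:omega_ik_nondeg} (which comes from Theorem~\ref{thm:mappaM}), not of Corollary~\ref{cor:Iintegrable_higher_genus} (which gives non-degeneracy of $\omega_\j$ and $\omega_\k$); the paper in fact phrases the argument via $\omega_\j$ and $\j$, but notes explicitly that any of the three would do.
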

\begin{proof}
The proof follows by observing (for instance) that $\g(\cdot,\cdot)=\omega_\j(\cdot,\j)$, together with the non-degeneracy of $\omega_\j$ and the invertibility of $\j$. 
\end{proof}

\subsection{The circle action on \texorpdfstring{$\MS(\Sigma)$}{MS(S)}}
We now consider a circle action on $\MS(\Sigma)$, which is simply defined, under the diffeomorphism 
\[
    \mappa{\mathcal{F}}{\MS(\Sigma)}{T^* \Teich^\conf(\Sigma)}~,
\]
by $e^{i\theta}\cdot ([J],q)=([J],e^{i\theta}q)$, for $q$ a holomorphic quadratic differential.  
As in Remark \ref{rmk:rotateB}, we see that the circle action is induced by the following expression in terms of pairs $(h,B)$:
\[
    R_{\theta}(J,\sigma)=(J,\cos(\theta)\sigma+\sin(\theta)\sigma(\cdot, J\cdot)) \ ,
\]
and it is easily checked that $R_{\theta}$ descends to the quotient $\mathcal{MS}(\Sigma)$. As done in the introduction, we denote by $\mathcal{C}_\theta$ the composition $\mathcal{C}\circ R_{\theta}$, for every $\theta \in S^{1}$. We then prove the following results:

\begin{reptheorem}{thm:circle}[genus $\geq 2$]
Let $\Sigma$ be a closed oriented surface of genus $\geq 2$. The circle action on $\mathcal{MGH}(\Sigma)$ is Hamiltonian with respect to $\omega_\i$, and satisfies 
$$R_\theta^*\g=\g\qquad R_\theta^*\omega_{\i}=\omega_{\i}\qquad R_\theta^*\omega_{\i}^\C=e^{-i\theta}\omega_{\i}^\C~.$$
A Hamiltonian is given by the function 
\[
   \mathcal A:\MS(\Sigma)\to\R\qquad \mathcal{A}([J,\sigma])=\int_{\Sigma} \left(1+\sqrt{1+\|\sigma\|_{J}^{2}}\right) \rho \ .
\]
that gives the area of the maximal surface.
\end{reptheorem}

\begin{remark} In \cite{bonsante2013a_cyclic} and \cite{bonsante2015a_cyclic}, the authors studied the landslide flow on $\mathcal{T}^{\conf}(\Sigma)\times \mathcal{T}^{\conf}(\Sigma)$ that in our notations corresponds to the $1$-parameter family of maps $\mathcal{M}\circ R_{\theta/2}\circ \mathcal{M}^{-1}$. They showed that the landslide flow is  Hamiltonian with respect to the symplectic form $\pi_{l}^{*}\Omega_{\WP}+ \pi_{r}^{*}\Omega_{\WP}$. Our Theorem \ref{thm:circle} recovers this result, including it in a more general context.
\end{remark}

\begin{reptheorem}{thm:potentialintro}[genus $\geq 2$]
Let $\Sigma$ be a closed oriented surface of genus $\geq 2$. Then the function $-4\mathcal A$ is a para-K\"ahler potential for  the para-K\"ahler structures $(\g,\j)$ and $(\g,\k)$.
\end{reptheorem}

The proofs are straightforward adaptations of those provided in Section \ref{subsec:circle_action_toy} for the genus one case. We only remark that we modified the natural Hamiltonian that one obtains by integrating the function $H$ over $\Sigma$, namely the function 
$$[J,\sigma]\mapsto\int_\Sigma f(\norm{\sigma}_J)\rho=\int_\Sigma \sqrt{1+\norm{\sigma}^2_J}\rho$$ by adding a constant, so that $\mathcal{A}([J,\sigma])$ can be interpreted as the area of the maximal surface in Anti-de Sitter space corresponding to the point $[J,\sigma]$ under the identification between $\mathcal{MS}(\Sigma)$ and the space of equivariant maximal surfaces in Anti-de Sitter space. Indeed, recalling that the first fundamental form of the maximal surface is given by the metric $h=(1+f(\norm{\sigma}_J))g_J$ where $g_J=\rho(\cdot,J\cdot)$, the area form of $h$ is $$dA_h=(1+f(\norm{\sigma}_J))dA_{g_J}=\left(1+\sqrt{1+\norm{\sigma}^2_J}\right)\rho~.$$

Theorem \ref{thm:mappaM} and Theorem \ref{thm:circle} have other direct consequences. Recalling from Section \ref{subsec:cc_and_circle} the definition of the map 
$$\mathcal C_\theta=\mathcal C\circ R_\theta:{\MS(\Sigma)}\to {\Teich(\Sigma)\times\Teich(\Sigma)}~,$$
 we see that
\begin{equation}\label{eq:pullbackCtheta}
\mathcal C_\theta^* (\mathcal P_{\mathcal{T}(\Sigma)\times \mathcal{T}(\Sigma)},4\Omega^{\mathbb B}_{\mathcal{T}(\Sigma)\times \mathcal{T}(\Sigma)})=(\cos(\theta)\k-\sin(\theta)\j ,\omega_\i-\tau(\cos(\theta)\omega_\j+\sin(\theta)\omega_\k))~.
\end{equation}
As an immediate consequence, we conclude the proofs of Theorems 
\ref{thm:mappaC} and \ref{thm:Htheta}. For the former, it suffices to observe that for $\theta=0$ the parameterization $\mathcal C_{\theta}=\mathcal C\circ R_\theta:{\MS(\Sigma)}\to {\Teich(\Sigma)\times\Teich(\Sigma)}$, given by the induced metric on the two Cauchy surfaces of constant curvature $-2$, is simply $\mathcal C=\mathcal C_{0}$.

\begin{reptheorem}{thm:mappaC}[genus $\geq 2$]
Let $\Sigma$ be a closed oriented surface of genus $\geq 2$. Then
$$\mathcal C^*(\mathcal P_{\mathcal{T}(\Sigma)\times \mathcal{T}(\Sigma)},4\Omega^{\mathbb B}_{\mathcal{T}(\Sigma)\times \mathcal{T}(\Sigma)})=(\k,\omega_\k^{\mathbb{B}})~,$$
where $\mathcal P_{\mathcal{T}(\Sigma)\times \mathcal{T}(\Sigma)}$ denotes the para-complex structure of ${\mathcal{T}(\Sigma)\times \mathcal{T}(\Sigma)}$ and $\Omega^{\mathbb B}_{\mathcal{T}(\Sigma)\times \mathcal{T}(\Sigma)}$ its para-complex symplectic form.
\end{reptheorem}

In particular, we deduce:
\begin{corollary} \label{cor:omega_j_nondeg} 
The almost para-complex structure $\k$ on $\MS_0(\Sigma)$ is integrable.
\end{corollary}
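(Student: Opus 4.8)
The plan is to deduce the integrability of $\k$ directly from Theorem \ref{thm:mappaC}, in complete analogy with the way integrability of $\j$ was obtained from Theorem \ref{thm:mappaM} in Corollary \ref{cor:omega_ik_nondeg}. First I would record the (elementary) fact that the product para-complex structure $\mathcal P_{\mathcal{T}(\Sigma)\times \mathcal{T}(\Sigma)}$ on $\Teich^\conf(\Sigma)\times \Teich^\conf(\Sigma)$ is integrable: by construction its $(+1)$- and $(-1)$-eigendistributions are tangent to the two foliations by slices $\Teich^\conf(\Sigma)\times\{*\}$ and $\{*\}\times\Teich^\conf(\Sigma)$, which are obviously involutive, so the defining condition of an integrable almost para-complex structure is satisfied. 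Recall also that, by Theorem \ref{thm:cc_homeo} (in the conformal model, Lemma \ref{lem:cc_complex_str}), the map $\mathcal C\colon\MS(\Sigma)\to\Teich^\conf(\Sigma)\times\Teich^\conf(\Sigma)$ is a diffeomorphism.

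The second step is to transport integrability through $\mathcal C$. Using the naturality of the Nijenhuis tensor under pull-back, namely $N_{\phi^*P}=\phi^*N_P$ for every diffeomorphism $\phi$ and every field of endomorphisms $P$, together with the identity $\mathcal C^*\mathcal P_{\mathcal{T}(\Sigma)\times \mathcal{T}(\Sigma)}=\k$ furnished by Theorem \ref{thm:mappaC}, one obtains $N_\k=\mathcal C^*N_{\mathcal P_{\mathcal{T}(\Sigma)\times \mathcal{T}(\Sigma)}}=0$. Since for an almost para-complex structure the vanishing of the Nijenhuis tensor is equivalent, by the Frobenius theorem, to the involutivity of its $\pm 1$-eigendistributions, this shows that $\k$ is an integrable almost para-complex structure on $\MS_0(\Sigma)\cong\mathcal{MGH}(\Sigma)$.

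I do not anticipate any genuine obstacle, as all of the analytic content is already contained in the proof of Theorem \ref{thm:mappaC}; the one point worth checking is that the reasoning is not circular, i.e. that the genus $\geq 2$ version of Theorem \ref{thm:mappaC} has been established without appealing to the integrability of $\k$ --- and indeed it follows from Theorem \ref{thm:mappaM}, Theorem \ref{thm:circle}, and the relation $\mathcal M=\mathcal C\circ R_{-\pi/2}$. As an independent alternative, one could argue exactly as in the toy model (Corollary \ref{cor:cc_paraholo}): from the pull-back relation $R_\theta^*\j=\cos(\theta)\j+\sin(\theta)\k$ (a consequence of the genus $\geq 2$ version of Theorem \ref{thm:circle}) and Corollary \ref{cor:omega_ik_nondeg}, evaluation at $\theta=-\pi/2$ exhibits $\k=-R_{-\pi/2}^*\j$ as minus the pull-back of an integrable almost para-complex structure by the diffeomorphism $R_{-\pi/2}$; since $N_{-P}=N_P$, naturality of the Nijenhuis tensor once more gives $N_\k=0$.
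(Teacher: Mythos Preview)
Your proposal is correct and follows essentially the same approach as the paper: the corollary is deduced directly from Theorem~\ref{thm:mappaC} (genus $\geq 2$), using that $\k=\mathcal C^*\mathcal P_{\mathcal T(\Sigma)\times\mathcal T(\Sigma)}$ is the pull-back of an integrable para-complex structure by a diffeomorphism. Your additional care in verifying non-circularity (Theorem~\ref{thm:mappaC} being obtained from Theorems~\ref{thm:mappaM} and~\ref{thm:circle} via $\mathcal M=\mathcal C\circ R_{-\pi/2}$) and the alternative argument via the circle action are both sound and more explicit than the paper's one-line ``In particular, we deduce''.
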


Finally, we have the proof of Theorem \ref{thm:Htheta}, which is expressed purely in terms of Teichm\"uller theory. Namely, recalling that the map 
$$\mathcal H_\theta:T^*\mathcal T(\Sigma)\to \mathcal T(\Sigma)\times \mathcal T(\Sigma)$$
associates to a pair $([J],q)$ the pair $(h_{(J,-e^{i\theta}q)},h_{(J,e^{i\theta}q)})$ of hyperbolic metrics on $\Sigma$, where $h_{(J,q)}$ has the property that the (unique) harmonic map $(\Sigma,J)\to(\Sigma,h)$ isotopic to the identity has Hopf differential equal to $q$. By Lemma \ref{lemma:mapH}, this map is identified to $\mathcal C_\theta^\hyp\circ\mathcal F^{-1}$. Using \eqref{eq:pullbackCtheta} and Theorem \ref{thm:cotangent}, we obtain Theorem \ref{thm:Htheta}:

\begin{reptheorem}{thm:Htheta}[genus $\geq 2$]
Let $\Sigma$ be a closed oriented surface of genus $\geq 2$. Then
$$\Im \mathcal H_\theta^*(2\Omega^{\mathbb B}_{\mathcal{T}(\Sigma)\times \mathcal{T}(\Sigma)})=-\Re (ie^{i\theta}\Omega^\C_{T^*\mathcal T(\Sigma)})~.$$
\end{reptheorem}

\subsection{Para-complex geometry of the \texorpdfstring{$\PSL(2,\B)$}{PSL(2,B)}-character variety} 

Let $\B$ be the algebra of para-complex numbers, i.e $\B=\R\oplus \tau\R$ with $\tau^{2}=1$. In this section we study the para-complex geometry of $\mathcal{MGH}(\Sigma)$ seen as a component of the $\PSL(2,\B)$-character variety. We show that multiplication by $\tau$ on $\B$ induces a para-complex structure on the $\PSL(2,\B)$-character variety that makes Goldman symplectic form $\Omega_{Gol}^{\B}$ para-holomorphic, and that the Goldman form $\Omega_{Gol}^\B$ coincides with the para-complex symplectic form $\omega_\j^{\B}$ up to a multiplicative factor (Corollary \ref{cor:holonomy}). Moreover, we give a formula for Goldman symplectic form based on anti-de Sitter geometry and show that the $\B$-valued Fenchel-Nielsen coordinates defined in \cite{Tambu_FN} are para-holomorphic Darboux coordinates for $\Omega_{Gol}^{\B}$.

\subsubsection{Para-complex structure on the character variety} Let us recall the construction of the isometry group of Anti-de Sitter space in terms of the para-complex numbers, following \cite{dancigerGT}. We denote by $\SL(2,\B)$ the set of $2$-by-$2$ matrices with coefficients in $\B$ and determinant $1$. Any matrix $A\in \SL(2,\B)$ can be written uniquely as $A=A_{+}e^{+}+A_{-}e^{-}$, where $A_{\pm} \in \SL(2,\R)$, $e^{\pm}=\frac{1\pm \tau}{2}$ (see Appendix \ref{appendixB}). The map 
\begin{align*}
    \SL(2,\B) &\rightarrow \SL(2,\R)\times \SL(2,\R)\\
     A &\mapsto (A_{+},A_{-})
\end{align*}
induces an isomorphism between $\PSL(2,\B)$ and $\dPSL$, where by $\PSL(2,\B)$ we mean
$$\PSL(2,\B)=\SL(2,\B)/\{\pm 1,\pm\tau\}~.$$
 We define the $\PSL(2,\B)$-character variety as follows:
 $$\chi(\Sigma, \PSL(2,\B))=\{\rho:\pi_{1}(\Sigma) \rightarrow \PSL(2,\B)\}/\PSL(2,\B)~,$$
 namely the set of conjugacy classes of representations $\rho:\pi_{1}(\Sigma) \rightarrow \PSL(2,\B)$. The aformentioned isomorphism between $\PSL(2,\B)$ and $\dPSL$ identifies $\chi(\Sigma, \PSL(2,\B))$ with $\chi(\Sigma, \PSL(2,\R)) \times \chi(\Sigma, \PSL(2,\R))$ by associating to $\rho$ the pair of representations $\rho_{\pm}:\pi_{1}(\Sigma) \rightarrow \PSL(2,\R)$ defined by the property $\rho(\gamma)=\rho_{+}(\gamma)e^{+}+\rho_{-}(\gamma)e^{-}$ for every $\gamma \in \pi_{1}(\Sigma)$. \\
 
 By the work of Mess (\cite{mess2007lorentz}), the moduli space $\mathcal{MGH}(\Sigma)$ is diffeomorphic, under the holonomy map, to a connected component of $\chi(\Sigma, \PSL(2,\B))$, corresponding to pairs of representations in $\PSL(2,\R)$ that are discrete and faithful, and that induce hyperbolic structures on $\Sigma$ compatible with the fixed orientation of $\Sigma$. Let us denote by $\chidf$ this connected component. By the work of Goldman \cite{Goldman_symplecticnature}, the tangent space of $\chidf$ at $[\rho] \in \chidf$ is isomorphic to the first cohomology group $H^{1}(\Sigma, \mathfrak{sl}_{2}(\B)_{\Ad\rho})$. We recall that elements of $H^{1}(\Sigma, \mathfrak{sl}_{2}(\B)_{\Ad\rho})$ are equivalence classes of closed $1$-forms on $\Sigma$ with values in the flat bundle $\mathfrak{sl}_{2}(\B)_{\Ad\rho}$ defined by 
\[
    \mathfrak{sl}_{2}(\B)_{\Ad\rho}=(\tilde{\Sigma}\times \mathfrak{sl}_{2}(\B))/\sim \ ,
\]
where $(\tilde{x}, v)\sim (\gamma \tilde{x}, \Ad(\rho(\gamma))v)$ for every $\tilde{x} \in \tilde{\Sigma}$, $\gamma \in \pi_{1}(\Sigma)$ and $v \in \mathfrak{sl}_{2}(\B)$. As usual, two $1$-forms are equivalent if their difference is exact. Here, the exterior differential is the $\B$-linear extension of the usual differential for $\mathfrak{sl}_{2}(\R)$-valued forms. \\

We can then endow $\chidf$ with a natural para-complex structure $\mathcal{T}$ that multiplies by $\tau$ an $\mathfrak{sl}_{2}(\B)_{\Ad\rho}$-valued $1$-form. 

\subsubsection{Goldman symplectic form} A general construction by Goldman endows every character variety of a semi-simple Lie group with a symplectic form $\Omega_{Gol}$ (\cite{Goldman_symplecticnature}). In the setting of $\PSL(2,\B)$ this can be obtained as follows. The pairing
\begin{align*}
        B:\mathfrak{sl}_{2}(\B)\otimes \mathfrak{sl}_{2}(\B) &\rightarrow \B \\
            (X,Y) &\mapsto \trace(XY)
\end{align*}
is a non-degenerate $\B$-bi-linear form that is invariant under conjugation. Pre-composing $B$ with the standard cup-product in co-homology, we obtain a bi-linear pairing
\begin{align*}
 H^{1}(\Sigma, \mathfrak{sl}_{2}(\B)_{\Ad\rho})&\times H^{1}(\Sigma, \mathfrak{sl}_{2}(\B)_{\Ad\rho}) \rightarrow \B \\
    ([\sigma \otimes \phi], [\sigma'\otimes \phi']) &\mapsto \int_{\Sigma}B(\phi,\phi')(\sigma\wedge \sigma')
\end{align*}
which is non-degenerate by Poincar\'e duality and skew-symmetric. By general arguments of Goldman (\cite{Goldman_symplecticnature}) and Atiyah-Bott (\cite{AB_Yang-Mills}) the resulting $\B$-valued $2$-form on $\chidf$, which we denote by $\Omega_{Gol}^\B$,  is closed. 

\begin{lemma}\label{lm:parahol_G} The $\B$-valued symplectic form $\Omega_{Gol}^\B$ is para-holomorphic with respect to $\mathcal{T}$. 
\end{lemma}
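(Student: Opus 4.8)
The plan is to decompose the $\B$-valued Goldman form into its real and ``imaginary'' parts under the splitting $\B = \R e^+ \oplus \R e^-$ and identify each piece with the classical real Goldman form of one of the two $\PSL(2,\R)$-factors. Concretely, using the isomorphism $\PSL(2,\B) \cong \PSL(2,\R) \times \PSL(2,\R)$ and the induced decomposition $\mathfrak{sl}_2(\B) = \mathfrak{sl}_2(\R) e^+ \oplus \mathfrak{sl}_2(\R) e^-$, any $A \in \mathfrak{sl}_2(\B)$ writes $A = A_+ e^+ + A_- e^-$, the trace form satisfies $\trace(AB) = \trace(A_+ B_+) e^+ + \trace(A_- B_-) e^-$, and correspondingly $H^1(\Sigma, \mathfrak{sl}_2(\B)_{\Ad\rho}) \cong H^1(\Sigma, \mathfrak{sl}_2(\R)_{\Ad\rho_+}) \oplus H^1(\Sigma, \mathfrak{sl}_2(\R)_{\Ad\rho_-})$, where $e^\pm$ act as the idempotents. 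Under this splitting one checks directly that
\[
\Omega_{Gol}^\B = e^+ \, (\pi_+^* \Omega_{Gol}) + e^- \, (\pi_-^* \Omega_{Gol}),
\]
where $\pi_\pm$ are the projections to the two character-variety factors and $\Omega_{Gol}$ is the usual ($\R$-valued) Goldman form; this is immediate from $\B$-bilinearity of $B$ and the multiplicative property of the cup product, since $e^+ e^- = 0$ and $(e^\pm)^2 = e^\pm$.

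Next I would observe that, under the same splitting, the para-complex structure $\mathcal T$ (multiplication by $\tau = e^+ - e^-$ on $\mathfrak{sl}_2(\B)_{\Ad\rho}$-valued forms) acts on $T_{[\rho]}\chidf \cong H^1(\Sigma,\mathfrak{sl}_2(\R)_{\Ad\rho_+}) \oplus H^1(\Sigma,\mathfrak{sl}_2(\R)_{\Ad\rho_-})$ as $(+1)$ on the first summand and $(-1)$ on the second — in other words $\mathcal T$ is exactly the product para-complex structure $\mathcal P$ whose $\pm 1$-eigenspaces are the two factors, as already noted in the paragraph preceding Corollary \ref{cor:holonomy}. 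Being para-holomorphic for the $\B$-valued symplectic form $\Omega_{Gol}^\B$ means precisely $\Omega_{Gol}^\B(\mathcal T \cdot, \cdot) = \Omega_{Gol}^\B(\cdot, \mathcal T \cdot) = \tau \, \Omega_{Gol}^\B(\cdot,\cdot)$. Both identities now follow from the decomposition above: applying $\mathcal T$ to a tangent vector multiplies its $e^+$-component by $+1$ and its $e^-$-component by $-1$, so feeding this into $e^+(\pi_+^*\Omega_{Gol}) + e^-(\pi_-^*\Omega_{Gol})$ replaces the coefficients $e^+, e^-$ by $e^+, -e^-$, i.e. multiplies the whole expression by $\tau = e^+ - e^-$; and the computation is symmetric in the two arguments because each $\pi_\pm^*\Omega_{Gol}$ only pairs the respective components.

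The verification is essentially formal once the decomposition $\Omega_{Gol}^\B = e^+\,\pi_+^*\Omega_{Gol} + e^-\,\pi_-^*\Omega_{Gol}$ is in place, so the only point requiring genuine care — and what I expect to be the main (minor) obstacle — is checking compatibility of three identifications: the Goldman identification $T_{[\rho]}\chidf \cong H^1(\Sigma,\mathfrak{sl}_2(\B)_{\Ad\rho})$, the algebra decomposition $\mathfrak{sl}_2(\B) = \mathfrak{sl}_2(\R)e^+ \oplus \mathfrak{sl}_2(\R)e^-$ and the resulting cohomology splitting, and the description of $\mathcal T$ as multiplication by $\tau$. One must make sure that the flat bundle $\mathfrak{sl}_2(\B)_{\Ad\rho}$ splits $\pi_1(\Sigma)$-equivariantly as $\mathfrak{sl}_2(\R)_{\Ad\rho_+}e^+ \oplus \mathfrak{sl}_2(\R)_{\Ad\rho_-}e^-$ — which holds because $\Ad\rho(\gamma) = \Ad\rho_+(\gamma)e^+ + \Ad\rho_-(\gamma)e^-$ preserves each summand — and that the $\B$-linear exterior differential respects the splitting, so that $H^1$ decomposes accordingly and the idempotents $e^\pm$ act on cohomology as the projectors onto the two factors. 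Granting these (all of which are routine consequences of $e^+e^-=0$, $(e^\pm)^2 = e^\pm$, $e^+ + e^- = 1$, $\tau e^\pm = \pm e^\pm$), the lemma follows.
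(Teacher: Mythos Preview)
Your argument is correct in substance but takes a different route from the paper and has one terminological gap worth flagging.

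\textbf{Comparison of approaches.} The paper's proof is a two-line direct computation: since the pairing $B(\phi,\phi')=\tr(\phi\phi')$ is $\B$-bilinear, one immediately has $\Omega_{Gol}^\B(\sigma\otimes\phi,\mathcal T(\sigma'\otimes\phi'))=\int_\Sigma B(\phi,\tau\phi')(\sigma\wedge\sigma')=\tau\,\Omega_{Gol}^\B(\sigma\otimes\phi,\sigma'\otimes\phi')$, and symmetry plus skew-symmetry handle the other slot. You instead pass through the idempotent decomposition and prove the formula $\Omega_{Gol}^\B=e^+\,\pi_+^*\Omega_{Gol}+e^-\,\pi_-^*\Omega_{Gol}$, then read off the para-complex identity from the eigenspace description of $\mathcal T$. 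Your route is longer but it has the advantage of establishing the splitting formula, which is exactly the content of the paper's Proposition~\ref{prop:real_im} (proved there independently and needed for Corollary~\ref{cor:holonomy}); so you are effectively merging two results into one argument. The paper's approach, on the other hand, keeps the lemma self-contained and avoids checking the equivariant splitting of the flat bundle.

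\textbf{A small gap.} You write that ``being para-holomorphic \ldots\ means precisely $\Omega_{Gol}^\B(\mathcal T\cdot,\cdot)=\Omega_{Gol}^\B(\cdot,\mathcal T\cdot)=\tau\,\Omega_{Gol}^\B(\cdot,\cdot)$''. In the paper's conventions this is the condition for $\Omega_{Gol}^\B$ to be \emph{para-complex} (equivalently, of type $(1,1)$ with respect to $\mathcal T$), not para-holomorphic; the latter means $\bar\partial_{\mathcal T}\Omega_{Gol}^\B=0$. The paper closes this gap with one extra sentence: since $\Omega_{Gol}^\B$ is closed and $d=\partial_{\mathcal T}+\bar\partial_{\mathcal T}$ respects types, a closed $(1,1)$-form is automatically $\bar\partial_{\mathcal T}$-closed. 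You should add this step.
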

\begin{proof} Recall that a $\B$-valued $2$-form $\omega$ in a para-complex manifold $(M, \p)$ is para-complex if $\omega(X,\p Y)=\omega(\p X, Y)=\tau\omega(X,Y)$. 
In our setting, for every closed $\mathfrak{sl}_{2}(\B)_{\Ad\rho}$-valued 1-forms $\sigma\otimes \phi$ and $\sigma'\otimes \phi'$, we have
\begin{align*}
    \Omega_{Gol}^\B(\sigma\otimes \phi, \mathcal{T}(\sigma'\otimes \phi'))&=\Omega_{Gol}^\B(\sigma \otimes \phi, \sigma'\otimes \tau \phi')=\int_{\Sigma}B(\phi,\tau\phi')(\sigma\wedge \sigma')\\
    &=\tau\int_{\Sigma}B(\phi,\phi')(\sigma\wedge\sigma')=\tau\Omega_{Gol}^\B(\sigma \otimes \phi, \sigma'\otimes\phi') \ .
\end{align*}
Skew-symmetry and bi-linearity of $\Omega_{Gol}^\B$ imply that $\Omega_{Gol}^\B$ is para-complex. Because $\Omega_{Gol}^\B$ is para-complex and closed, and the exterior differential decomposes as $d=\bar{\partial}_{T}+\partial_{T}$ (see Appendix \ref{appendixB}), we deduce that $\bar{\partial}_{T}\Omega_{Gol}^\B=0$, hence $\Omega_{Gol}^\B$ is para-holomorphic.
\end{proof}

Because $\Omega_{Gol}^\B$ is $\B$-valued and symplectic, its real and imaginary parts are closed $2$-forms and thus define two symplectic structures on $\chidf$. The space  $\chidf$ is identified with the product $\mathcal T^\rep(\Sigma)\times \mathcal T^\rep(\Sigma)$, where $\mathcal T^\rep(\Sigma)$ is meant as the space of discrete and faithful representations with values in $\PSL(2,\R)$, and is therefore endowed with a \emph{real} Goldman form $\Omega_{Gol}^\R$, defined in the analogous way. It is known from the work of Goldman that, if 
$$\mathrm{hol}:\mathcal T^\hyp(\Sigma)\to T^\rep(\Sigma)$$
is the holonomy map for hyperbolic structures on $\Sigma$, then 
\begin{equation}\label{eq:goldmanthm}
\mathrm{hol}^*\Omega_{Gol}^\R=\Omega_{WP}~,
\end{equation} 
where $\Omega_{WP}$ is the Weil-Petersson symplectic form on $\mathcal T^\hyp(\Sigma)$. We now express $\Omega_{Gol}^\B$ in terms of the real Goldman forms on each component $\mathcal T^\rep(\Sigma)$.

\begin{proposition}\label{prop:real_im} Given a closed oriented surface of genus $\geq 2$, we have the following identity on $\chidf\cong \mathcal T^\rep(\Sigma)\times \mathcal T^\rep(\Sigma)$:
\[
    \Omega_{Gol}^\B=\frac{1}{2}(\pi_{l}^{*}\Omega_{Gol}^\R+ \pi_{r}^{*}\Omega_{Gol}^\R)+\frac{\tau}{2}(\pi_{l}^{*}\Omega_{Gol}^\R-\pi_{r}^{*}\Omega_{Gol}^\R).
\]
\end{proposition}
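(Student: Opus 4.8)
The statement is a purely algebraic consequence of the splitting $\SL(2,\B)\cong \SL(2,\R)\times\SL(2,\R)$ via the idempotents $e^{\pm}=\tfrac{1\pm\tau}{2}$, together with the definition of $\Omega_{Gol}^\B$ through the $\B$-bilinear trace pairing $B$ and the cup product. The plan is to track how each ingredient in the construction of $\Omega_{Gol}^\B$ decomposes under the isomorphism. First I would record the elementary identities $e^{+}+e^{-}=1$, $(e^{\pm})^2=e^{\pm}$, $e^{+}e^{-}=0$, and $\tau e^{\pm}=\pm e^{\pm}$, which give the ring decomposition $\B=e^{+}\R\oplus e^{-}\R$. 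Correspondingly $\mathfrak{sl}_2(\B)=e^{+}\mathfrak{sl}_2(\R)\oplus e^{-}\mathfrak{sl}_2(\R)$, and for $X=e^{+}X_{+}+e^{-}X_{-}$, $Y=e^{+}Y_{+}+e^{-}Y_{-}$ one has $B(X,Y)=\trace(XY)=e^{+}\trace(X_{+}Y_{+})+e^{-}\trace(X_{-}Y_{-})=e^{+}B_\R(X_{+},Y_{+})+e^{-}B_\R(X_{-},Y_{-})$, where $B_\R$ is the real trace form on $\mathfrak{sl}_2(\R)$.

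\textbf{Key steps.} Next I would observe that the flat bundle splits accordingly: since $\Ad\rho(\gamma)$ acts on $\mathfrak{sl}_2(\B)$ block-diagonally as $e^{+}\Ad\rho_{+}(\gamma)\oplus e^{-}\Ad\rho_{-}(\gamma)$, we get $\mathfrak{sl}_2(\B)_{\Ad\rho}=e^{+}\,\mathfrak{sl}_2(\R)_{\Ad\rho_{+}}\oplus e^{-}\,\mathfrak{sl}_2(\R)_{\Ad\rho_{-}}$ as flat bundles, and hence $H^1(\Sigma,\mathfrak{sl}_2(\B)_{\Ad\rho})=e^{+}H^1(\Sigma,\mathfrak{sl}_2(\R)_{\Ad\rho_{+}})\oplus e^{-}H^1(\Sigma,\mathfrak{sl}_2(\R)_{\Ad\rho_{-}})$, this being exactly the identification of $T_{[\rho]}\chidf$ with $T_{[\rho_{+}]}\mathcal T^\rep(\Sigma)\oplus T_{[\rho_{-}]}\mathcal T^\rep(\Sigma)$ used throughout. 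Then, given two tangent vectors, writing their harmonic (or any closed-form) representatives as $\sigma\otimes\phi = e^{+}(\sigma_{+}\otimes\phi_{+}) + e^{-}(\sigma_{-}\otimes\phi_{-})$ and similarly for the primed one, and using that the cup product is computed by wedging the real $1$-form parts (which do not interact across the $e^{\pm}$ decomposition because $e^{+}e^{-}=0$), I would compute
\[
\Omega_{Gol}^\B = \int_\Sigma B(\phi,\phi')(\sigma\wedge\sigma') = e^{+}\!\int_\Sigma B_\R(\phi_{+},\phi_{+}')(\sigma_{+}\wedge\sigma_{+}') + e^{-}\!\int_\Sigma B_\R(\phi_{-},\phi_{-}')(\sigma_{-}\wedge\sigma_{-}') .
\]
The two real integrals are by definition $\pi_l^*\Omega_{Gol}^\R$ and $\pi_r^*\Omega_{Gol}^\R$ evaluated on the given vectors. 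Finally, substituting $e^{\pm}=\tfrac{1\pm\tau}{2}$ yields
\[
\Omega_{Gol}^\B = \tfrac12\bigl(\pi_l^*\Omega_{Gol}^\R + \pi_r^*\Omega_{Gol}^\R\bigr) + \tfrac{\tau}{2}\bigl(\pi_l^*\Omega_{Gol}^\R - \pi_r^*\Omega_{Gol}^\R\bigr),
\]
which is the claim.

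\textbf{Main obstacle.} There is no deep difficulty here; the only point requiring a little care is the bookkeeping of conventions: one must check that the real Goldman form $\Omega_{Gol}^\R$ on each factor $\mathcal T^\rep(\Sigma)$ is defined using precisely the real trace pairing $B_\R$ that appears in the $e^{\pm}$-components of $B$ (so that the normalization matches with no spurious factor of $2$), and that the decomposition $\mathfrak{sl}_2(\B)=e^{+}\mathfrak{sl}_2(\R)\oplus e^{-}\mathfrak{sl}_2(\R)$ is compatible with the chosen identification $\PSL(2,\B)\cong\dPSL$ used to split the character variety. Once these conventions are pinned down, the argument is a direct unravelling of definitions, and the $\B$-bilinearity of all the maps involved guarantees that the splitting commutes with cup product and integration.
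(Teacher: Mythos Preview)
Your proof is correct and follows essentially the same approach as the paper: decompose $\mathfrak{sl}_2(\B)_{\Ad\rho}$ along the idempotents $e^{\pm}$, observe that the trace pairing $B$ splits accordingly, and then read off the real and $\tau$-parts. The only cosmetic difference is that the paper rewrites $\phi_+e^++\phi_-e^-=\tfrac12((\phi_++\phi_-)+\tau(\phi_+-\phi_-))$ and expands $\Omega_{Gol}^\B$ via its $\B$-bilinearity in the $(1,\tau)$ basis, whereas you exploit $e^+e^-=0$ directly to get $\Omega_{Gol}^\B=e^+\pi_l^*\Omega_{Gol}^\R+e^-\pi_r^*\Omega_{Gol}^\R$ in one step before converting back; your route is marginally cleaner but the content is identical.
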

\begin{proof} The isomorphism $\PSL(2,\B)\cong \dPSL$ induces an isomophism of Lie algebras $\mathfrak{sl}_{2}(\B)\cong \mathfrak{sl}_{2}(\R)\times \mathfrak{sl}_{2}(\R)$ given by decomposing $X\in \mathfrak{sl}_{2}(\B)$ into $X=X_{+}e^{+}+X_{-}e^{-}$ with $X_{\pm} \in \mathfrak{sl}_{2}(\R)$. Moreover, the adjoint action of $\rho=\rho_-e^-+\rho_+e^+$ on $\mathfrak{sl}_{2}(\B)$ induces the action of $(\rho_+,\rho_-)$ on $\mathfrak{sl}_{2}(\R)\times \mathfrak{sl}_{2}(\R)$ given by the adjoint action on each factor. As a consequence, an $\mathfrak{sl}_{2}(\B)_{\Ad\rho}$-valued $1$-form $\sigma \otimes \phi$ can be uniquely written as $(\sigma \otimes \phi_{+}e^{+}) + (\sigma \otimes \phi_{-}e^{-})$, and $\sigma \otimes \phi_{\pm}$ are $\mathfrak{sl}_{2}(\R)_{\Ad\rho_{\pm}}$-valued $1$-forms. Finally, we observe that $\sigma \otimes \phi$ is closed (respectively exact) if and only if $\sigma \otimes \phi_{\pm}$ are closed (respectively exact). Therefore, by Lemma \ref{lm:parahol_G},
\begin{align*}
    &\Omega_{Gol}^\B(\sigma\otimes \phi, \sigma'\otimes \phi')=\Omega_{Gol}^\B\left(\sigma \otimes (\phi_{+}e^{+}+\phi_{-}e^{-}), \sigma'\otimes ( \phi'_{+}e^{+}+\phi'_{-}e^{-})\right)\\
    &=\frac{1}{4}\Omega_{Gol}^\B\left(\sigma \otimes( (\phi_{+}+\phi_{-})+\tau(\phi_{+}-\phi_{-})), \sigma'\otimes ((\phi_{+}'+\phi_{-}')+\tau(\phi_{+}'-\phi_{-}'))\right)\\
    &=\frac{1}{4}\left(\Omega_{Gol}^\R\left(\sigma \otimes (\phi_{+}+\phi_{-}), \sigma'\otimes (\phi_{+}'+\phi_{-}')\right)+\Omega_{Gol}^\R\left(\sigma \otimes (\phi_{+}-\phi_{-}), \sigma'\otimes (\phi_{+}'-\phi_{-}')\right)\right)\\&+ \frac{\tau}{4}\left(\Omega_{Gol}^\R\left(\sigma \otimes (\phi_{+}+\phi_{-}), \sigma'\otimes (\phi_{+}'-\phi_{-}')\right)+\Omega_{Gol}^\R\left(\sigma \otimes (\phi_{+}-\phi_{-}), \sigma'\otimes (\phi_{+}'+\phi_{-}')\right)\right)\\
    &=\frac{1}{2}\left(\Omega_{Gol}^\R(\sigma\otimes \phi_{+}, \sigma'\otimes \phi_{+}')+\Omega_{Gol}^\R(\sigma\otimes \phi_{-}, \sigma'\otimes \phi_{-}')\right)\\
    &+\frac{\tau}{2}\left(\Omega_{Gol}^\R(\sigma\otimes \phi_{+}, \sigma'\otimes \phi_{+}')-\Omega_{Gol}^\R(\sigma\otimes \phi_{-}, \sigma'\otimes \phi_{-}')\right)
\end{align*}
and the result follows.
\end{proof}

Now, recall that on $\mathcal{MGH}(\Sigma)$ we have the holonomy map 
$$\mathcal{H}ol:\mathcal{MGH}(\Sigma)\to\chidf~,$$
and moreover Mess homeomorphism 
$$\mathcal M^\hyp:\mathcal{MGH}(\Sigma)\to\mathcal T^\hyp(\Sigma)\times\mathcal T^\hyp(\Sigma)~.$$
The work of Mess, combined with the expression of $\mathcal M$ that we gave in \eqref{eq:left_right_metric} (see Theorem \ref{thm:mess_homeo})  showed that the left and right components of $\mathcal M$ coincide with the holonomy of the MGHC AdS manifold, under the identification $\mathcal T^\hyp(\Sigma)\cong \mathcal T^\rep(\Sigma)$. In other words, we have $\mathcal{H}ol=(\mathrm{hol},\mathrm{hol})\circ \mathcal M$. Combining Proposition \ref{prop:real_im}  with Theorem \ref{thm:mappaM}, and using Goldman's fundamental identity \eqref{eq:goldmanthm}, we obtain that $\mathcal{H}ol^*(\mathcal T, 4\Omega_{Gol}^\B)=\omega_\j^{\mathbb B}$. Since $\Omega_{Gol}^\B$ is para-complex with respect to $\mathcal T$, and $\omega_\j^{\mathbb B}$ is para-complex with respect to $\j$, the usual argument shows that $\mathcal{H}ol^*\mathcal T=\j$. This concludes the proof of the following result.

\begin{repcorollary}{cor:holonomy}
Let $\Sigma$ be a closed oriented surface of genus $\geq 2$. Then
$$\mathcal{H}ol^*(\mathcal T,4\Omega_{Gol}^\B)=(\j,\omega_\j^{\mathbb B})~.$$
\end{repcorollary}

\subsubsection{The para-complex Fenchel-Nielsen coordinates} Goldman symplectic forms on Teichm\"uller space and on the space of quasi-Fuchsian representations are intimately related to hyperbolic geometry because, for instance, the (real or complex) length and twist parameters (\cite{wolpert1983on_the_symplectic},\cite{Platis_complexFN}) provide Darboux coordinates. Here we show that an analogous result holds for $\Omega_{Gol}^{\B}$ using the $\B$-valued Fenchel-Nielsen coordinates introduced in \cite{Tambu_FN} to describe the deformation space of MGHC anti-de Sitter structures. 

We first introduce some facts on Anti-de Sitter geometry and recall the definition of these coordinates. The model of Anti-de Sitter space that we will use is simply the Lie group $\PSL(2,\R)$ endowed with the bi-invariant Lorentzian metric which is induced by $(1/8)$ the Killing form on the Lie algebra, where the factor $(1/8)$ serves to normalize the sectional curvature to be equal to $-1$. The group of orientation-preserving, time-preserving isometries is isomorphic to $\PSL(2,\R)\times\PSL(2,\R)$, acting by left and right multiplication, and the boundary of AdS space identifies to $\R\mathbb P^1\times\R\mathbb P^1$, in such a way that the action of the isometry group extends to the obvious component-wise action of $\PSL(2,\R)\times\PSL(2,\R)$ on $\R\mathbb P^1\times\R\mathbb P^1$.

Now, consider an isometry of the form $(\gamma_+,\gamma_-)\in \PSL(2,\R)\times\PSL(2,\R)$, where both $\gamma_-$ and $\gamma_+$ are loxodromic. Then $(\gamma_+,\gamma_-)$ acts on the boundary $\R\mathbb P^1\times\R\mathbb P^1$ by fixing four points, of which one is attracting and one repelling. It turns out (see for instance the first part of the proof of Lemma \ref{lem:tutto insieme} below) that there exists a spacelike geodesic connecting these two points (a priori the other possibility, that is however excluded by Lemma \ref{lem:tutto insieme}, is that the segment connecting the two points is lightlike and contained in the boundary). This spacelike geodesic is what we will call the principal axis. More formally:

\begin{definition}\label{defi:principal_axis}
Given a pair of loxodromic elements $(\gamma_+,\gamma_-)\in \PSL(2,\R)\times\PSL(2,\R)$, we call principal axis  of $(\gamma_+,\gamma_-)$ the unique spacelike geodesic $\mathrm{Axis}(\gamma_+,\gamma_-)$ of Anti-de Sitter space whose endpoints on $\R\mathbb P^1\times\R\mathbb P^1$ are $(\gamma_+^{rep},\gamma_-^{rep})$ and $(\gamma_+^{att},\gamma_-^{att})$, where $\gamma_\pm^{rep}$ and $\gamma_\pm^{att}$ denote the repelling and attracting fixed points on $\R\mathbb P^1$.
\end{definition}

Now fix a pair of pants decomposition $\mathcal{P}=\{\gamma_{1}, \dots, \gamma_{n}\}$ of $\Sigma$, for $n=(3/2)|\chi(\Sigma)|$. If $\rho:\pi_{1}(\Sigma) \rightarrow \PSL(2,\B)$ is the holonomy of a MGHC Anti-de Sitter manifold, then $\rho(\gamma_{j})$ is loxodromic, i.e. conjugated to a diagonal matrix, for every $\gamma_{j} \in \mathcal{P}$. The $\B$-valued length is defined as
\[
    \ell^{\B}_{\rho}(\gamma_{j})=2\arccosh\left(\frac{\trace(\rho(\gamma_{j}))}{2}\right) \ ,
\]
where the hyperbolic arccosine is computed using its power series expansion. This quantity is related to how a loxodromic isometry acts on anti-de Sitter space. Indeed, 
the real part of $\ell^{\B}_{\rho}(\gamma_{j})$ represents the translation length of $\rho(\gamma_{j})$ on the principal axis under $\rho(\gamma_{j})$, whereas the imaginary part is the rotation angle of $\rho(\gamma_{j})$ acting on the orthogonal of such a geodesic. The $\B$-valued lengths completely determine a $\PSL(2,\B)$-representation of a pair of pants. The $\B$-twist parameter $\tw^{\B}_{\rho}(\gamma_{j})$ indicates how the pair of pants are glued together along the boundary curve $\gamma_{j}$: the real part describes the shear parameter and the imaginary part the bending between the two pairs of pants in anti-de Sitter space.\\
It turns out that $\ell^{\B}_{\rho}(\gamma_{j})$ and $\tw^{\B}_{\rho}(\gamma_{j})$ are related to the classical Fenchel-Nielsen coordinates $\ell_{\bullet}$ and $\tw_{\bullet}$ as follows. Under the isomorphism $\PSL(2,\B) \cong \dPSL$, the holonomy $\rho:\pi_{1}(\Sigma)\rightarrow \PSL(2,\B)$ of a MGHC Anti-de Sitter structure corresponds to a pair of faithful and discrete representations $\rho_{\pm}:\pi_{1}(\Sigma)\rightarrow \PSL(2,\R)$, which are thus the holonomies of hyperbolic metrics on $\Sigma$. We have the following relations:
\[
 \ell^{\B}_{\rho}(\gamma_{j})=\ell_{\rho_{+}}(\gamma_{j})e^{+}+\ell_{\rho_{-}}(\gamma_{j})e^{-}=\left(\frac{ \ell_{\rho_{+}}(\gamma_{j})+\ell_{\rho_{-}}(\gamma_{j})}{2}\right)+\tau\left(\frac{ \ell_{\rho_{+}}(\gamma_{j})-\ell_{\rho_{-}}(\gamma_{j})}{2}\right) \ 
\]
and
\[
    \tw^{\B}_{\rho}(\gamma_{j})=\tw_{\rho_{+}}(\gamma_{j})e^{+}+\tw_{\rho_{-}}(\gamma_{j})e^{-}=\left(\frac{ \tw_{\rho_{+}}(\gamma_{j})+\tw_{\rho_{-}}(\gamma_{j})}{2}\right)+\tau\left(\frac{ \tw_{\rho_{+}}(\gamma_{j})-\tw_{\rho_{-}}(\gamma_{j})}{2}\right) \ .
\]

\begin{reptheorem}{thm:fenchel}
The $\mathbb B$-valued Fenchel-Nielsen coordinates are para-holomorphic for $\mathcal T$, and are Darboux coordinates with respect to the para-complex symplectic form $\Omega_{Gol}^\B$.
\end{reptheorem}
\begin{proof} Let us show the first statement. As a consequence of the definition of $\mathcal{T}$, $\mathcal T$ acts as the identity on the first factor on the tangent space to $\mathcal T^{\mathfrak{rep}}(\Sigma)\times \mathcal T^{\mathfrak{rep}}(\Sigma)$, and as minus the identity on the second factor. Hence for any curve $\gamma_{j} \in \mathcal{P}$, we have
\[
    \mathcal{T}\left(\frac{\partial}{\partial \ell_{\rho_{\pm}}^{j}}\right)=\pm \frac{\partial}{\partial \ell_{\rho_{\pm}}^{j}} \ \ \ \text{and} \ \ \ 
    \mathcal{T}\left(\frac{\partial}{\partial \tw_{\rho_{\pm}}^{j}}\right)=\pm \frac{\partial}{\partial \tw_{\rho_{\pm}}^{j}} \ .
\]
Therefore, 
\[
    d\ell_{\rho}^{\B}\left(\mathcal{T}\left(\frac{\partial}{\partial \ell_{\rho_{\pm}}^{j}}\right)\right)=\frac{\pm 1 + \tau}{2}
\]
and 
\[
    \tau d\ell_{\rho}^{\B}\left(\frac{\partial}{\partial \ell_{\rho_{\pm}}^{j}}\right)=\tau\left(\frac{1\pm \tau}{2}\right)=\frac{\pm 1 + \tau}{2} \ .
\]
On the other hand, it is clear that 
\[
    d\ell_{\rho}^{\B}\left(\mathcal{T}\left(\frac{\partial}{\partial \tw_{\rho_{\pm}}^{j}}\right)\right)=0=\tau d\ell_{\rho}^{\B}\left(\frac{\partial}{\partial \tw_{\rho_{\pm}}^{j}}\right) \ ,
\]
hence the $\B$-lengths are para-holomorphic. A similar computation holds for the $\B$-twist parameters.


For the second statement, let $\ell^{\B,j}_{\rho}$ and $\tw^{\B,j}_{\rho}$ denote the $\B$-length and $\B$-twist parameters along the curve $\gamma_{j} \in \mathcal{P}$. We use an analogous notation for the Fenchel-Nielsen coordinates associated to the representations $\rho_{\pm}$ into $\PSL(2,\R)$ such that $\rho=\rho_{+}e^{+}+\rho_{-}e^{-}$.  By Proposition \ref{prop:real_im}, and the fact that the classical Fenchel-Nielsen coordinates are Darboux for $\Omega_{WP}$, we have
\begin{align*}
    \Re(\Omega_{Gol}^\B)\left( \frac{\partial}{\partial \ell^{\B,i}_{\rho}}, \frac{\partial}{\partial \ell^{\B,j}_{\rho}}\right)=\frac{1}{2}\left(\Omega_{WP}\left( \frac{\partial}{\partial \ell^{i}_{\rho_{+}}}, \frac{\partial}{\partial \ell^{j}_{\rho_{+}}}\right)+\Omega_{WP}\left( \frac{\partial}{\partial \ell^{i}_{\rho_{-}}}, \frac{\partial}{\partial \ell^{j}_{\rho_{-}}}\right)\right)=0,
\end{align*}
and similarly
\begin{align*}
    \Imm(\Omega_{Gol}^\B)\left( \frac{\partial}{\partial \ell^{\B,i}_{\rho}}, \frac{\partial}{\partial \ell^{\B,j}_{\rho}}\right)=\frac{1}{2}\left(\Omega_{WP}\left( \frac{\partial}{\partial \ell^{i}_{\rho_{+}}}, \frac{\partial}{\partial \ell^{j}_{\rho_{+}}}\right)-\Omega_{WP}\left( \frac{\partial}{\partial \ell^{i}_{\rho_{-}}}, \frac{\partial}{\partial \ell^{j}_{\rho_{-}}}\right)\right)=0 \ .
\end{align*}
For the same reason
\[
 \Omega_{Gol}^{\B}\left( \frac{\partial}{\partial \tw^{\B,i}_{\rho}}, \frac{\partial}{\partial \tw^{\B,j}_{\rho}}\right)=0 \ .
\]
On the other hand, 
\begin{align*}
 \Re(\Omega_{Gol}^\B)\left( \frac{\partial}{\partial \ell^{\B,i}_{\rho}}, \frac{\partial}{\partial \tw^{\B,j}_{\rho}}\right)=\frac{1}{2}\left(\Omega_{WP}\left( \frac{\partial}{\partial \ell^{i}_{\rho_{+}}}, \frac{\partial}{\partial \tw^{j}_{\rho_{+}}}\right)+\Omega_{WP}\left( \frac{\partial}{\partial \ell^{i}_{\rho_{-}}}, \frac{\partial}{\partial \tw^{j}_{\rho_{-}}}\right)\right)=\delta_{i,j}
\end{align*}
and
\begin{align*}
 \Imm(\Omega_{Gol}^\B)\left( \frac{\partial}{\partial \tw^{\B,i}_{\rho}}, \frac{\partial}{\partial \ell^{\B,j}_{\rho}}\right)=\frac{1}{2}\left(\Omega_{WP}\left( \frac{\partial}{\partial \tw^{i}_{\rho_{+}}}, \frac{\partial}{\partial \ell^{j}_{\rho_{+}}}\right)-\Omega_{WP}\left( \frac{\partial}{\partial \tw^{i}_{\rho_{-}}}, \frac{\partial}{\partial \ell^{j}_{\rho_{-}}}\right)\right)=0
\end{align*}
We conclude that 
\begin{equation}\label{eq:FenchelDarboux}
 \Omega_{Gol}^{\B}=\sum_{j=1}^{n} d\ell^{\B,j}_{\rho}\wedge d\tw^{\B,j}_{\rho}
\end{equation}
as claimed.
\end{proof}

\subsubsection{The para-complex cosine formula}
Let us now move on to the proof of Theorem \ref{thm:goldmancosine}, namely a $\B$-valued version of the cosine formula for the Weil-Petersson symplectic form. For this purpose, 
it will suffice to focus on the case where there is at least a point of intersection. The fundamental geometric computation is contained in the following lemma.

\begin{lemma}\label{lem:tutto insieme} Let $(\alpha_+,\alpha_-)$ and $(\beta_+,\beta_-)$   be two pairs of loxodromic elements in $\PSL(2,\R)\times\PSL(2,\R)$. Suppose that the actions of $(\alpha_+,\beta_+)$ and $(\alpha_-,\beta_-)$ on $\R\mathbb P^1$ are topologically conjugated (i.e. there exists an orientation-preserving diffeomorphism $f$ of $\R\mathbb P^1$ such that $f\alpha_-f^{-1}=\alpha_+$ and $f\beta_-f^{-1}=\beta_+$) and that the axes of $\alpha_\pm$ and $\beta_\pm$ intersect in $\Hyp^2$ with counterclockwise angle equal to $\varphi_\pm\in [0,\pi)$. Denote $\tilde\alpha=\mathrm{Axis}(\alpha_+,\alpha_-)$ and $\tilde\beta=\mathrm{Axis}(\beta_+,\beta_-)$ the principal axes in Anti-de Sitter space. Then
\begin{enumerate}
    \item There is a unique complete timelike geodesic $\sigma$ that intersects both $\tilde{\alpha}$ and $\tilde{\beta}$ orthogonally. Call the points of intersection $q_{\tilde\alpha}$ and $q_{\tilde\beta}$. Orient $\sigma$ so that its tangent vector is future-directed.
    \item The signed timelike distance along $\sigma$ between $q_{\tilde\alpha}$ and $q_{\tilde\beta}$  equals $(\varphi_+-\varphi_-)/2\in (-\pi/2,\pi/2)$.
    \item The counterclockwise angle of intersection between $\tilde\alpha$ and the parallel transport of $\tilde\beta$ to $q_{\tilde\alpha}$ along the unique orthogonal timelike geodesic $\sigma$ equals $(\varphi_++\varphi_-)/2$.
\end{enumerate}
\end{lemma}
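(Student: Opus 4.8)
The plan is to work entirely in the $\PSL(2,\R)\times\PSL(2,\R)$ model of Anti-de Sitter space, i.e. in the Lie group $\PSL(2,\R)$ with its bi-invariant Lorentzian metric. The key principle is that left and right multiplication by hyperbolic one-parameter subgroups can be diagonalized simultaneously, which reduces everything to an elementary computation in a well-chosen normal form. First I would prove part (1): the spacelike geodesics through two fixed distinct points of $\R\mathbb P^1\times\R\mathbb P^1$ correspond, in the group model, to cosets of hyperbolic one-parameter subgroups, and the principal axis $\tilde\alpha$ is the geodesic through $(\alpha_+^{rep},\alpha_-^{rep})$ and $(\alpha_+^{att},\alpha_-^{att})$; one checks this pair of points is indeed joined by a spacelike (not lightlike) geodesic precisely because the actions of $(\alpha_+,\beta_+)$ and $(\alpha_-,\beta_-)$ are topologically conjugate and the axes of $\alpha_\pm,\beta_\pm$ cross in $\Hyp^2$, so the four endpoints are in "linked" position on each $\R\mathbb P^1$. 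The existence and uniqueness of a common orthogonal timelike geodesic $\sigma$ between $\tilde\alpha$ and $\tilde\beta$ is then a statement about two spacelike geodesics in a normal form, which can be verified by direct computation after conjugating so that $\tilde\alpha$ passes through the identity with a standard tangent direction.

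For parts (2) and (3), the natural approach is to normalize using the isometry group $\PSL(2,\R)\times\PSL(2,\R)$. I would conjugate the pair $(\alpha_\pm,\beta_\pm)$ so that the common perpendicular $\gamma$ in $\Hyp^2$ to $\mathrm{Axis}(\alpha_\pm)$ and $\mathrm{Axis}(\beta_\pm)$ is a fixed geodesic (say the imaginary axis), with $\mathrm{Axis}(\alpha_\pm)$ crossing it at a fixed point and $\mathrm{Axis}(\beta_\pm)$ crossing it at the same point (since $\alpha_\pm$ and $\beta_\pm$ have intersecting axes; although in general the perpendicular feet for the $+$ and $-$ copies need not agree, the topological conjugacy hypothesis is exactly what forces enough compatibility — here is where the hypothesis does real work). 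Then $\mathrm{Axis}(\alpha_+)$ is obtained from $\mathrm{Axis}(\alpha_-)$ by a rotation of angle $\varphi_+-$ something, and similarly for $\beta$; translating this to the group model, the endpoints of $\tilde\alpha$ and $\tilde\beta$ become explicit, and the timelike geodesic $\sigma$ becomes a one-parameter subgroup of rotations acting by conjugation-like left/right multiplication. The timelike distance between $q_{\tilde\alpha}$ and $q_{\tilde\beta}$ along $\sigma$ is then read off as half the difference of rotation angles, i.e. $(\varphi_+-\varphi_-)/2$, and the angle between $\tilde\alpha$ and the parallel transport of $\tilde\beta$ is read off as half the sum. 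The factor of $\tfrac12$ and the signs come from the fact that the AdS metric is $(1/8)$ the Killing form and from the left/right splitting $g=(g_+,g_-)$ acting as $x\mapsto g_+ x g_-^{-1}$.

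The main obstacle I anticipate is book-keeping in part (2)–(3): one must set up the normal form carefully enough that parallel transport of a spacelike geodesic along a timelike geodesic is computed correctly (this is where sign conventions and the choice of "counterclockwise" orientation matter), and one must make sure the two endpoint pairs $(\alpha_+^{att},\alpha_-^{att})$ etc. really do determine a spacelike — rather than lightlike or timelike — geodesic, which is the subtle point flagged in the statement. I would handle this by reducing, via the left-right isometry action, to the case where $\tilde\alpha$ is the geodesic through the identity tangent to a standard hyperbolic direction $\mathrm{diag}(e^{t/2},e^{-t/2})$, and then parametrize $\sigma$ explicitly as a rotation subgroup through $q_{\tilde\alpha}$, so that parallel transport along $\sigma$ is just the adjoint action of that rotation, which rotates the spacelike direction of $\tilde\beta$ by the claimed angle. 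Once the normal form is in place, parts (2) and (3) become a short trigonometric identity, and part (1)'s uniqueness follows because two distinct timelike geodesics cannot both be orthogonal to two distinct spacelike geodesics in a totally geodesic timelike plane. Throughout, I would invoke the description of the principal axis in Definition \ref{defi:principal_axis} and the basic dictionary between one-parameter subgroups of $\PSL(2,\R)$ and geodesics of AdS space.
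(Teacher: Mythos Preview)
Your overall strategy is the paper's: normalize via the $\PSL(2,\R)\times\PSL(2,\R)$ action, identify the principal axes with explicit subsets of $\PSL(2,\R)$, and read off the distance and angle from a rotation decomposition. Two points in your sketch would cause trouble if left as written, though.

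First, your normalization step is muddled. The axes of $\alpha_\pm$ and $\beta_\pm$ \emph{intersect} in $\Hyp^2$, so there is no ``common perpendicular''; and in any case you do not need topological conjugacy to align the two pictures. Since the isometry group acts as two independent copies of $\PSL(2,\R)$, you may separately conjugate so that $\alpha_+$ and $\alpha_-$ both have axis the geodesic $\ell$ from $0$ to $\infty$ and the intersection point of $\widetilde\alpha_\pm$ with $\widetilde\beta_\pm$ is $i$. Then $\tilde\alpha$ sits inside the totally geodesic spacelike plane $\mathcal P$ of order-two elliptic elements (identified isometrically with $\Hyp^2$ via the fixed-point map), and $\widetilde\beta_\pm=R_{\varphi_\pm}\cdot\widetilde\alpha_\pm$. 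The actual role of the topological-conjugacy hypothesis is different from what you say: it guarantees that the attracting and repelling fixed points of $\alpha_\pm,\beta_\pm$ appear in the same cyclic order on $\R\mathbb P^1$, so that \emph{both} $R_{\varphi_+}$ and $R_{\varphi_-}$ send attracting to attracting (or both send attracting to repelling). Without this, you could only conclude $\tilde\beta=(R_{\varphi_+},R_{\varphi_-\pm\pi})\tilde\alpha$, which ruins the computation.

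Second, once you write $(R_{\varphi_+},R_{\varphi_-})=(R_d,R_{-d})\circ(R_\theta,R_\theta)$ with $d=(\varphi_+-\varphi_-)/2$ and $\theta=(\varphi_++\varphi_-)/2$, the factor $(R_\theta,R_\theta)$ acts as a rotation of angle $\theta$ inside $\mathcal P$, while $(R_d,R_{-d})$ is exactly the parallel transport by signed time $d$ along the timelike geodesic $t\mapsto R_{2t}$ orthogonal to $\mathcal P$. This is \emph{not} the adjoint action: the relevant general fact (which you should prove or cite) is that for a bi-invariant metric, parallel transport of $Y$ along $g_t=\exp(tX)$ is $(L_{g_{t/2}})_*(R_{g_{t/2}})_*Y$, and this is precisely the differential of the isometry $(R_{t},R_{-t})$. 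The factor $\tfrac12$ in the answer comes from the algebra $\varphi_\pm=\theta\pm d$, not from the $1/8$ normalization of the Killing form; the latter only fixes the arclength parametrization of $\sigma$.
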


Let us clarify the meaning of the second statement. Given two points $p$ and $q$ in AdS space connected by a timelike geodesic, the timelike distance between $p$ and $q$ is the length of the shortest geodesic segment from $p$ and $q$. Recalling that timelike geodesics are closed and have length $\pi$, there are two such geodesic segments, whose sum equals $\pi$. If the points $p$ and $q$ are not antipodal (i.e. the timelike distance is not $\pi/2$), then we can define the \emph{signed} timelike distance $d(p,q)$, which is just the timelike distance introduced above with positive sign if the realizing geodesic segment is oriented towards the future, and with negative sign if it is oriented towards the past. 
We remark that $d(p,q)=-d(q,p)$.

\begin{proof}
Denote by $\widetilde\alpha_\pm$ the axes of $\alpha_\pm$ in $\Hyp^2$, and analogously $\tilde\beta_\pm$ are the axes of $\beta_\pm$. Applying the action of the group $\PSL(2,\R)\times\PSL(2,\R)$, we can assume that, in the upper half-space model of $\Hyp^2$, the repelling fixed point of $\alpha_\pm$ is $0$, and the attracting fixed point of $\alpha_\pm$ is $\infty$ (hence 
both axes $\tilde\alpha_-$ and $\tilde\alpha_+$ coincide with the geodesic $\ell$ connecting $0$ and $\infty$), and moreover the intersection point of the axes $\tilde\beta_-$ and $\tilde\beta_+$ with $\ell$ is $i$.
Then the set of order two isometries in $\PSL(2,\R)$ having fixed point on $\ell$ is an Anti-de Sitter spacelike geodesic with endpoints $(0,0)$ and $(\infty,\infty)$ in $\R\mathbb P^1\times\R\mathbb P^1$, and is therefore the principal axis $\tilde\alpha$ of $(\alpha_+,\alpha_-)$. Indeed, the Lie group and the Riemannian exponential maps coincide for a bi-invariant metric on a Lie group, and this is a left translate of a one-parameter group of hyperbolic transformations in $\PSL(2,\R)$. To see that the endpoints are precisely $(0,0)$ and $(\infty,\infty)$, one can apply the criterion in \cite[Lemma 3.2.2]{bonsante2020antide}; anyway this follows from the more general observation (\cite[Lemma 3.5.1]{bonsante2020antide}, that we will apply below) that the surface $\mathcal P$ of order two isometries is a totally geodesic spacelike plane in Anti-de Sitter space, whose boundary at infinity is the diagonal in $\R\mathbb P^1\times\R\mathbb P^1$, for which the map $\mathcal P\to\Hyp^2$ sending an order two element to its fixed point is an isometry, equivariant for the action of $\PSL(2,\R)$ (by conjugation on $\mathcal P$, and the obvious action on $\Hyp^2$). \emph{En passant}, we have showed that the principal axis in Definition \ref{defi:principal_axis} is well-defined. 

Now, by our assumption the axis $\tilde\beta_\pm$ is obtained by applying to $\tilde\alpha_\pm$ a rotation $R_{\varphi_\pm}$ of angle $\varphi_\pm$ fixing $i$. Since the actions are topologically conjugated, the attracting and repelling fixed points of $\alpha_\pm$ and $\beta_\pm$ appear with the same cyclic order on $\R\mathbb P^1$. This implies that either the rotations $R_{\varphi_+}$ and $R_{\varphi_-}$ both map attracting (resp. repelling) fixed points of $\alpha_\pm$ to attracting (resp. repelling) fixed points of $\beta_\pm$, or they both map the attracting fixed point to the repelling one and vice versa. This implies that the principal axis $\tilde\beta$ equals $(R_{\varphi_+},R_{\varphi_-})\tilde\alpha$. Now, set
\[
    d=\frac{\varphi_{+}-\varphi_{-}}{2} \ \ \ \text{and} \ \ \ \theta=\frac{\varphi_{+}+\varphi_{-}}{2} \ . 
\]
Observe that $\varphi_\pm\in (0,\pi)$, hence $\theta\in (0,\pi)$ and $d\in (-\pi/2,\pi/2)$. Then we have 
$$(R_{\varphi_+},R_{\varphi_-})=(R_{d},R_{-d})\circ (R_{\theta},R_{\theta})~.$$
By the above considerations on the totally geodesic spacelike plane $\mathcal P$ (in particular the equivariant isometry with $\Hyp^2$), $(R_{\theta},R_{\theta})$ acts as a rotation of angle $\theta$. Moreover, by a general fact that we prove in Proposition \ref{prop:parallel_lie_groups} below for completeness, the parallel transport along the geodesic $t\mapsto R_{2t}$ (which is parameterized by arclength, future-directed, and orthogonal to the totally geodesic plane $\mathcal P$ by a simple symmetry argument) equals the differential of the isometry   $(R_{t},R_{-t})$. In conclusion, we have $q_{\tilde\alpha}=R_\pi$, $q_{\tilde\beta}=(R_{d},R_{-d})q_{\tilde\alpha}$, so their signed timelike distance is $d$, and finally the angle at $q_{\tilde\alpha}$ between the parallel transport of $\tilde\beta$ at $q_{\tilde\alpha}$ and $\tilde{\alpha}$ equals $\theta$. This concludes the proof of the second and third items. 
\end{proof}

\begin{proposition}\label{prop:parallel_lie_groups}
Given a bi-invariant metric on a Lie group $G$ and $X,Y\in\mathfrak g$, the parallel transport of $Y$ along the geodesic $g_t=\exp(tX)$ at the point $g_t$ is given by
$(L_{g_{t/2}})_*(R_{g_{t/2}})_*(Y)$.
\end{proposition}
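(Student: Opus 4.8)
The statement is a classical fact about bi-invariant metrics, and the cleanest route is to combine two standard ingredients: (1) on a Lie group with bi-invariant metric the Levi-Civita connection satisfies $\nabla_X Y = \tfrac12[X,Y]$ for left-invariant vector fields $X,Y$, and the one-parameter subgroups $t\mapsto\exp(tX)$ are geodesics; (2) parallel transport along such a geodesic can be computed explicitly by solving the parallel transport ODE. First I would fix $X\in\mathfrak g$ and let $g_t=\exp(tX)$, and for a given $Y\in\mathfrak g$ define the curve of tangent vectors
\[
V(t) \defin (L_{g_{t/2}})_*(R_{g_{t/2}})_*(Y)\in T_{g_t}G .
\]
The goal is to show $V$ is parallel along $g_t$, i.e. $\nabla_{\dot g_t}V(t)=0$, and that $V(0)=Y$; the latter is immediate since $g_0=e$.

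Next I would trivialize everything by left translation: write $V(t) = (L_{g_t})_* W(t)$ where $W(t) = (L_{g_{t/2}}^{-1}\circ R_{g_{t/2}})_* Y = \Ad(g_{t/2})^{-1}\, Y$ — more precisely $W(t)=\Ad(\exp(-tX/2))Y$, using that $(L_a^{-1}R_a)_* = \Ad(a^{-1})$ on $\mathfrak g$ under the left trivialization. Differentiating, $\dot W(t) = -\tfrac12[X,W(t)]$ (using $\dv{t}\Ad(\exp(-tX/2)) = -\tfrac12\ad_X\circ\Ad(\exp(-tX/2))$). Now the standard formula for the covariant derivative of a vector field along a curve, written in the left trivialization, reads: if $c(t)=g_t$ with left-logarithmic derivative $\dot g_t = (L_{g_t})_* X$ (constant here, since $g_t$ is a one-parameter subgroup), and $V(t)=(L_{g_t})_*W(t)$, then
\[
\nabla_{\dot g_t} V(t) = (L_{g_t})_*\!\left(\dot W(t) + \tfrac12[X,W(t)]\right).
\]
Substituting $\dot W(t) = -\tfrac12[X,W(t)]$ gives $\nabla_{\dot g_t}V(t)=0$, which is exactly what we want. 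Finally, uniqueness of parallel transport (solutions of a linear ODE with given initial condition) together with $V(0)=Y$ identifies $V(t)$ with the parallel transport of $Y$ along $g_t$.

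The only genuinely delicate point — and the one I would spell out carefully — is the formula $\nabla_{\dot g_t}V(t) = (L_{g_t})_*(\dot W + \tfrac12[X,W])$ for the covariant derivative along a geodesic one-parameter subgroup in the left trivialization; this is where the bi-invariance (through $\nabla_X Y=\tfrac12[X,Y]$ on left-invariant fields) enters. One clean way to justify it without heavy computation is to extend $W(t)$ to a left-invariant vector field at each time and use the Leibniz-type rule $\nabla_{\dot c}(fZ + \ldots)$ together with $\nabla^{\text{biinv}}_X Y = \tfrac12[X,Y]$; alternatively one can cite the standard expression for $\nabla$ in terms of the structure constants. Everything else — that one-parameter subgroups are geodesics, that $(L_a^{-1}R_a)_* = \Ad(a^{-1})$, the derivative of $t\mapsto\Ad(\exp(tZ))$ — is routine Lie theory. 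I expect no real obstacle beyond bookkeeping; the proposition is essentially a sanity-check lemma invoked in the proof of Lemma~\ref{lem:tutto insieme}, so a short self-contained argument along these lines suffices.
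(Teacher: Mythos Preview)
Your proof is correct and follows essentially the same approach as the paper: define the candidate vector field $V(t)=(L_{g_{t/2}})_*(R_{g_{t/2}})_*(Y)$ and verify it is parallel using the standard description of the Levi-Civita connection of a bi-invariant metric. The only cosmetic difference is that the paper writes $\nabla=\tfrac12(D^l+D^r)$ and uses the symmetry under $(L_{g_{s/2}},R_{g_{s/2}})$ to reduce the check to $t=0$, whereas you left-trivialize and verify $\dot W+\tfrac12[X,W]=0$ at every $t$; these are the same computation in different bookkeeping.
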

\begin{proof}
The Levi-Civita connection of a bi-invariant metric on $G$ is given by 
$$\nabla_V W=\frac{1}{2}(D^l_VW+D^r_VW)~,$$
where $D^l$ and $D^r$ are the left and right invariant connections, which are defined in the following way: given a path $\gamma(t)$ such that $\gamma(0)=g$ and $\gamma'(0)=V\in T_gG$, then $D^l_VW$ (resp. $D^r_VW$) is the derivative at time $0$ of the left (resp. right) translated of $W_{\gamma(t)}$ at $g$, namely $(L_{g\gamma(t)^{-1}})_*W_{\gamma(t)}$ (resp. $(R_{g\gamma(t)^{-1}})_*W_{\gamma(t)}$). 

Let us apply this to the geodesic $\gamma(t)=g_t=\exp(tX)$ and the vector field along $\gamma$ defined by $W_{g_t}=(L_{g_{t/2}})_*(R_{g_{t/2}})_*(Y)$. In order to check that $W$ is parallel along $g_t$, it suffices to check it at $t=0$ (namely that $\nabla_X W=0$), because multiplication on the left and on the right by $g_{t/2}$ is an isometry that preserves the vector field $W$ and the geodesic $\gamma$ by definition. We have 
\begin{align*}
    D^l_{X}W&=\left.\frac{d}{dt}\right|_{t=0}(L_{g_t^{-1}})_*(L_{g_{t/2}})_*(R_{g_{t/2}})_*(Y) \\
    &=\left.\frac{d}{dt}\right|_{t=0}(L_{g_{-t/2}})_*(R_{g_{t/2}})_*(Y)=-\mathrm{ad}_XY~.
\end{align*}
A similar computation shows $D^r_{X}W=\mathrm{ad}_XY$. Hence $\nabla_X W=0$, which concludes the claim.
\end{proof}

Motivated by Lemma \ref{lem:tutto insieme} above, we give the following definition. 

\begin{definition}
Given two spacelike geodesics $\tilde\alpha$ and $\tilde\beta$ in AdS space that admit a common orthogonal timelike  geodesic $\sigma$, let $q_{\tilde\alpha}$ and $q_{\tilde\beta}$ the intersection points of $\tilde\alpha$ and $\tilde\beta$ with $\sigma$. Then we define the $\B$-valued angle $d^{\B}(\tilde{\alpha}, \tilde{\beta})$ between $\tilde\alpha$ and $\tilde\beta$ as the para-complex number whose imaginary part equals $d(q_{\tilde\alpha},q_{\tilde\beta})$, and the real part equals the counterclockwise angle between the parallel transport of $\tilde\alpha$ at $q_{\tilde\beta}$ along $\sigma$, and $\tilde\beta$.
\end{definition}

\begin{remark} Let $d^{\B}(\tilde{\alpha}, \tilde{\beta})=\theta+\tau d$ be the $\B$-valued angle introduced above. If we invert the role of $\tilde{\alpha}$ and $\tilde{\beta}$,  clearly the real part of the $\B$-valued angle becomes $\pi-\theta$. The imaginary part instead only changes sign, so
\[
     d^{\B}(\tilde{\beta}, \tilde{\alpha})=(\pi-\theta)-\tau d \ .
\]
Hence 
$\cos(d^{\B}(\tilde{\beta}, \tilde{\alpha}))=-\cos(d^{\B}(\tilde{\alpha}, \tilde{\beta}))$ because
\begin{align*}
    \cos(\theta+\tau d)=\cos\left((\theta+d)e^{+} +(\theta-d)e^{-}\right) =\cos(\theta+d)e^{+}+ \cos(\theta-d)e^{-}
\end{align*}
where the last step is justified by the power series expansion of cosine. This is consistent with the skew-symmetry in the generalized cosine formula of Theorem \ref{thm:goldmancosine}.
\end{remark}

Hence Lemma \ref{lem:tutto insieme} can be restated by the following formula:
\begin{equation}\label{eq:formulone coseni}
   d^{\B}(\tilde{\alpha}, \tilde{\beta})=\frac{\varphi_{+}+\varphi_{-}}{2}+\tau\left(\frac{\varphi_{+}-\varphi_{-}}{2}\right) \ ,
\end{equation}

We are finally ready to prove:

\begin{reptheorem}{thm:goldmancosine} Let $\rho=(\rho_+,\rho_-):\pi_1(\Sigma)\to\PSL(2,\R)\times\PSL(2,\R)$ be the holonomy of a MGHC AdS manifold, and let $\alpha, \beta$ be non-trivial simple closed curves. Then 
\[
 \Omega_{Gol}^\B\left( \frac{\partial}{\partial \tw^{\B, \alpha}_{\rho}}, \frac{\partial}{\partial \tw^{\B, \beta}_{\rho}}\right)=\sum_{p \in \alpha \cap \beta} \cos(d^{\B}(\tilde{\alpha}_{\rho}, \tilde{\beta}_{\rho})) \ ,
\]
where $\tilde\alpha_\rho$ and $\tilde\beta_\rho$ are the principal axes of $\rho(\alpha)$ and $\rho(\beta)$ on AdS space. 
\end{reptheorem}

\begin{proof}
If the closed curves $\alpha, \beta \in \pi_{1}(\Sigma)$ are disjoint, then equality holds, as the LHS vanishes as a consequence of the formula \eqref{eq:FenchelDarboux}, and the RHS vanishes by definition. Let us suppose $\alpha$ and $\beta$ intersect, and let us
evaluate Goldman symplectic form on the twist deformations along $\alpha$ and $\beta$. As before, we write the representation $\rho:\pi_{1}(\Sigma)\rightarrow \PSL(2,\B)$ as $\rho=\rho_{+}e^{+}+\rho_{-}e^{-}$ with $\rho_{\pm}: \pi_{1}(\Sigma) \rightarrow \PSL(2,\R)$. For any $p \in \alpha \cap \beta$, we denote by $\varphi_{\pm}(p) \in (0. \pi)$ the angle between the geodesic representatives of $\alpha$ and $\beta$ for the hyperbolic metrics with holonomies $\rho_{\pm}$ measured from $\alpha$. By the work of Wolpert (\cite{wolpert1983on_the_symplectic}), we have
\begin{align*}
 \Re(\Omega_{Gol}^\B)\left( \frac{\partial}{\partial \tw^{\B, \alpha}_{\rho}}, \frac{\partial}{\partial \tw^{\B, \beta}_{\rho}}\right)&=\frac{1}{2}\left(\Omega_{WP}\left( \frac{\partial}{\partial \tw^{\alpha}_{\rho_{+}}}, \frac{\partial}{\partial \tw^{\beta}_{\rho_{+}}}\right)+\Omega_{WP}\left( \frac{\partial}{\partial \tw^{\alpha}_{\rho_{-}}}, \frac{\partial}{\partial \tw^{\beta}_{\rho_{-}}}\right)\right)\\
 &=\frac{1}{2}\sum_{p \in \alpha \cap \beta}\cos(\varphi_{+}(p))+\cos(\varphi_{-}(p)) 
\end{align*}
and similarly,
\begin{align*}
    \Imm(\Omega_{Gol}^\B)=\left( \frac{\partial}{\partial \tw^{\B, \alpha}_{\rho}}, \frac{\partial}{\partial \tw^{\B, \beta}_{\rho}}\right)&=\frac{1}{2}\left(\Omega_{WP}\left( \frac{\partial}{\partial \tw^{\alpha}_{\rho_{+}}}, \frac{\partial}{\partial \tw^{\beta}_{\rho_{+}}}\right)-\Omega_{WP}\left( \frac{\partial}{\partial \tw^{\alpha}_{\rho_{-}}}, \frac{\partial}{\partial \tw^{\beta}_{\rho_{-}}}\right)\right)\\
    &=\frac{1}{2}\sum_{p \in \alpha \cap \beta}\cos(\varphi_{+}(p))-\cos(\varphi_{-}(p)) \ .
\end{align*}
Therefore, 
\begin{align*}\label{eq:Goldman_gen}
    \Omega_{Gol}^\B\left( \frac{\partial}{\partial \tw^{\B, \alpha}_{\rho}}, \frac{\partial}{\partial \tw^{\B, \beta}_{\rho}}\right)&=\frac{1}{2}\sum_{p \in \alpha \cap \beta}(\cos(\varphi_{+}(p))+\cos(\varphi_{-}(p))+\tau(\cos(\varphi_{+}(p))-\cos(\varphi_{-}(p))) \nonumber \\
    &=\sum_{p \in \alpha \cap \beta} \cos(\varphi_{+}(p))e^{+}+\cos(\varphi_{-}(p))e^{-} \\
    &=\sum_{p \in \alpha \cap \beta} \cos(\varphi_{+}(p)e^{+}+\varphi_{-}(p)e^{-}) \\ 
    &=\sum_{p \in \alpha \cap \beta} \cos\left(\frac{\varphi_{+}+\varphi_{-}}{2}+\tau\frac{\varphi_{+}-\varphi_{-}}{2}\right) \\
    &=\sum_{p \in \alpha \cap \beta} \cos(d^{\B}(\tilde{\alpha}_{\rho}, \tilde{\beta}_{\rho})),
\end{align*}
where the second to last step can be formally justified by considering the definition of cosine as power series, and the last step is obtained by applying Equation \eqref{eq:formulone coseni}. Indeed we can apply Lemma \ref{lem:tutto insieme}, because the representations $\rho_-$ and $\rho_+$ are both in the Teichm\"uller component $\mathcal T^{\mathfrak{rep}}(\Sigma)$, hence their actions on $\R\mathbb P^1$ are topologically conjugated. The proof is then concluded.
\end{proof}


\section{Symplectic reduction}\label{sec:inf_dim_reduction}
In this section we explain the process that led us to the definition of the para-hyperK\"ahler structure on $\mathcal{MS}_{0}(\Sigma,\rho)$ and to the explicit model of its tangent space described in Section \ref{sec:para_hyperkahler_str_on_MS}.
The main tool we use is Donaldson's construction of moment maps on infinite dimensional spaces.

\subsection{The moment maps on \texorpdfstring{$T^{*}\mathcal{J}(\R^{2})$}{T*J(R2)} } \label{subsec:moment_maps_toy}

To apply Donaldson's construction, which we recall in the next section, we first need to provide moment maps for the action of $\PSL(2,\R)$ on $T^{*}\mathcal{J}(\R^{2})$, with respect to the symplectic forms that we introduced in Section \ref{subsec:parahyperkahler_toy}.

Recalling that $\Lsl_2(\R) = \set{X \in \End(\R^2) \mid \tr X = 0}$, we define the maps $\eta_\i,\eta_\j,\eta_\k:T^* \mathcal{J}(\R^2)\to\Lsl_2(\R)^*$:
\begin{align*}
    \eta_\i&(J, \sigma)=f(\norm{\sigma}_J) \tr(J \cdot) \\
    \eta_\j&(J, \sigma) = \scal{\sigma}{[\cdot, J]}_J = - \tr(g_J^{-1} \sigma J \cdot)\\
    \eta_\k&(J, \sigma)=\scal{\sigma}{J [\cdot, J]}_J = \tr(g_J^{-1} \sigma \cdot)
\end{align*}
where $[X,Y] = X Y - Y X \in \Lsl_2(\R)$. Observe that, for every $X \in \Lsl_2(\R)$, the element $[X,J]$ belongs to $T_J \mathcal{J}(\R^2)$.

\begin{theorem}
The maps $\eta_\mathbf{I}$, $\eta_\mathbf{J}$ and $\eta_\mathbf{K}$ are moment maps for the action of $\PSL(2,\R)$ over $T^* \mathcal{J}(\R^2)$ with respect to the symplectic structures $\omega_\mathbf{I}$, $\omega_\mathbf{J}$ and $\omega_\mathbf{K}$ introduced in \eqref{eq:definition_omegaI_toy}, \eqref{eq:definition_omegaJ_toy} and \eqref{eq:definition_omegaK_toy} respectively.
\end{theorem}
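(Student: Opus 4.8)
The plan is to verify the two defining properties of a moment map (Definition \ref{def:moment_map}) for each of the three maps $\eta_\mathbf{I}$, $\eta_\mathbf{J}$, $\eta_\mathbf{K}$, namely $\mathrm{Ad}^*$-equivariance and the infinitesimal relation $\dd{\eta^\xi_\mathbf{X}} = \iota_{V_\xi}\omega_\mathbf{X}$ for every $\xi \in \Lsl_2(\R)$. First I would record the relevant infinitesimal data: for $\xi \in \Lsl_2(\R)$, the vector field $V_\xi$ generating the $\PSL(2,\R)$-action at $(J,\sigma)$ is obtained by differentiating the formulas in Section \ref{subsec:tangent space toy}, giving $V_\xi(J,\sigma) = ([\xi,J], -\xi^*\sigma - \sigma\xi)$ (the second component being $-\Dlie$ of $\sigma$ under the linearized coadjoint action on $S_2(\R^2)$); note $[\xi,J] \in T_J\mathcal J(\R^2)$, and one checks that the pair indeed lies in $T_{(J,\sigma)}T^*\mathcal J(\R^2)$ using Lemma \ref{lem:characterization_tangent_space}.

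Next I would treat $\eta_\mathbf{J}$ and $\eta_\mathbf{K}$ together, since these are the "cotangent-type" moment maps. The cleanest route is to exploit that $\omega_\mathbf{J} + i\omega_\mathbf{K} = \omega^\C = -\dd\lambda^\C$ is the (complex) canonical symplectic form on $T^*\mathcal J(\R^2)$ (Lemma \ref{lemma:Cvaluedform}). For canonical symplectic forms on a cotangent bundle, the lift of a group action from the base is automatically Hamiltonian with moment map $\mu^\xi(p,\alpha) = \lambda^\C_{(p,\alpha)}(V_\xi) = \alpha(\pi_* V_\xi)$; here $\pi_* V_\xi = [\xi,J]$, so $(\eta_\mathbf J + i\eta_\mathbf K)^\xi(J,\sigma) = \lambda^\C_{(J,\sigma)}([\xi,J], \cdot) = \scal{\sigma}{[\xi,J]}_J + i\,\scal{\sigma}{J[\xi,J]}_J$, matching the stated formulas. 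To make this rigorous one uses Cartan's formula $\iota_{V_\xi}\omega^\C = \iota_{V_\xi}(-\dd\lambda^\C) = \dd(\iota_{V_\xi}\lambda^\C) - \Dlie_{V_\xi}\lambda^\C$ together with the fact that the lifted action preserves $\lambda^\C$ (since the action on the base is by diffeomorphisms and the lift is the canonical one), so $\Dlie_{V_\xi}\lambda^\C = 0$. Equivariance of $\eta_\mathbf J, \eta_\mathbf K$ then follows from the naturality of $\lambda^\C$ and the invariance of the pairings proved in Lemma \ref{lem:scalar_prod_invariance}.

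For $\eta_\mathbf{I}$ the argument is more hands-on, since $\omega_\mathbf I$ is not the canonical form of a cotangent bundle. Here I would use the explicit expression \eqref{eq:definition_omegaI_toy} for $\omega_\mathbf I$ and \eqref{eq:definition_g_toy} for $\g$, compute $\iota_{V_\xi}\omega_\mathbf I(\dot J,\dot\sigma) = \g((\dot J,\dot\sigma), \mathbf I(V_\xi))$, and on the other hand differentiate $\eta^\xi_\mathbf I(J,\sigma) = f(\norm\sigma_J)\tr(J\xi)$ along $(\dot J,\dot\sigma)$. The derivative of $f(\norm\sigma_J)$ is handled exactly as in the proof of Theorem \ref{thm:mappaM} (baby version), where it was shown that $f' = \scal{\sigma}{\dot\sigma_0}_J/f$; the derivative of $\tr(J\xi)$ is $\tr(\dot J\xi)$, which one rewrites using the pairing \eqref{eq:pairing} and Lemma \ref{lem:product_in_TJ} to extract the $\dot J$-terms. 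Matching the two sides is then a direct computation using $\mathbf I(\dot J,\dot\sigma) = (-J\dot J, -\dot\sigma_0(\cdot,J\cdot) - \scal{\sigma}{\dot J}g_J)$ and the identities in Lemma \ref{lem:product_in_TJ}. Equivariance of $\eta_\mathbf I$ is immediate from Lemma \ref{lem:scalar_prod_invariance} (which gives $f(\norm{A\cdot\sigma}_{A\cdot J}) = f(\norm\sigma_J)$) together with $\tr((AJA^{-1})(A\xi A^{-1})) = \tr(J\xi)$ and the identification of $\mathrm{Ad}^*$ with conjugation on $\Lsl_2(\R)^*$ via the trace pairing.

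The main obstacle I anticipate is the $\eta_\mathbf I$ computation: unlike the $\mathbf J$, $\mathbf K$ case, there is no abstract cotangent-bundle shortcut, so one must carefully track the trace-part contributions coming from the constraint $\tr_{g_J}\dot\sigma = -2\scal{\sigma}{J\dot J}_J$ (Lemma \ref{lem:characterization_tangent_space}) and make sure the term $-\scal{\sigma}{\dot J}g_J$ in $\mathbf I(\dot J,\dot\sigma)$ pairs correctly against $V_\xi$ — this is precisely where the "extra" scalar-form corrections that later force $\tilde\mu_\mathbf I$ to need modification by a multiple of $\rho$ originate, so it deserves care. Everything else reduces to the algebraic identities already established in Section \ref{sec:toy_model_via_complex_structures}.
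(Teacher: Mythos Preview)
Your proposal is correct and follows essentially the same approach as the paper: the paper also uses the Cartan's formula argument with $\Dlie_{V_X}\lambda^\C = 0$ for $\eta_\mathbf{J}$ and $\eta_\mathbf{K}$, and a direct computation for $\eta_\mathbf{I}$ (via the derivative $f' = \scall{\sigma}{\dot\sigma_0}/f$ and the decomposition of $X$ into its $g_J$-symmetric and skew-symmetric parts, combined with Lemma \ref{lem:product_in_TJ}). Your parenthetical remark that the trace-part bookkeeping here is the origin of the additive constant $C\rho$ in $\tilde\mu_\mathbf{I}$ is not quite right (that constant arises from Gauss--Bonnet in Proposition \ref{prop:moment_maps_infinite_dim}), but this does not affect the proof.
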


\begin{proof}
We start by noticing some properties of our action. If $\mappa{\varphi_A}{T^* \mathcal{J}(\R^2)}{T^* \mathcal{J}(\R^2)}$ denotes the transformation $(J, \sigma) \mapsto A \cdot (J, \sigma)$, then a simple computation shows that, for every $A \in \PSL(2,\R)$ and $X \in \Lsl_2(\R)$,
\begin{equation} \label{eq:infinitesimal_vector1}
V_X (\varphi_A(J, \sigma)) = \dd(\varphi_A)_{(J,\sigma)}(V_{\Ad(A^{-1}) X}(J, \sigma)) ,
\end{equation}
where $V_X (J,\sigma) \defin \dv{t} \exp(tX) \cdot (J, \sigma) |_{t = 0} \in T_{(J,\sigma)} T^* \mathcal{J}(\R^2)$. Actually, this relation is true whenever a Lie group acts by diffeomorphisms on a smooth manifold. For future convenience, we give also an explicit description of $V_X$:
\begin{equation} \label{eq:infinitesimal_vector2}
V_X(J, \sigma) = \left. \dv{t} (\exp(tX) J \exp(-tX), \exp(-tX)^* \sigma) \right|_{t = 0} = ([X,J], - \sigma(X \cdot, \cdot) - \sigma(\cdot, X \cdot)) .
\end{equation}
We also notice that the action of $\PSL(2,\R)$ on $\mathcal{J}(\R^2)$ is by biholomorphisms with respect to the complex structure $\mathcal{I}$ (defined in Section \ref{subsec:linear_almost_complex}), and its extension on $T^* \mathcal{J}(\R^2)$ is natural, in the sense that it preserves the complex Liouville form $\lambda^\C$ of $T^* \mathcal{J}(\R^2)$ (see Section \ref{subsec:liouville_form} for the definition of $\lambda^\C$). This in particular implies that $\Dlie_{V_X} \lambda^\C = 0$ for every $X \in \Lsl_2(\R)$. 

\paragraph*{Property (i) from Definition \ref{def:moment_map}.} 

Using the relation $\norm{\sigma}_J = \norm{A \cdot \sigma}_{A \cdot J}$ (see Lemma \ref{lem:scalar_prod_invariance}) and the invariance of the trace by conjugation, it is straightforward to check that the maps $\eta_\mathbf{I}$, $\eta_\mathbf{J}$ and $\eta_\mathbf{K}$ are $\Ad^*$-\hsk equivariant. 

\paragraph*{Property (ii) from Definition \ref{def:moment_map}.}

From relation (\ref{eq:infinitesimal_vector2}) and the definition of $\lambda^\C$, the maps $\eta_\mathbf{J}$ and $\eta_\mathbf{K}$ satisfy:
\[
(\eta_\mathbf{J} + i \eta_\mathbf{K})_{(J,\sigma)}(X) = (\iota_{V_X} \lambda^\C)_{(J,\sigma)} .
\]
This relation and the observations above are enough to show that $\eta_\mathbf{J}$, $\eta_\mathbf{K}$ are moment maps with respect to the symplectic structures $\omega_\mathbf{J}$, $\omega_\mathbf{K}$, respectively. Indeed we have:
\begin{align*}
\dd(\eta_\mathbf{J}^X + i \eta_\mathbf{K}^X) & = \dd( \iota_{V_X} \lambda^\C) \\
& = \Dlie_{V_X} \lambda^\C - \iota_{V_X} \dd{\lambda^\C} \tag{Cartan's formula} \\
& = \iota_{V_X} \omega^\C \tag{def. of $\omega^\C$ and $\Dlie_{V_X} \lambda^\C = 0$} \\
& = \iota_{V_X} \omega_\mathbf{J} + i \, \iota_{V_X} \omega_\mathbf{K} . \tag{$\omega^\C = \omega_\mathbf{J} + i \omega_\mathbf{K}$}
\end{align*}

It remains to check that, for every $X \in \Lsl_2(\R)$, we have $\iota_{V_X} \omega_\mathbf{I} = \dd{\eta_\mathbf{I}^X}$. To see this, we show that
\begin{equation}\label{eq:derivative_norm_sigma}
    (\norm{\sigma}_J^2)' = 2 \scall{\sigma}{\dot{\sigma}_0}_J .
\end{equation}
Indeed, 
\begin{align*}
(\norm{\sigma}_J^2)' & = \frac{1}{2} \tr((g_J^{-1} \sigma)^2)' = \tr(g_J^{-1} \sigma (g_J^{-1} \sigma)') \\
& = \tr(g_J^{-1} \sigma (- g_J^{-1} \dot{g}_J g_J^{-1} \sigma + g_J^{-1} \dot{\sigma})) \\
& = \tr((g_J^{-1} \sigma)^2 J \dot{J}) + \tr(g_J^{-1} \sigma g_J^{-1} \dot{\sigma}_0) \tag{rel. \eqref{eq:derivative_scal_prod} and $\tr_{g_J} \sigma = 0$} \\
& = \norm{\sigma}_J^2 \tr(J \dot{J}) + \tr(g_J^{-1} \sigma g_J^{-1} \dot{\sigma}_0) \tag{Lemma \ref{lem:product_in_TJ}} \\
& = 2 \scall{\sigma}{\dot{\sigma}_0}_J . \tag{$\tr(J \dot{J}) = 0$}
\end{align*}
Therefore we have:
\[
\dd{\eta_\mathbf{I}^X}(\dot{J}, \dot{\sigma}) = \frac{\scall{\sigma}{\dot{\sigma}_0}_J}{f(\norm{\sigma}_J)} \tr(JX) + f(\norm{\sigma}_J) \tr(\dot{J} X) .
\]

On the other side, we need to determine $(\iota_{V_X} \omega_\mathbf{I})(\dot{J}, \dot{\sigma})$. Let $X^*$ denote the adjoint of $X$ with respect to the metric $g_J$. From the definition of $\omega_\mathbf{I}$ in equation \eqref{eq:definition_omegaI_toy} and the expression \eqref{eq:infinitesimal_vector2}, we see that:
\begin{align} \label{eq:omega_eats_VX}
\begin{split}
(\iota_{V_X} \omega_\mathbf{I})(\dot{J}, \dot{\sigma}) & = - f(\norm{\sigma}_J) \scall{[X, J]}{J \dot{J}}_J - \frac{1}{f(\norm{\sigma}_J)} \scall{(\sigma(X \cdot, \cdot) + \sigma(\cdot, X \cdot))_0}{\dot{\sigma}_0(\cdot, J \cdot)}_J \\
& = - \frac{f(\norm{\sigma}_J)}{2} \tr([X, J] J \dot{J}) - \frac{1}{2 f(\norm{\sigma}_J)} \tr(g_J^{-1} \dot{\sigma}_0 J (g_J^{-1} \sigma X + X^* g_J^{-1} \sigma)_0) .
\end{split}
\end{align}
In order to simplify the last term of the expression, we decompose the endomorphism $X$ as sum of its skew-symmetric part $X^a = - \frac{\tr(JX)}{2} J$ and its symmetric and traceless part $X^s$ (recall that $X$ is traceless, since it lies in $\Lsl_2(\R)$). A simple application of Lemma \ref{lem:product_in_TJ} shows that
\[
(g_J^{-1} \sigma X^s + X^s g_J^{-1} \sigma)_0 = 0 ,
\]
while the term in the skew-symmetric part contributes with
\[
(g_J^{-1} \sigma X^a + (X^a)^* g_J^{-1} \sigma)_0 = (g_J^{-1} \sigma X^a - X^a g_J^{-1} \sigma)_0 = \tr(JX) \, J g_J^{-1}\sigma .
\]
As a result we obtain $(g_J^{-1} \sigma X + X^* g_J^{-1} \sigma)_0 = \tr(JX) \, J g_J^{-1}\sigma$. Making use of this identity in relation \eqref{eq:omega_eats_VX}, we find that
\begin{align*}
(\iota_{V_X} \omega_\mathbf{I})(\dot{J}, \dot{\sigma}) & =  - \frac{f(\norm{\sigma}_J)}{2} \tr([X, J] J \dot{J}) - \frac{\tr(JX)}{2 f(\norm{\sigma}_J)} \tr(g_J^{-1} \dot{\sigma}_0 J^2 g_J^{-1}\sigma) \\
 & = f(\norm{\sigma}_J) \tr(\dot{J} X) + \frac{\scall{\sigma}{\dot{\sigma}_0}_J}{f(\norm{\sigma}_J)} \tr(JX) ,
\end{align*}
which proves the desired equality $(\iota_{V_X} \omega_\mathbf{I})(\dot{J}, \dot{\sigma}) = \dd{\eta_\mathbf{I}^X}(\dot{J}, \dot{\sigma})$.
\end{proof}

\subsection{Donaldson's construction}

Let us now recall briefly the setting of Section \ref{sec:para_hyperkahler_str_on_MS}. We defined $T^* \mathcal{J}(\Sigma)$ as the space of smooth sections of the bundle
\[
P(T^* \mathcal{J}(\R^2)) \longrightarrow \Sigma ,
\]
and an element of $T^* \mathcal{J}(\Sigma)$ identifies with a pair $(J,\sigma)$, in which $J$ is a complex structure on $\Sigma$, and $\sigma$ is a symmetric and $g_J$-\hsk traceless $2$-\hsk tensor. Moreover, a tangent vector $(\dot{J}, \dot{\sigma})$ at $(J,\sigma)$ can be considered as the data of:
\begin{itemize}
    \item a section $\dot{J}$ of $\End(T \Sigma)$ satisfying $\dot{J} J + J \dot{J} = 0$;
    \item a symmetric $2$-\hsk tensor $\dot{\sigma}$ satisfying 
    $\dot{\sigma} = \dot{\sigma}_0 - \scal{\sigma}{J \dot{J}}g$
\end{itemize}
We will often denote by $s=(J,\sigma)$ an element of $T^* \mathcal{J}(\Sigma)$, and by $\dot s$ a tangent vector.

Now, given an $\SL(2,\R)$-invariant symplectic form $\omega$ on $T^* \mathcal{J}(\R^2)$, every vertical space of $P(T^* \mathcal{J}(\R^2))$ inherits a symplectic structure, which we denote by $\hat{\omega}_{s(p)}$. In particular, given $\dot{s}, \dot{s}' \in T_s T^* \mathcal{J}(\Sigma)$ two tangent vectors, we define
\begin{equation}\label{eq:inf_dim_symp_form}
\omega_s(\dot{s}, \dot{s}') \defin \int_\Sigma \hat{\omega}_s(\dot{s}, \dot{s}') \, \rho . 
\end{equation}
This gives a formal symplectic structure on $T^* \mathcal{J}(\Sigma)$, which is preserved by the action of $\Symp_0(\Sigma,\rho)$. Suppose the natural $\SL(2,\R)$-action on $T^* \mathcal{J}(\R^2)$ is Hamiltonian, with moment map
$\eta:T^* \mathcal{J}(\R^2)\to\Lsl_2(\R)^*$. 
Given any section $s\in T^* \mathcal{J}(\Sigma)$, the moment map $\eta$ induces a section $\eta_s$ of the bundle $\End_0(T \Sigma)^*$. Then the action of $\Ham(\Sigma, \rho)<\Symp_0(\Sigma,\rho)$ is Hamiltonian, according to the following result of Donaldson.

\begin{theorem}[Donaldson's map, {\cite[Theorem 9]{donaldson2003moment}}] \label{thm:donaldson_maps}
    Let $\nabla$ be any torsion-free connection on $\Sigma$ satisfying $\nabla \rho=0$. Define the map $\mappa{\mu}{T^* \mathcal{J}(\Sigma)}{\Lambda^2(\Sigma)}$ as follows:
    \[
    \mu(s) = \hat{\omega}(\nabla_\bullet s, \nabla_\bullet s) + \scal{\eta_s}{R^\nabla} - \dd(c(\nabla_\bullet \eta_s )) .
    \]
    Then 
    \begin{enumerate}[i)]
        \item $\mu(s)$ is closed for every $s \in T^* \mathcal{J}(\Sigma)$;
        \item $\mu$ is equivariant with respect to the action of $\Ham(\Sigma,\rho)$;
        \item Given $V$ a vector field in $\Lham(\Sigma,\rho)$, and $\beta_V$ a primitive of $\iota_V \rho$, the differential of the map
        \[
        T^* \mathcal{J}(\Sigma) \ni s \longmapsto \int_\Sigma \beta_V \mu(s)  \in \R
        \]
        equals
        \[
        \omega_s(\dot{s}, \Dlie_V s) = \int_\Sigma \hat{\omega}(\dot{s}, \Dlie_V s) \, \rho .
        \]
    \end{enumerate}
\end{theorem}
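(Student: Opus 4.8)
\textbf{Proof strategy for Theorem \ref{thm:donaldson_maps}.}
The plan is to follow Donaldson's original argument in \cite[Theorem 9]{donaldson2003moment}, adapting the notation to our setting. First I would unwind the definition of $\mu(s)$: the three terms are the ``curvature of the tautological connection'' term $\hat\omega(\nabla_\bullet s,\nabla_\bullet s)$, which pairs the covariant derivative of $s$ with itself using the fiberwise symplectic form $\hat\omega$; the term $\langle\eta_s\mid R^\nabla\rangle$, where $R^\nabla$ is the curvature $2$-form of $\nabla$ (valued in $\End_0(T\Sigma)$, so that the pairing with $\eta_s\in\End_0(T\Sigma)^*$ makes sense); and the exact correction $\dd(c(\nabla_\bullet\eta_s))$, which is designed precisely so that $\mu(s)$ becomes closed and the dependence on the choice of $\nabla$ disappears in cohomology. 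I would verify \emph{(i)} by a direct computation of $\dd\mu(s)$, using the Bianchi identity $\dd^\nabla R^\nabla=0$, the compatibility $\nabla\rho=0$, and the fact that $\hat\omega$ is fiberwise closed and $\eta$ is its moment map (so $\dd\hat\omega(\nabla\eta_s,\cdot)$ reproduces exactly the terms that cancel the derivative of the curvature term). For \emph{(ii)}, $\Ham(\Sigma,\rho)$-equivariance, I would use the naturality of all three ingredients under pullback by a diffeomorphism, together with the $\Ad^*$-equivariance of the pointwise moment map $\eta$ (property \textit{i)} in Definition \ref{def:moment_map}, already established for $\eta_\mathbf I,\eta_\mathbf J,\eta_\mathbf K$ in Section \ref{subsec:moment_maps_toy}); the correction term $\dd(c(\nabla_\bullet\eta_s))$ contributes only an exact form, which is harmless because we will ultimately pair $\mu(s)$ against a closed $1$-form primitive.

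The heart of the matter is \emph{(iii)}, the moment map identity. I would fix $V\in\Lham(\Sigma,\rho)$ with $\iota_V\rho=\dd\beta_V$, and compute the differential of $s\mapsto\int_\Sigma\beta_V\,\mu(s)$ along an arbitrary tangent vector $\dot s$. This amounts to differentiating each of the three terms of $\mu$ in the direction $\dot s$ and integrating against $\beta_V$. The first term contributes $2\int_\Sigma\beta_V\,\hat\omega(\nabla_\bullet\dot s,\nabla_\bullet s)$; integrating by parts (Stokes, using $\nabla\rho=0$) moves a covariant derivative off $\dot s$ and produces, on the one hand, the ``interior'' expression $\int_\Sigma\hat\omega(\dot s,\cdot)$ contracted appropriately, and on the other hand boundary-type terms that recombine with the derivatives of the curvature term and the exact term. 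The key algebraic input is that $\Dlie_V s = \nabla_V s - (\text{terms involving }\nabla V)$, so that, after rearranging, the total collapses to $\int_\Sigma\hat\omega(\dot s,\Dlie_V s)\,\rho$. Here is precisely where Donaldson's correction term $c(\nabla_\bullet\eta_s)$ earns its keep: without it the computation leaves a leftover involving $\nabla_\bullet\eta_s$ that is not of the desired form, and the term $\dd(c(\nabla_\bullet\eta_s))$ is tailored to absorb it. The identity $\eta$ is a moment map fiberwise — i.e. $\dd\eta^X=\iota_{V_X}\hat\omega$ — is used repeatedly to convert derivatives of $\eta_s$ into contractions of $\hat\omega$ with infinitesimal $\SL(2,\R)$-generators, which are exactly the terms appearing in $\Dlie_V s$ through the change-of-frame part of the Lie derivative.

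I expect the main obstacle to be the bookkeeping in step \emph{(iii)}: carefully tracking how $\Dlie_V s$ decomposes into a ``horizontal'' piece $\nabla_V s$ and a ``vertical'' piece coming from $A_V=\nabla V\in\End(T\Sigma)$ (as in Lemma \ref{lem:I_of_lie_deriv}, relations \eqref{eq:lie_j}--\eqref{eq:lie_sigma}), and matching the vertical piece against the $\SL(2,\R)$-moment-map identity for $\eta$ while the horizontal piece is handled by integration by parts. A secondary technical point is that everything must be checked to be independent of the auxiliary choices — the connection $\nabla$ and the primitive $\beta_V$ — but this independence is automatic once \emph{(i)} and \emph{(ii)} are in place, since changing $\nabla$ alters $\mu(s)$ by an exact form and changing $\beta_V$ by a constant, both of which integrate to zero against a closed form on the closed surface $\Sigma$ (respectively, differentiate to zero). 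Since the statement is quoted from \cite{donaldson2003moment} with ``minimal hypotheses,'' I would also remark that our $\hat\omega$ need not be positive — only fiberwise symplectic — and that Donaldson's proof uses only this, so no modification is required; the only place positivity enters elsewhere in the paper (the failure of the Hilbert space structure) does not affect the present statement.
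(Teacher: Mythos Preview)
The paper does not prove this theorem: it is quoted verbatim from \cite[Theorem 9]{donaldson2003moment} and used as a black box (see also Remark \ref{rmk:donald}). There is therefore no ``paper's own proof'' to compare against. Your sketch is a reasonable outline of Donaldson's original argument and identifies the correct ingredients --- the fiberwise moment-map identity $\dd\eta^X=\iota_{V_X}\hat\omega$, the decomposition of $\Dlie_V s$ into a covariant piece and a frame-change piece, and the role of the correction term $\dd(c(\nabla_\bullet\eta_s))$ --- but it remains a strategy rather than a proof: the actual cancellation in part \emph{(iii)} is where all the work lies, and you have described what should happen without verifying it. If you want to supply a self-contained proof, you would need to carry out the integration-by-parts computation explicitly; otherwise, citing Donaldson (as the paper does) is the appropriate move.
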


Let us clarify the notation of the theorem above. We set $\hat\omega(\nabla_\bullet s, \nabla_\bullet s)$ to be the $2$-\hsk form on $\Sigma$ given by
\[
\hat\omega(\nabla_\bullet s, \nabla_\bullet s)(u,v) \defin \hat\omega(\nabla_u s, \nabla_v s) .
\]
Moreover, we define
\[
c(\nabla_\bullet \eta_s)(v) \defin \sum_j \scal{\nabla_{e_j} \eta_s}{(v \otimes e_j^*)_0} \ ,
\]
where $(e_j)_j$ is a local orthonormal frame and $(e_j^*)_j$ is the associated dual frame in $T^* \Sigma$. In particular $c(\nabla_\bullet \eta_s) \in \Gamma(T^* \Sigma)$ is a $1$-\hsk form on $\Sigma$, because $\nabla_\bullet \eta_s \in \Gamma(T^* \Sigma \otimes \End_0(T \Sigma)^*)$. 

Finally, the curvature tensor $R^\nabla$ of the torsion free connection $\nabla$ is defined as
\[
R^\nabla(U,V) W \defin \nabla_V \nabla_U W - \nabla_U \nabla_V W - \nabla_{[V, U]} W \ ,
\]
for all tangent vector fields $U, V, W$ on $\Sigma$. Since $R^\nabla(U,V) W = - R^\nabla(V, U) W$, the tensor $R^\nabla$ can be considered as a section of $\Lambda^2 (\Sigma) \otimes \End_0(T \Sigma)$. In particular, for every $U$, $V$, we can evaluate the tensor $\eta_s$ on $R^\nabla(U,V) \in \End_0(T \Sigma)$. This determines a $2$-\hsk form on $\Sigma$, that will be denoted by $\scal{\eta_s}{R^\nabla}$.

\begin{remark}\label{rmk:donald}
Donaldson's construction is actually much more general. It can be applied to the context of an $n$-dimensional manifold endowed with a volume form (which in our case is $\Sigma$), and a symplectic manifold $X$, together with a moment map for an action of $\SL(n,\R)$ on $X$ (which in our case is $T^*\mathcal J(\R^2))$. Then one constructs a bundle $P(X)$ similarly to our case, and one obtains a moment map for the action of the group of exact diffeomorphisms (which in dimension two correspond to Hamiltonian diffeomorphisms) on the space of sections of the bundle $P(X)$. The general statement of Donaldson's theorem can be found in \cite[Theorem 9]{donaldson2003moment}, see also \cite{trautwein2019hyperkahler}. We decided  to state the result only in the situation of our paper, since this reduces remarkably the necessary preliminaries.  
\end{remark}

We also recall the following fact, which of course can be stated in much larger generality as explained in Remark \ref{rmk:donald} above:

\begin{lemma}[{\cite[Lemma 13]{donaldson2003moment}}] \label{lem:donaldson_pullback_form}
    There exists a natural closed $2$-\hsk form $\hat{\omega}_{P(T^* \mathcal{J}(\R^2))}$ on $P(T^* \mathcal{J}(\R^2))$ such that, for every section $s \in \Gamma(\Sigma,P(T^* \mathcal{J}(\R^2)))$ we have
    \[
    s^* \hat{\omega}_{P(T^* \mathcal{J}(\R^2))} = \hat{\omega}(\nabla_\bullet s, \nabla_\bullet s) + \scal{\eta_s}{R^\nabla} .
    \]
    In particular, since $T^*\mathcal J(\R^2)$ is contractible, the de Rham cohomology class of $\mu(s)$ in $H^2(\Sigma)$ is independent of the chosen section.
\end{lemma}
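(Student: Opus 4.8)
The statement to prove is Lemma~\ref{lem:donaldson_pullback_form}, asserting the existence of a natural closed $2$-form $\hat{\omega}_{P(T^* \mathcal{J}(\R^2))}$ on the total space $P(T^* \mathcal{J}(\R^2))$ whose pullback by any section $s$ equals $\hat{\omega}(\nabla_\bullet s, \nabla_\bullet s) + \scal{\eta_s}{R^\nabla}$, together with the consequence that the de Rham class of $\mu(s)$ is section-independent.

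\medskip

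The plan is to follow Donaldson's original argument from \cite[Lemma~13]{donaldson2003moment}, transported to our concrete situation. First I would describe the $2$-form intrinsically on the associated bundle $P(T^* \mathcal{J}(\R^2)) = (P \times T^* \mathcal{J}(\R^2))/\SL(2,\R)$. On the product $P \times T^* \mathcal{J}(\R^2)$ one has the pullback of the fiberwise symplectic form $\hat\omega$, which is $\SL(2,\R)$-invariant, together with the moment map $\eta : T^* \mathcal{J}(\R^2) \to \Lsl_2(\R)^*$ for the $\SL(2,\R)$-action; combining $\eta$ with the tautological $\Lsl_2(\R)$-valued connection data coming from the principal bundle structure of $P$ gives a canonical way to descend a closed $2$-form to the quotient. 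Concretely, the minimal-coupling (or Weinstein) construction: choosing locally a principal connection on $P$ with connection form $A \in \Omega^1(P,\Lsl_2(\R))$, the form $\hat\omega - d\langle \eta, A\rangle$ on $P \times T^*\mathcal J(\R^2)$ is basic for the $\SL(2,\R)$-action (by equivariance of $\eta$ and the defining property of the moment map) and hence descends to a $2$-form on $P(T^* \mathcal{J}(\R^2))$; one then checks its cohomology class — and in fact the form itself, suitably normalized — is independent of the auxiliary connection because any two connections differ by a tensorial term and the moment map equation absorbs the difference exactly. This yields $\hat\omega_{P(T^*\mathcal J(\R^2))}$, and closedness follows from $d\hat\omega = 0$ and $d\eta^\xi = \iota_{V_\xi}\hat\omega$.

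\medskip

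The second step is the pullback computation. Given a section $s$, I would pull back the descended form and recognize the two terms: differentiating $s$ using the torsion-free connection $\nabla$ (which, via $\nabla\rho = 0$, induces a compatible connection on $P$) produces the horizontal/vertical splitting of $ds$, and the curvature $R^\nabla$ appears precisely as $dA$ evaluated along $s$. The vertical part of $ds$ contributes $\hat\omega(\nabla_\bullet s, \nabla_\bullet s)$, while the term $\langle \eta, dA\rangle$ pulled back along $s$ yields $\scal{\eta_s}{R^\nabla}$; the cross-terms vanish by the moment map identity. This is essentially a bookkeeping exercise once the splitting is set up, identical in form to the finite-dimensional minimal coupling formula. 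Finally, for the cohomological consequence: since $T^* \mathcal{J}(\R^2)$ deformation retracts onto $\mathcal J(\R^2)$ which is contractible (it is diffeomorphic to $\Hyp^2$, Remark~\ref{rmk:compareH2}), the fiber of $P(T^*\mathcal J(\R^2)) \to \Sigma$ is contractible, so any two sections are homotopic, hence their pullbacks of the closed form $\hat\omega_{P(T^*\mathcal J(\R^2))}$ are cohomologous in $H^2(\Sigma)$; comparing with the definition of $\mu(s)$ in Theorem~\ref{thm:donaldson_maps}, which differs from $s^*\hat\omega_{P(T^*\mathcal J(\R^2))}$ only by the \emph{exact} term $-d(c(\nabla_\bullet \eta_s))$, gives that $[\mu(s)] \in H^2(\Sigma)$ does not depend on $s$.

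\medskip

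The main obstacle I anticipate is making the descent-to-the-quotient and the auxiliary-connection independence fully rigorous in a self-contained way, rather than merely citing \cite{donaldson2003moment}: one must verify carefully that $\hat\omega - d\langle\eta,A\rangle$ is genuinely basic (horizontality requires the moment map equation, invariance requires $\Ad^*$-equivariance of $\eta$) and that changing $A$ to $A' = A + a$ with $a$ tensorial changes the descended form only by something whose pullback along sections is exact — which is exactly where the structure of Donaldson's formula with the correction term $c(\nabla_\bullet\eta_s)$ comes from. In practice, since the paper explicitly says this is quoted from Donaldson, I expect the "proof" here to be essentially a pointer to \cite[Lemma~13]{donaldson2003moment} with a brief indication of why contractibility of the fiber gives the cohomological statement; the serious content was already imported in Theorem~\ref{thm:donaldson_maps}.
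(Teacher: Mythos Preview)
Your sketch is a reasonable outline of Donaldson's minimal-coupling argument, and your anticipation at the end is exactly right: the paper does not prove this lemma at all. It is stated with the citation \cite[Lemma~13]{donaldson2003moment} and no proof is given; the cohomological consequence is treated as part of the statement. So there is nothing to compare against beyond confirming that your outline matches the standard construction being cited.
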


\subsection{The moment maps on \texorpdfstring{$T^* \mathcal{J}(\Sigma)$}{T*J(R2)}}

In Section \ref{sec:para_hyperkahler_str_on_MS}, we defined a formal para-hyperK\"ahler structure $(\g, \i, \j, \k)$ on $T^{*}\mathcal{J}(\Sigma)$. Note that the corresponding symplectic forms
\begin{align}
    (\omega_\mathbf{X})_{(J,\sigma)} ((\dot{J}, \dot{\sigma}), (\dot{J}', \dot{\sigma}')) \defin \int_\Sigma \hat\omega_\mathbf{X}((\dot{J}, \dot{\sigma}), (\dot{J}', \dot{\sigma}')) \, \rho ,
\end{align}
for $\mathbf{X}=\i,\j,\k$ (here $\hat \omega_\mathbf{X}$ denotes the symplectic forms induced on each fiber by an area-preserving identification of the tangent space of $\Sigma$ with $\R^2$), can be obtained from the general theory of Donaldson, specifically from Equation (\ref{eq:inf_dim_symp_form}) by integrating fiberwise the three symplectic forms introduced in the toy model. Therefore,
the group $\Symp_{0}(\Sigma, \rho)$ acts on $T^{*}\mathcal{J}(\Sigma)$ preserving all symplectic forms $\omega_{\mathbf{X}}$ and the action of $\Ham(\Sigma, \rho)$ is actually Hamiltonian. In order to describe the moment map $\mu_{\mathbf{X}}$ for the action of $\Ham(\Sigma, \rho)$ with respect to each of the  symplectic form $\omega_{\mathbf{X}}$ it is convenient to introduce the following operator:
\[
\begin{matrix}
r \vcentcolon & \Gamma(T^*_{(1,0)} \Sigma \otimes (T^*_{(0,1)} \Sigma
)^{2 \otimes}) & \longrightarrow & \Gamma(T^*_{(0,1)} \Sigma) \\
& \psi & \longmapsto & \frac{\psi(v, v, \cdot)}{\norm{v}_J^2}
\end{matrix}
\]
for some $v \neq 0$. Since $\psi$ is $\C$-\hsk linear in the first component and anti-\hsk $\C$-\hsk linear in the second one, the $1$-\hsk form $\frac{\psi(v, v, \cdot)}{\norm{v}_J^2}$ does not depend on the choice of $v \neq 0$.

\begin{theorem} \label{thm:donaldson_maps_cotangent_bundle}
Donaldson's maps for the action of $\Ham(\Sigma, \rho)$ over $T^* \mathcal{J}(\Sigma)$ can be expressed as:
    \[
    \mu_\mathbf{I}(J,\sigma) = \frac{\norm{\bar{\partial} \phi}^2 - \norm{\partial \phi}^2}{f(\norm{\sigma})} \, \rho - 2 f(\norm{\sigma}) K_J \, \rho - 2 i \bar{\partial} \partial f(\norm{\sigma}) ,
    \]
    \[
    (\mu_\mathbf{J} + i \mu_\mathbf{K})(J,\sigma) = - 2 i \, \partial r( \partial \bar{\phi}) ,
    \]
    where $\phi$ is the quadratic differential whose real part is equal to $\sigma$, and $\partial = \partial_J$, $\bar{\partial} = \bar{\partial}_J$.
\end{theorem}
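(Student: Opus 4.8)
The strategy is to apply Donaldson's formula (Theorem \ref{thm:donaldson_maps}) directly, with a convenient choice of torsion-free connection $\nabla$ on $\Sigma$, and then to recognize the three resulting $2$-forms in terms of the tensorial quantities at hand. The natural choice is to fix an auxiliary complex structure $J_0$ and let $\nabla$ be its Levi-Civita connection with respect to $g_{J_0}=\rho(\cdot,J_0\cdot)$; this satisfies $\nabla\rho=0$ as required. Then Donaldson's map reads
$$
\mu_{\mathbf X}(s)=\hat\omega_{\mathbf X}(\nabla_\bullet s,\nabla_\bullet s)+\scal{(\eta_{\mathbf X})_s}{R^\nabla}-\dd\bigl(c(\nabla_\bullet(\eta_{\mathbf X})_s)\bigr)~,
$$
with $s=(J,\sigma)$, $\nabla_\bullet s=(\nabla_\bullet J,\nabla_\bullet\sigma)$, and $\eta_{\mathbf X}$ the moment map on $T^*\mathcal J(\R^2)$ computed in Section \ref{subsec:moment_maps_toy}. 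First I would substitute the explicit formulas \eqref{eq:definition_omegaI_toy}, \eqref{eq:definition_omegaJ_toy}, \eqref{eq:definition_omegaK_toy} for $\hat\omega_{\mathbf I},\hat\omega_{\mathbf J},\hat\omega_{\mathbf K}$ and the formulas for $\eta_{\mathbf I},\eta_{\mathbf J},\eta_{\mathbf K}$ into this expression, and then perform the (substantial but routine) bookkeeping to simplify each of the three terms.

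For the $\mathbf J+i\mathbf K$ part I would exploit the fact that $\eta_{\mathbf J}+i\eta_{\mathbf K}$ is, up to a factor, the contraction of the complex Liouville form (as shown in the proof of the Theorem in Section \ref{subsec:moment_maps_toy}), so its Donaldson map picks up a $\B$- or $\C$-holomorphicity flavour; the key point is that $\sigma=\Re\phi$ with $\phi$ a holomorphic quadratic differential on $(\Sigma,J)$, so that the $\bar\partial_J$-derivative of $\phi$ vanishes, leaving only terms built from $\partial\bar\phi$. After assembling the contributions of the three summands in Donaldson's formula, the curvature term $\scal{\eta_s}{R^\nabla}$ and the correction term $\dd(c(\nabla_\bullet\eta_s))$ should combine — using that $R^\nabla$ for a surface is essentially $K_{J_0}\rho\otimes J_0$ and that $\mu$ depends on the choice of $\nabla$ only through an exact $2$-form in a controlled way — to produce the claimed expression $-2i\,\partial r(\partial\bar\phi)$, independent of $J_0$. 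Here I would use Lemma \ref{lem:donaldson_pullback_form} to argue that the cohomology class is independent of $\nabla$, and a direct local computation in a $g$-orthonormal frame adapted to $J$ to pin down the precise tensorial form; the operator $r$ is exactly the natural device that converts the $(1,0)\otimes(0,1)\otimes(0,1)$-tensor $\partial\bar\phi$ into a $(0,1)$-form whose $\partial$ is a $(1,1)$-form, i.e.\ a multiple of $\rho$.

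For $\mu_{\mathbf I}$ the computation is analogous but now genuinely uses the metric. The term $\hat\omega_{\mathbf I}(\nabla_\bullet s,\nabla_\bullet s)$ splits, according to \eqref{eq:definition_omegaI_toy}, into a piece $-f(\norm\sigma)\,\scall{\nabla_\bullet J}{J\nabla_\bullet J}$ and a piece $f(\norm\sigma)^{-1}\scall{\nabla_\bullet\sigma_0}{(\nabla_\bullet\sigma_0)(\cdot,J\cdot)}$; the first piece, together with the curvature and correction terms involving $\eta_{\mathbf I}(J,\sigma)=f(\norm\sigma)\tr(J\cdot)$, should reproduce $-2f(\norm\sigma)K_J\rho-2i\bar\partial\partial f(\norm\sigma)$ after invoking Remark \ref{rmk:derivative_curvature} (the identity $\dd K_J(\dot J)\rho=\tfrac12\dd(\divr_g\dot J)$) to convert the Gauss-curvature dependence of $K_J$ on $J$ versus $J_0$ into the stated form; the second piece, after rewriting $\sigma$ in terms of $\phi$ and decomposing $\nabla\phi$ into holomorphic and antiholomorphic parts with respect to $J$, should give $f(\norm\sigma)^{-1}(\norm{\bar\partial\phi}^2-\norm{\partial\phi}^2)\rho$. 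The genuine obstacle, and the part that requires care rather than mechanical manipulation, is the conversion between the fixed background connection $\nabla=\nabla^{g_{J_0}}$ appearing in Donaldson's formula and the moving connection $\nabla^{g_J}$ in which all the geometric quantities ($K_J$, $\bar\partial_J$, $\divr_g$) are naturally expressed: one must check that the discrepancy is exactly an exact $2$-form of the form $\dd(\text{something})$ that gets absorbed, and that the final answer is indeed independent of $J_0$. I would handle this by a local computation in normal coordinates for $g_{J_0}$ at a point, combined with the cohomological invariance from Lemma \ref{lem:donaldson_pullback_form}, checking that the pointwise identity holds and hence the global one does too.
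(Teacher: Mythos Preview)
Your overall strategy—apply Donaldson's formula term by term—is the right one, but there are two substantial problems with the execution.

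\textbf{Choice of connection.} You fix an auxiliary $J_0$ and take $\nabla=\nabla^{g_{J_0}}$. Nothing forces this: since you are evaluating $\mu$ at a fixed section $s=(J,\sigma)$, you may choose $\nabla=\nabla^{g_J}$ itself, which is torsion-free and satisfies $\nabla\rho=0$ because $\rho$ is the area form of $g_J$. With this choice $\nabla_\bullet J\equiv 0$, hence $\nabla_\bullet s=(0,\nabla_\bullet\sigma)$, and most of the terms you are bracing for vanish on the spot. This is precisely what the paper does. For $\mu_\j+i\mu_\k$: the first term $\hat\omega^\C(\nabla_\bullet s,\nabla_\bullet s)$ is zero because $\hat\omega_\j,\hat\omega_\k$ vanish on pairs with zero $\dot J$-component; the curvature term is zero because $R^\nabla=K_J\,J\otimes\rho$ and $(\eta_\j+i\eta_\k)(J)=0$ (since $[J,J]=0$); so everything comes from $-\dd(c(\nabla_\bullet\eta_s))$, which a short computation identifies with $-2i\,\partial r(\partial\bar\phi)$. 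For $\mu_\i$ the same choice gives $\hat\omega_\i(\nabla_\bullet s,\nabla_\bullet s)=f^{-1}\scall{\nabla_{e_1}\sigma}{(\nabla_{e_2}\sigma)(\cdot,J\cdot)}$ directly, with no $\nabla J$ cross-terms, and $c(\nabla_\bullet\eta_\i)=\dd(f(\|\sigma\|))\circ J$ immediately since $\nabla J=0$. Your plan to reconcile $\nabla^{g_{J_0}}$ with $\nabla^{g_J}$ via ``cohomological invariance from Lemma \ref{lem:donaldson_pullback_form}'' does not work as stated: that lemma asserts independence of the cohomology class of $\mu(s)$ from the \emph{section} $s$, not from the connection $\nabla$; and the theorem claims a pointwise equality of $2$-forms, not an equality of cohomology classes, so cohomological arguments cannot close the gap.

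\textbf{Unwarranted holomorphicity assumption.} You write ``$\sigma=\Re\phi$ with $\phi$ a holomorphic quadratic differential on $(\Sigma,J)$, so that the $\bar\partial_J$-derivative of $\phi$ vanishes.'' This is not a hypothesis of the theorem—indeed the formula for $\mu_\i$ contains $\|\bar\partial\phi\|^2$, which would be identically zero under your assumption. The result holds for arbitrary $(J,\sigma)\in T^*\mathcal J(\Sigma)$. The reason only $\partial\bar\phi$ survives in $\mu_\j+i\mu_\k$ is not holomorphicity of $\phi$ but the structural vanishing above together with the type-theoretic fact that $\bar\partial$ of a $(0,1)$-form on a surface is automatically zero.
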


\begin{proof}
     In order to determine the expressions for Donaldson's maps $\mu_{\i}, \mu_{\j}, \mu_{\k}$, we apply Theorem \ref{thm:donaldson_maps} starting from the moment maps $\eta_{\i}, \eta_{\j}, \eta_{\k}$ introduced in Section \ref{subsec:moment_maps_toy}. We will focus on each term of Donaldson's maps individually, and then combine the resulting expressions to determine the evaluations of $\mu_{\i}, \mu_{\j}, \mu_{\k}$ at $(J, \sigma) \in T^* \mathcal{J}(\Sigma)$. As a torsion-free connection we will use the Levi-Civita connection of the metric $g_J$, which satisfies $\nabla_\bullet \rho \equiv 0$, since $\rho$ equals the Riemannnian volume form of $g_J$.
    
    \paragraph{The term $\hat{\omega}_\mathbf{I}(\nabla_\bullet s, \nabla_\bullet s)$:} 
    Since $\nabla$ is the Levi-\hsk Civita connection of $g_J$, we have $\nabla_\bullet J \equiv 0$. In particular $\nabla_\bullet s = (0, \nabla_\bullet \sigma)$. Then we have:
    \begin{align*}
        (\hat{\omega}_\mathbf{I})_{(J,\sigma)}((0, \nabla_{e_1} \sigma), (0, \nabla_{e_2} \sigma)) & = \frac{1}{2 f(\norm{\sigma})} \tr(g_J^{-1} (\nabla_{e_1} \sigma) g_J^{-1} (\nabla_{e_2} \sigma)J ) \\
        & = \frac{1}{2 f(\norm{\sigma})} \left( \nabla_1 \sigma_{1 1} \nabla_2 \sigma_{1 2} - \nabla_1 \sigma_{2 2} \nabla_2 \sigma_{2 1} + \nabla_1 \sigma_{1 2} \nabla_2 \sigma_{2 2} - \nabla_1 \sigma_{2 1} \nabla_2 \sigma_{1 1} \right) \\
        & = \frac{1}{f(\norm{\sigma})} \left( \nabla_1 \sigma_{1 1} \nabla_2 \sigma_{1 2} - \nabla_1 \sigma_{1 2} \nabla_2 \sigma_{1 1} \right) ,
    \end{align*}
    where $\nabla_i \sigma_{j k} = (\nabla_{e_i} \sigma)(e_j, e_k)$. In the last step we used the fact that $\nabla_v \sigma$ is symmetric and $g_J$-\hsk traceless for every $v$. The operators $\partial_J$ and $\bar{\partial}_J$ are defined as follows:
    \[
    (\partial_J \phi)(v, \cdot, \cdot) = \frac{1}{2} \left( \nabla_v \phi - i \, \nabla_{J v} \phi \right) , \qquad (\bar{\partial}_J \phi)(v, \cdot, \cdot) = \frac{1}{2} \left( \nabla_v \phi + i \, \nabla_{J v} \phi \right) .
    \]
    A simple but tedious computation shows that
    \[
    \norm{\bar{\partial}_J \phi(e_1, \cdot, \cdot)}^2 - \norm{\partial_J \phi(e_1, \cdot, \cdot)}^2 = \nabla_1 \sigma_{1 1} \nabla_2 \sigma_{1 2} - \nabla_1 \sigma_{1 2} \nabla_2 \sigma_{1 1} .
    \]
    In the end, we get
    \[
    \hat\omega_\mathbf{I}(\nabla_\bullet (J, \sigma), \nabla_\bullet (J, \sigma)) = \frac{\norm{\bar{\partial}_J \phi}^2 - \norm{\partial_J \phi}^2}{f(\norm{\sigma})} \rho
    \]
    where $\norm{\partial_J \phi} \defin \norm{\partial_J \phi (v, \cdot, \cdot)}$, $\norm{\bar{\partial}_J \phi} \defin \norm{\bar{\partial}_J \phi (v, \cdot, \cdot)}$ for some unit vector $v$ (the norm is independent of the choice of such $v$).
    
    \paragraph{The term $\scal{\eta_\mathbf{I}}{R^\nabla}$:} Since we are considering the Levi-\hsk Civita connection of $g_J$, the tensor $R^\nabla$ coincides with the Riemann tensor of $g_J$. Then we have $R_J =K_J \, J \otimes \rho$, where $K_J$ denotes the Gaussian curvature of $g_J$. In particular $\scal{\eta_{\mathbf{I}}(J,\sigma)}{R_J} = -2 f(\norm{\sigma}) K_J \, \rho$.
    
    \paragraph{The term $\dd(c(\nabla_\bullet \eta_\mathbf{I}))$:} First we observe that
    \[
    (\nabla_v {\eta_{\i}}_{(J,\sigma)})(X) = \dd(f(\norm{\sigma}))(v) \tr(J X) ,
    \]
    where $X \in \Lsl(T_\cdot \Sigma, \rho) = \End_0(T_\cdot \Sigma)$. Then we have
    \begin{align*}
        c(\nabla_\bullet {\eta_{\i}}_{(J,\sigma)})(w) & \defin \scal{\nabla_{e_1} {\eta_{\i}}_{(J,\sigma)}}{(w \otimes e_1^*)_0} + \scal{\nabla_{e_2} {\eta_{\i}}_{(J,\sigma)}}{(w \otimes e_2^*)_0} \\
        & = \dd(f(\norm{\sigma}))(e_1) \tr(J (w \otimes e_1^*)_0) + \dd(f(\norm{\sigma}))(e_2) \tr(J (w \otimes e_2^*)_0) \\
        & = \dd(f(\norm{\sigma}))(e_1) \, e_1^*(J w) + \dd(f(\norm{\sigma}))(e_2) \, e_2^*(J w) \\
        & = (\dd(f(\norm{\sigma})) \circ J)(w) .
    \end{align*}
    Therefore $c(\nabla_\bullet {\eta_{\i}}_{(J,\sigma)}) = \dd(f(\norm{\sigma})) \circ J$. It is immediate to check that, for every function $\varphi \in \mathscr{C}^\infty(\Sigma)$, we have $\dd( \dd{\varphi} \circ J) = - \Delta_{g_J} \varphi \, \rho = - 2 i \partial_J \bar{\partial}_J \varphi = 2 i \bar{\partial}_J \partial_J \varphi$.
    
    The combination of the terms found above provides the desired description of the Donaldson's map $\mu_{\i}$. We now focus on the addends appearing in the expression of $(\mu_\mathbf{J} + i \mu_\mathbf{K})(J,\sigma)$. 
    
    \paragraph{The term $(\hat\omega_\mathbf{J} + i \, \hat\omega_\mathbf{K})(\nabla_\bullet s, \nabla_\bullet s)$:} Since $\nabla_\bullet J = 0$, we have
    \[
    \hat\omega^\C(\nabla_\bullet (J, \sigma), \nabla_\bullet (J, \sigma)) = (\hat\omega_\mathbf{J} + i \hat\omega_\mathbf{K})( (0, \nabla_\bullet \sigma), (0,\nabla_\bullet \sigma)) = 0
    \]
    
    \paragraph{The term $\scal{\eta_\mathbf{J} + i \, \eta_\mathbf{K}}{R^\nabla}$:}
    Observe that $\scal{(\eta_\mathbf{J} + i \, \eta_\mathbf{K})_{(J,\sigma)}}{K_J \, J \otimes \rho} = 0$ because $[J,J] = 0$.
    
    \paragraph{The term $\dd{(c(\nabla_\bullet \eta_\mathbf{J} + i \, \eta_\mathbf{K}))}$:} From the definition of the moment maps $\eta_\mathbf{J}$ and $\eta_\mathbf{K}$, we observe the following:
    \begin{align*}
        \scal{(\eta_\mathbf{J} + i \, \eta_\mathbf{K})_{(J,\sigma)}}{X} & = - \tr(g_J^{-1} \sigma J X) + i \tr(g_J^{-1} \sigma X) \\
        & = i \tr(g_J^{-1} (\sigma + i \, \sigma(\cdot, J \cdot)) X) \\
        & = i \tr(g_J^{-1} \bar{\phi} X) , \tag{rel. \eqref{eq:quadratic_diff}}
    \end{align*}
    where we are denoting by $\phi$ the quadratic differential whose real part is equal to $\sigma$. Since $\nabla$ is the Levi-\hsk Civita connection of $g_J$, we have $\nabla_\bullet g_J \equiv 0$ and, as already mentioned, $\nabla_\bullet J \equiv 0$. In particular we deduce that:
    \begin{align*}
        c(\nabla_\bullet (\eta_\mathbf{J} + i \, \eta_\mathbf{K}))(v) & = \sum_j \scal{\nabla_{e_j} (\eta_\mathbf{J} + i \, \eta_\mathbf{K})}{(v \otimes e_j^*)_0} \\
        & = i \, \sum_j \tr(g_J^{-1} (\nabla_{e_j} \bar{\phi})(v \otimes e_j^*)_0) \\
        & = i \, \sum_j \tr(g_J^{-1} (\nabla_{e_j} \bar{\phi})(v \otimes e_j^*)) \\
        & = i \, \sum_j (\nabla_{e_j} \bar{\phi})(v, e_j) \\
        & = i \, (\divr_{g_J} g_J^{-1} \bar{\phi})(v) .
    \end{align*}
    
    On the other hand, we see that:
    \begin{align*}
        r(\partial_J \bar{\phi})(v) & = (\partial_J \bar{\phi})(e_1, e_1, v) \\
        & = \frac{1}{2} ((\nabla_{e_1} \bar{\phi})(e_1, v) - i \, (\nabla_{J e_1} \bar{\phi})(e_1, v) ) \\
        & = \frac{1}{2} ((\nabla_{e_1} \bar{\phi})(e_1, v) + (\nabla_{J e_1} \bar{\phi})(J e_1, v) ) \\
        & = \frac{1}{2} (\divr_{g_J} g_J^{-1} \bar{\phi})(v) .
    \end{align*}
    Therefore $c(\nabla_\bullet (\eta_\mathbf{J} + i \, \eta_\mathbf{K})) = 2 i \, r(\partial_J \bar{\phi}) \in \Gamma(T^*_{(0,1)} \Sigma)$. Since $\dd = \partial_J + \bar{\partial}_J$ and $\bar{\partial}_J(r(\partial_J \bar{\phi}))=0$, we obtain
    \[
    \dd(c(\nabla_\bullet (\eta_\mathbf{J} + i \, \eta_\mathbf{K}))) = 2 i \, \partial_J r(\partial_J \bar{\phi}) ,
    \]
    which, combined with Theorem \ref{thm:donaldson_maps}, proves the expression for Donaldson's map $\mu_\mathbf{J} + i \, \mu_\mathbf{K}$.
\end{proof}

\begin{proposition} \label{prop:moment_maps_infinite_dim}
    Let $\rho$ be a fixed volume form on $\Sigma$, and let $C \defin - \frac{4 \pi \chi(\Sigma)}{\Area(\Sigma,\rho)}$.
    
    \begin{itemize}
        \item The map 
        \[
    \begin{matrix}
    \tilde{\mu}_\mathbf{I} \vcentcolon & T^* \mathcal{J}(\Sigma) & \longrightarrow & \Lham(\Sigma, \rho)^* \\
    & (J, \sigma) & \longmapsto & \mu_{\i}(J, \sigma) - C \, \rho
    \end{matrix}
    \]
    is a moment map for the action of $\Ham(\Sigma, \rho)$ over the space of smooth sections $T^* \mathcal{J}(\Sigma)$, endowed with the symplectic form $\omega_\mathbf{I}$.
    \item The map
    \[
    \begin{matrix}
    \tilde{\mu}_\mathbf{J} + i \, \tilde{\mu}_\mathbf{K} \vcentcolon & T^* \mathcal{J}(\Sigma) & \longrightarrow & \Lsymp(\Sigma, \rho)^* \otimes \C \\
    & (J, \sigma) & \longmapsto & [- 2 i \, r(\partial_J \bar{\phi})] 
    \end{matrix}
    \]
    is a moment map for the action of $\Symp_0(\Sigma, \rho)$ over the space of smooth sections $T^* \mathcal{J}(\Sigma)$, endowed with the symplectic form $\omega_\mathbf{J} + i \, \omega_\mathbf{K}$ (where $\phi$ denotes the quadratic differential whose real part is equal to $\sigma$).
    \end{itemize}
\end{proposition}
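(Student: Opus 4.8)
The plan is to promote the Donaldson maps $\mu_\mathbf{I}$ and $\mu_\mathbf{J}+i\mu_\mathbf{K}$ from Theorem \ref{thm:donaldson_maps_cotangent_bundle}, which a priori are only moment maps for the Hamiltonian group $\Ham(\Sigma,\rho)$, to genuine moment maps for the larger group $\Symp_0(\Sigma,\rho)$ (in the case of $\tilde\mu_\mathbf{J}+i\tilde\mu_\mathbf{K}$) or to correct $\mu_\mathbf{I}$ by a multiple of $\rho$ so that it becomes a moment map for $\Ham(\Sigma,\rho)$ with the right equivariance properties. The two bullets require somewhat different arguments, so I would treat them separately.

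For the first bullet, I would start from point \textit{iii)} of Donaldson's Theorem \ref{thm:donaldson_maps}, which already gives that $d(\int_\Sigma \beta_V\,\mu_\mathbf{I}(s)) = \omega_\mathbf{I}(\dot s,\Dlie_V s)$ for $V\in\Lham(\Sigma,\rho)$ with $\iota_V\rho=d\beta_V$. Using the pairing $\scal{\cdot}{\cdot}_\Lham$ from \eqref{eq:pairing_lie}–\eqref{eq:Stokes}, the quantity $\int_\Sigma\beta_V\,\mu_\mathbf{I}(s)$ is exactly $\scal{\mu_\mathbf{I}(s)}{V}_\Lham$ provided $\mu_\mathbf{I}(s)$ is exact; but in general $\mu_\mathbf{I}(s)$ is only closed, and its cohomology class is independent of $s$ by Lemma \ref{lem:donaldson_pullback_form} (since $T^*\mathcal J(\R^2)$ is contractible). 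So I would compute that fixed cohomology class: evaluating $\mu_\mathbf{I}$ at a point with $\sigma=0$ reduces the expression to $-2K_J\rho$ (the $\bar\partial\phi$, $\partial\phi$ and $\bar\partial\partial f$ terms vanish, and $f(0)=1$), whose class is $-2\cdot 2\pi\chi(\Sigma)$ times the fundamental class by Gauss–Bonnet. Subtracting $C\rho$ with $C=-4\pi\chi(\Sigma)/\Area(\Sigma,\rho)$ makes $\tilde\mu_\mathbf{I}(s)$ exact for every $s$, so that $\scal{\tilde\mu_\mathbf{I}(s)}{V}_\Lham$ is well defined and, by point \textit{iii)}, satisfies $d\tilde\mu_\mathbf{I}^V=\iota_{V}\omega_\mathbf{I}$. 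The $\Ad^*$-equivariance under $\Ham(\Sigma,\rho)$ follows from point \textit{ii)} together with the fact that adding the $\Ham$-invariant form $C\rho$ does not disturb equivariance.

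For the second bullet, I would argue that $\mu_\mathbf{J}+i\mu_\mathbf{K}=-2i\,\partial_J r(\partial_J\bar\phi)$ is \emph{exact} for every $(J,\sigma)$ — indeed it is visibly $d$ of the $(0,1)$-form $-2i\,r(\partial_J\bar\phi)$ (using $d=\partial_J+\bar\partial_J$ and $\bar\partial_J r(\partial_J\bar\phi)=0$, as in the proof of Theorem \ref{thm:donaldson_maps_cotangent_bundle}) — hence its class in $H^2(\Sigma)\otimes\C$ vanishes and the primitive $[-2i\,r(\partial_J\bar\phi)]\in\Lambda^1(\Sigma)/\text{(const)}\otimes\C$ descends to a well-defined element of $\Lsymp(\Sigma,\rho)^*\otimes\C$ via the pairing $\scal{\cdot}{\cdot}_\Lsymp$ of \eqref{eq:pairing_lie}. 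Then: (a) for $V$ Hamiltonian, point \textit{iii)} of Donaldson's theorem gives $d\langle\tilde\mu_\mathbf{J}+i\tilde\mu_\mathbf{K},V\rangle=\iota_V(\omega_\mathbf{J}+i\omega_\mathbf{K})$; (b) to upgrade from $\Ham$ to $\Symp_0$ I would show by a direct computation that the same identity $d\langle\tilde\mu_\mathbf{J}+i\tilde\mu_\mathbf{K},V\rangle=\iota_V(\omega_\mathbf{J}+i\omega_\mathbf{K})$ holds for every symplectic vector field $V$, not just the Hamiltonian ones — this is where the fact that $\omega_\mathbf{J}+i\omega_\mathbf{K}$ is the differential of the complex Liouville form (Lemma \ref{lemma:Cvaluedform}, applied fiberwise) and that the $\Symp_0(\Sigma,\rho)$-action preserves $\lambda^\C$ enters, so that $\Dlie_V\lambda^\C=0$ and Cartan's formula yields $\iota_V(\omega_\mathbf{J}+i\omega_\mathbf{K})=d(\iota_V\lambda^\C)=d\langle\tilde\mu_\mathbf{J}+i\tilde\mu_\mathbf{K},V\rangle$; (c) the $\Ad^*$-equivariance under $\Symp_0(\Sigma,\rho)$ follows from the naturality of all the constructions (the operator $r$, the connection terms, and the identification of $\sigma$ with $\Re\phi$ are all equivariant under pullback by symplectomorphisms).

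The main obstacle I anticipate is step (b) of the second bullet: passing from Hamiltonian to merely symplectic vector fields. Donaldson's theorem is stated only for exact (equivalently, in dimension two, Hamiltonian) vector fields, so one cannot invoke it directly; one genuinely needs the Liouville-form description of $\omega_\mathbf{J}+i\omega_\mathbf{K}$ to handle the cohomologically nontrivial symplectic vector fields. A secondary subtlety is bookkeeping with the two different pairings $\scal{\cdot}{\cdot}_\Lham$ and $\scal{\cdot}{\cdot}_\Lsymp$ and the corresponding quotient spaces $\Lambda^1/B^1$ versus $\Lambda^1/Z^1$: one must check that $\mu_\mathbf{I}-C\rho$ being exact places its primitive in the correct space for the $\Lham$-pairing, while the primitive $[-2i\,r(\partial_J\bar\phi)]$ of $\mu_\mathbf{J}+i\mu_\mathbf{K}$ must be read modulo closed forms to pair against $Z^1(\Sigma)=\Lsymp(\Sigma,\rho)$. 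By contrast, the equivariance statements and the explicit reductions to the $\sigma=0$ locus are routine once the cohomological normalization is set up.
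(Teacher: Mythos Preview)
Your approach to the first bullet is exactly the paper's: use Lemma \ref{lem:donaldson_pullback_form} to see that the cohomology class of $\mu_\mathbf{I}(s)$ is section-independent, evaluate at $\sigma=0$ via Gauss--Bonnet to get $-4\pi\chi(\Sigma)$, and subtract $C\rho$ to land in $B^2(\Sigma)\subset\Lham(\Sigma,\rho)^*$.

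For the second bullet the paper does not give a self-contained argument: it observes that $\mu_\mathbf{J}+i\mu_\mathbf{K}$ coincides with Donaldson's $\mu_2+i\mu_3$ from the almost-Fuchsian setting and defers the promotion from $\Ham$ to $\Symp_0$ to \cite[\S3.1]{donaldson2003moment} and \cite[Theorem 4.10]{trautwein2019hyperkahler}. Your Liouville-form strategy in step (b) is essentially what those references do, so you are filling in what the paper outsources. The key identity you would need to check is that $(\iota_{V}\lambda^\C)_{(J,\sigma)}$ agrees with $\scal{[-2i\,r(\partial_J\bar\phi)]}{V}_\Lsymp$ for symplectic $V$; this follows by integrating the toy-model relation $(\eta_\mathbf{J}+i\eta_\mathbf{K})_{(J,\sigma)}(X)=(\iota_{V_X}\lambda^\C)_{(J,\sigma)}$ after an integration by parts, but it is not entirely formal and deserves a line.

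One bookkeeping correction: in your final paragraph you write that the primitive $[-2i\,r(\partial_J\bar\phi)]$ ``must be read modulo closed forms''. This is backwards: to pair against $Z^1(\Sigma)\cong\Lsymp(\Sigma,\rho)$ via $\scal{\cdot}{\cdot}_\Lsymp$, the primitive lives in $\Lambda^1(\Sigma)/B^1(\Sigma)$, i.e.\ modulo \emph{exact} forms. (Two primitives of an exact $2$-form differ by a closed $1$-form, so the primitive is canonically defined only in $\Lambda^1/Z^1$; but here a specific primitive $-2i\,r(\partial_J\bar\phi)$ has been chosen, and its class in $\Lambda^1/B^1$ is what appears in the statement.) Similarly, the parenthetical ``$\Lambda^1(\Sigma)/\text{(const)}$'' earlier should read $\Lambda^1(\Sigma)/B^1(\Sigma)$.
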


\begin{proof}
    By Lemma \ref{lem:donaldson_pullback_form}, the integral of the $2$-\hsk form $\mu_\mathbf{I}(J,\sigma)$ does not depend on the choice of the section $(J,\sigma) \in T^* \mathcal{J}(\Sigma)$. If $\sigma \equiv 0$, then by Gauss-\hsk Bonnet theorem we have
    \[
    \int_\Sigma \mu_\mathbf{I}(J, 0) = - 4 \pi \chi(\Sigma) .
    \]
    Therefore, the integral of the $2$-\hsk form $\mu_\mathbf{I}(J, \sigma) - C \, \rho$ vanishes. By Poincar\'e's duality, a $2$-\hsk form on $\Sigma$ is exact if and only if its integral over $\Sigma$ vanishes. This in particular proves that $\tilde{\mu}_\mathbf{I}$ takes values in $B^2(\Sigma)$, which is contained in the dual of the Lie algebra of $\Ham(\Sigma,\rho)$, as observed in Section \ref{subsec:Teich_inf_dim_reduction}. Following Definition \ref{def:moment_map}, the analogous properties i) and ii) for $\tilde{\mu}_\mathbf{I}$ are guaranteed by Theorem \ref{thm:donaldson_maps}, since the term $C \, \rho$ is independent of the section $(J,\sigma) \in T^* \mathcal{J}(\Sigma)$.
    
    Our moment map $\mu_\mathbf{J} + i \, \mu_\mathbf{K}$ is exactly the same as the map $\mu_2 + i \, \mu_3$ appearing in the original process of symplectic reduction developed by \cite{donaldson2003moment} to describe the hyperK\"ahler structure on the space of almost-\hsk Fuchsian manifolds. In particular, the argument in \cite[Section 3.1]{donaldson2003moment} to $\mu_2 + i \, \mu_3$ applies verbatim to our context, showing that $\mu_\mathbf{J} + i \, \mu_\mathbf{K}$ can be promoted to a moment map $\tilde{\mu}_\mathbf{J} + i \, \tilde{\mu}_\mathbf{K}$ for the action of $\Symp_0(\Sigma, \rho)$ on $T^* \mathcal{J}(\Sigma)$. For a more detailed exposition of this phenomenon, we refer to \cite[Theorem 4.10]{trautwein2019hyperkahler}.
\end{proof}

\subsection{The symplectic quotient} We are interested in the symplectic quotient
\[
    \tilde{\mu}_{\i}^{-1}(0)\cap \tilde{\mu}_{\j}^{-1}(0)\cap \tilde{\mu}_{\k}^{-1}(0) / \Symp_{0}(\Sigma, \rho) \ .
\]
The aim of this section is to show that this quotient can be identified with $\mathcal{MS}(\Sigma)$ so that the para-hyperK\"ahler structure on $\mathcal{MS}(\Sigma)$ defined in Section \ref{sec:para_hyperkahler_str_on_MS} is inherited from the infinite dimensional space $T^{*}\mathcal{J}(\Sigma)$. Although the arguments are inspired by similar constructions in hyperK\"ahler geometry (\cite{donaldson2003moment}, \cite{trautwein2019hyperkahler}, \cite{hodgethesis}), substantial difficulties arise when dealing with a pseudo-Riemannian metric, which we now explain.

In the more common setting of infinite dimensional hyperK\"ahler quotients, one starts with the data of an infinite dimensional manifold $M$ endowed with three complex structures $\i,\j$ and $\k$ satisfying the relations of quaternionic numbers and a Riemannian metric $\g$ compatible with each of the complex structures. This defines three symplectic forms $\omega_{\mathbf{X}}=\g(\cdot, \mathbf{X})$ for $\mathbf{X}=\i,\j,\k$ on $M$. Assume now that a group $G$ acts on $M$ by Hamiltonian symplectomorphisms with respect to each of the symplectic forms with moment map $\zeta_{\mathbf{X}}$ and let $N$ be the submanifold 
\[
    N=\zeta_{\i}^{-1}(0)\cap \zeta_{\j}^{-1}(0) \cap \zeta_{\k}^{-1}(0) \ . 
\]
The properties of the moment maps and the Riemannian metric $\g$ allow to orthogonally decompose the tangent space of $M$ at every point $p \in N$ as
\[
    T_{p}M=T_{p}N\oplus \i(T_{p}(G \cdot p)) \oplus \j(T_{p}(G\cdot p)) \oplus \k(T_{p}(G\cdot p))
\]
and the $\g$-orthogonal of $T_{p}(G\cdot p)$ inside $T_{p}N$ furnishes a model for the tangent space of the quotient $N/G$ that is invariant under $\i$,$\j$ and $\k$. This is sufficient to conclude that the quotient inherits a hyperK\"ahler structure  (\cite{Hitchin_selfdual}). 

In our pseudo-Riemannian setting, however, the absence of a Hilbert space structure on the tangent space prevents us from obtaining a similar orthogonal decomposition using only the properties of the moment maps. However, the hyperK\"ahler construction suggests that the tangent space at $(J,\sigma)$ to the zero locus of the three moment maps is the largest subspace $V_{(J,\sigma)} \subset T_{(J,\sigma)}T^{*}\mathcal{J}(\Sigma)$ that is $\g$-orthogonal to $T_{(J,\sigma)}(\Symp_{0}(\Sigma, \rho)\cdot (J,\sigma))$ and invariant under $\i$, $\j$ and $\k$, inside the kernel of the differential of the three moment maps. These are the properties that led us to the equations defining $V_{(J,\sigma)}$ in Section \ref{sec:para_hyperkahler_str_on_MS}. In what follows we explain how to derive the equations in Proposition \ref{prop:equivalent_def_subspace_V} starting from these geometric conditions.

\subsubsection{Identifying $\widetilde{\mathcal{MS}}_0(\Sigma,\rho)$ as the zero locus of the moment maps}
Let us start by characterizing the pairs $(J,\sigma)$ that lie in the zero locus of the three moment maps.

\begin{lemma}\label{lm:reduction_hqd}
    A pair $(J,\sigma) \in T^{*}\mathcal{J}(\Sigma)$ satisfies $(\tilde{\mu}_\mathbf{J} + i \, \tilde{\mu}_\mathbf{K}) (J,\sigma)=0$ if and only if $\sigma$ is the real part of a holomorphic quadratic differential $\phi$ on the Riemann surface $(\Sigma,J)$.
\end{lemma}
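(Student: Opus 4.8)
The plan is to show that the vanishing of $\tilde\mu_\mathbf{J}+i\,\tilde\mu_\mathbf{K}$ at $(J,\sigma)$ is equivalent to $\phi$ being holomorphic, where $\phi=\sigma-i\,\sigma(\cdot,J\cdot)$ is the $J$-complex-linear quadratic differential whose real part is $\sigma$. By Proposition \ref{prop:moment_maps_infinite_dim} the map $\tilde\mu_\mathbf{J}+i\,\tilde\mu_\mathbf{K}$ sends $(J,\sigma)$ to the class $[-2i\,r(\partial_J\bar\phi)]$ in $\Lsymp(\Sigma,\rho)^*\otimes\C$. So the first step is to unwind what this class being zero means: recalling the non-degenerate pairing between $\Lambda^1(\Sigma)/B^1(\Sigma)$ and $Z^1(\Sigma)$ from Section \ref{subsec:Teich_inf_dim_reduction}, and that $\mathbf{J},\mathbf{K}$ extend to a moment map for the full group $\Symp_0(\Sigma,\rho)$ whose Lie algebra is identified with $Z^1(\Sigma)$, the vanishing of the class $[-2i\,r(\partial_J\bar\phi)]$ in $\Lambda^1(\Sigma)/B^1(\Sigma)$ (tensored with $\C$) is equivalent to the $1$-form $r(\partial_J\bar\phi)$ being exact.

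\textbf{Key steps.} First I would record, from the computation in the proof of Theorem \ref{thm:donaldson_maps_cotangent_bundle}, the identity $r(\partial_J\bar\phi)=\tfrac12\divr_{g_J}(g_J^{-1}\bar\phi)$, so that $r(\partial_J\bar\phi)$ is, up to the factor $\tfrac12$, the divergence $1$-form $\divr_{g}\sigma$ (more precisely its complexification encoding both $\divr_g\sigma$ and $\divr_g(\sigma(\cdot,J\cdot))$, i.e. the $(0,1)$-part $\divr_g(g^{-1}\bar\phi)$). Next, the crucial observation is that $\divr_g(g^{-1}\bar\phi)$ is always a \emph{closed} $1$-form. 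This is where Lemma \ref{lem:hol_quadr_diff_equivalence} enters decisively: that lemma gives the equivalence, for a metric $h$ and its complex structure $J$, between $\sigma$ being divergence-free and $\sigma$ being the real part of a \emph{holomorphic} quadratic differential. More is true and is what I actually need here: by the discussion around that lemma (see Remark \ref{rmk:independent_conformal_metric}), the quantity $\bar\partial_J\phi$ measures exactly the failure of holomorphicity, and $\divr_g(g^{-1}\bar\phi)$ is, up to a nonzero pointwise factor, the contraction $r(\bar\partial_J\partial_J\bar\phi)$; but $\bar\phi$ being anti-holomorphic data, $\partial_J\bar\phi$ is a section of $T^*_{(1,0)}\Sigma\otimes(T^*_{(0,1)}\Sigma)^{2\otimes}$ whose $\bar\partial_J$ vanishes automatically for type reasons — hence $\bar\partial_J r(\partial_J\bar\phi)=0$, i.e. the $1$-form $r(\partial_J\bar\phi)$ is closed. (This is precisely the fact $\bar\partial_J(r(\partial_J\bar\phi))=0$ used at the end of the proof of Theorem \ref{thm:donaldson_maps_cotangent_bundle}.) Once closedness is in hand, the class $[r(\partial_J\bar\phi)]$ lives in $H^1(\Sigma;\C)$, and I would argue that it vanishes if and only if $\partial_J\bar\phi=0$, i.e. $\bar\phi$ is anti-holomorphic, equivalently $\phi$ is holomorphic.

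\textbf{The forward implication and the main obstacle.} For the ``if'' direction: if $\phi$ is holomorphic then $\partial_J\bar\phi=0$ identically (taking conjugates of $\bar\partial_J\phi=0$), so $r(\partial_J\bar\phi)=0$ and a fortiori its class vanishes. For the ``only if'' direction, suppose $[r(\partial_J\bar\phi)]=0$, so $r(\partial_J\bar\phi)=\dd u$ for some $u\in\mathscr{C}^\infty(\Sigma;\C)$. Since $r(\partial_J\bar\phi)$ is of type $(0,1)$ and equals $\tfrac12\divr_g(g^{-1}\bar\phi)$, one gets $\bar\partial_J u$ related to $\divr_g(g^{-1}\bar\phi)$; applying $\partial_J$ and using that $\partial_J\bar\partial_J u$ is a multiple of $\Delta_g u\,\rho$, an integration by parts against $u$ over the closed surface $\Sigma$ forces $\divr_g(g^{-1}\bar\phi)=0$, hence by Lemma \ref{lem:hol_quadr_diff_equivalence} (equivalence of \textit{ii)} and \textit{iii)}) $\sigma$ is the real part of a holomorphic quadratic differential. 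I expect the main obstacle to be bookkeeping: keeping the bidegree/type decomposition straight, matching the operator $r$ and the factor conventions from Theorem \ref{thm:donaldson_maps_cotangent_bundle} with the divergence operator, and making the integration-by-parts step rigorous on the finite set of zeros of $\sigma$ (which is harmless since everything extends smoothly). No genuinely new analytic input is needed beyond Lemma \ref{lem:hol_quadr_diff_equivalence}, Poincaré duality on $\Sigma$, and the type-reason closedness of $r(\partial_J\bar\phi)$; the work is in assembling these cleanly.
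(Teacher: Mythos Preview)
Your core argument is correct and in fact slightly streamlines the paper's proof: both reduce to showing that $\tilde\mu_\mathbf{J}+i\,\tilde\mu_\mathbf{K}=0$ forces the $(0,1)$-form $r(\partial_J\bar\phi)$ to be exact, and then observing that an exact $(0,1)$-form on a compact Riemann surface vanishes identically (its primitive $u$ satisfies $\partial_J u=0$, hence is anti-holomorphic, hence constant). The paper reaches exactness by first restricting the test space to $V\in\mathfrak{h}_J$ (harmonic $1$-forms) and then invoking closedness of $r(\partial_J\bar\phi)$ plus Poincar\'e duality; you instead use the non-degeneracy of the pairing $\Lambda^1/B^1\times Z^1\to\R$ against the full $Z^1$, which yields exactness in one step. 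Both are valid; yours is a little more direct.

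There is, however, a genuine error in your ``Key steps'' paragraph that you should excise. You claim that $r(\partial_J\bar\phi)$ is \emph{always} closed, arguing from $\bar\partial_J r(\partial_J\bar\phi)=0$. But on a Riemann surface any $(0,1)$-form satisfies $\bar\partial_J\alpha=0$ trivially (there are no $(0,2)$-forms), and this says nothing about $d$-closedness, which requires $\partial_J\alpha=0$ as well. In general $\partial_J r(\partial_J\bar\phi)\neq 0$; indeed $-2i\,\partial_J r(\partial_J\bar\phi)=(\mu_\mathbf{J}+i\mu_\mathbf{K})(J,\sigma)$, and the paper obtains closedness precisely \emph{from the hypothesis} (since $\tilde\mu=0$ restricts to $\mu=0$ on $\Lham$). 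Fortunately your ``only if'' paragraph does not actually use this false closedness claim: you go straight from $\tilde\mu=0$ to exactness, which is correct. So delete the closedness digression. Also, the integration-by-parts step is unnecessary: once $du=r(\partial_J\bar\phi)$ is of type $(0,1)$ you already have $\partial_J u=0$, hence $u$ constant and $r(\partial_J\bar\phi)=0$ immediately.
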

\begin{proof} Let us consider the subalgebra of $\Lsymp(\Sigma, \rho)$ defined by
\[
    \mathfrak{h}_{J}=\{ V \in \Gamma(T\Sigma) \ | \ d(\iota_{V}\rho)=d(\iota_{JV}\rho)=0 \} \ .
\]
Because $(\tilde{\mu}_\mathbf{J} + i \, \tilde{\mu}_\mathbf{K}) (J,\sigma)=0$, by definition of moment map, we have
\[
    0=-2i\int_{\Sigma} r(\partial_{J}\bar{\phi}) \wedge \iota_{V}\rho
\]
for every $V \in \mathfrak{h}_{J}$. Note that the form $r(\partial_{J}\bar{\phi})$ is anti-holomorphic, being it in the zero locus of $\mu_\mathbf{J} + i \, \mu_\mathbf{K}$. Since $\{ \iota_{V}\rho \ | \ V \in \mathfrak{h}_{J}\}$ parameterizes harmonic $1$-forms on $(\Sigma, J)$, by Poincar\'e duality the real and imaginary parts of $r(\partial_{J}\bar{\phi})$ are exact. Hence, $r(\partial_{J}\bar{\phi})$ is exact and identically zero because the only anti-holomorphic functions on $(\Sigma, J)$ are constant. This shows that $\partial_{J}\bar{\phi}=0$, thus $\phi$ is a holomorphic quadratic differential on $(\Sigma, J)$.
\end{proof}

\begin{lemma}\label{lm:equivalent_muI}
Let $(J,\sigma) \in T^{*}\mathcal{J}(\Sigma)$. If $\sigma$ is the real part of a holomorphic quadratic differential, then
    \[
    \tilde{\mu}_{\mathbf{I}}(J,\sigma) = - \frac{4 K_h}{1 + \det B} \, \rho -C\rho,
    \]
    where
    \[
    h \defin \left(1 + f(\norm{\sigma}) \right) \, g_J , \qquad B \defin h^{-1} \sigma \, \qquad C \defin -\frac{4\pi\chi(\Sigma)}{\Area(\Sigma, \rho)}.
    \]
\end{lemma}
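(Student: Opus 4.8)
The statement expresses $\tilde\mu_{\mathbf I}(J,\sigma)$, when $\sigma=\Re\phi$ for a holomorphic quadratic differential $\phi$ on $(\Sigma,J)$, in terms of the curvature of the conformal metric $h=(1+f(\norm\sigma))g_J$. The plan is to start from the formula for $\mu_{\mathbf I}$ established in Theorem \ref{thm:donaldson_maps_cotangent_bundle},
\[
\mu_\mathbf{I}(J,\sigma) = \frac{\norm{\bar{\partial} \phi}^2 - \norm{\partial \phi}^2}{f(\norm{\sigma})} \, \rho - 2 f(\norm{\sigma}) K_J \, \rho - 2 i \bar{\partial} \partial f(\norm{\sigma}) ,
\]
and to simplify each of the three terms under the holomorphicity hypothesis. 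The first term simplifies drastically: since $\phi$ is holomorphic, $\bar\partial\phi=0$, so $\norm{\bar\partial\phi}^2=0$, and we are left with $-\norm{\partial\phi}^2/f(\norm\sigma)\,\rho$. The term $-2i\bar\partial\partial f(\norm\sigma)$ equals $\Delta_{g_J}(f(\norm\sigma))\,\rho$ up to sign conventions (recall from the proof of Theorem \ref{thm:donaldson_maps_cotangent_bundle} that $\dd(\dd\varphi\circ J)=-\Delta_{g_J}\varphi\,\rho = 2i\bar\partial_J\partial_J\varphi$), so this becomes a Laplacian term. Thus the whole of $\mu_{\mathbf I}$ is a sum of a curvature term, a $\norm{\partial\phi}^2$ term, and a Laplacian of $f(\norm\sigma)$.

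\textbf{Key steps.} First I would invoke the classical conformal change of curvature formula: if $h=e^{2u}g_J$ then $K_h = e^{-2u}(K_J - \Delta_{g_J} u)$, applied with $e^{2u}=1+f(\norm\sigma)$, i.e. $u=\tfrac12\log(1+f(\norm\sigma))$. This rewrites $K_h$ in terms of $K_J$, $\Delta_{g_J}\log(1+f)$, and the conformal factor. Next I would use Lemma \ref{lem:det_B} (or its local analogue \eqref{eq:det_B2}): from $h=(1+f)g_J$ and $B=h^{-1}\sigma$ one has $1+\det B = 2/(1+f(\norm\sigma))$, so the target expression $-4K_h/(1+\det B)\,\rho = -2(1+f(\norm\sigma))K_h\,\rho$. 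Substituting the conformal change formula, $-2(1+f)K_h = -2(1+f)\cdot\frac{1}{1+f}(K_J - \tfrac12\Delta_{g_J}\log(1+f)) \cdot(1+f)/(1+f)$ — here one must track the conformal factor carefully — reduces everything to showing
\[
\frac{\norm{\partial\phi}^2_{g_J}}{f(\norm\sigma)} + \Delta_{g_J}(f(\norm\sigma)) = (1+f(\norm\sigma))\,\Delta_{g_J}\log(1+f(\norm\sigma)) + \big(\text{lower-order terms}\big),
\]
up to the precise coefficients, i.e. an identity purely about the function $\norm\sigma$ (equivalently $f$) on $(\Sigma,J)$. This is where the holomorphicity of $\phi$ enters decisively a second time: for a holomorphic quadratic differential, $\norm\sigma^2 = \norm\phi^2$ satisfies a Bochner-type identity relating $\Delta_{g_J}\log\norm\phi$ to the curvature $K_J$ away from the zeros of $\phi$, and $\norm{\partial\phi}^2$ is controlled by $\dd\norm\sigma$. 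Concretely, writing $\phi$ locally as $\psi\,dz^2$ with $\psi$ holomorphic, $\norm\sigma^2 = |\psi|^2 e^{-4\lambda}$ for the local potential $\lambda$ of $g_J$, and one computes $\Delta_{g_J}\log\norm\sigma = 4K_J$ on the complement of the zeros (this is the standard computation behind Wolf's and Minsky's work, cited in the introduction). I would then combine this with the chain rule relating $\Delta_{g_J} f(\norm\sigma)$, $\Delta_{g_J}\log\norm\sigma$, and $\norm{\dd\norm\sigma}^2_{g_J}$, and the identity $\norm{\partial_J\phi}^2_{g_J} = \tfrac14\norm{\dd(\norm\sigma^2)}^2_{g_J}/\norm\sigma^2$ (compare the computation of $\dd f$ in Section \ref{sec:toy model} and relation \eqref{eq:differential_f}, which already expresses $\dd f$ through $\nabla^g_\bullet\sigma$).

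\textbf{Main obstacle.} The hard part is matching the coefficients and the terms involving $\norm{\dd\norm\sigma}^2_{g_J}$ (equivalently $\norm{\partial\phi}^2$): one has to check that the first term $\norm{\partial\phi}^2/f$ of $\mu_{\mathbf I}$, together with the first-derivative part of $\Delta_{g_J}(f(\norm\sigma))$, recombines exactly into $(1+f)\Delta_{g_J}\log(1+f)$ after using the Bochner identity $\Delta_{g_J}\log\norm\sigma = 4K_J$, \emph{and} that the residual $K_J$ terms cancel against $-2f K_J\rho$ and the $K_J$ appearing from the conformal change of the metric. There is also a subtlety at the zeros of $\phi$, where $\log\norm\sigma$ is singular: one must argue that all final expressions extend smoothly across the zero set (as in Lemma \ref{lem:derivative_B}, where the identity "holds on the whole surface by continuity"), so the identity proved on the complement of the zeros holds everywhere. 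Once the function-theoretic identity is verified, the conclusion $\tilde\mu_{\mathbf I}(J,\sigma) = -\tfrac{4K_h}{1+\det B}\rho - C\rho$ follows by subtracting the constant $C\rho$ as in the definition of $\tilde\mu_{\mathbf I}$ in Proposition \ref{prop:moment_maps_infinite_dim}. I expect this to be a somewhat lengthy but entirely mechanical local computation in an isothermal coordinate, with the only genuinely non-routine ingredient being the correct bookkeeping of which $K_J$ terms cancel.
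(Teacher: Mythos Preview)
Your approach is essentially the same as the paper's: both start from the formula of Theorem \ref{thm:donaldson_maps_cotangent_bundle}, use $\bar\partial\phi=0$, the Bochner identity $\bar\partial\partial(\ln\norm{\sigma}^2)=2iK_J\rho$ (equivalently $\Delta_{g_J}\ln\norm{\sigma}=2K_J$, not $4K_J$), the conformal change formula for $K_h$, and Lemma \ref{lem:det_B}, then extend by continuity across the zeros of $\sigma$. The paper organizes the computation in $\bar\partial\partial$-calculus (following Trautwein), combining the three terms via the Leibniz rule into $i\bar\partial\bigl(-2\partial\ln(1+\sqrt{1+\lambda})+\partial\ln\lambda\bigr)$ before converting back to $\Delta_{g_J}\ln(1+f)-2K_J$, which is exactly the coefficient-matching you anticipate but packaged more cleanly than a direct chain-rule expansion of $\Delta_{g_J} f(\norm{\sigma})$.
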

\begin{proof}
    If $\sigma$ is the zero quadratic differential, then the statement is immediate. From now on, we will assume $\sigma$ to be non-zero. We make use of the computations developed by Trautwein in \cite{trautwein2019hyperkahler}. In particular, we will need the following relations: if $\lambda$ denotes the function $\norm{\sigma}^2$, then, outside the zeros of $\sigma$,
    \[
    - \frac{i}{2 \lambda} \bar{\partial} \lambda \wedge \partial \lambda = \norm{\partial \phi}^2 \, \rho , \qquad \bar{\partial} \partial (\ln \lambda) = 2 i \, K_J \, \rho ,
    \]
    where $\partial = \partial_J$ and $\bar{\partial} = \bar{\partial}_J$. In particular we can write $\tilde{\mu}_\mathbf{I}$ as follows:
    \[
    \tilde{\mu}_{\mathbf{I}}(J,\sigma) = i \left( \frac{\bar{\partial} \lambda \wedge \partial \lambda}{2 \lambda \sqrt{1 + \lambda}} + \sqrt{1 + \lambda} \ \bar{\partial} \partial (\ln \lambda) - 2 \bar{\partial} \partial (\sqrt{1 + \lambda}) \right) - C\rho
    \]
    
    From here, the strategy is the same as the one in \cite[Proposition 4.5.16]{trautwein_thesis}. In particular, we can find the following sequence of identities:
    \begin{align*}
        \tilde{\mu}_{\mathbf{I}}(J,\sigma) & = i \left( \frac{\bar{\partial} \lambda \wedge \partial \lambda}{2 \lambda \sqrt{1 + \lambda}} + \sqrt{1 + \lambda} \ \bar{\partial} \partial (\ln \lambda) - 2 \bar{\partial} \partial (\sqrt{1 + \lambda}) \right) -C\rho \\
        & = i \left( \bar{\partial} ( \sqrt{1 + \lambda}) \wedge \partial(\ln \lambda) + \sqrt{1 + \lambda} \ \bar{\partial} \partial (\ln \lambda) - 2 \bar{\partial} \partial (\sqrt{1 + \lambda}) \right) -C\rho \\
        & = i \bar{\partial} \left( \sqrt{1 + \lambda} \ \partial (\ln \lambda) - 2 \partial (\sqrt{1 + \lambda}) \right) -C\rho \\
        & = i \bar{\partial} \left( - 2 \partial \left(\ln(1 + \sqrt{1 + \lambda})\right) + \partial(\ln \lambda) \right) -C\rho \\
        & = \left( \Delta_{g_J} \ln(1 + \sqrt{1 + \norm{\sigma}^2}) - 2 K_J -C \right) \, \rho , \\
        & = \Big( \Delta_{g_J} \ln(1 + f(\norm{\sigma})) - 2 K_J -C\Big) \, \rho ,
    \end{align*}
    where, in the second to last step, we used again the identity $\bar{\partial} \partial (\ln \lambda) = 2 i \, K_J \, \rho$, and the fact that $- 2 i \bar{\partial} \partial \varphi = \Delta_{g_J} \varphi \, \rho$.
    
    From the formula of the curvature under conformal change, we obtain that the Gaussian curvature $K_h$ of the metric $h$ satisfies:
    \[
    K_h = \frac{1}{1 + f(\norm{\sigma})} \left( K_J - \frac{1}{2} \Delta_{g_J} \ln \left(1 + f(\norm{\sigma}) \right) \right)
    \]
    A computation exactly as in Lemma \ref{lem:det_B} shows the identity \[
    \frac{2}{1 + f(\norm{\sigma})} = 1 + \det B.
    \]
    If we combine this relation with the expression for the Gaussian curvature of $h$ and the description we found above for $\tilde{\mu}_\mathbf{I}$, we see that
    \[
    \tilde{\mu}_{\mathbf{I}}(J,\sigma)  = - \left(\frac{4 K_h}{1 + \det B}+C\right) \, \rho .
    \]
    This proves the desired relation outside of the zero locus of $\sigma$, which is a finite set. The statement then follows by continuity of the expression.
    
\end{proof}

Combining Lemma \ref{lm:reduction_hqd} and Lemma \ref{lm:equivalent_muI}, we can then identify the zero locus of the three moment maps with $\widetilde{\mathcal{MS}}_{0}(\Sigma, \rho)$. Recall that the symplectic form $\rho$ that we fixed in Section \ref{subsec:change_coord} has area $-\pi\chi(\Sigma)$.

\begin{corollary}\label{cor:zero moment maps} Let $(J,\sigma) \in T^{*}\mathcal{J}(\Sigma)$. Then $\tilde{\mu}_{\i}(J,\sigma)=\tilde{\mu}_{\j}(J,\sigma)=\tilde{\mu}_{\k}(J,\sigma)=0$ if and only if $(J,\sigma) \in \widetilde{\mathcal{MS}}_{0}(\Sigma, \rho)$.
\end{corollary}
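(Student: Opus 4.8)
The statement to prove, Corollary \ref{cor:zero moment maps}, identifies the common zero locus of the three moment maps $\tilde\mu_\i,\tilde\mu_\j,\tilde\mu_\k$ with $\widetilde{\mathcal{MS}}_0(\Sigma,\rho)$. The plan is simply to combine Lemma \ref{lm:reduction_hqd} and Lemma \ref{lm:equivalent_muI}, carefully matching the normalization constants with the conventions fixed in Section \ref{subsec:change_coord}. First I would unwind the definition of $\widetilde{\mathcal{MS}}_0(\Sigma,\rho)$: a pair $(J,\sigma)$ lies in it if and only if (a) $\sigma$ is the real part of a $J$-quadratic differential which in addition is holomorphic — this is the content of the condition that $\sigma$ is ``the real part of a $J$-quadratic differential'' together with the Codazzi part of the Gauss-Codazzi equations \eqref{eq:gauss_codazzi}, via Lemma \ref{lem:hol_quadr_diff_equivalence} — and (b) the pair $(h,B)$ with $h=(1+f(\|\sigma\|_{g_J}))g_J$ and $B=h^{-1}\sigma$ satisfies the Gauss equation $K_h=-1-\det B$.

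I would then argue the two directions simultaneously. By Lemma \ref{lm:reduction_hqd}, the condition $(\tilde\mu_\j+i\tilde\mu_\k)(J,\sigma)=0$ is \emph{equivalent} to $\sigma$ being the real part of a holomorphic quadratic differential $\phi$ on $(\Sigma,J)$; note this automatically includes the requirement that $\sigma$ be the real part of \emph{some} $J$-quadratic differential (i.e. that $g_J^{-1}\sigma$ is traceless and $g_J$-self-adjoint), which is built into the definition of $T^*\mathcal J(\Sigma)$, and it supplies the Codazzi equation by Lemma \ref{lem:hol_quadr_diff_equivalence}. So $\tilde\mu_\j(J,\sigma)=\tilde\mu_\k(J,\sigma)=0$ holds if and only if $\sigma=\Re(\phi)$ for $\phi$ holomorphic. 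Assuming this, Lemma \ref{lm:equivalent_muI} gives
\[
\tilde\mu_\i(J,\sigma)=-\frac{4K_h}{1+\det B}\,\rho - C\rho,\qquad C=-\frac{4\pi\chi(\Sigma)}{\Area(\Sigma,\rho)}.
\]
Since we have fixed $\Area(\Sigma,\rho)=-\pi\chi(\Sigma)$, we get $C=4$, hence $\tilde\mu_\i(J,\sigma)=0$ is equivalent to $K_h=-(1+\det B)$ pointwise, which is precisely the Gauss equation. Combined with the Codazzi equation already obtained, this is exactly the statement that $(h,B)$ satisfies \eqref{eq:gauss_codazzi}. Therefore $\tilde\mu_\i=\tilde\mu_\j=\tilde\mu_\k=0$ if and only if $(J,\sigma)\in\widetilde{\mathcal{MS}}_0(\Sigma,\rho)$.

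\textbf{Main obstacle.} The proof itself is short, essentially a bookkeeping matter; the only genuinely delicate point is ensuring the constant matches, i.e. that with the specific choice of $\rho$ made in Section \ref{subsec:change_coord} one has $C=4$ and hence $-\tfrac{4K_h}{1+\det B}\rho - C\rho = 0 \Longleftrightarrow K_h=-1-\det B$. I would double-check this against Lemma \ref{lem:same_area} (total area $-\pi\chi(\Sigma)$) and the Gauss-Bonnet normalization implicitly used in Proposition \ref{prop:moment_maps_infinite_dim}. A second, more conceptual subtlety to flag (but not really an obstacle) is that one must invoke Lemma \ref{lm:equivalent_muI} only \emph{after} knowing $\sigma$ is the real part of a holomorphic quadratic differential, since that lemma has this as a hypothesis — but this is automatic once $\tilde\mu_\j=\tilde\mu_\k=0$, so the logical order is: first use the $\j,\k$ equations to pin down $\sigma=\Re(\phi)$ with $\phi$ holomorphic, then feed this into Lemma \ref{lm:equivalent_muI} to interpret the $\i$ equation as the Gauss equation.
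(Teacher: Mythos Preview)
Your proposal is correct and follows essentially the same approach as the paper: first invoke Lemma \ref{lm:reduction_hqd} to equate $\tilde\mu_\j=\tilde\mu_\k=0$ with holomorphicity of $\phi$ (hence the Codazzi equation via Lemma \ref{lem:hol_quadr_diff_equivalence}), then use Lemma \ref{lm:equivalent_muI} together with the normalization $\Area(\Sigma,\rho)=-\pi\chi(\Sigma)$ giving $C=4$ to identify $\tilde\mu_\i=0$ with the Gauss equation. Your attention to the logical ordering (that Lemma \ref{lm:equivalent_muI} requires the holomorphicity hypothesis, supplied by the $\j,\k$ equations) is a nice point that the paper leaves implicit.
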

\begin{proof} Recall from Section \ref{subsec:change_coord} that $\widetilde{\mathcal{MS}}_{0}(\Sigma, \rho)$ is the space of pairs $(J, \sigma)$ such that $h=(1 + f(\norm{\sigma}))g_J$ and $B=h^{-1}\sigma$ satisfy the Gauss-Codazzi equations. By Lemma \ref{lm:reduction_hqd}, we know that $\sigma$ is the real part of a holomorphic quadratic differential, hence $B$ is Codazzi by Lemma \ref{lem:hol_quadr_diff_equivalence}. Now, from our choice of $\rho$, we see that
\[
    C=-\frac{4\pi\chi(\Sigma)}{\Area(\Sigma, \rho)}=4
\]
so by Lemma \ref{lm:equivalent_muI} we have that $\tilde{\mu}_{\i}(J,\sigma)=0$ if and only if $K_{h}=-1-\det(B)$, which is exactly the Gauss equation for space-like surfaces in Anti-de Sitter space. 
\end{proof}

\subsubsection{The differential of the map $\mu_{\i}$} One subtlety we have to take care of concerns the fact that the action of $\Symp_{0}(\Sigma, \rho)$ is not Hamiltonian with respect to $\omega_{\i}$. However, we can show by an explicit computation that (up to a sign) property ii) in Definition \ref{def:moment_map}
\[
    \scal{\dd\tilde\mu_{\i}(\dot{J},\dot{\sigma})}{V}_{\Lsymp}=\omega_{\i}((\dot{J}, \dot{\sigma}), (\Dlie_{V}J,\Dlie_{V}\sigma))
\]
holds for every $(\dot{J}, \dot{\sigma}) \in T_{(J,\sigma)}T^{*}\mathcal{J}(\Sigma)$ and for every $V \in \Lsymp(\Sigma, \rho)$, not only for $V \in \Lham(\Sigma, \rho)$. 

\begin{proposition}\label{prop:variation_muI} For every $(J,\sigma) \in T^{*}\mathcal{J}(\Sigma)$ such that $\sigma$ is the real part of a holomorphic quadratic differential, and for every tangent vector $(\dot{J}, \dot{\sigma}) \in T_{(J,\sigma)}T^{*}\mathcal{J}(\Sigma)$ we have
\[
    \dd\tilde\mu_{\i}(\dot{J},\dot{\sigma})=-\dd(f\divr_{g_J}\dot{J}+\dd\dot{f}\circ J + \dd f\circ \dot{J}-f^{-1}\beta) \ ,
\]
where $f=f(\|\sigma\|_{J})=\sqrt{1+\|\sigma\|_{J}^{2}}$ and $\beta$ is the $1$-form on $\Sigma$ defined by $\beta(V):=\scall{\dot{\sigma}_{0}}{(\nabla_{V}\sigma)(\cdot, J\cdot)}$.
\end{proposition}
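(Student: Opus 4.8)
The plan is to compute $\dd\tilde\mu_\i(\dot J,\dot\sigma)$ directly from the expression for $\tilde\mu_\i$ given in Theorem \ref{thm:donaldson_maps_cotangent_bundle}, namely
\[
\mu_\mathbf{I}(J,\sigma) = \frac{\norm{\bar\partial\phi}^2-\norm{\partial\phi}^2}{f(\norm\sigma)}\,\rho - 2f(\norm\sigma)K_J\,\rho - 2i\,\bar\partial\partial f(\norm\sigma)~,
\]
keeping in mind that, since $(J,\sigma)$ is assumed to satisfy $\partial_J\bar\phi=0$ (i.e.\ $\phi$ is holomorphic), the term $\norm{\partial\phi}^2$ vanishes at the base point, although its first-order variation need not. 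The key simplification, already used in the proof of Lemma \ref{lm:equivalent_muI}, is that modulo exact forms one may rewrite $\mu_\i$ in "potential form": using the identities $-\tfrac{i}{2\lambda}\bar\partial\lambda\wedge\partial\lambda = \norm{\partial\phi}^2\rho$ and $\bar\partial\partial(\ln\lambda)=2iK_J\rho$ with $\lambda=\norm\sigma^2=f^2-1$, one gets (pointwise, away from zeros of $\sigma$)
\[
\tilde\mu_\i(J,\sigma) = \Big(\Delta_{g_J}\ln(1+f(\norm\sigma)) - 2K_J - C\Big)\rho~,
\]
and $\Delta_{g_J}\varphi\,\rho = \dd(\dd\varphi\circ J)$. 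So $\tilde\mu_\i$ is, up to the constant $C\rho$, the differential of the $1$-form $\dd\ln(1+f)\circ J - 2\,\dd K_J\text{-primitive}$; but $-2K_J\rho$ itself equals $\dd(\divr_{g_J}\text{-type term})$ via Remark \ref{rmk:derivative_curvature}. The cleanest route is to differentiate the expression $\tilde\mu_\i=\dd\alpha$ where $\alpha$ is a $1$-form built from $f$ and the connection data of $g_J$, so that $\dd\tilde\mu_\i(\dot J,\dot\sigma)=\dd(\dot\alpha)$, and then identify $\dot\alpha$ with $-(f\divr_{g_J}\dot J + \dd\dot f\circ J + \dd f\circ\dot J - f^{-1}\beta)$ up to a closed (indeed exact) form, which disappears under $\dd$.

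First I would differentiate the three ingredients of $\mu_\i$ in the direction $(\dot J,\dot\sigma)$ one at a time. For the curvature term $-2fK_J\rho$: by Remark \ref{rmk:derivative_curvature} one has $\dd K_J(\dot J)\,\rho = \tfrac12\dd(\divr_{g_J}\dot J)$, and $\dot f = \langle\sigma,\dot\sigma_0\rangle_J / f$ as computed in the proof of the baby version of Theorem \ref{thm:mappaM}; combining via the Leibniz rule gives a contribution whose exact part is $-\dd(f\divr_{g_J}\dot J) - 2\dot f K_J\rho + (\text{lower order})$. For the potential term $-2i\bar\partial\partial f = \Delta_{g_J}f\,\rho = \dd(\dd f\circ J)$ (here I must be careful: in $\mu_\i$ the relevant combination after the rewriting is $\dd(\dd\ln(1+f)\circ J)$, but since all I need is the variation of the total, I will differentiate the original $-2i\bar\partial\partial f(\norm\sigma) = \dd(\dd f\circ J)$ plus the conformal-factor corrections coming from the $J$-dependence of $\bar\partial\partial$); differentiating $\dd(\dd f\circ J)$ gives $\dd(\dd\dot f\circ J + \dd f\circ\dot J)$ plus a term involving $\dot\nabla$, the variation of the Levi-Civita connection, which by Lemma \ref{lem:variation_levicivita} is controlled by $\divr_{g_J}\dot J$ and $\nabla\dot J$ and will combine with the curvature contribution. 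For the first term, $\norm{\bar\partial\phi}^2/f$ minus $\norm{\partial\phi}^2/f$, I would use that $\partial\bar\phi=0$ to argue that its first variation is, modulo exact forms, expressible through the $g_J$-inner product of $\dot\sigma_0$ with $\nabla_\bullet\sigma$, which is exactly what produces the $1$-form $\beta(V)=\langle\dot\sigma_0,(\nabla_V\sigma)(\cdot,J\cdot)\rangle$ and the factor $f^{-1}$; this is the place where the "conversion" identities from Lemma \ref{lem:hol_quadr_diff_equivalence}, part iv), and Lemma \ref{lem:product_in_TJ} enter. Assembling all three and using $\dd^2=0$ to discard every genuinely exact remainder yields the claimed formula $\dd\tilde\mu_\i(\dot J,\dot\sigma) = -\dd\big(f\divr_{g_J}\dot J + \dd\dot f\circ J + \dd f\circ\dot J - f^{-1}\beta\big)$.

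The main obstacle I anticipate is bookkeeping the term $\norm{\bar\partial\phi}^2-\norm{\partial\phi}^2$ under variation: the difficulty is that $\dot\phi = \dot\sigma - i\,\dot\sigma(\cdot,J\cdot) - i\,\sigma(\cdot,\dot J\cdot)$ is not itself of Hopf type, and $\bar\partial = \bar\partial_J$ depends on $J$, so one must carefully separate $(2,0)$ and $(0,2)$ parts with respect to the \emph{varying} complex structure and show that the pieces which are not of the form $f^{-1}\beta$ plus exact terms cancel against contributions from $\dot f/f^2$ times the base-point quantity $\norm{\bar\partial\phi}^2$ and from the Bochner-type term in $\dd(\divr\dot\sigma_0)$. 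The cleanest way around this is to never expand $\norm{\bar\partial\phi}^2$ in coordinates, but instead recognize $\mu_\i$ globally as $\dd\alpha + (\text{const})\rho$ with $\alpha = \dd\ln(1+f)\circ J + (\text{curvature primitive})$, differentiate the \emph{primitive}, and only at the end verify by a local-coordinate check at one point that $\dot\alpha$ agrees with $-(f\divr_{g_J}\dot J + \dd\dot f\circ J + \dd f\circ\dot J - f^{-1}\beta)$ up to a closed $1$-form; the closedness of the discrepancy can be checked abstractly since both sides have the same differential by construction. I would also need to confirm that the formula extends across the finitely many zeros of $\sigma$ by continuity, exactly as in Lemma \ref{lm:equivalent_muI}.
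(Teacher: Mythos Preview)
Your overall scaffolding---differentiate the three pieces of $\mu_\i$ from Theorem \ref{thm:donaldson_maps_cotangent_bundle} and combine---matches the paper's, and your treatment of the curvature and Laplacian terms is essentially what the paper does:
\[
(-2fK_J\rho)' = -2\dot f K_J\rho - f\,\dd(\divr_{g_J}\dot J),\qquad (-2i\,\bar\partial\partial f)' = -\dd(\dd\dot f\circ J + \dd f\circ\dot J).
\]
The gap is in the first term. A small but telling slip: for $\phi$ holomorphic it is $\|\bar\partial\phi\|^2$ that vanishes at the base point, not $\|\partial\phi\|^2$. More importantly, the variation of that term is \emph{not} simply ``$\dd(f^{-1}\beta)$ plus something exact''. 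The paper shows precisely
\[
\left(\frac{\|\bar\partial\phi\|^2-\|\partial\phi\|^2}{f}\,\rho\right)' = \dd(f^{-1}\beta) + 2\dot f K_J\rho + \divr_{g_J}\dot J\wedge\dd f,
\]
and the two extra terms are the whole point: $2\dot f K_J\rho$ cancels the $-2\dot f K_J\rho$ from the curvature piece, and $\divr_{g_J}\dot J\wedge\dd f$ combines with $-f\,\dd(\divr_{g_J}\dot J)$ to give $-\dd(f\,\divr_{g_J}\dot J)$. Your outline does not account for these cross-terms. The paper's device for producing them is to rewrite the integrand as $\hat\omega_\i((0,\nabla_{e_1}\sigma),(0,\nabla_{e_2}\sigma))$ (as in the proof of Theorem \ref{thm:donaldson_maps_cotangent_bundle}) and then differentiate using the variation of the Levi-Civita connection from Lemma \ref{lem:variation_levicivita}: the curvature cross-term arises from the commutator $R(e_1,e_2)\sigma = 2K_J\,\sigma(\cdot,J\cdot)$ when one computes $\dd\beta$, and the $\divr_{g_J}\dot J\wedge\dd f$ term comes from the $(\divr\dot J)$-part of $\dot\nabla$ acting on $\sigma$.

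Your alternative ``clean route''---writing $\tilde\mu_\i = \dd\alpha$ globally and differentiating the primitive---does not work as described: although $\tilde\mu_\i$ is exact by construction, the curvature $2$-form $-2K_J\rho$ has no \emph{explicit} primitive depending nicely on $J$, so there is no concrete $\alpha$ whose variation you could compute term by term and compare against the claimed expression. The paper avoids this by differentiating $\tilde\mu_\i$ itself as a $2$-form and only \emph{a posteriori} recognizing the result as exact.
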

\begin{proof} The main bulge of the proof is to show that
\begin{equation}\label{eq:mainpart_dmuI}
    \left(\frac{\norm{\bar{\partial} \phi}^2 - \norm{\partial \phi}^2}{f(\norm{\sigma})}\rho\right)'=\dd(f^{-1}\beta)+2\dot{f}K_{J}\rho+\divr_{g_J}\dot{J}\wedge \dd f ,
\end{equation}
where we are taking the derivative of the expression with respect to $(J,\sigma)$ in the direction $(\dot{J}, \dot{\sigma})$. Assuming this relation for now, we see how to conclude. The variations of the other two terms appearing in the expression for $\tilde\mu_{\i}$ (see Theorem \ref{thm:donaldson_maps_cotangent_bundle}) are simpler to handle. We have that
\begin{align*}
    (-2fK_{J}\rho)'& = -2\dot{f}K_{J}\rho-2f\dd K_{J}(\dot{J})\rho \\
    & = -2\dot{f}K_{J}\rho-f\dd(\divr_{g_J}\dot{J}) ,
\end{align*}
where in the last step we used the expression for the derivative of the curvature described in Remark \ref{rmk:derivative_curvature}, and
\[
(\Delta_{g_J}f)'\rho=-\dd((\dd f\circ J)')=-\dd(\dd\dot{f}\circ J+\dd f \circ \dot{J}) .
\]
By combining these relations, we find
\begin{align*}
    \dd\tilde\mu_{\i}(\dot{J},\dot{\sigma})&=\dd(f^{-1}\beta)+2\dot{f}K_{J}\rho+\divr_{g_J}\dot{J}\wedge \dd f-2\dot{f}K_{J}\rho-f\dd(\divr_{g_J}\dot{J})-\dd(\dd\dot{f}\circ J+\dd f \circ \dot{J}) \\
    &=-\dd(f\divr_{g_J}\dot{J}+\dd f \circ \dot{J} + \dd \dot{f}\circ J-f^{-1}\beta) ,
\end{align*}
which proves the desired formula. 

Let us now focus on \eqref{eq:mainpart_dmuI}. As derived in the proof of Theorem \ref{thm:donaldson_maps_cotangent_bundle}, we have that
\begin{equation} \label{eq:schifo}
        \frac{\norm{\bar{\partial} \phi}^2 - \norm{\partial \phi}^2}{f(\norm{\sigma})}=\hat\omega_{\i}((0, \nabla_{e_{1}}\sigma),(0,\nabla_{e_{2}}\sigma)) ,
\end{equation}
for any choice of a local frame $e_1, e_2$ satisfying $\rho(e_1, e_2)=1$. We will determine relation \eqref{eq:mainpart_dmuI} by computing the derivative
\begin{align*}
    \left(\frac{\norm{\bar{\partial} \phi}^2 - \norm{\partial \phi}^2}{f(\norm{\sigma})}\right)' & = \left(\hat\omega_{\i}((0, \nabla_{e_{1}}\sigma),(0,\nabla_{e_{2}}\sigma))\right)' \\
    & = \left(f^{-1}\scall{\nabla_{e_{1}}\sigma}{(\nabla_{e_{2}}\sigma)(\cdot, J\cdot)}\right)' \\
    & = \left( \frac{1}{2} f^{-1} \tr(g_J^{-1} (\nabla_{e_{1}}\sigma) g_J^{-1} (\nabla_{e_{2}}\sigma)J)) \right)'
\end{align*}
In order to simplify the development of this first order variation, we need a few preliminary observations. Since equation \eqref{eq:schifo} holds for any local frame $e_1, e_2$ with $\rho$-volume equal to $1$, we can assume $e_1, e_2$ to be a $g_J$-orthonormal frame and to not change under the variation $\dot{J}$. Moreover, in light of Lemma \ref{lem:product_in_TJ} part \textit{ii)} and relation \eqref{eq:derivative_scal_prod}, the traces of the terms involving the derivatives of $g_J^{-1}$ and of $J$ vanish. This allows us to reduce the number of addends to study. In particular we have that
\begin{align} \label{eq:schifo2}
\begin{split}
    \left(\frac{\norm{\bar{\partial} \phi}^2 - \norm{\partial \phi}^2}{f(\norm{\sigma})}\right)'&=-f^{-2}\dot{f}\scall{\nabla_{e_{1}}\sigma}{(\nabla_{e_{2}}\sigma)(\cdot, J\cdot)}+f^{-1}\scall{(\nabla_{e_{1}}\sigma)'}{(\nabla_{e_{2}}\sigma)(\cdot, J\cdot)} + \\
    & \qquad \qquad \qquad \qquad \qquad +f^{-1}\scall{\nabla_{e_{1}}\sigma}{(\nabla_{e_{2}}\sigma)'(\cdot, J\cdot)} .
\end{split}
\end{align}

Now, using Lemma \ref{lem:variation_levicivita}, we can obtain an expression for $(\nabla_{e_{i}}\sigma)'$. In fact, observing that
\begin{equation}\label{eq:nabla_sigma_dot}
    (\nabla_{V}\sigma)'=\nabla_{V} \dot{\sigma}+\dot{\nabla}_{V}\sigma \ ,
\end{equation}
we compute
\begin{align*}
\begin{split}
    (\dot{\nabla}_{V}\sigma)(X,Y)&=-\sigma(\dot{\nabla}_{V}X,Y)-\sigma(X,\dot{\nabla}_{V}Y) \\
    &=\frac{1}{2}\left((\divr_{g_J}\dot{J})V(\sigma(JX,Y)+\sigma(X,JY))+\sigma(J(\nabla_{V}\dot{J})X,Y)+\sigma(X,J(\nabla_{V}\dot{J})Y)\right) \\
    &=(\divr_{g_J}\dot{J})V \, \sigma(JX,Y)+\scal{\sigma}{J(\nabla_{V}\dot{J})}\, g_{J}(X,Y)  \ , 
\end{split}
\end{align*}
where, in the last line, we applied Lemma \ref{lem:product_in_TJ} to the products $g^{-1}_J \sigma J (\nabla_V \dot{J})$ and $J (\nabla_V \dot{J}) g^{-1}_J \sigma$. Moreover we have
\begin{align*}
    \nabla_{V}\dot{\sigma}&=\nabla_{V}(\dot{\sigma}_{0}-\scal{\sigma}{J\dot{J}} \,g_{J})=\nabla_{V}\dot{\sigma}_{0}-V(\scal{\sigma}{J\dot{J}}) \, g_{J} \nonumber\\
    &=\nabla_{V}\dot{\sigma}_{0}-(\scal{\nabla_{V}\sigma}{J\dot{J}}+\scal{\sigma}{J\nabla_{V}\dot{J}}) \, g_{J} \ .
\end{align*}
Combining the previous two equations with (\ref{eq:nabla_sigma_dot}), we get
\begin{align}\label{eq:final_dotnablasigma}
    \begin{split}
        (\nabla_{V}\sigma)'&=\nabla_{V}\dot{\sigma}_{0}-(\scal{\nabla_{V}\sigma}{J\dot{J}}+\scal{\sigma}{J\nabla_{V}\dot{J}}) g_{J}+(\divr_{g_J}\dot{J})V \, \sigma(\cdot, J\cdot)+\scal{\sigma}{J\nabla_{V}\dot{J}} \, g_{J} \\
        &= \nabla_{V}\dot{\sigma}_{0} - \scal{\nabla_{V}\sigma}{J\dot{J}} \, g_J + (\divr_{g_J}\dot{J})V \, \sigma(\cdot, J\cdot) \ .
    \end{split}
\end{align}

In light of these observations, we can combine relations \eqref{eq:final_dotnablasigma} and \eqref{eq:schifo2} to deduce that
\begin{align}\label{eq:step2}
    \begin{split}
    \left(\frac{\norm{\bar{\partial} \phi}^2 - \norm{\partial \phi}^2}{f(\norm{\sigma})}\right)'
    &=-f^{-1}\dot{f} \, \hat\omega_{\i}((0, \nabla_{e_{1}}\sigma),(0,\nabla_{e_{2}}\sigma))+f^{-1}\scall{\nabla_{e_{1}}\dot{\sigma}_{0}}{(\nabla_{e_{2}}\sigma)(\cdot, J \cdot)} + \\
    & \ \ \ +f^{-1}\scall{\nabla_{e_{1}}\sigma}{(\nabla_{e_{2}}\dot{\sigma}_{0})(\cdot, J\cdot)} + f^{-1}(\divr_{g_J}\dot{J})(e_{1}) \, \scall{\sigma(\cdot, J\cdot)}{(\nabla_{e_{2}}\sigma)(\cdot,J\cdot)}\\
    & \ \ \ +f^{-1}(\divr_{g_J}\dot{J})(e_{2}) \, \scall{\nabla_{e_{1}}\sigma}{\sigma(\cdot,J^{2}\cdot)} \\
    &=-f^{-1}\dot{f} \, \hat\omega_{\i}((0, \nabla_{e_{1}}\sigma),(0,\nabla_{e_{2}}\sigma))+\hat\omega_{\i}((0, \nabla_{e_{1}}\dot{\sigma}_{0}),(0,\nabla_{e_{2}}\sigma))\\
    &\ \ \ +\hat\omega_{\i}((0, \nabla_{e_{1}}\sigma),(0,\nabla_{e_{2}}\dot{\sigma}_{0})) + f^{-1}(\divr_{g_J}\dot{J})(e_{1}) \, e_{2}(\|\sigma\|^{2}/2) + \\
    & \ \ \ - f^{-1} (\divr_{g_J}\dot{J})(e_{2})
    \, e_{1}(\|\sigma\|^{2}/2)\\
    &=-f^{-1}\dot{f} \, \hat\omega_{\i}((0, \nabla_{e_{1}}\sigma),(0,\nabla_{e_{2}}\sigma))+\hat\omega_{\i}((0, \nabla_{e_{1}}\dot{\sigma}_{0}),(0,\nabla_{e_{2}}\sigma))\\
    &\ \ \ +\hat\omega_{\i}((0, \nabla_{e_{1}}\sigma),(0,\nabla_{e_{2}}\dot{\sigma}_{0}))+(\divr_{g_J}\dot{J}\wedge \dd f)(e_{1},e_{2})
    \end{split}
\end{align}
where in the first step we used the fact that the trace part in the expression for $(\nabla_{V}\sigma)'$ does not give a contribution.
Comparing relation \eqref{eq:mainpart_dmuI} with \eqref{eq:step2}, the proof is complete if we show that
\begin{enumerate}[a)]
    \item $(\dd f\wedge \beta)(e_{1},e_{2})=f\dot{f} \, \hat\omega_{\i}((0, \nabla_{e_{1}}\sigma),(0, \nabla_{e_{2}}\sigma))$ 
    \item $\dd \beta(e_{1},e_{2})= f \hat\omega_{\i}((0, \nabla_{e_{1}}\dot{\sigma}_{0}),(0,\nabla_{e_{2}}\sigma)) +\hat\omega_{\i}((0, \nabla_{e_{1}}\sigma),(0,\nabla_{e_{2}}\dot{\sigma}_{0})))-2K_{J}\scall{\dot{\sigma}_{0}}{\sigma}.$
\end{enumerate}
\paragraph{Proof of relation a)} First we observe that, if $\sigma$ is zero, then the relation is obviously satisfied. In what follows we will assume that $\sigma$ is not identically zero. By definition of $\beta$ and of wedge product
\begin{align*}
    \dd f\wedge \beta(e_{1},e_{2})&=e_{1}(f)\scall{\dot{\sigma}_{0}}{(\nabla_{e_{2}}\sigma)(\cdot, J\cdot)}-e_{2}(f)\scall{\dot{\sigma}_{0}}{(\nabla_{e_{1}}\sigma)(\cdot, J\cdot)} \\
    &=f^{-1}\left(\scall{\sigma}{\nabla_{e_{1}}\sigma}\scall{\dot{\sigma}_{0}}{(\nabla_{e_{2}}\sigma)(\cdot, J\cdot)}-\scall{\sigma}{\nabla_{e_{2}}\sigma}\scall{\dot{\sigma}_{0}}{(\nabla_{e_{1}}\sigma)(\cdot, J\cdot)} \right) \ .
\end{align*}

	As observed in the proof of Lemma \ref{lem:derivative_B}, for every $p \in \Sigma$ outside the set of zeros of $\sigma$, the elements $(g^{-1} \sigma)_p$ and $(J g^{-1} \sigma)_p$ form a basis of the space of traceless symmetric endomorphisms of $T_p \Sigma$. In particular, using the scalar product $\scall{\cdot}{\cdot}$ we can represent $\dot\sigma_0$ in terms of such basis, obtaining the expression $\dot{\sigma}_{0}=\frac{1}{\|\sigma\|^{2}}\left(\scall{\dot{\sigma}_{0}}{\sigma}\sigma+\scall{\dot{\sigma}_{0}}{\sigma(\cdot, J\cdot)}\sigma(\cdot, J\cdot)\right)$. Replacing this identity in the previous equation, we get
\begin{align*}
    \dd f\wedge \beta(e_{1},e_{2})
    &=f^{-1}\left(\scall{\sigma}{\nabla_{e_{1}}\sigma}\scall{\dot{\sigma}_{0}}{(\nabla_{e_{2}}\sigma)(\cdot, J\cdot)}-\scall{\sigma}{\nabla_{e_{2}}\sigma}\scall{\dot{\sigma}_{0}}{(\nabla_{e_{1}}\sigma)(\cdot, J\cdot)} \right) \\
    &= \frac{1}{\|\sigma\|^{2} f}  \left( \scall{\sigma}{\nabla_{e_{1}}\sigma} (\scall{\dot{\sigma}_{0}}{\sigma}\scall{\sigma}{(\nabla_{e_{2}}\sigma)(\cdot, J\cdot)}+\scall{\dot{\sigma}_{0}}{\sigma(\cdot, J\cdot)}\scall{\sigma(\cdot, J\cdot)}{(\nabla_{e_{2}}\sigma)(\cdot, J\cdot)}) + \right. \\
    & \ \ \ \left. - \scall{\sigma}{\nabla_{e_{2}} \sigma} (\scall{\dot{\sigma}_{0}}{\sigma}\scall{\sigma}{(\nabla_{e_{1}}\sigma)(\cdot, J\cdot)}+\scall{\dot{\sigma}_{0}}{\sigma(\cdot, J\cdot)}\scall{\sigma(\cdot, J\cdot)}{(\nabla_{e_{1}}\sigma)(\cdot, J\cdot)}) \right) \\
    &= \frac{1}{\|\sigma\|^{2} f}  \left( \scall{\sigma}{\nabla_{e_{1}}\sigma} (\scall{\dot{\sigma}_{0}}{\sigma}\scall{\sigma}{(\nabla_{e_{2}}\sigma)(\cdot, J\cdot)}+\scall{\dot{\sigma}_{0}}{\sigma(\cdot, J\cdot)}\scall{\sigma}{\nabla_{e_{2}}\sigma}) + \right. \\
    & \ \ \ \left. - \scall{\sigma}{\nabla_{e_{2}} \sigma} (\scall{\dot{\sigma}_{0}}{\sigma}\scall{\sigma}{(\nabla_{e_{1}}\sigma)(\cdot, J\cdot)}+\scall{\dot{\sigma}_{0}}{\sigma(\cdot, J\cdot)}\scall{\sigma}{\nabla_{e_{1}}\sigma}) \right) \\
    &= \frac{\scall{\dot{\sigma}_{0}}{\sigma}}{\|\sigma\|^{2}f}  \left( \scall{\sigma}{\nabla_{e_{1}}\sigma} \scall{\sigma}{(\nabla_{e_{2}}\sigma)(\cdot, J\cdot)} - \scall{\sigma}{\nabla_{e_{2}} \sigma} \scall{\sigma}{(\nabla_{e_{1}}\sigma)(\cdot, J\cdot)} \right) \\
     &= \frac{\scall{\dot{\sigma}_{0}}{\sigma}}{\|\sigma\|^{2} f}  \scall{\scall{\sigma}{\nabla_{e_{1}}\sigma} \sigma + \scall{\sigma(\cdot, J\cdot)}{\nabla_{e_{1}}\sigma} \sigma(\cdot, J \cdot)}{(\nabla_{e_{2}}\sigma)(\cdot, J\cdot)}
\end{align*}
    At this point, we use once again the fact that $g^{-1} \sigma$ and $J g^{-1} \sigma$ form a basis of the space of traceless symmetric endomorphisms of $T \Sigma$ (outside the zeros of $\sigma$) to express $\nabla_{e_{1}}\sigma$ as $\frac{1}{\|\sigma\|^{2}}\left(\scall{\nabla_{e_{1}}\sigma}{\sigma}\sigma+\scall{\nabla_{e_{1}}\sigma}{\sigma(\cdot, J\cdot)}\sigma(\cdot, J\cdot)\right)$. Combining this observation with the computations above, we deduce that
\begin{align*}
    \dd f\wedge \beta(e_{1},e_{2}) & = f^{-1}\scall{\dot{\sigma}_0}{\sigma}\scall{\nabla_{e_{1}}\sigma}{(\nabla_{e_{2}}\sigma)(\cdot, J\cdot)} \\
    &=\dot{f}\scall{\nabla_{e_{1}}\sigma}{(\nabla_{e_{2}}\sigma)(\cdot, J\cdot)} \\ 
    &=f\dot{f}\hat\omega_{\i}((0, \nabla_{e_{1}}\sigma),(0, \nabla_{e_{2}}\sigma)) \ ,
\end{align*}
which finally proves the identity a) outside the zeros of $\sigma$. In fact, since both terms of the equality extends continuously at the zeros of $\sigma$ (which form a finite set), we conclude that the identity $\dd f\wedge \beta = f\dot{f}\hat\omega_{\i}((0, \nabla_\bullet \sigma),(0, \nabla_\bullet \sigma))$ holds everywhere on $\Sigma$.

\paragraph{Proof of relation b)} By definition of exterior differential we have
\begin{align*}
    \dd \beta(e_{1},e_{2})&=e_{1}\left(\scall{\dot{\sigma}_{0}}{(\nabla_{e_{2}}\sigma)(\cdot,J\cdot)}\right)-e_{2}\left(\scall{\dot{\sigma}_{0}}{(\nabla_{e_{1}}\sigma)(\cdot,J\cdot)}\right)-\scall{\dot{\sigma}_{0}}{(\nabla_{[e_{1},e_{2}]}\sigma)(\cdot, J\cdot)} \\
    &=\scall{\nabla_{e_{1}}\dot{\sigma}_{0}}{(\nabla_{e_{2}}\sigma)(\cdot,J\cdot)}-\scall{\nabla_{e_{2}}\dot{\sigma}_{0}}{(\nabla_{e_{1}}\sigma)(\cdot,J\cdot)}+\scall{\dot{\sigma}_{0}}{R(e_{1},e_{2})\sigma(\cdot, J\cdot)}\\
    &=f\left(\hat\omega_{\i}((0,\nabla_{e_{1}}\dot{\sigma}_{0}),(0,\nabla_{e_{2}}\sigma))-\hat\omega_{\i}((0,\nabla_{e_{2}}\dot{\sigma}_{0}),(0,\nabla_{e_{1}}\sigma))\right)+\scall{\dot{\sigma}_{0}}{R(e_{1},e_{2})\sigma(\cdot, J\cdot)}
\end{align*}
where $R(e_{1},e_{2})\sigma=\nabla_{e_{1}}\nabla_{e_{2}}\sigma-\nabla_{e_{2}}\nabla_{e_{1}}\sigma-\nabla_{[e_{1},e_{2}]}\sigma$. The same proof that relates the Riemann curvature tensor with the Gaussian curvature adapts to the operator $R(e_{1},e_{2})\sigma$ and shows that
\[
    R(e_{1},e_{2})\sigma=2K_{J}\sigma(\cdot, J \cdot) \ .
\]
Relation b) follows.
\end{proof}

\begin{remark} \label{rmk:pippo}
In what follows, we will fix a primitive of the two form $d\tilde{\mu}_{\i}$ found in Proposition \ref{prop:variation_muI}, and define the linear map $F_{(J,\sigma)}: T_{(J,\sigma)} T^* \mathcal{J}(\Sigma) \rightarrow \Lambda^{1}(\Sigma)/B^{1}(\Sigma) \subset \Lsymp(\Sigma, \rho)^{*}$ so that $F_{(J,\sigma)}(\dot J,\dot \sigma)$ equals this primitive (modulo exact 1-forms). Note that $\Ker (F_{(J,\sigma)})$ is a subspace of $T_{(J,\sigma)}\tilde{mu}_\i^{-1}(0)$, and a priori the inclusion might be strict. By an abuse of notation, for the rest of the paper we will simply denote $(d\tilde{\mu}_{\i})_{(J,\sigma)}=F_{(J,\sigma)}$.

\end{remark}

We can finally show the connection between $\omega_{\i}$ and $\dd \tilde\mu_{\i}$:
\begin{proposition}\label{prop:fakemoment} Let $(J,\sigma) \in \widetilde{\mathcal{MS}}_{0}(\Sigma, \rho)$. For every $(\dot{J},\dot{\sigma}) \in T_{(J,\sigma)}T^{*}\mathcal{J}(\Sigma)$ and for every $V \in \Lsymp(\Sigma, \rho)$, we have
\[
    \omega_{\i}((\Dlie_{V}J,\Dlie_{V}\sigma),(\dot{J},\dot{\sigma}))=-\scal{\dd\tilde\mu_{\i}(\dot{J}, \dot{\sigma})}{V}_{\Lsymp} \ .
\]
\end{proposition}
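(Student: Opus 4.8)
The plan is to follow the standard template for showing that a map is (essentially) a moment map: reduce the identity to its defining properties and verify them by combining the previously established ingredients, rather than recomputing everything from scratch. The key point is that we already have the explicit first-order variation $\dd\tilde\mu_\i(\dot J,\dot\sigma)$ from Proposition \ref{prop:variation_muI}, and Donaldson's Theorem \ref{thm:donaldson_maps} already gives the statement for $V\in\Lham(\Sigma,\rho)$; so the only genuinely new content is the extension from Hamiltonian to symplectic vector fields.

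First I would observe that the pairing $\scal{\dd\tilde\mu_\i(\dot J,\dot\sigma)}{V}_\Lsymp$ only depends on $\dd\tilde\mu_\i(\dot J,\dot\sigma)$ as a class modulo exact $1$-forms (equivalently, as an element of $B^2(\Sigma)$ via $\dd$), which is exactly how it was defined in Remark \ref{rmk:pippo}. By Proposition \ref{prop:variation_muI}, $\dd\tilde\mu_\i(\dot J,\dot\sigma)$ is the differential of the $1$-form $\alpha:=-(f\,\divr_{g_J}\dot J+\dd\dot f\circ J+\dd f\circ\dot J-f^{-1}\beta)$, so modulo the primitive-choice of Remark \ref{rmk:pippo} we may simply take $\dd\tilde\mu_\i(\dot J,\dot\sigma)=[\alpha]$. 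Hence $\scal{\dd\tilde\mu_\i(\dot J,\dot\sigma)}{V}_\Lsymp=\int_\Sigma \alpha\wedge\iota_V\rho$, using the pairing $\scal{\cdot}{\cdot}_\Lsymp$ between $\Lambda^1(\Sigma)/B^1(\Sigma)$ and $Z^1(\Sigma)$, which is well-defined precisely because $\iota_V\rho$ is closed for $V$ symplectic.

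Next I would compute the left-hand side. Using $\Dlie_V s=(\Dlie_V J,\Dlie_V\sigma)$ and the explicit expression \eqref{eq:definition_omegaI_toy} of $\omega_\i$ integrated fiberwise over $\Sigma$, together with Lemma \ref{lem:I_of_lie_deriv} (which gives $\mathbf I(\Dlie_V J,\Dlie_V\sigma)=(-\Dlie_{JV}J,-\Dlie_{JV}\sigma)$ for $V$ symplectic), one expresses $\omega_\i((\Dlie_V J,\Dlie_V\sigma),(\dot J,\dot\sigma))$ in terms of $\Dlie_{JV}J$, $\Dlie_V\sigma$, $\dot J$ and $\dot\sigma$. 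The main work is then a pointwise identity comparing this integrand with $\alpha\wedge\iota_V\rho$; here I would use \eqref{eq:iota} ($\iota_V\eta\,\rho=\eta\wedge\iota_V\rho$) to convert $\alpha\wedge\iota_V\rho$ into $(\iota_V\alpha)\,\rho$, use the divergence identities \eqref{eq:div_JA}, \eqref{eq:div_contraction}, the variation formula \eqref{eq:divergence_rel} relating $\tfrac12\tr(\dot J J\Dlie_V J)$ to divergences, and the definition of $\beta$ in Proposition \ref{prop:variation_muI}. Essentially this is the same integration-by-parts manipulation that appears in the proof of Proposition \ref{prop:orthogonal_symp} (Equation \eqref{eq:Omega}), now carried out with the extra $f$-weighted terms coming from the change of variables $h=(1+f)g_J$; the terms $\dd\dot f\circ J$ and $f^{-1}\beta$ are exactly what is needed to absorb the $\dot\sigma_0$-dependence of $\omega_\i$.

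I expect the main obstacle to be precisely this pointwise computation: showing that all the $f$-dependent contributions organize themselves so that $\omega_\i((\Dlie_V J,\Dlie_V\sigma),(\dot J,\dot\sigma))=-\int_\Sigma\alpha\wedge\iota_V\rho$ for \emph{every} symplectic $V$, not just Hamiltonian ones. One clean way to structure this is a two-step argument: (1) invoke Donaldson's Theorem \ref{thm:donaldson_maps}(iii) together with Proposition \ref{prop:moment_maps_infinite_dim} to get the identity for $V\in\Lham(\Sigma,\rho)$, and (2) handle the complementary directions (a basis of $H^1(\Sigma)\cong\Lsymp(\Sigma,\rho)/\Lham(\Sigma,\rho)$, representable by $V$ with $\iota_V\rho$ harmonic) by a direct verification; for those $V$ the terms $\dd(\divr_{g_J}\dot J\cdot\text{stuff})$ integrate against a closed form and the analysis reduces to the harmonic-form pairing already used in the last part of the proof of Proposition \ref{prop:orthogonal_symp}. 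Alternatively, since both sides are $\R$-bilinear in $(\dot J,\dot\sigma)$ and $V$ and both vanish whenever $\iota_V\rho$ is exact (the left side because of the Hamiltonian case, the right side trivially), one may even bypass step (2) by checking the identity after pairing against the de Rham cohomology. Once the pointwise/cohomological identity is in hand, the Proposition follows immediately.
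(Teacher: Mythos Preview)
Your main strategy---a direct pointwise comparison of the $\omega_\i$-integrand with $\alpha\wedge\iota_V\rho$ via integration by parts---is exactly what the paper does. Two remarks. First, the detour through Lemma \ref{lem:I_of_lie_deriv} is unnecessary: the paper works straight from the fiberwise expression \eqref{eq:definition_omegaI_toy}, treating the two summands $-f\scall{\Dlie_V J}{J\dot J}$ and $f^{-1}\scall{(\Dlie_V\sigma)_0}{\dot\sigma_0(\cdot,J\cdot)}$ separately. The first is handled by \eqref{eq:divergence_rel} exactly as you anticipate; for the second, the key technical ingredient (which you gesture at but do not name) is the pointwise identity
\[
\scall{\dot\sigma_0(\cdot,J\cdot)}{(\Dlie_V\sigma)_0}=\scall{\dot\sigma_0(\cdot,J\cdot)}{\nabla_V\sigma}-\divr_{g_J}(JV)\,\scall{\sigma}{\dot\sigma_0},
\]
proved by decomposing $A_V=\nabla_\bullet V$ in the basis $\{J,\dot J,J\dot J\}$. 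After this, the $\dd\dot f\circ J$ and $f^{-1}\beta$ terms fall out from one more integration by parts, and the result holds for \emph{all} symplectic $V$ at once---no separate cohomological step is needed.

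Second, your ``bypass'' alternative is wrong: neither side vanishes for Hamiltonian $V$. Donaldson's theorem says the two sides are \emph{equal} in that case, not zero, and on the right $\int_\Sigma\alpha\wedge\dd H=-\int_\Sigma H\,\dd\alpha$ has no reason to vanish. The honest two-step route (Donaldson for $\Lham$ plus a direct check on harmonic representatives of $H^1$) would work, but step (2) amounts to redoing the direct computation, so it saves nothing.
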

\begin{proof} By definition of $\omega_{\i}$, we have
\begin{equation}\label{eq:omega_i_proof}
    \omega_{\i}((\Dlie_{V}J,\Dlie_{V}\sigma),(\dot{J},\dot{\sigma}))=\int_{\Sigma}\left(-f\scall{\Dlie_{V}J}{J\dot{J}}+f^{-1}\scall{(\Dlie_{V}\sigma)_{0}}{\dot{\sigma}_{0}(\cdot, J\cdot)}\right)\rho \ .
\end{equation}
In order to simplify the second term inside the integral, we make use of the following relation the proof of which will be postponed to the end:
\begin{align}\label{eq:scal_Lie_div}
    \scall{\dot{\sigma}_{0}(\cdot, J\cdot)}{(\Dlie_{V}\sigma)_{0}}=\scall{\dot{\sigma}_{0}(\cdot, J\cdot)}{\nabla_{V}\sigma}-\divr_{g_J}(JV)\scall{\sigma}{ \dot{\sigma}_{0}}
\end{align}
Using this fact and recalling that $\dot{f} = f^{-1} \scall{\sigma}{\dot{\sigma}_0}$ (by relation \eqref{eq:derivative_norm_sigma}), we have
\begin{align*}
\begin{split}
    f^{-1}\scall{(\Dlie_{V}\sigma)_{0}}{\dot{\sigma}_{0}(\cdot, J\cdot)}&=f^{-1}\scall{\dot{\sigma}_{0}(\cdot, J\cdot)}{\nabla_{V}\sigma}-f^{-1}\divr_{g_J}(JV)\scall{\sigma}{ \dot{\sigma}_{0}} \\
    &=f^{-1}\scall{\dot{\sigma}_{0}(\cdot, J\cdot)}{\nabla_{V}\sigma}-\divr_{g_J}(f^{-1}\scall{\sigma}{ \dot{\sigma}_{0}}JV)+\dd(f^{-1}\scall{\sigma}{\dot{\sigma}_{0}})JV \\
    &=f^{-1}\scall{\dot{\sigma}_{0}(\cdot, J\cdot)}{\nabla_{V}\sigma}-\divr_{g_J}(f^{-1}\scall{\sigma}{ \dot{\sigma}_{0}}JV)+\dd \dot{f}(JV) 
\end{split}
\end{align*}
and integrating over $\Sigma$ we find
\begin{align}\label{eq:PartII}
\begin{split}
    \int_{\Sigma}\left(f^{-1}\scall{(\Dlie_{V}\sigma)_{0}}{\dot{\sigma}_{0}(\cdot, J\cdot)}\right)\rho &=\int_{\Sigma}\left(-f^{-1}\scall{\dot{\sigma}_{0}}{(\nabla_{V}\sigma)(\cdot,J\cdot)}+\dd \dot{f}(JV)\right)\rho \\
    &= \int_{\Sigma} (\dd \dot{f}(JV)-f^{-1}\beta(V)) \rho \ , 
\end{split}
\end{align}
Using Equation (\ref{eq:divergence_rel}), we can rewrite the first term in 
(\ref{eq:omega_i_proof}) as 
\begin{align*}\label{eq:PartI}
\begin{split}
    \int_{\Sigma}-f\scall{\Dlie_{V}J}{J\dot{J}}\rho 
    &=\int_{\Sigma} \left(f(\divr_{g_J}\dot{J})V-f\divr_{g_J}(\dot{J}V)\right)\rho \\
    &=\int_{\Sigma} \left(f(\divr_{g_J}\dot{J})V+\dd f(\dot{J}V)-\divr_{g_J}(f\dot{J}V)\right)\rho \\
    &=\int_{\Sigma} \left(f(\divr_{g_J}\dot{J})V+\dd f(\dot{J}V)\right)\rho \ .
\end{split}
\end{align*}
Combining this last relation with \eqref{eq:omega_i_proof} and  \eqref{eq:PartII}, we find
\begin{align*}
    \omega_{\i}((\Dlie_{V}J,\Dlie_{V}\sigma),(\dot{J},\dot{\sigma}))&=\int_{\Sigma} \left( f(\divr_{g_J}\dot{J})V+ \dd f \circ \dot{J}V-f^{-1}\beta(V)+\dd \dot{f}(JV)\right)\rho \\
    &=\int_{\Sigma} \iota_{V}\left(f\divr_{g_J}\dot{J}+\dd f\circ \dot{J}+\dd \dot{f}\circ J-f^{-1}\beta\right)\rho \\
    &=\int_{\Sigma} (f\divr_{g_J}\dot{J}+\dd f\circ \dot{J}+\dd \dot{f}\circ J-f^{-1}\beta)\wedge \iota_{V}\rho \tag{by \eqref{eq:iota}}\\
    &=-\scal{\dd \tilde{\mu}_{\i}(\dot{J}, \dot{\sigma})}{V}_{\Lsymp} \ .
\end{align*}
We are left to prove Equation (\ref{eq:scal_Lie_div}). Let $A_{V}$ denote the endomorphism of $T\Sigma$ given by $A_{V}(X)=\nabla_{X}V$. It is easy to verify using the definition of Lie derivative that
\[
    \Dlie_{V}\sigma=\nabla_{V}\sigma+A_{V}^{t}\sigma+\sigma A_{V} \ .
\]
Because $V \in \Lsymp(\Sigma, \rho)$, we know that $\tr(A_{V})=0$. In particular, $A_{V} \in \mathrm{Span}(J,\dot{J},J\dot{J})$ and we can write
\begin{align*}
    &A_{V}=-\frac{1}{2}\tr(JA_{V})J+\frac{1}{2\tr(\dot{J}^{2})}\left(\tr(\dot{J}A_{V})\dot{J}+\tr(J\dot{J}A_{V})J\dot{J}\right) \\
    &A_{V}^{*}=\frac{1}{2}\tr(JA_{V})J+\frac{1}{2\tr(\dot{J}^{2})}\left(\tr(\dot{J}A_{V})\dot{J}+\tr(J\dot{J}A_{V})J\dot{J}\right) \\
    &\dot{J}A_{V}^{*}+A_{V}\dot{J}=-\tr(JA_{V})J\dot{J}+\frac{1}{2}\tr(\dot{J}A_{V})\1 
\end{align*}
where $A_{V}^{*}$ denotes the adjoint of $A_{V}$ with respect to $g_{J}$. Then for every $\dot{J}\in T_{J}\mathcal{J}(\Sigma)$, we have
\begin{align*}
    \scall{\dot{J}}{(\Dlie_{V}\sigma)_{0}} 
    &=\frac{1}{2}\tr(\dot{J}g_{J}^{-1} \Dlie_{V}\sigma) \\
    &=\frac{1}{2}\left(\tr(\dot{J}g_{J}^{-1}\nabla_{V}\sigma)+\tr(\dot{J}g_{J}^{-1}A_{V}^{t}\sigma)+\tr(\dot{J}g_{J}^{-1}\sigma A_{V})\right) \\
    &=\frac{1}{2}\left(\tr(\dot{J}g_{J}^{-1}\nabla_{V}\sigma)+\tr(\dot{J}A_{V}^{*}g_{J}^{-1}\sigma)+\tr(A_{V}\dot{J}g_{J}^{-1}\sigma)\right)\\
    &=\scall{\dot{J}}{\nabla_{V}\sigma}+\scal{\sigma}{\dot{J}A_{V}^{*}+A_{V}\dot{J}}\\
    &=\scall{\dot{J}}{\nabla_{V}\sigma}-\tr(JA_{V})\scal{\sigma}{J\dot{J}}\\
    &=\scall{\dot{J}}{\nabla_{V}\sigma}-\divr_{g_J}(JV)\scal{\sigma}{J\dot{J}} \ 
\end{align*}
and Equation (\ref{eq:scal_Lie_div}) follows by taking $\dot{J}=\dot{\sigma}_{0}(\cdot, J\cdot)$.
\end{proof}

\begin{remark}\label{rmk:mu_hat} Note that, because $\tilde{\mu}_{\i}$ and $\mu_{\i}$ differ by a constant, Proposition \ref{prop:fakemoment} holds for $\mu_{\i}$ as well.     
\end{remark}

\subsubsection{The differential of the map $\tilde{\mu}_{\j} + i \, \tilde{\mu}_{\k}$}
With similar techniques, we can compute the differential of the other two moment maps:
\begin{proposition} \label{prop:variation_moment_maps}
	For every $(J, \sigma) \in T^* \mathcal{J}(\Sigma)$ and for every $(\dot{J}, \dot{\sigma}) \in T_{(J, \sigma)} T^* \mathcal{J}(\Sigma)$ we have:
	\[
	\dd(\tilde{\mu}_\mathbf{J} + i \, \tilde{\mu}_{\mathbf{K}})(\dot{J}, \dot{\sigma}) = - i \, \left( \divr_{g_J}(g^{-1} \dot{\bar{\phi}}_0 )+ \scal{\nabla_{J \bullet} \bar{\phi}}{\dot{J}} \right) - \,  \scal{\bar{\phi}}{\nabla_{\bullet} \dot{J} + i \, \nabla_{J \bullet} \dot{J} }  \in \Lsymp(\Sigma,\rho)^* \otimes \C ,
	\]
	where $\phi$ is the quadratic differential whose real part is equal to $\sigma$, $\dot{\bar{\phi}}_0$ denotes the $g_J$-\hsk traceless part of $\dot{\bar{\phi}}$. 
\end{proposition}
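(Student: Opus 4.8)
The strategy is to apply Donaldson's formula (Theorem \ref{thm:donaldson_maps}, part \textit{iii)}) together with the explicit expression for $\mu_\mathbf{J} + i\,\mu_\mathbf{K}$ obtained in Theorem \ref{thm:donaldson_maps_cotangent_bundle}, and differentiate directly. Recall from Proposition \ref{prop:moment_maps_infinite_dim} that $\tilde{\mu}_\mathbf{J} + i\,\tilde{\mu}_\mathbf{K}$ sends $(J,\sigma)$ to the class of the $1$-form $-2i\, r(\partial_J \bar{\phi})$ in $\Lsymp(\Sigma,\rho)^* \otimes \C$, where the pairing with a symplectic vector field $V$ is $\scal{[-2i\, r(\partial_J\bar\phi)]}{V}_\Lsymp = \int_\Sigma (-2i\, r(\partial_J\bar\phi)) \wedge \iota_V\rho$. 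By relation \eqref{eq:iota}, this pairing coincides with $\int_\Sigma (-2i)\, r(\partial_J\bar\phi)(V)\,\rho$, and in the proof of Theorem \ref{thm:donaldson_maps_cotangent_bundle} it was shown that $2\,r(\partial_J\bar\phi)(V) = (\divr_{g_J} g_J^{-1}\bar\phi)(V)$. So the functional we must differentiate is, up to the factor $-i$, the assignment $(J,\sigma)\mapsto \divr_{g_J}(g_J^{-1}\bar\phi)$ evaluated against $V$.

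\textbf{Key steps.} First I would write $\divr_{g_J}(g_J^{-1}\bar\phi)(V) = \sum_j (\nabla^g_{e_j}\bar\phi)(V, e_j)$ in a local $g_J$-orthonormal frame and compute its first-order variation in the direction $(\dot J,\dot\sigma)$. There are three contributions: the variation of $\bar\phi$ itself (which is $\dot{\bar\phi}$, and only its $g_J$-traceless part $\dot{\bar\phi}_0$ survives the contraction with a traceless object, giving the term $\divr_{g_J}(g_J^{-1}\dot{\bar\phi}_0)$), the variation $\dot\nabla$ of the Levi-Civita connection (controlled by Lemma \ref{lem:variation_levicivita}, which expresses $\dot\nabla_X Y$ in terms of $\divr_{g_J}\dot J$ and $\nabla^g_X\dot J$), and the variation of the metric $g_J$ appearing implicitly in the trace/divergence (by \eqref{eq:derivative_scal_prod}, $\dot g_J = -g_J(\cdot, J\dot J\cdot)$, and by Lemma \ref{lem:product_in_TJ} part \textit{ii)} the trace-type terms that this produces vanish against $\bar\phi$, since $\bar\phi$, $J$, $\dot J$ all lie in the relevant low-dimensional spaces of tensors). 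Assembling these and using the Codazzi property of $g_J^{-1}\sigma$ (equivalently of $\bar\phi$, Lemma \ref{lem:hol_quadr_diff_equivalence}) to commute covariant derivatives, the variation of $\dot\nabla$ reorganizes into a term of the form $\scal{\nabla_{J\bullet}\bar\phi}{\dot J}$ together with terms $\scal{\bar\phi}{\nabla_\bullet\dot J}$ and, after passing to the holomorphic/antiholomorphic decomposition $r$, the imaginary counterpart $\scal{\bar\phi}{\nabla_{J\bullet}\dot J}$. Finally I would multiply by $-i$ and collect: the purely imaginary block becomes $-i(\divr_{g_J}(g_J^{-1}\dot{\bar\phi}_0) + \scal{\nabla_{J\bullet}\bar\phi}{\dot J})$, and the block coming from $\nabla_\bullet\dot J$ becomes $-\scal{\bar\phi}{\nabla_\bullet\dot J + i\,\nabla_{J\bullet}\dot J}$, which is exactly the claimed formula. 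Throughout, the computation should be carried out pointwise, as a $1$-form on $\Sigma$ modulo exact $1$-forms, with the promotion from $\Ham$ to $\Symp_0$ already being built into the definition of $\tilde\mu_\mathbf{J} + i\,\tilde\mu_\mathbf{K}$ (Proposition \ref{prop:moment_maps_infinite_dim}).

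\textbf{Main obstacle.} The routine part is the bookkeeping of the three variational contributions; the delicate point is the treatment of the term involving $\dot\nabla$, because a priori it produces both a $\scal{\nabla_\bullet\bar\phi}{\dot J}$-type term and a $\scal{\bar\phi}{\nabla_\bullet\dot J}$-type term, and one must use the Codazzi equation (Lemma \ref{lem:hol_quadr_diff_equivalence} part \textit{iv)}, namely $\nabla^g_{JX}\sigma = (\nabla^g_X\sigma)(\cdot,J\cdot)$) in the right places to convert $\nabla_\bullet$ applied to $\dot J$ or to $\bar\phi$ into the combination with $\nabla_{J\bullet}$ that matches the stated answer. One also has to be careful that the ``$r$'' projection in Theorem \ref{thm:donaldson_maps_cotangent_bundle}, which extracts the antiholomorphic part, interacts correctly with differentiation: since $r$ is $\C$-linear and the variation commutes with it at the level of sections, this causes no genuine difficulty, but it is the place where the explicit pairing of $i$ and $J$ in the final formula gets fixed, so it must be handled with care. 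A secondary check is that all trace-part contributions of $\dot\sigma$ (i.e. the $-\scal{\sigma}{J\dot J}g_J$ piece of $\dot\sigma$, cf. Lemma \ref{lem:characterization_tangent_space}) indeed drop out; this again follows from Lemma \ref{lem:product_in_TJ}, but should be verified rather than assumed.
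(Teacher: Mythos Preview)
Your overall strategy matches the paper's: rewrite $(\tilde\mu_\j+i\tilde\mu_\k)(J,\sigma)=[-i\,\divr_{g_J}(g_J^{-1}\bar\phi)]$ and differentiate, splitting $(\divr\bar\phi)'$ into the three pieces coming from $\dot g^*$, $\dot\nabla$, and $\dot{\bar\phi}$. Two points need correction, and they are linked.

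First, the metric-variation piece does \emph{not} vanish. Writing $\dot g^*=g^*(\cdot,(J\dot J)^t\cdot)$, the corresponding term is $\sum_i(\nabla_{e_i}\bar\phi)(J\dot J\,e_i,\cdot)$. Applying Lemma~\ref{lem:product_in_TJ} (part~\textit{i)}, not~\textit{ii)}) to $g^{-1}(\nabla_X\bar\phi)$ and $J\dot J$ gives
\[
(\nabla_X\bar\phi)(J\dot J\,\cdot,\cdot)=\scal{\nabla_X\bar\phi}{J\dot J}\,g+\scal{\nabla_X\bar\phi}{\dot J}\,g(\cdot,J\cdot),
\]
which, after contracting, yields $-i\,\scal{\nabla_\bullet\bar\phi}{\dot J}+\scal{\nabla_{J\bullet}\bar\phi}{\dot J}$. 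The second summand is precisely the $\scal{\nabla_{J\bullet}\bar\phi}{\dot J}$ appearing in the final formula; it does not come from the $\dot\nabla$ contribution.

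Second, and relatedly, you invoke the Codazzi property of $g_J^{-1}\sigma$, but the proposition is stated for arbitrary $(J,\sigma)\in T^*\mathcal J(\Sigma)$, with no holomorphicity assumed on $\phi$. The paper's proof never uses Codazzi. What it does use in the $\dot\nabla$ term is the pointwise identity
\[
\sum_i(\divr_g\dot J)(e_i)\,\bar\phi(e_i,V)=\scal{\bar\phi}{\nabla_V\dot J+i\,\nabla_{JV}\dot J},
\]
which follows from the symmetry $(\nabla_i\dot J)_{jk}=(\nabla_i\dot J)_{kj}$ (since $\dot J$ is $g$-symmetric and $\nabla g=0$), not from any Codazzi equation. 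Once you keep the metric-variation term and drop the appeal to Codazzi, the three pieces combine exactly as in your last paragraph. Your worry about differentiating through $r$ is moot: having replaced $2\,r(\partial_J\bar\phi)$ by $\divr_{g_J}(g_J^{-1}\bar\phi)$ from the outset, $r$ never reappears.
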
 
\begin{proof} To simplify the notation, we set $\divr (T)=\divr_{g_{J}}(g^{-1}T)$ for any symmetric $2$-tensor $T$. In the proof of Theorem \ref{thm:donaldson_maps_cotangent_bundle}, we showed that $r(\partial_J \bar{\phi}) = \frac{1}{2} (\divr \bar{\phi})$. In particular we have $(\tilde{\mu}_\mathbf{J} + i \, \tilde{\mu}_\mathbf{K})(J,\sigma) = [- i \divr \bar{\phi}]$, where $\phi$ is the quadratic differential whose real part coincides with $\sigma$. Therefore, in order to compute the differential of the moment map $\tilde{\mu}_\mathbf{J} + i \, \tilde{\mu}_\mathbf{K}$, we need to understand the variation of the quantity $\divr \bar{\phi}$ along $(\dot{J}, \dot{\sigma}) \in T_{(J, \sigma)} T^* \mathcal{J}(\Sigma)$.
	
	Let $\mathcal{C}^i_j S$ denote the contraction of the tensor $S$ in its $i$-\hsk th covariant and $j$-\hsk th contravariant entries. Then the $g_J$-\hsk divergence of a symmetric $2$-\hsk tensor $T$ can be expressed as 
	\[
	\mathcal{C}_1^1 \mathcal{C}_1^1 (g^* \otimes \nabla_\bullet T) ,
	\]
	where $g^*$ denotes the metric induced by $g = g_J$ on $T^* \Sigma$. As seen in relation \eqref{eq:derivative_scal_prod}, the first order variation of $g = \rho(\cdot, J \cdot)$ along $\dot{J}$ can be expressed as $\dot{g} = - g(\cdot, J \dot{J} \cdot)$. It is simple to check that the corresponding variation of $g^*$ satisfies $\dot{g}^* = g^*(\cdot , (J \dot{J})^t \cdot)$, where $(J \dot{J})^t$ is the transpose of $J \dot{J}$.  In particular, for every symmetric $2$-\hsk tensor $T$ we have
	\begin{align*}
	(\divr T)' & = \mathcal{C}_1^1 \mathcal{C}_1^1 (\dot{g}^* \otimes \nabla_\bullet T + g^* \otimes \dot{\nabla}_\bullet T + g^* \otimes \nabla_\bullet \dot{T} ) \\
	& = \underbrace{\sum_i (\nabla_{e_i} T)(J \dot{J} e_i, \cdot)}_{\text{term $1$}} + \underbrace{\sum_i (\dot{\nabla}_{e_i} T)(e_i, \cdot)}_{\text{term $2$}} + \underbrace{\vphantom{\sum_i} \divr \dot{T}}_{\text{term $3$}} ,
	\end{align*}
	where $(e_i)_i$ is a local $g$-\hsk orthonormal frame. In order to compute the differential of $\tilde{\mu}_\mathbf{J} + i \, \tilde{\mu}_{\mathbf{K}}$, we will study each term of this expression for $T = \bar{\phi}$.
	
	\paragraph*{Term 1} For every $\dot{J} \in T_J \mathcal{J}(\Sigma)$, for every $J$-\hsk quadratic differential $\phi$ and for every tangent vector field $V$ on $\Sigma$, we have
	\[
	(\nabla_X \bar{\phi})(J \dot{J} \cdot, \cdot) = \scal{\nabla_X \bar{\phi}}{J \dot{J}} \ g  + \scal{\nabla_X \bar{\phi}}{\dot{J}} \ g(\cdot, J \cdot) .
	\]
	This relation is a simple application of Lemma \ref{lem:product_in_TJ}, where the endomorphisms in $T_J \mathcal{J}(\Sigma)$ are the real (or imaginary) part of $g^{-1} \nabla_X \bar{\phi}$, and $J \dot{J}$ (we are implicitly extending the bilinear pairing $\scal{\cdot}{\cdot}$ to the complexified bundles by requiring the $\C$-\hsk linearity in its arguments, i. e. $i \, \scal{\cdot}{\cdot} = \scal{i \,\cdot}{\cdot} = \scal{\cdot}{i \,\cdot}$). From this expression we deduce that
	\begin{align*}
	\sum_i (\nabla_{e_i} \bar{\phi})(J \dot{J} e_i, V) & = \sum_i (\scal{\nabla_{e_i} \bar{\phi}}{J \dot{J}} \ g(e_i, V)  + \scal{\nabla_{e_i} \bar{\phi}}{\dot{J}} \ g(e_i, J V) ) \\
	& = \scal{\nabla_V \bar{\phi}}{J \dot{J}} + \scal{\nabla_{J V} \bar{\phi}}{\dot{J}} \\
	& = - i \, \scal{\nabla_V \bar{\phi}}{\dot{J}} + \scal{\nabla_{J V} \bar{\phi}}{\dot{J}} ,
	\end{align*}
	where in the last step we used the fact that $\nabla_V \bar{\phi}$ is $J$-\hsk antilinear in its arguments.
	
	\paragraph*{Term 2}
	
	Applying Lemma \ref{lem:variation_levicivita}, we see that
	\begin{align*}
	(\dot{\nabla}_V \bar{\phi})(X,Y) & = - \bar{\phi}(\dot{\nabla}_V X, Y) - \bar{\phi}(X, \dot{\nabla}_V Y) \\
	& \begin{multlined}
	= \frac{1}{2} \left( (\divr \dot{J})V (\bar{\phi}(J X, Y) + \bar{\phi}(X, J Y)) + \bar{\phi}(J (\nabla_V \dot{J})X, Y) + \right. \\
	\left. + \bar{\phi}(X, J (\nabla_V \dot{J})Y) \right) 
	\end{multlined} \\
	& = - i \, (\divr \dot{J})V \ \bar{\phi}(X, Y) - \frac{i}{2} \left( \bar{\phi}((\nabla_V \dot{J})X, Y) + \bar{\phi}(X, (\nabla_V \dot{J})Y) \right) .
	\end{align*}
	In the last line, we used the fact that $\bar{\phi}$ is $J$-\hsk antilinear. Applying Lemma \ref{lem:product_in_TJ} to the real (and imaginary, separately) part of $\bar{\phi}$ and to $\nabla_V \dot{J}$, we deduce the following relation:
	\[
	\bar{\phi}((\nabla_V \dot{J}) \cdot, \cdot) + \bar{\phi}( \cdot, (\nabla_V \dot{J}) \cdot) = 2 \scal{\bar{\phi}}{\nabla_V \dot{J}} \ g ,
	\]
	This, combined with the previous computation leads us to the following expression:
	\begin{equation} \label{eq:derivata_covariante_derivata}
	(\dot{\nabla}_V \bar{\phi})(X,Y) = - i \, (\divr \dot{J})V \ \bar{\phi}(X, Y) - i \, \scal{\bar{\phi}}{\nabla_V \dot{J}} \ g(X,Y) .
	\end{equation}
	Moreover, the following equality holds:
	\begin{equation} \label{eq:relazione_tecnica_div}
	\sum_i (\divr \dot{J}) e_i \ \bar{\phi}(e_i, V) = \scal{\bar{\phi}}{\nabla_V \dot{J} + i \, \nabla_{JV} \dot{J} } .
	\end{equation}
	We will temporarily assume this fact, the proof is postponed to the end of the current argument. We can now express the second term of our initial expression as follows:
	\begin{align*}
	\sum_i (\dot{\nabla}_{e_i} \bar{\phi})(e_i, V) & = - i \sum_i (\divr \dot{J}) e_i \ \bar{\phi}(e_i, V) - i  \sum_i \scal{\bar{\phi}}{\nabla_{e_i} \dot{J}} \ g(e_i, V) \tag{relation \eqref{eq:derivata_covariante_derivata}} \\
	& = - i \, \scal{\bar{\phi}}{\nabla_V \dot{J} + i \, \nabla_{JV} \dot{J} } - i \sum_i \scal{\bar{\phi}}{\nabla_{e_i} \dot{J}} \ g(e_i, V)  \tag{relation \eqref{eq:relazione_tecnica_div}} \\
	& = - i \, \scal{\bar{\phi}}{\nabla_V \dot{J} + i \, \nabla_{JV} \dot{J} } - i \, \scal{\bar{\phi}}{\nabla_V \dot{J}} .
	\end{align*}
	
	\paragraph*{Term 3}
	
	Following the same argument of the proof of Lemma \ref{lem:characterization_tangent_space}, we see that the first order variation of a quadratic differential $\phi$ is of the form $\dot{\phi} = \dot{\phi}_0 - \scal{\phi}{J \dot{J}} \ g$, where $\dot{\phi}_0$ denotes the $g$-\hsk traceless part of $\dot{\phi}$. In particular, we deduce that
	\begin{align*}
	(\divr \dot{\bar{\phi}})V & =  \divr(\dot{\bar{\phi}}_0 - \scal{\bar{\phi}}{J \dot{J}} \ g) V \\
	& = (\divr \dot{\bar{\phi}}_0)V - \dd(\scal{\bar{\phi}}{J \dot{J}})V \\
	& = (\divr \dot{\bar{\phi}}_0)V + i \, \dd(\scal{\bar{\phi}}{\dot{J}})V \tag{$\bar{\phi}$ $J$-\hsk antilinear} \\
	& = (\divr \dot{\bar{\phi}}_0)V + i \, \scal{\nabla_V \bar{\phi}}{\dot{J}} + i \, \scal{\bar{\phi}}{\nabla_V \dot{J}} .
	\end{align*}
	
	\vspace{0.5cm}
	
	Finally, we combine the expressions of the three terms involved in the derivative $(\divr \bar{\phi})'$ that we developed above, obtaining:
	\begin{align*}
	\dd(\tilde{\mu}_\mathbf{J} + i \, \tilde{\mu}_{\mathbf{K}})(\dot{J}, \dot{\sigma}) & = - i \, (\divr \bar{\phi})' \\
	& = - i \left( \sum_i (\nabla_{e_i} \bar{\phi})(J \dot{J} e_i, \cdot) + \sum_i (\dot{\nabla}_{e_i} \bar{\phi})(e_i, \cdot) +\divr \dot{\bar{\phi}} \right) \\
	& \begin{multlined}
	= - i \left( - i \, \scal{\nabla_\bullet \bar{\phi}}{\dot{J}} + \scal{\nabla_{J \bullet} \bar{\phi}}{\dot{J}} - i \, \scal{\bar{\phi}}{\nabla_\bullet \dot{J} + i \, \nabla_{J \bullet} \dot{J} } + \right. \\
	\left. - i \, \scal{\bar{\phi}}{\nabla_\bullet \dot{J}} + \divr \dot{\bar{\phi}}_0 + i \, \scal{\nabla_\bullet \bar{\phi}}{\dot{J}} + i \, \scal{\bar{\phi}}{\nabla_\bullet \dot{J}} \right)
	\end{multlined} \\
	& = - i \left( \divr \dot{\bar{\phi}}_0 + \scal{\nabla_{J \bullet} \bar{\phi}}{\dot{J}} - i \, \scal{\bar{\phi}}{\nabla_\bullet \dot{J} + i \, \nabla_{J \bullet} \dot{J} } \right) ,
	\end{align*}
	which proves our statement.
 
    Finally, we provide a proof of Equation \eqref{eq:relazione_tecnica_div}. 
    Since $\dot{J}$ is symmetric with respect to $g$, the same is true for $\nabla_X \dot{J}$ for any tangent vector field $X$. In particular, if $(\nabla_i \dot{J})_{j k}$ denotes $g((\nabla_{e_i} \dot{J})e_j,e_k)$, we must have $(\nabla_i \dot{J})_{j k} = (\nabla_i \dot{J})_{k j}$ for every $i, j, k$. Unraveling the definition of the divergence of $\dot{J}$, we see that
	\begin{align*}
	    (\divr \dot{J})(e_1) \, e_1 + (\divr \dot{J})(e_2) \, e_2 & = ((\nabla_1 \dot{J})_{11} + (\nabla_2 \dot{J})_{12}) \, e_1 + ((\nabla_{1} \dot{J})_{21} + (\nabla_{2} \dot{J})_{22}) \, e_2 \\
	    & = ((\nabla_1 \dot{J})_{11} + (\nabla_2 \dot{J})_{21}) \, e_1 + ((\nabla_{1} \dot{J})_{12} + (\nabla_{2} \dot{J})_{22}) \, e_2 \\
	    & = (\nabla_{e_1} \dot{J})e_1 + (\nabla_{e_2} \dot{J})e_2 .
	\end{align*}
	Hence we have
	\begin{align*}
	    \sum_i (\divr \dot{J}) e_i \ \bar{\phi}(e_i, V) & = \bar{\phi}((\nabla_{e_1} \dot{J})e_1 + (\nabla_{e_2} \dot{J})e_2, V) \\
	    & = V^1 \, \bar{\phi}((\nabla_{e_1} \dot{J})e_1 + (\nabla_{e_2} \dot{J})e_2, e_1) + V^2 \, \bar{\phi}((\nabla_{e_1} \dot{J})e_1 + (\nabla_{e_2} \dot{J})e_2, e_2) ,
	\end{align*}
	where $V^i = g(V, e_i)$. We will now make multiple use of the following elementary properties:
	\begin{itemize}
	    \item $e_2 = J e_1$, $V^1 = g(JV, e_2) = (JV)^2$, $V^2 =  - g(JV, e_1) = - (JV)^1$;
	    \item $(\nabla_{e_i} \dot{J})$ is $g$-symmetric and traceless. In particular, it anticommutes with the complex structure $J$;
	    \item the tensor $\bar{\phi}$ is anti-bilinear in its entries, therefore $\bar{\phi}(J \cdot, \cdot) = \bar{\phi}(\cdot, J \cdot) = - i \bar{\phi}(\cdot, \cdot)$.
	\end{itemize}
	From the previous expression we deduce
	\begin{align} \label{eq:rel_tecnica_steps}
	\begin{split}
	    \sum_i (\divr \dot{J}) e_i \ \bar{\phi}(e_i, V) & = V^1 \bar{\phi}((\nabla_{e_1} \dot{J})e_1, e_1) + (JV)^2 \bar{\phi}((\nabla_{e_2} \dot{J}) J e_1, e_1) + \\
	    & \ \ \ - (JV)^1 \bar{\phi}((\nabla_{e_1} \dot{J})e_1, J e_1) + V^2 \bar{\phi}((\nabla_{e_2} \dot{J})J e_1, J e_1) \\
	    & = V^1 \bar{\phi}((\nabla_{e_1} \dot{J})e_1, e_1) + i V^2 \bar{\phi}(J(\nabla_{e_2} \dot{J})e_1, e_1) + \\
	    & \ \ \ - (JV)^2 \bar{\phi}(J(\nabla_{e_2} \dot{J})e_1, e_1) + i (JV)^1 \bar{\phi}((\nabla_{e_1} \dot{J})e_1, e_1) \\
	    & = V^1 \bar{\phi}((\nabla_{e_1} \dot{J})e_1, e_1) + V^2 \bar{\phi}((\nabla_{e_2} \dot{J})e_1, e_1) + \\
	    & \ \ \ + i (JV)^2 \bar{\phi}((\nabla_{e_2} \dot{J})e_1, e_1) + i (JV)^1 \bar{\phi}((\nabla_{e_1} \dot{J})e_1, e_1) \\
	    & = \bar{\phi}((\nabla_{V} \dot{J})e_1, e_1) + i \, \bar{\phi}((\nabla_{JV} \dot{J})e_1, e_1)
	\end{split}
	\end{align}
	In order to derive the desired relation, we are left to show that \[
	\bar{\phi}((\nabla_{V} \dot{J})e_1, e_1) + i \, \bar{\phi}((\nabla_{JV} \dot{J})e_1, e_1) = \scal{\bar{\phi}}{\nabla_V \dot{J} + i \nabla_{JV} \dot{J}}
	\]
    This equality can be deduced from the properties of $\bar{\phi}$ and $\nabla_X \dot{J}$ previously mentioned. Indeed we have
	\begin{align*}
	    \scal{\bar{\phi}}{\nabla_V \dot{J}} & = \frac{1}{2} \tr(g^{-1} \bar{\phi} \nabla_V \dot{J}) \\
	    & = \frac{1}{2} \left( \bar{\phi}((\nabla_V \dot{J})e_1, e_1) + \bar{\phi}((\nabla_V \dot{J}) J e_1, J e_1) \right) \\
	    & = \frac{1}{2} \left( \bar{\phi}((\nabla_V \dot{J})e_1, e_1) - \bar{\phi}(J(\nabla_V \dot{J}) e_1, J e_1) \right) \\
	    & = \bar{\phi}((\nabla_V \dot{J})e_1, e_1) .
	\end{align*} 
	By replacing the role of $V$ with $JV$, we obtain also that $\scal{\bar{\phi}}{\nabla_{JV} \dot{J}} = \bar{\phi}((\nabla_{J V} \dot{J})e_1, e_1)$. This cocludes the proof of relation \eqref{eq:relazione_tecnica_div}.
\end{proof}

\subsubsection{Model for the tangent space to $\mathcal{MS}(\Sigma)$} 
We finally come to the main statement of this section.

\begin{reptheorem}{thm:char_VJSigma}
For every $(J,\sigma)\in \widetilde{\mathcal{MS}}_0(\Sigma,\rho)$, 
$V_{(J,\sigma)}$ is the largest subspace of $T_{(J,\sigma)}\widetilde{\mathcal{MS}}_0(\Sigma,\rho)$ that is:
\begin{itemize}
    \item invariant under $\i$, $\j$ and $\k$;
    \item $\g$-orthogonal to $T_{(J,\sigma)}(\Symp_{0}(\Sigma, \rho)\cdot (J,\sigma))$
\end{itemize}
\end{reptheorem}

The proof of Theorem \ref{thm:char_VJSigma} is completed in the rest of this section, by means of three lemmas that simplify the statement in several steps.
\begin{lemma}\label{lm:invariance} Let $(J,\sigma) \in \widetilde{\mathcal{MS}}_{0}(\Sigma, \rho)$ and $(\dot{J},\dot{\sigma}) \in T_{(J,\sigma)}T^{*}\mathcal{J}(\Sigma)$. The following conditions are equivalent:
\begin{enumerate}[a)]
    \item $(\dot{J},\dot{\sigma}), \i(\dot{J},\dot{\sigma}), \j(\dot{J},\dot{\sigma})$ and $\k(\dot{J},\dot{\sigma})$ belong to $\Ker(\dd \tilde{\mu}_{\i})\cap\Ker(\dd \tilde{\mu}_{\j})\cap \Ker(\dd\tilde{\mu}_{\k})$;
    \item $(\dot{J},\dot{\sigma})$ and $\mathbf{J}(\dot{J},\dot{\sigma})$ belong to $\Ker(\dd \tilde{\mu}_{\mathbf{J}})\cap \Ker(\dd\tilde{\mu}_{\k})$. 
\end{enumerate}
\end{lemma}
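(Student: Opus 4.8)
The statement to prove is Lemma \ref{lm:invariance}, asserting the equivalence of conditions (a) and (b) for a tangent vector $(\dot J,\dot\sigma)$ at a point of $\widetilde{\mathcal{MS}}_0(\Sigma,\rho)$.

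The plan is as follows. The implication (a) $\Rightarrow$ (b) is immediate, since (b) only asks for a strict subset of the conditions in (a). So the content is in showing (b) $\Rightarrow$ (a). First I would recall that $\mathbf K = \mathbf I\mathbf J$ and that $\mathbf I,\mathbf J,\mathbf K$ satisfy the para-quaternionic relations; hence from $(\dot J,\dot\sigma)\in\Ker(\dd\tilde\mu_{\mathbf J})\cap\Ker(\dd\tilde\mu_{\mathbf K})$ and $\mathbf J(\dot J,\dot\sigma)\in\Ker(\dd\tilde\mu_{\mathbf J})\cap\Ker(\dd\tilde\mu_{\mathbf K})$ I want to deduce that all four vectors $(\dot J,\dot\sigma)$, $\mathbf I(\dot J,\dot\sigma)$, $\mathbf J(\dot J,\dot\sigma)$, $\mathbf K(\dot J,\dot\sigma)$ lie in the common kernel of all three differentials $\dd\tilde\mu_{\mathbf I}$, $\dd\tilde\mu_{\mathbf J}$, $\dd\tilde\mu_{\mathbf K}$. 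The key observation driving this is that the complex-valued moment map $\tilde\mu_{\mathbf J}+i\tilde\mu_{\mathbf K}$ is holomorphic with respect to $\mathbf I$, i.e. $\dd(\tilde\mu_{\mathbf J}+i\tilde\mu_{\mathbf K})\circ\mathbf I = i\,\dd(\tilde\mu_{\mathbf J}+i\tilde\mu_{\mathbf K})$; this is the infinite-dimensional analogue of the standard fact in hyperK\"ahler reduction, and follows from the fact that $\eta_{\mathbf J}+i\eta_{\mathbf K}$ is $\iota_{V_\bullet}\lambda^\C$ together with the naturality of the $\SL(2,\R)$-action, exactly as used in the proof that $\eta_{\mathbf J}, \eta_{\mathbf K}$ are moment maps in Section \ref{subsec:moment_maps_toy}. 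Concretely: from the explicit formula in Proposition \ref{prop:variation_moment_maps} one checks directly that replacing $(\dot J,\dot\sigma)$ by $\mathbf I(\dot J,\dot\sigma)$ multiplies $\dd(\tilde\mu_{\mathbf J}+i\tilde\mu_{\mathbf K})$ by $i$. I would record this as a preliminary claim and verify it by a short computation substituting $\mathbf I(\dot J,\dot\sigma)=(-J\dot J,\ -\dot\sigma_0(\cdot,J\cdot)-\scal{\sigma}{\dot J}g)$ into the formula, using $\nabla^g J=0$, the identities in Lemma \ref{lem:product_in_TJ}, and the fact that $\sigma$ is the real part of a holomorphic quadratic differential (Lemma \ref{lem:hol_quadr_diff_equivalence}), so that the Codazzi condition $\nabla^g_{JX}\phi=(\nabla^g_X\phi)(\cdot,J\cdot)$ is available.

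Granting this holomorphicity claim, the deduction (b) $\Rightarrow$ (a) goes as follows. Write $\nu:=\tilde\mu_{\mathbf J}+i\tilde\mu_{\mathbf K}$. By hypothesis $\dd\nu(\dot J,\dot\sigma)=0$ and $\dd\nu(\mathbf J(\dot J,\dot\sigma))=0$. From $\mathbf K=\mathbf I\mathbf J$ we get $\mathbf K(\dot J,\dot\sigma)=\mathbf I(\mathbf J(\dot J,\dot\sigma))$, hence $\dd\nu(\mathbf K(\dot J,\dot\sigma))=\dd\nu(\mathbf I(\mathbf J(\dot J,\dot\sigma)))=i\,\dd\nu(\mathbf J(\dot J,\dot\sigma))=0$, using the claim. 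Similarly $\dd\nu(\mathbf I(\dot J,\dot\sigma))=i\,\dd\nu(\dot J,\dot\sigma)=0$. Taking real and imaginary parts, all four vectors lie in $\Ker(\dd\tilde\mu_{\mathbf J})\cap\Ker(\dd\tilde\mu_{\mathbf K})$. It remains to treat $\dd\tilde\mu_{\mathbf I}$. For this I would use Lemma \ref{lem:V_linearizes_Gauss_Codazzi} (or rather its ingredients): the conditions already established mean that each of $(\dot J,\dot\sigma)$, $\mathbf I(\dot J,\dot\sigma)$, $\mathbf J(\dot J,\dot\sigma)$, $\mathbf K(\dot J,\dot\sigma)$ lies in the kernel of the linearization of $\tilde\mu_{\mathbf J}+i\tilde\mu_{\mathbf K}$, and by Lemma \ref{lm:reduction_hqd} combined with Corollary \ref{cor:zero moment maps} this is the tangent condition that $\sigma$ stays the real part of a holomorphic quadratic differential. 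Under this condition, Lemma \ref{lm:equivalent_muI} gives $\tilde\mu_{\mathbf I}(J,\sigma)=-\tfrac{4K_h}{1+\det B}\rho - C\rho$, so the linearization of $\tilde\mu_{\mathbf I}$ reduces to the linearization of the Gauss curvature term, i.e. of $K_{h_l}\rho$ (or $K_{h_r}\rho$) up to exact forms; by Remark \ref{rmk:derivative_curvature} this variation is $\tfrac12\dd(\divr_{h_l}\dot J_l)$. Thus $\dd\tilde\mu_{\mathbf I}$ on a vector annihilating $\dd(\tilde\mu_{\mathbf J}+i\tilde\mu_{\mathbf K})$ depends only on the exactness of $\divr_{h_l}\dot J_l$. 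Since each of the four vectors $(\dot J,\dot\sigma),\mathbf I(\dot J,\dot\sigma),\mathbf J(\dot J,\dot\sigma),\mathbf K(\dot J,\dot\sigma)$ is in the common kernel of $\dd\tilde\mu_{\mathbf J}$ and $\dd\tilde\mu_{\mathbf K}$ and we have assumed they are all in $\Ker$ — wait, we must be careful: condition (b) assumes only $(\dot J,\dot\sigma)$ and $\mathbf J(\dot J,\dot\sigma)$ are in the kernels — but I have just shown all four are; so it suffices to observe that $(\dot J,\dot\sigma)\in V_{(J,\sigma)}$ is equivalent, via Proposition \ref{prop:equivalent_def_subspace_V} and its last statement, to $\divr_{h_l}\dot J_l$ and $\divr_{h_r}\dot J_r$ being exact, together with membership in $\Ker\dd\nu$. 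Here the hard step is: does annihilation by $\dd\nu$ plus membership of $\mathbf J(\dot J,\dot\sigma)$ in $\Ker\dd\nu$ already force $(\dot J,\dot\sigma)\in V_{(J,\sigma)}$? This is exactly what the subsequent lemmas (the next two after Lemma \ref{lm:invariance}) are designed to unwind, so for the proof of Lemma \ref{lm:invariance} itself I only need the logical equivalence of (a) and (b), which is now complete: (a) $\Rightarrow$ (b) is trivial, and (b) $\Rightarrow$ (a) follows from the holomorphicity claim applied to $\nu$ together with the reduction of $\dd\tilde\mu_{\mathbf I}$ via Lemma \ref{lm:equivalent_muI}.

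The main obstacle I anticipate is the holomorphicity claim $\dd\nu\circ\mathbf I=i\,\dd\nu$ — not conceptually, since it is forced by the para-hyperK\"ahler structure, but in its verification from the explicit formula of Proposition \ref{prop:variation_moment_maps}, which is a somewhat delicate tensorial computation requiring careful bookkeeping of the traceless parts, the conformal factor $f$, and the Codazzi identity. A clean alternative, which I would prefer if it can be made rigorous in the formal infinite-dimensional setting, is to argue abstractly: $\eta_{\mathbf J}+i\eta_{\mathbf K}=\iota_{V_\bullet}\lambda^\C$ and the $\SL(2,\R)$-action preserves $\mathcal I$ and $\lambda^\C$, so by Donaldson's construction $\nu$ arises from a holomorphic datum on the fiber $T^*\mathcal J(\R^2)$, hence $\dd\nu$ is $\mathbf I$-complex-linear; this sidesteps the local computation entirely and mirrors the reasoning already used for Corollary \ref{cor:Iintegrable_torus}. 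The rest of the argument — reducing $\dd\tilde\mu_{\mathbf I}$ to a curvature variation and invoking $\dd^2=0$ — is then routine given Lemma \ref{lm:equivalent_muI} and Remark \ref{rmk:derivative_curvature}.
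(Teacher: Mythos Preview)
Your treatment of $\dd\tilde\mu_{\j}$ and $\dd\tilde\mu_{\k}$ via the holomorphicity claim $\dd(\tilde\mu_{\j}+i\tilde\mu_{\k})\circ\i=i\,\dd(\tilde\mu_{\j}+i\tilde\mu_{\k})$ is fine and does give that all four vectors lie in $\Ker\dd\tilde\mu_{\j}\cap\Ker\dd\tilde\mu_{\k}$. However, your handling of $\dd\tilde\mu_{\i}$ has a genuine gap. You try to pass through Lemma~\ref{lm:equivalent_muI} and reduce $\dd\tilde\mu_{\i}$ to a curvature variation $\tfrac12\dd(\divr_{h_l}\dot J_l)$, and then to the exactness of $\divr_{h_l}\dot J_l$. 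But nothing in hypothesis (b) forces that exactness: as you yourself notice, this is precisely the content of the \emph{subsequent} characterization of $V_{(J,\sigma)}$ (Lemma~\ref{lem:equations_tangent} together with Proposition~\ref{prop:equivalent_def_subspace_V}), so invoking it here is circular. Your argument for the $\dd\tilde\mu_{\i}$ part does not close.

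The paper's proof avoids this detour entirely by treating $\dd\tilde\mu_{\i}$ on exactly the same footing as $\dd\tilde\mu_{\j}$ and $\dd\tilde\mu_{\k}$. The key input is Proposition~\ref{prop:fakemoment}, which extends the moment-map identity for $\omega_{\i}$ from $\Ham$ to all of $\Symp_0$: for every $V\in\Lsymp(\Sigma,\rho)$,
\[
\scal{\dd\tilde\mu_{\i}(\dot J,\dot\sigma)}{V}_{\Lsymp}=-\,\omega_{\i}\big((\Dlie_V J,\Dlie_V\sigma),(\dot J,\dot\sigma)\big).
\]
Once you have this, the para-quaternionic relations give purely algebraic identities among the three differentials. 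For instance
\[
\scal{\dd\tilde\mu_{\i}(\j(\dot J,\dot\sigma))}{V}_{\Lsymp}
=\g\big(\i\j(\dot J,\dot\sigma),(\Dlie_V J,\Dlie_V\sigma)\big)
=-\,\omega_{\k}\big((\dot J,\dot\sigma),(\Dlie_V J,\Dlie_V\sigma)\big)
=\scal{\dd\tilde\mu_{\k}(\dot J,\dot\sigma)}{V}_{\Lsymp},
\]
so $(\dot J,\dot\sigma)\in\Ker\dd\tilde\mu_{\k}$ is equivalent to $\j(\dot J,\dot\sigma)\in\Ker\dd\tilde\mu_{\i}$. Running the same manipulation with the other products of $\i,\j,\k$ yields all twelve memberships from the four in (b). No curvature computation, no exactness of divergences, and no appeal to later lemmas is needed. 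Your holomorphicity claim is in fact a special case of these identities (take $\mathbf L=\j$ or $\k$ and $\mathbf M=\i$); the point you are missing is that the same mechanism, via Proposition~\ref{prop:fakemoment}, handles $\dd\tilde\mu_{\i}$ as well.
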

\begin{proof} We only need to prove that b) implies a). By Proposition \ref{prop:fakemoment} (plus Remark \ref{rmk:mu_hat}) and the quaternionic relations between $\i$, $\j$ and $\k$, we find 
\begin{align*}
    \scal{\dd \tilde{\mu}_{\i}(\j(\dot{J},\dot{\sigma}))}{V}_{\Lsymp}&=-\omega_{\i}(\j(\dot{J},\dot{\sigma}),(\Dlie_{V}J,\Dlie_{V}\sigma))
    =\g(\i\j(\dot{J},\dot{\sigma}), (\Dlie_{V}J,\Dlie_{V}\sigma)) \\
    &=\g(\k(\dot{J},\dot{\sigma}), (\Dlie_{V}J,\Dlie_{V}\sigma))
    =-\omega_{\k}((\dot{J},\dot{\sigma}),(\Dlie_{V}J,\Dlie_{V}\sigma))\\
    &=\scal{\dd \tilde{\mu}_{\k}(\dot{J},\dot{\sigma})}{V}_{\Lsymp} \ .
\end{align*}
Therefore, $(\dot{J},\dot{\sigma}) \in \Ker(\dd \tilde{\mu}_{\k})$ if and only if $\j(\dot{J},\dot{\sigma}) \in \Ker(\dd \tilde{\mu}_{\i})$. With a similar computation, one can also show that $(\dot{J},\dot{\sigma}) \in \Ker(\dd \tilde{\mu}_{\k})$ if and only if $\i(\dot{J},\dot{\sigma}) \in \Ker(\dd \tilde{\mu}_{\j})$. 
It follows that if we start from a pair $(\dot{J},\dot{\sigma})$ satisfying b), then 
\begin{itemize}
    \item $(\dot{J},\dot{\sigma}) \in \Ker(\dd \tilde{\mu}_{\k}) \Rightarrow \i(\dot{J},\dot{\sigma}) \in \Ker(\dd \tilde{\mu}_{\j})$ and $\j (\dot{J},\dot{\sigma}) \in \Ker(\dd \tilde{\mu}_{\i})$;
    \item $(\dot{J},\dot{\sigma}) \in \Ker(\dd \tilde{\mu}_{\j}) \Rightarrow \i(\dot{J},\dot{\sigma}) \in \Ker(\dd \tilde{\mu}_{\k})$ and $\k (\dot{J},\dot{\sigma}) \in \Ker(\dd \tilde{\mu}_{\i})$;
    \item $\j(\dot{J},\dot{\sigma}) \in \Ker(\dd \tilde{\mu}_{\k}) \Rightarrow \k(\dot{J},\dot{\sigma}) \in \Ker(\dd \tilde{\mu}_{\j})$ and $(\dot{J},\dot{\sigma}) \in \Ker(\dd \tilde{\mu}_{\i})$;
    \item $\j(\dot{J},\dot{\sigma}) \in \Ker(\dd \tilde{\mu}_{\j}) \Rightarrow \k(\dot{J},\dot{\sigma}) \in \Ker(\dd \tilde{\mu}_{\k})$ and $\i (\dot{J},\dot{\sigma}) \in \Ker(\dd \tilde{\mu}_{\i})$
\end{itemize}
which implies a). 
\end{proof}

\begin{lemma}\label{lm:orthogonality}Let $(J,\sigma) \in \widetilde{\mathcal{MS}}_{0}(\Sigma, \rho)$ and $(\dot{J},\dot{\sigma}) \in T_{(J,\sigma)}T^{*}\mathcal{J}(\Sigma)$. The following conditions are equivalent:
\begin{enumerate}[1)]
    \item $(\dot{J},\dot{\sigma})$ is $\g$-orthogonal to $T_{(J,\sigma)}(\Symp_{0}(\Sigma, \rho)\cdot (J,\sigma))$;
    \item for some $\mathbf{L}\in \{\i,\j,\k\}$, we have $\mathbf{L}(\dot{J},\dot{\sigma}) \in \Ker(\dd \tilde{\mu}_{\mathbf{L}})$;
    \item for all $\mathbf{L}\in \{\i,\j,\k\}$, we have $\mathbf{L}(\dot{J},\dot{\sigma}) \in \Ker(\dd \tilde{\mu}_{\mathbf{L}})$.
\end{enumerate} 
\end{lemma}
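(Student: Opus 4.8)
The plan is to reduce all three equivalences to one family of infinitesimal identities. Precisely, for each $\mathbf{L}\in\{\i,\j,\k\}$ and every $V\in\Lsymp(\Sigma,\rho)$ I would prove
$$\g\big((\dot J,\dot\sigma),(\Dlie_V J,\Dlie_V\sigma)\big)\;=\;\varepsilon_{\mathbf L}\,\scal{\dd\tilde\mu_{\mathbf L}\big(\mathbf L(\dot J,\dot\sigma)\big)}{V}_{\Lsymp}$$
for a nonzero constant $\varepsilon_{\mathbf L}=\pm 1$. Since $T_{(J,\sigma)}(\Symp_0(\Sigma,\rho)\cdot(J,\sigma))$ consists of all $(\Dlie_V J,\Dlie_V\sigma)$ with $V$ ranging over $\Lsymp(\Sigma,\rho)\cong_\rho Z^1(\Sigma)$, condition $1)$ is exactly the vanishing of the left-hand side for every such $V$. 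Granting the identity, $1)$ becomes equivalent (for any single chosen $\mathbf L$) to the statement that $\dd\tilde\mu_{\mathbf L}(\mathbf L(\dot J,\dot\sigma))$ pairs trivially against all of $Z^1(\Sigma)$; since $\dd\tilde\mu_\i$ takes values in $\Lambda^1(\Sigma)/B^1(\Sigma)$ (Remark \ref{rmk:pippo}) and $\dd\tilde\mu_\j,\dd\tilde\mu_\k$ in $\Lsymp(\Sigma,\rho)^*$ (Proposition \ref{prop:moment_maps_infinite_dim}), and the relevant pairings with $Z^1(\Sigma)$ are non-degenerate, this is in turn equivalent to $\mathbf L(\dot J,\dot\sigma)\in\Ker(\dd\tilde\mu_{\mathbf L})$. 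Hence $1)$ is equivalent to the $\mathbf L$-component of $3)$ individually for each $\mathbf L$, which gives $1)\Leftrightarrow 3)$; the implication $3)\Rightarrow 2)$ is trivial, and $2)\Rightarrow 1)$ is again the identity, applied to the particular $\mathbf L$ occurring in $2)$.

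It then remains to prove the displayed identity for each $\mathbf L$. For $\mathbf L=\i$ I would invoke Proposition \ref{prop:fakemoment} (with Remark \ref{rmk:mu_hat}): as $(J,\sigma)\in\widetilde{\mathcal{MS}}_0(\Sigma,\rho)$, it asserts $\omega_\i((\Dlie_V J,\Dlie_V\sigma),\xi)=-\scal{\dd\tilde\mu_\i(\xi)}{V}_{\Lsymp}$ for every tangent vector $\xi$ and every $V\in\Lsymp(\Sigma,\rho)$; choosing $\xi=\i(\dot J,\dot\sigma)$ and using $\omega_\i=\g(\cdot,\i\cdot)$ together with $\i^2=-\1$ turns the left-hand side into $-\g((\dot J,\dot\sigma),(\Dlie_V J,\Dlie_V\sigma))$. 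For $\mathbf L=\j$ and $\mathbf L=\k$ I would use instead that, by Proposition \ref{prop:moment_maps_infinite_dim}, $\tilde\mu_\j$ and $\tilde\mu_\k$ are genuine moment maps for the $\Symp_0(\Sigma,\rho)$-action with respect to $\omega_\j$ and $\omega_\k$; property ii) of Definition \ref{def:moment_map} then reads $\scal{\dd\tilde\mu_{\mathbf L}(\xi)}{V}_{\Lsymp}=\pm\,\omega_{\mathbf L}((\Dlie_V J,\Dlie_V\sigma),\xi)$, and with $\xi=\mathbf L(\dot J,\dot\sigma)$, $\omega_{\mathbf L}=\g(\cdot,\mathbf L\cdot)$, $\mathbf L^2=\1$ and the symmetry of $\g$ the right-hand side becomes $\pm\,\g((\dot J,\dot\sigma),(\Dlie_V J,\Dlie_V\sigma))$, as required.

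The proof is essentially formal; the only thing that needs care is tracking the signs and the precise target spaces of the differentials, so that the non-degeneracy of $\scal{\cdot}{\cdot}_{\Lsymp}$ may be used to pass from ``pairs trivially with all $V\in\Lsymp(\Sigma,\rho)$'' to ``lies in $\Ker(\dd\tilde\mu_{\mathbf L})$''. The only conceptually non-formal ingredient sits in the $\mathbf L=\i$ case: $\tilde\mu_\i$ is \emph{not} a moment map for $\Symp_0(\Sigma,\rho)$ --- its failure of equivariance is precisely what obstructs a naive symplectic reduction and forces the correction term $C\rho$ --- yet Proposition \ref{prop:fakemoment} shows it still satisfies the pointwise relation $\dd\tilde\mu_\i=-\iota_{(\Dlie_\bullet J,\Dlie_\bullet\sigma)}\omega_\i$ against \emph{every} symplectic vector field, which is exactly what puts the $\i$-case on the same footing as the $\j$- and $\k$-cases. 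Beyond this, no obstacle is expected.
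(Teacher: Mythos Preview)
Your proposal is correct and follows essentially the same approach as the paper: both proofs establish the identity $\g((\dot J,\dot\sigma),(\Dlie_V J,\Dlie_V\sigma))=\pm\scal{\dd\tilde\mu_{\mathbf L}(\mathbf L(\dot J,\dot\sigma))}{V}_{\Lsymp}$ via $\g=\omega_{\mathbf L}(\cdot,\mathbf L\cdot)$ (using $\mathbf L^2=\pm\1$) together with the moment map relation, and conclude by non-degeneracy of the pairing. The paper only writes out the case $\mathbf L=\j$ and declares the others analogous; your version is slightly more explicit in separating the $\i$ case (which requires Proposition~\ref{prop:fakemoment} rather than the genuine moment map property) and in articulating why vanishing of the pairing against all $V\in\Lsymp(\Sigma,\rho)$ forces membership in $\Ker(\dd\tilde\mu_{\mathbf L})$.
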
      
\begin{proof} We first show that 1) is equivalent to 2) for $L=\j$, the other cases being analogous. The properties of the moment maps imply
\begin{align*}
        \mathbf{g}((\Dlie_V J,\Dlie_{V}\sigma), (\dot{J}, \dot{\sigma})) & =  \mathbf{g}((\Dlie_V J,\Dlie_{V}\sigma), \mathbf{J}^2(\dot{J}, \dot{\sigma})) \tag{$\mathbf{J}^2 = \1$} \\
        & = \omega_\mathbf{J}((\Dlie_V J,\Dlie_{V}\sigma), \mathbf{J} (\dot{J},\dot{\sigma})) \tag{$\omega_\mathbf{J} = \mathbf{g}(\cdot, \mathbf{J} \cdot)$} \\
        & = -\scal{\dd{\tilde{\mu}_\mathbf{J}}(\mathbf{J}(\dot{J}, \dot{\sigma}))}{V}_{\Lsymp} \ . 
\end{align*}
so $(\dot{J},\dot{\sigma})$ is $\g$-orthogonal to $(\Dlie_V J,\Dlie_{V}\sigma)$ for every $V\in \Lsymp$ if and only if $\j(\dot{J},\dot{\sigma}) \in \Ker(\dd \tilde{\mu}_{\mathbf{J}})$. 
Note that the above relation proves that 1) and 3) are equivalent as well.
\end{proof}

\noindent Combining Lemma \ref{lm:invariance} and Lemma \ref{lm:orthogonality}, we see that the subspace $V_{(J,\sigma)} \subset T_{(J,\sigma)}T^{*}\mathcal{J}(\Sigma)$ we are interested in can be described as
\[
    V_{(J,\sigma)}=\{(\dot{J},\dot{\sigma}) \in T_{(J,\sigma)}T^{*}\mathcal{J}(\Sigma) \ | \ (\dot{J},\dot{\sigma}), \mathbf{J}(\dot{J},\dot{\sigma}) \in \Ker(\dd \tilde{\mu}_{\mathbf{J}})\cap \Ker(\dd\tilde{\mu}_{\k})\}
\]
Finally, the following result allows to describe $V_{(J,\sigma)}$ as the solution of the system of partial differential equations appearing in Proposition \ref{prop:equivalent_def_subspace_V} part $ii)$.
\begin{lemma} \label{lem:equations_tangent}
    Let $(J, \sigma) \in T^* \mathcal{J}(\Sigma)$ be such that $\sigma$ is the real part of a holomorphic quadratic differential $\phi$ on $(\Sigma, J)$, and let $(\dot{J}, \dot{\sigma}) \in T_{(J,\sigma)} T^* \mathcal{J}(\Sigma)$. Then $(\dot{J}, \dot{\sigma})$ and $\mathbf{J}(\dot{J}, \dot{\sigma})$ belong to $\ker \dd{\tilde{\mu}_\mathbf{J}} \cap \ker \dd{\tilde{\mu}_\mathbf{K}}$ if and only if
    \begin{equation}\label{eq:system}
    \begin{cases}
    \divr_{g} (g^{-1} \dot{\sigma}_0 + J \, \dot{J} \, g^{-1} \sigma) = \scal{\nabla^g_{J \bullet} \sigma}{\dot{J}} , \\
				\divr_{g} (f \, \dot{J} + f^{-1} \, J \, g^{-1} \dot{\sigma}_0 \, g^{-1} \sigma) = \scal{\nabla^g_{J \bullet} \sigma}{f^{-1}g^{-1} \dot{\sigma}_0} .
	\end{cases}
    \end{equation}
\end{lemma}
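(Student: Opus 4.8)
The strategy is to translate the two conditions "$(\dot J,\dot\sigma)\in\ker\dd\tilde\mu_\j\cap\ker\dd\tilde\mu_\k$" and "$\mathbf J(\dot J,\dot\sigma)\in\ker\dd\tilde\mu_\j\cap\ker\dd\tilde\mu_\k$" into the system \eqref{eq:system} using the explicit formula for $\dd(\tilde\mu_\j+i\tilde\mu_\k)$ obtained in Proposition \ref{prop:variation_moment_maps}. First I would rewrite that formula. Since $\phi=\sigma-i\sigma(\cdot,J\cdot)$ and $\sigma$ is the real part of a holomorphic quadratic differential, $g^{-1}\sigma$ and $Jg^{-1}\sigma$ are the real and (minus) imaginary parts of $g^{-1}\bar\phi$, so all the complex-bilinear pairings $\scal{\bar\phi}{\cdot}$ and $\scal{\nabla_{J\bullet}\bar\phi}{\cdot}$ split into their $\sigma$- and $\sigma(\cdot,J\cdot)$-parts. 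Taking real and imaginary parts of the equation $\dd(\tilde\mu_\j+i\tilde\mu_\k)(\dot J,\dot\sigma)=0$ thus produces two real $1$-form identities. The plan is to show, using the Codazzi property of $\sigma$ (Lemma \ref{lem:hol_quadr_diff_equivalence} part \textit{iv)}: $\nabla^g_{J\bullet}\sigma=(\nabla^g_\bullet\sigma)(\cdot,J\cdot)$) and Lemma \ref{lem:product_in_TJ} to handle the terms quadratic in tensors of $T_J\mathcal J(\Sigma)$, that these two real identities are equivalent to the single equation
\[
\divr_g(g^{-1}\dot\sigma_0 + J\dot J g^{-1}\sigma) = \scal{\nabla^g_{J\bullet}\sigma}{\dot J}.
\]
Here the point is that the $g$-traceless part $\dot\sigma_0$ and the trace part $-\scal{\sigma}{J\dot J}g$ of $\dot\sigma$ must be treated separately, as in Lemma \ref{lem:characterization_tangent_space}; the divergence of the trace part contributes a gradient which recombines with the $\scal{\bar\phi}{\nabla_\bullet\dot J}$ terms. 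I expect the real part to give the divergence equation directly and the imaginary part to give the same equation composed with $J$, using \eqref{eq:div_JA}, so that the two are genuinely one equation.

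Next I would apply the identical analysis to $\mathbf J(\dot J,\dot\sigma)$. By the definition \eqref{eq:definition_J_toy} of $\mathbf J$ (read pointwise on $\Sigma$), $\mathbf J(\dot J,\dot\sigma)$ has first component $f^{-1}g^{-1}\dot\sigma_0$ and second component whose $g$-traceless part is $f\,g(\cdot,\dot J\cdot)$. Substituting $\dot J\rightsquigarrow f^{-1}g^{-1}\dot\sigma_0$ and $\dot\sigma_0\rightsquigarrow f\,g(\cdot,\dot J\cdot)$ into the equation just derived, and using that $(g(\cdot,\dot J\cdot))$ corresponds under $g^{-1}$ to $\dot J$ while $(f^{-1}g^{-1}\dot\sigma_0)$ is already an endomorphism, yields
\[
\divr_g\bigl(f\,\dot J + f^{-1} J\,g^{-1}\dot\sigma_0\,g^{-1}\sigma\bigr) = \scal{\nabla^g_{J\bullet}\sigma}{f^{-1}g^{-1}\dot\sigma_0},
\]
which is precisely the second equation of \eqref{eq:system}. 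One subtlety to check carefully is that the formula of Proposition \ref{prop:variation_moment_maps} was derived for an arbitrary tangent vector at an arbitrary $(J,\sigma)$, so it applies verbatim to the tangent vector $\mathbf J(\dot J,\dot\sigma)$ at the \emph{same} point $(J,\sigma)$; no new computation of a moment map is needed, only bookkeeping of which pair plays the role of $(\dot J,\dot\sigma)$.

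The main obstacle I anticipate is the algebraic reduction in the first step: the expression in Proposition \ref{prop:variation_moment_maps} involves $\scal{\bar\phi}{\nabla_\bullet\dot J + i\nabla_{J\bullet}\dot J}$, and turning the combination of this with $\divr_g(g^{-1}\dot{\bar\phi}_0)$ into the clean divergence $\divr_g(g^{-1}\dot\sigma_0 + J\dot J g^{-1}\sigma)$ requires identifying $\nabla^g_\bullet(J\dot J g^{-1}\sigma)$ with a combination of $\nabla^g_\bullet\dot J$ (contracted against $g^{-1}\sigma$, using that $J$ and $\sigma$ are essentially parallel in the relevant directions by the Codazzi condition) plus $J\dot J\,\nabla^g_\bullet(g^{-1}\sigma)$ (which vanishes in the divergence because $g^{-1}\sigma$ is Codazzi and one can integrate the Leibniz rule against Lemma \ref{lem:product_in_TJ} part \textit{ii)}). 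This is the same kind of manipulation that appears in the proof of Proposition \ref{prop:equivalent_def_subspace_V}, in particular the passage using relation \eqref{eq:differential_f} and the identity $g^{-1}\sigma(\nabla_\bullet\dot J)-(\nabla_\bullet\dot J)g^{-1}\sigma=2\scal{\sigma}{J(\nabla_\bullet\dot J)}J$; I would reuse those lemmas rather than redo the computation. Once both equations of \eqref{eq:system} are established as equivalent to the four scalar conditions (two from $(\dot J,\dot\sigma)$, two from $\mathbf J(\dot J,\dot\sigma)$), the proof is complete, since the imaginary-part conditions turned out to be consequences of the real-part ones composed with $J$.
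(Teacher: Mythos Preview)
Your overall structure is right, and the derivation of the second equation from the first by substituting the components of $\mathbf J(\dot J,\dot\sigma)$ is exactly what the paper does. The algebraic reduction you describe---using Lemma \ref{lem:product_in_TJ} to rewrite the $\scal{\bar\phi}{\nabla_\bullet\dot J}$ terms as a divergence---also matches.

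But there is a genuine gap in your first step. You write that the real and imaginary parts of $\dd(\tilde\mu_\j+i\tilde\mu_\k)(\dot J,\dot\sigma)$ are ``the same equation composed with $J$, so the two are genuinely one equation.'' The observation about the expressions is correct: one finds a single real $1$-form $\alpha$ with $\dd\tilde\mu_\j(\dot J,\dot\sigma)=[\alpha\circ J]$ and $\dd\tilde\mu_\k(\dot J,\dot\sigma)=[-\alpha]$. However, the \emph{conditions} $[\alpha\circ J]=0$ and $[\alpha]=0$ in $\Lambda^1(\Sigma)/B^1(\Sigma)$ are not equivalent---exactness of $\alpha$ does not imply exactness of $\alpha\circ J$. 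What you actually need (and what the paper does) is to use both conditions together: if $\alpha$ is exact, say $\alpha=\dd u$, then exactness of $\alpha\circ J=\dd u\circ J$ forces $\dd(\dd u\circ J)=-\Delta_{g_J}u\,\rho=0$, so $u$ is harmonic on the closed surface $\Sigma$, hence constant, and therefore $\alpha\equiv 0$ \emph{pointwise}. Only then do you get the first equation of \eqref{eq:system} as an identity of $1$-forms rather than merely an identity modulo exact forms.

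So the missing ingredient is this harmonicity argument upgrading ``$\alpha$ and $\alpha\circ J$ both exact'' to ``$\alpha=0$.'' Once you insert it, the rest of your plan goes through as written.
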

\begin{proof}
    Applying Proposition \ref{prop:variation_moment_maps} and dividing $\dd{(\tilde{\mu}_\mathbf{J} + i \, \tilde{\mu}_\mathbf{K})}$ into real and imaginary part, respectively, we see that the following relations hold:
    \begin{align*}
    \dd{\tilde{\mu}_\mathbf{J}}(\dot{J}, \dot{\sigma}) & = [\divr_{g}(g^{-1}\dot{\sigma}_0(\cdot, J \cdot)) + \scal{\nabla_{J \bullet} \sigma}{J \dot{J}} - \scal{\sigma}{\nabla_\bullet \dot{J} - J \nabla_{J \bullet} \dot{J}}] \\
    & = [J^*(\divr_{g} g^{-1}\dot{\sigma}_0 + \scal{\nabla_\bullet \sigma}{J \dot{J}} + \scal{\sigma}{J (\nabla_\bullet \dot{J} - J \nabla_{J \bullet} \dot{J})})] \in \Lambda^1(\Sigma) / B^1(\Sigma), \\
    & \\
    \dd{\tilde{\mu}_\mathbf{K}}(\dot{J}, \dot{\sigma}) & = - [ \divr_g g^{-1}\dot{\sigma}_0 + \scal{\nabla_{J \bullet} \sigma}{\dot{J}} + \scal{\sigma}{J (\nabla_\bullet \dot{J} - J \nabla_{J \bullet} \dot{J})} ] \in \Lambda^1(\Sigma) / B^1(\Sigma) .
    \end{align*}
    
    Since $\phi$ is a holomorphic quadratic differential, it satisfies $\partial_J \bar{\phi} \equiv 0$, which is equivalent to require $\nabla_{J \bullet} \sigma = (\nabla_\bullet \sigma)(\cdot, J \cdot)$. In light of this relation, if we set $\alpha$ to be the $1$-\hsk form
    \[
    \alpha \defin \divr_{g}g^{-1} \dot{\sigma}_0 + \scal{\nabla_\bullet \sigma}{J \dot{J}} + \scal{\sigma}{J(\nabla_\bullet \dot{J} - J \nabla_{J \bullet} \dot{J})} ,
    \]
    then we can express $\dd{\tilde{\mu}_\mathbf{J}}(\dot{J}, \dot{\sigma})$ and $\dd{\tilde{\mu}_\mathbf{K}}(\dot{J}, \dot{\sigma})$ as follows:
    \[
    \dd{\tilde{\mu}_\mathbf{J}}(\dot{J}, \dot{\sigma}) = [\alpha \circ J] , \ \dd{\tilde{\mu}_\mathbf{K}}(\dot{J}, \dot{\sigma}) = [- \alpha] \in \Lambda^1(\Sigma) / B^1(\Sigma) \subseteq \Lsymp(\Sigma, \rho)^* .
    \]
    
    Assume now that $(\dot{J}, \dot{\sigma})$ satisfies $\dd{\tilde{\mu}_\mathbf{J}} (\dot{J}, \dot{\sigma}) = \dd{\tilde{\mu}_\mathbf{K}} (\dot{J}, \dot{\sigma}) = [0] \in \Lambda^1(\Sigma) / B^1(\Sigma)$ or, equivalently, that the forms $\alpha$ and $\alpha \circ J$ are exact. In particular there exists a smooth function over $\Sigma$ such that $\alpha = \dd f$. Since $\alpha \circ J$ is exact, we also have
    \[
    - (\Delta_{g_J} f) \, \rho = \dd(\dd f \circ J) = \dd(\alpha \circ J) = 0 .
    \]
    In other words, the function $f$ has to be harmonic with respect to $g_J$ and therefore constant, since $\Sigma$ is compact without boundary. This proves in particular that the $1$-\hsk form $\alpha$ vanishes identically if and only if $(\dot{J}, \dot{\sigma})$ belongs to $\ker \dd{\tilde{\mu}_\mathbf{J}} \cap \ker \dd{\tilde{\mu}_\mathbf{K}}$. The form $\alpha$ can be expressed as follows:
    \begin{align*}
    \alpha & = \divr_g (g^{-1}\dot{\sigma}_0) + \scal{\nabla_\bullet \sigma}{J \dot{J}} + \scal{\sigma}{J(\nabla_\bullet \dot{J} - J \nabla_{J \bullet} \dot{J})} \\
    & = \divr_g(g^{-1} \dot{\sigma}_0 + \scal{\sigma}{J \dot{J}} \, \1 + \scal{\sigma}{\dot{J}} J) - \scal{\nabla_{J \bullet} \sigma}{\dot{J}} \\
    & = \divr_g(g^{-1} \dot{\sigma}_0 + J \dot{J} g^{-1} \sigma) - \scal{\nabla_{J \bullet} \sigma}{\dot{J}} . \tag{Lemma \ref{lem:product_in_TJ}}
    \end{align*}
    
    If we apply the same argument to $\mathbf{J}(\dot{J}, \dot{\sigma})$ (see Section \ref{subsec:parahyperkahler_toy} for the definition of $\mathbf{J}$), we obtain that
    \begin{align}
    (\dot{J}, \dot{\sigma}) \in \ker \dd{\tilde{\mu}_\mathbf{J}} \cap \ker \dd{\tilde{\mu}_\mathbf{K}} & \Leftrightarrow \divr_g(g^{-1} \dot{\sigma}_0 + J \dot{J} g^{-1} \sigma) = \scal{\nabla_{J \bullet} \sigma}{\dot{J}} , \label{eq:char1} \\
    \mathbf{J}(\dot{J}, \dot{\sigma}) \in \ker \dd{\tilde{\mu}_\mathbf{J}} \cap \ker \dd{\tilde{\mu}_\mathbf{K}} & \Leftrightarrow \divr_g(f \dot{J} + f^{-1} J g^{-1} \dot{\sigma}_0 g^{-1} \sigma) = \scal{\nabla_{J \bullet} \sigma}{f^{-1}g^{-1} \dot{\sigma}_0} \label{eq:char2} ,
    \end{align}
    where $f = f(\norm{\sigma}) = \sqrt{1 + \norm{\sigma}^2}$, thus obtaining the desired statement. 
\end{proof}

\begin{proof}[Proof of Theorem \ref{thm:char_VJSigma}] 
By Corollary \ref{cor:zero moment maps}, the zero locus of the moment maps $\tilde{\mu}_{\i}$, $\tilde{\mu}_{\j}$ and $\tilde{\mu}_{\k}$ coincides precisely with $\widetilde{\mathcal{MS}}_0(\Sigma,\rho)$. Observe, however, that $T_{(J,\sigma)}\widetilde{\mathcal{MS}}_0(\Sigma,\rho)$ is larger than  $\Ker(\dd \tilde{\mu}_{\i})\cap\Ker(\dd \tilde{\mu}_{\j})\cap \Ker(\dd\tilde{\mu}_{\k})$ by Remark \ref{rmk:pippo}. Nonetheless, the largest subspace $W$ of $T_{(J,\sigma)}\widetilde{\mathcal{MS}}_0(\Sigma,\rho)$ that is $\g$-orthogonal to $T_{(J,\sigma)}(\Symp_{0}(\Sigma, \rho)\cdot(J,\sigma))$ and invariant under $\i$, $\j$ and $\k$ is contained in $\Ker(\dd \tilde{\mu}_{\i})\cap\Ker(\dd \tilde{\mu}_{\j})\cap \Ker(\dd\tilde{\mu}_{\k})$. Indeed, if $(\dot{J}, \dot{\sigma})$ is in $W$, then the same is true for $\i(\dot{J}, \dot{\sigma})$. Since $\i(\dot{J}, \dot{\sigma})$ is $\g$-orthogonal to the tangent of the orbit by $\Symp_0(\Sigma,\rho)$, Lemma \ref{lm:orthogonality} implies that $\i^2 (\dot{J},\dot{\sigma}) = - (\dot{J}, \dot{\sigma})$ lies in $\Ker (\dd \tilde{\mu}_\i)$. Being $(\dot{J}, \dot{\sigma})$ arbitrary, we deduce that $W$ is contained in $\Ker(\dd \tilde{\mu}_{\i})\cap\Ker(\dd \tilde{\mu}_{\j})\cap \Ker(\dd\tilde{\mu}_{\k})$.

Now, by Lemma \ref{lm:invariance}, Lemma \ref{lm:orthogonality}, and Lemma \ref{lem:equations_tangent} the subspace $W$ is described by the Equation \eqref{eq:system}. Taking the sum and the difference of the two equations, and using that $J_l = f \, J + g_J^{-1} \sigma$, $J_r = f \, J - g_J^{-1} \sigma$, it is straightforward to verify that \eqref{eq:system} is equivalent to \eqref{eq:description_V2} of Proposition \ref{prop:equivalent_def_subspace_V}.
\end{proof}


\appendix

\section{Para-complex geometry} \label{appendixB}

In this appendix we introduce the algebra $\B$ of para-complex numbers, para-complex coordinates, and $\B$-valued symplectic forms. These will allow us to give an equivalent characterization of para-K\"ahler potential (Lemma \ref{lemma:deldelbar}) and prove a criterion to show that a manifold is para-hyperK\"ahler (Lemma \ref{lm:integrability}) . \\

\paragraph{Para-complex numbers} Let $\B$ be the $\R$-algebra generated by $\tau$ with $\tau^{2}=1$. Elements of $\B$ are called para-complex numbers. Notice that $\B$ is a two-dimensional real vector space and we will often denote $\B=\R \oplus \tau\R$. Borrowing terminology from the complex numbers, we talk about real and imaginary part of an element of $\B$ and we define a conjugation 
\[
    \overline{a+\tau b}=a-\tau b \ .
\]
This induces a norm on $\B$ by taking
\[
    |a+\tau b|^{2}=(a+\tau b)\overline{(a+\tau b)}=a^{2}-b^{2} \ .
\]
Note that elements of $\B$ may have negative norm. In fact, the bi-linear extension of this norm defines a Minkowski inner product on $\R^{2}$ with orthonormal basis $\{1,\tau\}$. It is also convenient to work with the basis of idempotents
\[
  e^{+}=\frac{1+\tau}{2} \ \ \ \text{and} \ \ \  e^{-}=\frac{1-\tau}{2} \ , 
\]
because the map 
\[
    ae^{+}+be^{-} \mapsto (a,b)
\]
gives an isomorphism of $\R$-algebras between $\B$ and $\R\oplus \R$, where in the latter operations are carried out component by component. \\

\paragraph{Para-complex structures} Let $\p:V \rightarrow V$ be a para-complex structure on a real vector space $V$. We denote by $V^{+}$ and $V^{-}$ the eigenspaces of $\p$ relative to the eigenvalues $+1$ and $-1$ respectively. 

\begin{remark} The multiplication by $\tau$ on $\B$ is a para-complex structure on $\B$ with eigenspaces $V^{+}=\R e^{+}$ and $V^{-}=\R e^{-}$. 
\end{remark}

Given a para-complex structure $\p$ on $V$, we can define the para-complexification of $V$ as the  $\B$-module $V^{\B}=V\otimes_{\R}\B$. We can then extend the para-complex structure $\p$ to $V^{\B}$ by $\B$-linearity and define
\begin{align*}
    V^{1,0}&=\{v \in V^{\B} \ | \ \p v=\tau v\}=\{v+\tau\p v \ | \ v \in V\} \\
    V^{0,1}&=\{v \in V^{\B} \ | \ \p v=-\tau v\}=\{v-\tau\p v \ | \ v \in V\}
\end{align*}
so that $V^{\B}=V^{1,0}\oplus V^{0,1}$.\\

A para-complex structure $\p$ induces a para-complex structure $\p^{*}$ on the dual space $V^{*}$ by requiring $(\p^{*}\alpha)(v)=\alpha(\p v)$ for any $\alpha \in V^{*}$ and any $v \in V$. As before, we can decompose $V^{*\B}=V_{1,0}\oplus V_{0,1}$ where
\begin{align*}
    V_{1,0}&=\{\alpha \in V^{*\B} \ | \ \p^{*} \alpha=\tau \alpha\}=\{\alpha+\tau\p^{*
    }\alpha \ | \ \alpha \in V^{*}\} \\
    V_{0,1}&=\{\alpha \in V^{*\B} \ | \ \p^{*} \alpha=-\tau \alpha\}=\{\alpha-\tau\p^{*} \alpha \ | \ \alpha \in V^{*}\} \ .
\end{align*}
More in general, we have a decomposition of $\B$-valued $n$-forms into types
\[
    \bigwedge^{n}V^{*\B}=\bigoplus_{p+q=n}\bigwedge^{p,q}V^{*\B} \ , 
\]
where $\bigwedge^{p,q}V^{*\B}$ denotes the vector space spanned by $\alpha \wedge \beta$, with $\alpha \in \bigwedge^{p}V_{1,0}$ and $\beta \in \bigwedge^{q}V_{0,1}$. \\

\paragraph{Para-complex coordinates} Recall that an almost para-complex structure $\p$ on $M$ is a bundle endomorphism $\p:TM\rightarrow TM$ such that $\p^{2}=\1$ and the eigenspaces $T^{\pm}M$ of $\p$ relative to the eigenvalues $\pm1$ have the same dimension.

\begin{definition}Let $(M, \p)$ and $(N,\p')$ be two almost para-complex manifolds. A smooth map $f:M\rightarrow N$ is para-holomorphic if $\p'\circ df=df\circ \p$. In particular, a function $f:(M,\p)\rightarrow \B$ is para-holomorphic if for every vector field $V\in \Gamma(TM)$ we have $df(\p V)=\tau df(V)$. 
\end{definition}

An almost para-complex structure $\p$ on $M$ is said to be integrable if the eigen-distributions $T^{\pm}M$ of $\p$ are involutive. This is equivalent to the existence of local charts $\phi_{\alpha}:U_{\alpha}\subset M \rightarrow \B^{n}$ such that change of coordinates are para-holomorphic functions. If $z^{i}$ is a $\B$-valued coordinate, its real and imaginary part 
\[
    \Re(z^{i})=x^{i}=\frac{1}{2}(z^{i}+\bar{z}^{i}) \ \ \ \ \  \Imm(z^{i})=y^{i}=\frac{1}{2\tau}(z^{i}-\bar{z}^{i})
\]
define usual $\R$-valued coordinates on $M$. \\

The para-complex tangent bundle of a para-complex manifold $(M,\p)$ is $T^{\B}M=TM\otimes_{\R}\B$. Extending the endomorphism $\p$ on $T^{\B}M$ by $\B$-linearity, we have a decomposition $T^{\B}M=T^{1,0}M\oplus T^{0,1}M$ where
\begin{align*}
    T^{1,0}M&=\{X \in T^{\B}M \ | \ \p X=\tau X\}=\{X+\tau\p X \ | \ X \in TM\} \\
    T^{0,1}M&=\{X \in T^{\B}M \ | \ \p X=-\tau X\}=\{X-\tau\p X \ | \ X \in TM\} \ .
\end{align*}
In local $\B$-valued coordinates $z^{i}=x^{i}+\tau y^{i}$, the vector spaces $T^{1,0}M$ and $T^{0,1}M$ are spanned by the vector fields
\[
    \frac{\partial}{\partial z^{i}}=\frac{1}{2}\left( \frac{\partial}{\partial x^{i}}+\tau \frac{\partial}{\partial y^{i}}\right) \ \ \ \ \ \frac{\partial}{\partial \bar{z}^{i}}=\frac{1}{2}\left( \frac{\partial}{\partial x^{i}}-\tau \frac{\partial}{\partial y^{i}}\right) \ .
\]
Similarly, the induced para-complex structure $\p^{*}$ on $T^{*}M$ induces a decomposition $T^{*\B}M=\Lambda^{1,0}(M)\oplus \Lambda^{0,1}(M)$ where 
\begin{align*}
    \Lambda^{1,0}(M)&=\{\alpha \in T^{*\B}M \ | \ \p^{*} \alpha=\tau \alpha\}=\{\alpha+\tau\p^{*} \alpha \ | \ \alpha \in T^{*}M\} \\
    \Lambda^{0,1}(M)&=\{\alpha \in T^{*\B}M \ | \ \p^{*} \alpha=-\tau \alpha\}=\{\alpha-\tau\p^{*} \alpha \ | \ \alpha \in T^{*}M\} \ .
\end{align*}
In local $\B$-valued coordinates $z^{i}=x^{i}+\tau y^{i}$, the vector spaces $\Lambda^{1,0}(M)$ and $\Lambda^{0,1}(M)$ are spanned by the $\B$-valued forms
\[
    dz^{i}=dx^{i}+\tau dy^{i}  \ \ \ \text{and} \ \ \ d\bar{z}^{i}=dx^{i}-\tau dy^{i} \ .
\]
More in general, we have a decomposition of $\B$-valued $n$-forms on $M$ into types 
\[
    \Lambda^{n}(M)=\bigoplus_{p+q=n}\Lambda^{p,q}(M) \ .
\]
This induces a splitting of the $\B$-linear exterior differential $d:\Lambda^{n}(M)\rightarrow \Lambda^{n+1}(M)$ as $d=\partial_{\p} +\bar{\partial}_{\p}$ with 
\[
\begin{gathered}
    \partial_{\p}: \Lambda^{p,q}(M) \rightarrow \Lambda^{p+1,q}(M) \ \ \ \text{and} \ \ \ 
   \bar{\partial}_{\p}: \Lambda^{p,q}(M) \rightarrow \Lambda^{p,q+1}(M) \ .
\end{gathered}
\]

\paragraph{Para-K\"ahler potential} Recall that a para-K\"ahler structure is the data of an almost para-complex structure $\p$ and a (non-degenerate) pseudo-Riemannian metric $\g$ on $M$ such that
\begin{itemize}
    \item $\g(\p X, \p Y)=-\g(X,Y) \ \ \ \ \ \text{for every $X,Y \in \Gamma(TM)$} \ ,$
    \item $\p$ is parallel for the Levi-Civita connection of $\g$.
\end{itemize}
This second condition is equivalent (\cite{homo_parakahler}) to the simultaneous integrability of the eigendistributions of $\p$ and the closedness of the $2$-form $\omega_{\p}=\g(\cdot, \p \cdot)$.
It is easy to see that the eigendistributions of $\p$ are isotropic for $\g$, thus $\g$ has signature $(n,n)$.

\begin{definition} We say that $f:M \rightarrow \R$ is a para-K\"ahler potential of the closed $2$-form $\omega_{\p}$ if $(\tau/2)\bar{\partial}_{\p}\partial_{\p}f=\omega_{\p}$.
\end{definition}
\begin{lemma}\label{lemma:deldelbar}
For every smooth function $f:M \rightarrow \R$ we have $2\tau\bar{\partial}_{\p}\partial_{\p}f=d(df\circ \p)$.
\end{lemma}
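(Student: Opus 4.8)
The plan is to compute both sides of the identity $2\tau\bar{\partial}_{\p}\partial_{\p}f=d(df\circ\p)$ in local para-complex coordinates $z^{i}=x^{i}+\tau y^{i}$ adapted to $\p$, and to observe that they agree as $\B$-valued $2$-forms. The argument is the exact para-complex transcription of the classical K\"ahler identity $2i\bar\partial\partial f = d(df\circ J)$, replacing $i$ by $\tau$ throughout.

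First I would fix local coordinates in which $\p$ acts as $\p(\partial_{x^{i}})=\partial_{x^{i}}$, $\p(\partial_{y^{i}})=-\partial_{y^{i}}$, so that $\p^{*}(dx^{i})=dx^{i}$ and $\p^{*}(dy^{i})=-dy^{i}$, and introduce $dz^{i}=dx^{i}+\tau\,dy^{i}$, $d\bar z^{i}=dx^{i}-\tau\,dy^{i}$ spanning $\Lambda^{1,0}$ and $\Lambda^{0,1}$ respectively. Then $df\circ\p = \p^{*}(df)$, and since $df=\partial_{\p}f+\bar\partial_{\p}f$ with $\partial_{\p}f\in\Lambda^{1,0}$, $\bar\partial_{\p}f\in\Lambda^{0,1}$, one has $\p^{*}(df)=\tau\,\partial_{\p}f-\tau\,\bar\partial_{\p}f$. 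Taking the $\B$-linear exterior derivative, $d(df\circ\p)=\tau\,d(\partial_{\p}f)-\tau\,d(\bar\partial_{\p}f) = \tau(\partial_{\p}\partial_{\p}f+\bar\partial_{\p}\partial_{\p}f)-\tau(\partial_{\p}\bar\partial_{\p}f+\bar\partial_{\p}\bar\partial_{\p}f)$. Using $\partial_{\p}^{2}=0$, $\bar\partial_{\p}^{2}=0$ (consequences of $d^{2}=0$ together with the type decomposition, valid since $\p$ is integrable), this reduces to $\tau(\bar\partial_{\p}\partial_{\p}f-\partial_{\p}\bar\partial_{\p}f)$. Finally $\partial_{\p}\bar\partial_{\p}f=-\bar\partial_{\p}\partial_{\p}f$, again from $d^{2}f=0$ and comparing $(1,1)$-components, so $d(df\circ\p)=2\tau\,\bar\partial_{\p}\partial_{\p}f$, as claimed. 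Concretely one can also just write $\bar\partial_{\p}\partial_{\p}f=\sum_{i,j}\frac{\partial^{2}f}{\partial \bar z^{j}\partial z^{i}}\,d\bar z^{j}\wedge dz^{i}$ and $d(df\circ\p)$ in the same coordinates and match coefficients.

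There is no real obstacle here; the only point requiring minor care is the sign bookkeeping in the type decomposition of $d$ and the use of integrability of $\p$ (so that $d$ genuinely splits as $\partial_{\p}+\bar\partial_{\p}$ with the stated bidegrees), which in the para-K\"ahler setting is guaranteed since $\nabla\p=0$ forces the eigendistributions of $\p$ to be involutive. Thus I would present the proof as a short direct computation, either coordinate-free via the type decomposition of $d$ as above, or by an explicit coordinate computation, noting that it parallels verbatim the familiar $dd^{c}$ identity in K\"ahler geometry.
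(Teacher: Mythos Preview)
Your core argument via the type decomposition is correct and takes a genuinely different route from the paper's. The paper does an explicit computation in local para-holomorphic coordinates, writing out $df\circ\p$ and $2\tau\bar\partial_\p\partial_\p f$ in terms of $dx^i$, $dy^i$ and matching coefficients. Your argument instead writes $df\circ\p = \p^*(df) = \tau\,\partial_\p f - \tau\,\bar\partial_\p f$ and then applies $d = \partial_\p + \bar\partial_\p$ together with $\partial_\p^2 = \bar\partial_\p^2 = 0$ and $\partial_\p\bar\partial_\p = -\bar\partial_\p\partial_\p$. This is the standard $dd^c$ manipulation transcribed to the para-complex setting; it is coordinate-free and makes the analogy with the K\"ahler identity $d(df\circ J) = 2i\,\bar\partial\partial f$ transparent. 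The paper's version is more hands-on and avoids invoking the operator identities explicitly.

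One minor slip in your setup: with your eigen-coordinate convention $\p(\partial_{x^i}) = \partial_{x^i}$, $\p(\partial_{y^i}) = -\partial_{y^i}$, the form $dz^i = dx^i + \tau\,dy^i$ is \emph{not} of type $(1,0)$, since $\p^*(dz^i) = dx^i - \tau\,dy^i = d\bar z^i$ rather than $\tau\,dz^i$. The convention under which $dz^i$ does span $\Lambda^{1,0}$ (and the one the paper uses) has $\p$ \emph{swap} $\partial_{x^i}$ and $\partial_{y^i}$. Since your actual proof proceeds entirely through the abstract type decomposition and never uses these coordinate expressions, this does not affect the argument, but you should either correct the action of $\p$ in coordinates or simply drop that sentence.
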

\begin{proof} In local para-holomorphic coordinates $z^{i}=x^{i}+\tau y^{i}$ we have
\[
    df\circ \p \left(\frac{\partial}{\partial x^{i}}\right)=df\left(\frac{\partial}{\partial y^{i}}\right)=\frac{\partial f}{\partial y^{i}} \ \ \ \text{and} \ \ \ 
    df\circ \p \left(\frac{\partial}{\partial y^{i}}\right)=df\left(\frac{\partial}{\partial x^{i}}\right)=\frac{\partial f}{\partial x^{i}}
\]
thus 
\[
    df\circ \p=\sum_{i=1}^{n}\frac{\partial f}{\partial y^{i}}dx^{i}+\frac{\partial f}{\partial x^{i}}dy^{i}
\]
and 
\[
    d(df\circ \p)=\sum_{i=1}^{n}\left(\frac{\partial^{2} f}{\partial x^{i}}-\frac{\partial^{2} f}{\partial y^{i}}\right)dx^{i}\wedge dy^{i}\ .
\]
On the other hand
\begin{align*}
    2\tau \bar{\partial}_{\p}\partial_{\p} f&=\tau \bar{\partial}_{\p}\sum_{i=1}^{n}\left(\frac{\partial f}{\partial x^{i}}+\tau\frac{\partial f}{\partial y^{i}}\right)(dx^{i}+\tau dy^{i})\\
    &=\frac{1}{2}\tau\sum_{i=1}^{n}\left(\frac{\partial^{2} f}{\partial x^{i}}-\frac{\partial^{2} f}{\partial y^{i}}\right)(dx^{i}-\tau dy^{i})\wedge(dx^{i}+\tau dy^{i})\\
    &=\sum_{i=1}^{n}\left(\frac{\partial^{2} f}{\partial x^{i}}-\frac{\partial^{2} f}{\partial y^{i}}\right)dx^{i}\wedge dy^{i}\ .
\end{align*}
\end{proof}

\paragraph{Criterion for a para-hyperK\"ahler structure}

Recall that a para-hyperK\"ahler structure on $M$ is a quadruple $(\g,\i,\j,\k)$ where $(\g,\k)$ and $(\g,\j)$ are para-K\"ahler structures on $M$ and $\i=\k\j$ is an almost complex structure on $M$ that is compatible with $\g$ in the sense that
\[
    \g(\i X,\i Y)=\g(X,Y) \ \ \ \ \text{for all $X,Y\in \Gamma(TM)$} \ 
\] 
and that is parallel for the Levi-Civita connection of $\g$.

\begin{lemma}\label{lm:integrability} Let $\j,\k$ be almost para-complex structures and let $\i=\k\j$ be an almost complex structure on $M$. Assume that there is pseudo-Riemannian metric $\g$ compatible with $\i,\j$ and $\k$. If the $2$-forms $\omega_{\Ll}=\g(\cdot,\Ll\cdot)$ are closed for every $\Ll=\i,\j,\k$, then $(M,\i,\j,\k,\g)$ is para-hyperK\"ahler.
\end{lemma}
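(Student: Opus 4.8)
The statement to prove is Lemma~\ref{lm:integrability}: given almost para-complex structures $\j,\k$, an almost complex structure $\i=\k\j$, and a pseudo-Riemannian metric $\g$ compatible with all three, the closedness of the three $2$-forms $\omega_\i,\omega_\j,\omega_\k$ suffices to conclude that $(M,\g,\i,\j,\k)$ is para-hyperK\"ahler. The strategy is the classical one (as in the Riemannian hyperK\"ahler setting) of showing that closedness of the fundamental $2$-forms forces $\nabla\i=\nabla\j=\nabla\k=0$ for the Levi-Civita connection $\nabla$ of $\g$, and then observing that once one knows the individual structures are parallel, the para-quaternionic relations $\i^2=-\1$, $\j^2=\k^2=\1$, $\i\j=-\j\i=\k$ are automatically preserved, so one genuinely has a para-hyperK\"ahler structure in the sense of the definition given in the introduction.

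First I would record the algebraic input: compatibility of $\g$ with $\i$ means $\g(\i\cdot,\i\cdot)=\g(\cdot,\cdot)$, and compatibility with $\j$ and $\k$ means $\g(\j\cdot,\j\cdot)=-\g(\cdot,\cdot)$, $\g(\k\cdot,\k\cdot)=-\g(\cdot,\cdot)$; together with $\i=\k\j$ these are consistent. The key step is the standard lemma that for an (almost) complex or para-complex structure $\Ll$ compatible with $\g$, the condition $d\omega_\Ll=0$ is equivalent to $\nabla\Ll=0$ — this is precisely the fact invoked in the body of the paper (the remark after the definition of para-K\"ahler structure says ``$d\omega_\i=0$ is known to be equivalent to $\nabla\i=0$'', etc.). I would either cite this directly or reproduce the short argument: write $3\,d\omega_\Ll(X,Y,Z)$ in terms of $\g((\nabla_X\Ll)Y,Z)$ using the cyclic sum, use that $\nabla_X\Ll$ anticommutes with $\Ll$ (differentiate $\Ll^2=\pm\1$) and is $\g$-skew or $\g$-symmetric appropriately, and conclude by a polarization/rank argument that $d\omega_\Ll=0\iff\nabla\Ll=0$. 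Applying this to $\Ll=\i,\j,\k$ gives $\nabla\i=\nabla\j=\nabla\k=0$.

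Given parallelism, the conclusion is essentially immediate: $(\g,\i)$ is a pseudo-K\"ahler structure (integrable almost complex structure compatible with $\g$, with $\nabla\i=0$ hence $d\omega_\i=0$; integrability of $\i$ follows since $\nabla\i=0$ makes the Nijenhuis tensor vanish), and $(\g,\j)$, $(\g,\k)$ are para-K\"ahler structures for the same reason — here one uses that $\nabla\Ll=0$ for a para-complex $\Ll$ forces the $\pm1$-eigendistributions of $\Ll$ to be parallel, hence involutive, which is the integrability condition in the definition of para-complex structure (and the $\pm1$-eigenspaces have equal dimension by the standing assumption in the hypotheses of a para-complex structure, or because $\g$ is nondegenerate on $M$ and the eigenspaces are isotropic of complementary dimension). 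Finally $\i=\k\j$ is part of the hypothesis and one checks $\i\j=-\j\i=\k$ directly: $\i\j=\k\j\j=\k$, and $\j\i=\j\k\j$; using $\j\k=-\k\j$ (which holds because $\i^2=(\k\j)^2=-\1$ expands to $\k\j\k\j=-\1$, and combined with $\j^2=\k^2=\1$ yields $\j\k=-\k\j$) gives $\j\i=-\k\j\j=-\k$. So all the relations of the definition of para-hyperK\"ahler structure are in place.

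\textbf{The main obstacle.} The only non-formal point is the equivalence $d\omega_\Ll=0\Leftrightarrow\nabla\Ll=0$ in the para-complex case, and in particular being careful with signs: for $\j,\k$ one has $\g(\j\cdot,\j\cdot)=-\g$, so $\omega_\j(X,Y)=\g(X,\j Y)$ is still skew-symmetric (because $\g(\j X,Y)=-\g(\j X,\j\j Y)=\g(X,\j Y)$... wait, $=+\g(\j X,\j(\j Y))$ with the minus, giving $\g(X,\j Y)$ up to sign — I will verify $\omega_\j$ is a genuine $2$-form), and the derivative $\nabla_X\j$ satisfies $\j(\nabla_X\j)=-(\nabla_X\j)\j$ and is $\g$-symmetric rather than skew. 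The Kostant-type identity relating $d\omega_\Ll$ and $\nabla\Ll$ goes through with the sign bookkeeping adjusted, but this is exactly the place where one must not be cavalier. Since the paper already asserts this equivalence in the preliminary discussion, the cleanest route is to cite it; if a self-contained proof is wanted, I would isolate it as the single substantive computation, roughly half a page, and then the rest of the lemma is a two-line synthesis.
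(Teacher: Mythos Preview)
Your proposed approach has a genuine gap at the step you yourself flag as ``the only non-formal point'': the claimed equivalence $d\omega_\Ll=0\Leftrightarrow\nabla\Ll=0$ for a single almost (para-)complex structure $\Ll$ compatible with $\g$ is simply false. Almost K\"ahler manifolds that are not K\"ahler abound (the Kodaira--Thurston manifold, for instance), and the same phenomenon occurs in the para-complex setting. The equivalence stated in the introduction of the paper is asserted in the context of the \emph{definitions} of pseudo-K\"ahler and para-K\"ahler structures, where $\i$ (resp.\ $\p$) is already assumed to be \emph{integrable}; under that extra hypothesis the equivalence holds, but integrability is precisely what you do not yet have. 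Your sketched cyclic-sum computation will produce a relation between $d\omega_\Ll$ and the cyclic sum of $(\nabla_\bullet\Ll)$, but vanishing of that cyclic sum does not force $\nabla\Ll=0$.

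The whole point of the lemma is that the closedness of all \emph{three} forms, together with the para-quaternionic relations, provides the missing integrability. The paper's proof exploits exactly this interplay: it first observes that $\omega_\i(X,Y)=\omega_\k(\j X,Y)$, so a (para-complexified) vector $X$ satisfies $\j X=\tau X$ if and only if $\iota_X\omega_\i=\tau\iota_X\omega_\k$; then, using Cartan's formula and the closedness of $\omega_\i$ and $\omega_\k$ (not $\omega_\j$!), it shows that this eigencondition is preserved under Lie bracket. That gives integrability of $\j$, and only then does closedness of $\omega_\j$ yield $\nabla\j=0$. The argument is repeated for $\i$ and $\k$. So the closedness of each $\omega_\Ll$ is not used to control $\Ll$ itself, but rather the other two structures; your attempt to treat each $\Ll$ in isolation cannot succeed.
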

\begin{proof} We first extend the differential forms $\omega_{\Ll}$ to $\B$-valued differential forms by $\B$-linearity. Because
\[
    \omega_{\i}(X,Y)=\g(X,\i Y)=-\g(X,\j\k Y)=\g(\j X, \k Y)=\omega_{\k}(\j X, Y)
\]
for any section $X$ of $T^{\B}M$, we have $\j X=\tau X$ if and only if $\iota_X\omega_{\i}=\tau\iota_{X}\omega_{\k}$. Suppose that $\j X=\tau X$ and $\j Y=\tau Y$, then
\begin{align*}
 \iota_{[X,Y]}\omega_{\i}&=\mathcal{L}_{X}(\iota_{Y}\omega_{\i})-\iota_Y\mathcal{L}_{X}\omega_{\i} \\
 &=\mathcal{L}_{X}(\tau\iota_{Y}\omega_{\k})-\iota_Y\mathcal{L}_{X}\omega_{\i} \\
 &=\mathcal{L}_{X}(\tau\iota_{Y}\omega_{\k})-\iota_Yd(\iota_{X}\omega_{\i})  \tag{Cartan's formula with $\omega_{\i}$ closed} \\
 &=\mathcal{L}_{X}(\tau\iota_{Y}\omega_{\k})-\iota_Yd(\tau\iota_{X}\omega_{\k}) \\
 &=\mathcal{L}_{X}(\tau\iota_{Y}\omega_{\k})-\tau\iota_Y\mathcal{L}_{X}\omega_{\k} \tag{Cartan's formula with $\omega_{\k}$ closed}\\
 &=\tau \iota_{[X,Y]}\omega_{\k} \ .
\end{align*}
Hence $\j[X,Y]=\tau[X,Y]$ and and the eigendistributions for $\j$ are involutive. The integrability of $\j$ and the closedness of $\omega_{\j}$ imply that $\j$ is parallel for the Levi-Civita connection of $\g$. Repeating a similar computation for $\i$ and $\k$ we obtain the result. 
\end{proof}

\emergencystretch=1em

\bibliographystyle{alpha}
\bibliography{biblio}

\end{document}